\documentclass[10pt]{amsart}
\usepackage{amsthm}
\usepackage{amsfonts}
\usepackage{amssymb}
\usepackage{stmaryrd}
\usepackage{amsmath}
\usepackage{scalerel}
\usepackage{lscape}
\usepackage{xfrac}
\usepackage{nicefrac}
\usepackage{rotating}

\overfullrule=5pt

\input xy
\xyoption{all}
\usepackage{tikz-cd}
\usepackage[shortlabels,inline]{enumitem}
\setenumerate[1]{leftmargin=5.5ex}
\setitemize[1]{leftmargin=5.5ex}

\usepackage{amsthm}
\usepackage{amssymb}
\usepackage{amsmath}
\usepackage[shortlabels]{enumitem}


\input xy
\xyoption{all} 

\newtheorem{theorem}{Theorem}
\newtheorem*{theorem*}{Theorem}
\newtheorem*{remark*}{Remark}

\newtheorem*{example*}{Example}

\newtheorem*{examples*}{Examples}
\newtheorem*{exercise*}{Exercise}

\newtheorem{definition}[theorem]{Definition}
\newtheorem*{definition*}{Definition}
\newtheorem{lemma}[theorem]{Lemma}

\newtheorem{proposition}[theorem]{Proposition}
\newtheorem{remark}[theorem]{Remark}
\newtheorem{cor}[theorem]{Corollary}
\newtheorem*{cor*}{Corollary}

\newtheorem{question}{Question}

\newtheorem*{conjecture*}{Conjecture}
\numberwithin{theorem}{section}


%
%
%
%
%

\renewcommand\iff{%
\ifmmode\text{ if and only if }%
\else if and only if \fi}

\renewcommand{\and}{\wedge}

\renewcommand{\phi}{\varphi}

\newcommand{\Span}{\text{Span}}

\newcommand{\Mod}{\textnormal{Mod}}

\newcommand{\rad}{\textnormal{rad}}

\renewcommand{\P}{\mathbb{P}}

\newcommand{\Ass}{\textnormal{Ass}}
\newcommand{\Div}{\textnormal{Div}}

\newcommand{\Att}{\textnormal{Att}}

\newcommand{\ann}{\textnormal{ann}}

\newcommand{\Spec}{\textnormal{Spec}}

\newcommand{\Zg}{\textnormal{Zg}}

\newcommand{\pinj}{\textnormal{pinj}}

\newcommand{\mcal}[1]{\mathcal{#1}}

\newcommand{\mfrak}[1]{\mathfrak{#1}}
\newcommand{\st}{\ \vert \ }
\newcommand{\vertl}{\left\vert}
\newcommand{\vertr}{\right\vert}

\newcommand{\pp}{\textnormal{pp}}

\newcommand{\Q}{\mathbb{Q}}
\newcommand{\N}{\mathbb{N}}
\newcommand{\Z}{\mathbb{Z}}

\newcommand{\ldot}{\raisebox{-3pt}{$\cdot$}}


\setcounter{tocdepth}{1}
\newcommand{\DPR}{\mathrm{DPR}}
\newcommand{\PP}{\mathrm{PP}}
\newcommand{\EPP}{\mathrm{EPP}}
\newcommand{\clx}{\mathrm{clx}}
\newcommand{\exsig}{\mathtt{exsig}\, }

\let\chiorg\chi
\renewcommand\chi{{\raisebox{1.5pt}{$\chiorg$}}}

\newcommand{\nf}{\nicefrac}
\newif\iflorna\lornafalse


\usepackage[pdftex,linktocpage]{hyperref}

%
%
%
\overfullrule=5pt
%
\newcommand\hfuzzReset{\hfuzz=3pt}
\hfuzzReset
%
\newcommand\toleranceReset{\tolerance=1400}
\toleranceReset
%
\newcommand\emergencystretchReset{\emergencystretch=1ex}
\emergencystretchReset

\hbadness=10000

%
%
%
%
%
%
%
%
%
%
%

\title{Decidability for the theory of modules over a Pr\"ufer domain}
\author{Lorna Gregory}
\address{Dipartimento di Matematica e Fisica, Universit\`a degli Studi della Campania ``Luigi
Vanvitelli'', Viale Lincoln 5, 81100 Caserta, Italy}
\email{Lorna.Gregory@gmail.com}
\date{\today}

\begin{document}
\begin{abstract}
In this paper we give elementary conditions completely characterising when the theory of modules of a Pr\"ufer domain is decidable.
Using these results, we show that the theory of modules of the ring of integer valued polynomials is decidable.
\end{abstract}

\maketitle
\tableofcontents

\section{Introduction}

\noindent
In this article, we give a complete characterization of when the theory of modules of a recursive Pr\"ufer domain is decidable.


Pr\"ufer domains are a much studied class of rings, including many classically important rings and classes of rings. For example, they include Dedekind domains and hence rings of integers of number fields; B\'ezout domains and hence the ring of complex entire functions \cite[Thm. 9]{Helmer} and the ring of algebraic integers \cite[Thm. 102]{Kapcomm}; the ring of integer valued polynomials with rational coefficients \cite[VI.1.7]{IntPoly} and the real holomorphy ring of a formally real field \cite[2.16]{Becker}.

Pr\"ufer domains have provided a rich supply of rings for which the decidability of modules can be determined. The theory of modules of a ring $R$ is said to be decidable if there is an algorithm which decides whether a given first order sentence in the language of $R$-modules is true in all $R$-modules.

The first non-trivial example of a ring with decidable theory of modules was given by Szmielew, \cite{Szmie}, who showed that the theory of abelian groups (or equivalently $\Z$-modules) is decidable. This result was generalised by Eklof and Fischer, \cite{EkFis}, to certain Dedekind domains, among them certain rings of integers, and they showed that, for a (recursive) field $k$ with a splitting algorithm, the theory of $k[x]$-modules is decidable.

The most recent effort to understand decidability of theories of modules over Pr\"ufer domains started with a paper, \cite{PunPunTof}, of Puninski, Puninskaya and Toffalori. They showed that a recursive valuation domain with dense archimedean value group has decidable theory of modules if and only if its set of units is recursive. Proving a conjecture in \cite{PunPunTof}, we show in \cite{DecVal} that an arbitrary recursive valuation domain has decidable theory of modules if and only if the radical relation $a\in\rad bR$ is recursive.

The theory of modules of B\'ezout domains of the form $D+XQ[X]\subseteq Q[X]$, where $D$ is a principal ideal domain with field of fractions $Q$, is shown in \cite{D+M} to be decidable under certain reasonable effective conditions on $D$. In particular, it is shown that $\Z+X\Q[X]$ has decidable theory of modules. The theory of modules of the ring of algebraic integers, along with some other B\'ezout domains with Krull dimension $1$, is shown to have decidable theory of modules in \cite{AlgIntDec}.

Work towards characterising when a general Pr\"ufer domain has decidable theory of modules was started in the articles \cite{Decpruf} and \cite{Decdense}, and is finished in the present one.
We will describe the results of these articles whilst describing the main result of this article.

First a reminder of the setup for proving decidability results for theories of modules.
Thanks to the Baur-Monk theorem, if $R$ is a recursive ring then the theory of $R$-modules is decidable if and only if there exists an algorithm which, given $\nf{\phi_1}{\psi_1},\ldots,\nf{\phi_l}{\psi_l}$ pairs of pp-formulae and intervals $[n_1,m_1],\ldots,[n_l,m_l]\subseteq \N$ where $n_i,m_i\in \N\cup\{\infty\}$, answers whether there exists an $R$-module $M$ such that, for all $1 \leq i \leq l$, $\vertl\phi(M)/\psi(M)\vertr\in [n_i,m_i]$. The existence of an algorithm answering this question when $[n_i,m_i]$ are either $[1,1]$ or $[1,\infty]$ is equivalent to the existence of an algorithm deciding whether one Ziegler basic open set is contained in a finite union of other Ziegler basic open sets
(for the definition of the Ziegler spectrum see \ref{sectionModThModules}).

We characterise when the theory of modules of a Pr\"ufer domain $R$ is decidable in terms of the recursivity of three sets:
$\DPR(R)$, $\EPP(R)$ and $X(R)$. Each of these sets is a subset of $R^n\times \N_0^k$ for some $n,k\in\N_0$. For the sets $\EPP(R)$ and $X(R)$, we postpone their definitions to section \ref{Srecsets} and in this introduction we instead give some indication of their meaning.
%

For any commutative ring $R$, the set $\DPR(R)$ is defined as the set of $(a,b,c,d)\in R^4$ such that for all prime ideals $\mfrak{p},\mfrak{q}\lhd R$ with $\mfrak{p}+\mfrak{q}\neq R$, either $a\in \mfrak{p}$, $b\notin \mfrak{p}$, $c\in\mfrak{q}$ or  $d\notin \mfrak{q}$.
This set was introduced in \cite{Decpruf} as a generalisation of the radical relation $a\in\rad bR$.
For a recursive B\'ezout domain it is shown there that $\DPR(R)$ is recursive if and only if there is an algorithm deciding inclusions of Ziegler basic open sets. For recursive Pr\"ufer domains, analogous sufficient conditions were given for there to exist an algorithm deciding inclusions of Ziegler basic open sets.  Building heavily on those results, we extend the equivalence given in \cite{Decpruf} for B\'ezout domains to all recursive Pr\"ufer domains (see \ref{Zginc}). As a consequence we get the following theorem.

\smallskip\noindent
\textbf{Theorem.} (See \ref{ZieglertoTinfty})
{\it
Let $R$ be a recursive Pr\"ufer domain such that $R/\mfrak{m}$ is infinite for all maximal ideals $\mfrak{m}$. The theory of $R$-modules is decidable if and only if $\DPR(R)$ is recursive.
}

\smallskip

\noindent
For rings $R$ with a pair of pp-formulae $\nf{\phi}{\psi}$ and an $R$-module $M$ such that $\vertl\phi(M)/\psi(M)\vertr$ is finite but not equal to $1$, in particular for commutative rings with finite non-zero modules, we need to do more than show that there is an algorithm deciding inclusions of Ziegler basic open sets. 

For any ring $R$, if the theory of $R$-modules is decidable then the theory of modules of size $n$ is decidable uniformly in $n$.
In \ref{decfinimpEPP}, we introduce a set $\EPP(R)$, whose recursivity, for a recursive Pr\"ufer domain $R$,
is equivalent to the decidability
of the theory of $R$-modules of size $n$, uniformly in $n$. This is proved in Theorem \ref{EPPfinite}.
The main feature of $\EPP(R)$ is that it is often easier to check in examples that $\EPP(R)$ is recursive than it is to check that the theory of modules of size $n$ is decidable uniformly in $n$.

The  set $\EPP(R)$ is a generalisation of $\PP(R)$, which is defined in \cite{Decdense} and inspired by the characterisation of commutative von Neumann regular rings with decidable theories of modules given in \cite{PointPrest}. In \cite{Decdense}, for a recursive B\'ezout domain $R$, under the condition that for each maximal ideal $\mfrak{m}$, $R_{\mfrak{m}}$ has dense value group, it is shown that the theory of $R$-modules is decidable if and only if $\DPR(R)$ and $\PP(R)$ are recursive. Building heavily on \cite{Decdense}, we show, that
this result also holds for Pr\"ufer domains.


\smallskip\noindent
\textbf{Theorem \ref{densethm}.}
{\it
Let $R$ be a recursive Pr\"ufer domain such that $R_\mfrak{m}$ has dense value group for all maximal ideals $\mfrak{m}$. The theory of $R$-modules is decidable if and only if $\PP(R)$ and $\DPR(R)$ is recursive.
}

\smallskip

\noindent
It follows from this result that the theory of modules of a recursive Pr\"ufer domain with dense value groups (or infinite residue fields) is decidable if and only if there is an algorithm deciding inclusions of Ziegler basic open sets and the theory of finite modules of size $n$ is decidable uniformly in $n$. The same result for commutative von Neumann regular rings easily follows from \cite{PointPrest}. This does not appear to be the case for arbitrary Pr\"ufer domains.

The third set, $X(R)$, captures information about finite Baur-Monk invariants of the, in some sense intrinsically infinite modules, $R_{\mfrak{p}}/I$ where $\mfrak{p}\lhd R$ is a prime ideal and $I\lhd R_{\mfrak{p}}$.

\smallskip\noindent
\textbf{Main Theorem \ref{mainthm}.}
{\it
Let $R$ be a recursive Pr\"ufer domain. The theory of $R$-modules is decidable if and only if the sets $\DPR(R)$, $\EPP(R)$ and $X(R)$ are recursive.
}

Our characterisation is such that it can be easily checked for concrete rings. We illustrate this in section \ref{SIntpoly}
by using our main theorem to show that the ring of integer valued polynomials with rational coefficients, $\text{Int}(\Z)$, has decidable theory of modules.
In order to prove that $\DPR(\text{Int}(\Z))$, $\EPP(\text{Int}(\Z))$ and $X(\text{Int}(\Z))$ are recursive, we use Ax's result, \cite[Thm 17]{Ax}, that the common theory of the $p$-adic valued fields $\Q_p$ as $p$ varies, is decidable.


Section $1$ contains background material and simple preparations for the rest of the paper. Its main purpose is to make the article as
accessible as possible. We postpone a guide to the proof and discussion of what happens in each section to subsection \ref{SSproofguide}.

When it doesn't complicate the proofs, we will state some of our intermediate results for arithmetical rings i.e. commutative rings whose localisations at prime ideals are valuation rings.

\section{Preliminaries}

\noindent
Notation: In this article $\N=\{1,2,3,\ldots\}$, $\N_0=\N\cup \{0\}$, $\N_n=\{m\geq n\}$ for $n\in\N$ and $\P$ denotes the set of prime natural numbers. For a ring $R$, let $\Mod\text{-}R$ denote the category of (right) modules.

\subsection{Model theory of Modules}\label{sectionModThModules}

For general background on model theory of modules see \cite{PrestBluebook}.

Let $R$ be a ring. Let $\mcal{L}_R:=\{0,+,(\cdot r)_{r\in R}\}$ be the language of (right) $R$-modules and $T_R$ be the theory of (right) $R$-modules. A (right) pp-$n$-formula is a formula of the form
\[\exists y_1,\ldots,y_l\bigwedge_{j=1}^m\sum_{i=1}^ly_ir_{ij}+\sum_{i=1}^nx_is_{ij}=0
\] where $r_{ij},s_{ij}\in R$. For $a\in R$, we write $a|x$ for the pp-$1$-formula $\exists y \, x=ya$.

The solution set $\phi(M)$ of a pp-$n$-formula $\phi$ in an $R$-module $M$ is a subgroup of $M^n$. For $\phi,\psi$, pp-$n$-formulae and $M\in\Mod\text{-}R$, we will write $\psi\leq_M \phi$ to mean that $\psi(M)\subseteq \phi(M)$. We will write $\psi\leq \phi$ to mean that $\psi\leq_M\phi$ for all $M\in\Mod\text{-}R$. After identifying equivalent pp-$n$-formulae, the set of pp-$n$-formulae, $\pp_R^n$, equipped with the order $\leq$ is a lattice.

A \textbf{pp-$m$-pair} will simply mean a pair of pp-$m$-formulae and we will write $\nf{\phi}{\psi}$ for such pairs. We write $\nf{\phi}{\psi}(M)$ for the quotient group $\phi(M)/\phi(M)\cap\psi(M)$. For every $n\in\N$ and pp-pair $\nf{\phi}{\psi}$, there is a sentence, denoted $\vertl\nf{\phi}{\psi}\vertr\geq n$, in the language of (right) $R$-modules, which expresses, in every $R$-module $M$, that $\nf{\phi}{\psi}(M)$ has at least $n$ elements. We write $\vertl\nf{\phi}{\psi}\vertr=n$ for the sentence which expresses, in every $R$-module $M$, that $\nf{\phi}{\psi}(M)$ has exactly $n$ elements.

\begin{theorem}[Baur-Monk]
Let $R$ be a ring. Every sentence in $\mcal{L}_R$ is equivalent to a boolean combination of sentences of the form $\vertl\nf{\phi}{\psi}\vertr\geq n$ where $\nf{\phi}{\psi}$ is a pp-$1$-pair.
\end{theorem}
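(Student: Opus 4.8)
The plan is to establish the stronger statement of pp-elimination of quantifiers (due to Baur): modulo $T_R$, every $\mcal{L}_R$-formula $\theta(\bar x)$ is equivalent to a boolean combination of pp-formulae in $\bar x$ together with invariant sentences of the form $\vertl\nf{\alpha}{\beta}\vertr\geq k$ for pp-$1$-formulae $\alpha,\beta$. The theorem follows at once: a sentence $\sigma$ is then $T_R$-equivalent to a boolean combination of pp-\emph{sentences} and such invariant sentences, and every pp-sentence is satisfied in every module (set all the existentially quantified variables to $0$), hence is equivalent to $\top$; so $\sigma$ is $T_R$-equivalent to a boolean combination of sentences $\vertl\nf{\alpha}{\beta}\vertr\geq k$, which is what is wanted. (Note $\vertl\nf{\phi}{\psi}\vertr=n$ is itself the boolean combination $\vertl\nf{\phi}{\psi}\vertr\geq n\wedge\neg(\vertl\nf{\phi}{\psi}\vertr\geq n+1)$, so using only the sentences $\vertl\nf{\phi}{\psi}\vertr\geq n$ loses nothing.)

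I would prove pp-elimination by induction on $\theta$. Atomic $\mcal{L}_R$-formulae are linear equations, hence pp-formulae, and the propositional connectives are trivial; putting $\theta$ into prenex form and using $\forall=\neg\exists\neg$, it suffices to eliminate a single existential quantifier $\exists y$ from a boolean combination of pp-formulae (the invariant sentences accumulated at earlier stages have no free variables, so they commute past $\exists y$ and survive unchanged). Writing the matrix in disjunctive normal form, distributing $\exists y$ over $\vee$, and using that a conjunction of pp-formulae is pp, the problem reduces to the following: for pp-formulae $\phi,\psi_1,\dots,\psi_m$ in variables $(\bar x,y)$, with each $\psi_i\leq\phi$ (replace $\psi_i$ by $\psi_i\wedge\phi$), the formula $\exists y\,\bigl(\phi(\bar x,y)\wedge\bigwedge_{i=1}^m\neg\psi_i(\bar x,y)\bigr)$ is $T_R$-equivalent to a boolean combination of pp-formulae in $\bar x$ and invariant sentences on pp-$1$-pairs.

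To prove this, fix a module $M$ and parameters $\bar a$: the formula holds iff $\phi(\bar a,M):=\{b\in M:M\models\phi(\bar a,b)\}$ is nonempty and not contained in $\bigcup_{i=1}^m\psi_i(\bar a,M)$. When nonempty, $\phi(\bar a,M)$ is a coset of $G:=\phi(\bar 0,M)$ and each $\psi_i(\bar a,M)$ is a coset of $G_i:=\psi_i(\bar 0,M)\leq G$, and nonemptiness of $\phi(\bar a,M)$, of $\psi_i(\bar a,M)$, and more generally of $\bigcap_{i\in T}\psi_i(\bar a,M)$ for $T\subseteq\{1,\dots,m\}$, is expressed at $\bar a$ by the pp-formulae $\exists y\,\phi(\bar x,y)$, $\exists y\,\psi_i(\bar x,y)$ and $\exists y\bigwedge_{i\in T}\psi_i(\bar x,y)$. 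By the classical lemma of B.\,H.\ Neumann on coverings of a group by finitely many cosets, a coset of $G$ that is the union of $m$ cosets of subgroups of $G$ is already the union of those cosets whose subgroup has index at most $m$; hence $\phi(\bar a,M)\subseteq\bigcup_i\psi_i(\bar a,M)$ iff $\phi(\bar a,M)\subseteq\bigcup_{i\in S'}\psi_i(\bar a,M)$, where $S':=\{i:[G:G_i]\leq m\}$ depends only on $M$, and since $[G:G_i]=\vertl\nf{\phi(\bar 0,y)}{\psi_i(\bar 0,y)}\vertr$, the data "$i\in S'$ and $[G:G_i]=e_i$" (with $e_i\in\{1,\dots,m\}$) is a boolean combination of invariant sentences on pp-$1$-pairs. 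Splitting into the finitely many cases prescribed by these data together with the values of $[G:\bigcap_{i\in T}G_i]=\vertl\nf{\phi(\bar 0,y)}{\bigwedge_{i\in T}\psi_i(\bar 0,y)}\vertr$ for $T\subseteq S'$ (all bounded by $m^m$, hence finitely many possibilities, and each an invariant on a pp-$1$-pair), one sees that on each case all the subsets $\bigcap_{i\in T}\psi_i(\bar a,M)$ of the fixed-size coset $\phi(\bar a,M)$ have prescribed cardinalities, so inclusion--exclusion turns the condition $\phi(\bar a,M)\subseteq\bigcup_{i\in S'}\psi_i(\bar a,M)$ into a statement about which of the pp-formulae $\exists y\bigwedge_{i\in T}\psi_i(\bar x,y)$ hold at $\bar a$ --- that is, a boolean combination of pp-formulae in $\bar x$. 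Re-assembling the cases gives the lemma. I expect this last bookkeeping to be the only real obstacle: the crucial points are to invoke Neumann's lemma to reduce to finitely many subgroups of index at most $m$ (so that only finitely many invariant sentences enter and the final boolean combination stays finite) and then to convert "one coset covered by finitely many cosets" into pp-information via inclusion--exclusion; the reduction of the theorem to this lemma is entirely routine.
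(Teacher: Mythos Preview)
The paper does not prove the Baur--Monk theorem; it is quoted as a standard background result (the reference for model theory of modules is \cite{PrestBluebook}). So there is no paper proof to compare against.

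Your outline is the standard proof (essentially Baur's original argument via Neumann's lemma), and the strategy is correct. One phrasing is misleading: you write of ``the fixed-size coset $\phi(\bar a,M)$'', but $\phi(\bar a,M)$ is typically infinite. What makes the inclusion--exclusion work is that, after Neumann's lemma restricts to subgroups $G_i$ of index at most $m$ in $G$, the intersection $H=\bigcap_{i\in S'}G_i$ has finite index in $G$, and the covering question descends to the finite set of $H$-cosets inside $\phi(\bar a,M)$. For each $T\subseteq S'$, the set $\bigcap_{i\in T}\psi_i(\bar a,M)$, when nonempty, is a coset of $\bigcap_{i\in T}G_i$ and hence contributes exactly $[G:H]/[G:\bigcap_{i\in T}G_i]$ cosets of $H$; emptiness is the pp-condition $\neg\exists y\bigwedge_{i\in T}\psi_i(\bar x,y)$ on $\bar a$, and the indices are prescribed by the invariant sentences you have already fixed. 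With that correction the bookkeeping you anticipate goes through.
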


An embedding $f:A\rightarrow B$ is \textbf{pure} if for all pp-$1$-formulae $\phi$ and $m\in A$, $f(m)\in \phi(B)$ implies $m\in\phi(A)$. A module $N$ is \textbf{pure-injective} if for every embedding $f:A\rightarrow B$ and homomorphism $g:A\rightarrow N$, there exists $h:B\rightarrow N$ such that $h\circ f=g$.

\begin{lemma}\cite[4.36]{PrestBluebook}\label{sumpielemeq}
Let $R$ be a ring. For all $M\in \Mod\text{-}R$, there exist indecomposable pure-injective modules $N_i\in\Mod\text{-}R$ for $i\in I$ such that $\oplus_{i\in I}N_i$ is elementary equivalent to $M$.
\end{lemma}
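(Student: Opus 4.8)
The plan is to replace $M$, step by step, by elementarily equivalent modules of increasingly special form; by Baur--Monk it suffices to match, for every pp-$1$-pair $\nf{\phi}{\psi}$, the invariant $\vertl\nf{\phi}{\psi}(-)\vertr\in\N\cup\{\infty\}$, and throughout I would use that these invariants are multiplicative over direct sums and that pp-formulae commute with direct sums.

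\emph{Step 1: reduce to the pure-injective case.} Replace $M$ by its pure-injective hull $H(M)$. It is pure-injective, and $H(M)\equiv M$: in a sufficiently saturated elementary extension $M^{\ast}\succeq M$ (which is pure-injective) the hull $H(M)$ occurs as a direct summand, so multiplicativity together with the chain $\vertl\nf{\phi}{\psi}(M)\vertr\le\vertl\nf{\phi}{\psi}(H(M))\vertr\le\vertl\nf{\phi}{\psi}(M^{\ast})\vertr=\vertl\nf{\phi}{\psi}(M)\vertr$ (the first inequality by purity of $M$ in $H(M)$, the last equality since $M^{\ast}\equiv M$) forces equality throughout. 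So I may assume $M$ is pure-injective.

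\emph{Step 2: decompose and dispose of the indecomposable part.} By the structure theory of pure-injective modules (see \cite{PrestBluebook}), $M\cong H\big(\bigoplus_{i\in I}N_i\big)\oplus M_c$, where each $N_i$ is an indecomposable pure-injective module and $M_c$ is \emph{superdecomposable} (it has no nonzero indecomposable direct summand; possibly $M_c=0$). As in Step~1, $H\big(\bigoplus_{i\in I}N_i\big)\equiv\bigoplus_{i\in I}N_i$, hence by multiplicativity $M\equiv\big(\bigoplus_{i\in I}N_i\big)\oplus M_c$. It therefore suffices to produce a direct sum $N_c'$ of indecomposable pure-injective modules with $N_c'\equiv M_c$; then $M\equiv\big(\bigoplus_{i\in I}N_i\big)\oplus N_c'$ has the required form.

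\emph{Step 3: the superdecomposable part (the crux).} I would invoke two facts. (i) There is a closed set $\mathrm{supp}(M_c)\subseteq\Zg_R$, the support of $M_c$, such that for every pp-pair $\nf{\phi}{\psi}$ one has $\phi(M_c)\not\subseteq\psi(M_c)$ if and only if the basic open set $(\nf{\phi}{\psi})$ meets $\mathrm{supp}(M_c)$; in particular every nonempty basic open set contains an indecomposable pure-injective. (ii) For a superdecomposable pure-injective module $M_c$, every invariant $\vertl\nf{\phi}{\psi}(M_c)\vertr$ equals $1$ or is infinite. Fact~(ii) is the main obstacle: were some invariant finite and $>1$, then, since only finitely many pp-definable subgroups of $M_c$ lie between $\psi(M_c)$ and $\phi(M_c)$, one could extract a minimal pp-pair that is non-closed on $M_c$, and such a pair singles out an isolated point of $\mathrm{supp}(M_c)$; but an isolated point of the support of a pure-injective module is a direct summand of it, contradicting superdecomposability. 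Granting (i) and (ii), let $\mathcal{P}$ be the set of pp-pairs $\nf{\phi}{\psi}$ with $\vertl\nf{\phi}{\psi}(M_c)\vertr$ infinite. For each $\nf{\phi}{\psi}\in\mathcal{P}$ choose, using (i), an indecomposable pure-injective $L_{\phi/\psi}\in(\nf{\phi}{\psi})\cap\mathrm{supp}(M_c)$, and set $N_c':=\bigoplus_{\nf{\phi}{\psi}\in\mathcal{P}}L_{\phi/\psi}^{(\aleph_0)}$. If $\nf{\phi}{\psi}\in\mathcal{P}$, the summand $L_{\phi/\psi}^{(\aleph_0)}$ makes $\vertl\nf{\phi}{\psi}(N_c')\vertr$ infinite, as for $M_c$. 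If $\nf{\phi}{\psi}\notin\mathcal{P}$, then by (ii) $\phi(M_c)\subseteq\psi(M_c)$, so $(\nf{\phi}{\psi})$ misses $\mathrm{supp}(M_c)$ by (i), hence $\phi(L_{\phi'/\psi'})\subseteq\psi(L_{\phi'/\psi'})$ for every chosen $L_{\phi'/\psi'}$, and so $\vertl\nf{\phi}{\psi}(N_c')\vertr=1$, again as for $M_c$. Thus $N_c'\equiv M_c$, and combining with Step~2 finishes the proof.
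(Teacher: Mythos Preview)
The paper does not prove this lemma; it simply cites \cite[4.36]{PrestBluebook}. Your argument follows the standard line one finds there: pass to $H(M)\equiv M$, apply the structure theorem to write $H(M)=H(\bigoplus_i N_i)\oplus M_c$ with $M_c$ superdecomposable, and reduce to showing that $M_c$ is elementarily equivalent to a direct sum of indecomposable pure-injectives, which you do by establishing that every Baur--Monk invariant of $M_c$ lies in $\{1,\infty\}$ and then matching these invariants with indecomposables chosen from $\mathrm{supp}(M_c)$.

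The one step that is compressed is your justification of fact~(ii). The underlying theorem---that an $N$-minimal pp-pair on a pure-injective $N$ forces an indecomposable direct summand of $N$---is genuine and nontrivial (see \cite{Zie} or \cite[\S4]{PrestBluebook}); your phrasing via ``singles out an isolated point of $\mathrm{supp}(M_c)$, and an isolated point of the support of a pure-injective is a direct summand'' strings together several results (minimal pairs relative to a closed set isolate a unique point of that set; isolated points of the support of a pure-injective split off) each of which deserves its own citation. But the claim is correct, and so is your overall argument. Incidentally, your Step~1 detour through a saturated elementary extension is unnecessary: the paper records just below the lemma that $M\prec H(M)$ directly \cite[2.27]{PrestBluebook}.
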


We say a pure-embedding $i:M\rightarrow N$ with $N$ pure-injective is a \textbf{pure-injective hull} of $M$ if for every other pure-embedding $g:M\rightarrow K$ where $K$ is pure-injective, there is a pure-embedding $h:N\rightarrow K$ such that $h\circ i=g$. The pure-injective hull of $M$ is unique up to isomorphism over $M$ and we will write $H(M)$ for any module $N$ such that the inclusion of $M$ in $N$ is a pure-injective hull of $M$. Every module is a elementary substructure of its pure-injective hull \cite[2.27]{PrestBluebook}. So, in particular every module is elementary equivalent to its pure-injective hull.

The (right) \textbf{Ziegler spectrum}, denoted $\Zg_R$, is a topological space whose points are isomorphism classes of indecomposable pure injective (right) modules and which has a basis of compact open sets given by
\[\left(\nf{\phi}{\psi}\right):=\{N\in\pinj_R\st \phi(N)\supsetneq \phi(N)\cap\psi(N)\}\] where $\nf{\phi}{\psi}$ range over pp-$1$-pairs.

For each $n\in\N$, Prest gave a lattice anti-isomorphism $D:\pp_R^n\rightarrow {}_R\pp^n$ (see \cite[8.21]{PrestBluebook}). As is standard, we denote its inverse ${}_R\pp^n\rightarrow \pp_R^n$ also by $D$. We don't recall the full definition of $D$ here but instead note that for all $a\in R$, $Da|x$ is $ax=0$ and $D(xa=0)$ is $a|x$.

Herzog extended this duality to an isomorphism between the lattice of  open sets of the right and left Ziegler spectra of a ring \cite[4.4]{HerDual}, and, to a useful bijection between the complete theories of right and left $R$-modules.

The following proposition is direct consequence of \cite[6.6]{HerDual}.

\begin{proposition}\label{Dualsent}
Let $R$ be a ring. Let $n,m\in\N$ be such that $n\leq m$ and for $1\leq i\leq m$, let $N_i\in\N$ and let $\nicefrac{\phi_i}{\psi_i}$ be a pp-pair. For
\[\chi:=\bigwedge_{i=1}^n\vertl\nicefrac{\phi_i}{\psi_i}\vertr=N_i\wedge\bigwedge_{i=n+1}^m\vertl \nicefrac{\phi_i}{\psi_i}\vertr\geq N_i,\] define
\[D\chi:=\bigwedge_{i=1}^n\vertl \nicefrac{D\psi_i}{D\phi_i}\vertr=N_i\wedge\bigwedge_{i=n+1}^m\vertl \nicefrac{D\psi_i}{D\phi_i}\vertr\geq N_i.\]
There exists a right $R$-module satisfying $\chi$ if and only if there exits a left $R$-module satisfying $D\chi$.
\end{proposition}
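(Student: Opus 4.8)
The plan is to reduce Proposition \ref{Dualsent} to Herzog's elementary duality, as packaged in \cite[6.6]{HerDual}, which sets up a bijection between complete theories of right $R$-modules and complete theories of left $R$-modules that, on the level of Baur--Monk invariants, interchanges a pp-pair $\nicefrac{\phi}{\psi}$ with $\nicefrac{D\psi}{D\phi}$. The key input is that, under elementary duality, if $T$ is a complete theory of right $R$-modules and $T^D$ is the corresponding complete theory of left $R$-modules, then for every pp-pair $\nicefrac{\phi}{\psi}$ and every $n\in\N$ one has $T\vdash \vertl\nicefrac{\phi}{\psi}\vertr\geq n$ if and only if $T^D\vdash\vertl\nicefrac{D\psi}{D\phi}\vertr\geq n$, and similarly with $=n$ in place of $\geq n$ (the latter being a boolean combination of the former, so it follows formally once one has the $\geq n$ version together with the fact that the invariants are finite on one side iff on the other).

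First I would recall that, by the Baur--Monk theorem, the existence of a right $R$-module satisfying $\chi$ is equivalent to the existence of a complete theory $T$ of right $R$-modules with $T\vdash\chi$; indeed $\chi$ is a conjunction of invariant conditions, so it is satisfied in some module iff it is satisfied in some indecomposable-pure-injective-indexed direct sum (Lemma \ref{sumpielemeq}), hence iff it lies in some complete theory. Then I would apply Herzog's correspondence: $T\mapsto T^D$ is a bijection from complete theories of right $R$-modules to complete theories of left $R$-modules. The condition $T\vdash\chi$, spelled out, is the conjunction over $1\leq i\leq n$ of $\vertl\nicefrac{\phi_i}{\psi_i}\vertr=N_i$ and over $n+1\leq i\leq m$ of $\vertl\nicefrac{\phi_i}{\psi_i}\vertr\geq N_i$; applying the invariant-matching property of elementary duality termwise, this holds iff $T^D$ satisfies the conjunction over $1\leq i\leq n$ of $\vertl\nicefrac{D\psi_i}{D\phi_i}\vertr=N_i$ and over $n+1\leq i\leq m$ of $\vertl\nicefrac{D\psi_i}{D\phi_i}\vertr\geq N_i$, i.e. iff $T^D\vdash D\chi$. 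Since $T\mapsto T^D$ is a bijection onto all complete theories of left modules, there is a complete theory of right modules containing $\chi$ iff there is a complete theory of left modules containing $D\chi$, which (again by Baur--Monk) is equivalent to the existence of a left $R$-module satisfying $D\chi$.

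The main obstacle — really the only point needing care — is extracting from \cite[6.6]{HerDual} precisely the statement that elementary duality preserves the values of the Baur--Monk invariants with the pairing $\nicefrac{\phi}{\psi}\leftrightarrow\nicefrac{D\psi}{D\phi}$, including matching $=n$ conditions and not merely $\geq n$ conditions; the $=n$ case is handled by noting $\vertl\nicefrac{\phi}{\psi}\vertr=N$ is equivalent to $\vertl\nicefrac{\phi}{\psi}\vertr\geq N \wedge \neg(\vertl\nicefrac{\phi}{\psi}\vertr\geq N+1)$ and that duality of open sets respects the order on pp-pairs (so $D$ is order-reversing, whence $D\psi\leq D\phi$ and the pair $\nicefrac{D\psi}{D\phi}$ makes sense), together with $\vertl\nicefrac{D\psi}{D\phi}\vertr$ being finite iff $\vertl\nicefrac{\phi}{\psi}\vertr$ is. Everything else is a direct translation between "module satisfying a sentence", "complete theory containing a sentence", and Herzog's bijection, so the proof will be short.
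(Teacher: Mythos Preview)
Your proposal is correct and matches the paper's approach exactly: the paper does not give a proof but simply states that the proposition is a direct consequence of \cite[6.6]{HerDual}, and your argument spells out precisely this reduction to Herzog's elementary duality between complete theories of right and left modules.
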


A priori, duality may not appear particularly relevant to an article about commutative rings. However, its use significantly simplifies some of the proofs in this paper and, as in \cite{DecVal}, the fact that it exchanges formulae $xb=0$ with $b|x$ allows us to reduce the number of calculations.

\subsection{Decidability and recursive Pr\"ufer domains}

\

\noindent
A \textbf{recursive ring} is either a finite ring or a ring $R$ together with a bijection $\pi:\N\rightarrow R$ such that addition and multiplication in $R$ induce recursive functions on $\N$ via $\pi$.

Note that if $R$ is a ring and $\pi: \N\rightarrow R$ is a bijection then $T_R$ is recursively axiomatisable with respect to $\pi$ if and only if $R$ together with $\pi$ is a recursive ring.

When proving decidability results about theories of modules, it is common to work with an ``effectively given'' ring rather than just a recursive one. Usually, a ring of a particular type is called effectively given if $R$ is a recursive ring and the bijection $\pi$ satisfies some extra conditions which are equivalent, for that particular type of ring, to Prest's condition $(D)$ holding (see \cite[pg 334]{PrestBluebook}). Recall that a recursive ring satisfies condition $(D)$ if there is an algorithm which, given $\phi,\psi\in\pp_R^1$ answers whether $\psi\leq \phi$.
For example, a recursive valuation domain $V$ is said to be \textbf{effectively given} if the preimage under $\pi$ of the set of units of $V$ is recursive. A recursive Pr\"ufer domain $R$ is said to be \textbf{effectively given} if the preimage under $\pi$ of the set of $(a,b)\in R^2$ such that $a\in bR$ is recursive.

Whether one works with recursive rings or effectively given ones is largely a matter of personal taste. For simplicity, we choose to work with recursive rings.

It was remarked in \cite[paragraph before 2.4]{Decdense}, that the property that $a\in bR$ is recursive is never used in \cite{Decpruf} and \cite{Decdense}. Moreover, see \cite[2.4]{Decdense}, if $R$ is a recursive Pr\"ufer domain and the set $\DPR(R)\subseteq R^4$ is recursive then $a\in bR$ is recursive. In particular, even though they are stated for effectively given Pr\"ufer domains, all results in \cite{Decpruf} and \cite{Decdense} in fact hold for recursive Pr\"ufer domains.

The next theorem is a well-known and easy to prove consequence of the Baur-Monk Theorem. Note that since $T_R$ is recursively axiomatisable when $R$ is recursive, given a sentence $\chi$ in $\mcal{L}_R$, we can always find, using a proof algorithm, a sentence $\chi'$ as in the statement of the Baur-Monk theorem which is $T_R$-equivalent to $\chi$.

\begin{theorem}\label{DecconBM}
Let $R$ be a recursive ring. The theory of $R$-modules is decidable if and only if there is an algorithm which, given a sentence $\chi$ of the form
\[\tag{$\dagger$}\label{sentform}\chi:=\bigwedge_{i=1}^n\vertl\nicefrac{\phi_i}{\psi_i}\vertr=N_i\wedge\bigwedge_{i=n+1}^m\vertl \nicefrac{\phi_i}{\psi_i}\vertr\geq N_i,\] where $N_i\in\N$ and $\nicefrac{\phi_i}{\psi_i}$ is a pp-$1$-pair for $1\leq i\leq m$, answers whether there exists an $R$-module satisfying $\chi$.
\end{theorem}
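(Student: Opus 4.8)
The plan is to prove Theorem \ref{DecconBM} in two directions, both of which are routine given the Baur-Monk theorem and the recursive axiomatisability of $T_R$.

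For the forward direction, suppose the theory of $R$-modules is decidable, i.e.\ there is an algorithm deciding, for any sentence $\sigma\in\mcal{L}_R$, whether $\sigma\in T_R$ (equivalently, whether $\sigma$ holds in all $R$-modules). Given a sentence $\chi$ of the form \eqref{sentform}, I would observe that ``there exists an $R$-module satisfying $\chi$'' is equivalent to ``$\neg\chi\notin T_R$'': indeed $\chi$ has a model iff $\neg\chi$ is not true in all $R$-modules. Since $\neg\chi$ is again a sentence in $\mcal{L}_R$ (and can be written down effectively from $\chi$), we apply the decision algorithm for $T_R$ to $\neg\chi$ and negate the answer. This gives the required algorithm.

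For the converse, suppose we have an algorithm which, given $\chi$ of the form \eqref{sentform}, decides whether $\chi$ has a model. I want to decide, for an arbitrary sentence $\sigma\in\mcal{L}_R$, whether $\sigma\in T_R$. Since $R$ is recursive, $T_R$ is recursively axiomatisable, so by the completeness theorem for first-order logic there is a proof-search algorithm that halts and confirms $\sigma\in T_R$ whenever this is the case. In parallel I want a procedure that halts and confirms $\sigma\notin T_R$ whenever that holds; running the two in parallel (dovetailing) then decides $T_R$, since exactly one of the two must eventually halt. The point is that $\sigma\notin T_R$ iff $\neg\sigma$ has a model. By the Baur-Monk theorem, $\neg\sigma$ is $T_R$-equivalent to a boolean combination $\theta$ of sentences of the form $\vertl\nicefrac{\phi}{\psi}\vertr\geq n$, and since $T_R$ is recursively axiomatisable such a $\theta$ together with a proof of the equivalence $\neg\sigma\leftrightarrow\theta$ can be found by proof search. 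Putting $\theta$ in disjunctive normal form over the atoms $\vertl\nicefrac{\phi_i}{\psi_i}\vertr\geq n_i$ and their negations, $\neg\sigma$ has a model iff one of the disjuncts does; and each disjunct is a finite conjunction of sentences $\vertl\nicefrac{\phi_i}{\psi_i}\vertr\geq N_i$ and $\vertl\nicefrac{\phi_i}{\psi_i}\vertr< K_j$. Finally I would rewrite each atom $\vertl\nicefrac{\phi}{\psi}\vertr< K$ as $\vertl\nicefrac{\phi}{\psi}\vertr = 1 \vee \cdots \vee \vertl\nicefrac{\phi}{\psi}\vertr=K-1$ and redistribute into disjunctive normal form once more, so that $\neg\sigma$ has a model iff one of finitely many sentences of the exact shape \eqref{sentform} has a model. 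Applying the hypothesised algorithm to each of these sentences in turn and taking the disjunction of the answers decides whether $\neg\sigma$ has a model, hence whether $\sigma\notin T_R$.

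The only mildly delicate point — and hence the ``main obstacle'', although it is standard — is the bookkeeping needed to get from an arbitrary boolean combination of invariant-sentences to a finite disjunction of sentences of the precise normal form \eqref{sentform}: one must handle the negated atoms $\vertl\nicefrac{\phi}{\psi}\vertr\geq n$ by turning $\vertl\nicefrac{\phi}{\psi}\vertr< n$ into the finite disjunction $\bigvee_{k=1}^{n-1}\vertl\nicefrac{\phi}{\psi}\vertr=k$, and then re-expand into disjunctive normal form. It is also worth noting that $\vertl\nicefrac{\phi}{\psi}\vertr\geq N$ is itself the sentence $\vertl\nicefrac{\phi}{\psi}\vertr\geq N$ appearing in \eqref{sentform} and $\vertl\nicefrac{\phi}{\psi}\vertr = N$ is $\vertl\nicefrac{\phi}{\psi}\vertr\geq N \wedge \neg(\vertl\nicefrac{\phi}{\psi}\vertr\geq N+1)$, so no generality is lost by allowing both equalities and inequalities in \eqref{sentform}. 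Everything else is an immediate application of the Baur-Monk theorem together with recursive axiomatisability of $T_R$, exactly as indicated in the sentence preceding the theorem.
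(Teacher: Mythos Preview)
Your proof is correct and follows exactly the standard approach the paper alludes to (the paper states this theorem as well-known and gives no proof beyond the remark that recursive axiomatisability lets one effectively find a Baur--Monk normal form for any sentence). One small redundancy: the dovetailing with proof search for $\sigma\in T_R$ is unnecessary, since your procedure for finding $\theta$ equivalent to $\neg\sigma$ always terminates (Baur--Monk guarantees such $\theta$ exists and the equivalence is provable from the recursive axioms), so you already have a total decision procedure for whether $\neg\sigma$ has a model, and hence directly for whether $\sigma\in T_R$.
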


\begin{remark}\label{Zgsent}
Let $R$ be a recursive ring. There is an algorithm deciding inclusions of Ziegler basic open sets if and only if there is an algorithm which, given a sentence
\[\chi:=\bigwedge_{i=1}^n\vertl \nf{\phi_i}{\psi_i}\vertr\geq E_i\wedge\bigwedge_{j=1}^m \vertl \nf{\sigma_j}{\tau_j}\vertr=1,\] where $\nf{\phi_i}{\psi_i},\nf{\sigma_j}{\tau_j}$ are pp-$1$-pairs and $E_i\in\N$ for $1\leq i\leq n$ and $1\leq j\leq m$, answers whether there exists an $R$-module satisfying $\chi$.

\end{remark}
\begin{proof}
There is a module satisfying $\chi$ as in the statement if and only if for each $1\leq i\leq n$, there exists an indecomposable pure-injective module $N_i$ such that $N_i\in \left(\nf{\phi_i}{\psi_i}\right)\backslash \bigcup_{j=1}^m\left(\nf{\sigma_j}{\tau_j}\right)$. This is a standard argument. For the forward direction use \ref{sumpielemeq}. For the reverse, observe $\oplus_{i=1}^nN_i^{E_i}$ satisfies $\chi$.
\end{proof}

\subsection{Arithmetical rings and Pr\"ufer domains}
A commutative ring is \textbf{arithmetical}\footnote{This condition is often referred to as Pr\"ufer in papers on Model Theory of Modules. However, algebraists tend to use the term Pr\"ufer for the stronger condition that every regular ideal is invertible. To avoid confusion we choose the term with a unique definition.} if all its localisations at maximal ideals are valuation rings. Equivalently, \cite[Thm 1]{JenAri}, a commutative ring $R$ is arithmetical if its lattice of ideals is distributive. A \textbf{Pr\"ufer domain} is an integral domain which is arithmetical.

The following is a consequence of \cite[1.3]{Tug}.

\begin{lemma}\label{Tuganbaev}
If $R$ is an arithmetical ring then for all $a,b\in R$, there exist $\alpha,r,s\in R$ such that $a\alpha=br$ and $b(\alpha-1)=as$.
\end{lemma}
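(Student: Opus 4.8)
The statement to prove is Lemma~\ref{Tuganbaev}: for an arithmetical ring $R$ and any $a,b\in R$, there exist $\alpha,r,s\in R$ with $a\alpha=br$ and $b(\alpha-1)=as$.

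The plan is to reduce the claim to the observation that in an arithmetical ring the ideals $aR$ and $bR$ are "comparable up to a partition of unity": there is an idempotent-like decomposition $\alpha, 1-\alpha$ of $1$ such that after multiplying by $\alpha$ the element $a$ lies in $bR$, and after multiplying by $1-\alpha$ the element $b$ lies in $aR$. Concretely, $a\alpha=br$ says $a\alpha\in bR$, and $b(\alpha-1)=as$, i.e. $b(1-\alpha)=a(-s)$, says $b(1-\alpha)\in aR$. So the whole content is: \emph{find $\alpha\in R$ with $a\alpha\in bR$ and $b(1-\alpha)\in aR$}. First I would recall the cited characterization \cite[1.3]{Tug} (or the standard local-global fact): $R$ is arithmetical iff for every $a,b\in R$ the localization data patch, equivalently iff $(aR:bR)+(bR:aR)=R$, where $(I:J)=\{x\in R\mid xJ\subseteq I\}$. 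Granting that identity, pick $\alpha\in(bR:aR)$ and $\beta\in(aR:bR)$ with $\alpha+\beta=1$. Then $a\alpha\in bR$ gives $r$ with $a\alpha=br$, and $b\beta\in aR$ with $\beta=1-\alpha$ gives $s'$ with $b(1-\alpha)=as'$; set $s=-s'$ to get $b(\alpha-1)=as$. That is the entire argument modulo the quoted lemma.

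If one instead wants a self-contained derivation of $(aR:bR)+(bR:aR)=R$ from the distributivity of the ideal lattice, the key step is the local-global principle: a finitely generated ideal equals $R$ iff it is the unit ideal in every localization at a maximal ideal $\mfrak m$. So fix a maximal ideal $\mfrak m$; since $R_\mfrak m$ is a valuation ring, the principal ideals $aR_\mfrak m$ and $bR_\mfrak m$ are comparable, say $aR_\mfrak m\subseteq bR_\mfrak m$ (the other case is symmetric). Then $(aR:bR)_\mfrak m=(aR_\mfrak m:bR_\mfrak m)$ and $(bR:aR)_\mfrak m=(bR_\mfrak m:aR_\mfrak m)=R_\mfrak m$ because $a\in bR_\mfrak m$ forces $aR_\mfrak m\subseteq bR_\mfrak m$, hence the latter colon ideal is everything. (One must check the colon ideal localizes as expected; this is routine since $bR$ is finitely generated.) Therefore $\big((aR:bR)+(bR:aR)\big)_\mfrak m=R_\mfrak m$ for every $\mfrak m$, so the sum is $R$, and we conclude as above.

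The only genuine obstacle is bookkeeping rather than mathematics: making sure the colon ideal $(I:J)$ commutes with localization (which needs $J$ finitely generated — here $J=aR$ or $bR$ is principal, so this is immediate), and handling the two symmetric cases $aR_\mfrak m\subseteq bR_\mfrak m$ versus $bR_\mfrak m\subseteq aR_\mfrak m$ uniformly. Since the paper already cites \cite[1.3]{Tug}, the cleanest writeup is simply to invoke that result for the identity $(aR:bR)+(bR:aR)=R$ and then spend one line producing $\alpha$, $r$, $s$; I would only include the local-global derivation if a self-contained proof is wanted.
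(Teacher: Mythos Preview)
Your proposal is correct and is exactly the unpacking of the paper's one-line citation: the paper gives no proof beyond ``this is a consequence of \cite[1.3]{Tug}'', and your argument---writing $1=\alpha+\beta$ with $\alpha\in(bR:aR)$, $\beta\in(aR:bR)$ and reading off $r,s$---is precisely how one derives the lemma from that result. Your optional self-contained local--global derivation of $(aR:bR)+(bR:aR)=R$ is also correct and standard.
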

Note that if $R$ is a recursive arithmetical ring then there is an algorithm which, given $a,b\in R$, finds $\alpha,r,s$ satisfying the above equations. We will frequently use this fact without note.

An $R$-module is \textbf{pp-uniserial} if its lattice of pp-definable subgroups is totally ordered. Over a commutative ring all pp-definable subgroups are submodules. Thus, all uniserial modules over a commutative ring are pp-uniserial.

The lattice of pp-$1$-formulae of a commutative ring is distributive if and only if it is arithmetical \cite[3.1]{EklHer}. Thus, the following is a direct consequence of \cite[3.3]{KGser}.

\begin{lemma}\label{ppuniserial}
Let $R$ be a commutative ring. All indecomposable pure-injective $R$-modules are pp-uniserial if and only if $R$ is arithmetical.
\end{lemma}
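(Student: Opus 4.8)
The plan is to obtain the statement by chaining together the two lattice-theoretic facts recalled just above. By \cite[3.1]{EklHer} the ring $R$ is arithmetical if and only if the lattice $\pp_R^1$ of (right) pp-$1$-formulae is distributive, so it suffices to prove that $\pp_R^1$ is distributive if and only if every indecomposable pure-injective $R$-module has a totally ordered lattice of pp-definable subgroups. This equivalence is exactly \cite[3.3]{KGser}, so strictly speaking there is nothing further to prove; nonetheless I indicate why each direction holds.

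For the direction ``$\pp_R^1$ distributive $\Rightarrow$ every indecomposable pure-injective is pp-uniserial'', let $N$ be indecomposable pure-injective. The assignment $\phi\mapsto\phi(N)$ is a surjective lattice homomorphism from $\pp_R^1$ onto the lattice $L_N$ of pp-definable subgroups of $N$, carrying $\wedge$ to $\cap$ and $\vee$ to $+$; hence $L_N$ is distributive. One then uses that $N$ is not merely indecomposable but pure-injective, so that $\End(N)$ is local: if $\phi(N)$ and $\psi(N)$ were incomparable in $L_N$, then, after passing to the pp-pairs $\phi/(\phi\wedge\psi)$ and $\psi/(\phi\wedge\psi)$ and exploiting pure-injectivity of $N$ together with the distributivity of $\pp_R^1$, one would split off a proper nonzero direct summand of $N$, a contradiction. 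This is the content of the corresponding half of \cite[3.3]{KGser}.

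For the converse, suppose $\pp_R^1$ is not distributive, so that some pp-$1$-formulae witness a failure $\phi\wedge(\psi\vee\theta)\neq(\phi\wedge\psi)\vee(\phi\wedge\theta)$. Realising this configuration inside a suitable module $M$ and applying Lemma~\ref{sumpielemeq} to write $M$ as elementarily equivalent to a direct sum of indecomposable pure-injectives, one must then locate a single summand on which the non-distributivity survives, so that its lattice of pp-definable subgroups is not totally ordered. Extracting such a summand is the delicate point — the pieces of a lattice failure can a priori be spread across several summands — and it is precisely here that the analysis of \cite[3.3]{KGser} is required; combined with \cite[3.1]{EklHer} it yields the lemma. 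I expect this direction, rather than the essentially formal forward implication, to be the crux.
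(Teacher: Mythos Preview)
Your proposal is correct and takes essentially the same approach as the paper: both derive the lemma by combining \cite[3.1]{EklHer} (arithmetical $\Leftrightarrow$ $\pp_R^1$ distributive) with \cite[3.3]{KGser} ($\pp_R^1$ distributive $\Leftrightarrow$ every indecomposable pure-injective is pp-uniserial). The paper's proof is simply the one-line citation of these two results, whereas you have added an informal sketch of why each direction of \cite[3.3]{KGser} holds; this extra commentary is fine but not needed.
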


The endomorphism rings of indecomposable pure-injective modules are local \cite[4.3.43]{PSL}. Therefore, if $R$ is a commutative ring and $N$ is an indecomposable pure-injective module then the set, $\Att N$, of $r\in R$ acting on $N$  non-bijectively form a prime ideal.
Thus, if $N$ is an indecomposable pure-injective module over a commutative ring $R$ then $N$ may be equipped with the structure of an $R_{\Att N}$-module. Moreover, $N$ remains indecomposable and pure-injective as an $R_{\Att N}$-module.  Conversely, if $N$ is an indecomposable pure-injective $R_{\mfrak{p}}$-module for some prime ideal $\mfrak{p}\lhd R$  then the restriction of $N$ to $R$ remains indecomposable and pure-injective.

The following lemma is now an easy consequence of the fact that indecomposable pure-injective modules over arithmetical rings are pp-uniserial (a proof appears in \cite[2.8]{Decdense}).

\begin{lemma}\label{DivAssAtt}
Let $R$ be an arithmetical ring and $N$ an indecomposable pure-injective $R$-module. The sets
\[\Div N:=\{r\in R\st Nr\subsetneq N\},\]
\[\Ass N:=\{r\in R \st \text{ there exists }m\in N\backslash\{0\} \text{ such that }mr=0\}\] and
\[\Att N:=\Div N\cup\Ass N\] are prime ideals.
\end{lemma}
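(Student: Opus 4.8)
The plan is to prove that $\Div N$, $\Ass N$ and $\Att N$ are prime ideals for an indecomposable pure-injective module $N$ over an arithmetical ring $R$, using the fact (Lemma \ref{ppuniserial}) that $N$ is pp-uniserial. The key structural input is that the pp-definable subgroups $Nr = (r|x)(N)$ and $\ann_N(r) = (xr=0)(N)$ are members of the totally ordered lattice of pp-definable subgroups of $N$, and that $r$ acts bijectively on $N$ precisely when $Nr = N$ and $\ann_N(r) = 0$. Since $\End N$ is local, $r \in \Att N$ iff $r$ acts non-bijectively, so $\Att N = \Div N \cup \Ass N$; one should note $\Att N$ is automatically a prime ideal by the locality of $\End N$ (the non-units of a local ring form the maximal ideal, and here the non-units acting on $N$ are exactly those $r$ with $r \cdot (-) \in \End N$ non-invertible). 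The real content is therefore showing each of $\Div N$ and $\Ass N$ is itself a prime ideal, and that their union is as well.

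\textbf{First I would} check $\Div N$ is an ideal. It is clearly closed under multiplication by arbitrary elements of $R$ (if $Nr \subsetneq N$ then $Nrs \subseteq Nr \subsetneq N$, and $Nsr \subseteq Ns$; one must be a little careful, but $Nr \neq N \Rightarrow Nrs \neq N$ needs $s$ not to ``undo'' things — actually $N(rs) \subseteq Nr$ so if $Nr \subsetneq N$... no, we need $N(rs)\subsetneq N$; since $N(rs) = (Nr)s \subseteq Ns$, if $Ns \subsetneq N$ we're fine, otherwise $Ns = N$ and $(Nr)s$; hmm). Let me restructure: the clean approach is that $r \mapsto (\text{mult by }r)$ gives a ring map $R \to \End N$, and $\Div N \cup \Ass N$ is the preimage of the maximal ideal $J$ of $\End N$, hence a prime ideal — this is precisely the statement ``$\Att N$ is prime'' established before the lemma in the excerpt. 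So $\Att N$ is prime for free. It remains to show $\Div N$ and $\Ass N$ are each prime ideals, and the point is that they coincide with $\Att N$ after localizing, or rather: I claim $\Div N$ and $\Ass N$ are each \emph{equal to} $\Att N$ or there is a containment forced by uniseriality. The crucial lemma is: in a pp-uniserial module, for each $r$, either $Nr = N$ or $\ann_N(r) = 0$ cannot both fail in a ``transverse'' way — more precisely, I would show $\Div N$ and $\Ass N$ are comparable under inclusion is false in general, so instead:

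\textbf{The cleanest route:} Show directly that $\Div N$ is a prime ideal by: (i) it is closed under addition — given $a, b \in \Div N$, use Lemma \ref{Tuganbaev} to write $a\alpha = br$, $b(\alpha - 1) = as$, so for any $m \in N$, $m a = ma\alpha + ma(1-\alpha) = m b r + $ something, driving $N(a+b)$ or a related combination into $Na + Nb$; combined with $Na, Nb \subsetneq N$ and comparability $Na \subseteq Nb$ or $Nb \subseteq Na$ (from pp-uniseriality!), conclude $a + b \in \Div N$. This is where pp-uniseriality does the work: the pp-subgroups $Na$ and $Nb$ are comparable, say $Nb \subseteq Na \subsetneq N$, and one shows $N(a+b) \subseteq Na$ or that $a+b$ still fails surjectivity. (ii) Closure under $R$-multiplication similarly uses comparability. (iii) Primeness: if $ab \in \Div N$ but $a \notin \Div N$, i.e. $Na = N$, then $N(ab) = (Na)b = Nb \subsetneq N$ so $b \in \Div N$. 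Primeness of $\Div N$ is thus immediate once it is an ideal. The symmetric argument (dualizing via $D$, which swaps $r|x$ with $xr=0$, as recalled in the excerpt) handles $\Ass N$: $D$ sends the pp-uniserial structure on $N$ to the pp-uniserial structure on the dual, exchanging $\Div$ and $\Ass$, so $\Ass N$ is a prime ideal by the already-proved case applied to the dual module. Finally $\Att N = \Div N \cup \Ass N$ is prime because it is the preimage of the maximal ideal of the local ring $\End N$ under $R \to \End N$.

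\textbf{The main obstacle} I anticipate is the closure of $\Div N$ (equivalently $\Ass N$) under addition: showing $a, b \in \Div N \Rightarrow a+b \in \Div N$. The comparability of $Na$ and $Nb$ from pp-uniseriality is essential but one must combine it with the identity $\alpha a = rb$, $(\alpha-1)b = sa$ from Lemma \ref{Tuganbaev} to relate $N(a+b)$ to $Na + Nb = \max(Na, Nb) \subsetneq N$. Concretely: for $m \in N$, $m(a+b)$ — multiply by the unit-or-not element $\alpha$; either $\alpha$ or $1-\alpha$ is a unit on $N$ (locality again!), and in the case $\alpha$ is a unit, $N(a+b) = N\alpha(a+b) = N(\alpha a + \alpha b) = N(rb + \alpha b) = N(r+\alpha)b \subseteq Nb \subsetneq N$; in the case $1-\alpha$ is a unit, symmetrically $N(a+b) \subseteq Na \subsetneq N$. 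So in fact the Tuganbaev lemma plus locality of $\End N$ makes even the comparability unnecessary — this is the slick argument and I would present it this way. (This is essentially the proof referenced as appearing in \cite[2.8]{Decdense}.)
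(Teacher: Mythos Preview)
Your proposal is correct and aligns with the paper's approach: the paper does not give a detailed argument but simply observes the lemma is an easy consequence of pp-uniseriality and cites \cite[2.8]{Decdense}, which is exactly the reference you identify. Your final ``cleanest route'' for $\Div N$ (Tuganbaev plus locality of $\End N$: one of $\alpha,1-\alpha$ acts as a unit, forcing $N(a+b)\subseteq Nb$ or $N(a+b)\subseteq Na$) is sound, and primeness of $\Att N$ via the ring map $R\to\End N$ is the standard argument already noted in the paper just before the lemma.

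One small simplification: for $\Ass N$ you invoke Herzog duality to reduce to the $\Div$ case, which is valid, but a direct argument using pp-uniseriality is shorter and more in keeping with the paper's one-line justification. Namely, if $a,b\in\Ass N$ then $\ann_N(a)$ and $\ann_N(b)$ are nonzero pp-definable subgroups, hence comparable; any nonzero element of the smaller one is annihilated by both $a$ and $b$, so by $a+b$. Primeness is immediate: if $mab=0$ with $m\neq 0$, then either $ma=0$ or $(ma)b=0$ with $ma\neq 0$.
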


\begin{lemma}\label{decomposeorder}
Let $R$ be an arithmetical ring and $M\in \Mod\text{-}R$.
\begin{enumerate}
\item For all $\alpha\in R$, there exist $M_1,M_2\in \Mod\text{-}R$ such that $M_1\oplus M_2\equiv M$, $\vertl \nicefrac{x\alpha=0}{x=0}(M_1)\vertr=1$, $\vertl \nicefrac{x=x}{\alpha|x}(M_1)\vertr=1$, $\vertl \nicefrac{x(\alpha-1)=0}{x=0}(M_2)\vertr=1$ and  $\vertl \nicefrac{x=x}{(\alpha-1)|x}(M_2)\vertr=1$.
\item For all $a,b\in R$, there exist $M_1,M_2\in M$ such that $M_1\oplus M_2\equiv M$, $\vertl \nicefrac{ab|x}{x=0}(M_1)\vertr=1$ and $\vertl \nicefrac{xa=0}{b|x}(M_2)\vertr=1$.
\end{enumerate}
\end{lemma}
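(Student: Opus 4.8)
The plan is to reduce both parts to the case of an indecomposable pure-injective and then split a decomposition of $M$ according to an appropriate dichotomy. By Lemma~\ref{sumpielemeq} fix indecomposable pure-injective modules $(N_i)_{i\in I}$ with $M\equiv\bigoplus_{i\in I}N_i$. In each part I partition $I=I_1\sqcup I_2$ and set $M_1:=\bigoplus_{i\in I_1}N_i$, $M_2:=\bigoplus_{i\in I_2}N_i$; then $M_1\oplus M_2=\bigoplus_{i\in I}N_i\equiv M$ for free, so it remains only to check the listed invariants of $M_1$ and $M_2$. For this I use that pp-formulae commute with direct sums, so that for instance $(x\alpha=0)(M_1)=\bigoplus_{i\in I_1}(x\alpha=0)(N_i)$; in particular an invariant $\vertl\nicefrac{\phi}{\psi}(M_1)\vertr$ is $1$ as soon as $\vertl\nicefrac{\phi}{\psi}(N_i)\vertr=1$ for every $i\in I_1$, and likewise for $M_2$.

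For part (1): an element $r\in R$ acts bijectively on an indecomposable pure-injective $N$ exactly when $r\notin\Att N$, and this holds exactly when $(xr=0)(N)=0$ and $(r|x)(N)=N$, i.e. when $\vertl\nicefrac{xr=0}{x=0}(N)\vertr=1$ and $\vertl\nicefrac{x=x}{r|x}(N)\vertr=1$. So I put $i\in I_1$ when $\alpha\notin\Att N_i$ and $i\in I_2$ otherwise. If $i\in I_2$ then $\alpha\in\Att N_i$, and since $\Att N_i$ is a prime, hence proper, ideal, $1=\alpha-(\alpha-1)$ forces $\alpha-1\notin\Att N_i$. Thus $\alpha$ acts bijectively on every $N_i$ with $i\in I_1$ and $\alpha-1$ acts bijectively on every $N_i$ with $i\in I_2$, and the four required invariant conditions follow by the direct-sum remark.

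For part (2): here I use instead that each $N_i$ is pp-uniserial by Lemma~\ref{ppuniserial}, so the pp-definable subgroups $(xa=0)(N_i)=\ann_{N_i}(a)$ and $(b|x)(N_i)=N_ib$ are comparable. I put $i\in I_1$ when $N_iab=0$ and $i\in I_2$ otherwise. If $i\in I_2$ then $N_ib\not\subseteq\ann_{N_i}(a)$: otherwise $N_iab=(N_ib)a=0$ by commutativity of $R$, contradicting $i\in I_2$. Hence comparability gives $\ann_{N_i}(a)\subseteq N_ib$. Then $M_1ab=\bigoplus_{i\in I_1}N_iab=0$, so $\vertl\nicefrac{ab|x}{x=0}(M_1)\vertr=1$; and $\ann_{M_2}(a)=\bigoplus_{i\in I_2}\ann_{N_i}(a)\subseteq\bigoplus_{i\in I_2}N_ib=M_2b$, so $\vertl\nicefrac{xa=0}{b|x}(M_2)\vertr=1$, as needed.

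The real content lies only in the two dichotomies underlying the splits: for (1), that $\Att N$ is a prime — and hence proper — ideal of $R$, which is recorded in the discussion preceding Lemma~\ref{DivAssAtt}; and for (2), that the two relevant pp-definable subgroups of an indecomposable pure-injective over an arithmetical ring are totally ordered, which is Lemma~\ref{ppuniserial}. Everything else is bookkeeping: once $I_1,I_2$ are chosen, $M_1$ and $M_2$ are by construction direct sums of exactly the chosen summands, and the needed (in)equalities of pp-subgroups transfer from the summands to the direct sum because pp-formulae commute with $\bigoplus$.
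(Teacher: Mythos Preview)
Your proof is correct and follows essentially the same route as the paper's: decompose $M$ up to elementary equivalence as $\bigoplus_{i\in I}N_i$ via Lemma~\ref{sumpielemeq}, then split $I$ using the dichotomy that $\Att N_i$ is proper for part~(1) and pp-uniseriality (Lemma~\ref{ppuniserial}) for part~(2). The only difference is cosmetic: the paper phrases the part~(2) dichotomy as $xb=0\geq_N a|x$ versus $a|x\geq_N xb=0$ (with $a,b$ apparently swapped relative to the statement), while you work directly with $\ann_{N_i}(a)$ versus $N_ib$, which matches the statement as written.
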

\begin{proof}

\noindent
(1) By \ref{sumpielemeq}, there exist indecomposable pure-injective $R$-modules $N_i$ for $i\in I$ such that $M\equiv\oplus_{i\in I}N_i$. Since $\Att N_i$ is a proper ideal for each $N_i$, for all $i\in I$, either $\alpha\notin \Att N_i$ or $\alpha-1\notin \Att N_i$. Let $I_\alpha$ be the set of $i\in I$ such that $\alpha\notin\Att N_i$ and let $I_{\alpha-1}=I\backslash I_\alpha$. So, for all $i\in I_{\alpha-1}$, $\alpha-1\notin \Att N_i$.
For each $\beta\in R$ and $N$ indecomposable pure-injective, $\beta\notin\Att N$ if and only if $\vertl \nicefrac{x=x}{\beta|x}(N)\vertr=1$ and $\vertl \nicefrac{x\beta=0}{x=0}(M_1)\vertr=1$. Therefore $\vertl \nicefrac{x=x}{\alpha|x}(\oplus_{i\in I_\alpha}N_i)\vertr=1$, $\vertl \nicefrac{x\alpha=0}{x=0}(\oplus_{i\in I_\alpha}N_i)\vertr=1$, $\vertl \nicefrac{x=x}{(\alpha-1)|x}(\oplus_{i\in I_{\alpha-1}}N_i)\vertr=1$ and $\vertl \nicefrac{x(\alpha-1)=0}{x=0}(\oplus_{i\in I_{\alpha-1}}N_i)\vertr=1$.

\noindent
(2) For any $L\in \Mod\text{-}R$, $xb=0\geq_La|x$ if and only if $ab\in\ann_R L$. So $\vertl\nicefrac{ab|x}{x=0}(L)\vertr=1$ if and only if $xb=0\geq_L a|x$. Let $N$ be an indecomposable pure-injective $R$-module. By \ref{ppuniserial}, either $xb=0\geq_Na|x$ or $a|x\geq_Nxb=0$. So either $\vertl \nicefrac{ab|x}{x=0}(N)\vertr=1$ or $\vertl \nicefrac{xb=0}{a|x}(N)\vertr=1$. The proof is now as in $(1)$.
%
%
\end{proof}

\noindent
It is easy to see that if $R$ is a commutative ring, $\mfrak{p}\lhd R$ is a prime ideal and $M$ is an $R_{\mfrak{p}}$-module then the restriction to $R$ of the pure-injective hull of $M$ as an $R_{\mfrak{p}}$-module is equal to the pure-injective hull of $M$ as an $R$-module.

Recall that a module is called \textbf{uniserial} if its lattice of submodules form a chain.

\begin{theorem}\label{standarduni}\cite{Zie}
Let $V$ be a valuation domain with field of fractions $Q$. Every indecomposable pure-injective $V$-module is the pure-injective hull of a module $J/I$ where $I\subsetneq J\subseteq Q$ are submodules of $Q$.
\end{theorem}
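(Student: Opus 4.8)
The plan is as follows. As $N$ is indecomposable it is in particular nonzero, so fix $0\neq a\in N$ and let $p:=\pp^N(a)$ be its pp-$1$-type. Recall from \cite{PrestBluebook} that inside a pure-injective module every element lies in a smallest direct summand, which is the pure-injective hull of the pp-type of that element. The smallest direct summand of $N$ containing $a$ is nonzero, hence equals $N$ because $N$ is indecomposable; thus $N$ is the pure-injective hull of the pp-type $p$. It therefore suffices to find $V$-submodules $I\subsetneq J$ of $Q$ and an element of $J/I$ freely realising $p$, since then $H(J/I)=N$.

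For this I would take $I:=\ann_V(a)$, $D:=\{r\in V\setminus\{0\}:a\in Nr\}$ and $J:=\sum_{r\in D}r^{-1}V$. Since $V$ is a valuation domain, $D$ is closed under divisors and $1\in D$, so $J$ is a $V$-submodule of $Q$ with $V\sseq J$; thus $I\sseq V\sseq J$, and $I\neq V$ because $a\neq 0$, so $I\subsetneq J$. One then checks that $x:=1+I$ in $J/I$ satisfies $\pp^{J/I}(x)=p$ and that $(J/I,x)$ is a free realisation of $p$. The relevant ingredient here is the structure of pp-$1$-formulae over a valuation domain: every such formula is a finite sum of formulae $r\mid x\wedge xs=0$, so, since $N$ and $J/I$ are pp-uniserial (the former by \ref{ppuniserial}, the latter because $J/I$ is uniserial, its submodules being exactly the $V$-submodules of $Q$ between $I$ and $J$), a pp-$1$-type in either module is pinned down by which divisibility conditions $r\mid x$ and annihilator conditions $xs=0$ it contains. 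Now $x$ is divisible in $J/I$ by precisely the $r\in D$ (because $r^{-1}\in J$ exactly when $r$ divides some member of $D$, i.e.\ when $r\in D$) and is killed by precisely the $s\in I=\ann_V(a)$, and these are exactly the conditions in $p$.

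It remains to see that the pure-injective hull of the free realisation $J/I$ of $p$ is again $N$. Since $N$ is pure-injective, hence algebraically compact, the map $x\mapsto a$ extends to a homomorphism $J/I\to N$: each generator $r^{-1}+I$ has to be sent to some $b\in N$ with $br=a$, and any finitely many of these requirements are jointly satisfiable in $N$ because $D$ is totally ordered by divisibility — one chooses a witness for the largest $r$ occurring and scales down — so algebraic compactness solves the whole system. As $\pp^N(a)=p=\pp^{J/I}(x)$ and $J/I$ freely realises $p$, this homomorphism is a pure embedding, so its image is a nonzero pure submodule of $N$ isomorphic to $J/I$; hence $H(J/I)$ embeds purely in $N$, is therefore a direct summand, and equals $N$ by indecomposability. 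The genuine content of the argument is the two classical inputs used above — the structure of pp-$1$-formulae (equivalently, pp-$1$-types) over a valuation domain and the consequent description of a free realisation of $p$ as a quotient $J/I$ of submodules of $Q$, together with the facts about smallest direct summands and hulls of pp-types in \cite{PrestBluebook} — with everything else amounting to routine bookkeeping in $V$. All of this is of course due to \cite{Zie}.
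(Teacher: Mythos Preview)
The paper does not prove this theorem; it is quoted from \cite{Zie} without proof, so there is nothing in the paper to compare against. Your outline is essentially Ziegler's approach, and the construction of $J/I$ is correct: with $I=\ann_V(a)$ and $J=\bigcup_{r\in D}r^{-1}V$ the element $x=1+I$ has exactly the divisibility and annihilator conditions that $a$ does, and since over a valuation domain every pp-$1$-formula is a finite join of formulae $r\mid x\wedge xs=0$, this pins down $\pp^{J/I}(x)=p$ in the pp-uniserial module $J/I$.

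The step that is not adequately justified is ``$\pp^N(a)=p=\pp^{J/I}(x)$ and $J/I$ freely realises $p$, hence $f$ is a pure embedding''. Free realisation of a type together with preservation of the pp-type at one point does \emph{not} force purity: for instance $(V\oplus V/\mfrak m,\,(1,0))$ freely realises $\pp^V(1)$ and the first projection to $V$ preserves the pp-type of $(1,0)$, yet it is not injective. What you actually need is that $f$ preserves the pp-type of \emph{every} element, and this requires a separate check. For each $r\in D$, any $b_r\in N$ with $b_r r=a$ automatically has $\ann_V(b_r)=rI$ (compare $r$ with a given $s$ by divisibility in $V$) and $\{t:b_r\in Nt\}=\{t:rt\in D\}$ (the nontrivial inclusion uses pp-uniseriality of $N$ to compare $\ann_N(r)$ with $Nt$), matching the data for $r^{-1}+I$ in $J/I$. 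Since every finite tuple from $J/I$ lies in a single cyclic submodule $(r^{-1}+I)V$ for a common $r\in D$, any pp-$n$-formula evaluated at such a tuple rewrites as a pp-$1$-formula at $r^{-1}+I$, and purity of $f$ then follows.
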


So, in particular, over a valuation domain, all indecomposable pure-injective $R$-modules are pure-injective hulls of uniserial modules. It is not known if all indecomposable pure-injective modules over valuation rings are pure-injective hulls of uniserial modules (see \cite[\S 4]{EklHer}).

\begin{lemma}\label{REFuni}
Let $R$ be a Pr\"ufer domain. For any sentence $\chi\in\mcal{L}_R$, there exists $M\in\Mod\text{-}R$ such that $M\models \chi$ if and only if there exist $n\in\N$, prime ideals $\mfrak{p}_i\lhd R$ and uniserial $R_{\mfrak{p}_i}$-modules $U_i$ for $1\leq i\leq n$ such that $\oplus_{i=1}^nU_i\models \chi$.
\end{lemma}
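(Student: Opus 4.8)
The plan is as follows. First I would reduce $\chi$ to the normal form $(\dagger)$ of Theorem~\ref{DecconBM}: by the Baur--Monk theorem $\chi$ is $T_R$-equivalent to a Boolean combination of sentences $\vertl\nf{\phi}{\psi}\vertr\geq N$ with $\nf{\phi}{\psi}$ pp-$1$-pairs, and, writing this in disjunctive normal form and using that $\vertl\nf{\phi}{\psi}\vertr\geq N+1$ implies $\vertl\nf{\phi}{\psi}\vertr\geq N$ (so a conjunct $\vertl\nf{\phi}{\psi}\vertr\geq a\wedge\neg(\vertl\nf{\phi}{\psi}\vertr\geq b)$ is either unsatisfiable or equivalent to $\bigvee_{a\leq k<b}\vertl\nf{\phi}{\psi}\vertr=k$), one sees $\chi$ is $T_R$-equivalent to a finite disjunction of sentences of the form $(\dagger)$. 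Since both the existence of a model of $\chi$ and the existence of a finite direct sum $\oplus_{i=1}^nU_i$ of the stated kind modelling $\chi$ distribute over such a disjunction, it suffices to treat $\chi=\bigwedge_{i=1}^n\vertl\nf{\phi_i}{\psi_i}\vertr=N_i\wedge\bigwedge_{i=n+1}^m\vertl\nf{\phi_i}{\psi_i}\vertr\geq N_i$ with the $\nf{\phi_i}{\psi_i}$ pp-$1$-pairs. The backward implication is then immediate, since $\oplus_{i=1}^nU_i$ is an $R$-module.

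For the forward implication, suppose $M\models\chi$. By Lemma~\ref{sumpielemeq} there are indecomposable pure-injective $R$-modules $N_j$, $j\in I$, with $M\equiv\oplus_{j\in I}N_j$. Fix $j\in I$. By Lemma~\ref{DivAssAtt}, $\mfrak{p}_j:=\Att N_j$ is a prime ideal; $R_{\mfrak{p}_j}$ is a valuation domain (a localisation of the Pr\"ufer domain $R$), and, as recalled after Lemma~\ref{ppuniserial}, $N_j$ may be viewed as an indecomposable pure-injective $R_{\mfrak{p}_j}$-module. By Theorem~\ref{standarduni}, as an $R_{\mfrak{p}_j}$-module $N_j$ is the pure-injective hull of a uniserial module $U_j$. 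Since restriction to $R$ carries the $R_{\mfrak{p}_j}$-pure-injective hull of $U_j$ to its $R$-pure-injective hull, $N_j=H(U_j)$ as an $R$-module, whence $N_j$ and $U_j$ are elementarily equivalent as $R$-modules and so $\vertl\nf{\phi}{\psi}(N_j)\vertr=\vertl\nf{\phi}{\psi}(U_j)\vertr$ for every pp-pair $\nf{\phi}{\psi}$. Because pp-formulae commute with direct sums, the Baur--Monk invariant of a direct sum is the product (of cardinals) of the invariants of its summands; hence $\vertl\nf{\phi}{\psi}(\oplus_{j\in I}N_j)\vertr=\vertl\nf{\phi}{\psi}(\oplus_{j\in I}U_j)\vertr$ for all pp-pairs, and Baur--Monk gives $M\equiv\oplus_{j\in I}N_j\equiv\oplus_{j\in I}U_j$. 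In particular $\oplus_{j\in I}U_j\models\chi$.

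It remains to prune this (possibly infinite) direct sum to a finite one. For each $i$ we have $\prod_{j\in I}\vertl\nf{\phi_i}{\psi_i}(U_j)\vertr=\vertl\nf{\phi_i}{\psi_i}(\oplus_{j\in I}U_j)\vertr$. For $1\leq i\leq n$ this equals the finite number $N_i$, so $S_i:=\{j\in I:\vertl\nf{\phi_i}{\psi_i}(U_j)\vertr\neq 1\}$ is finite and $\prod_{j\in S_i}\vertl\nf{\phi_i}{\psi_i}(U_j)\vertr=N_i$. For $n<i\leq m$ the product is $\geq N_i$, and since finite sub-products become arbitrarily large whenever the total product is infinite (and a singleton already suffices if some factor is infinite), there is a finite $T_i\subseteq I$ with $\prod_{j\in T_i}\vertl\nf{\phi_i}{\psi_i}(U_j)\vertr\geq N_i$. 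Put $T:=\bigcup_{i=1}^nS_i\cup\bigcup_{i=n+1}^mT_i$, a finite subset of $I$ (if $T=\emptyset$, take instead $n=1$ and $U_1$ the zero module, which is vacuously uniserial). For $i\leq n$ every factor indexed outside $S_i\subseteq T$ is $1$, so $\vertl\nf{\phi_i}{\psi_i}(\oplus_{j\in T}U_j)\vertr=\prod_{j\in S_i}\vertl\nf{\phi_i}{\psi_i}(U_j)\vertr=N_i$; for $i>n$, $\vertl\nf{\phi_i}{\psi_i}(\oplus_{j\in T}U_j)\vertr\geq\prod_{j\in T_i}\vertl\nf{\phi_i}{\psi_i}(U_j)\vertr\geq N_i$. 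Thus $\oplus_{j\in T}U_j\models\chi$, and enumerating $T=\{j_1,\dots,j_r\}$ yields the required uniserial modules $U_{j_1},\dots,U_{j_r}$ over $R_{\mfrak{p}_{j_1}},\dots,R_{\mfrak{p}_{j_r}}$. The only genuinely delicate step is this last pruning: one must check that deleting the summands that are ``trivial'' for each of the finitely many pp-pairs occurring in $\chi$ keeps the equality invariants exactly right while finitely many summands already force the inequality invariants; everything else is bookkeeping with the multiplicativity of Baur--Monk invariants over direct sums together with the structure theory of indecomposable pure-injectives over valuation domains.
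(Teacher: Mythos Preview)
Your proof is correct and follows essentially the same route as the paper. The paper compresses the argument by invoking as a black box the standard fact that a sentence has a model iff it has one that is a \emph{finite} direct sum of indecomposable pure-injectives, and then applies Theorem~\ref{standarduni}; you unfold that black box by reducing $\chi$ to the $(\dagger)$ form and carrying out the pruning explicitly, which is exactly what underlies the cited fact.
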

\begin{proof}
For any ring $R$, there exists $M\in\Mod\text{-}R$ such that $M\models \chi$ if and only if there exist $n\in\N$ and indecomposable pure-injective $R$-modules $N_i$ for $1\leq i\leq n$ such that $\oplus_{i=1}^n N_i\models \chi$. The result now follows from \ref{standarduni}.
\end{proof}
\noindent
We will frequently use the following easy lemma.

\begin{lemma}\label{unifinitefg}
Let $V$ be a valuation domain, $\phi$ a pp-$1$-formula and $U$ a uniserial $V$-module. If $\vertl U/\phi(U)\vertr$ is finite but not equal to $1$ then $U\cong V/I$ for some ideal $I\lhd V$.
\end{lemma}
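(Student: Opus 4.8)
The statement to be proved is Lemma \ref{unifinitefg}: if $V$ is a valuation domain, $\phi$ a pp-$1$-formula, $U$ a uniserial $V$-module, and $\vertl U/\phi(U)\vertr$ is finite but $\neq 1$, then $U\cong V/I$ for some ideal $I\lhd V$. The plan is to use the classification of uniserial modules over valuation domains together with the observation that $\phi(U)$ must be a ``small'' submodule of $U$ (a proper submodule, since the quotient is nontrivial) that nonetheless has finite index.

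First I would invoke Theorem \ref{standarduni}: $U$ embeds as a submodule of the quotient $J/I$ of $Q$-submodules with $I\subsetneq J\subseteq Q$; more directly, every uniserial $V$-module is of the form $J/I$ for $V$-submodules $I\subsetneq J\subseteq Q$ (this is the standard description of uniserial modules over a valuation domain; I may want to cite the relevant structure result or derive it from \ref{standarduni} applied to the pure-injective hull). So write $U=J/I$. The key step is then to analyze the pp-definable subgroup $\phi(U)$. Over a commutative ring pp-definable subgroups of $U$ are submodules, so $\phi(U)=K/I$ for some $V$-submodule $K$ with $I\subseteq K\subseteq J$, and the quotient $U/\phi(U)=J/K$ is finite and nontrivial, hence $I\subsetneq K\subsetneq J$. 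The goal is to show $J$ is cyclic, i.e. principal as a $V$-submodule of $Q$; then $U=J/I\cong V/(I:J)$ up to the obvious isomorphism $V/J^{-1}I$, giving the conclusion with $I$ replaced by an appropriate ideal of $V$.

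To force $J$ to be principal I would argue: a $V$-submodule $J$ of $Q$ that is not principal (i.e. not of the form $qV$) has the property that $J=\bigcup_{q\in J} qV$ with no maximal element, so for every proper submodule $K\subsetneq J$ and every $x\in J\setminus K$, there is $y\in J$ with $x\in yV$ but $y\notin xV$ — concretely $J$ is ``not finitely generated over itself at the top.'' In that situation $J/K$ cannot be finite: given any nonzero coset $x+K$ one can find $y+K$ that is not a $V$-multiple-related-from-above, producing infinitely many distinct cosets, or more cleanly, one shows the socle-type argument that a finite uniserial module over a valuation domain must be cyclic. Actually the cleanest route: a finite nontrivial uniserial module $J/K$ over $V$ is itself of the form $V/I'$ — but we want this for $U=J/I$ itself, not $J/K$. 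So instead I would directly show that if $J$ is not principal then for the chain $I\subsetneq K\subsetneq J$, the module $J/K$ contains, for each $n$, an element whose annihilator-type invariant strictly decreases, contradicting finiteness; equivalently, use that $\vertl J/K\vertr$ finite forces $K$ to contain $rJ$ for some $r$ with $J/rJ$ finite, and then $J$ finitely generated over $V$ (being between $rJ$ and $J$ with finite quotient, plus $rJ\cong J$) forces $J$ principal since a finitely generated submodule of $Q$ over a valuation domain is cyclic.

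The main obstacle I anticipate is the bookkeeping in the last step: translating ``$\vertl U/\phi(U)\vertr$ finite'' into ``$J$ is a cyclic (principal) submodule of $Q$'' cleanly, since one must handle the case $I\neq 0$ (where $U=J/I$ and $\phi(U)=K/I$) and rule out pathological non-principal $J$ with $rJ\subseteq K$. The slick resolution is the observation that over a valuation domain a finitely generated $V$-submodule of the fraction field $Q$ is principal (generated by the element of largest value among finitely many generators, using that the value group is totally ordered); combined with $rJ\cong J$ as $V$-modules and $rJ\subseteq K\subsetneq J$ with $J/rJ\cong J/K$-plus-$K/rJ$ both finite hence $J/rJ$ finite, so $J$ is generated over $V$ by finitely many elements modulo $rJ=rJ$, i.e. $J$ is finitely generated, hence principal; then $U=J/I\cong V/(r')V$ type quotient, establishing $U\cong V/I$ for a suitable ideal $I\lhd V$. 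I would make sure to state explicitly the lemma ``a finitely generated $V$-submodule of $Q$ is principal'' as it is the crux.
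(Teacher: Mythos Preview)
Your route via the representation $U=J/I\subseteq Q$ and the attempt to prove $J$ is principal is far more complicated than necessary, and the specific argument you sketch has genuine gaps. You claim that $|J/K|$ finite gives some nonzero $r$ with $rJ\subseteq K$ and $J/rJ$ finite; the existence of $r$ is fine (take $r\in\ann(J/K)$), but you never justify that $K/rJ$ is finite, and there is no reason it should be. Worse, the step ``$J/rJ$ finite and $rJ\cong J$ imply $J$ finitely generated'' is simply false: you are implicitly invoking a Nakayama-type cancellation, but Nakayama requires $J$ to be finitely generated to begin with, which is exactly what you want to conclude.

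The irony is that you had the right idea and discarded it. You observe that a finite nonzero uniserial module is cyclic, apply this to $U/\phi(U)$, and then say ``but we want this for $U$, not $U/\phi(U)$.'' The point you are missing --- and this is the paper's entire proof --- is that the generator \emph{lifts}. Pick $u\in U$ with $u+\phi(U)$ generating $U/\phi(U)$; then $u\notin\phi(U)$, and since $U$ is uniserial its submodules are totally ordered, so $uV\supseteq\phi(U)$. Hence $uV=uV+\phi(U)=U$, and $U$ is cyclic. No structure theorem for $U$, no fraction field, no principal-submodule lemma needed.
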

\begin{proof}
Since $U$ is uniserial, so is $U/\phi(U)$. Therefore, since $U/\phi(U)$ is finite, there exists $u\in U$ such that $u+\phi(U)$ generates $U/\phi(U)$ as a $V$-module. Since $U/\phi(U)\neq 0$, $uR\supseteq \phi(U)$. Therefore $uR=U$.
\end{proof}

\noindent
We finish this subsection by reviewing material about ideals of valuation domains.

For any commutative ring $R$, $r\in R$ and ideal $I\lhd R$, define
\[(I:r):=\{a\in R \st ar\in I\}.\]
Note that $(I:r)$ is an ideal of $R$.

\begin{definition}
For $V$ a valuation domain and $I\lhd R$ a proper ideal, define $I^\#:=\bigcup_{a\notin I}(I:a)$. By convention, we define $V^\#$ to be the unique maximal ideal of $V$.
\end{definition}

Note that this definition agrees with the definition given in \cite[ChII \S 4]{MONND}, that is, for $I\neq 0$, $r\in I^\#$ if and only if $rI\subsetneq I$.


\begin{lemma}\label{propIhash}
Let $V$ be a valuation domain.
\begin{enumerate}[(i)]
\item For any ideal $I\lhd V$, $I^\#$ is a prime ideal.
\item If $\mfrak{p}\lhd V$ is a prime ideal then $\mfrak{p}^\#=\mfrak{p}$.
\item If $I\lhd V$ and $a\in V\backslash\{0\}$ then $(aI)^\#=I^\#$.
\item If $I\lhd V$ and $V/I$ is finite then $I^\#$ is the unique maximal ideal of $V$.\label{finmodulespr}
\end{enumerate}
\end{lemma}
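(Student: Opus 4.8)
\textbf{Proof plan for Lemma \ref{propIhash}.}

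The plan is to treat the four items essentially independently, using the characterization $r\in I^\#\iff rI\subsetneq I$ for $I\neq 0$, together with the total ordering of ideals in a valuation domain. For the zero ideal and for $I=V$, each statement is immediate from the convention $V^\#=\mfrak{m}$ (the unique maximal ideal) and from $0^\#=\mfrak{m}$ (since every nonzero $a$ has $(0:a)=0$, but by convention $0^\#$ should be read via the displayed definition $\bigcup_{a\notin 0}(0:a)=0$; I will check the boundary case carefully against how $I^\#$ is actually used, and if $I=0$ is not needed I will simply assume $I\neq 0$ throughout, as in \cite{MONND}). So assume $0\neq I\subsetneq V$.

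For (i), I would show $I^\#$ is closed under addition and multiplication by $V$ (an ideal) and is prime. Closure: if $aI\subsetneq I$ and $bI\subsetneq I$ then, since the ideals $aV,bV$ are comparable, say $aV\subseteq bV$, we get $(a+b)I\subseteq bI\subsetneq I$ and $(va)I\subseteq aI\subsetneq I$ for $v\in V$; hence $I^\#$ is an ideal. It is proper since $1\notin I^\#$. Primeness: suppose $ab\in I^\#$ but $a\notin I^\#$, i.e. $aI=I$; then $abI=bI$, and $abI\subsetneq I$ forces $bI\subsetneq I$, so $b\in I^\#$. For (ii), if $\mfrak{p}$ is prime and nonzero, then $r\in\mfrak{p}^\#$ means $r\mfrak{p}\subsetneq\mfrak{p}$; pick $a\in\mfrak{p}\setminus r\mfrak{p}$, so $a\notin r\mfrak{p}\supseteq ($ since $\mfrak{p}$ prime $)$... more directly: $r\in\mfrak{p}^\#$ iff $(\mfrak{p}:a)\ni r$ for some $a\notin\mfrak{p}$, i.e. $ra\in\mfrak{p}$ with $a\notin\mfrak{p}$, which by primeness gives $r\in\mfrak{p}$; conversely if $r\in\mfrak{p}$, take any $a\notin\mfrak{p}$ (possible as $\mfrak{p}\subsetneq V$) and then $ra\in\mfrak{p}$, so $r\in(\mfrak{p}:a)\subseteq\mfrak{p}^\#$. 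Hence $\mfrak{p}^\#=\mfrak{p}$.

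For (iii), with $a\neq 0$ the map $x\mapsto ax$ is an order-isomorphism of the ideal lattice of $V$ (using that $V$ is a domain), sending $I$ to $aI$; so $r(aI)\subsetneq aI\iff a(rI)\subsetneq aI\iff rI\subsetneq I$, giving $(aI)^\#=I^\#$. For (iv), the main point: if $V/I$ is finite and nonzero, I claim $I^\#=\mfrak{m}$. It suffices to show every $r\in\mfrak{m}$ satisfies $rI\subsetneq I$. Suppose not, so $rI=I$ for some non-unit $r$. Then $I=rI=r^2I=\cdots$, so $I\subseteq\bigcap_n r^nV$. Now in $V/I$, finiteness forces the descending chain $V\supsetneq rV+I\supseteq r^2V+I\supseteq\cdots$ of submodules to stabilize, say $r^kV+I=r^{k+1}V+I$; combined with $r^kI=I$ this should let me push $r^kV\subseteq r^{k+1}V+I$ and, multiplying/using $rI=I$, derive $r^kV\subseteq r^{k+1}V$, i.e. $r^k=r^{k+1}v$, so $r^k(1-rv)=0$; since $V$ is a domain and $1-rv$ is a unit, $r^k=0$, contradicting $r\neq 0$ (as $V$ is a domain and $r$ is a non-unit, hence nonzero unless $V$ is a field, in which case $I=0$, excluded). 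This contradiction shows $rI\subsetneq I$ for all $r\in\mfrak{m}$, hence $\mfrak{m}\subseteq I^\#$, and since $I^\#$ is a proper ideal it equals $\mfrak{m}$.

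The only delicate point is part (iv): one must correctly extract from the finiteness of $V/I$ a stabilization that forces nilpotency of $r$ in a domain. I expect that to be the main obstacle, and I would phrase it carefully via the descending chain of images of $r^nV$ in the finite module $V/I$, or alternatively by noting that $U:=V/I$ finite and $rU=U$ (which follows from $rI=I$ together with... ) — if $rI=I$ does not immediately give $rU=U$ I would instead argue directly on ideals: $rI=I$ with $r$ a non-unit contradicts that $I\neq 0$ is, by finiteness of $V/I$, \emph{not} divisible by arbitrarily small elements; concretely $I\neq 0$ means $I$ contains some $0\neq c$, and $V/cV$ surjects onto... I will settle the cleanest route when writing the details, but the structure above (ideal-lattice order-isomorphism for (iii), primeness by comparability for (i)–(ii), and a stabilization-plus-domain argument for (iv)) is the intended proof.
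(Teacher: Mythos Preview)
Your arguments for (i)--(iii) are correct; the paper simply cites \cite[II.4]{MONND} for these, so you have filled in standard details the paper omits.

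For (iv), your stabilization argument does go through (once $rI=I$ you have $I=r^{k+1}I\subseteq r^{k+1}V$, so $r^kV+I=r^{k+1}V+I$ collapses to $r^kV\subseteq r^{k+1}V$, forcing $r^k=0$ in the domain), but it is considerably more work than the paper's route. The paper observes that, for $I$ proper, $I\subseteq I^\#$ (immediate from the definition: if $r\in I$ and $a\notin I$ then $ra\in I$, so $r\in(I:a)$), whence $V/I^\#$ is a quotient of $V/I$ and therefore finite; since $I^\#$ is prime by (i), $V/I^\#$ is a finite integral domain, hence a field, so $I^\#$ is maximal. This two-line argument avoids the chain-stabilization entirely and shows that (iv) is not delicate once (i) is in hand. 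Your approach is self-contained and does not use (i), which is a minor structural difference, but the paper's proof is the cleaner one to write.
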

\begin{proof}
The first $3$ statements are in \cite[II.4]{MONND}. We prove (iv). If $I=V$ then the statement follows directly from the definition. Otherwise, $I\subseteq I^\#$ and hence $V/I^\#$ is also finite. Since $V/I^\#$ is a finite integral domain, it is a field. Therefore $I^\#$ is maximal.
\end{proof}

\begin{lemma}\label{Rp/IAssDiv}
Let $R$ be a Pr\"ufer domain, $\mfrak{p}\lhd R$ a prime ideal and $I\lhd R_{\mfrak{p}}$. Then, for all $\delta,\gamma\in R$,
\begin{enumerate}
\item $\vertl\nf{x\delta=0}{x=0}(R_{\mfrak{p}}/I)\vertr=1$ if and only if $\delta\notin I^\#$ or $I=R_{\mfrak{p}}$, and,
\item $\vertl\nf{x=x}{\gamma|x}(R_{\mfrak{p}}/I)\vertr=1$ if and only if $\gamma\notin \mfrak{p}$ or $I=R_{\mfrak{p}}$.
\end{enumerate}
\end{lemma}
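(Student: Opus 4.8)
The plan is to work inside the valuation domain $V:=R_{\mfrak{p}}$ and to translate the two pp-pairs into concrete statements about submodules of $M:=R_{\mfrak{p}}/I$. Unwinding definitions, $\nf{x\delta=0}{x=0}(M)$ is the annihilator submodule $\{m\in M\st m\delta=0\}$ (since $(x=0)(M)=0$), which for $M=V/I$ equals $(I:\delta)/I$, while $\nf{x=x}{\gamma|x}(M)=M/M\gamma\cong V/(\gamma V+I)$; here $\delta,\gamma\in R$ are understood via their images in $V$, and $I^\#$, $\mfrak{p}$ denote the corresponding ideals of $V$. Thus assertion (1) amounts to the statement "$(I:\delta)=I$" (recall $(I:\delta)\supseteq I$ always) and assertion (2) amounts to "$V=\gamma V+I$". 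I would first dispose of the case $I=R_{\mfrak{p}}$: then $M=0$, every pp-pair has size $1$ on $M$, and the disjunct "$I=R_{\mfrak{p}}$" holds, so both equivalences are trivially true. Assume henceforth $I\subsetneq V$.

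For (1), unwind the definition $I^\#=\bigcup_{a\notin I}(I:a)$: an element $\delta$ lies in $I^\#$ precisely when $a\delta\in I$ for some $a\in V\setminus I$, i.e.\ precisely when $(I:\delta)\not\subseteq I$. Since $I\subseteq(I:\delta)$ always, this is the same as $(I:\delta)\neq I$. Hence $(I:\delta)=I$ if and only if $\delta\notin I^\#$, which, together with the degenerate case already handled, is exactly the claim.

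For (2), if $\gamma\notin\mfrak{p}$ then the image of $\gamma$ in $V$ is a unit, so $\gamma V=V$ and a fortiori $\gamma V+I=V$, giving size $1$. Conversely, if $\gamma\in\mfrak{p}$ then the image of $\gamma$ lies in the maximal ideal $\mfrak{p}V$ of the local ring $V$, and $I$, being proper, also satisfies $I\subseteq\mfrak{p}V$; therefore $\gamma V+I\subseteq\mfrak{p}V\subsetneq V$, so $V/(\gamma V+I)\neq 0$ and the pp-pair has size $>1$.

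No step here is genuinely hard; the only points demanding care are the bookkeeping of the ambient data (that $\delta,\gamma\in R$ act through $R_{\mfrak{p}}$, and that $I^\#$ and $\mfrak{p}$ are read as ideals of $R_{\mfrak{p}}$) and keeping the degenerate case $I=R_{\mfrak{p}}$ apart, since there the characterizations "$\delta\notin I^\#$" and "$\gamma\notin\mfrak{p}$" fail but the extra disjunct restores the equivalence.
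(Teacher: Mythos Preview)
Your proof is correct and follows essentially the same approach as the paper: both arguments translate the pp-pairs into the conditions $(I:\delta)\subseteq I$ and $\gamma R_{\mfrak{p}}+I=R_{\mfrak{p}}$, then unwind the definition of $I^\#$ for (1) and use locality of $R_{\mfrak{p}}$ for (2). The only cosmetic difference is that you isolate the degenerate case $I=R_{\mfrak{p}}$ at the outset, whereas the paper folds it into the equivalence chain.
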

\begin{proof}
(1) For any $\delta\in R$, $\vertl\nf{x\delta=0}{x=0}(R_{\mfrak{p}}/I)\vertr=1$ if and only if $(I:\delta)\subseteq I$. Now $(I:\delta)\subseteq I$ if and only if $I=R_{\mfrak{p}}$, or, for all $a\notin I$, $\delta a\notin I$. Therefore $\vertl\nf{x\delta=0}{x=0}(R_{\mfrak{p}}/I)\vertr=1$ if and only if $\delta\notin I^\#$ or $I=R_{\mfrak{p}}$.

\noindent
(2) For any $\gamma\in R$, $\vertl\nf{x=x}{\gamma|x}(R_{\mfrak{p}}/I)\vertr=1$ if and only if $\gamma R_{\mfrak{p}}+I=R_{\mfrak{p}}$. This is true if and only if $\gamma\notin\mfrak{p}$ or $I=R_{\mfrak{p}}$.
\end{proof}

\begin{remark}\label{I/bI} Let $b\in R\backslash\{0\}$, $\mfrak{p}\lhd R$ be a prime ideal and $I\lhd R_{\mfrak{p}}$ be an ideal. If $R_{\mfrak{p}}/I$ is finite then $\vertl I/bI\vertr=\vertl R_{\mfrak{p}}/bR_{\mfrak{p}}\vertr$.
\end{remark}
\begin{proof}
Since $b\neq 0$,
\[\vertl R_{\mfrak{p}}/I\vertr\cdot\vertl I/bI\vertr=\vertl R_{\mfrak{p}}/bI\vertr=\vertl R_{\mfrak{p}}/bR_{\mfrak{p}}\vertr\cdot\vertl bR_{\mfrak{p}}/bI\vertr=\vertl R_{\mfrak{p}}/bR_{\mfrak{p}}\vertr\cdot\vertl R_{\mfrak{p}}/I\vertr.\] So, since $\vertl R/I\vertr$ is non-zero, $\vertl I/bI\vertr=\vertl R_{\mfrak{p}}/bR_{\mfrak{p}}\vertr$.
\end{proof}

We will frequently use the following lemma which becomes particularly useful when $R$ is a valuation ring because then for all $r\in R$ and $I\lhd R$ either $r\in I$ or $rR\supseteq I$.

\begin{lemma}
Let $R$ be a commutative ring, $r\in R$ and $I\lhd R$. Then $rR\supseteq I$ if and only there exists $J\lhd R$ such that $I=rJ$.
\end{lemma}
\begin{proof}
The reverse direction is clear. For the forward direction, take $J=(I:r)$.
\end{proof}

\begin{lemma}\label{(I:a)useful}
Let $V$ be a valuation domain, $I,J\lhd V$ and $a\in R\backslash\{0\}$. Then $J\supseteq (I:a)$ if and only $aJ\supseteq I$ or $J=V$.
\end{lemma}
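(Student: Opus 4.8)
The plan is to dispose of the trivial case $J=V$ first — there the left-hand side holds because $(I:a)\subseteq V$, and the right-hand side holds by its second disjunct — and then prove the genuine equivalence ``$J\supseteq(I:a)$ if and only if $aJ\supseteq I$'' under the standing assumption $J\neq V$.

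For the implication from $aJ\supseteq I$ to $J\supseteq(I:a)$, I would take $b\in(I:a)$, so that $ab\in I\subseteq aJ$, write $ab=aj$ with $j\in J$, and cancel $a$ — legitimate since $V$ is a domain and $a\neq 0$ — to conclude $b=j\in J$. Note this direction does not even use $J\neq V$.

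For the converse, assuming $(I:a)\subseteq J$, the first observation is that $J\neq V$ forces $(I:a)\neq V$, i.e.\ $a\notin I$ (an ideal of the form $(I:a)$ contains $1$ precisely when $a\in I$). Then, given $x\in I$, I would use that $V$ is a valuation domain, so $aV$ and $xV$ are comparable: if $aV\subseteq xV$ then $a\in xV\subseteq I$ because $x\in I$ and $I$ is an ideal, contradicting $a\notin I$; hence $xV\subseteq aV$, so $x=ab$ for some $b\in V$, and then $ab=x\in I$ gives $b\in(I:a)\subseteq J$, whence $x=ab\in aJ$. Since $x\in I$ was arbitrary, $aJ\supseteq I$.

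I do not expect a real obstacle here. The only point that needs care is to extract the hypothesis $a\notin I$ from $J\neq V$ \emph{before} invoking comparability of $aV$ and $xV$ — without it the ``$aV\subseteq xV$'' branch has nothing to contradict. The cancellation step and the use of ``$I$ is an ideal'' are the other places where the hypotheses ($V$ a domain, $a\neq 0$) are genuinely needed.
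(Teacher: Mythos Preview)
Your proof is correct and follows essentially the same approach as the paper's. The only organizational difference is that the paper, in the $(\Rightarrow)$ direction, case-splits upfront on ``$a\in I$ or $aV\supseteq I$'' (total order of ideals in a valuation domain) rather than extracting $a\notin I$ from $J\neq V$ and then comparing $aV$ with $xV$ element by element; the substance is identical.
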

\begin{proof}
$(\Rightarrow)$ Suppose $J\supseteq (I:a)$. Since $V$ is a valuation domain, either $a\in I$ or $aV\supseteq I$. If $a\in I$ then $J\supseteq (I:a)=V$. So $J=V$. Suppose $aV\supseteq I$. Take $c\in I$. There exists $b\in V$ such that $ab=c$. So $b\in (I:a)\subseteq J$. Hence $c=ab\in aJ$ as required.

\noindent
$(\Leftarrow)$ If $J=V$ then $J\supseteq (I:a)$. So, suppose $aJ\supseteq I$. Take $c\in (I:a)$. Then $ac\in I\supseteq aJ$. Since $a\neq 0$, $c\in J$. Therefore $J\supseteq (I:a)$.
\end{proof}

\subsection{Guide to the proof}\label{SSproofguide}

The reverse direction of the main theorem is proved in \ref{Srecsets}. That is, we show that if $T_R$ is decidable then $\DPR(R)$, $\EPP(R)$ and $X(R)$ are recursive (see \cite[6.4]{Decpruf} (or \ref{DPRZg}), \ref{decimpEPPrec} and \ref{decimpXrec}).

The proof of the forward direction of the main theorem has $3$ principal ingredients.
\begin{enumerate}[(A)]
\item Consequences of $\DPR(R)$, $\EPP(R)$ and $X(R)$ being recursive. (\S\ref{Srecsets} and \S\ref{Sfinitemodules})
\item Syntactic reductions. (\S \ref{Sform}, \S \ref{1stsyn} and \S \ref{Furthersyn})
\item Semantic input. (\S \ref{sectuni}, \S\ref{Sremoving} and \S \ref{notred})

\end{enumerate}

\medskip
\noindent
(A) In section \ref{Srecsets} we introduce and analyse the sets $\DPR(R)$, $\EPP(R)$ and $X(R)$. These sets are chosen to be as simple as possible so that our theorem is as easy as possible to apply to concrete rings. For this reason, work needs to be done to obtain more elaborate consequences of them being recursive.

For each $n\in\N$, a set $\DPR_n(R)$ was introduced in \cite{Decpruf}. We show that, \ref{DPRimpDPRn}, for $R$ a recursive arithmetical ring, if $\DPR(R)$ is recursive then the sets $\DPR_n(R)$ are recursive (uniformly in $n$). Combining this with \cite[7.1]{Decpruf}, or more precisely its proof, we conclude that we can effectively decide inclusions of Ziegler basic open sets if and only if $\DPR(R)$ is recursive.


In section \ref{Sfinitemodules}, we investigate the consequences of $\EPP(R)$ being recursive, and of
$\EPP(R)$ and the radical relation being recursive (which is deployed in section \ref{Sremoving}).
We show that for a recursive Pr\"ufer domain the theory of $R$-modules of size $n$ is decidable
uniformly in $n$ if and only if $\EPP(R)$ is recursive, \ref{EPPfinite}.

In the proof of the forward direction of the main theorem, the set $X(R)$ is only ever used in section \ref{notred}.

\medskip
\noindent
(B) Given a sentence $\chi$ as in (\ref{sentform}) (from \ref{DecconBM}) we often produce a finite set  $S$ of tuples of sentences $(\chi_1,\ldots,\chi_n)$ with each $\chi_i$ having a ``better'' form than $\chi$ such that there exists $M\models \chi$ if and only if there exist $(\chi_1,\ldots,\chi_n)\in S$ and modules $M_i$ for $1\leq i\leq n$ with $M_i\models \chi_i$. This is roughly what happens in the proof of \cite[4.1]{Decdense}.
Section \ref{Sform} introduces two important formalisms (and ideas) which used in combination are key to the proof. Essentially they allow us to ``automate'' some reductions similar to those in the proof of \cite[4.1]{Decpruf} which in this article become too complicated to perform entirely by hand.
%

It is shown in \cite[4.1]{Decdense} that for arithmetical rings\footnote{The result is stated there only for Pr\"ufer domains but the same proof implies the result for all arithmetical rings (see section \ref{1stsyn}).} it is enough to consider sentences like (\ref{sentform}) where the pp-pairs involved are of the form $\nf{d|x}{x=0}$ or $\nf{xb=0}{c|x}$.  Section \ref{1stsyn} uses the formalisms in section \ref{Sform}, to show that it is enough to consider sentences like (\ref{sentform}) where at most one conjunct of the form $\vertl \nf{d|x}{x=0}\vertr=D$ or $\vertl \nf{d|x}{x=0}\vertr\geq D$ with $D\geq 2$ occurs and where at most one conjunct of the form $\vertl\nf{xb=0}{c|x}\vertr=G$ or $\vertl\nf{xb=0}{c|x}\vertr\geq G$ with $G\geq 2$ and $b,c\neq 0$ occurs.

In section \ref{Furthersyn} a notion of complexity, called the extended signature, is put on the set $W$ of sentences as in (\ref{sentform}) as reduced to in section \ref{1stsyn}. The set of extended signatures is equipped with an artinian partial order.
The reduction processes in section \ref{Furthersyn} terminate at expressions whose extended signatures are not reducible.
Some of the sentences which are not reducible are of a form for which we can answer whether there  exists a module satisfying them, because
there is an algorithm deciding inclusions of Ziegler basic open sets.
The remaining sentences are of a particular simple form and we deal with them in section \ref{notred}.

\medskip
\noindent
(C) In section \ref{sectuni}, for pp-pairs $\nf{\phi}{\psi}$ of the form $\nf{d|x}{x=0}$, $\nf{xb=0}{c|x}$ with $b,c\neq 0$ and $\nf{x=x}{c|x}$ with $c\neq 0$, we give a description of the uniserial module $U$ over a valuation domain $V$, such that $\nf{\phi}{\psi}(U)$ is finite but non-zero. Unlike the descriptions of such modules in \cite{DecVal} and \cite{PunPunTof}, the results we prove do not depend on whether the value group of $V$ is dense or not.
We now describe how we use semantic input to deal with sentences as in (\ref{sentform}) with a conjunct of the form
$\vertl\nf{d|x}{x=0}\vertr=D$, or of the form $\vertl\nf{xb=0}{c|x}\vertr=G$ where $b,c\neq 0$.

For instance, if $\nf{d|x}{x=0}(U)$ is finite but non-zero then it is easy to show that $U\cong V/dI$ for some ideal $I\lhd V$. In view of \ref{REFuni}, for $R$ a Pr\"ufer domain, this means that for $\chi$ a sentence as in (\ref{sentform}), $D\in\N_2$ and $d\in R\backslash\{0\}$, there exists $M\in \Mod\text{-}R$ such that $M\models \vertl\nf{d|x}{x=0}\vertr=D\wedge \chi$ if and only if there exist $h\in\N$, prime ideals $\mfrak{p}_i\lhd R$ and ideals $I_i\lhd R_{\mfrak{p}_i}$ for $1\leq i\leq h$, and, $M'\in\Mod\text{-}R$ with $\vertl\nf{d|x}{x=0}(M')\vertr=1$ such that $\bigoplus_{i=1}^hR_{\mfrak{p}_i}/dI_i\oplus M'\models \vertl\nf{d|x}{x=0}\vertr=D\wedge \chi$.

In section \ref{Sremoving}, we show that if $\EPP(R)$ and the radical relation are recursive then there is an algorithm which given $D\in \N$, $d\in R\backslash\{0\}$ and a sentence $\Theta$ as in (\ref{sentform}), answers whether there exists a direct sum $\oplus_{i=1}^hR_{\mfrak{p}_i}/dI_i$ satisfying $\vertl\nf{d|x}{x=0}\vertr=D\wedge \Theta$. This is used in \ref{ssrem=} to produce sentences $\chi_1,\ldots \chi_n$ as in (\ref{sentform}) such that there exists an $R$-module satisfying $\vertl\nf{d|x}{x=0}\vertr=D\wedge\chi$ if and only if there exists an $R$-module satisfying $\vertl \nf{d|x}{x=0}\vertr=1\wedge\chi_i$ for some $1\leq i\leq n$.
The sentences $\chi_i$ are less complex than $\chi$ in a way precisely defined in section \ref{Furthersyn}.

Similar, but slightly more complicated, reductions are made for pp-pairs of the form $\nf{xb=0}{c|x}$ where $b,c\neq 0$.

For pp-pairs of the form $\nf{x=x}{c|x}$ we need to do something different. It is easy to see, \ref{x=x/c|xfinnonzero}, that if $U$ is a uniserial module over a valuation domain $V$ then $\nf{x=x}{c|x}(U)$ is finite but non-zero if and only if $U\cong V/cI$ for some $I\lhd V$ or $U$ is finite and $c\in\ann_R U$.
However, it does not seem possible, in this case, to make a reduction as for $\vertl\nf{d|x}{x=0}\vertr=D$ and sums of modules of the form $R_{\mfrak{p}}/dI$. This is the main reason that we need to make the syntactic reductions in section \ref{1stsyn} and \ref{Furthersyn}. In particular, the set of sentences that are not reducible in the sense of section \ref{Furthersyn}, contains only a small number of forms of sentences with a conjunct of the form $\vertl\nf{x=x}{c|x}\vertr=C$. These sentences are considered individually in section \ref{notred}.

%
%
%
%

\section{Recursive sets}\label{Srecsets}

\noindent
In this section we consider the sets $\DPR(R)$, $\EPP(R)$ and $X(R)$. In each case, we show that if $T_R$ is decidable then they are recursive.

\subsection{The set $\DPR(R)$}

\

In \cite{Decpruf}, a family of relations $\DPR_n(R)$ were defined. Although not directly stated there, see \cite[7.1]{Decpruf}, it was shown that, for $R$ a recursive\footnote{Recall $R$ recursive and $\DPR(R)$ recursive imply $R$ is effectively given} Pr\"ufer domain, if the sets $\DPR_n(R)$ are recursive (uniformly in $n$) then there is an algorithm deciding inclusions of finite unions of Ziegler basic open sets. However, it was not known if this condition was necessary for the existence of such an algorithm, or even if the decidability of the theory of modules of Pr\"ufer domain implied that $\DPR_n(R)$ is recursive for any $n>1$.  It is a consequence of \ref{DPRZg} that the existence of an algorithm deciding inclusions of finite unions of Ziegler basic open sets implies that the sets $\DPR_n(R)$ are recursive (uniformly in $n$).

For a recursive B\'ezout domain, it was shown that if $\DPR(R):=\DPR_1(R)$ is recursive then there is an algorithm deciding inclusions of finite unions of Ziegler basic open sets. For Pr\"ufer domains, it was not known if $\DPR_1(R)$ being recursive is sufficient to imply that there is an algorithm deciding inclusions of finite unions of Ziegler basic open sets.  We show, \ref{DPRimpDPRn}, that, for $R$ a Pr\"ufer domain, $\DPR_1(R)$ recursive implies $\DPR_n(R)$ is recursive uniformly in $n$.

\begin{definition}Let $R$ be a commutative ring.

\begin{itemize}
\item  For each $l\in\N$, let $\DPR_l(R)$ be the set of $2l+2$-tuples $(a,b_1,\ldots,b_l,c,d_1,\ldot,d_l)\in R^{2l+2}$ such that, for all prime ideals $\mfrak{p},\mfrak{q}\lhd R$ with $\mfrak{p}+\mfrak{q}\neq R$, either $a\in\mfrak{p}$, $c\in \mfrak{q}$, $b_i\notin \mfrak{p}$ for some $1\leq i\leq l$ or $d_i\notin \mfrak{q}$ for some $1\leq i\leq l$.
\item  Let $\DPR_{*}(R)$ be the set of $4$-tuples $(a,B,c,D)$, where $a,c\in R$ and $B,D\lhd R$ are finitely generated ideals, such that for all prime ideals $\mfrak{p},\mfrak{q}\lhd R$ with $\mfrak{p}+\mfrak{q}\neq R$, either $a\in \mfrak{p}$, $c\in \mfrak{q}$, $B\nsubseteq\mfrak{p}$ or $D\nsubseteq \mfrak{q}$.
\end{itemize}
\end{definition}

\noindent
Note that $(a,b_1,\ldots,b_l,c,d_1,\ldots,d_l)\in \DPR_l(R)$ if and only if
\[(a,\sum_{i=1}^lb_iR,c,\sum_{i=1}^ld_iR)\in \DPR_{*}(R).\]
In the following lemma, for $R$ a commutative ring, $I\lhd R$ an ideal and $X\subseteq \Spec R$, let $V(I)$ denote the closed set, in the Zariski topology, of prime ideals $\mfrak{p}$ such that $\mfrak{p}\supseteq I$ and $\overline{X}$ the closure of $X$ in the Zariski topology.

\begin{lemma}\label{radcolon}
Let $R$ be a commutative ring, $a\in R$ and $B\lhd R$. Then
\[(\rad B:a)=\bigcap_{\substack{\mfrak{p}\in\Spec R \\ \mfrak{p}\supseteq B, \ a\notin \mfrak{p}}}\mfrak{p}.\] Hence, \[V((\rad B:a))=\overline{V(B)\backslash V(aR)}.\]

\end{lemma}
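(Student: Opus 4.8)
The plan is to prove the identity by mutual containment of ideals, and then derive the Zariski-closure statement as a formal consequence. First I would recall that for any ideal $J\lhd R$, one has $\rad J=\bigcap_{\mfrak{p}\supseteq J}\mfrak{p}$, the intersection over all primes containing $J$; this is the standard description of the nilradical of $R/J$ pulled back to $R$. Applying this to $J=(\rad B:a)$ is not directly what we want, so instead I would work with the definition of $(\rad B:a)$ directly: $c\in(\rad B:a)$ iff $ca\in\rad B$ iff $ca\in\mfrak{p}$ for every prime $\mfrak{p}\supseteq B$.

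For the inclusion $(\rad B:a)\subseteq\bigcap_{\mfrak{p}\supseteq B,\,a\notin\mfrak{p}}\mfrak{p}$: take $c\in(\rad B:a)$ and a prime $\mfrak{p}$ with $\mfrak{p}\supseteq B$ and $a\notin\mfrak{p}$. Then $ca\in\rad B\subseteq\mfrak{p}$, and since $\mfrak{p}$ is prime and $a\notin\mfrak{p}$, we get $c\in\mfrak{p}$. For the reverse inclusion, suppose $c\in\mfrak{p}$ for every prime $\mfrak{p}$ with $\mfrak{p}\supseteq B$ and $a\notin\mfrak{p}$; I must show $ca\in\rad B$, i.e. $ca\in\mfrak{q}$ for every prime $\mfrak{q}\supseteq B$. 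Fix such a $\mfrak{q}$. If $a\in\mfrak{q}$ then $ca\in\mfrak{q}$ trivially. If $a\notin\mfrak{q}$ then $\mfrak{q}$ is one of the primes in the intersection, so $c\in\mfrak{q}$, hence $ca\in\mfrak{q}$. Either way $ca\in\mfrak{q}$, so $ca\in\rad B$ and $c\in(\rad B:a)$. This establishes the displayed equality.

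For the second assertion, I would argue that for any ideal $J$, $V(J)=V(\rad J)$ and $V$ turns intersections of ideals into intersections (hence, at the level of closed sets, $V(\bigcap_i\mfrak{p}_i)$ is the Zariski closure of $\{\mfrak{p}_i\}$). Concretely: $V((\rad B:a))=V\big(\bigcap_{\mfrak{p}\supseteq B,\,a\notin\mfrak{p}}\mfrak{p}\big)$ by the first part. The set $\{\mfrak{p}\in\Spec R\st\mfrak{p}\supseteq B,\ a\notin\mfrak{p}\}$ is exactly $V(B)\setminus V(aR)$. A prime $\mfrak{q}$ lies in $V(\bigcap_{\mfrak{p}\in S}\mfrak{p})$ iff $\mfrak{q}\supseteq\bigcap_{\mfrak{p}\in S}\mfrak{p}$, and it is a standard fact that this holds iff $\mfrak{q}$ lies in the Zariski closure $\overline{S}$ of $S$ (indeed $\overline{S}=V(\bigcap_{\mfrak{p}\in S}\mfrak{p})$, since any closed set $V(I)$ containing $S$ satisfies $I\subseteq\bigcap_{\mfrak{p}\in S}\mfrak{p}$, so $V(\bigcap_{\mfrak{p}\in S}\mfrak{p})\subseteq V(I)$, while $V(\bigcap_{\mfrak{p}\in S}\mfrak{p})$ obviously contains $S$). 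Taking $S=V(B)\setminus V(aR)$ gives $V((\rad B:a))=\overline{V(B)\setminus V(aR)}$.

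The only mild subtlety — and the step I would be most careful about — is the degenerate cases: if $a\in\rad B$ then $(\rad B:a)=R$ and the index set $\{\mfrak{p}\supseteq B:a\notin\mfrak{p}\}$ is empty, so the intersection is the empty intersection, which by convention is $R$; on the geometric side $V(B)\setminus V(aR)=\emptyset$ and its closure is $\emptyset=V(R)$, consistent with the formula. One should also note the convention that the intersection over the empty family equals $R$ so that the first displayed equation reads correctly; I would state this explicitly to avoid ambiguity. No results beyond elementary commutative algebra (primes, radicals, the Zariski topology) are needed, so there is no serious obstacle here — the content is purely the bookkeeping of the empty-index case and the standard closure identity $\overline{S}=V(\bigcap_{\mfrak{p}\in S}\mfrak{p})$.
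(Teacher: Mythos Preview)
Your proof is correct and follows essentially the same approach as the paper. The only minor tactical difference is in the reverse inclusion: the paper argues by contrapositive, using Zorn's lemma to produce a prime $\mfrak{p}\supseteq B$ avoiding all powers of $ra$, whereas you invoke the standard identity $\rad B=\bigcap_{\mfrak{p}\supseteq B}\mfrak{p}$ and case-split on whether $a\in\mfrak{q}$; both derivations of the closure formula are identical.
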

\begin{proof}
Suppose $r\in(\rad B:a)$. Then $(ra)^n\in B$ for some $n\in\N$. Let $\mfrak{p}\lhd R$ be a prime ideal with $B\subseteq \mfrak{p}$ and $a\notin \mfrak{p}$. Then $r^na^n=(ra)^n\in \mfrak{p}$. Therefore $r\in \mfrak{p}$.

Conversely, suppose that $r\notin (\rad B:a)$ i.e. $(ra)^n\notin B$ for all $n\in\N$. A standard argument using Zorn's lemma produces a prime ideal $\mfrak{p}\lhd R$ such that $\mfrak{p}\supseteq B$ and $(ra)^n\notin \mfrak{p}$ for all $n\in \N$. Now $(ra)^n\notin \mfrak{p}$ implies $r\notin \mfrak{p}$ and $a\notin \mfrak{p}$. So we have proved the first statement. For any $X\subseteq \Spec R$, $V(\bigcap_{\mfrak{p}\in X}\mfrak{p})=\overline{X}$. Therefore, the second statement follows from the first.
\end{proof}

\noindent
The following statement with $\DPR(R):=\DPR_1(R)$ in place of $\DPR_{*}(R)$ is proved in \cite[6.3]{Decpruf} for Pr\"ufer domains. We use \ref{radcolon} to further extend it to all commutative rings.

\begin{proposition}\label{DPRradical}
Let $R$ be a commutative ring. The following are equivalent for $a,c\in R$ and $B,D\lhd R$ finitely generated.
\begin{enumerate}
\item $(a,B,c,D)\in \DPR_{*}(R)$
\item $1\in (\rad(B):a)+(\rad(D):c)$
\end{enumerate}
\end{proposition}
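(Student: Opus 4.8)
The plan is to prove the equivalence by translating both conditions into statements about the Zariski spectrum and invoking Lemma \ref{radcolon}. First I would reformulate condition (1): unravelling the definition of $\DPR_*(R)$, the tuple $(a,B,c,D)$ fails to lie in $\DPR_*(R)$ exactly when there exist primes $\mfrak p \supseteq B$ with $a\notin\mfrak p$ and $\mfrak q \supseteq D$ with $c\notin\mfrak q$ and $\mfrak p + \mfrak q \neq R$; so $(1)$ holds if and only if for every such pair of primes we have $\mfrak p + \mfrak q = R$. The geometric content is that the closed sets $\overline{V(B)\setminus V(aR)}$ and $\overline{V(D)\setminus V(cR)}$ are "comaximal" in the sense that no prime contains both the intersection of the first family and the intersection of the second. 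By Lemma \ref{radcolon}, $\bigcap_{\mfrak p\supseteq B,\, a\notin\mfrak p}\mfrak p = (\rad B : a)$ and likewise $\bigcap_{\mfrak q\supseteq D,\, c\notin\mfrak q}\mfrak q = (\rad D : c)$.

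Next I would argue the key point: $(1)$ holds iff $V\bigl((\rad B:a)\bigr)$ and $V\bigl((\rad D:c)\bigr)$ are disjoint closed subsets of $\Spec R$, i.e.\ iff $V\bigl((\rad B:a) + (\rad D:c)\bigr) = \emptyset$. For the forward direction, suppose $(1)$ holds and take a prime $\mfrak r \supseteq (\rad B:a) + (\rad D:c)$. Then $\mfrak r$ lies in the closure $\overline{V(B)\setminus V(aR)}$, and since $V(B)\setminus V(aR)$ is the set of primes containing $(\rad B:a)$ but not containing... — here I need to be slightly careful: $\mfrak r$ contains $(\rad B:a)$ but need not itself satisfy $a\notin\mfrak r$. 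The cleaner route is to observe that $(\rad B:a) = \bigcap\{\mfrak p : \mfrak p \supseteq B,\ a\notin\mfrak p\}$ means $V\bigl((\rad B:a)\bigr) = \overline{V(B)\setminus V(aR)}$, and a prime $\mfrak r$ contains $(\rad B:a)$ iff $\mfrak r$ contains \emph{some} prime of the form $\mfrak p$ with $\mfrak p \supseteq B$, $a\notin\mfrak p$ (because $\mfrak r \in \overline{\{\mfrak p\}}$ is equivalent to $\mfrak r \supseteq \mfrak p$). So if $\mfrak r$ witnesses non-emptiness of $V((\rad B:a)+(\rad D:c))$, pick $\mfrak p \subseteq \mfrak r$ with $\mfrak p \supseteq B$, $a\notin\mfrak p$, and $\mfrak q \subseteq \mfrak r$ with $\mfrak q \supseteq D$, $c\notin\mfrak q$; then $\mfrak p + \mfrak q \subseteq \mfrak r \neq R$, contradicting $(1)$. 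Conversely, if $V((\rad B:a)+(\rad D:c)) = \emptyset$ and $\mfrak p, \mfrak q$ are primes with $\mfrak p\supseteq B$, $a\notin\mfrak p$, $\mfrak q\supseteq D$, $c\notin\mfrak q$, and $\mfrak p+\mfrak q \neq R$, then any prime $\mfrak r \supseteq \mfrak p + \mfrak q$ contains both $(\rad B:a)$ and $(\rad D:c)$, contradiction.

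Finally, $V\bigl((\rad B:a) + (\rad D:c)\bigr) = \emptyset$ holds if and only if $(\rad B:a) + (\rad D:c) = R$, which is precisely condition $(2)$: the implication $\Leftarrow$ is trivial, and $\Rightarrow$ is the standard fact that an ideal contained in no prime is the whole ring. Chaining these equivalences yields $(1)\Leftrightarrow(2)$.

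The main obstacle I anticipate is the bookkeeping in the "prime contained in $\mfrak r$" step: one must be careful that membership $\mfrak r \supseteq (\rad B:a)$ does not directly give $a\notin\mfrak r$, so the argument must pass through the closure description $V((\rad B:a)) = \overline{V(B)\setminus V(aR)}$ from Lemma \ref{radcolon} and use that a point lies in the closure of a set iff it is a specialization of one of its points. Everything else is routine manipulation of the Zariski topology and the definition of $\DPR_*(R)$, together with the observation already recorded after the definition that $\DPR_l(R)$ is the special case $B = \sum b_i R$, $D = \sum d_i R$.
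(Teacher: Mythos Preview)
Your proof is correct and follows essentially the same topological approach as the paper: both reduce $(1)\Leftrightarrow(2)$ to showing that $V((\rad B:a))\cap V((\rad D:c))=\emptyset$ via Lemma~\ref{radcolon}, then use that the closure of the locally closed set $V(B)\setminus V(aR)$ coincides with its specialization closure. The paper handles the step you flag as the ``main obstacle'' by citing \cite[1.5.4(i)]{SpecSp} for the identity $\overline{V(B)\setminus V(aR)}=\bigcup_{\mfrak p\in V(B)\setminus V(aR)}V(\mfrak p)$, which is exactly the specialization fact you invoke.
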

\begin{proof}
We prove this proposition topologically.

For all $a, c\in R$ and $B, D\lhd R$, $1\in (\rad(B):a)+(\rad(D):c)$ if and only if $V((\rad B:a))\cap V((\rad D:c))=\emptyset$. By \ref{radcolon}, $V((\rad B:a))=\overline{V(B)\backslash V(aR)}$ and $V((\rad D:c))=\overline{V(D)\backslash V(cR)}$. Thus
\[1\in (\rad(B):a)+(\rad(D):c)\] if and only if
\[\overline{V(B)\backslash V(aR)}\cap \overline{V(D)\backslash V(cR)}=\emptyset.\]
Now, by \cite[1.5.4 (i)]{SpecSp},
\[\overline{V(B)\backslash V(aR)}=\bigcup_{\mfrak{p}\in V(B)\backslash V(aR)}V(\mfrak{p}) \ \ \text{ and } \ \ \overline{V(D)\backslash V(cR)}=\bigcup_{\mfrak{q}\in V(D)\backslash V(cR)}V(\mfrak{q}).\]
So $1\in (\rad(B):a)+(\rad(D):c)$ if and only if
\begin{equation}\label{equ1}
    (\bigcup_{\mfrak{p}\in V(B)\backslash V(aR)}V(\mfrak{p}))\cap (\bigcup_{\mfrak{q}\in V(D)\backslash V(cR)}V(\mfrak{q}))=\emptyset.\tag{$\dagger$}
\end{equation}
Now (\ref{equ1}) holds if and only if for all prime ideals $\mfrak{p},\mfrak{q}$ such that $a\notin \mfrak{p}$, $B\subseteq \mfrak{p}$, $c\notin \mfrak{q}$ and $D\subseteq \mfrak{q}$, we have $V(\mfrak{p})\cap V(\mfrak{q})=\emptyset$ (i.e. $\mfrak{p}+\mfrak{q}=R$).
\end{proof}
\noindent
We refer to the relation $a\in\rad bR$ as the \textbf{radical relation}.

\begin{remark}
Let $R$ be a commutative ring. For $a,b\in R$, $a\in\rad bR$ if and only if $(a,b,a,b)\in\DPR(R)$. In particular, if $R$ is a recursive ring and $\DPR(R)$ is recursive then the radical relation is recursive.
\end{remark}

\noindent
For $R$ a commutative ring and $B,D\lhd R$ finitely generated ideals, let $xB=0$ denote the pp-formula $\bigwedge_{i=1}^n xb_i=0$ where $B=\sum_{i=1}^n b_iR$ and let $D|x$ denote the pp-formula $\sum_{i=1}^n d_i|x$ where $D=\sum_{i=1}^nd_iR$. Note that, up to $T_R$ equivalence, these formulae don't depend on the choice of generators of $B$ and $D$.

The following statement with $\DPR_{*}(R)$ replaced by $\DPR(R)=\DPR_1(R)$ is proved in \cite[6.4]{Decpruf} for Pr\"ufer domains.

\begin{proposition}\label{DPRZg}
Let $R$ be an arithmetical ring. The following are equivalent for $a,c\in R$ and $B,D\lhd R$ finitely generated ideals.
\begin{enumerate}
\item $(a,B,c,D)\in \DPR_{*}(R)$
\item $\left(\nicefrac{xB=0}{D|x}\right)\subseteq \left(\nicefrac{xa=0}{x=0}\right)\cup\left(\nicefrac{x=x}{c|x}\right)$
\end{enumerate}
\end{proposition}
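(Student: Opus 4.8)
The plan is to read condition~(2) in the Ziegler spectrum, prove $(1)\Rightarrow(2)$ directly, and deduce $(2)\Rightarrow(1)$ by localising at a maximal ideal and reducing to the valuation-ring case.

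First I would unwind~(2). For an indecomposable pure-injective $N$ one has $(xB=0)(N)=\{m\in N : mB=0\}$ and $(D|x)(N)=ND$ (the join $\sum_i d_i|x$ has solution set $\sum_i Nd_i$), while $N\in(\nicefrac{xa=0}{x=0})$ iff $a\in\Ass N$ and $N\in(\nicefrac{x=x}{c|x})$ iff $c\in\Div N$. So~(2) says: every indecomposable pure-injective $N$ with $\{m:mB=0\}\not\subseteq ND$ has $a\in\Ass N$ or $c\in\Div N$. I would prove $(1)\Rightarrow(2)$ by contraposition: suppose $N$ is indecomposable pure-injective with $\{m:mB=0\}\not\subseteq ND$, $a\notin\Ass N$, $c\notin\Div N$. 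Then $\{m:mB=0\}\neq 0$, so picking $0\neq m$ with $mB=0$ gives $B\subseteq\Ass N$; and $ND\neq N$ gives $D\subseteq\Div N$. By \ref{DivAssAtt} the ideals $\mfrak p:=\Ass N$ and $\mfrak q:=\Div N$ are prime, $\mfrak p+\mfrak q\subseteq\Att N\neq R$, and $a\notin\mfrak p$, $c\notin\mfrak q$, $B\subseteq\mfrak p$, $D\subseteq\mfrak q$, so $(a,B,c,D)\notin\DPR_{*}(R)$.

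For $(2)\Rightarrow(1)$, again by contraposition: by \ref{DPRradical}, $(a,B,c,D)\notin\DPR_{*}(R)$ means $1\notin(\rad B:a)+(\rad D:c)$, so there is a maximal ideal $\mfrak M\supseteq(\rad B:a)+(\rad D:c)$, hence (as $\rad B\subseteq(\rad B:a)$, $\rad D\subseteq(\rad D:c)$) $B,D\subseteq\mfrak M$. Passing to the valuation ring $V:=R_{\mfrak M}$, the ideals $BV=bV$ and $DV=dV$ are principal, and localising the colon-containments (using that $\rad$ and $(-:a)$ commute with localisation) gives $a\notin\rad(bV)$ and $c\notin\rad(dV)$. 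It then suffices to produce an indecomposable pure-injective $V$-module $N$ on which $a$ acts injectively, $c$ acts surjectively and $\{m:mb=0\}\not\subseteq Nd$: the restriction of $N$ to $R$ stays indecomposable pure-injective, and since $bV=BV$, $dV=DV$ one has $\{m:mB=0\}=\{m:mb=0\}$ and $ND=Nd$ in $N$, so $N$ refutes~(2). To build $N$, set $P:=\rad(bV)\ni b$ (so $a\notin P$) and $Q:=\rad(dV)\ni d$ (so $c\notin Q$), both prime and comparable; take the uniserial module $U:=V_Q/PV_Q$ if $P\subseteq Q$, and $U:=V_Q/V_P$ if $Q\subsetneq P$ (with $U:=V_P/PV_P$ in the case $P=Q$), and let $N:=H(U)$, which is indecomposable since $U$ is uniform. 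A direct computation (via the value group of $V$, in the domain case) gives $\Ass U=P$ and $\Div U=Q$, so $a\notin\Ass N$ and $c\notin\Div N$ by elementary equivalence of $U$ and $N$; and since $b\in P$, $d\in Q$, one has $Ud=0$ while $\{m\in U:mb=0\}\neq 0$, so the pair $\nicefrac{xb=0}{d|x}$ is nonzero already on $U$, hence on $N$.

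Equivalently — and this also re-proves $(1)\Rightarrow(2)$ — the inclusion in~(2) holds iff its analogue holds in $\Zg_{R_{\mfrak M}}$ for every maximal ideal $\mfrak M$ (every indecomposable pure-injective over $R$ being one over $R_{\Att N}$, and conversely); over the valuation ring $R_{\mfrak M}$ this analogue is $(a,b_{\mfrak M},c,d_{\mfrak M})\in\DPR_1(R_{\mfrak M})$ with $b_{\mfrak M},d_{\mfrak M}$ generating $BR_{\mfrak M},DR_{\mfrak M}$, which is the principal case \cite[6.4]{Decpruf}; by \ref{DPRradical} and the fact that a sum of two ideals in a valuation ring is the larger one, this is just ``$a\in\rad(BR_{\mfrak M})$ or $c\in\rad(DR_{\mfrak M})$'', and taking the conjunction over all $\mfrak M$ recovers $1\in(\rad B:a)+(\rad D:c)$, i.e.\ (1). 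The hard part is the construction in the previous paragraph: exhibiting an indecomposable pure-injective over $V$ with the prescribed $\Ass$ and $\Div$, and dealing with the two comparability cases and the degenerate cases for $P,Q$; this is cleanest to quote from the valuation-ring version of \cite[6.4]{Decpruf}.
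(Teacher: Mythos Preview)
Your $(1)\Rightarrow(2)$ is correct and in fact more direct than the paper's argument: you read off the witnessing primes as $\mfrak p=\Ass N$ and $\mfrak q=\Div N$ straight from a counterexample $N$, using only \ref{DivAssAtt}. The paper instead first invokes \ref{DPRradical} to produce $u\in R$ and $n\in\N$ with $(ua)^n\in B$ and $((u-1)c)^n\in D$, and then argues that either $u$ or $u-1$ lies outside $\Att N$.

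For $(2)\Rightarrow(1)$ the two approaches differ more substantially. The paper does not localise at a maximal ideal or rebuild primes as radicals; it takes the given $\mfrak p,\mfrak q$ (comparable since $R$ is arithmetical and $\mfrak p+\mfrak q\neq R$), applies Herzog duality to (2) to obtain the companion inclusion $\left(\nicefrac{xD=0}{B|x}\right)\subseteq\left(\nicefrac{xc=0}{x=0}\right)\cup\left(\nicefrac{x=x}{a|x}\right)$, and tests the single module $H(R_{\mfrak p}/\mfrak q R_{\mfrak p})$ (for $\mfrak p\supseteq\mfrak q$, or the symmetric module otherwise) against whichever of the two inclusions fits. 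The duality trick is precisely what lets the paper avoid ever constructing a uniserial module with $\Ass\supsetneq\Div$.

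In your route, the case $P\subseteq Q$ with $U=V_Q/PV_Q$ is fine, but the claim ``$Ud=0$'' is wrong whenever $P\subsetneq Q$ strictly (then $d\notin P$, since $Q=\rad(dV)$ is the smallest prime containing $d$, so $dV_Q\supsetneq PV_Q$); what you actually need, and do have, is $(xb=0)(U)=U\supsetneq Ud$. Your second construction $U=V_Q/V_P$ for $Q\subsetneq P$ requires a genuine verification that $\Div U=Q$ and $\Ass U=P$, which you do not give. Deferring this to \cite[6.4]{Decpruf} is legitimate, but compare how cheaply duality disposes of exactly this case in the paper's proof.
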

\begin{proof}
$(1)\Rightarrow(2)$: Suppose $(1)$ holds. By \ref{DPRradical}, $1\in (\rad B:a)+(\rad D:c)$. Hence there exist $u\in R$ and $n\in\N$ such that $(ua)^n\in B$ and $((u-1)c)^n\in D$.

Suppose $N\in \left(\nicefrac{xB=0}{D|x}\right)$. Take $m\in N$ such that $mb=0$ for all $b\in B$ and $m\notin ND$. So $mu^na^n=0$ and $m\notin N(u-1)^nc^n$. Since $\text{Att}N$ is a proper ideal, either $u\notin \text{Att}N$ or $u-1\notin \text{Att}N$. Suppose $u\notin \text{Att}N$. Then $mu^na^n=0$ implies $ma^n=0$ and hence $N\in \left(\nicefrac{xa^n=0}{x=0}\right)=\left(\nicefrac{xa=0}{x=0}\right)$ as required. Now suppose $u-1\notin \text{Att}N$. Then $N(u-1)^nc^n=Nc^n$ and hence $N\in \left(\nicefrac{x=x}{c^n|x}\right)=\left(\nicefrac{x=x}{c|x}\right)$.

\noindent
$(2)\Rightarrow (1)$: Suppose $(2)$ holds. Note that if $\left(\nicefrac{xB=0}{D|x}\right)\subseteq \left(\nicefrac{xa=0}{x=0}\right)\cup\left(\nicefrac{x=x}{c|x}\right)$ then, applying Herzog's duality, $\left(\nicefrac{xD=0}{B|x}\right)\subseteq\left(\nicefrac{xc=0}{x=0}\right)\cup\left(\nicefrac{x=x}{a|x}\right)$.

Suppose $\mfrak{p},\mfrak{q}$ are prime ideals with $\mfrak{p}+\mfrak{q}\neq R$. Further, suppose that $a\notin \mfrak{p}$, $B\subseteq \mfrak{p}$ and $D\subseteq \mfrak{q}$. We need to show that $c\in \mfrak{q}$.

Since $R$ is an arithmetical ring, either $\mfrak{p}\supseteq \mfrak{q}$ or $\mfrak{q}\supseteq \mfrak{p}$. Suppose $\mfrak{p}\supseteq \mfrak{q}$. Then, again since $R$ is an arithmetical ring, $R_{\mfrak{p}}/\mfrak{q}R_{\mfrak{p}}$ is uniserial and its pure-injective hull $N:=H(R_{\mfrak{p}}/\mfrak{q}R_{\mfrak{p}})$ is indecomposable. Since $D\subseteq \mfrak{q}$, $xD=0$ is equivalent to $x=x$ in $R_{\mfrak{p}}/\mfrak{q}R_{\mfrak{p}}$ and hence in $N$. Since $B\subseteq \mfrak{p}$, $B|x$ is not equivalent to $x=x$ in $R_{\mfrak{p}}/\mfrak{q}R_{\mfrak{p}}$ and hence in $N$. Thus $N\in \left(\nicefrac{xD=0}{B|x}\right)$. Since $a\notin \mfrak{p}$, $N\notin \left(\nicefrac{x=x}{a|x}\right)$ and hence $N\notin \left(\nicefrac{xc=0}{x=0}\right)$. Therefore $c\notin \mfrak{q}$, for otherwise $xc=0$ is equivalent to $x=0$ in $N$. The argument when $\mfrak{q}\supseteq \mfrak{p}$ is very similar except this time $N:=H(R_{\mfrak{q}}/\mfrak{p}R_{\mfrak{q}})$ and we use that $\left(\nicefrac{xD=0}{B|x}\right)\subseteq\left(\nicefrac{xc=0}{x=0}\right)\cup\left(\nicefrac{x=x}{a|x}\right)$.
\end{proof}

\noindent
Thus, for $R$ an arithmetical ring, if there exists an algorithm deciding inclusions of finite unions of Ziegler basic open sets then $\DPR_{*}(R)$ is recursive. Combining this with \cite[7.1]{Decpruf} (and its proof) we get the following theorem.

\begin{theorem}
Let $R$ be a recursive Pr\"ufer domain. There is an algorithm deciding inclusions of finite unions of Ziegler basic open sets if and only if $\DPR_{*}(R)$ is recursive.
Moreover, if $R/\mfrak{m}$ is infinite for every maximal ideal $\mfrak{m}\lhd R$ then $T_R$ is decidable if and only if $\DPR_{*}(R)$ is recursive.
\end{theorem}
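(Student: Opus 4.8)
The plan is to read this as an assembly of \ref{DPRZg}, a padding observation, and the cited results of \cite[7.1]{Decpruf}, organised into the two directions of the main equivalence together with the two directions of the ``moreover'' clause.

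\emph{An inclusion algorithm gives recursivity of $\DPR_*(R)$.} This is essentially the paragraph just before the statement. Presenting a finitely generated ideal by a finite tuple of generators, a membership query for $\DPR_*(R)$ is a tuple $(a,b_1,\dots,b_l,c,d_1,\dots,d_k)$ coding $(a,B,c,D)$; since $R$ is recursive we effectively form the pp-$1$-pairs $\nicefrac{xB=0}{D|x}$, $\nicefrac{xa=0}{x=0}$ and $\nicefrac{x=x}{c|x}$, and by \ref{DPRZg} the tuple lies in $\DPR_*(R)$ iff $(\nicefrac{xB=0}{D|x})\subseteq(\nicefrac{xa=0}{x=0})\cup(\nicefrac{x=x}{c|x})$, which the hypothesised algorithm decides.

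\emph{Recursivity of $\DPR_*(R)$ gives an inclusion algorithm.} Under the same encoding, ``$\DPR_*(R)$ recursive'' coincides with ``$\DPR_n(R)$ recursive uniformly in $n$'': the two are identified by the observation following the definition of $\DPR_l(R)$, while a mismatch between the number of generators of $B$ and of $D$ is absorbed by padding a generating tuple with zeros (which does not affect $\DPR_*$-membership). Uniform recursivity of the $\DPR_n(R)$ then produces the algorithm deciding inclusions of finite unions of Ziegler basic open sets, by the construction in the proof of \cite[7.1]{Decpruf}. To phrase the hypothesis through the single relation $\DPR(R)=\DPR_1(R)$, as in the introduction, one inserts \ref{DPRimpDPRn} first, which for Pr\"ufer domains promotes recursivity of $\DPR_1(R)$ to uniform recursivity of all $\DPR_n(R)$.

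\emph{The ``moreover'' clause.} That decidability of $T_R$ yields an inclusion algorithm needs no hypothesis on residue fields: by \ref{Zgsent} it amounts to a decision procedure for a restricted shape of the sentences of \ref{DecconBM}, which decidability of $T_R$ supplies. For the converse, when every $R/\mfrak{m}$ is infinite, the key structural fact is that no indecomposable pure-injective $N$ carries a Baur--Monk invariant $|\nicefrac{\phi}{\psi}(N)|$ that is finite and $\geq 2$: by \ref{standarduni} and \ref{unifinitefg} such a value would yield a finite cyclic quotient $V/I'$ of a localisation $V=R_\mfrak{p}$, whence by \ref{propIhash}(iv) the field $\mathrm{Frac}(R/\mfrak{p})=V/(I')^\#=V/\mfrak{p}V$ would be finite, contrary to hypothesis. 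Hence in a sentence of the form \eqref{sentform} a conjunct $|\nicefrac{\phi_i}{\psi_i}|=N_i$ with $N_i\geq2$ makes the sentence unsatisfiable, $|\nicefrac{\phi_i}{\psi_i}|\geq N_i$ with $N_i\geq2$ collapses to $|\nicefrac{\phi_i}{\psi_i}|\geq2$, and $|\nicefrac{\phi_i}{\psi_i}|=1$ asserts the pair is closed, so satisfiability of \eqref{sentform} reduces to containment of a finite intersection of Ziegler basic open sets in a finite union of them, decided by the previous step; $T_R$ is then decidable by \ref{DecconBM}. (This reduction is the ``moreover'' half of \cite[7.1]{Decpruf} and may simply be cited.) I expect the substantive work to lie entirely in those cited parts of \cite[7.1]{Decpruf}; granting them, the only genuinely new ingredient is \ref{DPRimpDPRn}, and it is needed only for the reformulation through the single relation $\DPR_1(R)$.
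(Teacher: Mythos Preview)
Your proposal is correct and follows the same route the paper indicates: the forward direction of the first equivalence is \ref{DPRZg}, the backward direction and the ``moreover'' are delegated to \cite[7.1]{Decpruf} (together with the padding observation that identifies recursivity of $\DPR_*(R)$ with uniform recursivity of the $\DPR_n(R)$), and you correctly separate off \ref{DPRimpDPRn} as relevant only to the later reformulation via $\DPR_1(R)$.

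One small divergence worth noting: for the ``moreover'' clause the paper (see the proof of \ref{ZieglertoTinfty}) takes a shorter path than yours, simply observing that $\nf{\phi}{\psi}(M)$ is itself an $R$-module and citing \cite[3.1]{Decdense} to conclude that under the infinite-residue-field hypothesis the only finite $R$-module is zero; hence every Baur--Monk invariant is $1$ or $\infty$ for \emph{every} module, not just indecomposable pure-injectives. Your argument via \ref{standarduni}, \ref{unifinitefg} and \ref{propIhash}(iv) reaches the same conclusion but is effectively reproving the needed instance of \cite[3.1]{Decdense}; it is fine, though your invocation of \ref{unifinitefg} is really to the finite uniserial module $\nf{\phi}{\psi}(U)$ (with $\phi$ replaced by $x=x$) rather than to $U$ itself, and you should make explicit the final step that $\mathrm{Frac}(R/\mfrak p)$ finite forces $\mfrak p$ to be maximal before invoking the hypothesis.
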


\noindent
We finish off the work in \cite{Decpruf} by showing that for $R$ a Pr\"ufer domain, if $\DPR(R)$ is recursive then $\DPR_n(R)$ is recursive uniformly in $n$ (equivalently $\DPR_{*}(R)$ is recursive).

\begin{proposition}\label{DPRimpDPRn}
Let $R$ be a recursive arithmetical ring. If $\DPR(R)$ is recursive then $\DPR_n(R)$ is recursive uniformly in $n$.
\end{proposition}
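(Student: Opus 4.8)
The plan is to prove the equivalent statement that $\DPR_{*}(R)$ is recursive (recall $(a,b_1,\ldots,b_l,c,d_1,\ldots,d_l)\in\DPR_l(R)$ iff $(a,\sum b_iR,c,\sum d_iR)\in\DPR_{*}(R)$, and that one may pad a finitely generated ideal with repeated generators, so a uniform decision procedure for $\DPR_{*}(R)$ is the same as one for the family $\DPR_n(R)$). I would do this by induction on the number of generators: show that a membership query for $\DPR_l(R)$, $l\ge 2$, reduces by a computable procedure to four membership queries for $\DPR_{l-1}(R)$. Since the base case $l=1$ is $\DPR(R)=\DPR_1(R)$, which is recursive by hypothesis, and since the reduction is a single algorithm taking $l$ as input, this yields recursiveness of $\DPR_n(R)$ uniformly in $n$.

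The reduction separates off a generator using Lemma \ref{Tuganbaev}. Given $(a,b_1,\ldots,b_l,c,d_1,\ldots,d_l)$ with $l\ge 2$, apply Lemma \ref{Tuganbaev} to the pair $b_{l-1},b_l$ to compute $\alpha,r,s\in R$ with $b_{l-1}\alpha=b_lr$ and $b_l(\alpha-1)=b_{l-1}s$, and likewise apply it to $d_{l-1},d_l$ to compute $\beta,r',s'$. The key observation is elementary: if $\mfrak{p}$ is a prime ideal with $\alpha\notin\mfrak{p}$, then $\{b_1,\ldots,b_l\}\subseteq\mfrak{p}$ iff $\{b_1,\ldots,b_{l-2},b_l\}\subseteq\mfrak{p}$ — the nontrivial direction being that $b_l\in\mfrak{p}$ forces $b_{l-1}\alpha=b_lr\in\mfrak{p}$, hence $b_{l-1}\in\mfrak{p}$ since $\mfrak{p}$ is prime and $\alpha\notin\mfrak{p}$ — and symmetrically, if $1-\alpha\notin\mfrak{p}$ then $\{b_1,\ldots,b_l\}\subseteq\mfrak{p}$ iff $\{b_1,\ldots,b_{l-1}\}\subseteq\mfrak{p}$, using $b_l(\alpha-1)=b_{l-1}s$. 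Because every prime ideal omits at least one of $\alpha,1-\alpha$ (otherwise it would contain $1$), and using that $a\notin\mfrak{p}$ together with $\alpha\notin\mfrak{p}$ is equivalent to $a\alpha\notin\mfrak{p}$, the defining condition of $\DPR_l(R)$ splits over the two cases for the $b$-side; doing the same for the $d$-side with $\beta$, one obtains that $(a,b_1,\ldots,b_l,c,d_1,\ldots,d_l)\in\DPR_l(R)$ if and only if all four of
\begin{gather*}
(a\alpha,\ b_1,\ldots,b_{l-2},b_l,\ c\beta,\ d_1,\ldots,d_{l-2},d_l),\\
(a\alpha,\ b_1,\ldots,b_{l-2},b_l,\ c(1-\beta),\ d_1,\ldots,d_{l-1}),\\
(a(1-\alpha),\ b_1,\ldots,b_{l-1},\ c\beta,\ d_1,\ldots,d_{l-2},d_l),\\
(a(1-\alpha),\ b_1,\ldots,b_{l-1},\ c(1-\beta),\ d_1,\ldots,d_{l-1})
\end{gather*}
lie in $\DPR_{l-1}(R)$ (each has a $b$-list and a $d$-list of length $l-1$).

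Iterating the reduction, a $\DPR_n(R)$ query unfolds into at most $4^{n-1}$ queries to $\DPR(R)=\DPR_1(R)$, each with data obtained from the input by finitely many ring operations and applications of the algorithm attached to Lemma \ref{Tuganbaev}; since $R$ is recursive and $\DPR(R)$ is recursive, this is a bona fide algorithm, uniform in $n$. The main point requiring care is the verification of the displayed "if and only if": one checks both directions at the level of prime pairs — that a pair $\mfrak{p},\mfrak{q}$ with $\mfrak{p}+\mfrak{q}\neq R$ witnessing failure of $\DPR_l(R)$ for the original tuple witnesses failure of one of the four reduced tuples (choose the case according to whether $\alpha\in\mfrak{p}$ and whether $\beta\in\mfrak{q}$, then shrink the generator lists and replace $a$ by $a\alpha$ or $a(1-\alpha)$, and $c$ by $c\beta$ or $c(1-\beta)$), and conversely that a witness of failure for any of the four reduced tuples — which by construction omits $a\alpha$ or $a(1-\alpha)$, hence omits $\alpha$ or $1-\alpha$ — lifts back to a witness of failure for the original via the generator-reconstruction above. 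No input beyond Lemma \ref{Tuganbaev} and basic manipulation of prime ideals is needed.
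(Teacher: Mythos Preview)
Your proof is correct and is essentially the same as the paper's: both reduce a $\DPR_l$ query to four $\DPR_{l-1}$ queries by applying Lemma~\ref{Tuganbaev} to a pair of $b$-generators and a pair of $d$-generators, splitting over whether $\alpha$ (resp.\ $\beta$) lies in $\mfrak{p}$ (resp.\ $\mfrak{q}$), and multiplying $a,c$ by the appropriate factor $\alpha,1-\alpha,\beta,1-\beta$. The only differences are cosmetic (you work with $b_{l-1},b_l$ rather than $b_1,b_2$, and write $1-\alpha$ rather than $\alpha-1$).
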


\begin{proof}
Let $n\in\N_2$ and $a,c,b_1,\ldots,b_n,d_1,\ldots,d_n\in R$. Suppose that $\alpha,\beta, r_1,r_2, \allowbreak s_1, \allowbreak s_2\in R$ are such that
\[b_1\alpha=b_2r_2, \ \ \ \ \ \ b_2(\alpha-1)=b_1r_1\]
\[d_1\beta=d_2s_2  \ \text{ and } \ d_2(\beta-1)=d_1s_1. \]
\noindent
\textbf{Claim:}
\[
\tag{$\star$}    (a,c,b_1,\ldot,b_n,d_1,\ldots,d_n)\in \text{DPR}_n(R)
\]
if and only if
\begin{enumerate}[(i)]
\item $(a\alpha, c\beta,b_2,\ldots,b_n,d_2,\ldots,d_n)\in \text{DPR}_{n-1}(R)$,
\item $(a\alpha,c(\beta-1),b_2,\ldots,b_n,d_1,d_3,\ldots,d_n)\in \text{DPR}_{n-1}(R)$,
\item $(a(\alpha-1),c\beta,b_1,b_3\ldots,b_n,d_2,\ldots,d_n)\in \text{DPR}_{n-1}(R)$ and
\item $(a(\alpha-1),c(\beta-1),b_1,b_3,\ldots,b_n,d_1,d_3,\ldots,d_n)\in \text{DPR}_{n-1}(R)$.
\end{enumerate}

\noindent
This claim plus the fact that we can always find appropriate $\alpha, \beta, r_1,r_2,s_1,s_2\in R$ implies the proposition.

To prove the forward direction, we show that $(\star)$ implies $(i)$ and note that the remaining conditions $(ii)$, $(iii)$, $(iv)$ are the same as $(i)$ but with the roles of $b_1$ and $b_2$, and of $\alpha$ and $\alpha-1$ interchanged (respectively of $d_1$ and $d_2$, and of $\beta$ and $\beta-1$ interchanged).

Let $\mfrak{p},\mfrak{q}$ be prime ideals such that $\mfrak{p}+\mfrak{q}\neq R$. Assuming $(\star)$, we need to show that $a\alpha\in \mfrak{p}$, $c\beta\in \mfrak{q}$, $b_i\notin \mfrak{p}$ for some $2\leq i\leq n$ or $d_i\notin \mfrak{q}$ for some $2\leq i\leq n$.

Now $(\star)$ implies $a\in \mfrak{p}$, $c\in \mfrak{q}$, $b_i\notin \mfrak{p}$ for some $1\leq i\leq n$ or $d_i\notin \mfrak{q}$ for some $1\leq i\leq n$. So the only problem is when $b_1\notin \mfrak{p}$ or $d_1\notin \mfrak{q}$. Suppose $b_1\notin \mfrak{p}$. If $\alpha\in \mfrak{p}$ then $a\alpha\in \mfrak{p}$ as required. So suppose further that $\alpha\notin \mfrak{p}$. Then $b_1\alpha\notin \mfrak{p}$ and hence $b_2r_2\notin \mfrak{p}$. So $b_2\notin \mfrak{p}$ as required. The argument when $d_1\notin \mfrak{q}$ is the same with the roles of $b_1$ and $d_1$, of $b_2$ and $d_2$, and of $\alpha$ and $\beta$ interchanged.

We now show that if $(\star)$ is not true then one of $(i), (ii), (iii), (iv)$ is not true. If $(\star)$ is not true then there exist prime ideals $\mfrak{p},\mfrak{q}$ such that $\mfrak{p}+\mfrak{q}\neq R$ and $a\notin \mfrak{p}$, $c\notin \mfrak{q}$, $b_i\in \mfrak{p}$ for all $1\leq i\leq n$ and $d_i\in \mfrak{q}$ for all $1\leq i\leq n$. For any proper ideal $I$, either $\alpha\notin I$ or $\alpha-1\notin I$ (respectively $\beta\notin I$ or $\beta-1\notin I$). Without loss of generality, we may assume that $\alpha\notin \mfrak{p}$ and $\beta\notin \mfrak{q}$. Hence $a\alpha\notin \mfrak{p}$, $c\beta\notin \mfrak{q}$, $b_i\in \mfrak{p}$ for all $1\leq i\leq n$ and $d_i\in \mfrak{q}$ for all $1\leq i\leq n$. So $(a\alpha, c\beta,b_2,\ldots,b_n,d_2,\ldots,d_n)\notin \text{DPR}_{n-1}(R)$ as required.
\end{proof}
\noindent
Combining this with the results in \cite{Decpruf} we get the following.

\begin{theorem}\label{Zginc}
Let $R$ be a recursive Pr\"ufer domain. There is an algorithm answering whether one Ziegler basic open set is contained in a finite union of others if and only if the relation $\DPR(R)$ is recursive.
\end{theorem}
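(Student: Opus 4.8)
The plan is to assemble Theorem~\ref{Zginc} from Proposition~\ref{DPRimpDPRn}, the theorem stated just before it (which rests on \cite[7.1]{Decpruf} and Proposition~\ref{DPRZg}), and an elementary reduction between two flavours of containment question. That reduction: an algorithm deciding whether a given Ziegler basic open set is contained in a given finite union of basic open sets is equivalent to an algorithm deciding whether a given finite union of basic open sets is contained in a given finite union of basic open sets. One direction is trivial (a single basic open set is a union of one); for the other, $\bigcup_{i=1}^{k}\left(\nicefrac{\phi_i}{\psi_i}\right)\subseteq\bigcup_{j=1}^{l}\left(\nicefrac{\sigma_j}{\tau_j}\right)$ holds if and only if $\left(\nicefrac{\phi_i}{\psi_i}\right)\subseteq\bigcup_{j=1}^{l}\left(\nicefrac{\sigma_j}{\tau_j}\right)$ for every $i$, so running the first algorithm $k$ times settles the second.

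For the direction ($\Leftarrow$): suppose $\DPR(R)$ is recursive. By Proposition~\ref{DPRimpDPRn} the sets $\DPR_n(R)$ are recursive uniformly in $n$; since $(a,b_1,\dots,b_n,c,d_1,\dots,d_n)\in\DPR_n(R)$ exactly when $(a,\sum_i b_iR,c,\sum_i d_iR)\in\DPR_{*}(R)$, and a finitely generated ideal is presented by a finite list of generators, it follows that $\DPR_{*}(R)$ is recursive. By the theorem stated just before Proposition~\ref{DPRimpDPRn} there is then an algorithm deciding inclusions of finite unions of Ziegler basic open sets, hence in particular an algorithm for the question in the statement of Theorem~\ref{Zginc}.

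For the direction ($\Rightarrow$): suppose there is an algorithm deciding whether one Ziegler basic open set lies in a finite union of others. By the reduction of the first paragraph there is an algorithm deciding inclusions of finite unions of basic open sets. Now apply Proposition~\ref{DPRZg} with the finitely generated ideals taken principal: for $a,b,c,d\in R$ we have $(a,b,c,d)\in\DPR(R)$ if and only if $\left(\nicefrac{xb=0}{d|x}\right)\subseteq\left(\nicefrac{xa=0}{x=0}\right)\cup\left(\nicefrac{x=x}{c|x}\right)$. Since $R$ is recursive, from $(a,b,c,d)$ we can effectively write down these three pp-pairs and query the algorithm, so $\DPR(R)$ is recursive.

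The mathematical substance is already contained in Proposition~\ref{DPRimpDPRn}---it is the uniformity in $n$ that matches the a priori unbounded number of ideal generators produced when a general Ziegler containment is reduced to the shape of Proposition~\ref{DPRZg} in \cite{Decpruf}---so I do not anticipate a genuine obstacle here; the only point requiring care is keeping the passage between $\DPR(R)$, the sets $\DPR_n(R)$, and $\DPR_{*}(R)$ straight, and observing that the two containment questions are algorithmically interchangeable.
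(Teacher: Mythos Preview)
Your proposal is correct and follows essentially the same route as the paper: the paper's proof is a one-line ``combining this with the results in \cite{Decpruf}'', and you have correctly unpacked that into Proposition~\ref{DPRimpDPRn} plus the theorem immediately preceding it (itself built from \cite[7.1]{Decpruf} and Proposition~\ref{DPRZg}), together with the elementary equivalence of the one-in-union and union-in-union containment problems. The only difference is cosmetic: for ($\Rightarrow$) you invoke Proposition~\ref{DPRZg} directly with principal ideals, whereas one could equivalently say the algorithm gives $\DPR_{*}(R)$ recursive and then specialise to $\DPR(R)=\DPR_1(R)$.
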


\begin{cor}\label{ZieglertoTinfty}
Let $R$ be a recursive Pr\"ufer domain. The relation $\DPR(R)$ is recursive if and only if there is an algorithm which, given a sentence
\[\chi:=\bigwedge_{i=1}^n\vertl \nf{\phi_i}{\psi_i}\vertr\geq E_i\wedge\bigwedge_{j=1}^m \vertl \nf{\sigma_j}{\tau_j}\vertr=1,\] where $\phi_i,\psi_i,\sigma_j,\tau_j$ are pp-$1$-formulae and $E_i\in\N$ for $1\leq i\leq n$ and $1\leq j\leq m$, answers whether there exists an $R$-module $M$ with $M\models \chi$.
Moreover, if $R/\mfrak{m}$ is infinite for every maximal ideal $\mfrak{m}\lhd R$ then $T_R$ is decidable if and only if $\DPR(R)$ is recursive.
\end{cor}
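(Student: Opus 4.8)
The plan is to obtain the first equivalence almost for free from Remark~\ref{Zgsent} and Theorem~\ref{Zginc}, and then to deduce the ``moreover'' clause by reducing an arbitrary Baur--Monk sentence to one of the displayed shape, using the hypothesis that all residue fields $R/\mfrak{m}$ are infinite. For the first statement I would argue as in the proof of Remark~\ref{Zgsent}: a module satisfies $\chi=\bigwedge_{i=1}^n\vertl\nf{\phi_i}{\psi_i}\vertr\geq E_i\wedge\bigwedge_{j=1}^m\vertl\nf{\sigma_j}{\tau_j}\vertr=1$ if and only if for each $i$ there is an indecomposable pure-injective in the basic open set $(\nf{\phi_i}{\psi_i})$ not lying in $\bigcup_j(\nf{\sigma_j}{\tau_j})$, that is, if and only if $(\nf{\phi_i}{\psi_i})\nsubseteq\bigcup_j(\nf{\sigma_j}{\tau_j})$ for every $i$. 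Hence an algorithm deciding satisfiability of all such $\chi$ is the same thing as an algorithm deciding whether one Ziegler basic open set is contained in a finite union of others, and by Theorem~\ref{Zginc} such an algorithm exists if and only if $\DPR(R)$ is recursive.

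For the ``moreover'' clause, one implication is immediate: the sentences $\chi$ above are $\mcal{L}_R$-sentences, so if $T_R$ is decidable then in particular their satisfiability can be decided, whence $\DPR(R)$ is recursive by the first part. For the converse I would invoke Theorem~\ref{DecconBM}, reducing to the task of deciding satisfiability of an arbitrary sentence $\Theta=\bigwedge_{i=1}^n\vertl\nf{\phi_i}{\psi_i}\vertr=N_i\wedge\bigwedge_{i=n+1}^m\vertl\nf{\phi_i}{\psi_i}\vertr\geq N_i$ with $N_i\in\N$. The crux, and the only non-formal step, is the claim that when $R/\mfrak{m}$ is infinite for every maximal ideal $\mfrak{m}$, then for every indecomposable pure-injective $N$ and every pp-pair $\nf{\phi}{\psi}$ the group $\nf{\phi}{\psi}(N)$ has either one element or infinitely many. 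To prove it I would use that $N$ is naturally a module over the valuation domain $V=R_{\Att N}$ (Lemma~\ref{DivAssAtt}, together with the fact that localisations of a Pr\"ufer domain at primes are valuation domains); writing $\mfrak{m}_V$ for the maximal ideal of $V$, the residue field $\kappa:=V/\mfrak{m}_V\cong\mathrm{Frac}(R/\Att N)$ is infinite, since it equals $R/\Att N$ when $\Att N$ is maximal and is the fraction field of the infinite domain $R/\Att N$ otherwise (a finite integral domain being a field). A finite non-zero $V$-module $G$ is finitely generated, so Nakayama's lemma gives $G/\mfrak{m}_VG\neq 0$, a finite non-zero $\kappa$-vector space, which is impossible; applying this to $G=\nf{\phi}{\psi}(N)$ proves the claim.

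Granting the claim I would finish the reduction as follows. Given $\Theta$, if some conjunct has the form $\vertl\nf{\phi_i}{\psi_i}\vertr=N_i$ with $N_i\geq 2$, then no module satisfies $\Theta$: writing any putative model as $\bigoplus_{k}N_k$ with the $N_k$ indecomposable pure-injective (Lemma~\ref{sumpielemeq}), the invariant $\vertl\nf{\phi_i}{\psi_i}(\bigoplus_kN_k)\vertr$ is a product of factors each equal to $1$ or to $\infty$ by the claim, hence never a finite integer $\geq 2$; so the algorithm outputs ``no''. Otherwise every equality conjunct reads $\vertl\nf{\phi_i}{\psi_i}\vertr=1$, and after deleting the vacuously true conjuncts $\vertl\nf{\phi_i}{\psi_i}\vertr\geq 1$ the sentence $\Theta$ has exactly the shape treated in the first paragraph, whose satisfiability is decidable because $\DPR(R)$ is recursive. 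I do not anticipate a genuine obstacle: apart from the short residue-field/Nakayama argument, the statement is a recombination of Remark~\ref{Zgsent}, Theorems~\ref{Zginc} and~\ref{DecconBM}, and Lemma~\ref{sumpielemeq}.
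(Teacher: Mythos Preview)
Your proof is correct and the first part matches the paper's approach exactly, combining Remark~\ref{Zgsent} and Theorem~\ref{Zginc}. For the ``moreover'' clause you reach the same conclusion by a slightly more roundabout path: you decompose into indecomposable pure-injectives and then argue via localisation at $\Att N$ and Nakayama that $\nf{\phi}{\psi}(N)$ cannot be finite non-zero. The paper instead observes directly that $\nf{\phi}{\psi}(M)$ is an $R$-module for \emph{any} $M$ (since $R$ is commutative), and cites \cite[3.1]{Decdense} for the fact that under the residue-field hypothesis the only finite $R$-module is zero; this avoids the passage through indecomposable pure-injectives and the need to check that $\nf{\phi}{\psi}(N)$ is an $R_{\Att N}$-module. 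Your Nakayama argument is essentially a self-contained proof of that cited lemma, which is a virtue if one wants the argument to stand alone, but the detour through Lemma~\ref{sumpielemeq} is unnecessary once one notices the claim holds for all modules.
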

\begin{proof}

The first statement follows from \ref{Zginc} and \ref{Zgsent}.

For any $R$-module $M$ and pp-pair $\nf{\phi}{\psi}$, $\nf{\phi}{\psi}(M)$ is an $R$-module. By \cite[3.1]{Decdense}, if $R/\mfrak{m}$ is infinite for every maximal ideal $\mfrak{m}\lhd R$ then the only finite $R$-module is the zero module. So, the second statement follows from the first by a standard argument using the Baur-Monk theorem.
\end{proof}

\begin{question}
It was shown in \cite[3.2]{DecVal} that, for any commutative ring $R$, if $T_R$ is recursive then the radical relation is recursive. For $R$ an arbitrary commutative ring, does $T_R$ decidable imply $\DPR(R)$, or more generally $\DPR_*(R)$, is recursive?
\end{question}

\subsection{The sets $\PP(R)$ and $\EPP(R)$}

\

\noindent
In \cite{Decdense}, a family of relations $\PP_n(R)$ were introduced. It is shown, \cite[3.2]{Decdense}, that if a recursive Pr\"ufer domain has decidable theory of modules then $\PP_n(R)$ is recursive uniformly in $n$. Conversely, it was shown that if $R$ is a recursive\footnote{It was stated there for effectively given Pr\"ufer domains. However, recall, if $R$ is a recursive Pr\"ufer domain with $\DPR(R)$ recursive then $R$ is effectively given.} Pr\"ufer domain such that the value group of each localisation of $R$ at a maximal ideal is dense then if $\DPR_n(R)$ and $\PP_n(R)$ are recursive uniformly in $n$ then the theory of $R$-modules is recursive.

\begin{definition}
For $l\in\N$, let $\PP_l(R)$ consist of the tuples $(p,n,c_1,\ldots,c_l,d)\in\P\times\N\times R^{l+1}$ such that there exist positive integers $s,k_1,\ldots,k_s$ and maximal ideals $\mfrak{m}_1,\ldots,\mfrak{m}_s$ of $R$ for which $n\in\mathrm{Span}_{\N_0}\{k_1,\ldots,k_s\}$ and for $1\leq i\leq s$
\begin{enumerate}
\item $\vertl R/\mfrak{m}_i\vertr=p^{k_i}$,
\item $c_j\in\mfrak{m}_i$ for $1\leq j\leq l$,
\item $d\notin \mfrak{m}_i$.
\end{enumerate}
\end{definition}

\noindent
It is clear that for $R$ a B\'ezout domain, if $\PP_1(R)$ is recursive then $\PP_l(R)$ is recursive uniformly in $l$. This is because $(p,n,c_1,\ldots,c_l,d)\in \PP_l(R)$ if and only if $(p,n,\gcd\{c_1,\ldots,c_l\},d)\in \PP_1(R)$. However, with a bit more work one can show this is also true for Pr\"ufer domains.

\begin{proposition}\label{PPimpPPn}
Let $R$ be a recursive Pr\"ufer domain. If $\PP_1(R)$ is recursive then $\PP_l(R)$ is recursive uniformly in $l$.
\end{proposition}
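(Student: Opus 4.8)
The plan is to follow the strategy of \ref{DPRimpDPRn}: use Lemma \ref{Tuganbaev} to ``merge'' two of the elements $c_1,\ldots,c_l$ at the price of modifying $d$, thereby reducing membership in $\PP_l(R)$ to finitely many queries about $\PP_{l-1}(R)$, and then recurse on $l$ with base case $l=1$ handled by the hypothesis that $\PP_1(R)$ is recursive. Since $R$ is a recursive Pr\"ufer domain, given $c_1,c_2\in R$ we can algorithmically produce $\alpha,r_1,r_2\in R$ with $c_1\alpha=c_2r_2$ and $c_2(\alpha-1)=c_1r_1$. The key observation is that for every maximal ideal $\mfrak{m}\lhd R$ exactly one of $\alpha,\alpha-1$ lies outside $\mfrak{m}$: if $\alpha\notin\mfrak{m}$ then $c_1\in\mfrak{m}\iff c_2r_2\in\mfrak{m}$, so $c_2\in\mfrak{m}$ forces $c_1\in\mfrak{m}$ and hence ``$c_1,c_2\in\mfrak{m}$'' is equivalent to ``$c_2\in\mfrak{m}$''; dually, if $\alpha-1\notin\mfrak{m}$ then ``$c_1,c_2\in\mfrak{m}$'' is equivalent to ``$c_1\in\mfrak{m}$''. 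Also $d\alpha\notin\mfrak{m}$ is equivalent to ($d\notin\mfrak{m}$ and $\alpha\notin\mfrak{m}$), and similarly for $d(\alpha-1)$.

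Concretely, for $l\geq 2$ I would establish that $(p,n,c_1,\ldots,c_l,d)\in\PP_l(R)$ if and only if at least one of the following holds:
(a) $(p,n,c_2,c_3,\ldots,c_l,d\alpha)\in\PP_{l-1}(R)$;
(b) $(p,n,c_1,c_3,\ldots,c_l,d(\alpha-1))\in\PP_{l-1}(R)$;
(c) there exist $n_1,n_2\in\N_0$ with $n_1+n_2=n$, $(p,n_1,c_2,c_3,\ldots,c_l,d\alpha)\in\PP_{l-1}(R)$ and $(p,n_2,c_1,c_3,\ldots,c_l,d(\alpha-1))\in\PP_{l-1}(R)$.

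For the forward direction, take a witnessing family $\mfrak{m}_1,\ldots,\mfrak{m}_s$ with residue exponents $k_1,\ldots,k_s$ and coefficients $a_i\in\N_0$ with $n=\sum_i a_ik_i$, and split the index set into $A=\{i:\alpha\notin\mfrak{m}_i\}$ and $B=\{i:\alpha\in\mfrak{m}_i\}$ (so $\alpha-1\notin\mfrak{m}_i$ for $i\in B$). Using the equivalence $d\alpha\notin\mfrak{m}_i\iff(d\notin\mfrak{m}_i\wedge\alpha\notin\mfrak{m}_i)$, the subfamily indexed by $A$ witnesses $(p,\sum_{i\in A}a_ik_i,c_2,\ldots,c_l,d\alpha)\in\PP_{l-1}(R)$, and the subfamily indexed by $B$ witnesses $(p,\sum_{i\in B}a_ik_i,c_1,c_3,\ldots,c_l,d(\alpha-1))\in\PP_{l-1}(R)$; if $B=\emptyset$ we land in case (a), if $A=\emptyset$ in case (b), and otherwise in case (c). For the reverse direction, note that any family witnessing $(p,\cdot\,,c_2,\ldots,c_l,d\alpha)\in\PP_{l-1}(R)$ consists of maximal ideals $\mfrak{m}$ with $\alpha\notin\mfrak{m}$ and $c_2\in\mfrak{m}$, whence $c_1\alpha=c_2r_2\in\mfrak{m}$ forces $c_1\in\mfrak{m}$ and $d\alpha\notin\mfrak m$ forces $d\notin\mfrak m$; together with the dual statement for $\alpha-1$, one reassembles in each of the three cases a family witnessing $(p,n,c_1,\ldots,c_l,d)\in\PP_l(R)$ (in case (c) by taking the union of the two families, using $n_1+n_2=n$).

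Granting this equivalence, the decision procedure for $\PP_l(R)$ uniform in $l$ is immediate: on input $(p,n,c_1,\ldots,c_l,d)$, if $l=1$ invoke the given algorithm for $\PP_1(R)$; if $l\geq 2$, compute $\alpha$ and recursively decide the $\PP_{l-1}$-instances in (a), (b), and in (c) for each of the finitely many pairs $(n_1,n_2)$ with $n_1+n_2=n$, answering ``yes'' iff one of (a), (b), (c) holds. The recursion has depth $l-1$ and finite branching, so it terminates, and nothing in its description depends non-uniformly on $l$. The main obstacle, and the point of departure from \ref{DPRimpDPRn}, is that $\PP$ is an \emph{existential} statement about families of maximal ideals (rather than a universal statement about all pairs of primes as $\DPR$ is), so the reduction must be a disjunction rather than a conjunction, and in particular the integer parameter $n$ has to be allowed to split as $n_1+n_2$ across the two ``types'' of maximal ideal in a witnessing family; getting the degenerate sub-cases (one subfamily empty, or contributing $0$ to $n$) right is precisely what forces the three-way alternative above. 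Once the correct formulation is in hand, verifying the equivalence is a routine manipulation of prime ideals using the two identities from Lemma \ref{Tuganbaev}.
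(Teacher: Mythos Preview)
Your proof is correct and follows essentially the same approach as the paper, which refers the reader to the proof of \ref{epp1enough} (the analogous result for $\EPP$). That proof establishes the single biconditional
\[
(p,n,c_1,\ldots,c_l,d)\in\PP_l(R)\ \Longleftrightarrow\ \exists\,n_1,n_2\in\N_0,\ n_1+n_2=n,\ (p,n_1,c_2,\ldots,c_l,d\alpha)\in\PP_{l-1}(R)\ \wedge\ (p,n_2,c_1,c_3,\ldots,c_l,d(\alpha-1))\in\PP_{l-1}(R),
\]
after adopting the convention that $(p,0,c_1,\ldots,c_l,d)\in\PP_l(R)$ always. Your cases (a) and (b) are exactly what this convention absorbs (they are the instances $n_2=0$ and $n_1=0$ respectively), so your three-way disjunction is equivalent to the paper's single clause; there is no substantive difference. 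One minor point: since you already write $n_1,n_2\in\N_0$ in (c), either adopt the $n=0$ convention explicitly (making (a) and (b) redundant) or restrict (c) to $n_1,n_2\in\N$ so the three cases are genuinely disjoint; as written there is a small redundancy, but no error.
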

\begin{proof}
We skip this proof as it is very similar to the proof of \ref{epp1enough}.
\end{proof}

\noindent
As a direct consequence of \cite[6.1]{Decdense}, \cite[3.2]{Decdense}, \cite[6.4]{Decpruf}, \ref{PPimpPPn} and \ref{DPRimpDPRn}, we get the following theorem.

\begin{theorem}\label{densethm}
Let $R$ be a recursive Pr\"ufer domain such that for all maximal ideals $\mfrak{m}$, the value group of $R_{\mfrak{m}}$ is dense. The theory of $R$-modules is decidable if and only if $\DPR(R)$ and $\PP(R)$ are recursive.
\end{theorem}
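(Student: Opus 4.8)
The plan is to obtain the theorem by assembling results already in hand: \cite[6.1]{Decdense}, \cite[3.2]{Decdense} and \cite[6.4]{Decpruf} from earlier work, together with the two reduction propositions \ref{DPRimpDPRn} and \ref{PPimpPPn} proved above. No new model theory is required; the whole content is matching hypotheses to conclusions.

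For the forward implication, I would assume $T_R$ is decidable. By the Baur--Monk theorem (see \ref{Zgsent}) there is then an algorithm deciding inclusions of Ziegler basic open sets, and \cite[6.4]{Decpruf} (equivalently \ref{DPRZg}, specialised to principal ideals) turns this into recursivity of $\DPR(R)=\DPR_1(R)$. Independently, \cite[3.2]{Decdense} says that decidability of the theory of modules of a recursive Pr\"ufer domain forces $\PP_n(R)$ to be recursive uniformly in $n$, hence in particular $\PP(R)=\PP_1(R)$ is recursive. Neither step uses the density hypothesis.

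For the reverse implication, I would assume $\DPR(R)$ and $\PP(R)$ are recursive. Then \ref{DPRimpDPRn} gives that $\DPR_n(R)$ is recursive uniformly in $n$, and \ref{PPimpPPn} gives that $\PP_n(R)$ is recursive uniformly in $n$. At this point the density assumption is invoked exactly once: \cite[6.1]{Decdense} states that for a recursive Pr\"ufer domain all of whose localisations at maximal ideals have dense value group, the theory of modules is decidable provided $\DPR_n(R)$ and $\PP_n(R)$ are recursive uniformly in $n$. This yields decidability of $T_R$ and finishes the argument.

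The only genuinely new ingredients are \ref{DPRimpDPRn} and \ref{PPimpPPn}, which promote recursivity of the $n=1$ relations to uniform recursivity of the full families; the real difficulty is not in this theorem but in the cited decision procedure \cite[6.1]{Decdense}. The one point to keep straight is purely bookkeeping: \cite[6.1]{Decdense} and \cite[3.2]{Decdense} are stated for effectively given Pr\"ufer domains, but, as recalled in the preliminaries, all results of \cite{Decpruf} and \cite{Decdense} hold for recursive Pr\"ufer domains (and in any case recursivity of $\DPR(R)$ forces $R$ to be effectively given), so the hypotheses line up in both directions.
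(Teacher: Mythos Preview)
Your proposal is correct and matches the paper's own treatment: the theorem is stated there as a direct consequence of \cite[6.1]{Decdense}, \cite[3.2]{Decdense}, \cite[6.4]{Decpruf}, \ref{PPimpPPn} and \ref{DPRimpDPRn}, and you have assembled these ingredients in exactly the intended way, including the remark that the ``effectively given'' hypothesis in the cited results can be replaced by ``recursive''.
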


\iflorna
\begin{proof}
For the purposes of this proof we extend $\PP_{l}(R)$ to a relation on $\P\times\N_0\times R^{l+1}$ by letting $(p,0,c_1,\ldots,c_l,d)\in \PP_{l}(R)$ for all $p\in\P$ and $c_1,\ldots,c_l,d\in R$.

\

We show that for all $p\in\mathbb{P}$, $n\in\N$, $c_1,\ldots,c_l,d\in R$, if $\alpha,r,s\in R$ are such that $c_1\alpha=c_2r$ and $c_2(\alpha-1)=c_1s$ then $(p,n,c_1,\ldots,c_l,d)\in \PP_l(R)$ if and only if there exist $n_1,n_2\in\N_0$ such that $n_1+n_2=n$, $(p,n_1,c_2,\ldots,c_l,d\alpha)\in \PP_{l-1}(R)$ and $(p,n_2,c_1,c_3,\ldots,c_l,d(\alpha-1))\in \PP_{l-1}(R)$.

Suppose $(p,n,c_1,\ldots,c_l,d)\in \PP_l(R)$ and let $\mfrak{m}_1,\ldots,\mfrak{m}_t\lhd R$ be maximal ideals such that  $n\in\Span_{\N_0}\{k_1,\ldots,k_t\}$ and for all $1\leq i\leq t$, $\vertl R/\mfrak{m}_i\vertr =p^{k_i}$, $c_j\in\mfrak{m}_i$ for $1\leq j\leq l$ and $d\notin\mfrak{m}_i$.

For all $1\leq i\leq t$, either $\alpha\notin\mfrak{m}_i$ or $\alpha-1\notin\mfrak{m}_i$. Without loss of generality, we may assume that $\alpha\notin\mfrak{m}_i$ for $1\leq i\leq t'$ and $\alpha-1\notin \mfrak{m}_i$ for $t'+1\leq i\leq t$. There exist $n_1\in \Span_{\N_0}\{k_1,\ldots,k_{t'}\}$ and $n_2\in \Span_{\N_0}\{k_{t'+1},\ldots,k_{t}\}$ such that $n_1+n_2=n$. Since $d\notin \mfrak{m}_i$ for $1\leq i\leq t$, $d\alpha\notin \mfrak{m}_i$ for $1\leq i\leq t'$ and $d(\alpha-1)\notin \mfrak{m}_i$ for $t'+1\leq i\leq t$. So $(p,n_1,c_2,\ldots,c_l,d\alpha)\in \PP_{l-1}(R)$ and $(p,n_2,c_1,c_3,\ldots,c_l,d(\alpha-1))\in \PP_{l-1}(R)$ as required.

Now suppose that there exists $n_1,n_2\in\N_0$ such that $n_1+n_2=n$, $(p,n_1,c_2,\ldots,c_l,d\alpha)\in \PP_{l-1}(R)$ and $(p,n_2,c_1,c_3,\ldots,c_l,d(\alpha-1))\in \PP_{l-1}(R)$.

It's enough to note that if $c_1\alpha=c_2r$ (respectively $c_2(\alpha-1)=c_1s$) then $(p,n_1,c_2,\ldots,c_l,d\alpha)\in \PP_{l-1}(R)$ (respectively $(p,n_2,c_1,c_3,\ldots,c_l,d(\alpha-1))\in \PP_{l-1}(R)$) implies $(p,n_1,c_1,c_2,\ldots,c_l,d)\in \PP_{l}(R)$ (respectively $(p,n_2,c_1,c_2,\ldots,c_l,d)\in \PP_{l}(R)$). So $(p,n,c_1,\ldots,c_l,d)\in \PP_l(R)$.
\end{proof}
\else\fi

\noindent
We generalise $\PP_l(R)$ to $\EPP_l(R)$ in order to deal with Pr\"ufer domains with maximal ideals $\mfrak{m}$ such that $R_{\mfrak{m}}$ is a valuation domain with non-dense value group.

\begin{definition}\label{defnEPP}
For $l\in \N$, let $\EPP_l(R)$ consist of tuples \[(p,n;a_1,\ldots,a_l;\gamma;e,m)\in \mathbb{P}\times\N_0\times R^l\times R\times R\times \N_0\] such that there exist $h\in \N_0$ and, for $1\leq i\leq h$, prime ideals $\mfrak{p}_i\lhd R$ and ideals $I_i\lhd R_{\mfrak{p}_i}$ such that $\gamma\notin \mfrak{p}_i$ and $a_1,\ldots,a_l\in I_i$ for $1\leq i\leq h$, $|\oplus_{i=1}^h R_{\mfrak{p}_i}/I_i|=p^n$ and $|\oplus_{i=1}^h R_{\mfrak{p}_i}/eR_{\mfrak{p}_i}|=p^m$.

We say the sequence $(\mfrak{p}_j,I_j)_{1\leq j\leq h}$ witnesses $(p,n;a_1,\ldots,a_l;\gamma;e,m)\in \EPP_l(R)$. By convention, $(p,0;a_1,\ldots,a_l;\gamma;e,0)\in\EPP_l(R)$ and the empty sequence is a witness for it. We will often write $\EPP(R)$ for $\EPP_1(R)$.
\end{definition}

\begin{remark}
We may replace prime ideals with maximal ideals in \ref{defnEPP} without changing the definition since if $\vertl R_{\mfrak{p}}/eR_{\mfrak{p}}\vertr$ is finite then either $\mfrak{p}$ is maximal or $R_{\mfrak{p}}=eR_{\mfrak{p}}$ and if $\vertl R_{\mfrak{p}}/I\vertr$ is finite then either $\mfrak{p}$ is maximal or $I=R_{\mfrak{p}}$.
\end{remark}

\noindent
The relation $\EPP_l(R)$ is an extension of the relation $\PP_l(R)$.

\begin{lemma}
Let $p\in \mathbb{P}$, $n\in\N$ and $c_1,\ldots,c_n,d\in R$. Then $(p,n,c_1,\ldots,c_l,d)\in \PP_l(R)$ if and only if $(p,n;c_1,\ldots,c_l;d;1,0)\in \EPP_l(R)$.
\end{lemma}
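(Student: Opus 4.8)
The plan is to unwind both conditions in the special case $\gamma=d$, $a_j=c_j$, $e=1$, $m=0$, and then to match the two kinds of witnessing data. First I would note that the $e,m$ clause of $\EPP_l$ is vacuous here: with $e=1$ we have $eR_{\mfrak{p}}=R_{\mfrak{p}}$ for every prime $\mfrak{p}$, so $\bigoplus_{i=1}^h R_{\mfrak{p}_i}/eR_{\mfrak{p}_i}=0$, and the requirement $|\bigoplus_{i=1}^h R_{\mfrak{p}_i}/eR_{\mfrak{p}_i}|=p^m$ just reads $1=p^0$, which holds automatically. Thus $(p,n;c_1,\ldots,c_l;d;1,0)\in\EPP_l(R)$ is equivalent to the existence of finitely many pairs $(\mfrak{p}_i,I_i)$, $1\leq i\leq h$, with $\mfrak{p}_i\lhd R$ prime, $I_i\lhd R_{\mfrak{p}_i}$, $d\notin\mfrak{p}_i$, $c_1,\ldots,c_l\in I_i$ and $\prod_{i=1}^h|R_{\mfrak{p}_i}/I_i|=p^n$.

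For $\PP_l\Rightarrow\EPP_l$: given a $\PP_l$-witness consisting of maximal ideals $\mfrak{m}_1,\ldots,\mfrak{m}_s$, exponents $k_1,\ldots,k_s$ and a representation $n=\sum_{i=1}^s t_ik_i$ with $t_i\in\N_0$, I would take the sequence of pairs containing, for each $i$, exactly $t_i$ copies of $(\mfrak{m}_i,\mfrak{m}_iR_{\mfrak{m}_i})$. Since $\mfrak{m}_i$ is maximal, the canonical map gives $R_{\mfrak{m}_i}/\mfrak{m}_iR_{\mfrak{m}_i}\cong R/\mfrak{m}_i$, of size $p^{k_i}$, so the product of the orders over this sequence is $p^{\sum_i t_ik_i}=p^n$; moreover the image of each $c_j$ lies in $\mfrak{m}_iR_{\mfrak{m}_i}$ because $c_j\in\mfrak{m}_i$, and $d\notin\mfrak{m}_i$. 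Hence this sequence witnesses $(p,n;c_1,\ldots,c_l;d;1,0)\in\EPP_l(R)$.

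For the converse: given an $\EPP_l$-witness $(\mfrak{p}_i,I_i)_{1\leq i\leq h}$, discard the indices with $I_i=R_{\mfrak{p}_i}$ (which contribute a factor $1$); since $n\geq1$ at least one index, say those with $1\leq i\leq h'$, survives, and on each of them $I_i$ is proper. As $R$ is a Pr\"ufer domain, $R_{\mfrak{p}_i}$ is a valuation domain, so $R_{\mfrak{p}_i}/I_i$ is a finite module whose lattice of submodules is a chain; being of finite length, its (unique) composition factors are then all isomorphic to the single simple $R_{\mfrak{p}_i}$-module, namely the residue field $R_{\mfrak{p}_i}/\mfrak{p}_iR_{\mfrak{p}_i}$. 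In particular that residue field is finite, so $R/\mfrak{p}_i$ is a finite domain, hence a field, so $\mfrak{p}_i$ is maximal; writing $p^{k_i}:=|R/\mfrak{p}_i|=|R_{\mfrak{p}_i}/\mfrak{p}_iR_{\mfrak{p}_i}|$ and letting $r_i$ be the length of $R_{\mfrak{p}_i}/I_i$, we get $|R_{\mfrak{p}_i}/I_i|=p^{k_ir_i}$. Summing, $n=\sum_{i=1}^{h'}k_ir_i\in\Span_{\N_0}\{k_1,\ldots,k_{h'}\}$; also $c_1,\ldots,c_l\in I_i\subseteq\mfrak{p}_iR_{\mfrak{p}_i}$, and contracting along $R\to R_{\mfrak{p}_i}$ (using $\mfrak{p}_iR_{\mfrak{p}_i}\cap R=\mfrak{p}_i$) yields $c_1,\ldots,c_l\in\mfrak{p}_i$, while $d\notin\mfrak{p}_i$. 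So $\mfrak{p}_1,\ldots,\mfrak{p}_{h'}$ with exponents $k_1,\ldots,k_{h'}$ witness $(p,n,c_1,\ldots,c_l,d)\in\PP_l(R)$.

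The only step that is more than bookkeeping is the one in the converse that rewrites $|R_{\mfrak{p}_i}/I_i|$ as a power of $|R/\mfrak{p}_i|$ and forces $\mfrak{p}_i$ to be maximal; this rests on the valuation-domain structure of $R_{\mfrak{p}_i}$ and is essentially the content already recorded in Lemma \ref{propIhash}(iv) (that $I_i^\#$ is the maximal ideal when the quotient is finite). The remaining ingredients — vacuity of the $e=1,m=0$ clause, the passage between $c_j\in\mfrak{p}_i$ in $R$ and its image in $R_{\mfrak{p}_i}$, and the isomorphism $R_{\mfrak{m}}/\mfrak{m}R_{\mfrak{m}}\cong R/\mfrak{m}$ for maximal $\mfrak{m}$ — are standard.
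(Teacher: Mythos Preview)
Your proof is correct and follows essentially the same approach as the paper: in both directions you use the same witnesses (copies of $(\mfrak{m}_i,\mfrak{m}_iR_{\mfrak{m}_i})$ forward, and for the converse the observation that $|R_{\mfrak{p}_i}/I_i|$ is a power of the residue field size). The only cosmetic difference is that the paper offloads the ``$\mfrak{p}_i$ must be maximal'' step to a Remark immediately preceding the lemma, whereas you reprove it inline via the composition-series argument.
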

\begin{proof}
Suppose $(p,n,c_1,\ldots,c_l,d)\in \PP_l(R)$. Let $s\in\N$, $k_1,\ldots,k_s\in \N$ and  $\mfrak{m}_1,\ldots,\mfrak{m}_s$ be as in the definition of $\PP_l(R)$. Let $\lambda_1,\ldots,\lambda_s\in\N_0$ be such that $n=\sum_{i=1}^s\lambda_ik_i$. Then $c_j\in \mfrak{m}_i$ for $1\leq j\leq l$, $d\notin \mfrak{m}_i$, $|\oplus_{i=1}^s(R_{\mfrak{m}_i}/\mfrak{m}_iR_{\mfrak{m}_i})^{\lambda_i}|=p^n$ and $|\oplus_{i=1}^s(R_{\mfrak{m}_i}/1\cdot R_{\mfrak{m}_i})^{\lambda_i}|=p^0$. So $(p,n;c_1,\ldots,c_l;d;1,0)\in \EPP_l(R)$.

Suppose $(\mfrak{m}_j,I_j)_{1\leq j\leq s}$ witnesses $(p,n;c_1,\ldots,c_l;d;1,0)\in \EPP_l(R)$. For $1\leq i\leq s$, let $\lambda_i\in \N_0$ be such that $|R_{\mfrak{m}_i}/I_i|=|R/\mfrak{m}_i|^{\lambda_i}$. If $\lambda_i\neq 0$ then $I_i\subseteq \mfrak{m}_iR_{\mfrak{m}_i}$ and so $c_j\in \mfrak{m}_i$ for $1\leq j\leq l$. For $1\leq i\leq s$, let $k_i$ be such that $|R/\mfrak{m}_i|=p^{k_i}$. Now $n=\sum_{i=1, \ \lambda_i\neq 0}^s\lambda_ik_i$ and hence $(p,n,c_1,\ldots,c_l,d)\in \PP_l(R)$.
\end{proof}

\noindent
If the value group of $R_{\mfrak{m}}$ is dense then, for all $e\in R$, $\vertl R_{\mfrak{m}}/eR_{\mfrak{m}}\vertr$ is either $1$ or infinite. Moreover, $\vertl R_{\mfrak{m}}/eR_{\mfrak{m}}\vertr=1$ if and only if $e\notin\mfrak{m}$.

\begin{remark}
Let $R$ be a Pr\"ufer domain such that the value group of $R_{\mfrak{m}}$ is dense for all maximal ideals $\mfrak{m}$. Then $(p,n;a_1,\ldots,a_l;\gamma;e,m)\in \EPP_l(R)$ if and only if $(p,n,a_1,\ldots,a_l,\gamma\cdot e)\in \PP_l(R)$ and $m=0$.
\end{remark}

\noindent
In particular, if $R$ is a recursive Pr\"ufer domain such that the value group of $R_{\mfrak{m}}$ is dense for all maximal ideals $\mfrak{m}$ then $\EPP_l(R)$ is recursive if and only if $\PP_l(R)$ is recursive.

\begin{proposition}\label{epp1enough}
Let $R$ be a recursive Pr\"ufer domain. If $\EPP_1(R)$ is recursive then $\EPP_l(R)$ is recursive uniformly in $l$.
\end{proposition}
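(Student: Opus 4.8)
The plan is to adapt the proof of \ref{DPRimpDPRn}: we reduce membership in $\EPP_l(R)$ to membership in two instances of $\EPP_{l-1}(R)$ by ``splitting'' the first two parameters $a_1,a_2$ using Lemma \ref{Tuganbaev}, then iterate this $l-1$ times and appeal to the oracle for $\EPP_1(R)$ at the bottom. Concretely, fix $l\geq 2$ and a tuple $(p,n;a_1,\ldots,a_l;\gamma;e,m)$, and use Lemma \ref{Tuganbaev} (which is effective over a recursive arithmetical ring) to produce $\alpha,r,s\in R$ with $a_1\alpha=a_2r$ and $a_2(\alpha-1)=a_1s$. The key claim is that $(p,n;a_1,\ldots,a_l;\gamma;e,m)\in\EPP_l(R)$ if and only if there exist $n_1,n_2,m_1,m_2\in\N_0$ with $n_1+n_2=n$ and $m_1+m_2=m$ such that $(p,n_1;a_2,a_3,\ldots,a_l;\gamma\alpha;e,m_1)\in\EPP_{l-1}(R)$ and $(p,n_2;a_1,a_3,\ldots,a_l;\gamma(\alpha-1);e,m_2)\in\EPP_{l-1}(R)$.

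For the forward direction of the claim, take a witnessing sequence $(\mfrak{p}_j,I_j)_{1\leq j\leq h}$ and partition the index set into $J_1=\{\,j:\alpha\notin\mfrak{p}_j\,\}$ and its complement $J_2$; since each $\mfrak{p}_j$ is proper, $\alpha-1\notin\mfrak{p}_j$ for $j\in J_2$. For $j\in J_1$ we have $\gamma\alpha\notin\mfrak{p}_j$ (as $\mfrak{p}_j$ is prime and contains neither factor) and $a_2,\ldots,a_l\in I_j$; for $j\in J_2$ we have $\gamma(\alpha-1)\notin\mfrak{p}_j$ and $a_1,a_3,\ldots,a_l\in I_j$. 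Defining $n_1,m_1$ by $p^{n_1}=\prod_{j\in J_1}|R_{\mfrak{p}_j}/I_j|$ and $p^{m_1}=\prod_{j\in J_1}|R_{\mfrak{p}_j}/eR_{\mfrak{p}_j}|$ — both are powers of $p$, being divisors of $p^{n}$ and $p^{m}$ respectively — and similarly $n_2,m_2$ over $J_2$, the two subsequences witness the two $\EPP_{l-1}$ memberships.

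For the reverse direction, concatenate a witness for the first membership with one for the second. A term $(\mfrak{p}_j,I_j)$ from the first witness has $\gamma\alpha\notin\mfrak{p}_j$, hence $\alpha\notin\mfrak{p}_j$, so $\alpha$ is a unit in $R_{\mfrak{p}_j}$; then $a_1\alpha=a_2r$ gives $a_1=a_2r\alpha^{-1}\in a_2R_{\mfrak{p}_j}\subseteq I_j$, using $a_2\in I_j$ (here $l\geq 2$), so $a_1,\ldots,a_l\in I_j$ and $\gamma\notin\mfrak{p}_j$. Symmetrically a term from the second witness has $\alpha-1$ a unit in $R_{\mfrak{p}_j}$, and $a_2(\alpha-1)=a_1s$ gives $a_2\in a_1R_{\mfrak{p}_j}\subseteq I_j$. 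As $p^{n_1}p^{n_2}=p^n$ and $p^{m_1}p^{m_2}=p^m$, the concatenation witnesses the $\EPP_l$ membership. The claim now yields the algorithm: to decide $(p,n;a_1,\ldots,a_l;\gamma;e,m)\in\EPP_l(R)$, compute $\alpha,r,s$, loop over the finitely many decompositions $n=n_1+n_2$, $m=m_1+m_2$, and recurse on the two $\EPP_{l-1}$ instances, bottoming out at $\EPP_1(R)$ via the hypothesised oracle; this is one procedure taking $l$ as an input, so $\EPP_l(R)$ is recursive uniformly in $l$.

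There is no deep obstacle here, since the argument closely parallels \ref{DPRimpDPRn}; what needs care is (i) checking that the two identities of Lemma \ref{Tuganbaev} genuinely move $a_1$ (resp.\ $a_2$) into $I_j$ after localising — which is where primeness of $\mfrak{p}_j$ and the valuation structure of $R_{\mfrak{p}_j}$ are used — and (ii) the bookkeeping of the $p$-power cardinalities under partitioning and concatenation, including degenerate cases such as $\alpha=0$ or $\gamma=0$, where $\gamma\alpha=0\notin\mfrak{p}_j$ is impossible and so one sub-instance is forced to have the empty witness with $n_i=m_i=0$, consistent with $J_1$ (or $J_2$) being empty.
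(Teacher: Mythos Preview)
Your proof is correct and follows essentially the same approach as the paper: reduce $\EPP_l$ to $\EPP_{l-1}$ via the Tuganbaev splitting $a_1\alpha=a_2r$, $a_2(\alpha-1)=a_1s$, partitioning a witness according to whether $\alpha\notin\mfrak{p}_j$ or $\alpha-1\notin\mfrak{p}_j$, and concatenating in the converse direction. One small remark: your comment that the ``valuation structure of $R_{\mfrak{p}_j}$'' is used in step (i) is a slight overstatement --- all you need is that $\alpha\notin\mfrak{p}_j$ makes $\alpha$ a unit in the localisation (so $a_1\alpha\in I_j\Rightarrow a_1\in I_j$), together with primeness of $\mfrak{p}_j$ to split $\gamma\alpha\notin\mfrak{p}_j$.
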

\begin{proof}
We show that for all $p\in\mathbb{P}$, $n,m\in\N_0$, $a_1,\ldots,a_l,\gamma,e\in R$, if $\alpha,r,s\in R$ are such that $a_1\alpha=a_2r$ and $a_2(\alpha-1)=a_1s$ then $(p,n;a_1,\ldots,a_l;\gamma;e,m)\in \EPP_l(R)$ if and only if there exist $n_1,n_2\in\N_0$ and $m_1,m_2\in\N_0$ such that $n_1+n_2=n$, $m_1+m_2=m$, $(p,n_1;a_2,\ldots,a_l;\gamma\alpha;e,m_1)\in \EPP_{l-1}(R)$ and $(p,n_2,a_1,a_3,\ldots,a_l,\gamma(\alpha-1);e,m_2)\in \EPP_{l-1}(R)$. This is enough since we can always effectively find appropriate $\alpha,r,s\in R$.

Suppose that $(\mfrak{p}_j,I_j)_{1\leq j\leq s}$ witnesses $(p,n;a_1,\ldots,a_l;\gamma;e,m)\in \EPP_l(R)$. For all $1\leq j\leq s$, either $\alpha\notin \mfrak{p}_j$ or $\alpha-1\notin\mfrak{p}_j$. By reordering, we may assume that $\alpha\notin \mfrak{p}_j$ for $1\leq j\leq t$ and $\alpha-1\notin \mfrak{p}_j$ for $t+1\leq j\leq s$. Let $n_1 =\log_p\vertl\oplus_{i=1}^tR_{\mfrak{p}_i}/I_i\vertr$, $n_2=\log_p\vertl\oplus_{i=t+1}^sR_{\mfrak{p}_i}/I_i\vertr$, $m_1=\log_p\vertl \oplus_{i=1}^t R_{\mfrak{p}_i}/eR_{\mfrak{p}_i}\vertr$ and $m_2=\log_p\vertl \oplus_{i=t+1}^s R_{\mfrak{p}_i}/eR_{\mfrak{p}_i}\vertr$. Now $\alpha \gamma\notin \mfrak{p}_j$ and $a_2,\ldots,a_l\in I_j$ for $1\leq j\leq t$, $\vertl\oplus_{i=1}^tR_{\mfrak{p}_i}/I_i\vertr=p^{n_1}$ and $\vertl \oplus_{i=1}^t R_{\mfrak{p}_i}/eR_{\mfrak{p}_i}\vertr=p^{m_1}$. So $(p,n_1;a_2,\ldots,a_l;\gamma\alpha;e,m_1)\in \EPP_{l-1}(R)$. Similarly, $(p,n_2,a_1,a_3,\ldots,a_l,\allowbreak \gamma(\alpha-1);e,m_2)\in \EPP_{l-1}(R)$.

Conversely, suppose that $n_1,n_2,m_1,m_2\in\N_0$ are such that $n_1+n_2=n$, $m_1+m_2=m$, $(p,n_1;a_2,\ldots,a_l;\allowbreak\gamma\alpha;\allowbreak e,m_1)\in \EPP_{l-1}(R)$ and $(p,n_2,a_1,a_3,\ldots,a_l,\allowbreak\gamma(\alpha-1);e,m_2)\in \EPP_{l-1}(R)$. Let $(\mfrak{p}_j,I_j)_{1\leq j\leq t}$ witness $(p,n_1;a_2,\ldots,a_l;\gamma\alpha;e,m_1)\allowbreak\in \EPP_{l-1}(R)$ and let $(\mfrak{p}_j,I_j)_{t+1\leq j\leq s}$ witness $(p,n_2,a_1,a_3,\ldots,a_l,\allowbreak\gamma(\alpha-1);e,m_2)\in \EPP_{l-1}(R)$. Then
\[\vertl\oplus_{i=1}^sR_{\mfrak{p}_i}/I_i\vertr=\vertl\oplus_{i=1}^tR_{\mfrak{p}_i}/I_i\vertr\cdot\vertl\oplus_{i=t+1}^{s}R_{\mfrak{p}_i}/I_i\vertr=p^{n_1}p^{n_2}=p^n\]
and
\[
\vertl \oplus_{i=1}^s R_{\mfrak{p}_i}/eR_{\mfrak{p}_i}\vertr=\vertl \oplus_{i=1}^t R_{\mfrak{p}_i}/eR_{\mfrak{p}_i}\vertr\cdot\vertl \oplus_{i=t+1}^s R_{\mfrak{p}_i}/eR_{\mfrak{p}_i}\vertr=p^{m_1}p^{m_2}=p^m.
\]
For $1\leq j\leq t$, $\gamma\alpha\notin I_j$ and hence $\gamma\notin I_j$ and $\alpha\notin I_j$. Since $a_2\in I_j$ and $\alpha\notin \mfrak{p}_j$, $a_1\alpha=a_2r\in I_j$ implies $a_1\in I_j$. Similarly, $\gamma\notin \mfrak{m}_j$ and $a_2\in I_j$ for $t+1\leq j\leq s$. Therefore $(\mfrak{p}_j,I_j)_{1\leq j\leq s}$ witnesses $(p,n;a_1,\ldots,a_l;\gamma;e,m)\in \EPP_l(R)$.
\end{proof}

\begin{lemma}\label{decimpEPPrec}
Let $R$ be a Pr\"ufer domain. If $T_R$ is decidable then $\EPP_1(R)$ (and hence $\EPP_l(R)$ is recursive uniformly in $l$).
\end{lemma}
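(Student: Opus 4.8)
The plan is to produce, recursively from a candidate tuple $t=(p,n;a;\gamma;e,m)$, a single $\mcal{L}_R$-sentence $\chi_t$ with the property that some $R$-module satisfies $\chi_t$ if and only if $t\in\EPP_1(R)$. Granting this, decidability of $T_R$ lets us decide whether $\neg\chi_t\in T_R$, i.e.\ whether $\chi_t$ is satisfiable in some $R$-module, and hence decide $t\in\EPP_1(R)$; the statement for $\EPP_l(R)$ then follows from \ref{epp1enough}. If $R$ is finite it is a field and $T_R$ is trivially decidable, so assume $R$ infinite; then $R_{\mfrak{p}}$ is infinite for every prime $\mfrak{p}$, and the degenerate cases $e=0$ and $\gamma=0$ (which collapse all the invariants in play) are harmless.

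If $t\in\EPP_1(R)$ is witnessed by $(\mfrak{p}_i,I_i)_{1\le i\le h}$, with $e\neq 0$ (for $e=0$, $t\in\EPP_1(R)$ forces $n=m=0$ and $\chi_t$ is witnessed by the zero module), the relevant module is $M_t:=\bigoplus_{i=1}^h R_{\mfrak{p}_i}/e^2I_i$, and I would take $\chi_t$ to be
\begin{align*}
\chi_t\ :=\ &\vertl\nf{e^2|x}{x=0}\vertr=p^n\ \wedge\ \vertl\nf{e|x}{e^2|x}\vertr=p^m\ \wedge\ \vertl\nf{x=x}{e|x}\vertr=p^m\\
&\wedge\ \vertl\nf{x=x}{x(ae^2)=0}\vertr=1\ \wedge\ \vertl\nf{x=x}{\gamma|x}\vertr=1.
\end{align*}
That $M_t\models\chi_t$ is a direct computation using $e^2I_i\subseteq eR_{\mfrak{p}_i}$, injectivity of multiplication by $e$ and by $e^2$ on $R_{\mfrak{p}_i}$, and \ref{I/bI}: one obtains $e^2M_t\cong\bigoplus_i R_{\mfrak{p}_i}/I_i$, $eM_t/e^2M_t\cong\bigoplus_i R_{\mfrak{p}_i}/eR_{\mfrak{p}_i}\cong M_t/eM_t$, that $ae^2$ annihilates $M_t$ (because $a\in I_i$), and that $\gamma$ acts invertibly on $M_t$ (because $\gamma\notin\mfrak{p}_i$), so all five conjuncts hold.

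Conversely, suppose $M\models\chi_t$. The first three conjuncts give $\vertl M\vertr=\vertl M/eM\vertr\cdot\vertl eM/e^2M\vertr\cdot\vertl e^2M\vertr=p^{n+2m}$, so $M$ is finite; by \ref{REFuni} we may replace $M$ by a finite direct sum $\bigoplus_{i=1}^h U_i$ still satisfying $\chi_t$ with each $U_i$ a finite uniserial $R_{\mfrak{p}_i}$-module, hence (\ref{unifinitefg}) $U_i\cong R_{\mfrak{p}_i}/K_i$ with $K_i\subsetneq R_{\mfrak{p}_i}$. The crucial point is that the canonical surjection $M/eM\twoheadrightarrow eM/e^2M$, $x+eM\mapsto ex+e^2M$, is between finite sets of equal cardinality $p^m$, hence a bijection, so $\{x\in M:ex=0\}\subseteq eM$. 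Computing inside $V/K_i$ (with $V=R_{\mfrak{p}_i}$): if $e\in K_i$ then $V/K_i$ is $e$-annihilated, so its $e$-torsion is all of $V/K_i$ and is not contained in $e(V/K_i)=0$ unless $U_i=0$; hence $e\notin K_i$, so $K_i=eJ_i$ with $J_i=(K_i:e)\subsetneq V$, and $\{x:ex=0\}\subseteq eM$ then forces $J_i\subseteq eV$, i.e.\ $K_i=e^2L_i$ for an ideal $L_i\lhd R_{\mfrak{p}_i}$. With $U_i\cong R_{\mfrak{p}_i}/e^2L_i$ in hand, the last two conjuncts say $\gamma\notin\mfrak{p}_i$ and $a\in L_i$ for all $i$, while the first two give $\prod_i\vertl R_{\mfrak{p}_i}/L_i\vertr=p^n$ and $\prod_i\vertl R_{\mfrak{p}_i}/eR_{\mfrak{p}_i}\vertr=p^m$; thus $(\mfrak{p}_i,L_i)_{1\le i\le h}$ witnesses $t\in\EPP_1(R)$.

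I expect the main obstacle to be exactly the faithfulness of the encoding, concentrated in the step $K_i=e^2L_i$ above. The obvious attempt — encode $\EPP_1$ via a finite module $\bigoplus R_{\mfrak{p}_i}/I_i$ together with sentences fixing $\vertl eM\vertr$ and $\vertl M/eM\vertr$ as the appropriate powers of $p$, asserting that $a$ kills the $e$-divisible part, and asserting bijectivity of $\gamma$ — is \emph{not} faithful: a finite module can break up with cyclic summands $R_{\mfrak{q}}/K$ where $eR_{\mfrak{q}}\subsetneq K\subsetneq R_{\mfrak{q}}$, and such summands carry Baur--Monk data that arises from no $\EPP_1$-witness. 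Passing from $I_i$ to $e^2I_i$ and imposing $\vertl M/eM\vertr=\vertl eM/e^2M\vertr$ (equivalently $\{x:ex=0\}\subseteq eM$) is the device that excludes these spurious summands. Everything else — the displayed isomorphisms, the summand-by-summand evaluation of the invariants, and the disposal of $e=0$ and $\gamma=0$ — is routine.
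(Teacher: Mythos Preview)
Your proof is correct and follows essentially the same route as the paper: encode $(p,n;a;\gamma;e,m)$ by a sentence whose intended model is $\bigoplus_i R_{\mfrak{p}_i}/e^2I_i$, with the crucial device being a condition forcing each cyclic summand $R_{\mfrak{p}_i}/K_i$ of a finite model to satisfy $K_i=e^2L_i$. The paper imposes $\vertl\nf{xe=0}{e|x}\vertr=1$ directly (and reads off $\vertl M\vertr=p^{2m+n}$, $\vertl M/eM\vertr=p^m$), whereas you impose $\vertl M/eM\vertr=\vertl eM/e^2M\vertr=p^m$ and deduce $\{x:ex=0\}\subseteq eM$ by a counting argument; these are equivalent formulations of the same constraint, and the rest of the argument is identical.
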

\begin{proof}
For $p\in\mathbb{P}$, $n,m\in \N_0$ and $a,\gamma,e\in R$, let
$\Theta_{(p,n;a;\gamma;e,m)}$ be the $\mcal{L}_R$-sentence
\[\vertl \nicefrac{x=x}{x=0}\vertr=p^{2m+n}\wedge\vertl \nicefrac{x=x}{e|x}\vertr=p^m\wedge \vertl \nicefrac{xe=0}{e|x}\vertr=1\wedge \vertl \nicefrac{x=x}{\gamma|x}\vertr=1\wedge \vertl \nicefrac{e^2a|x}{x=0}\vertr=1.\]
We show that, for $m,n$ not both zero, $(p,n;a;\gamma;e,m)\in \EPP_1(R)$ if and only if there is an $R$-module satisfying $\Theta_{(p,n;a;\gamma;e,m)}$. Hence the lemma holds.

Suppose $(p,n;a;\gamma;e,m)\in \EPP_1(R)$. By definition, there exist prime ideals $\mfrak{p}_1,\ldots\mfrak{p}_s\lhd R$ and ideals $I_i\lhd R_{\mfrak{p}_i}$ such that $\gamma\notin \mfrak{p}_j$ and $a\in I_j$ for $1\leq j\leq s$, $|\oplus_{i=1}^s R_{\mfrak{p}_i}/I_i|=p^n$ and $|\oplus_{i=1}^s R_{\mfrak{p}_i}/eR_{\mfrak{p}_i}|=p^m$. Let $M:=\oplus_{i=1}^s R_{\mfrak{p}_i}/e^2I_i$. Then $M$ satisfies $\Theta_{(p,n;a;\gamma;e,m)}$.

Conversely, suppose there exists an $R$-module $M$ satisfying $\Theta_{(p,n;a;\gamma;e,m)}$. Since $M$ is finite and non-zero, there exist maximal ideals $\mfrak{m}_1,\ldots,\mfrak{m}_s\lhd R$ and proper ideals $J_i\lhd R_{\mfrak{m}_i}$ such that $\oplus_{i=1}^sR_{\mfrak{m}_i}/J_i\cong M$. Since $M$ satisfies $\Theta_{(p,n;a;\gamma;e,m)}$, for $1\leq i\leq s$, $(J_i:e)\subseteq eR_{\mfrak{m}_i}+J_i$.

By \ref{(I:a)useful}, either $eR_{\mfrak{m}}+J_i=R_{\mfrak{m}_i}$ or $J_i\subseteq e^2R_{\mfrak{m}_i}+eJ_i$. So, either $e\notin\mfrak{m}_i$, $J_i\subseteq e^2R_{\mfrak{m}_i}$ or $J_i\subseteq eJ_i$. So, for each $1\leq i\leq s$, there exists $I_i\lhd R_{\mfrak{m}_i}$ with $J_i=e^2I_i$.


Since $|e^2a|x/x=0(R_{\mfrak{m}_i}/e^2I_i)|=1$, $a\in I_i$ and since $\vert x=x/\gamma|x(R_{\mfrak{m}_i}/e^2I_i)\vert=1$, $\gamma\notin \mfrak{m}_i$. Therefore $\mfrak{m}_1,\ldots,\mfrak{m}_s$ and $I_i\lhd R_{\mfrak{m}_i}$ are such that $a\in I_i$ and $\gamma\notin \mfrak{m}_i$ for $1\leq i\leq s$, and $|\oplus_{i=1}^sR_{\mfrak{m}_i}/I_i|=p^n$ and $|\oplus_{i=1}^sR_{\mfrak{m}_i}/eR_{\mfrak{m}_i}|=p^m$. Hence $(p,n;a;\gamma;e,m)\in \EPP_1(R)$.
\end{proof}

\noindent
The following corollary is a direct consequence of the proof of \ref{decimpEPPrec}. We will later see, \ref{EPPfinite}, that the converse also holds.

\begin{cor}\label{decfinimpEPP}
If the theory of $R$-modules of size $n$ is decidable uniformly in $n$ then $\EPP_1(R)$ is recursive.
\end{cor}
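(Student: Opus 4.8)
The plan is to read the statement off the proof of \ref{decimpEPPrec}. Recall that, for $p\in\P$, $n,m\in\N_0$ not both zero and $a,\gamma,e\in R$, that proof exhibits an $\mcal{L}_R$-sentence $\Theta_{(p,n;a;\gamma;e,m)}$ and shows that $(p,n;a;\gamma;e,m)\in\EPP_1(R)$ if and only if some $R$-module satisfies $\Theta_{(p,n;a;\gamma;e,m)}$. When $m=n=0$ the tuple lies in $\EPP_1(R)$ by convention, so it suffices to decide membership in the remaining cases.

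The key observation is that $\Theta_{(p,n;a;\gamma;e,m)}$ contains the conjunct $\vertl\nicefrac{x=x}{x=0}\vertr=p^{2m+n}$, so every $R$-module satisfying it has exactly $p^{2m+n}$ elements. Consequently, there is an $R$-module satisfying $\Theta_{(p,n;a;\gamma;e,m)}$ if and only if there is an $R$-module of size $p^{2m+n}$ satisfying it, which in turn holds if and only if $\neg\Theta_{(p,n;a;\gamma;e,m)}$ fails in some $R$-module of size $p^{2m+n}$. Since $R$ is recursive, both the sentence $\Theta_{(p,n;a;\gamma;e,m)}$ and the natural number $p^{2m+n}$ are computable from the tuple $(p,n;a;\gamma;e,m)$.

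Now the hypothesis applies directly: as the theory of $R$-modules of size $k$ is decidable uniformly in $k$, there is an algorithm which, given $k\in\N$ and an $\mcal{L}_R$-sentence $\sigma$, decides whether $\sigma$ holds in all $R$-modules of size $k$. Running it with $k=p^{2m+n}$ and $\sigma=\neg\Theta_{(p,n;a;\gamma;e,m)}$ then decides whether $(p,n;a;\gamma;e,m)\in\EPP_1(R)$. This shows that $\EPP_1(R)$ is recursive, and hence, by \ref{epp1enough}, that $\EPP_l(R)$ is recursive uniformly in $l$.

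There is no real obstacle here: all the content sits in \ref{decimpEPPrec}, and the only point to verify is that membership in $\EPP_1(R)$ has been reduced to the existence of a module of one explicitly computable finite cardinality, so that the uniform decidability of the theories of finite modules can be applied off the shelf.
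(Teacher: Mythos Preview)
Your proof is correct and follows exactly the approach the paper indicates: the corollary is stated there as ``a direct consequence of the proof of \ref{decimpEPPrec}'', and you have spelled out precisely that consequence by observing that $\Theta_{(p,n;a;\gamma;e,m)}$ pins down the size of any model as $p^{2m+n}$, so the uniform decidability hypothesis applies.
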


\subsection{The set $X(R)$}

\

\noindent

\begin{definition}
Let $X(R)$ be the set of $(p,n;e,\gamma,a,\delta)\in\mathbb{P}\times\N\times (R\backslash\{0\})\times R^3$ such that there exist integers $h\in\N$ and prime ideals $\mfrak{p}_1,\ldots,\mfrak{p}_h$ such that  $\vertl \oplus_{i=1}^hR_{\mfrak{p}_i}/e R_{\mfrak{p}_i}\vertr=p^n$ and for $1\leq i\leq h$, $\gamma\notin \mfrak{m}_i$, and,
 there exists an ideal $I_i\lhd R_{\mfrak{p}_i}$  such that $a\in I_i$ and $\delta\notin (I_i)^\#$.
\end{definition}

It is often easier to check that $X(R)$ is recursive in concrete rings using the following reformulation.
\begin{remark}\label{X2ndformulation}
Let $(p,n;e,\gamma,a,\delta)\in\mathbb{P}\times\N\times (R\backslash\{0\})\times R^3$. Then $(p,n;e,\gamma,a,\delta)\in X(R)$ if and only if there exist $1\leq h\leq n$ and maximal ideals $\mfrak{m}_1,\ldots,\mfrak{m}_h$ such that $\vertl \oplus_{i=1}^hR_{\mfrak{m}_i}/e R_{\mfrak{m}_i}\vertr=p^n$, and, for $1\leq i\leq h$
\begin{enumerate}
\item $\gamma\notin \mfrak{m}_i$, and,
\item either $\delta\notin \mfrak{m}_i$, or, there exists a prime ideal $\mfrak{q}_i\subseteq \mfrak{m}_i$ such that $a\in \mfrak{q}_i$ and $\delta\notin \mfrak{q}_i$.
\end{enumerate}
\end{remark}

\begin{proof}
Note that if $\mfrak{p}_i$ in the definition of $X(R)$ is such that $\vertl R_{\mfrak{p}_i}/eR_{\mfrak{p}_i}\vertr=1$ then we may drop $\mfrak{p}_i$ from the sequence of prime ideals witnessing $(p,n;e,\gamma,a,\delta)\in X(R)$. Therefore, we may assume each $\mfrak{p}_i$ is maximal and that $1\leq h\leq n$. Now, if $a\in I_i$ and $\delta\notin (I_i)^\#$ then either $I_i=R_{\mfrak{p}_i}$ and $\delta\notin \mfrak{p}_i$, or, $I_i\lhd R_{\mfrak{p}_i}$ is a proper ideal. If $I_i$ is a proper ideal then $a\in I_i$ implies $a\in (I_i)^\#$.

Therefore, if $(p,n;e,\gamma,a,\delta)\in X(R)$ then the conditions in the statement hold with, $\mfrak{m}_i:=\mfrak{p}_i$, and $\mfrak{q}_i:=(I_i)^\#$ if $\delta\in\mfrak{m}_i$. Conversely, if the conditions in the statement hold for $(p,n;e,\gamma,a,\delta)$ then set $\mfrak{p}_i:=\mfrak{m}_i$ and, $I_i:=R_{\mfrak{m}_i}$ if $\delta\notin \mfrak{m}_i$ and $I_i:=\mfrak{q}_i$ otherwise.
\end{proof}

\begin{proposition}\label{decimpXrec}
Let $R$ be a Pr\"ufer domain. If $T_R$ is decidable then $X(R)$ is recursive.
\end{proposition}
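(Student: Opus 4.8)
Following the pattern of Lemma \ref{decimpEPPrec}, I would show that membership in $X(R)$ can be detected by the satisfiability of a single $\mcal{L}_R$-sentence built uniformly from the parameters $(p,n;e,\gamma,a,\delta)$, so that decidability of $T_R$ (via Theorem \ref{DecconBM}) immediately gives recursivity of $X(R)$. The key semantic observation is that the modules witnessing membership in $X(R)$ are exactly (direct sums of) the "intrinsically infinite" modules $R_{\mfrak{p}}/I$ with $R_{\mfrak{p}}/eR_{\mfrak{p}}$ finite, constrained by $\gamma\notin\mfrak{p}$, $a\in I$ and $\delta\notin I^\#$. I need an $\mcal{L}_R$-sentence $\Theta_{(p,n;e,\gamma,a,\delta)}$ such that there is an $R$-module satisfying it if and only if $(p,n;e,\gamma,a,\delta)\in X(R)$.

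\textbf{Building the sentence.} The condition $\vertl\oplus R_{\mfrak{p}_i}/eR_{\mfrak{p}_i}\vertr=p^n$ is captured by a conjunct $\vertl\nf{x=x}{e|x}\vertr=p^n$ together with $\vertl\nf{xe=0}{e|x}\vertr=1$ (which, by \ref{decomposeorder}(2) or a direct uniserial argument, forces the indecomposable summands $N$ to have $\vertl N/eN\vertr$ finite and $eN$ torsion-free over the relevant localisation, i.e. forces $N\cong H(R_{\mfrak{p}}/eI)$-type behaviour — more precisely, that on each indecomposable summand $e$ acts so that the module is of the form $R_{\mfrak{p}}/I$ rather than something with a nontrivial $e$-divisible part). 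The condition $\gamma\notin\mfrak{m}_i$ (equivalently $\gamma$ acts invertibly, by \ref{DivAssAtt}/\ref{Rp/IAssDiv}) is $\vertl\nf{x=x}{\gamma|x}\vertr=1$. The condition $a\in I_i$ on the uniserial module $R_{\mfrak{p}_i}/I_i$ is that $a$ annihilates it, i.e. $\vertl\nf{a|x}{x=0}\vertr=1$; but I must be careful that this is imposed on the $I_i$ appearing in the $X(R)$ definition, not on the module $R_{\mfrak{p}_i}/eR_{\mfrak{p}_i}$ whose cardinality is being measured. As in the proof of \ref{decimpEPPrec}, the trick is to actually work with the module $\oplus R_{\mfrak{p}_i}/e^2I_i$ (or $eI_i$): then $R_{\mfrak{p}_i}/I_i$ is recovered as a definable subquotient, $e$ acts with the right finiteness, and "$a\in I_i$" becomes "$e^2a$ (or $ea$) annihilates the relevant subquotient", expressible by a conjunct like $\vertl\nf{e^2a|x}{x=0}\vertr=1$. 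Finally, the condition $\delta\notin I_i^\#$ translates, via Lemma \ref{Rp/IAssDiv}(1), to $\vertl\nf{x\delta=0}{x=0}(R_{\mfrak{p}_i}/I_i)\vertr=1$, which after the $e^2$-shift becomes a conjunct of the form $\vertl\nf{x\delta=0}{\,\cdot\,}\vertr=1$ relative to the $I_i$-subquotient. Assembling all of these and using Remark \ref{I/bI} to keep the various cardinalities consistent, I get the desired $\Theta_{(p,n;e,\gamma,a,\delta)}$.

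\textbf{Verifying the equivalence.} For the forward direction: given a witnessing sequence $(\mfrak{p}_i,I_i)_{1\le i\le h}$ for $X(R)$-membership, set $M:=\oplus_{i=1}^h R_{\mfrak{p}_i}/e^2I_i$ and check each conjunct directly using \ref{Rp/IAssDiv}, \ref{propIhash}, \ref{I/bI} and the fact that $e\ne 0$. For the converse: given $M\models\Theta$, $M$ is finite nonzero (from the $\vertl\nf{x=x}{e|x}\vertr=p^n$ conjunct together with the annihilation conjuncts), so $M\cong\oplus_{i=1}^s R_{\mfrak{m}_i}/J_i$ with $\mfrak{m}_i$ maximal; the conjuncts $\vertl\nf{xe=0}{e|x}\vertr=1$ and $\vertl\nf{x=x}{e|x}\vertr=p^n$ together force (via \ref{(I:a)useful}, exactly as in \ref{decimpEPPrec}) each $J_i$ to have the form $e^2 I_i$ for some $I_i\lhd R_{\mfrak{m}_i}$; then $\vertl\nf{x=x}{\gamma|x}\vertr=1$ gives $\gamma\notin\mfrak{m}_i$, the $e^2a$-conjunct gives $a\in I_i$, and the $\delta$-conjunct gives $\delta\notin I_i^\#$ via \ref{Rp/IAssDiv}(1); finally Remark \ref{I/bI} converts $\vertl\oplus R_{\mfrak{m}_i}/e^2I_i\cdot(\text{measured part})\vertr$ back to $\vertl\oplus R_{\mfrak{m}_i}/eR_{\mfrak{m}_i}\vertr=p^n$, exhibiting the witnessing sequence. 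The main obstacle, as in \ref{decimpEPPrec}, is choosing the conjuncts so that the $e^2$-rescaling simultaneously (a) forces $J_i$ to lie in the image of multiplication by $e^2$ and (b) lets "$a\in I_i$" and "$\delta\notin I_i^\#$" be read off as Baur--Monk invariants; getting all the cardinality bookkeeping (the power of $p$) to match up after the rescaling, using \ref{I/bI}, is the delicate point, but it is entirely analogous to the argument already carried out for $\EPP_1(R)$.
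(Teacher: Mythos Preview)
Your overall strategy matches the paper's: construct a sentence $\chi$ whose satisfiability is equivalent to $(p,n;e,\gamma,a,\delta)\in X(R)$, and you correctly identify essentially all of the conjuncts the paper uses, namely
\[
\vertl\nf{x=x}{e|x}\vertr=p^n \wedge \vertl\nf{xe=0}{e|x}\vertr=1 \wedge \vertl\nf{e^2a|x}{x=0}\vertr=1 \wedge \vertl\nf{x=x}{\gamma|x}\vertr=1 \wedge \vertl\nf{x\delta=0}{x=0}\vertr=1,
\]
with the forward direction witnessed by $M=\bigoplus_i R_{\mfrak{p}_i}/e^2 I_i$. (The $\delta$-conjunct needs no shift: since $(e^2I)^\#=I^\#$ by \ref{propIhash}(iii), $\vertl\nf{x\delta=0}{x=0}(R_{\mfrak{p}}/e^2 I)\vertr=1$ already says $\delta\notin I^\#$ via \ref{Rp/IAssDiv}(1). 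Remark \ref{I/bI} is not needed.)

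There is, however, a genuine gap in your converse direction. You assert that any $M\models\chi$ is \emph{finite} and then decompose it as $\bigoplus R_{\mfrak{m}_i}/J_i$. This is false: unlike in the $\EPP$ case, $\chi$ has no conjunct bounding $\vertl M\vertr$. Indeed, the whole point of $X(R)$ is to capture information about modules $R_{\mfrak{p}}/I$ that are typically infinite (the paper calls them ``intrinsically infinite''): only $\vertl\oplus R_{\mfrak{p}_i}/eR_{\mfrak{p}_i}\vertr$ is required to be $p^n$, not $\vertl\oplus R_{\mfrak{p}_i}/I_i\vertr$. For instance, $M=R_{\mfrak{p}}/e^2\mfrak{q}$ with $\mfrak{q}\subsetneq\mfrak{p}$ prime is infinite yet has $M/eM\cong R_{\mfrak{p}}/eR_{\mfrak{p}}$ finite. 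The paper's fix is to invoke \ref{REFuni} to replace $M$ by a finite direct sum of uniserial modules $U_i$, drop those with $U_i/U_ie=0$, and then use \ref{unifinitefg} (applied to the finite nonzero quotient $U_i/U_ie$) to conclude each $U_i\cong R_{\mfrak{p}_i}/J_i$. From there your argument for extracting $J_i=e^2I_i$ via \ref{(I:a)useful} goes through.
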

\begin{proof}
Let $(p,n;e,\gamma,a,\delta)\in\mathbb{P}\times\N\times(R\backslash\{0\})\times R^3$. We show that $(p,n;e,\gamma,a,\delta)\in X(R)$ if and only if there exists an $R$-modules satisfying
\[\chi:=\vertl \nicefrac{x=x}{e|x} \vertr=p^n\wedge \vertl \nicefrac{xe=0}{e|x}\vertr=1\wedge \vertl \nicefrac{e^2a|x}{x=0}\vertr=1\wedge\vertl \nicefrac{x=x}{\gamma|x}\vertr=1\wedge \vertl \nicefrac{x\delta=0}{x=0}\vertr=1.\]
First suppose that there exist $h\in\N$ and prime ideals $\mfrak{p}_1,\ldots,\mfrak{p}_h\lhd R$ such that
$\vertl \oplus_{i=1}^hR_{\mfrak{p}_i}/e R_{\mfrak{p}_i}\vertr=p^n$ and for $1\leq i\leq h$, $\gamma\notin \mfrak{p}_i$, and,
 there exists an ideal $I_i\lhd R_{\mfrak{p}_i}$  such that $a\in I_i$ and $\delta\notin (I_i)^\#$. Then $\bigoplus_{i=1}^h R_{\mfrak{p}_i}/e^2I_i\models \chi$.

Conversely, suppose there exists an $R$-module satisfying $\chi$. Then, \ref{REFuni}, there exists a finite direct sum of modules $U_i$ such that $\oplus_{i=1}^hU_i\models \chi$ and each $U_i$ is the restriction to $R$ of a uniserial module over $R_{\mfrak{p}_i}$ for some prime ideal $\mfrak{p}_i\lhd R$. We may assume that $U_i/U_ie$ is non-zero for each $U_i$, for otherwise the direct sum with $U_i$ omitted also satisfies $\chi$. Since $U_i$ is uniserial as an $R_{\mfrak{p}_i}$-module and $U_i/U_ie$ is non-zero and finite, $U_i$ is finitely generated over $R_{\mfrak{p}_i}$. Therefore $U_i\cong R_{\mfrak{p}_i}/J_i$ for some ideal $J_i\lhd R_{\mfrak{p}_i}$. Since $|xe=0/e|x(U_i)|=1$, $(J_i:e)\subseteq J_i+eR_{\mfrak{p}_i}$. So, as in \ref{decimpEPPrec}, there exists $I_i\lhd R_{\mfrak{p}_i}$ such that $J_i=e^2I_i$.

Now, since $\oplus_{i=1}^hU_i\models \chi$, $\vertl\oplus_{i=1}^h R_{\mfrak{p}_i}/e R_{\mfrak{p}_i}\vertr=p^n$, $e^2a\in e^2I$ and hence $a\in I$. Moreover, by \ref{Rp/IAssDiv}, $\gamma\notin \mfrak{p}_i$ and $\delta\notin I_i^\#$.
\end{proof}

\section{Formalisms}\label{Sform}

\noindent
The formalisms introduced in this section will be used throughout the paper to allow us to make reductions in complexity
of certain sets of conditions later on.

\subsection{Sets of functions}

\

\noindent
Let $\Delta$ be a set and $\mcal{E}$ a set of functions from $\Delta$ to $\N\cup\{\infty\}$ such that if $f,g\in \mcal{E}$ then $f\cdot g\in \mcal{E}$ and such that the function which has constant value $1$ is in $\mcal{E}$. Let $n\in\N$, $X,Y\subseteq \Delta$ be finite sets and let $f:X\rightarrow \N$ and $g:Y\rightarrow \N$. 

Define $\Omega_{f,g,n}$ to be the set of all tuples of functions $(f_1,\ldots,f_n,g_1,\ldots,g_n)$ where $f_i:X\cup (Y\backslash Y_i)\rightarrow \N$ and $g_i:Y_i\rightarrow \N$ are such that $Y_i\subseteq Y$ and
\begin{itemize}
\item $\prod_{i=1}^nf_i(x)=f(x)$ for all $x\in X$,
\item $f_i(y)<g(y)$ for all $y\in Y\backslash Y_i$,
\item $g_i(y)=g(y)$ for all $y\in Y_i$, and
\item for all $y\in Y$,\[\left(\prod_{i \text{ with } y\in Y\backslash Y_i}f_i(y)\cdot \prod_{i \text{ with }y\in Y_i}g_i(y)\right)\geq g(y).\]
\end{itemize}
The most important instance of this set up in this paper is when $\Delta$ is a set of pairs $\phi/\psi$ of pp-$1$-formulae over a ring $R$ and $\mcal{E}$ is the set of $R$-modules $M$ viewed as functions on $\Delta$ by setting $M(\phi/\psi):=|\phi(M)/\psi(M)|$.

For $\mcal{E}_1,\mcal{E}_2$ sets of functions from $\Delta$ to $\N\cup\{\infty\}$, define $\mcal{E}_1\cdot\mcal{E}_2$ to be the set of $f:\Delta\rightarrow \N\cup\{\infty\}$ such that there exist $f_1\in \mcal{E}_1$ and $f_2\in\mcal{E}_2$ such that $f=f_1\cdot f_2$.

\begin{lemma}\label{feathering}
Let $\mcal{E}=\prod_{i=1}^n\mcal{E}_i$, $X,Y\subseteq \Delta$ be finite sets  and let $f:X\rightarrow \N$ and $g:Y\rightarrow \N$. There exists $h\in \mcal{E}$ such that $h(x)=f(x)$ for all $x\in X$ and $h(y)\geq g(y)$ for all $y\in Y$ if and only if for some $(f_1,\ldots,f_n,g_1,\ldots,g_n)\in\Omega_{f,g,n}$ there exist $h_i\in\mcal{E}_i$ for $1\leq i\leq n$ such that $h_i(x)=f_i(x)$ for all $x\in X\cup (Y\backslash Y_i)$ and $h_i(y)\geq g_i(y)$ for all $y\in Y_i$. 
\end{lemma}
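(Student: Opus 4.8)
The plan is to prove both directions by direct construction, pairing a product decomposition $h=\prod_{i=1}^n h_i$ on one side with the combinatorial data of $\Omega_{f,g,n}$ on the other.

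For the ($\Leftarrow$) direction I would simply take $h:=\prod_{i=1}^n h_i$, which lies in $\mcal{E}=\prod_{i=1}^n\mcal{E}_i$. On $X$ the first clause of the definition of $\Omega_{f,g,n}$ gives $h(x)=\prod_i h_i(x)=\prod_i f_i(x)=f(x)$. For $y\in Y$, split the product according to whether $y\in Y\setminus Y_i$ (where $h_i(y)=f_i(y)$) or $y\in Y_i$ (where $h_i(y)\ge g_i(y)$); since all the values $f_i(y),g_i(y),h_i(y)$ are $\ge 1$, monotonicity of products yields $h(y)\ge\bigl(\prod_{i:\,y\in Y\setminus Y_i}f_i(y)\bigr)\bigl(\prod_{i:\,y\in Y_i}g_i(y)\bigr)\ge g(y)$, the last inequality being exactly the fourth clause of the definition of $\Omega_{f,g,n}$. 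So $h$ witnesses the left-hand side.

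For the ($\Rightarrow$) direction, given $h=\prod_{i=1}^n h_i$ with $h_i\in\mcal{E}_i$, $h|_X=f$ and $h(y)\ge g(y)$ for $y\in Y$, I would set $Y_i:=\{y\in Y: h_i(y)\ge g(y)\}$, $f_i:=h_i|_{X\cup(Y\setminus Y_i)}$ and $g_i:=g|_{Y_i}$. These $f_i$ genuinely take values in $\N$: on $X$ each $h_i(x)$ is finite since its product over $i$ equals $f(x)$, and on $Y\setminus Y_i$ we have $h_i(y)<g(y)$. The first three clauses of the definition of $\Omega_{f,g,n}$ are then immediate ($\prod_i f_i(x)=h(x)=f(x)$ on $X$; $f_i(y)<g(y)$ off $Y_i$; $g_i\equiv g$ on $Y_i$). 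For the fourth clause fix $y\in Y$: if $y\in Y_i$ for some $i$ then the displayed product contains the factor $g_i(y)=g(y)$ and every other factor is $\ge 1$, so the product is $\ge g(y)$; if $y\notin Y_i$ for all $i$ then each $h_i(y)<g(y)$ and $\prod_i f_i(y)=\prod_i h_i(y)=h(y)\ge g(y)$. Finally the same $h_i$ witness the right-hand side: by construction $h_i$ agrees with $f_i$ on $X\cup(Y\setminus Y_i)$, and for $y\in Y_i$ we have $h_i(y)\ge g(y)=g_i(y)$ by definition of $Y_i$.

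This is essentially bookkeeping, and I expect the only point needing genuine care to be the fourth clause of $\Omega_{f,g,n}$ in the ($\Rightarrow$) direction: the reason the unconstrained choice $Y_i=\{y:h_i(y)\ge g(y)\}$ works is precisely that a single index with $y\in Y_i$ already contributes a factor equal to $g(y)$, so the remaining factors only need to be $\ge 1$. The hypotheses that each $\mcal{E}_i$ contains the constant function $1$ and is closed under products enter only through the resulting fact that $\mcal{E}=\prod_i\mcal{E}_i$ contains $\prod_i h_i$ and that all relevant values are positive.
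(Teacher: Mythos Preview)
Your proof is correct and follows essentially the same approach as the paper's: in both directions you use the same constructions (for $(\Rightarrow)$, $Y_i:=\{y:h_i(y)\ge g(y)\}$, $f_i:=h_i|_{X\cup(Y\setminus Y_i)}$, $g_i:=g|_{Y_i}$; for $(\Leftarrow)$, $h:=\prod_i h_i$), and the verification of the four clauses of $\Omega_{f,g,n}$ proceeds identically. Your explicit remark that $f_i$ genuinely takes values in $\N$ (rather than $\N\cup\{\infty\}$) is a small point the paper leaves implicit, but otherwise the arguments coincide.
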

\begin{proof}
Let $h\in \mcal{E}$ be such that $h(x)=f(x)$ for all $x\in X$ and $h(y)\geq g(y)$ for all $y\in Y$. Since $\mcal{E}=\prod_{i=1}^n\mcal{E}_i$, there exist $h_i\in \mcal{E}_i$ for $1\leq i\leq n$ such that $\prod_{i=1}^nh_i(x)=h(x)$ for all $x\in \Delta$. For each $1\leq i\leq n$, let $Y_i:=\{y\st h_i(y)\geq g(y)\}$, let $f_i(x)=h_i(x)$ for all $x\in X\cup (Y\backslash Y_i)$, and let $g_i(y)=g(y)$ for all $y\in Y_i$. By definition $\prod_{i=1}^nf_i(x)=\prod_{i=1}^nh_i(x)=h(x)=f(x)$ for all $x\in X$, $f_i(y)=h_i(y)<g(y)$ for all $y\in Y\backslash Y_i$ and $g_i(y)=g(y)$ for all $y\in Y_i$. Now if $y\in Y\backslash Y_i$ then $f_i(y)=h_i(y)$ and if $y\in Y_i$ then $g_i(y)=g(y)$. Therefore, for all $y\in Y$, either $y\in Y_i$ for some $1\leq i\leq n$ and hence the $4$th condition holds or $y\notin Y_i$ for all $1\leq i\leq n$ and so $g(y)\leq h(y)=\prod_{i=1}^nh_i(y)=\prod_{i}f_i(y)$. So $(f_1,\ldots,f_n,g_1,\ldots,g_n)\in \Omega_{f,g,n}$.

Conversely, suppose $h_1,\ldots,h_n:\Delta\rightarrow \N$ are such that there exists $(f_1,\ldots,f_n,\allowbreak g_1,\ldots,g_n)\in \Omega_{f,g,n}$ with $h_i(x)=f_i(x)$ for all $x\in X\cup (Y\backslash Y_i)$ and $h_i(y)\geq g_i(y)$ for all $y\in Y_i$. Define $h:\Delta\rightarrow \N$ by $h(x)=\prod_{i=1}^nh_i(x)$ for all $x\in \Delta$. Then $h(x)=f(x)$ for all $x\in X$ and $h(y)\geq g(y)$ for all $y\in Y$ as required.
\end{proof}

\begin{definition}
Let $X,Y\subseteq \Delta$ be finite sets  and let $f:X\rightarrow \N$ and $g:Y\rightarrow \N$. Let $g:=\max\{g(y)\st y\in Y\}$. Define $\Theta_{f,g}$ to be the set of pairs of functions $(f',g')$ such that $f':X\cup (Y\backslash Y')\rightarrow \N$, $g':Y'\rightarrow \N$, $f'(x)=f(x)$ for all $x\in X$, $g'(y)=g$ for all $y\in Y'$ and $g(y)\leq f'(y)<g$ for all $y\in Y\backslash Y'$. 
\end{definition}

\begin{remark}\label{reducegconstant}
A function $h\in \mcal{E}$ is such that $h(x)=f(x)$ for all $x\in X$ and $h(y)\geq g(y)$ for all $y\in Y$ if and only if there exists $(f',g')\in \Theta_{f,g}$ such that  $h(x)=f'(x)$ for all $x\in X\cup (Y\backslash Y')$ and $h(y)\geq g'(y)$ for all $y\in Y'$.
\end{remark}

\subsection{Lattices generated by conditions}\label{lattcomb}

\

\noindent
Let $W$ be an infinite set. Let $\mathbb{W}$ be the free bounded distributive lattice generated by $W$\footnote{See \cite{Graetzer} for the definition of a free distributive lattice and add a largest and smallest element.}. We use $\sqcup$ for the supremum in this lattice and $\sqcap$ for the infimum in this lattice. Any element of $\mathbb{W}$ may be expressed as $\bigsqcup_{i\in I}\bigsqcap_{j\in J_i} w_{ij}$ where $I$ and $J_i$ for $i\in I$ are finite sets and $w_{ij}\in W$. Moreover, for $v_k,w_{ij}\in W$,
\[\bigsqcap_{k\in K} v_{k}\leq \bigsqcup_{i\in I}\bigsqcap_{j\in J_i} w_{ij} \] if and only if there exists $i\in I$ such that
\[\bigsqcap_{k\in K} v_{k}\leq \bigsqcap_{j\in J_i} w_{ij}\] if and only if there exists $i\in I$ such that
\[\{v_k\st k\in K\}\supseteq \{w_{ij}\st j\in J_i\}.\]
We make the convention that the empty infimum is the largest element $\top$ and the empty supremum is the least element $\bot$.

We call an expression of the form $\bigsqcup_{i\in I}\bigsqcap_{j\in J_i} w_{ij}$, where $w_{ij}\in W$, \textbf{irredundant} if for each $i\in I$, $w_{ij_1}= w_{ij_2}$ implies $j_i=j_2$ and the sets $w_i:=\{w_{ij}\st j\in J_i\}$ for $i\in I$ are pairwise incomparable by inclusion. If $\bigsqcup_{i\in I}\bigsqcap_{j\in J_i} w_{ij}$ and $\bigsqcup_{i\in I'}\bigsqcap_{j\in J'_i} w'_{ij}$ are in irredundant form then $\bigsqcup_{i\in I}\bigsqcap_{j\in J_i} w_{ij}=\bigsqcup_{i\in I'}\bigsqcap_{j\in J'_i} w'_{ij}$ if and only if there exist bijections $\sigma:I\rightarrow I'$ and $\sigma_i:J_i\rightarrow J'_{\sigma(i)}$ for each $i\in I$ such that $w_{ij}=w'_{\sigma(i),\sigma_i(j)}$ for all $i\in I$ and $j\in I_i$.

Given a recursive presentation of $W$ (i.e. a bijection with $\N$), this presentation of $W$ gives rise to a recursive presentation of $\mathbb{W}$ (i.e a presentation where the inclusion of $W$ in $\mathbb{W}$ is recursive and $\sqcup$ and $\sqcap$ are recursive functions).

For any $V\subseteq W$, define $\mathbb{V}$ to be the filter generated by $V$ in $\mathbb{W}$. Note for $w_{ij}\in W$, $\bigsqcup_{i\in I}\bigsqcap_{j\in J_i} w_{ij}\in\mathbb{V}$ if and only if there exists $i\in I$ such that $w_{ij}\in V$ for all $j\in J_i$. So, in particular $\mathbb{V}$ is prime filter. It follows that $V$ is a recursive subset of $W$ if and only if $\mathbb{V}$ is a recursive subset of $\mathbb{W}$.

Suppose that $\text{clx}:W \rightarrow \alpha$ where $\alpha$ is a partially ordered set with the descending chain condition. Let $\underline{w}\in \mathbb{W}$ and let $\underline{w}=\bigsqcup_{i\in I}\bigsqcap_{j\in J_i} w_{ij}$ be in irredundant form. For $\beta\in \alpha$, we write $\clx \underline{w}\leq \beta$ if $w_{ij}\leq \beta$ for all $i\in I$ and $j\in J_i$ and $\clx \underline{w}< \beta$ if $\clx w_{ij}<\beta$ for all $i\in I$ and $j\in J_i$. Note that if $\underline{w}$ is a lattice combination of elements $w_i\in W$ for $1\leq i\leq n$ then $\clx w_i\leq \beta$ (respectively $\clx w_i<\beta$) for $1\leq i\leq n$ implies $\clx \underline{w}\leq \beta$ (respectively $\clx \underline{w}<\beta$).

\begin{remark}
Let $W$ be an infinite recursively presented set and $V\subseteq W$. Suppose that $\alpha$ is an artinian recursive partially ordered set, $\clx: W\rightarrow \alpha$ is recursive and $W_0\subseteq W$ is recursive. Suppose that there is an algorithm which given $w\notin W_0$ computes $\underline{w}\in\mathbb{W}$ such that $\text{clx} \underline{w}<\text{clx} w$ and such that $w\in V$ if and only if $\underline{w}\in\mathbb{V}$. Then $V$ is a recursive subset of $W$ if and only if $V\cap W_0$ is a recursive subset of $W$.
\end{remark}
\noindent
The precise choice of $W$ and $V$ varies throughout this article.

\medskip\noindent
To illustrate how this setup is used, let
$R$ be a recursive ring. Let $W$ be the set of $\mcal{L}_R$-sentences
\[\bigwedge_{\nf{\phi}{\psi}\in X}\vertl\nf{\phi}{\psi}\vertr=f(\nf{\phi}{\psi})\wedge\bigwedge_{\nf{\phi}{\psi}\in Y}\vertl\nf{\phi}{\psi}\vertr\geq g(\nf{\phi}{\psi})\]
where $X,Y$ are finite sets of pp-$1$-pairs, $f:X\rightarrow \N$ and $g:Y\rightarrow \N$.
Let $V$ be the set of $w\in W$ such that there exists $M\in\Mod\text{-}R$ with $M\models w$. Then, by \ref{DecconBM}, $T_R$ is decidable if and only if $V$ is recursive. Working with $\mathbb{W}$ and $\mathbb{V}$, rather than $W$ and $V$ directly, allows us to talk about more than one module at a time. For instance, for $w_1,\ldots,w_n\in W$, the condition
\(w_1\sqcap\ldots\sqcap w_n\in \mathbb{V}\)
says that there exist $R$-modules $M_i\in\Mod\text{-}R$ with $M_i\models w_i$ for $1\leq i\leq n$.


\section{First syntactic reductions}\label{1stsyn}

\noindent
Recall that, for a recursive ring $R$, in order to show that the theory of $R$-modules is decidable, it is enough to show that there is an algorithm which, given a sentence of the form
\[\bigwedge_{i=1}^s \vertl\nicefrac{\phi_i}{\psi_i} \vertr= F_i \wedge \bigwedge_{j=1}^t \vert \nicefrac{\sigma_j}{\tau_j} \vert \geq G_j \tag{$\star$},\]
where, for $1\leq i \leq s$ and $1\leq j\leq t$, $\phi_i$, $\psi_i$, $\sigma_j$ and $\tau_j$ are pp-$1$-formulae and $F_i,G_j\in \N$,
answers whether there exists an $R$-module satisfying it. 

In \cite{Decdense}, it was shown that if $R$ is a recursive Pr\"ufer domain then it is enough to consider sentences where the pairs of pp-$1$-formulae in $(\star)$ are all of the form $\nicefrac{d|x}{x=0}$ and $\nicefrac{xb=0}{c|x}$.
The proof of this statement relies on \ref{Tuganbaev}, \cite[2.2]{Bezwid} and
 the fact, by \ref{sumpielemeq} and \ref{ppuniserial}, that every $R$-module is elementary equivalent to a direct sum of pp-uniserial modules. This is also true for arithmetical rings, and so, although not stated in \cite{Decdense}, the result, with the same proof, also holds for arithmetical rings.

\begin{theorem}\label{4.1Denseimproved} \cite[4.1]{Decdense}
Let $R$ be a recursive arithmetical ring. If there exists an algorithm which, given a sentence
\[ \chi:=\bigwedge_{i=1}^m\vertl\nf{\phi_i}{\psi_i}\vertr=G_i\wedge\bigwedge_{i=m+1}^n\vertl\nf{\phi_i}{\psi_i}\vertr\geq H_i, \] where $G_i,H_i\in\N$ and $\nf{\phi_i}{\psi_i}$ are pairs of pp-$1$-formulae of the form $\nf{d|x}{x=0}$ and $\nf{xb=0}{c|x}$ for $1\leq i\leq n$, answers whether there exists $M\in\Mod\text{-}R$ satisfying $\chi$, then $T_R$ is decidable. 
\end{theorem}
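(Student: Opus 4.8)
The statement is \cite[4.1]{Decdense} with ``recursive Pr\"ufer domain'' weakened to ``recursive arithmetical ring'', so the first thing I would do is check that the proof in \cite{Decdense} never uses more than the arithmetical hypothesis. That proof rests on three inputs: Tuganbaev's lemma, available here as \ref{Tuganbaev} and already stated for arithmetical rings; the structural description of pp-$1$-formulae over B\'ezout domains from \cite[2.2]{Bezwid}; and the fact that every module is elementarily equivalent to a direct sum of pp-uniserial modules, which holds for arithmetical rings by \ref{sumpielemeq} together with \ref{ppuniserial}. For the middle input one inspects the proof of \cite[2.2]{Bezwid} and observes that it localises at primes and argues inside valuation rings, using only that $R_{\mfrak{p}}$ is a valuation ring, never that $R$ is a domain or that finitely generated ideals are principal. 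Granting this, the argument of \cite{Decdense} goes through verbatim. For completeness I would reproduce its shape as follows.

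By \ref{DecconBM} it is enough to decide, for a sentence $\chi$ as in $(\star)$ with \emph{arbitrary} pp-$1$-pairs $\nf{\phi_i}{\psi_i}$, whether some $R$-module satisfies it. By \ref{sumpielemeq} and \ref{ppuniserial}, such a module exists iff a finite direct sum $N_1\oplus\cdots\oplus N_k$ of pp-uniserial modules does, and since $M\mapsto\vertl\nf{\phi}{\psi}(M)\vertr$ is multiplicative over direct sums, the problem is controlled by the possible invariant profiles of a single pp-uniserial module. Using \cite[2.2]{Bezwid} I would rewrite every $\phi_i$ and $\psi_i$, up to $T_R$-equivalence, as a finite sum of formulae of the form $a|x\wedge xb=0$; on a pp-uniserial module, whose lattice of pp-definable subgroups is a chain, such a sum is realised by the largest of its summands. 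Then, applying \ref{Tuganbaev} and the splitting lemma \ref{decomposeorder} repeatedly --- once for each parameter pair that occurs, and once more for each pair of the resulting basic formulae that still needs to be linearly ordered --- I would decompose an arbitrary module, up to elementary equivalence, into a finite direct sum of modules on each of which every $a|x\wedge xb=0$ occurring reduces to $a|x$ or to $xb=0$, and on each of which the finitely many formulae $a|x$ (respectively $xb=0$) that appear are linearly ordered. On such a summand every $\phi_i$ and $\psi_i$ is equivalent to one of $x=x$, $x=0$, a single $d|x$, or a single $xb=0$, so each invariant $\vertl\nf{\phi_i}{\psi_i}(\cdot)\vertr$ becomes an invariant of a pair of one of the two permitted shapes $\nf{d|x}{x=0}$ and $\nf{xb=0}{c|x}$ (where $b,c$ may be $0$, which already subsumes $\nf{x=x}{c|x}$ and $\nf{xb=0}{x=0}$). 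By multiplicativity, $\chi$ is satisfiable iff some member of an explicitly computable finite disjunction of conjunctions of such reduced sentences is satisfiable, the disjunction running over the ways the targets $G_i$ factor across the summands; each reduced sentence is then fed to the hypothesised algorithm.

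The only genuine work is the bookkeeping in the decomposition step: one must keep the case analysis driving \ref{decomposeorder} finite and effective, make sure that splitting for one parameter pair does not spoil an ordering already arranged for another, and handle the \emph{equality} conjuncts $\vertl\nf{\phi_i}{\psi_i}\vertr=G_i$ rather than mere inequalities --- which is what forces the final disjunction to range over all distributions of the $G_i$ among the finitely many summands. This combinatorial organisation is precisely the sort of thing the feathering lemma \ref{feathering} of Section \ref{Sform} will later systematise, but for the present statement the direct argument of \cite{Decdense} is enough. I expect this bookkeeping, together with the verification that \cite[2.2]{Bezwid} uses only the arithmetical hypothesis, to be the only points requiring care.
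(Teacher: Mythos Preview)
Your proposal is correct and matches the paper's own treatment: the paper does not give a proof either, but simply observes that the argument of \cite[4.1]{Decdense} rests on \ref{Tuganbaev}, \cite[2.2]{Bezwid}, and the pp-uniserial decomposition via \ref{sumpielemeq} and \ref{ppuniserial}, all of which hold for arithmetical rings, so the result carries over with the same proof. Your more detailed sketch of the shape of that argument, and your flagging of the verification needed for \cite[2.2]{Bezwid}, go slightly beyond what the paper says but are entirely in the same spirit.
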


%
%
\noindent
We call any conjunction of sentences of the form
\[ \vertl \nicefrac{d|x}{x=0}\vertr=1 \ \ \ \text{ or } \ \ \ \vertl \nicefrac{xb=0}{c|x}\vertr=1\] an \textbf{auxiliary sentence}.

\smallskip

\noindent
\textbf{Convention:} In the sequel, we will use the symbol $\square$ as a variable denoting either $=$ or $\geq$ when talking about conjunctions of sentences like $\vertl\nf{\phi}{\psi}\vertr\square N$. It will be useful for us to extend this notation so that $\square$ can also be the symbol $\emptyset$, where $\square$ being $\emptyset$ indicates that $\vertl\nf{\phi}{\psi}\vertr\square N$ is omitted from the conjunction. For instance, when $\square_1$ is $\emptyset$, the sentence
$\vertl \nicefrac{d|x}{x=0}\vertr \square_1 D\wedge \vertl \nicefrac{xb=0}{c|x}\vertr \square_2 E$ stands for $\vertl \nicefrac{xb=0}{c|x}\vertr \square_2 E$.


\smallskip
\noindent
In this section we improve \cite[4.1]{Decdense} to prove the following.

\begin{theorem}\label{1stsynthm}
Let $R$ be arithmetical ring. If there exists an algorithm which, given a sentence $\chi$ of the form
\[\tag{$\dagger$} \vertl \nicefrac{d|x}{x=0}\vertr\square_1 D\wedge \vertl \nicefrac{xb=0}{c|x}\vertr \square_2 E\wedge\bigwedge_{i=1}^m\vertl\nf{\phi_i}{\psi_i}\vertr=G_i\wedge\bigwedge_{i=m+1}^n\vertl\nf{\phi_i}{\psi_i}\vertr\geq H_i\wedge\Xi ,\] where $\square_1,\square_2\in\{\geq,=,\emptyset\}$, $d,c,b\in R\backslash\{0\}$, $D,E,G_i,H_i\in\N$, $\Xi$ is an auxiliary sentence and $\nf{\phi_i}{\psi_i}$ are pairs of pp-$1$-formulae of the form $\nf{x=x}{c'|x}$ and $\nf{xb'=0}{x=0}$ for $1\leq i\leq n$, answers whether there exists an $R$-module satisfying $\chi$, then $T_R$ is decidable.
%
%
\end{theorem}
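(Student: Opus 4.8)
The plan is to reduce a sentence $\chi$ as in Theorem~\ref{4.1Denseimproved} — that is, a conjunction of conditions on pp-pairs $\nf{d|x}{x=0}$ and $\nf{xb=0}{c|x}$ — to a finite disjunction of sentences of the special form $(\dagger)$, and then invoke \ref{4.1Denseimproved}. The first observation is that conjuncts of the shape $\vertl\nf{d|x}{x=0}\vertr\square D$ with $D=1$ are already auxiliary, and conjuncts $\vertl\nf{xb=0}{c|x}\vertr\square E$ with $b=0$ collapse to a condition on $\nf{x=x}{c|x}$ (and with $c=0$ to a condition on $\nf{xb=0}{x=0}$), so these are harmless. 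The remaining work is to arrange that \emph{at most one} conjunct of the form $\vertl\nf{d|x}{x=0}\vertr\square D$ with $D\geq 2$ survives, and likewise at most one conjunct $\vertl\nf{xb=0}{c|x}\vertr\square E$ with $E\geq 2$ and $b,c\neq 0$; the surplus conjuncts must be removed by distributing them across a direct-sum decomposition.

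The key device is Lemma~\ref{decomposeorder} together with the ``feathering'' machinery of \S\ref{Sform}. Suppose $\chi$ contains two conjuncts involving $\nf{d_1|x}{x=0}$ and $\nf{d_2|x}{x=0}$ with values $\geq 2$. Using \ref{Tuganbaev} pick $\alpha,r,s$ with $d_1\alpha=d_2 r$ and $d_2(\alpha-1)=d_1 s$, and apply \ref{decomposeorder}(1) to split any prospective module $M\equiv M_1\oplus M_2$ so that on $M_1$ the element $\alpha$ acts invertibly (hence $d_1|x$ and $d_2|x$ become comparable there — in fact $d_2|x$ implies $d_1|x$ on $M_1$ up to the unit, so $\nf{d_2|x}{x=0}$ controls $\nf{d_1|x}{x=0}$), and on $M_2$ the element $\alpha-1$ acts invertibly. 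On each summand one of the two pp-quotients is then determined by the other plus an auxiliary condition, so one of the two ``bad'' conjuncts can be absorbed. Iterating, and using the formalism of lattices $\mathbb{W},\mathbb{V}$ to keep track of the resulting finite disjunction-of-conjunctions of sentences (each conjunct now asking for a separate module), collapses the $\nf{d|x}{x=0}$-conjuncts to at most one. The same argument with \ref{decomposeorder}(2) handles the $\nf{xb=0}{c|x}$-conjuncts: given $\nf{x b_1=0}{c_1|x}$ and $\nf{x b_2=0}{c_2|x}$, the relevant splittings make $b_1,b_2$ (resp.\ $c_1,c_2$) comparable after inverting a suitable unit, and \ref{decomposeorder}(2) also lets us convert a conjunct $\vertl\nf{xa=0}{b|x}\vertr=1$ or $\vertl\nf{ab|x}{x=0}\vertr=1$ into auxiliary form, feeding the debris into $\Xi$. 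The remaining pp-pairs $\nf{d|x}{x=0}$ and $\nf{xb=0}{c|x}$ that we do not collapse get rewritten: $\nf{d|x}{x=0}$ with $d\neq 0$ stays, $\nf{xb=0}{c|x}$ with $c=0$ becomes $\nf{xb=0}{x=0}$ and with $b=0$ becomes (after a further split via \ref{decomposeorder}) a condition on $\nf{x=x}{c|x}$, both of which are permitted $\nf{\phi_i}{\psi_i}$ in $(\dagger)$.

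Concretely the algorithm is: given $\chi$, produce (by the above splittings, driven by \ref{Tuganbaev}) a finite set $S$ of tuples $(\chi_1,\ldots,\chi_k)$ of sentences, each $\chi_j$ of the form $(\dagger)$, such that some module satisfies $\chi$ iff for some tuple in $S$ there are modules $M_1,\ldots,M_k$ with $M_j\models\chi_j$; equivalently, in $\mathbb{W}$, the element associated to $\chi$ lies in the filter $\mathbb{V}$ iff the corresponding lattice combination of the $\chi_j$ does. Then apply the hypothesised algorithm of Theorem~\ref{1stsynthm} to each $\chi_j$ and combine the answers booleanly; by \ref{4.1Denseimproved} this decides $T_R$. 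The main obstacle is purely bookkeeping: showing that the iterated splitting terminates with a \emph{finite} $S$ and that at each stage the conversions into auxiliary sentences $\Xi$ and into the allowed pairs $\nf{x=x}{c'|x}$, $\nf{xb'=0}{x=0}$ are exhaustive — i.e.\ that after distributing across the $M_1\oplus M_2$ decompositions no ``bad'' conjunct of value $\geq 2$ can reappear on a summand. This is where the formalism of \S\ref{Sform} (the sets $\Omega_{f,g,n}$ and $\Theta_{f,g}$, and Lemma~\ref{feathering}) does the real work, letting us automate the otherwise unwieldy case analysis of how the numerical invariants $D,E,G_i,H_i$ distribute over a direct sum.
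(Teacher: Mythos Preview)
Your plan is essentially the paper's proof: start from \ref{4.1Denseimproved}, define complexity counts ($\clx_1$, $\clx_2$ in the paper) for the number of $\nf{d|x}{x=0}$- and $\nf{xb=0}{c|x}$-conjuncts with value $>1$ (and $b,c\neq0$), and iteratively reduce each using Tuganbaev splittings, \ref{decomposeorder}, and the feathering machinery, with the lattice $\mathbb{W}$ tracking the resulting finite boolean combination. Two points where your sketch is thinner than the paper: first, in the $\nf{d|x}{x=0}$ step the ``debris'' after the $\alpha$-split is not an auxiliary condition but a new pair $\nf{x=x}{r|x}$ entering the $\nf{\phi_i}{\psi_i}$ list via the multiplicative relation $\vertl\nf{b|x}{x=0}\vertr=\vertl\nf{x=x}{r|x}\vertr\cdot\vertl\nf{a|x}{x=0}\vertr$ and Lemma~\ref{redmult} (the paper actually uses four cases here, splitting further on $rb$); second, your ``same argument'' for the $\nf{xb=0}{c|x}$ reduction significantly underestimates the work---because two parameters $b,c$ are involved, the paper needs an eighteen-leaf case tree (Lemma~\ref{xb=0/c|xsplit}, Figure~1) before the analogous multiplicative identities appear.
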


\begin{definition}
Let $X,Y$ be finite subsets of pp-pairs of the form $\nf{d|x}{x=0}$ or $\nf{xb=0}{c|x}$, $f:X\rightarrow \N$ and $g:Y\rightarrow \N$ functions. Define $\chi_{f,g}$ to be the sentence
\[\bigwedge_{\nicefrac{\phi}{\psi}\in X}\vertl \nicefrac{\phi}{\psi}\vertr=f(\nicefrac{\phi}{\psi})\wedge\bigwedge_{\nicefrac{\phi}{\psi}\in Y}\vertl \nicefrac{\phi}{\psi}\vertr\geq g(\nicefrac{\phi}{\psi}).\]
For the rest of this section, let $W$ be the set of $\mcal{L}_R$-sentences of the form $\chi_{f,g}$ and let $V$ be the set of $w\in W$ such that there exists $M\in\Mod\text{-}R$ with $M\models w$. If $X$ and $Y$ are both empty then $\chi_{f,g}$ should be read as the true sentence. 
\end{definition}

\noindent
Define $\clx_1 \chi_{f,g}$ to be
\[\vertl \{\nf{d|x}{x=0}\in X \st f(\nf{d|x}{x=0})>1\}\vertr+\vertl \{\nf{d|x}{x=0}\in Y \st g(\nf{d|x}{x=0})>1\}\vertr\]
and
$\clx_2 \chi_{f,g}$ to be
\begin{align*}
    &\vertl \{\nf{xb=0}{c|x}\in X \st f(\nf{xb=0}{c|x})>1 \text{ and } c,b\neq 0\}\vertr+\cr
    &\qquad\qquad\qquad\qquad\qquad\qquad\qquad\vertl \{\nf{xb=0}{c|x}\in Y \st g(\nf{xb=0}{c|x})>1 \text{ and } b,c\neq 0\}\vertr.
\end{align*}
Formally, we extend $\clx_1$ and $\clx_2$ to $\bot,\top\in \mathbb{W}$ by setting $\clx_1\bot=\clx_1\top=0$ and $\clx_2\bot=\clx_2\top=0$. We will use the notation $\clx_i\underline{w}\leq \clx_i w$ and $\clx_i\underline{w}<\clx_i w$, for $i\in\{1,2\}$, $\underline{w}\in\mathbb{W}$ and $w\in W$ as in subsection \ref{lattcomb}.

\begin{remark}
For all $w_1,w_2\in W$ and $i\in\{1,2\}$, \[\clx_i(w_1\wedge w_2)\leq \clx_i(w_1)+\clx_i(w_2).\]
\end{remark}
\noindent
For our purposes, given $w\in W$, we may always assume that $w$ is of the form
\[
\chi_{f,g}\wedge \Xi,
\]
where $f:X\rightarrow \N_2$, $g:Y\rightarrow \N_2$
with $X,Y$ finite disjoint sets of pp-pairs of the form $\nf{d|x}{x=0}$ or $\nf{xb=0}{c|x}$,
and $\Xi$ is an auxiliary sentence.

It is obvious that any $\chi_{f,g}\in W$ may be rewritten as $\chi_{f',g}\wedge \Xi$ where $f':X'\rightarrow \N_2$ and $\Xi$ is an auxiliary sentence. Moreover, for $\chi_{f,g}\wedge \Xi\in W$, let $Y':=\{\nf{\phi}{\psi} \st g(\nf{\phi}{\psi})>1\}$ and $g':=g|_{Y'}$. Then $\chi_{f,g}\wedge \Xi\in V$ if and only if $\chi_{f,g'}\wedge \Xi\in V$. If $\nf{\phi}{\psi}\in X\cap Y$ and $f(\nf{\phi}{\psi})< g(\nf{\phi}{\psi})$ then $T_R\models\lnot\chi_{f,g}$. If  $f(\nf{\phi}{\psi})\geq g(\nf{\phi}{\psi})$ then $\chi_{f,g}\wedge\Xi\in V$ if and only if $\chi_{f,g'}\wedge\Xi\in V$ where $g':=g|_{Y\backslash\{\nf{\phi}{\psi}\}}$. So, given $w\in W$, we can effectively decide that $w\notin V$ or compute $f,g$ and $\Xi$ of the required form such that, $\clx_1 \chi_{f,g}\wedge\Xi\leq \clx_1w$, $\clx_2\chi_{f,g}\wedge\Xi\leq \clx_2 w$, and,  $w\in V$ if and only if $\chi_{f,g}\wedge \Xi\in V$.

\begin{remark}\label{decomp}
Let $X,Y$ be disjoint finite sets of pp-pairs of the form $d|x/x=0$ or $xb=0/c|x$, $f:X\rightarrow \N$ and $g:Y\rightarrow \N$ functions, and $\Xi$ an auxiliary sentence. For each $1\leq j\leq n$, let $\theta_j$ be an auxiliary sentence. Suppose that for all $M\in\Mod\text{-}R$, there exist modules $M_i\models \theta_j$ such that $M\equiv\bigoplus_{j=1}^nM_j$. Then $\chi_{f,g}\wedge\Xi\in V$ if and only if
\[\bigsqcup_{(\overline{f},\overline{g})\in\Omega_{(f,g,n)}}\bigsqcap_{j=1}^n  \chi_{f_j,g_j}\wedge\theta_j\wedge\Xi\in \mathbb{V}.\]
Moreover, for all $(\overline{f},\overline{g})\in\Omega_{f,g,n}$ and $1\leq j\leq n$,
\[ \ \ \ \ \ \ \clx_1(\chi_{f_j,g_j}\wedge\theta_j\wedge\Xi)\leq \clx_1 (\chi_{f,g}\wedge\Xi) \text{ and }\]
\[\clx_2(\chi_{f_j,g_j}\wedge\theta_j\wedge\Xi)\leq \clx_2 (\chi_{f,g}\wedge\Xi).\]
\end{remark}

\noindent
The next lemma is more precise than we will need in this section. However, we will need its full strength in section \ref{Furthersyn}. The total order $\prec$ on the set $\{\emptyset, =,\leq \}$ is defined as $\emptyset \, \prec \,  = \, \prec \, \leq$.

\begin{lemma}\label{redmult}
Let $\phi/\psi,\phi'/\psi',\sigma/\tau$ be pp-pairs, $\square,\square'\in\{=,\geq\}$, $E,E'\in\N_2$ and let $\Sigma$ be an $\mcal{L}_R$-sentence. Suppose that $M\models\Sigma $ implies \[|\phi/\psi(M)|=|\sigma/\tau(M)|\cdot|\phi'/\psi'(M)|\] for all $M\in\Mod\text{-}R$.

There is an algorithm which, given $\phi/\psi,\phi'/\psi',\sigma/\tau$, $\square,\square'\in\{=,\geq\}$, $E,E'\in\N_2$ and $\Sigma$ as above, either returns $\Omega:=\{\bot\}$, in which case
\[T_R\models \lnot(\Sigma\wedge |\phi/\psi|\square E\wedge |\phi'/\psi'|\square' E'),\]
or, returns $\Omega$, a finite set of tuples $(D_1,D_2,\square_1,\square_2)\in \N^2\times \{=,\geq\}^2$ such that
\[T_R\models \Sigma\wedge |\phi/\psi|\square E\wedge |\phi'/\psi'|\square' E'\leftrightarrow\bigvee_{(D_1,D_2,\square_1,\square_2)\in\Omega}\Sigma\wedge |\tau/\sigma|\square_1 D_1\wedge |\phi'/\psi'|\square_2 D_2\]
and $D_1\cdot D_2<E\cdot E'$, $\square_1\preceq\square$ and $\square_2\preceq \square'$ for all $(D_1,D_2,\square_1,\square_2)\in\Omega$.
\end{lemma}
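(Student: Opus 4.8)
The plan is to recognise this as a purely arithmetic statement, dressed up in model-theoretic language. By hypothesis, every model $M$ of $\Sigma$ satisfies $|\phi/\psi(M)| = |\sigma/\tau(M)|\cdot|\phi'/\psi'(M)|$, so, abbreviating $a := |\sigma/\tau(M)|$ and $b := |\phi'/\psi'(M)|$ — both lying in $\N\cup\{\infty\}$ and bounded below by $1$ — the sentence $\Sigma\wedge|\phi/\psi|\,\square\,E\wedge|\phi'/\psi'|\,\square'\,E'$ holds in $M$ exactly when $M\models\Sigma$ and the pair $(a,b)$ lies in
\[ S_{\square,\square',E,E'} := \{(a,b)\in(\N\cup\{\infty\})^2 : a,b\geq 1,\ ab\,\square\,E,\ b\,\square'\,E'\}. \]
The algorithm will, from the data $(\square,\square',E,E')$, compute a finite list of "rectangles'' $\{a\,\square_1\,D_1\}\times\{b\,\square_2\,D_2\}$ whose union is $S_{\square,\square',E,E'}$, and output the corresponding tuples $(D_1,D_2,\square_1,\square_2)$; the pp-pairs $\sigma/\tau,\phi'/\psi'$ and the sentence $\Sigma$ are simply carried along, so the procedure is uniform in its inputs. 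The claimed logical equivalence is then immediate: both sides are false in any module not satisfying $\Sigma$, and in a model $M\models\Sigma$ both sides assert exactly $(a,b)\in S_{\square,\square',E,E'}$. If no rectangles are produced, i.e.\ $S_{\square,\square',E,E'}=\emptyset$, the algorithm outputs $\{\bot\}$; this is correct since then $\Sigma\wedge|\phi/\psi|\,\square\,E\wedge|\phi'/\psi'|\,\square'\,E'$ is unsatisfiable.

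The core of the argument is the case split on $(\square,\square')\in\{=,\geq\}^2$, describing $S_{\square,\square',E,E'}$ in each case; these descriptions are short verifications using only $a,b\geq 1$ and the conventions $\infty\cdot n=\infty$, $\infty\geq n$. When $\square$ is $=$, the constraint $ab=E$ forces $a,b$ finite and dividing $E$: for $(=,=)$ the set is $\{(E/E',E')\}$ if $E'\mid E$ and empty otherwise; for $(=,\geq)$ it is $\{(E/d,d):d\mid E,\ d\geq E'\}$, which is empty precisely when $E'>E$. When $\square$ is $\geq$: for $(\geq,=)$ it is $\{(a,E'):a\geq\lceil E/E'\rceil\}$; and for $(\geq,\geq)$ it is the union of $\{(a,b):b\geq\max(E,E')\}$ with, for each integer $b_0$ in the (possibly empty) range $E'\leq b_0<E$, the slab $\{(a,b_0):a\geq\lceil E/b_0\rceil\}$. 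The tuples emitted are, respectively: $(E/E',E',=,=)$; one tuple $(E/d,d,=,=)$ per admissible divisor $d$; the single tuple $(\lceil E/E'\rceil,E',\geq,=)$; and $(1,\max(E,E'),\geq,\geq)$ together with the tuples $(\lceil E/b_0\rceil,b_0,\geq,=)$.

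There are then just two bookkeeping conditions to verify on every emitted tuple, and these are the only places the hypothesis $E,E'\in\N_2$ is used. First, $\square_1\preceq\square$ and $\square_2\preceq\square'$: inspection of the list shows each emitted $\square_i$ is either the corresponding original symbol, or is $=$ while the original is $\geq$ — exactly what $\preceq$ allows. Second, the strict bound $D_1D_2<E\cdot E'$: for the "$=$''-type tuples $D_1D_2=E<EE'$ because $E'\geq 2$; for $(\lceil E/E'\rceil,E',\geq,=)$ one uses $\lceil E/E'\rceil\leq\lceil E/2\rceil<E$; for the slab tuples $\lceil E/b_0\rceil\cdot b_0\leq E+b_0-1\leq 2E-2<2E\leq EE'$; and for $(1,\max(E,E'),\geq,\geq)$ one uses that $\max(E,E')<EE'$ since the other of $E,E'$ is at least $2$. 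I do not expect a genuine obstacle: the content is elementary, and the only real risk is simultaneously meeting the $\preceq$-constraint and the strict bound while correctly handling the value $\infty$ (which is automatic). The fiddliest case is $(\geq,\geq)$, where $S_{\square,\square',E,E'}$ is decomposed into one unbounded rectangle plus finitely many one-row slabs and where the bound $D_1D_2<EE'$ is tightest.
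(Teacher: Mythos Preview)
Your proposal is correct and follows essentially the same approach as the paper: a four-way case split on $(\square,\square')$, in each case describing the admissible pairs $(|\sigma/\tau(M)|,|\phi'/\psi'(M)|)$ as a finite union of rectangles and reading off the tuples, with the same elementary inequalities (using $E,E'\geq 2$) to verify $D_1D_2<EE'$. Your explicit framing via the set $S_{\square,\square',E,E'}\subseteq(\N\cup\{\infty\})^2$ and your handling of the value $\infty$ are a touch more careful than the paper's presentation, but the content is identical.
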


\begin{proof}\

\noindent
\textbf{Case 1:} $\square$ and $\square'$ are both $=$.

\noindent
Let $\Omega:=\{\bot\}$ if $E'$ does not divide $E$, otherwise $\Omega:=\{(E/E',E',=,=)\}$. Note $(E/E')\cdot E'=E<E\cdot E'$ because $E'\geq 2$.

\noindent
\textbf{Case 2:} $\square$ is $\geq$ and $\square'$ is $=$.

\noindent
Let $\Omega:=\{(\lceil E/E'\rceil,E',\geq, =)\}$. Note \[\lceil E/E'\rceil\cdot E'<(E/E'+1)\cdot E=E+E'\leq E\cdot E'.\]

\noindent
\textbf{Case 3:} $\square$ is $=$ and $\square'$ is $\geq$.

\noindent
Let $X:=\{D\in\N \st D|E \text{ and } D\geq E'\}$. Define $\Omega:=\{\bot\}$ if $X=\emptyset$ and $\Omega:=\{(E/D,D,=,=)\st D\in X\}$.
Note $(E/D)\cdot D=E<E\cdot E'$.

\noindent
\textbf{Case 4:} $\square$ and $\square'$ are both $\geq$.

\noindent
If $E'\geq E$ then let $\Omega:=\{(1,E',\geq ,\geq)\}$. If $E>E'$ then set
\[
\Omega:=\{(\lceil E/D\rceil ,D,\geq ,=) \st E>D\geq E'\}\cup\{(1,E,\geq ,\geq)\}.
\]
Note that $E'<E\cdot E'$, $E<E\cdot E'$ and \[\lceil E/D\rceil\cdot D<(E/D+1)\cdot D=E+D\leq E\cdot E'.
\qedhere\]
\end{proof}

The following remark is easy to prove. We record it here because we will use it frequently.

\begin{remark}\label{equivpair1}
Let $R$ be a commutative ring. For all $a,b\in R$ and $M\in\Mod\text{-}R$,
\[\vertl \nicefrac{a|x}{ab|x}(M)\vertr=\vertl \nicefrac{x=x}{xa=0+b|x}(M)\vertr\] and
\[\vertl \nicefrac{xab=0}{xa=0}(M)\vertr=\vertl \nicefrac{a|x\wedge xb=0}{x=0}(M)\vertr.\]

\end{remark}

\begin{proposition}\label{redclx1} Let $R$ be a recursive arithmetical ring.
There is an algorithm which given $w\in W$ with $\clx_1(w)> 1$ outputs $\underline{w}\in\mathbb{W}$ such that $\clx_1(\underline{w})<\clx_1(w)$, $\clx_2(\underline{w})\leq\clx_2(w)$, and, $w\in V$ if and only if $\underline{w}\in\mathbb{V}$.
\end{proposition}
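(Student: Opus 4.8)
## Proof proposal

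The plan is to reduce the number of conjuncts of the form $\vertl\nicefrac{d|x}{x=0}\vertr\square N$ with $N\geq 2$ and $d\neq 0$ by one, at the cost of passing from a single sentence $w$ to a lattice element $\underline w\in\mathbb{W}$. Concretely, write $w$ in the normalised form $\chi_{f,g}\wedge\Xi$ discussed just before \ref{decomp}, with $X,Y$ disjoint, $f:X\to\N_2$, $g:Y\to\N_2$, and $\Xi$ auxiliary. Since $\clx_1(w)>1$, there are at least two pp-pairs of the form $\nicefrac{d|x}{x=0}$ with $d\neq 0$ occurring in $X\cup Y$ with value $\geq 2$. Pick two of them, say $\nicefrac{d_1|x}{x=0}$ and $\nicefrac{d_2|x}{x=0}$, occurring in the conjunction as $\vertl\nicefrac{d_1|x}{x=0}\vertr\square_1 N_1$ and $\vertl\nicefrac{d_2|x}{x=0}\vertr\square_2 N_2$.

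First I would apply \ref{Tuganbaev} to $d_1,d_2$ to obtain $\alpha,r,s\in R$ with $d_1\alpha=d_2r$ and $d_2(\alpha-1)=d_1 s$. By \ref{decomposeorder}(1), every $M\in\Mod\text{-}R$ is elementary equivalent to $M_1\oplus M_2$ where $M_1\models\vertl\nicefrac{x=x}{\alpha|x}\vertr=1\wedge\vertl\nicefrac{x\alpha=0}{x=0}\vertr=1$ and $M_2\models\vertl\nicefrac{x=x}{(\alpha-1)|x}\vertr=1\wedge\vertl\nicefrac{x(\alpha-1)=0}{x=0}\vertr=1$; set $\theta_1,\theta_2$ to be these two auxiliary sentences. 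Apply \ref{decomp} with $n=2$: $w\in V$ iff $\bigsqcup_{(\overline f,\overline g)\in\Omega_{f,g,2}}\bigsqcap_{j=1}^2\chi_{f_j,g_j}\wedge\theta_j\wedge\Xi\in\mathbb{V}$, and each summand has $\clx_1,\clx_2$ no larger than those of $w$. Now it remains to analyse each factor $\chi_{f_j,g_j}\wedge\theta_j\wedge\Xi$ and show its $\clx_1$ strictly drops (while $\clx_2$ does not increase). The key point is that in the presence of $\theta_j$, one of the two pp-pairs $\nicefrac{d_1|x}{x=0}$, $\nicefrac{d_2|x}{x=0}$ becomes expressible in terms of the other of strictly smaller $\square$-value: for instance, over a module where $\alpha$ acts invertibly one has $d_1|x \Leftrightarrow d_1\alpha|x \Leftrightarrow d_2 r|x$, so $\vertl\nicefrac{d_1|x}{x=0}(M)\vertr$ divides (in fact, via the factorisation through $\nicefrac{d_2|x}{d_2r|x}$) the product $\vertl\nicefrac{d_2|x}{x=0}(M)\vertr\cdot\vertl\nicefrac{d_2|x}{d_1|x}(M)\vertr$-type relation — and crucially $\vertl\nicefrac{d_1|x}{x=0}(M_1)\vertr = \vertl\nicefrac{d_2|x}{x=0}(M_1)\vertr / \vertl\nicefrac{d_2|x}{d_2r|x + \text{stuff}}(M_1)\vertr$ only involves pp-pairs $\nicefrac{\text{stuff}}{\cdots}$ that are either auxiliary (value $1$) or of the other two allowed shapes $\nicefrac{x=x}{c'|x}$, $\nicefrac{xb'=0}{x=0}$. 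I would use \ref{equivpair1} to rewrite the relevant quotients: $\vertl\nicefrac{d_2r|x}{d_1 d_2 r|x}(M)\vertr = \vertl\nicefrac{x=x}{x d_2 r = 0 + d_1|x}(M)\vertr$ and similar identities, so that the "eliminated" conjunct $\vertl\nicefrac{d_1|x}{x=0}\vertr\square_1 N_1$ is traded — via an application of \ref{redmult} with $\Sigma = \theta_j\wedge\Xi$, $\phi/\psi = d_1|x/x=0$, $\phi'/\psi' = d_2|x/x=0$, $\sigma/\tau$ the auxiliary-or-other-shape quotient — for a disjunction of conjunctions in which $\nicefrac{d_1|x}{x=0}$ no longer appears, $\nicefrac{d_2|x}{x=0}$ appears with a value $D_2 < N_2$ (or with $D_2 = N_2$ but that conjunct's partner $D_1$-conjunct is of a shape not counted by $\clx_1$), and the product $D_1 D_2 < N_1 N_2$.

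I would then assemble: each $\chi_{f_j,g_j}\wedge\theta_j\wedge\Xi$ gets rewritten, using \ref{redmult} plus \ref{equivpair1} (and folding the $\theta_j$'s back into the auxiliary part since they are of the form $\vertl\nicefrac{x=x}{\alpha|x}\vertr=1$ etc., which are either auxiliary or absorbable), as a finite disjunction of sentences in $W$ whose $\clx_1$ is $\clx_1(w)-1$ or less and whose $\clx_2$ is at most $\clx_2(w)$. Taking the join over $\Omega_{f,g,2}$ and then over these disjunctions produces the desired $\underline w\in\mathbb{W}$; it is a lattice combination of elements of $W$, each of strictly smaller $\clx_1$, so $\clx_1(\underline w)<\clx_1(w)$ and $\clx_2(\underline w)\le\clx_2(w)$ by the remark in subsection \ref{lattcomb} on lattice combinations, and $w\in V$ iff $\underline w\in\mathbb{V}$ by \ref{decomp} together with the equivalences from \ref{redmult}. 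All steps are effective: \ref{Tuganbaev} gives $\alpha,r,s$ algorithmically (noted after \ref{Tuganbaev}), $\Omega_{f,g,2}$ is finite and computable, and \ref{redmult} is explicitly algorithmic.

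The main obstacle I expect is the bookkeeping in the case analysis of \emph{which} of the two pp-pairs $\nicefrac{d_1|x}{x=0},\nicefrac{d_2|x}{x=0}$ gets eliminated in each of the two factors $M_1,M_2$, and checking that the auxiliary/other-shape quotients $\sigma/\tau$ produced by \ref{equivpair1} genuinely fall outside the count of $\clx_1$ (they are of the forms $\nicefrac{x=x}{c'|x}$ or $\nicefrac{xb'=0}{x=0}$, or auxiliary, but one must verify no new $\nicefrac{d|x}{x=0}$ with $d\neq 0$ of value $\geq 2$ sneaks in, and that the $b,c\neq 0$ condition governing $\clx_2$ is not violated). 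Handling the boundary subtleties — a pp-pair lying in $X\cap Y$, or a value collapsing to $1$ after the reduction, or the $\square$-symbol degenerating to $\emptyset$ — is routine given the normalisation remarks preceding \ref{decomp} and the $\square\in\{=,\geq,\emptyset\}$ convention, but needs care to state cleanly.
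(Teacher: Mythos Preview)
Your overall strategy---pick two pairs $\nicefrac{d_1|x}{x=0}$, $\nicefrac{d_2|x}{x=0}$, use \ref{Tuganbaev} to relate them, split via \ref{decomposeorder}, then apply \ref{redmult} and \ref{equivpair1}---is the right one, and matches the paper. But there is a genuine gap: the two-way split on $\alpha$ versus $\alpha-1$ alone is not enough.

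Concretely, with $d_1\alpha=d_2r$ and $\alpha$ invertible you get $d_1|x\equiv d_2r|x$, so
\[
\vertl\nicefrac{d_2|x}{x=0}(M)\vertr=\vertl\nicefrac{d_2|x}{d_2r|x}(M)\vertr\cdot\vertl\nicefrac{d_1|x}{x=0}(M)\vertr,
\]
and \ref{equivpair1} gives $\vertl\nicefrac{d_2|x}{d_2r|x}\vertr=\vertl\nicefrac{x=x}{xd_2=0+r|x}\vertr$. The pair $\nicefrac{x=x}{xd_2=0+r|x}$ is \emph{not} of either shape $\nicefrac{d|x}{x=0}$ or $\nicefrac{xb=0}{c|x}$ allowed in $W$, so you cannot feed it back into $W$ after applying \ref{redmult}. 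Your ``auxiliary-or-other-shape'' claim for $\sigma/\tau$ fails here; this is not just bookkeeping.

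The paper resolves this by doing a \emph{four}-way split: on top of the $\alpha$/$(\alpha-1)$ dichotomy it also splits (via \ref{decomposeorder}(2)) on whether $\vertl\nicefrac{rd_2|x}{x=0}\vertr=1$ or $\vertl\nicefrac{xd_2=0}{r|x}\vertr=1$. Under the first, $Md_1=Md_2r=0$, so $\vertl\nicefrac{d_1|x}{x=0}\vertr=1$ and that conjunct simply disappears. Under the second, $xd_2=0\leq_M r|x$, so $xd_2=0+r|x$ collapses to $r|x$ and the quotient becomes $\nicefrac{x=x}{r|x}$, which \emph{is} of the form $\nicefrac{xb=0}{c|x}$ with $b=0$ (hence contributes to neither $\clx_1$ nor $\clx_2$), and then \ref{redmult} applies cleanly. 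The $(\alpha-1)$-branch is symmetric. So: keep your plan, but use \ref{decomposeorder}(2) for a second layer of splitting before invoking \ref{redmult}.
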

\begin{proof}
Let $X,Y$ be disjoint finite sets of pp-pairs of the form $d|x/x=0$ and $xb=0/c|x$. Let $f:X\rightarrow \N_2$ and $g:Y\rightarrow \N_2$ be functions, and let $\Xi$ be an auxiliary sentence. Let $w$ be
\[\bigwedge_{\nicefrac{\phi}{\psi}\in X}\vertl \nicefrac{\phi}{\psi}\vertr=f(\nicefrac{\phi}{\psi})\wedge\bigwedge_{\nicefrac{\phi}{\psi}\in Y}\vertl \nicefrac{\phi}{\psi}\vertr\geq g(\nicefrac{\phi}{\psi})\wedge \Xi.\]
Suppose that there exist non-equal $a, b\in R$ such that $\nicefrac{a|x}{x=0},\nicefrac{b|x}{x=0}\in X\cup Y$ i.e. $\clx_1(w)>1$. Let $\alpha,r,s\in R$ be such that $a\alpha=br$ and $b(\alpha-1)=as$. Define
\begin{enumerate}
\item $\Sigma_1:=\vertl \nicefrac{x=x}{\alpha|x}\vertr=1\wedge \vertl \nicefrac{rb|x}{x=0}\vertr=1$,
\item $\Sigma_2:=\vertl \nicefrac{x=x}{\alpha|x}\vertr=1\wedge \vertl \nicefrac{xb=0}{r|x}\vertr=1$,
\item $\Sigma_3:=\vertl \nicefrac{x=x}{(\alpha-1)|x}\vertr=1\wedge \vertl \nicefrac{as|x}{x=0}\vertr=1$, and
\item $\Sigma_4:=\vertl \nicefrac{x=x}{(\alpha-1)|x}\vertr=1\wedge \vertl \nicefrac{xa=0}{s|x}\vertr=1$.
\end{enumerate}
It follows directly from \ref{decomposeorder} that, for any $M\in \Mod\text{-}R$, there are $M_i\models \Sigma_i$ for $1\leq i\leq 4$ such that $M\equiv M_1\oplus M_2\oplus M_3\oplus M_4$. Therefore, by \ref{decomp}, $w\in V$ if and only if
\[
\bigsqcup_{(\overline{f},\overline{g})\in\Omega_{f,g,4}}\bigsqcap_{i=1}^4\chi_{f_i,g_i}\wedge\Xi\wedge \Sigma_i\in \mathbb{V}.
\]
For each $(\overline{f},\overline{g})\in\Omega_{f,g,4}$ and $1\leq i\leq 4$, it is enough to compute $\underline{w}_i\in\mathbb{W}$ such that $\clx_1(\underline{w}_i)<\clx_1(\chi_{f,g}\wedge\Xi)$, $\clx_2(\underline{w}_i)\leq\clx_2(\chi_{f,g}\wedge\Xi)$ and $\chi_{f_i,g_i}\wedge\Xi\wedge \Sigma_i\in V$ if and only if $\underline{w}_i\in V$.

Fix $(\overline{f},\overline{g})\in\Omega_{f,g,4}$. For each $1\leq i\leq 4$, let $X_i$ be the domain of $f_i$ and $Y_i$ be the domain of $g_i$.

\smallskip

\noindent
\textbf{Case i=1:} Suppose $M\models \Sigma_1$. Then $M\alpha=M$ and hence $Ma=Mbr=0$. Therefore, if $M\models \Sigma_1$ then $\vertl \nicefrac{a|x}{x=0}(M)\vertr=1$.

If $\nf{a|x}{x=0}\in X_1$ and $f_1(\nf{a|x}{x=0})=1$ then $\clx_1(\chi_{f_1,g_1}\wedge\Xi\wedge \Sigma_1)<\clx_1(\chi_{f,g}\wedge\Xi)$ and, by \ref{decomp}, $\clx_2(\chi_{f_1,g_1}\wedge\Xi\wedge \Sigma_1)\leq\clx_2(\chi_{f,g}\wedge\Xi)$. So, $\underline{w}_i:=\chi_{f_1,g_1}\wedge\Xi\wedge \Sigma_1$ has the required properties.

If $\nf{a|x}{x=0}\in X_1$ and $f_1(\nf{a|x}{x=0})\neq 1$ then, by the first paragraph, $\chi_{f_1,g_1}\wedge\Xi\wedge \Sigma_1$ is not satisfied by any $R$-module. If $\nf{a|x}{x=0}\notin X_1$ then $\nf{a|x}{x=0}\in Y_1$ since $X\cup Y=X_1\cup Y_1$. Moreover $g_1(\nf{a|x}{x=0})=g(\nf{a|x}{x=0})$. So $g_1(\nf{a|x}{x=0})>1$ and hence $\chi_{f_1,g_1}\wedge\Xi\wedge \Sigma_1$ is not satisfied by any $R$-module. In either case, set $\underline{w}_i:=\bot$. Then $\underline{w}_i\in \mathbb{V}$ if and only if $\chi_{f_1,g_1}\wedge\Xi\wedge \Sigma_1\in V$. By definition $\clx_1(\bot)<\clx_1 (\chi_{f,g}\wedge\Xi)$ and $\clx_2(\bot)\leq\clx_2 (\chi_{f,g}\wedge\Xi)$.

\smallskip

\noindent
\textbf{Case i=2:} Suppose $M\models \Sigma_2$. Then $Ma=M\alpha a=Mbr$. So, since $xb=0\leq_Mr|x$, by \ref{equivpair1},
\[\vertl \nicefrac{b|x}{x=0}(M)\vertr=\vertl \nicefrac{b|x}{br|x}(M)\vertr\cdot\vertl \nicefrac{br|x}{x=0}(M)\vertr=\vertl \nicefrac{x=x}{r|x}(M)\vertr\cdot\vertl \nicefrac{a|x}{x=0}(M)\vertr.\]
Let $X'=X_2\backslash\{\nicefrac{a|x}{x=0},\nicefrac{b|x}{x=0}\}$ and $Y'=Y_2\backslash\{\nicefrac{a|x}{x=0},\nicefrac{b|x}{x=0}\}$. Let $\square,\square'\in \{=,\geq\}$ and $A,B$ be such that $\chi_{f_2,g_2}\wedge\Xi\wedge\Sigma_2$ is
\[\vertl \nicefrac{a|x}{x=0}\vertr\square A\wedge\vertl \nicefrac{b|x}{x=0}\vertr\square'B\wedge\bigwedge_{\nicefrac{\phi}{\psi}\in X'}\vertl\nicefrac{\phi}{\psi}\vertr= f_2(\nicefrac{\phi}{\psi})\wedge\bigwedge_{\nicefrac{\phi}{\psi}\in Y'}\vertl\nicefrac{\phi}{\psi}\vertr\geq g_2(\nicefrac{\phi}{\psi})\wedge\Xi\wedge\Sigma_2.\]
By \ref{redmult}, there is an algorithm which either returns $\Omega:=\{\bot\}$, in which case
\[T_R\models\lnot \Sigma_2\wedge \vertl \nicefrac{a|x}{x=0}\vertr\square A\wedge\vertl \nicefrac{b|x}{x=0}\vertr\square'B,\]
or, a set $\Omega\subseteq \N^2\times\{=,\geq\}^2$ such that
\[\Sigma_2\wedge \vertl \nicefrac{a|x}{x=0}\vertr\square A\wedge\vertl \nicefrac{b|x}{x=0}\vertr\square'B\]
is equivalent, with respect to $T_R$, to
\[
\bigvee_{(D_1,D_2,\square_1,\square_2)\in \Omega}\Sigma_2\wedge\vertl\nf{x=x}{r|x}\vertr\square_1 D_1\wedge \vertl\nf{a|x}{x=0}\vertr\square_2D_2.
\]
If $\Omega:=\{\bot\}$ then $\chi_{f_2,g_2}\wedge\Xi\wedge \Sigma_2\in V$ if and only if $\bot\in \mathbb{V}$. By definition $\clx_1(\bot)< \clx_1(\chi_{f,g}\wedge\Xi)$ and $\clx_2(\bot)\leq \clx_2(\chi_{f,g}\wedge\Xi)$. Otherwise, for each $(D_1,D_2,\square_1,\square_2)\in\Omega$, let $u_{(D_1,D_2,\square_1,\square_2)}$ be
\[
\vertl\nf{x=x}{r|x}\vertr\square_1 D_1\wedge \vertl\nf{a|x}{x=0}\vertr\square_2D_2\wedge\bigwedge_{\nicefrac{\phi}{\psi}\in X'}\!\vertl\nicefrac{\phi}{\psi}\vertr= f_2(\nicefrac{\phi}{\psi})\wedge\bigwedge_{\nicefrac{\phi}{\psi}\in Y'}\!\vertl\nicefrac{\phi}{\psi}\vertr\geq g_2(\nicefrac{\phi}{\psi})\wedge\Xi\wedge\Sigma_2.
\]
Then $\chi_{f_2,g_2}\wedge\Xi\wedge\Sigma_2$ is equivalent to
\[\bigvee_{(D_1,D_2,\square_1,\square_2)\in \Omega} u_{(D_1,D_2,\square_1,\square_2)}\] with respect to $T_R$. Therefore $\chi_{f_2,g_2}\wedge\Xi\wedge\Sigma_2\in V$ if and only if
\[\bigsqcup_{(D_1,D_2,\square_1,\square_2)\in \Omega} u_{(D_1,D_2,\square_1,\square_2)}\in\mathbb{V}.\]
Moreover,
\[\clx_1 (u_{(D_1,D_2,\square_1,\square_2)})\leq 1+(\clx_1 (\chi_{f_2,g_2}\wedge\Xi\wedge\Sigma_2) -2)< \clx_1 (\chi_{f_2,g_2}\wedge\Xi\wedge\Sigma_2) \]
and $\clx_2(u_{(D_1,D_2,\square_1,\square_2)})=\clx_2 (\chi_{f_2,g_2}\wedge\Xi\wedge\Sigma_2)$.
So we are done, since, by \ref{decomp}, $\clx_1 (\chi_{f_2,g_2}\wedge\Xi\wedge\Sigma_2)\leq \clx_1 (\chi_{f,g}\wedge\Xi)$ and $\clx_2 (\chi_{f_2,g_2}\wedge\Xi\wedge\Sigma_2)\leq \clx_2 (\chi_{f,g}\wedge\Xi)$.

The case $i=3$ is similar to $i=1$ and the case $i=4$ is similar to $i=2$.
\end{proof}

\noindent
We now start work on showing that there exists an algorithm which given $w\in W$ with $\clx_2 w>1$ outputs $\underline{w}\in\mathbb{W}$ such that $w\in V$ if and only if $\underline{w}\in \mathbb{V}$ and $\clx_2(\underline{w})<\clx_2(w)$. This uses the same ideas as for $\clx_1$ but is significantly more complicated.

\begin{figure}
\newcommand\factor{0.6}
\newcommand\textSizeHere{} 
\newcommand\nodeSizeHere{\LARGE}
\scalebox{0.7}{
\begin{tikzpicture}[scale=1,rotate=90,transform shape]
\node (root) at ( 0,0) [] {};
\node (R) at ( 12*\factor,-2*\factor) [] {};
\node (L) at ( -12*\factor,-2*\factor) [] {};
\node (RR) at ( 18*\factor,-4*\factor)  [] {\nodeSizeHere$\Lambda'_1$};
\node (LL) at ( -18*\factor,-4*\factor)[] {\nodeSizeHere$\Lambda_1$};
\node (RU) at ( 12*\factor,-5*\factor) [] {};
\node (LU) at ( -12*\factor,-5*\factor) [] {};
\node (RUU) at ( 12*\factor,-8*\factor) [] {};
\node (LUU) at ( -12*\factor,-8*\factor)[] {};
\node (LLUU) at ( -18*\factor,-7*\factor)  [] {\nodeSizeHere$\Lambda_2$};
\node (RRUU) at ( 18*\factor,-7*\factor) [] {\nodeSizeHere$\Lambda'_2$};
\node (LLUUU) at ( -18*\factor,-10*\factor) [] {\nodeSizeHere$\Lambda_3$};
\node (RRUUU) at ( 18*\factor,-10*\factor) [] {\nodeSizeHere$\Lambda'_3$};
\node (B) at ( 12*\factor,-11*\factor) [] {};
\node (A) at ( -12*\factor,-11*\factor)  [] {};
\node (BR) at ( 18*\factor,-13*\factor)  [] {};
\node(AL) at ( -18*\factor,-13*\factor)  [] {};
\node (BL) at ( 7*\factor,-13*\factor) [] {};
\node (AR) at ( -7*\factor,-13*\factor)[] {};
\node (BL4) at ( 24*\factor,-15*\factor) [] {\nodeSizeHere$\Lambda'_4$};
\node (AL4) at ( -24*\factor,-15*\factor)[] {\nodeSizeHere$\Lambda_4$};
\node (BP4) at ( 2*\factor,-15*\factor) [] {\nodeSizeHere$P'_4$};
\node (AP4) at ( -2*\factor,-15*\factor) [] {\nodeSizeHere$P_4$};
\node (BLU) at( 7*\factor,-17*\factor)  [] {};
\node (ARU) at ( -7*\factor,-17*\factor) [] {};
\node(BRU) at( 18*\factor,-17*\factor) [] {};
\node (ALU) at ( -18*\factor,-17*\factor) [] {};
\node (BL6) at ( 18*\factor,-21*\factor)  [] {\nodeSizeHere$\Lambda_6'$};
\node (AL6) at ( -18*\factor,-21*\factor) [] {\nodeSizeHere$\Lambda_6$};
\node (BL5) at ( 24*\factor,-19*\factor)  [] {\nodeSizeHere$\Lambda_5'$};
\node (AL5) at ( -24*\factor,-19*\factor) [] {\nodeSizeHere$\Lambda_5$};
\node (BP5) at ( 2*\factor,-19*\factor)  [] {\nodeSizeHere$P'_5$};
\node (AP5) at ( -2*\factor,-19*\factor) [] {\nodeSizeHere$P_5$};
\node(BP6) at ( 7*\factor,-21*\factor)  [] {\nodeSizeHere$P_6'$};
\node (AP6) at ( -7*\factor,-21*\factor) [] {\nodeSizeHere$P_6$};
\draw [->] (root) to node [sloped, above] {\textSizeHere$\vertl \frac{x=x}{\alpha|x}\vertr=1$} (L);
\draw [->] (root) to node [sloped, above] {\textSizeHere$\vertl \frac{x=x}{\alpha-1|x}\vertr=1$} (R);
\draw [->] (L) to node [sloped, above] {\textSizeHere$\vertl \frac{c'u|x}{x=0}\vertr=1$} (LL);
\draw [->] (R) to node [sloped, above] {\textSizeHere$\vertl \frac{cu'|x}{x=0}\vertr=1$} (RR);
\draw [->] (L) to node [right] {\textSizeHere$\vertl\frac{xu=0}{c'|x}\vertr=1$} (LU);
\draw [->] (R) to node [left] {\textSizeHere$\vertl\frac{xu'=0}{c|x}\vertr=1$} (RU);
\draw [->] (LU) to node [right] {\textSizeHere$\vertl \frac{x(\beta-1)=0}{x=0}\vertr=1$} (LUU);
\draw [->] (RU) to node [left] {\textSizeHere$\vertl \frac{x(\beta'-1)=0}{x=0}\vertr=1$} (RUU);
\draw [->] (LU) to node [sloped, above] {\textSizeHere$\vertl \frac{x\beta=0}{x=0}\vertr=1$} (LLUU);
\draw [->] (RU) to node [sloped, above] {\textSizeHere$\vertl \frac{x\beta'=0}{x=0}\vertr=1$} (RRUU);
\draw [->] (LUU) to node [sloped, above] {\textSizeHere$\vertl\frac{us|x}{x=0}\vertr=1$} (LLUUU);
\draw [->] (RUU) to node [sloped, above] {\textSizeHere$\vertl\frac{u's'|x}{x=0}\vertr=1$} (RRUUU);
\draw [->] (LUU) to node [right] {\textSizeHere $\vertl\frac{xs=0}{u|x}\vertr=1$} (A);
\draw [->] (RUU) to node [left] {\textSizeHere$\vertl\frac{xs'=0}{u'|x}\vertr=1$} (B);
\draw [->] (A) to node [sloped, above] {\textSizeHere$\vertl \frac{x\delta=0}{x=0}\vertr=1$} (AL);
\draw [->] (B) to node [sloped, above] {\textSizeHere$\vertl \frac{x\delta'=0}{x=0}\vertr=1$} (BR);
\draw [->] (A) to node [sloped, above] {\textSizeHere$\vertl \frac{x(\delta-1)=0}{x=0}\vertr=1$} (AR);
\draw [->] (B) to node [sloped, above] {\textSizeHere$\vertl \frac{x(\delta'-1)=0}{x=0}\vertr=1$}(BL);
\draw [->] (AL) to node [sloped, above] {\textSizeHere$\vertl\frac{xs=0}{c'u|x}\vertr=1$} (AL4);
\draw [->] (BR) to node [sloped, above] {\textSizeHere$\vertl\frac{xs'=0}{cu'|x}\vertr=1$} (BL4);
\draw [->] (AR) to node [sloped, above] {\textSizeHere $\vertl\frac{xb=0}{c'u|x}\vertr=1$}(AP4);
\draw [->] (BL) to node [sloped, above] {\textSizeHere $\vertl\frac{xb'=0}{cu'|x}\vertr=1$}(BP4);
\draw [->] (AR) to node [left] {\textSizeHere$\vertl\frac{c'bu|x}{x=0}\vertr=1$} (ARU);
\draw [->] (BL) to node [right] {\textSizeHere$\vertl\frac{cb'u'|x}{x=0}\vertr=1$} (BLU);
\draw [->] (BR) to node [left] {\textSizeHere $\vertl\frac{cu's'|x}{x=0}\vertr=1$}(BRU);
\draw [->] (AL) to node [right] {\textSizeHere$\vertl\frac{c'us|x}{x=0}\vertr=1$}(ALU);
\draw [->] (ALU) to node [sloped, above] {\textSizeHere $\vertl\frac{\lambda s|x}{x=0}\vertr=1$} (AL5);
\draw [->] (BRU) to node [sloped, above] {\textSizeHere $\vertl\frac{\lambda's'|x}{x=0}\vertr=1$} (BL5);
\draw [->] (ALU) to node [right] {\textSizeHere $\vertl \frac{x\lambda=0}{s|x}\vertr=1$} (AL6);
\draw [->] (BRU) to node [left] {\textSizeHere $\vertl \frac{x\lambda'=0}{s'|x}\vertr=1$} (BL6);
\draw [->] (ARU) to node [sloped, above] {\textSizeHere $\vertl\frac{b\mu|x}{x=0}\vertr=1$}(AP5);
\draw [->] (BLU) to node [sloped, above] {\textSizeHere $\vertl\frac{b'\mu'|x}{x=0}\vertr=1$}(BP5);
\draw [->] (ARU) to node [left] {\textSizeHere$\vertl\frac{x\mu=0}{b|x}\vertr=1$} (AP6);
\draw [->] (BLU) to node [right] {\textSizeHere$\vertl\frac{x\mu'=0}{b'|x}\vertr=1$} (BP6);
\node at ( -1*\factor,-24*\factor) [] {\nodeSizeHere Figure 1};
\end{tikzpicture}
}
\end{figure}

\begin{lemma}\label{xb=0/c|xsplit}
Let $R$ be an arithmetical ring and let $b,c,b',c'\in R$. Let $\alpha,u,u',\allowbreak\beta,\beta',s,\allowbreak s',\allowbreak r\allowbreak,r',\allowbreak\delta,\delta',\lambda,\lambda',\mu,\mu'\in R$ be such that
\[\begin{array}{ccccccccccccccc}
& & & & c\alpha & \!\!\!\!=\!\!\!\! & c'u, &  & c'(\alpha-1) & \!\!\!\!=\!\!\!\! & cu' & & & &\\
 u\beta & \!\!\!\!=\!\!\!\! & b'r,  &   & b'(\beta-1) & \!\!\!\!=\!\!\!\! & us, & & u'\beta'  & \!\!\!\!=\!\!\!\! &br', & & b(\beta'-1)& \!\!\!\!=\!\!\!\! &u's'\\
 b\delta & \!\!\!\!=\!\!\!\! & s\lambda, &  & s(\delta-1) & \!\!\!\!=\!\!\!\! & b\mu, & & b\delta' &\!\!\!\!=\!\!\!\! &s'\lambda', & & s'(\delta'-1) &\!\!\!\!=\!\!\!\! & b'\mu'.
\end{array}\]

\noindent
Define the sentences $\Lambda_i$ and $\Lambda_i'$ for $1\leq i\leq 6$ and $P_i$ and $P_i'$ for $4\leq i\leq 6$ to be the conjunction of sentences labeling the edges in the path from the root of the tree in Figure 1 to the leaf of the tree with that sentence as label.

Every $R$-module is elementary equivalent to an $R$-module of the form
\[(\bigoplus_{i=1}^6M_i\oplus \bigoplus_{i=4}^6N_i)\oplus(\bigoplus_{i=1}^6M_i'\oplus \bigoplus_{i=4}^6N_i')\]
where $M_i\models\Lambda_i$ and $M_i'\models\Lambda_i'$ for $1\leq i\leq 6$ and $N_i\models P_i$ and $N_i'\models P_i'$ for $4\leq i\leq 6$.

\smallskip
\noindent
Moreover, for all $M\in\Mod\text{-}R$,
\begin{enumerate}[(i)]
\item $M\models \Lambda_1$ implies $c\in\ann_RM$ and hence $\vertl \nicefrac{xb=0}{c|x}(M)\vertr=\vertl \nicefrac{xb=0}{x=0}(M)\vertr$,
\item $M\models \Lambda_2$ implies $\vertl \nicefrac{xb'=0}{c'|x}(M)\vertr=1$,
\item $M\models \Lambda_3$ implies $b'\in\ann_RM$ and hence $\vertl \nicefrac{xb'=0}{c'|x}(M)\vertr =\vertl \nicefrac{x=x}{c'|x}(M)\vertr$,
\item $M\models \Lambda_4$ implies $\vertl \nicefrac{xb'=0}{c'|x}(M)\vertr=1$,
\item $M\models \Lambda_5$ implies $b\in \ann_R M$ and hence $\vertl \nicefrac{xb=0}{c|x}(M)\vertr=\vertl \nicefrac{x=x}{c|x}(M)\vertr$,
\item $M\models \Lambda_6$ implies
\[\vertl \nicefrac{xb=0}{c|x}(M)\vertr=\vertl \nicefrac{xb'=0}{c'|x}(M)\vertr\cdot \vertl \nicefrac{x\lambda=0}{x=0}(M)\vertr,\]
\item $M\models P_4$ implies $\vertl \nicefrac{xb=0}{c|x}(M)\vertr=1,$
\item $M\models P_5$ implies $b'\in\ann_RM$ and hence $\vertl \nicefrac{xb'=0}{c'|x}(M)\vertr=\vertl \nicefrac{x=x}{c'|x}(M)\vertr,$ and
\item $M\models P_6$ implies
\[\vertl \nicefrac{xb'=0}{c'|x}(M)\vertr=\vertl \nicefrac{xb=0}{c|x}(M)\vertr\cdot \vertl \nicefrac{x\mu=0}{x=0}(M)\vertr.\]
\end{enumerate}
Similarly, the symmetry of figure 1, gives 9 conditions for $\Lambda_i'$ and $P_i'$, where $c,b$ and  $c',b'$ are interchanged
and $\lambda,\mu$ are replaced by $\lambda',\mu'$, respectively.

\end{lemma}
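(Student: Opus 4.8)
The plan is to read Figure~1 as a recipe for iterating \ref{decomposeorder}. At each internal node $v$ the two outgoing edges are labelled either by a pair $\vertl\nf{x=x}{\theta|x}\vertr=1$, $\vertl\nf{x=x}{(\theta-1)|x}\vertr=1$ (at the root) or $\vertl\nf{x\theta=0}{x=0}\vertr=1$, $\vertl\nf{x(\theta-1)=0}{x=0}\vertr=1$ (at the nodes splitting a ``$\theta$'' from its ``$\theta-1$''), or by a pair $\vertl\nf{ae|x}{x=0}\vertr=1$, $\vertl\nf{xa=0}{e|x}\vertr=1$ for suitable ring elements $a,e$ read off from the labels; in the first two cases \ref{decomposeorder}(1) applies with element $\theta$, in the last \ref{decomposeorder}(2) applies (commutativity is used to match $ae|x$ with $ea|x$). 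Hence any $R$-module $N$ attached to $v$ is elementary equivalent to $N^1\oplus N^2$ with $N^i$ satisfying the label of the $i$-th edge out of $v$. I would then prove, by induction on subtrees, that for every node $v$ and every $N\in\Mod\text{-}R$, $N$ is elementary equivalent to $\bigoplus_\ell N_\ell$ over the leaves $\ell$ below $v$, with each $N_\ell$ satisfying the conjunction of the labels along the path from $v$ to $\ell$. In the inductive step one splits $N\equiv N^1\oplus N^2$ and applies the hypothesis to $N^1,N^2$; the only point is that each summand of the resulting decomposition of $N^i$ still satisfies the edge label into the $i$-th child of $v$, which holds because that label has the form $\vertl\nf{\phi}{\psi}\vertr=1$, i.e.\ $\phi(N^i)\subseteq\psi(N^i)$, and such a containment passes to the summands of any elementary-equivalent direct-sum decomposition since $\phi,\psi$ commute with $\oplus$ (and $\equiv$ is preserved under $\oplus$ of families). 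Applying this at the root to an arbitrary $M$ yields the asserted decomposition into the eighteen summands indexed by the $\Lambda_i,\Lambda_i',P_i,P_i'$.

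For the ``moreover'' part the workhorse is the observation that if $M\alpha=M$ — a conjunct of every $\Lambda_i$ and $P_i$ — then, writing $m\in M$ as $m'\alpha$ and using $c\alpha=c'u$ and commutativity, $mc=m'(c\alpha)=m'c'u$, while conversely $Mc'u=M(c\alpha)=(Mc)\alpha\subseteq Mc$; thus $Mc=Mc'u$, so $Mc\subseteq Mc'$, and when $Mc'us=0$ also $Mc\subseteq\ann_M(s)$ (the mirror identities hold on the right). Granting this, each collapse statement (i), (iii), (v), (viii) and each triviality statement (ii), (iv), (vii) is a line or two: (i) reads $Mc=Mc'u=0$; (vii) reads $\ann_M(b)\subseteq Mc'u=Mc$; (ii) follows from $u\beta=b'r$ with $\beta$ injective, giving $\ann_M(b')\subseteq\ann_M(u)\subseteq Mc'$; (iv) follows because $xb'=0$ forces $xus=0$ via $b'(\beta-1)=us$, hence $xu\in\ann_M(s)\subseteq Mc'u$ and then $x\in Mc'$; and (iii), (v), (viii) each deduce, from the relevant equation together with injectivity of $\beta-1$, of $\delta$, of $\delta-1$ respectively, that some product element kills $M$.

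The substantial cases are the multiplicativity identities (vi) and (ix). For (vi), on a model $M$ of $\Lambda_6$ I would argue: $b\delta=s\lambda$ with $\delta$ injective gives $\ann_M(b)=\ann_M(s\lambda)$, so $\vertl\nf{xb=0}{c|x}(M)\vertr=\vertl\nf{xs\lambda=0}{c|x}(M)\vertr$; since $Mc=Mc'u\subseteq\ann_M(s)$, one has $c|x\leq_M(xs=0)\leq_M(xs\lambda=0)$, so this invariant factors (multiplicativity of Baur--Monk invariants along a chain of pp-formulae) as $\vertl\nf{xs\lambda=0}{xs=0}(M)\vertr\cdot\vertl\nf{xs=0}{c|x}(M)\vertr$; by \ref{equivpair1} and $\ann_M(\lambda)\subseteq Ms$ the first factor equals $\vertl Ms\cap\ann_M(\lambda)\vertr=\vertl\ann_M(\lambda)\vertr=\vertl\nf{x\lambda=0}{x=0}(M)\vertr$; and it remains to identify $\vertl\nf{xs=0}{c|x}(M)\vertr$ with $\vertl\nf{xb'=0}{c'|x}(M)\vertr$, using $b'(\beta-1)=us$ with $\beta-1$ injective together with the remaining equations and $Mc=Mc'u$. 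Case (ix) is the mirror image, using $s(\delta-1)=b\mu$ with $\delta-1$ injective and $c'bu\in\ann_R M$. The primed statements follow from the left--right symmetry of Figure~1 (interchanging $b\leftrightarrow b'$, $c\leftrightarrow c'$, $u\leftrightarrow u'$, $s\leftrightarrow s'$, $\delta\leftrightarrow\delta'$, $\lambda\leftrightarrow\lambda'$, $\mu\leftrightarrow\mu'$, $\beta\leftrightarrow\beta'$), which exchanges exactly the corresponding equations.

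I expect the real obstacle to be that last identification in (vi) — proving $\vertl\nf{xs=0}{c|x}(M)\vertr=\vertl\nf{xb'=0}{c'|x}(M)\vertr$ on $\Lambda_6$-models — and its analogue in (ix): it forces one to deploy several of the Tuganbaev-type equations simultaneously and to track carefully how the images $Mc,Mc'$ sit inside the annihilators $\ann_M(s),\ann_M(us)$, which is where the bulk of the computation lives. Everything else is either the routine tree bookkeeping of the first paragraph or the short element-chasing of the second.
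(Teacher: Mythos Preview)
Your proposal is correct and follows the same route as the paper: iterate \ref{decomposeorder} along the tree for the decomposition, then verify (i)--(ix) by exploiting $Mc=Mc'u$ and the Tuganbaev relations. The paper handles the ``real obstacle'' you flagged in (vi) and (ix) by isolating a Claim---that $\vertl\nf{xb'=0}{c'|x}(M)\vertr=\vertl\nf{xs=0}{c'u|x}(M)\vertr$ whenever $M$ satisfies the path down to node $A$---and proving it via the explicit map $m\mapsto mu+Mc'u$ from $\ann_M(us)$ to $\ann_M(s)/Mc'u$, whose surjectivity uses $\ann_M(s)\subseteq Mu$ and whose kernel is $Mc'$ by $\ann_M(u)\subseteq Mc'$; this is exactly the computation you anticipated.
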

\begin{proof}
There are two edges coming out of each node of the tree in Figure $1$. In each instance the two edges are either
\begin{enumerate}
\item $\vertl \nicefrac{x=x}{\gamma|x}\vertr=1$ and $\vertl \nicefrac{x=x}{(\gamma-1)|x}\vertr=1$ for some $\gamma\in R$,
\item $\vertl \nicefrac{x\gamma=0}{x=0}\vertr=1$ and $\vertl \nicefrac{x(\gamma-1)=0}{x=0}\vertr=1$ for some $\gamma\in R$, or
\item $\vertl \nicefrac{ab|x}{x=0}\vertr=1$ and $\vertl \nicefrac{xa=0}{b|x}\vertr=1$.
\end{enumerate}
By \ref{decomposeorder}, in each $(1),(2)$ and $(3)$, for all modules $M\in\Mod\text{-}R$, there exist $M_1$ satisfying the first sentence and $M_2$ satisfying the second sentence such that $M\cong M_1\oplus M_2$. The first claim follows from this fact.

For any $M\in\Mod\text{-}R$, $\alpha\notin \Div M$ implies $c|x$ is equivalent to $c'u|x$ in $M$ because $c\alpha=c'u$.

\smallskip

\noindent
$(i)$ Suppose $M\models \Lambda_1$. Since $\vertl \nicefrac{c'u|x}{x=0}(M)\vertr=1$, $Mc=Mc'u=0$. So $c\in\ann_RM$.

\smallskip
\noindent
$(ii)$ Suppose $M\models \Lambda_2$. Then $\beta\notin \Ass M$ and so, since $u\beta=b'r$, $xb'r=0$ is equivalent to $xu=0$ in $M$. Since $xu=0\leq_Mc'|x$, we conclude \[xb'=0\leq_M xb'r=0\leq_Mxu=0 \leq_Mc'|x.\]

\smallskip
\noindent
$(iii)$ Suppose $M\models \Lambda_3$. Then $\beta-1\notin \Ass M$ and so, since $b'(\beta-1)=us$, $xb'=0$ is equivalent to $xus=0$ in $M$. Since $\vertl \nicefrac{us|x}{x=0}(M)\vertr=1$, $us\in\ann_RM$ and hence $b'\in\ann_RM$.

\smallskip
\noindent
\textbf{Claim:} If
$M\models\vertl \nicefrac{x=x}{\alpha|x}\vertr=1\wedge\vertl \nicefrac{xu=0}{c'|x}\vertr=1\wedge \vertl \nicefrac{x(\beta-1)=0}{x=0}\vertr=1\wedge \vertl \nicefrac{xs=0}{u|x}\vertr=1$ then $\vertl \nicefrac{xb'=0}{c'|x}(M)\vertr=\vertl \nicefrac{xs=0}{c'u|x}(M)\vertr$.

First note that since $\beta-1\notin \Ass M$ and $b'(\beta-1)=us$, $xb'=0$ is equivalent to $xus=0$ in $M$. We show that $\vertl \nicefrac{xus=0}{c'|x}(M)\vertr=\vertl \nicefrac{xs=0}{c'u|x}(M)\vertr$.

Consider the map $f: xus=0/x=0(M)\rightarrow xs=0/c'u|x(M)$ defined by $f(m):=mu+c'u|x(M)$. Now $f$ is surjective since $\vertl \nicefrac{xs=0}{u|x}\vertr=1$. Suppose $f(m)=m'c'u$ for some $m'\in M$. Then $(m-m'c)u=0$. Since $\vertl \nicefrac{xu=0}{c'|x}\vertr=1$, $m-m'c\in c'|x(M)$ and hence $m\in c'|x(M)$. Therefore $\ker f=c'|x(M)$. So we have proved the claim.

We now prove the statements about modules satisfying $\Lambda_4,\Lambda_5,\Lambda_6$. The statements for modules satisfying $P_4,P_5$ and $P_6$ follow similarly.

\smallskip
\noindent
$(iv)$ Suppose $M\models \Lambda_4$. Then $\vertl \nicefrac{xs=0}{c'u|x}(M)\vertr =1$ and by the claim $\vertl \nicefrac{xb'=0}{c'|x}(M)\vertr=\vertl \nicefrac{xs=0}{c'u|x}(M)\vertr$.

\smallskip
\noindent
$(v)$ Suppose $M\models \Lambda_5$. Since $\delta\notin \Ass M$, $xb=0$ is equivalent to $xs\lambda=0$ in $M$. Since $\vertl \nicefrac{s\lambda|x}{x=0}(M)\vertr=1$, $s\lambda\in\ann_RM$ and hence $b\in\ann_RM$.

\smallskip
\noindent
$(vi)$ Suppose $M\models \Lambda_6$. Since $\delta\notin \Ass M$, $xb=0$ is equivalent to $xs\lambda=0$ in $M$. Since $xs=0\geq_Mc'u|x$,
\[\vertl \nicefrac{xb=0}{c|x}(M)\vertr=\vertl \nicefrac{xs\lambda=0}{c'u|x}(M)\vertr=\vertl \nicefrac{xs\lambda=0}{xs=0}(M)\vertr\cdot\vertl \nicefrac{xs=0}{c'u|x}(M)\vertr.\]
By \ref{equivpair1}, $\vertl \nicefrac{xs\lambda=0}{xs=0}(M)\vertr=\vertl \nicefrac{s|x\wedge x\lambda=0}{x=0}(M)\vertr$. So, since $\vertl\nicefrac{x\lambda=0}{s|x}\vertr=1$,
\[\vertl \nicefrac{xb=0}{c|x}\left(M\right)\vertr=\vertl \nicefrac{xs\lambda=0}{xs=0}\left(M\right)\vertr\cdot\vertl \nicefrac{xs=0}{c'u|x}\left(M\right)\vertr=\vertl \nicefrac{x\lambda=0}{x=0}\left(M\right)\vertr\cdot\vertl \nicefrac{xb'=0}{c'|x}\left(M\right)\vertr.\]
\end{proof}

\begin{proposition}\label{redclx2}
There is an algorithm which, given $w\in W$ with $\clx_2(w)> 1$, outputs $\underline{w}\in\mathbb{W}$ such that $\clx_2(\underline{w})<\clx_2(w)$, $\clx_1(\underline{w})\leq\clx_1(w)$, and, $w\in V$ if and only if $\underline{w}\in\mathbb{V}$.
\end{proposition}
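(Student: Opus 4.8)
The plan is to follow the proof of \ref{redclx1} essentially line by line, with the role of \ref{decomposeorder} now played by the much larger decomposition of \ref{xb=0/c|xsplit}. First I would apply the preliminary normalization (the paragraph preceding \ref{decomp}) to $w$: either we detect $w\notin V$ and output $\bot$, or we rewrite $w$ as $\chi_{f,g}\wedge\Xi$ with $f\colon X\to\N_2$ and $g\colon Y\to\N_2$, where $X,Y$ are disjoint finite sets of pp-pairs of the form $\nf{d|x}{x=0}$ or $\nf{xb=0}{c|x}$ and $\Xi$ is auxiliary, without increasing $\clx_1$ or $\clx_2$. Since $\clx_2(w)>1$ there are two distinct pairs $P=\nf{xb=0}{c|x}$ and $P'=\nf{xb'=0}{c'|x}$ occurring in $X\cup Y$ with $b,c,b',c'\neq 0$, and by \ref{Tuganbaev} one can effectively find the ring elements $\alpha,u,u',\beta,\beta',s,s',\delta,\delta',\lambda,\lambda',\mu,\mu'$ (and the remaining multipliers) witnessing the hypotheses of \ref{xb=0/c|xsplit} for this $b,c,b',c'$. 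By \ref{xb=0/c|xsplit} every $R$-module is elementary equivalent to a direct sum of the $18$ modules satisfying the auxiliary sentences $\Lambda_1,\dots,\Lambda_6,P_4,P_5,P_6,\Lambda_1',\dots,\Lambda_6',P_4',P_5',P_6'$, so \ref{decomp} applies with $n=18$ and gives that $w\in V$ if and only if
\[\bigsqcup_{(\overline f,\overline g)\in\Omega_{f,g,18}}\;\bigsqcap_{j=1}^{18}\chi_{f_j,g_j}\wedge\theta_j\wedge\Xi\in\mathbb{V},\]
where $\theta_1,\dots,\theta_{18}$ list the $18$ sentences above, together with $\clx_i(\chi_{f_j,g_j}\wedge\theta_j\wedge\Xi)\le\clx_i(\chi_{f,g}\wedge\Xi)$ for $i=1,2$. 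It then suffices, for each feathering $(\overline f,\overline g)$ and each $j$, to compute $\underline{w}_j\in\mathbb{W}$ with $\underline{w}_j\in\mathbb{V}\iff\chi_{f_j,g_j}\wedge\theta_j\wedge\Xi\in V$, $\clx_2(\underline{w}_j)<\clx_2(w)$ and $\clx_1(\underline{w}_j)\le\clx_1(w)$; then $\underline{w}:=\bigsqcup_{(\overline f,\overline g)}\bigsqcap_{j=1}^{18}\underline{w}_j$ is the required output.

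To compute $\underline{w}_j$ I would fix a feathering and a $j$, renormalize $\chi_{f_j,g_j}\wedge\theta_j\wedge\Xi$, and split on the shape of $\theta_j$ via the matching clause of \ref{xb=0/c|xsplit}. If after renormalization either $P$ or $P'$ occurs with value $1$ (or has been dropped), then, since every pair contributing to $\clx_2(\chi_{f_j,g_j})$ already contributed to $\clx_2(\chi_{f,g})$ and both $P,P'$ had value $\ge 2$ in $w$, the invariant $\clx_2$ has already strictly decreased, and we take $\underline{w}_j$ to be the renormalized sentence. Otherwise $P,P'$ both carry value $\ge 2$, and there are three kinds of $\theta_j$. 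For $\theta_j$ among $\Lambda_1,\Lambda_3,\Lambda_5,P_5$ and their primed versions, the clause forces one of $|P(M)|,|P'(M)|$ to equal $|\nf{xb=0}{x=0}(M)|$ or $|\nf{x=x}{c|x}(M)|$ (or the primed analogue), so we replace the corresponding conjunct by the conjunct on that new pair — which is counted by neither $\clx_1$ nor $\clx_2$ — obtaining a $T_R$-equivalent $\underline{w}_j$ with $\clx_2$ down by one and $\clx_1$ unchanged. For $\theta_j$ among $\Lambda_2,\Lambda_4,P_4$ and their primes, the clause forces $|P(M)|=1$ or $|P'(M)|=1$, incompatible with a conjunct of value $\ge 2$ on that pair, so $\chi_{f_j,g_j}\wedge\theta_j\wedge\Xi$ is unsatisfiable and we set $\underline{w}_j:=\bot$. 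For $\theta_j\in\{\Lambda_6,P_6,\Lambda_6',P_6'\}$ the clause gives a multiplicative relation such as $|P(M)|=|\nf{x\lambda=0}{x=0}(M)|\cdot|P'(M)|$; writing $\chi_{f_j,g_j}\wedge\theta_j\wedge\Xi$ as $|P|\,\square\,A\wedge|P'|\,\square'\,B$ conjoined with the rest and applying \ref{redmult} with $\phi/\psi:=P$, $\phi'/\psi':=P'$ and this relation, we get either $\Omega=\{\bot\}$, whence $\underline{w}_j:=\bot$, or a finite disjunction equivalent to $\chi_{f_j,g_j}\wedge\theta_j\wedge\Xi$ in which $P$ is replaced by $\nf{x\lambda=0}{x=0}$ and the bound $D_2$ on $P'$ satisfies $D_1 D_2<AB$; taking $\underline{w}_j$ to be the corresponding $\sqcup$, the $\nf{x\lambda=0}{x=0}$-conjunct is counted by neither invariant, so $\clx_2(\underline{w}_j)\le 1+(\clx_2(w)-2)<\clx_2(w)$ and $\clx_1$ is unchanged. (The primed and $P$-indexed clauses are handled identically with $P,P'$ and $\lambda,\mu,\lambda',\mu'$ permuted accordingly.)

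Assembling, $\underline{w}=\bigsqcup_{(\overline f,\overline g)}\bigsqcap_{j=1}^{18}\underline{w}_j$ is a lattice combination of elements of $W\cup\{\bot\}$, each of $\clx_2$-value $\le\clx_2(w)-1$ and $\clx_1$-value $\le\clx_1(w)$, so $\clx_2(\underline{w})<\clx_2(w)$, $\clx_1(\underline{w})\le\clx_1(w)$, and $w\in V\iff\underline{w}\in\mathbb{V}$. Every step is effective because $R$ is recursive arithmetical: the \ref{Tuganbaev}-type equations can be solved, $\Omega_{f,g,18}$ is a finite computable set, \ref{redmult} and the normalization are algorithms, and deciding which clause of \ref{xb=0/c|xsplit} a given $\theta_j$ matches is pattern matching.

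I expect the main obstacle to be the bookkeeping rather than any new idea: all the semantic content is packaged in \ref{xb=0/c|xsplit}, so what remains is to track how $\clx_1$ and $\clx_2$ evolve through the $18$-fold feathering and the \ref{redmult} calls, and to check that in each of the $18$ clauses exactly one of the two distinguished $\nf{xb=0}{c|x}$-conjuncts is eliminated or made multiplicative. The genuinely delicate point is securing the \emph{strict} decrease of $\clx_2$ when $\clx_2(w)=2$: this is why the preliminary renormalization (discarding value-$1$ pairs up front) is needed, together with the observation that the newly introduced pairs $\nf{xb=0}{x=0}$, $\nf{x=x}{c|x}$ and $\nf{x\lambda=0}{x=0}$ are counted by neither $\clx_1$ nor $\clx_2$.
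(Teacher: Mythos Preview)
Your proposal is correct and follows essentially the same strategy as the paper: feather via the decomposition of Lemma~\ref{xb=0/c|xsplit}, then for each summand use the corresponding clause to either replace one of the two distinguished $\nf{xb=0}{c|x}$-pairs by a pair counted by neither $\clx_1$ nor $\clx_2$, derive a contradiction, or apply Lemma~\ref{redmult} to the multiplicative relation. The only cosmetic difference is that the paper first regroups the eighteen leaves into ten weaker auxiliary sentences $\Sigma_1,\dots,\Sigma_{10}$ (merging leaves with the same consequence) and restricts the feathering to those $(\overline f,\overline g)$ with $f_5(P)=f_6(P')=1$, whereas you work with all eighteen leaves directly and output $\bot$ for the incompatible pieces; both organizations yield the same reduction.
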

\begin{proof}
We start with a special case. Let $b,c,b',c'\in R\backslash\{0\}$. Let $P_6,P'_6,\Lambda_6,\Lambda'_6$ be as in \ref{xb=0/c|xsplit}. Let $\Sigma_1:=\vertl c|x/x=0\vertr=1$, $\Sigma_2:=\vertl b|x/x=0\vertr=1$, $\Sigma_3:=\vertl c'|x/x=0\vertr=1$, $\Sigma_4:=\vertl b'|x/x=0\vertr=1$, $\Sigma_5:=\vertl\nicefrac{xb=0}{c|x}\vertr=1$, $\Sigma_6:=\vertl\nicefrac{xb'=0}{c'|x}\vertr=1$, $\Sigma_7:=\Lambda_6$, $\Sigma_8:=P_6$, $\Sigma_9:=\Lambda'_6$ and $\Sigma_{10}:=P'_6$.

Fix $1\leq i\leq 4$ or $7\leq i\leq 10$. Suppose $w$ is
\[\vertl\nicefrac{xb=0}{c|x}\vertr\square E\wedge\vertl\nicefrac{xb'=0}{c'|x}\vertr\square'E'\wedge\chi_{f,g}\wedge\Sigma_i\wedge\Xi.\]

\smallskip

\noindent
\textbf{Case i=1:} Let $w'$ be
\[\vertl\nicefrac{xb=0}{x=0}\vertr\square E\wedge\vertl\nicefrac{xb'=0}{c'|x}\vertr\square'E'\wedge\chi_{f,g}\wedge\Sigma_i\wedge\Xi.\]
Then $\clx_1 w'= \clx_1 w$ and  $\clx_2 w'<\clx_2 w$. Since $T_R\models w\leftrightarrow w'$, we get $w\in V$ if and only if $w'\in V$.

\smallskip

\noindent
\textbf{Case i=2,3,4:} The same argument as for $i=1$ works.

\smallskip

\noindent
\textbf{Case i=7:} By \ref{xb=0/c|xsplit}, if $M\models \Sigma_7=\Lambda_6$ then
\[\vertl \nicefrac{xb=0}{c|x}(M)\vertr=\vertl \nicefrac{xb'=0}{c'|x}(M)\vertr\cdot \vertl \nicefrac{x\lambda=0}{x=0}(M)\vertr.\]
By \ref{redmult}, there is an algorithm which either returns $\Omega:=\bot$, in which case
\[T_R\models \lnot (\vertl\nf{xb=0}{c|x}\vertr\square E\wedge \vertl\nf{xb'=0}{c'|x}\vertr\square'E'),\] or, a set $\Omega\subseteq \N^2\times\{=,\geq\}^2$ such that
\[
\Sigma_7\wedge \vertl\nf{xb=0}{c|x}\vertr\square E\wedge |\nf{xb'=0}{c'|x}|\square' E'
\] is equivalent, with respect to $T_R$, to
\[
\bigvee_{(D_1,D_2,\square_1,\square_2)\in\Omega}\Sigma_7\wedge \vertl \nf{x\lambda=0}{x=0}\vertr\square_1 D_1\wedge \vertl \nf{xb'=0}{c'|x}\vertr\square_2 D_2.
\]
If $\Omega:=\{\bot\}$ then $w\in V$ if and only if $\bot\in \mathbb{V}$ and by definition $\clx_1\bot\leq \clx_1 w$ and $\clx_2\bot<\clx_2w$. Otherwise,
\[\vertl\nicefrac{xb=0}{c|x}\vertr\square E\wedge\vertl\nicefrac{xb'=0}{c'|x}\vertr\square'E'\wedge\chi_{f,g}\wedge\Sigma_7\wedge\Xi\] is equivalent to
\[\bigvee_{(D_1,D_2,\square_1,\square_2)\in\Omega}\vertl \nf{x\lambda=0}{x=0}\vertr\square_1 D_1\wedge \vertl \nf{xb'=0}{c'|x}\vertr\square_2 D_2\wedge\chi_{f,g}\wedge\Sigma_7\wedge\Xi.\]
For each $(D_1,D_2,\square_1,\square_2)\in\Omega$, let
\[w_{(D_1,D_2,\square_1,\square_2)}:=\vertl \nf{x\lambda=0}{x=0}\vertr\square_1 D_1\wedge \vertl \nf{xb'=0}{c'|x}\vertr\square_2 D_2\wedge\chi_{f,g}\wedge\Sigma_7\wedge\Xi.\]
So $w\in V$ if and only if
\[\bigsqcup_{(D_1,D_2,\square_1,\square_2)\in\Omega}w_{(D_1,D_2,\square_1,\square_2)}\in\mathbb{V}.\] For all $(D_1,D_2,\square_1,\square_2)\in\Omega$, $\clx_1 w_{(D_1,D_2,\square_1,\square_2)}=\clx_1w$ and
 $\clx_2w_{(D_1,D_2,\square_1,\square_2)}<\clx_2 w$.

\smallskip

\noindent
\textbf{Case i=8,9,10:} The same argument as for $i=7$ works.

\smallskip
\noindent
We now deal with the general case. Let
\[w:=\bigwedge_{\nf{\phi}{\psi}\in X}\vertl \nf{\phi}{\psi}\vertr=f(\nf{\phi}{\psi})\wedge\bigwedge_{\nf{\phi}{\psi}\in Y}\vertl \nf{\phi}{\psi}\vertr\geq g(\nf{\phi}{\psi})\wedge \Xi\in W\] where $X,Y$ are disjoint finite sets of pp-pairs of the form $d|x/x=0$ and $xb=0/c|x$, $f:X\rightarrow \N_2$, $g:Y\rightarrow \N_2$ and $\Xi$ is an auxiliary sentence.

Suppose that $\nf{xb=0}{c|x},\nf{xb'=0}{c'|x}\in X\cup Y$ are distinct pp-pairs. Let $\Omega\subseteq \Omega_{f,g,10}$ such that $(\overline{f},\overline{g})\in\Omega$ if and only if $\nf{xb=0}{c|x}\in X_5$, $\nf{xb'=0}{c'|x}\in X_6$, $f_5(\nf{xb=0}{c|x})=1$, and $f_6(\nf{xb'=0}{c'|x})=1$.

Then $w\in V$ if and only if
\[\bigsqcup_{(\overline{f},\overline{g})\in\Omega}\bigsqcap_{i=1}^{10}\bigwedge_{\nf{\phi}{\psi}\in X}\vertl \nf{\phi}{\psi}\vertr=f_i(\nf{\phi}{\psi})\wedge\bigwedge_{\nf{\phi}{\psi}\in Y}\vertl\nf{\phi}{\psi}\vertr\geq g_i(\nf{\phi}{\psi})\wedge\Sigma_i\wedge\Xi\in\mathbb{V}.\]
For each $(\overline{f},\overline{g})\in\Omega$ and $1\leq i\leq 10$, let
\[
w_{i,\overline{f},\overline{g}}:=\bigwedge_{\nf{\phi}{\psi}\in X}\vertl \nf{\phi}{\psi}\vertr=f_i(\nf{\phi}{\psi})\wedge\bigwedge_{\nf{\phi}{\psi}\in Y}\vertl\nf{\phi}{\psi}\vertr\geq g_i(\nf{\phi}{\psi})\wedge\Sigma_i\wedge\Xi.
\]
By definition of $\Omega_{f,g,10}$, for each $1\leq i\leq 10$, $\clx_1 w_{i,\overline{f},\overline{g}}\leq \clx_1 w$, $\clx_2 w_{i,\overline{f},\overline{g}}\leq clx_2 w$.

By assumption $\nf{xb=0}{c|x}\in X$ and $f(\nf{xb=0}{c|x})>1$ or $\nf{xb=0}{c|x}\in Y$ and $g(\nf{xb=0}{c|x})>1$. So, since $\nf{xb=0}{c|x}\in X_5$ and $f_5(\nf{xb=0}{c|x})=1$, for each $(\overline{f},\overline{g})\in\Omega_{f,g,10}$, $\clx_2 w_{5,\overline{f},\overline{g}}< \clx_2 w$. Replacing $\nf{xb=0}{c|x}$ by $\nf{xb'=0}{c'|x}$, the same argument gives $\clx_2 w_{6,\overline{f},\overline{g}}< \clx_2 w$.

If $1\leq i\leq 4$ or $7\leq i\leq 10$ then $w_{i,\overline{f},\overline{g}}$ is of the form of the special case considered at the beginning of the proof. Thus we may replace each $w_{i,\overline{f},\overline{g}}$ by a lattice combination of $w'\in W$ such that $\clx_1 w'\leq \clx_1 w_{i,\overline{f},\overline{g}}$, and, either $\clx_2w'<\clx_2w_{i,\overline{f},\overline{g}}$ or $\clx_2w'\leq\clx_2w_{i,\overline{f},\overline{g}}$.
\end{proof}

\begin{proof}[Proof of \ref{1stsynthm}]
By \ref{4.1Denseimproved}, in order to show that $T_R$ is decidable, it is enough to show that there exists an algorithm which given $w\in W$ answers whether $w\in V$ or not. By assumption, the set of $w'\in V$ with $\clx_1 w'\leq 1$ and $\clx_2w'\leq 1$ is recursive. Thus the set of $\underline{w}\in \mathbb{V}$ with $\clx_1 \underline{w}\leq 1$ and $\clx_2 \underline{w}\leq 1$ is recursive.

Since $\N_0$ is artinian as an order, iteratively applying \ref{redclx1} and \ref{redclx2}, gives an algorithm which given $w\in W$ with either $\clx_1w> 1$ or $\clx_2 w> 1$ outputs $\underline{w}\in\mathbb{W}$ such that $\clx_1\underline{w},\clx_2\underline{w}\leq 1$, and, $w\in V$ if and only if $\underline{w}\in \mathbb{V}$.
\end{proof}

\section{Uniserial modules with finite invariants sentences}\label{sectuni}

\noindent
Descriptions of the uniserial (and hence indecomposable pure-injective) modules, $U$, over a valuation domain which have $\nf{\phi}{\psi}(U)$ finite but non-zero for a given pp-pair $\nf{\phi}{\psi}$ are given for valuation domains with dense value groups in \cite{PunPunTof} and for valuation domains with non-dense value groups in \cite{DecVal}. However, we need a uniform description that works for both valuation domains with dense and non-dense value groups.
This is done in Lemmas \ref{needV/dI}, \ref{needI/bcV} and \ref{x=x/c|xfinnonzero} and used in section \ref{Furthersyn} and \ref{notred}.
The rest of the section is about these modules in preparation for sections \ref{Sfinitemodules}, \ref{Sremoving} and \ref{notred}.

\begin{lemma}\label{needV/dI}
Let $V$ be a valuation domain. If $d\in V$ and $U$ is a uniserial $V$-module such that $\nicefrac{d|x}{x=0}(U)=Ud$ is finite and non-zero then $U\cong V/dI$ for some $I\lhd V$ and $Ud\cong V/I$.
\end{lemma}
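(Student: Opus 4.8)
The plan is to show first that $U$ must be finitely generated, and then to identify $U$ with a quotient of the form $V/dI$ by tracking the annihilator. For the first step, I would use Lemma~\ref{unifinitefg}: since $\nicefrac{d|x}{x=0}(U) = Ud$ is finite and, by hypothesis, not equal to $1$ (it is non-zero, so it has at least $2$ elements once we observe the zero module case is excluded — more carefully, if $Ud$ were a one-element group then it would be $0$, contradicting non-vanishing), the formula $\phi(x) := (d\mid x)$ has $|U/\phi(U)|$... wait, that is not quite the quotient in Lemma~\ref{unifinitefg}. Instead I would argue directly: $Ud$ is a finite nonzero submodule of $U$, and since $U$ is uniserial as a $V$-module, $Ud$ is cyclic, say $Ud = vV$ for some $v \in U$. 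Then $v = ud$ for some $u \in U$, so $udV \supseteq Ud$, which forces $udV = Ud$ and in particular $uV \supseteq Ud$. Since $V$ is a valuation domain and $U$ is uniserial, either $uV \subseteq Ud$ or $Ud \subseteq uV$; we have just shown the latter, and if also $uV \subseteq Ud$ then $uV = Ud$ is finite; in either case $uV$ contains $Ud$. I claim $uV = U$: indeed $Ud = udV \subseteq uV$, and for any $w \in U$, uniseriality gives $wV \subseteq uV$ or $uV \subseteq wV$; in the second case $w d \in Ud = udV \subseteq uV \cdot d$... this needs a cleaner argument. The cleanest route: $U/Ud$ has the property that $d$ annihilates it, so $U/Ud$ is a module over $V/dV$; and $Ud$ is finite. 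Hmm, but $U/Ud$ need not be finite.

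Let me restructure. The key point is simply: $Ud$ finite nonzero $\Rightarrow$ $Ud$ is cyclic (uniserial) $\Rightarrow$ there is $u \in U$ with $ud V = Ud$. Now consider $u V$. Since $ud \ne 0$ (as $Ud \ne 0$), and $V$ is a domain, multiplication by $d$ is injective on $uV \cong V/\mathrm{ann}(u)$... I want to conclude $uV = U$. Suppose not; then since $U$ is uniserial, $uV \subsetneq U$, so pick $w \in U \setminus uV$; uniseriality forces $uV \subsetneq wV$, hence $wd V \supseteq$ ... actually $wV \supsetneq uV \ni ud$, and multiplying the strict inclusion $uV \subsetneq wV$ by $d$: since $d$ acts injectively and $U$ is a module over the domain $V$, we get $udV \subseteq wdV$; but is this strict? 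Not necessarily. Alternative: I will invoke Lemma~\ref{unifinitefg} correctly after all. Apply it with the pp-$1$-formula $\phi(x)$ chosen so that $U/\phi(U) \cong Ud$. Take $\phi(x) := \exists y\, (x = yd) \wedge$ ... no. Instead note $Ud \cong U/(0:d)$ is the wrong side. Let me just take $\phi$ to be a pp-formula with $\phi(U) = \{u \in U : ud = 0\} = (0 :_U d)$; this is cut out by $xd = 0$. Then $U/\phi(U) = U/(0:_U d) \cong Ud$ via $u + (0:_U d) \mapsto ud$. So $|U/\phi(U)| = |Ud|$ is finite and $\ne 1$, and Lemma~\ref{unifinitefg} gives $U \cong V/I_0$ for some ideal $I_0 \lhd V$.

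Now with $U \cong V/I_0$, I need to show $I_0 = dI$ for some ideal $I$, equivalently $dV \supseteq I_0$, equivalently $d \mid$ every element of $I_0$. Under the identification, $Ud = (dV + I_0)/I_0 = dV/I_0$ (since $dV \supseteq I_0$ is what we want) — more precisely $Ud \cong dV/(I_0 \cap dV)$ and this is required to be nonzero and finite. Since $Ud \ne 0$ we have $dV \not\subseteq I_0$; since $V$ is a valuation domain this forces $dV \supseteq I_0$ (the two ideals are comparable). Hence, by the displayed lemma just before Lemma~\ref{(I:a)useful}, $I_0 = dJ$ for $J = (I_0 : d)$; set $I := J$, so $U \cong V/dI$. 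Finally $Ud = dV/dI \cong V/I$ because multiplication by $d$ is injective on $V$ (domain), giving $dV/dI \cong V/I$. The finiteness of $Ud$ then matches $|V/I|$ being finite, consistent with the hypothesis. I expect no real obstacle here; the only care needed is getting the side of the pp-formula right so that Lemma~\ref{unifinitefg} applies, and then the valuation-domain comparability of $dV$ with $I_0$ does the rest.
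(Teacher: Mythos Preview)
Your proof is correct and, once you settle on the pp-formula $\phi(x) := (xd=0)$ so that $U/\phi(U) \cong Ud$, it follows exactly the paper's approach: the paper simply observes $\nf{x=x}{xd=0}(M) \cong \nf{d|x}{x=0}(M)$ (which is your isomorphism $U/(0:_U d) \cong Ud$), applies Lemma~\ref{unifinitefg}, and then uses $Ud \ne 0 \Rightarrow d \notin J \Rightarrow dV \supseteq J$ to write $J = dI$. The initial detours in your write-up can be deleted, but the core argument is the same.
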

\begin{proof}For any module $M$, $\nf{x=x}{xd=0}(M)\cong \nf{d|x}{x=0}(M)$. Thus if $\nf{d|x}{x=0}(U)\cong \nf{x=x}{xd=0}(U)$ is finite but not equal to the zero module then, by \ref{unifinitefg}, $U\cong V/J$ for some ideal $J\lhd V$. Since $Ud\neq 0$, $d\notin J$ and therefore $dV\supseteq J$. So there exists $I\lhd V$ such that $dI=J$.
%
%
\end{proof}

\noindent
Note that in the assumptions of the second clause of the next lemma we are not excluding that $I=V$ or consequently that $xb=0/c|x(I/bcV)=0$.

\begin{lemma}\label{needI/bcV}
Let $V$ be a valuation domain and $b,c\in V\backslash\{0\}$. If $U$ is a uniserial $V$-module such that $b,c\notin \text{ann}_V U$ and $\nicefrac{xb=0}{c|x}(U)$ is finite but non-zero then there exists $I\lhd V$ with $b,c\in I$ such that $U\equiv I/bcV$ and $\nicefrac{xb=0}{c|x}(U)\cong V/I$.

 Conversely, if $0\neq I\lhd V$ is an ideal and $b,c\in I\backslash\{0\}$ then $\nicefrac{xb=0}{c|x}(I/bcV)\cong V/I$.
\end{lemma}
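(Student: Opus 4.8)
The plan is to dispatch the two clauses separately, beginning with the final (``conversely'') one, which is a direct computation.

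\emph{The ``conversely'' clause.} Put $M:=I/bcV$ and write $\phi:=(xb=0)$, $\psi:=(c|x)$. Since $b\in I$ we have $bc\in Ic\subseteq cV\subseteq I$ (the last inclusion because $c\in I$), so $\psi(M)=Mc=(Ic+bcV)/bcV=Ic/bcV$. As $b\neq 0$, a coset $r+bcV$ with $r\in I$ lies in $\phi(M)$ iff $rb\in bcV$ iff $r\in cV$; using $c\in I$ this gives $\phi(M)=cV/bcV\supseteq Ic/bcV=\psi(M)$. Hence $\nicefrac{xb=0}{c|x}(M)=\phi(M)/\psi(M)\cong cV/Ic$, and multiplication by $c$ (injective, as $c\neq 0$) identifies this with $V/I$.

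\emph{The first clause, reduction to an ideal.} Since $U$ is uniserial, $\phi(U)=\ann_U(b)$ and $\psi(U)=Uc$ are comparable; because $\nicefrac{xb=0}{c|x}(U)=\phi(U)/(\phi(U)\cap\psi(U))\neq 0$ we must have $Uc\subsetneq\ann_U(b)$. Then $Ucb=0$, i.e. $bc\in\ann_V U$, and $\nicefrac{xb=0}{c|x}(U)=\ann_U(b)/Uc$ is finite, nonzero and uniserial, hence cyclic; so it is isomorphic to $V/I$ where $I:=\ann_V(\ann_U(b)/Uc)$ and $V/I$ is finite. As $b$ annihilates $\ann_U(b)$ and $c$ carries $\ann_U(b)$ into $Uc$, both $b$ and $c$ annihilate this quotient, so $b,c\in I$. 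It remains to prove $U\equiv I/bcV$.

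\emph{The first clause, the elementary equivalence.} Here I would first put $U$ in a concrete shape: $U\equiv H(U)$, and by \ref{standarduni} $H(U)\cong H(J/K)$ for $V$-submodules $K\subsetneq J\subseteq Q$ of the field of fractions, with $K\neq 0$ because $\ann_V(J/K)=\ann_V U$ contains $bc\neq 0$; rescaling by an element of $K\setminus\{0\}$ we may assume $V\subseteq K\subsetneq J\subseteq Q$ and $U\equiv J/K$. Using that the $V$-submodules of $Q$ form a chain one computes, with $K\subseteq cJ$ (as $Uc\neq 0$), $b^{-1}K\subsetneq J$ (as $bJ\not\subseteq K$) and $bcJ\subseteq K$ (as $bc\in\ann_V U$), that $Uc=cJ/K$, $\ann_U(b)=b^{-1}K/K$, whence $\ann_U(b)/Uc\cong b^{-1}K/cJ\cong K/bcJ$ by multiplication by $b$; so $I=(bcJ:_V K)$ and $V/I\cong K/bcJ$. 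One then proves $J/K\equiv I/bcV$, which by Baur--Monk is exactly the equality $|\nicefrac{\sigma}{\tau}(J/K)|=|\nicefrac{\sigma}{\tau}(I/bcV)|$ for every pp-$1$-pair. In a uniserial $V$-module every pp-$1$-definable subgroup has the form $Nr\cap\ann_N(s)$ (the pp-$1$-lattice being generated by the formulae $r|x$ and $xs=0$, and sums of such collapsing to their largest term by uniseriality): for $N=J/K$ these are the submodules $(rJ\cap s^{-1}K+K)/K$, and for the cyclic module $N=I/bcV$ they exhaust the whole submodule chain. The equivalence follows by comparing these two chains, index by index, using $V/I\cong K/bcJ$ to line up endpoints and interior.

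\emph{Where the difficulty lies.} I expect the genuinely hard step to be precisely this last comparison: matching the lattice of pp-definable subgroups of the possibly infinitely generated uniserial module $U$ (equivalently of $J/K$) with that of the cyclic module $I/bcV$. This is the ``uniform'' content of the lemma: in \cite{PunPunTof} and \cite{DecVal} one could lean on density, respectively discreteness, of the value group to describe such $U$ outright, whereas here the identification must be carried out for all valuation domains at once and with no finite-generation hypothesis on $U$.
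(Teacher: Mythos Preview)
Your ``conversely'' computation and your reduction to $U\equiv J/K$ with $K/bcJ\cong V/I$ are correct and match the paper. The gap is in the final step: you propose to prove $J/K\equiv I/bcV$ by matching Baur--Monk invariants, admit this is the hard part, and do not carry it out. In fact no such argument is needed, because one gets an honest \emph{isomorphism} $J/K\cong I/bcV$.

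The point you are missing is that $K/bcJ\cong V/I$ being finite and nonzero forces $K$ itself to be principal as a $V$-submodule of $Q$. Indeed, a finite nonzero uniserial module is cyclic (Lemma~\ref{unifinitefg}), so $K/bcJ$ is generated by some $\lambda+bcJ$ with $\lambda\in K$; since the submodules of $Q$ are totally ordered and $\lambda\notin bcJ$, we have $bcJ\subseteq\lambda V$, whence $K=\lambda V+bcJ=\lambda V$. Then $bcJ=\lambda I$, so $J=(bc)^{-1}\lambda I$, and multiplication by $bc\lambda^{-1}$ gives $J/K\cong I/bcV$. This is exactly what the paper does (in two lines), and it bypasses the pp-lattice comparison entirely.
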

\begin{proof} Let $Q$ be the field of fractions of $V$. By \cite[pg 168]{Zie}, for any non-zero uniserial module $U$, there exist submodules $K\subsetneq J\subseteq Q$ such that $U\equiv J/K$. Now $b,c\notin \text{ann}_V J/K$ imply $(K:b)\subsetneq J$ and $cJ\supsetneq K$ respectively. Therefore, since $\nf{xb=0}{c|x}(J/K)\neq 0$, $xb=0/c|x(J/K)=(K:b)/cJ\cong K/cbJ$.  Since $K/cbJ$ is a non-zero finite uniserial module, it has the form $V/I$ for some proper ideal $I\lhd V$. Therefore $K=\lambda V$ for some $\lambda\in Q\backslash \{0\}$ and $\lambda^{-1}cbJ=I$. Thus $J/K\cong I/bcV$ as required.

Let $0\neq I\lhd V$ be an ideal and $b,c\in I\backslash\{0\}$. Then $\nicefrac{xb=0}{c|x}(I/bcV)=cV/cI\cong V/I$.
\end{proof}

\begin{lemma}\label{x=x/c|xfinnonzero}
Let $V$ be a valuation domain and $c\in V$.  If $U$ is a uniserial $V$-module such that $\nicefrac{x=x}{c|x}(U)$ is finite but non-zero then $U\cong V/K$ for some ideal $K\lhd R$. Moreover, if $\nicefrac{x=x}{c|x}(U)$ is finite but non-zero then either $U\cong V/cI$ for some $I\lhd V$ and $V/cV\cong \nicefrac{x=x}{c|x}(U)$, or, $c\in \ann_RU$ and $U\cong \nicefrac{x=x}{c|x}(U)$.
\end{lemma}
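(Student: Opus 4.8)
The plan is to reduce the statement to an application of Lemma~\ref{unifinitefg} together with the structural description of ideals of a valuation domain already established in this section. The key observation is that $\nicefrac{x=x}{c|x}(U)=U/cU$, so the hypothesis is precisely that $U/cU$ is finite and non-zero.

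First I would handle the ``moreover'' clause's case split at the level of ideals. By Lemma~\ref{unifinitefg}, since $U/cU=\nicefrac{x=x}{c|x}(U)$ is finite but not equal to $1$, there is an ideal $K\lhd V$ with $U\cong V/K$; this gives the first sentence of the lemma. Now work with $U=V/K$. Since $V$ is a valuation domain, for the element $c$ either $c\in K$ or $cV\supseteq K$. If $c\in K$ then $c\in\ann_V(V/K)=\ann_V U$, and then $cU=0$, so $\nicefrac{x=x}{c|x}(U)=U/cU=U$, which is the second alternative. If instead $c\notin K$, then $cV\supseteq K$, so by the lemma stating $rR\supseteq I$ iff $I=rJ$ for some $J$ (taking $J=(K:c)$), there is an ideal $I\lhd V$ with $K=cI$; hence $U\cong V/cI$. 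In this case $cU=cV/cI$ and so $\nicefrac{x=x}{c|x}(U)=U/cU=(V/cI)/(cV/cI)\cong V/cV$, giving the first alternative.

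The only point requiring a moment's care is that these two alternatives are stated as an inclusive ``or'' and I should make sure the boundary case is consistent: if $c\notin K$ but also $c$ is a unit, then $cV=V\supseteq K$ forces $K=cI$ with $I=c^{-1}K$, and $V/cV=0$; but then $U/cU=0$, contradicting the hypothesis that $\nicefrac{x=x}{c|x}(U)$ is non-zero, so this degenerate situation does not arise. Thus whenever the hypothesis holds and $c\notin K$, we genuinely have $V/cV\cong\nicefrac{x=x}{c|x}(U)$ non-zero and finite. I do not anticipate a real obstacle here; the argument is essentially a repackaging of Lemma~\ref{needV/dI}'s proof technique (reduce to $V/K$, then split on $c\in K$ versus $cV\supseteq K$) applied to the pair $\nicefrac{x=x}{c|x}$ rather than $\nicefrac{d|x}{x=0}$, the main difference being the extra alternative coming from the possibility $c\in\ann_V U$, which cannot happen for the pair in Lemma~\ref{needV/dI}.
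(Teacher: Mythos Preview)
Your proof is correct and follows essentially the same approach as the paper: apply Lemma~\ref{unifinitefg} to obtain $U\cong V/K$, then use the valuation-domain dichotomy $c\in K$ versus $cV\supseteq K$ to produce the two alternatives. You supply a bit more detail than the paper (the explicit computation of $U/cU$ in each case and the remark on the unit boundary case), but the argument is the same.
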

\begin{proof}
The first claim is a consequence of \ref{unifinitefg}. If $c\in K$ then $c\in \ann_RV/K$. If $c\notin K$ then $cV\supseteq K$ and hence there exists $I\lhd V$ such that $K=cI$.
\end{proof}

\noindent
We avoid dealing directly with uniserial $V$-modules $U$ such that $\nicefrac{xb=0}{x=0}(U)$ is finite but non-zero by using duality.

\subsection{$(\mfrak{p},I)\models(r,a,\gamma,\delta)$}

\begin{definition}
Let $r,a,\gamma,\delta\in R$, $\mfrak{p}\lhd R$ a prime ideal and $I\lhd R_{\mfrak{p}}$ an ideal. We write $(\mfrak{p},I)\models (r,a,\gamma,\delta)$ if $rR_{\mfrak{p}}\supseteq I$, $a\in I$, $\gamma\notin \mfrak{p}$ and $\delta\notin I^\#$.
\end{definition}
\noindent
The task of this subsection is to show that given an auxiliary sentence $\Xi$ and $\lambda\in R\backslash\{0\}$, we can compute $(r_i,r_ia_i,\gamma_i,\delta_i)$ for $1\leq i\leq n$ such that for any prime ideal $\mfrak{p}\lhd R$ and ideal $I\lhd R_{\mfrak{p}}$, $R_{\mfrak{p}}/\lambda I\models \Xi$ if and only if $(\mfrak{p},I)\models (r_i,r_ia_i,\gamma_i,\delta_i)$ for some $1\leq i\leq n$. However, in \ref{x=x/c|x=xb=0/x=0geq}, we will need the $(r_i,r_ia_i,\gamma_i,\delta_i)$ that we compute to interact well with the duality defined in \ref{Dualsent}.


\begin{remark}\label{alphafeather}
Let $r,a,\gamma,\delta,\alpha\in R$. For all prime ideals $\mfrak{p}\lhd R$ and ideals $I\lhd R_{\mfrak{p}}$, $(\mfrak{p},I)\models (r,a,\gamma,\delta)$ if and only if $(\mfrak{p},I)\models (r,a,\gamma\alpha,\delta\alpha)$ or $(\mfrak{p},I)\models (r,a,\gamma(\alpha-1),\delta(\alpha-1))$.
\end{remark}
\begin{proof}
This is true because for all prime ideals $\mfrak{p}\lhd R$, either $\alpha\notin\mfrak{p}$ or $\alpha-1\notin\mfrak{p}$.
\end{proof}

\begin{lemma}\label{R/Iclosexbcx}\label{R/Iclosed|xx=0}
Let $R$ be a Pr\"ufer domain and $b,c,d\in R$ with $b\neq 0$. Let $\mfrak{p}\lhd R$ be a prime ideal and $I\lhd R_\mfrak{p}$ be an ideal.
\begin{enumerate}
\item Then $\vertl xb=0/c|x(R_{\mfrak{p}}/I)\vertr=1$ if and only if $b\notin I^\#$, $c\notin \mfrak{p}$, $bcR_{\mfrak{p}}\supseteq I$ or $1\in I$.
\item Then $\vertl d|x/x=0(R_{\mfrak{p}}/I)\vertr=1$ if and only if $d\in I$.
\end{enumerate}

\end{lemma}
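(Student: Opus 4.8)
The plan is to compute both invariants directly in the cyclic module $R_{\mfrak p}/I$; throughout write $V:=R_{\mfrak p}$, a valuation domain with $I\lhd V$. Part (2) is immediate: the solution set of $d|x$ in $V/I$ is $(dV+I)/I$, so $\vertl d|x/x=0(V/I)\vertr=1$ holds precisely when $dV+I=I$, that is when $dV\subseteq I$, that is when $d\in I$.

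For part (1), the first step is to note that in $V/I$ the solution set of $xb=0$ is $(I:b)/I$ and the solution set of $c|x$ is $(cV+I)/I$, so that $xb=0/c|x(V/I)$ is isomorphic to $(I:b)\big/\big((I:b)\cap(cV+I)\big)$; hence $\vertl xb=0/c|x(V/I)\vertr=1$ if and only if $(I:b)\subseteq cV+I$. I would then feed this into Lemma \ref{(I:a)useful}, applied to the ideal $I$, the nonzero element $b$, and the ideal $cV+I$: it says that $(I:b)\subseteq cV+I$ is equivalent to ``$b(cV+I)\supseteq I$ or $cV+I=V$''. Since $V$ is a valuation domain, $cV$ and $I$ are comparable and $cV+I$ is the larger of the two; hence $cV+I=V$ exactly when $cV=V$ or $I=V$, i.e.\ when $c\notin\mfrak p$ or $1\in I$, which are the second and fourth alternatives. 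For the remaining alternative I would split on the comparison of $cV$ and $I$: if $I\subseteq cV$ then $cV+I=cV$, so $b(cV+I)=bcV$ and the condition reads $bcR_{\mfrak p}\supseteq I$; if instead $cV\subsetneq I$ then $cV+I=I$, so $b(cV+I)=bI$ and the condition reads $bI\supseteq I$, i.e.\ (as $bI\subseteq I$ always) $bI=I$, which, using that $cV\subsetneq I$ forces $I\neq 0$ and the characterisation of $I^\#$ recorded just after its definition, means exactly $b\notin I^\#$. This produces one of the four listed conditions.

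For the converse I would verify directly that each of the four conditions forces $(I:b)\subseteq cV+I$: if $c\notin\mfrak p$ or $1\in I$ then $cV+I=V\supseteq(I:b)$; if $bcR_{\mfrak p}\supseteq I$ then $b(cV+I)\supseteq bcV\supseteq I$; and if $b\notin I^\#$ then for $I\neq 0$ one has $bI=I$, so $b(cV+I)\supseteq bI=I$ (the case $I=0$ being trivial), so in the last two cases Lemma \ref{(I:a)useful} again yields $(I:b)\subseteq cV+I$. I do not expect a genuine difficulty: everything reduces to the translation $\vertl xb=0/c|x(V/I)\vertr=1\Leftrightarrow(I:b)\subseteq cV+I$ followed by Lemma \ref{(I:a)useful}, and the only points needing a little attention are the degenerate cases $c=0$ (permitted by the hypotheses) and $I\in\{0,V\}$, all of which are subsumed by the dichotomy above once one recalls that $0^\#=0$.
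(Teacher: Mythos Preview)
Your proof is correct and essentially the same as the paper's: both translate the condition to $(I:b)\subseteq cR_{\mfrak p}+I$ and then unpack it using comparability of ideals in the valuation ring $R_{\mfrak p}$, Lemma~\ref{(I:a)useful}, and the definition of $I^\#$. The only difference is cosmetic: the paper first splits $cR_{\mfrak p}+I\supseteq(I:b)$ as ``$I\supseteq(I:b)$ or $cR_{\mfrak p}\supseteq(I:b)$'' and then analyses each piece (the first via the definition of $I^\#$, the second via Lemma~\ref{(I:a)useful}), whereas you apply Lemma~\ref{(I:a)useful} to $J=cR_{\mfrak p}+I$ first and split afterwards.
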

\begin{proof}
(1) For any ideal $I\lhd R_{\mfrak{p}}$, $\vertl xb=0/c|x(R_{\mfrak{p}}/I)\vertr=1$ if and only if $cR_{\mfrak{p}}+I\supseteq (I:b)$. Since $R_\mfrak{p}$ is a valuation ring, $cR_{\mfrak{p}}+I\supseteq (I:b)$ if and only if $ I\supseteq (I:b)$ or $ cR_{\mfrak{p}}\supseteq (I:b)$. So it is enough to note that $I\supseteq(I:b)$ if and only if $b\notin I^\#$ or $1\in I$, and, $ cR_{\mfrak{p}}\supseteq (I:b)$ if and only if $ bcR_{\mfrak{p}}\supseteq I$ or $c\notin \mfrak{p}$.

\noindent
(2) is obvious.
\end{proof}

%

\begin{lemma}\label{comb1}
Given $(r,a,\gamma,\delta),(r',a',\gamma',\delta')\in R^4$ we can compute $n\in \N$ and $(r_i,a_i,\gamma_i,\delta_i)\in R^4$ for $1\leq i\leq n$ such that \[(\mfrak{p},I)\models (r,a,\gamma,\delta) \text{ and } (\mfrak{p},I)\models (r',a',\gamma',\delta')\] if and only if
\[(\mfrak{p},I)\models (r_i,a_i,\gamma_i,\delta_i)\] for some $1\leq i\leq n$, and,
\[(\mfrak{p},I)\models (r,a,\delta,\gamma) \text{ and } (\mfrak{p},I)\models (r',a',\delta',\gamma')\] if and only if
\[(\mfrak{p},I)\models (r_i,a_i,\delta_i,\gamma_i)\] for some $1\leq i\leq n$.
\end{lemma}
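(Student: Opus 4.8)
The plan is to dismantle the conjunction one coordinate at a time and then reassemble it into four tuples, using only that primes are prime and that Lemma \ref{Tuganbaev} is effective over the Pr\"ufer domain $R$.

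First I would unwind the two hypotheses: $(\mfrak{p},I)\models(r,a,\gamma,\delta)$ and $(\mfrak{p},I)\models(r',a',\gamma',\delta')$ together assert exactly $rR_{\mfrak{p}}\supseteq I$, $r'R_{\mfrak{p}}\supseteq I$, $a\in I$, $a'\in I$, $\gamma\notin\mfrak{p}$, $\gamma'\notin\mfrak{p}$, $\delta\notin I^{\#}$ and $\delta'\notin I^{\#}$. Since $\mfrak{p}$ is prime the two conditions on $\gamma,\gamma'$ amount to $\gamma\gamma'\notin\mfrak{p}$, and since $I^{\#}$ is prime (Lemma \ref{propIhash}(i), together with its convention for $I=R_{\mfrak{p}}$) the two conditions on $\delta,\delta'$ amount to $\delta\delta'\notin I^{\#}$. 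Next, apply Lemma \ref{Tuganbaev} to get $\beta,s_1,s_2\in R$ with $r\beta=r's_2$, $r'(\beta-1)=rs_1$, and $\alpha,t_1,t_2\in R$ with $a\alpha=a't_2$, $a'(\alpha-1)=at_1$. Working in the valuation ring $R_{\mfrak{p}}$: if $\beta\notin\mfrak{p}$ then $rR_{\mfrak{p}}\subseteq r'R_{\mfrak{p}}$, so "$rR_{\mfrak{p}}\supseteq I$ and $r'R_{\mfrak{p}}\supseteq I$" is equivalent to "$rR_{\mfrak{p}}\supseteq I$"; if $\beta-1\notin\mfrak{p}$ it is equivalent to "$r'R_{\mfrak{p}}\supseteq I$". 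Likewise, if $\alpha\notin\mfrak{p}$ then $aR_{\mfrak{p}}\subseteq a'R_{\mfrak{p}}$, so "$a\in I$ and $a'\in I$" is equivalent to "$a'\in I$", and if $\alpha-1\notin\mfrak{p}$ it is equivalent to "$a\in I$". As at least one of $\beta,\beta-1$ and at least one of $\alpha,\alpha-1$ lies outside $\mfrak{p}$, the conjunction splits into the disjunction of four cases indexed by these choices.

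The reassembly step is the only mildly delicate point. In the case $\beta,\alpha\notin\mfrak{p}$, for instance, the case condition reads $rR_{\mfrak{p}}\supseteq I$, $a'\in I$, $\beta\notin\mfrak{p}$, $\alpha\notin\mfrak{p}$, $\gamma\gamma'\notin\mfrak{p}$, $\delta\delta'\notin I^{\#}$; I claim this is exactly $(\mfrak{p},I)\models(r,\,a',\,\beta\alpha\gamma\gamma',\,\beta\alpha\delta\delta')$. Indeed the third slot records $\beta\alpha\gamma\gamma'\notin\mfrak{p}$, i.e.\ $\beta,\alpha,\gamma,\gamma'\notin\mfrak{p}$; and since $I^{\#}\subseteq\mfrak{p}R_{\mfrak{p}}$, $\beta,\alpha\notin\mfrak{p}$ forces $\beta\alpha\notin I^{\#}$, so (as $I^{\#}$ is prime) the fourth slot $\beta\alpha\delta\delta'\notin I^{\#}$ is equivalent to $\delta\delta'\notin I^{\#}$; conversely the displayed case conditions, together with $r'R_{\mfrak{p}}\supseteq rR_{\mfrak{p}}\supseteq I$ and $a\in a'R_{\mfrak{p}}\subseteq I$, recover the full conjunction. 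Carrying this out in all four corners yields $n=4$ and the tuples
\[
(r,a',\beta\alpha\gamma\gamma',\beta\alpha\delta\delta'),\quad
(r,a,\beta(\alpha-1)\gamma\gamma',\beta(\alpha-1)\delta\delta'),
\]
\[
(r',a',(\beta-1)\alpha\gamma\gamma',(\beta-1)\alpha\delta\delta'),\quad
(r',a,(\beta-1)(\alpha-1)\gamma\gamma',(\beta-1)(\alpha-1)\delta\delta').
\]
The deliberate choice to put the "selector" factor ($\beta\alpha$, $\beta(\alpha-1)$, etc.) into \emph{both} the third and fourth coordinates is exactly what makes the same list of tuples serve the second, $(\delta,\gamma)$-swapped assertion: interchanging the third and fourth coordinates throughout swaps the roles of $\gamma\gamma'$ and $\delta\delta'$ but leaves $\alpha,\beta,r,r',a,a'$ untouched, so the verbatim same case analysis applies (one only uses primeness of $\mfrak{p}$ and of $I^{\#}$, and $I^{\#}\subseteq\mfrak{p}R_{\mfrak{p}}$, symmetrically). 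Computability of $n$ and of the tuples is immediate since Lemma \ref{Tuganbaev} is effective over a recursive arithmetical ring. The only routine verifications left are the two directions of each of the two coordinate reductions above; I do not expect any genuine obstacle.
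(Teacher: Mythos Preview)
Your proof is correct and takes essentially the same approach as the paper's: apply Lemma~\ref{Tuganbaev} once to the pair $(r,r')$ and once to the pair $(a,a')$, then produce the same four tuples with the selector factors inserted into both the third and fourth coordinates. The paper simply states the four tuples and says ``one verifies easily''; you have supplied that verification, including the key observation that $I^{\#}\subseteq\mfrak{p}R_{\mfrak{p}}$ forces the selector factors to be automatically outside $I^{\#}$ once they are outside $\mfrak{p}$.
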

\begin{proof}
Let $\alpha,u_1,u_2,\beta,v_1,v_2\in R$ be such that $r\alpha=r'u_1$, $r'(\alpha-1)=ru_2$, $a\beta=a'v_1$ and $a'(\beta-1)=av_2$. Then
one verifies easily that
\[(\mfrak{p},I)\models (r,a,\gamma,\delta) \text{ and } (\mfrak{p},I)\models (r',a',\gamma',\delta')\] if and only if
 $(\mfrak{p},I)\models (r,a',\alpha\beta\gamma\gamma',\alpha\beta\delta\delta')$, $(\mfrak{p},I)\models (r,a,\alpha(\beta-1)\gamma\gamma',\alpha(\beta-1)\delta\delta')$, $(\mfrak{p},I)\models(r',a',(\alpha-1)\beta\gamma\gamma',(\alpha-1)\beta\delta\delta')$ or $(\mfrak{p},I)\models (r',a,(\alpha-1)(\beta-1)\gamma\gamma',(\alpha-1)(\beta-1)\delta\delta')$.
\end{proof}

\begin{lemma}\label{shiftlambda}
Given $(r,a,\gamma,\delta)\in R^4$ and $\lambda\in R\backslash\{0\}$, we can compute $n\in\N$ and $(r_j,a_j,\gamma_j,\delta_j)$ for $1\leq j\leq n$ such that, for all prime ideals $\mfrak{p}\lhd R$ and ideals $I\lhd R_{\mfrak{p}}$,
\begin{itemize}
\item $(\mfrak{p},\lambda I)\models (r,a,\gamma,\delta)$ if and only if $(\mfrak{p},I)\models (r_j,a_j,\gamma_j,\delta_j)$ for some $1\leq j\leq n$ and
\item $(\mfrak{p},\lambda I)\models (r,a,\delta,\gamma)$ if and only if $(\mfrak{p},I)\models (r_j,a_j,\delta_j,\gamma_j)$ for some $1\leq j\leq n$.
\end{itemize}
\end{lemma}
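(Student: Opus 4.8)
The plan is to strip the factor $\lambda$ off $I$ by a localisation‑at‑$\mfrak{p}$ case analysis, after which the two ``order'' conditions $rR_{\mfrak{p}}\supseteq\lambda I$ and $a\in\lambda I$ defining $(\mfrak{p},\lambda I)\models(r,a,\gamma,\delta)$ can each be rephrased as a membership condition involving $I$ itself; the two residual conditions $\gamma\notin\mfrak{p}$ and $\delta\notin(\lambda I)^\#$ need no work, the latter because $(\lambda I)^\#=I^\#$ by \ref{propIhash}(iii).

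First I would apply \ref{Tuganbaev} to $(a,\lambda)$ to get $\alpha,s,t\in R$ with $a\alpha=\lambda s$ and $\lambda(\alpha-1)=at$, and to $(r,\lambda)$ to get $\beta,u,v\in R$ with $r\beta=\lambda u$ and $\lambda(\beta-1)=rv$. Applying \ref{alphafeather} to the ideal $\lambda I$, first with $\alpha$ and then with $\beta$, shows that $(\mfrak{p},\lambda I)\models(r,a,\gamma,\delta)$ holds if and only if $(\mfrak{p},\lambda I)\models(r,a,\gamma w,\delta w)$ holds for one of the four words
\[ w\in\{\,\alpha\beta,\ \alpha(\beta-1),\ (\alpha-1)\beta,\ (\alpha-1)(\beta-1)\,\}, \]
and likewise with $\gamma,\delta$ interchanged throughout. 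So it suffices to rewrite each of these four statements in the form $(\mfrak{p},I)\models(r_j,a_j,\gamma_j,\delta_j)$.

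Fix one of the words $w$. If $w\in\mfrak{p}$ then $(\mfrak{p},\lambda I)\models(r,a,\gamma w,\delta w)$ and the tuple I am going to produce are both vacuously false, so assume $w\notin\mfrak{p}$; then the factor $\alpha$ or $\alpha-1$, and the factor $\beta$ or $\beta-1$, occurring in $w$ is a unit of the valuation domain $R_{\mfrak{p}}$, and in particular lies outside $I^\#$ (which is contained in the maximal ideal $\mfrak{p}R_{\mfrak{p}}$), so multiplying $\gamma$ and $\delta$ by these units changes neither ``$\gamma\notin\mfrak{p}$'' nor ``$\delta\notin I^\#$''. For the first two coordinates: if $\beta$ is the unit then $r\beta=\lambda u$ gives $rR_{\mfrak{p}}=\lambda uR_{\mfrak{p}}$, so ``$rR_{\mfrak{p}}\supseteq\lambda I$'' becomes ``$uR_{\mfrak{p}}\supseteq I$''; if $\beta-1$ is the unit then $\lambda R_{\mfrak{p}}\subseteq rR_{\mfrak{p}}$, so ``$rR_{\mfrak{p}}\supseteq\lambda I$'' holds automatically and I put $r_j=1$. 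Similarly, if $\alpha$ is the unit then $a=\lambda s\alpha^{-1}$, so ``$a\in\lambda I$'' becomes ``$s\in I$''; if $\alpha-1$ is the unit then $\lambda R_{\mfrak{p}}=atR_{\mfrak{p}}$, hence $\lambda I=atI$ and ``$a\in\lambda I$'' becomes ``$1\in tI$'', which holds exactly when $t\notin\mfrak{p}$ and $I=R_{\mfrak{p}}$ --- here I put $a_j=1$ and append the factor $t$ to both the third and the fourth coordinate (legitimate because $I=R_{\mfrak{p}}$ forces $I^\#=\mfrak{p}R_{\mfrak{p}}$, so the fourth coordinate again just asks for non‑membership in $\mfrak{p}$). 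Collecting the four cases gives
\[ (u,s,\gamma\alpha\beta,\delta\alpha\beta),\qquad (1,s,\gamma\alpha(\beta-1),\delta\alpha(\beta-1)), \]
\[ (u,1,\gamma(\alpha-1)\beta t,\delta(\alpha-1)\beta t),\qquad (1,1,\gamma(\alpha-1)(\beta-1)t,\delta(\alpha-1)(\beta-1)t). \]
In each of these the third and fourth coordinates are $\gamma$ and $\delta$ times the same ring element, so running the identical computation with $\gamma,\delta$ swapped produces $(r_j,a_j,\delta_j,\gamma_j)$, which gives the second clause.

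The work here is bookkeeping rather than anything deep. The part needing the most care is checking that each of the four equivalences holds with no side hypotheses, i.e. handling the degenerate subcases ($w\in\mfrak{p}$, with both sides false; and $a=0$, $r=0$, or $\lambda$ already a unit in $R_{\mfrak{p}}$), and, most importantly, arranging that the third and fourth coordinates are scaled by the same word so that the $\gamma\leftrightarrow\delta$ statement requires no separate argument --- appending $t$ in the last two cases is exactly what achieves this, and it is harmless precisely because ``$1\in tI$'' forces $I=R_{\mfrak{p}}$.
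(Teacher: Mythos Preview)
Your proof is correct and follows essentially the same approach as the paper's: both apply \ref{Tuganbaev} to the pairs $(r,\lambda)$ and $(a,\lambda)$, use \ref{alphafeather} to split into four cases, and in the case where the ``$a$'' element falls on the $\lambda$-divisible side append the extra factor (your $t$, the paper's $v'$) to both the third and fourth coordinates to preserve the $\gamma\leftrightarrow\delta$ symmetry. The only differences are cosmetic---you swap the roles of $\alpha$ and $\beta$ relative to the paper and spell out a bit more of the verification.
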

\begin{proof}
Let $\alpha,u,v,\beta,u',v'\in R$ be such that $r\alpha=\lambda u$, $\lambda(\alpha-1)=rv$, $a\beta=\lambda u'$ and $\lambda(\beta-1)=av'$.
By \ref{alphafeather}, $(\mfrak{p},\lambda I)\models (r,a,\gamma,\delta)$ if and only if $(\mfrak{p},\lambda I)\models (r,a,\alpha\beta\gamma,\alpha\beta\delta)$, $(\mfrak{p},\lambda I)\models (r,a,\alpha(\beta-1)\gamma,\alpha(\beta-1)\delta)$, $(\mfrak{p},\lambda I)\models (r,a,(\alpha-1)\beta\gamma,(\alpha-1)\beta\delta)$ or $(\mfrak{p},\lambda I)\models (r,a,(\alpha-1)(\beta-1)\gamma,(\alpha-1)(\beta-1)\delta)$.

\smallskip

\noindent
It is straightforward to see that:
\begin{itemize}
\item If $\alpha\notin \mfrak{p}$ then $rR_{\mfrak{p}}\supseteq \lambda I$ if and only if $uR_{\mfrak{p}}\supseteq I$.
\item If $\alpha-1\notin\mfrak{p}$ then $rR_{\mfrak{p}}\supseteq \lambda I$.
\item If $\beta\notin \mfrak{p}$ then $a\in\lambda I$ if and only if $u'\in I$.
\item If $\beta-1\notin \mfrak{p}$ then $a\in \lambda I$ if and only if $1\in I$ and $v'\notin\mfrak{p}$ (and hence $v'\notin I^\#$).
\end{itemize}
Recall that, since $\lambda\neq 0$, $(\lambda I)^\#=I^\#$. Therefore, $(\mfrak{p},\lambda I)\models (r,a,\gamma,\delta)$ if and only if $(\mfrak{p},I)\models (u,u',\alpha\beta\gamma,\alpha\beta\delta)$, $(\mfrak{p},I)\models (u,1,\alpha(\beta-1) v'\gamma,\alpha(\beta-1) v'\delta)$, $(\mfrak{p},I)\models (1,u',(\alpha-1)\beta\gamma,(\alpha-1)\beta\delta)$ or $(\mfrak{p},I)\models (1,1,(\alpha-1)(\beta-1)v'\gamma,(\alpha-1)(\beta-1)v'\delta)$.
%
%
\end{proof}

\begin{lemma}\label{cleanup}
Given $(r,a,\gamma,\delta)\in R^4$, we can compute $n\in\N$ and $(r_j,r_ja_j,\gamma_j,\delta_j)$ for $1\leq j\leq n$ such that for all prime ideals $\mfrak{p}\lhd R$ and ideals $I\lhd R_{\mfrak{p}}$,
\begin{itemize}
\item $(\mfrak{p},I)\models (r,a,\gamma,\delta)$ if and only if $(\mfrak{p},I)\models (r_j,r_ja_j,\gamma_j,\delta_j)$ for some $1\leq j\leq n$ and
\item $(\mfrak{p},I)\models (r,a,\delta,\gamma)$ if and only if $(\mfrak{p},I)\models (r_j,r_ja_j,\delta_j,\gamma_j)$ for some $1\leq j\leq n$.
\end{itemize}
\end{lemma}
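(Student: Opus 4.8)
The plan is to apply Tuganbaev's lemma (\ref{Tuganbaev}) to the pair $a,r$: compute $\alpha,u,v\in R$ with $a\alpha=ru$ and $r(\alpha-1)=av$, and then show that, for every prime ideal $\mfrak{p}\lhd R$ and every ideal $I\lhd R_{\mfrak{p}}$,
\[(\mfrak{p},I)\models(r,a,\gamma,\delta)\ \Longleftrightarrow\ (\mfrak{p},I)\models(r,\,r\cdot u,\,\gamma\alpha,\,\delta\alpha)\ \text{ or }\ (\mfrak{p},I)\models(r,\,r\cdot 1,\,\gamma v(\alpha-1),\,\delta v(\alpha-1)).\]
Both tuples on the right already have the required shape $(r_j,r_ja_j,\gamma_j,\delta_j)$, with $a_1=u$ and $a_2=1$. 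The analogous equivalence for $(r,a,\delta,\gamma)$, i.e. the second bullet of the lemma, then comes for free: the defining condition of $(\mfrak{p},I)\models(\cdot)$ is symmetric under simultaneously interchanging its third and fourth coordinates and the roles of ``$\notin\mfrak{p}$'' and ``$\notin I^\#$'', so running the same argument with $\gamma$ and $\delta$ swapped in the input yields exactly the two tuples above with $\gamma$ and $\delta$ swapped. This is the same mechanism as in \ref{shiftlambda}, only a single adjustment being needed here.

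To prove the displayed equivalence I would split on $\mfrak{p}$ using that either $\alpha\notin\mfrak{p}$ or $\alpha-1\notin\mfrak{p}$. If $\alpha\notin\mfrak{p}$ then $\alpha$ is a unit of $R_{\mfrak{p}}$, so $aR_{\mfrak{p}}=a\alpha R_{\mfrak{p}}=ruR_{\mfrak{p}}$ and hence $a\in I$ if and only if $ru\in I$ (as $I$ is an $R_{\mfrak{p}}$-submodule); moreover $\gamma\alpha\notin\mfrak{p}$ is equivalent to $\gamma\notin\mfrak{p}$ and $\alpha\notin\mfrak{p}$, and since $I^\#\subseteq\mfrak{p}R_{\mfrak{p}}$ is prime (\ref{propIhash}(i)) and $\alpha\notin\mfrak{p}$, one gets that $\delta\alpha\notin I^\#$ is equivalent to $\delta\notin I^\#$; this branch produces the first tuple. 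If instead $\alpha\in\mfrak{p}$, so $\alpha-1$ is a unit of $R_{\mfrak{p}}$, then $rR_{\mfrak{p}}=r(\alpha-1)R_{\mfrak{p}}=avR_{\mfrak{p}}\subseteq aR_{\mfrak{p}}$; combined with $a\in I\subseteq rR_{\mfrak{p}}$ this forces $I=rR_{\mfrak{p}}$ and (when $a\neq0$) forces $v$ to be a unit of $R_{\mfrak{p}}$, so in particular $r\in I$ and $\gamma v(\alpha-1)\notin\mfrak{p}$, while conversely $r\in I$ together with $rR_{\mfrak{p}}\supseteq I$ gives $I=rR_{\mfrak{p}}$, and then $v,\alpha-1$ being units mod $\mfrak{p}$ together with $r(\alpha-1)=av$ give $a\in rR_{\mfrak{p}}=I$. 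Absorbing the side conditions $v,\alpha-1\notin\mfrak{p}$ into the third coordinate and, harmlessly (again since $I^\#\subseteq\mfrak{p}R_{\mfrak{p}}$ is prime), into the fourth, produces the second tuple.

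The step I expect to be the real point, rather than the degenerate bookkeeping, is this last branch: one must notice that $rR_{\mfrak{p}}\supseteq I$ together with $a\in I$ pins $I$ down to $rR_{\mfrak{p}}$, so that $r$ itself---not $a$---becomes the relevant element of $I$, and that internal consistency of the hypotheses then forces $v$ to be a unit of $R_{\mfrak{p}}$, which is why $v$ must be built into the third coordinate. The genuinely degenerate cases are routine: if $r=0$ then $(\mfrak{p},I)\models(r,a,\gamma,\delta)$ forces $I=\{0\}$ and hence $a=0$, so either $(r,a,\gamma,\delta)=(0,0,\gamma,\delta)$ is already of the required form (with $a_1=1$), or, when the input has $r=0\neq a$, the relation is unsatisfiable and one outputs a single never-satisfied tuple such as $(0,0,\gamma,0)$; and if $r\neq0=a$ then, $R$ being a domain and $r(\alpha-1)=0$, the computed $\alpha$ must equal $1$ and $u=0$, so the first tuple above is $(r,0,\gamma,\delta)=(r,a,\gamma,\delta)$ and the argument goes through unchanged. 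Effectiveness is clear, since the Tuganbaev elements $\alpha,u,v$ can be computed, as noted after \ref{Tuganbaev}.
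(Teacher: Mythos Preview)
Your proof is correct and follows essentially the same approach as the paper: apply Tuganbaev to the pair $(a,r)$ to obtain $\alpha,u,v$ with $a\alpha=ru$ and $r(\alpha-1)=av$, split on whether $\alpha$ lies in $\mfrak{p}$, and output two tuples of the required shape. The only cosmetic difference is in the second output tuple: the paper records it as $(a,\,a\cdot 1,\,\gamma(\alpha-1)v,\,\delta(\alpha-1)v)$, whereas you write $(r,\,r\cdot 1,\,\gamma v(\alpha-1),\,\delta v(\alpha-1))$; in the branch where this tuple is relevant one has $aR_{\mfrak{p}}=rR_{\mfrak{p}}$, so the two formulations are equivalent, and your version has the mild tidiness advantage that both output tuples share the same first coordinate $r$.
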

\begin{proof}
If $a=0$ then $(r,a,\gamma,\delta)=(r,r\cdot 0,\gamma,\delta)$ is already of the required form. So suppose $a\neq 0$.

Let $\alpha,u,v\in R$ be such that $a\alpha=ru$ and $r(\alpha-1)=av$. By \ref{alphafeather}, $(\mfrak{p},I)\models (r,a,\gamma,\delta)$ if and only if $(\mfrak{p},I)\models (r,a,\gamma\alpha,\delta\alpha)$ or $(\mfrak{p},I)\models (r,a,\gamma(\alpha-1),\delta(\alpha-1))$. Since $a\alpha=ru$, $(\mfrak{p},I)\models (r,a,\gamma\alpha,\delta\alpha)$ if and only if $(\mfrak{p},I)\models (r,ru,\gamma\alpha,\delta\alpha)$. Since $r(\alpha-1)=av$, $(\mfrak{p},I)\models (r,a,\gamma(\alpha-1),\delta(\alpha-1))$ if and only if $(\mfrak{p},I)\models(av,a,\gamma(\alpha-1),\delta(\alpha-1))$. Now, since $a\neq 0$, $av R_{\mfrak{p}}\supseteq I$ and $a\in I$ if and only if $v\notin\mfrak{p}$, $aR_{\mfrak{p}}\supseteq I$ and $a\in I$. So $(\mfrak{p},I)\models (av,a,\gamma(\alpha-1),\delta(\alpha-1))$ if and only if $(\mfrak{p},I)\models (a,a,\gamma\alpha v,\delta \alpha v)$.

Therefore $(\mfrak{p},I)\models (r,a,\gamma,\delta)$ if and only if $(\mfrak{p},I)\models (r,rv,\gamma\alpha,\delta\alpha)$ or $(\mfrak{p},I)\models (a,a,\gamma(\alpha-1) v,\delta (\alpha-1)v)$. The same argument shows that $(\mfrak{p},I)\models (r,a,\delta,\gamma)$ if and only if $(\mfrak{p},I)\models (r,rv,\delta\alpha,\gamma\alpha)$ or $(\mfrak{p},I)\models (a,a,\delta (\alpha-1) v,\gamma(\alpha-1) v)$.
\end{proof}

\begin{proposition}\label{auxtoragammadelta}
Given an auxiliary sentence $\Xi$ and $\lambda\in R\backslash\{0\}$, we can compute $n\in\N$ and for $1\leq j\leq n$, $(r_j,r_ja_j,\gamma_j,\delta_j)\in R^4$ such that, for all prime ideals $\mfrak{p}\lhd R$ and ideals $I\lhd R_{\mfrak{p}}$,
\begin{itemize}
\item $(\mfrak{p},I)\models (r_j,r_ja_j,\gamma_j,\delta_j)$ for some $1\leq j\leq n$ if and only if $R_{\mfrak{p}}/\lambda I\models\Xi$, and
\item $(\mfrak{p},I)\models (r_j,r_ja_j,\delta_j,\gamma_j)$ for some $1\leq j\leq n$ if and only if $R_{\mfrak{p}}/\lambda I\models D\Xi$
\end{itemize}
\end{proposition}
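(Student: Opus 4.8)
The plan is to process $\Xi$ one conjunct at a time and feed the outcome into the combinatorial lemmas \ref{comb1}, \ref{shiftlambda} and \ref{cleanup}, each of which is formulated so that it \emph{simultaneously} performs the corresponding transformation with $\gamma$ and $\delta$ transposed. Since Herzog duality (\ref{Dualsent}) acts conjunct-wise and sends $\vertl\nicefrac{d|x}{x=0}\vertr=1$ to $\vertl\nicefrac{x=x}{xd=0}\vertr=1$ and $\vertl\nicefrac{xb=0}{c|x}\vertr=1$ to $\vertl\nicefrac{xc=0}{b|x}\vertr=1$, it is enough to set up the base translation so that the dual of a conjunct corresponds to transposing $\gamma\leftrightarrow\delta$ in the associated tuples; this transposition then propagates automatically through all later steps, and the second displayed equivalence comes out for free. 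This is precisely why \ref{comb1}, \ref{shiftlambda} and \ref{cleanup} were stated with their two symmetric clauses.

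\smallskip
\noindent\textbf{Base step.} For a single conjunct $\sigma$ of $\Xi$ I would express ``$R_{\mfrak{p}}/\lambda I\models\sigma$'' as a finite disjunction of conditions of the form $(\mfrak{p},\lambda I)\models(\cdot)$.
\begin{itemize}
\item If $\sigma$ is $\vertl\nicefrac{d|x}{x=0}\vertr=1$ then, by \ref{R/Iclosed|xx=0}(2), this says $d\in\lambda I$, i.e. $(\mfrak{p},\lambda I)\models(1,d,1,1)$; transposing $\gamma,\delta$ here changes nothing, and indeed $D\sigma$ is $\vertl\nicefrac{x=x}{xd=0}\vertr=1$, which holds in $R_{\mfrak{p}}/\lambda I$ iff $dR_{\mfrak{p}}\subseteq\lambda I$ iff $d\in\lambda I$ (as $\lambda I$ is an $R_{\mfrak{p}}$-submodule).
\item If $\sigma$ is $\vertl\nicefrac{xb=0}{c|x}\vertr=1$ with $b\neq 0$ then, by \ref{R/Iclosexbcx}(1) and $(\lambda I)^\#=I^\#$ (\ref{propIhash}(iii)), it holds iff $b\notin I^\#$, or $c\notin\mfrak{p}$, or $bcR_{\mfrak{p}}\supseteq\lambda I$, or $1\in\lambda I$; these four options are respectively $(\mfrak{p},\lambda I)\models(1,0,1,b)$, $(1,0,c,1)$, $(bc,0,1,1)$ and $(1,1,1,1)$. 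Transposing $\gamma,\delta$ gives $(1,0,b,1)$, $(1,0,1,c)$, $(bc,0,1,1)$ and $(1,1,1,1)$; the disjunction of the corresponding conditions $(\mfrak{p},\lambda I)\models(\cdot)$ reads ``$b\notin\mfrak{p}$, or $c\notin I^\#$, or $bcR_{\mfrak{p}}\supseteq\lambda I$, or $1\in\lambda I$'', which by \ref{R/Iclosexbcx}(1) with the roles of $b$ and $c$ interchanged is exactly $R_{\mfrak{p}}/\lambda I\models\vertl\nicefrac{xc=0}{b|x}\vertr=1$, i.e. $D\sigma$.
\item The degenerate conjuncts are handled in the same way using \ref{Rp/IAssDiv}: the case $b=0$ gives $\sigma=\vertl\nicefrac{x=x}{c|x}\vertr=1$, translating to $(\mfrak{p},\lambda I)\models(1,0,c,1)$ or $(1,1,1,1)$, with dual $\vertl\nicefrac{xc=0}{x=0}\vertr=1$ translating to $(1,0,1,c)$ or $(1,1,1,1)$; the case $c=0$ gives $\sigma=\vertl\nicefrac{xb=0}{x=0}\vertr=1$, translating to $(1,0,1,b)$ or $(1,1,1,1)$, with dual translating to $(1,0,b,1)$ or $(1,1,1,1)$; and the case $d=0$ gives the always-true $\vertl\nicefrac{0|x}{x=0}\vertr=1$, witnessed by $(1,0,1,1)$.
\end{itemize}

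\smallskip
\noindent\textbf{Assembly.} Writing $\Xi=\sigma_1\wedge\cdots\wedge\sigma_k$ (so $D\Xi=D\sigma_1\wedge\cdots\wedge D\sigma_k$), the base step presents ``$R_{\mfrak{p}}/\lambda I\models\Xi$'' as a disjunction, over all ways of choosing one tuple per conjunct, of \emph{conjunctions} of conditions $(\mfrak{p},\lambda I)\models(\cdot)$, and ``$R_{\mfrak{p}}/\lambda I\models D\Xi$'' as the same disjunction with every tuple transposed. I would then apply \ref{comb1} repeatedly --- it is valid with an arbitrary ideal, in particular $\lambda I$, in place of $I$, and its second clause handles the transposed conjunctions --- to collapse each such conjunction into a disjunction of single conditions $(\mfrak{p},\lambda I)\models(r'_i,a'_i,\gamma'_i,\delta'_i)$; then \ref{shiftlambda}, with the given $\lambda$, to turn each $(\mfrak{p},\lambda I)\models(\cdot)$ into a disjunction of conditions $(\mfrak{p},I)\models(\cdot)$; and finally \ref{cleanup} to rewrite each of these in the required form $(\mfrak{p},I)\models(r_j,r_ja_j,\gamma_j,\delta_j)$. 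At every stage the statement for $D\Xi$ is obtained by transposing all last-two-coordinate pairs, by the second clauses of \ref{comb1}, \ref{shiftlambda} and \ref{cleanup}, so both displayed equivalences of the proposition follow, and all steps are effective. The only real difficulty is the bookkeeping: one must check that this ``transpose'' correspondence between the $\Xi$- and $D\Xi$-statements survives each of these steps --- which is exactly what the base step above and the symmetric formulations of the three lemmas guarantee.
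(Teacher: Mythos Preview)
Your proposal is correct and follows essentially the same route as the paper's proof: translate each conjunct of $\Xi$ (and simultaneously $D\Xi$) into a finite disjunction of tuple conditions via \ref{Rp/IAssDiv} and \ref{R/Iclosexbcx}, distribute conjunction over disjunction, collapse conjunctions using \ref{comb1}, shift by $\lambda$ using \ref{shiftlambda}, and normalise via \ref{cleanup}, with the $\gamma\leftrightarrow\delta$ transposition tracking duality at every stage. The only cosmetic difference is that the paper first writes the conditions for $R_{\mfrak{p}}/I$ and then substitutes $\lambda I$ for $I$ before invoking \ref{shiftlambda}, whereas you work with $\lambda I$ from the outset; this is immaterial.
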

\begin{proof}
Let $\Xi$ be the sentence
\[\bigwedge_{i=1}^{l'} \vertl \nicefrac{d_i|x}{x=0}\vertr=1\wedge\bigwedge_{i=l'}^{l} \vertl \nicefrac{xb_{i}=0}{c_{i}|x}\vertr=1. \]
Using \ref{Rp/IAssDiv} and \ref{R/Iclosed|xx=0}, we can compute $n_i\in\N$ for $1\leq i\leq l$ and $s_{ij},b_{ij},g_{ij},h_{ij}\in R$ for $1\leq i\leq l$ and $1\leq j\leq n_i$ such that for all prime ideals $\mfrak{p}\lhd R$ and ideals $I\lhd R_{\mfrak{p}}$,
\begin{itemize}
\item $R_{\mfrak{p}}/I\models \Xi$ if and only if $(\mfrak{p},I)\models \bigwedge_{i=1}^{l}\bigvee_{j=1}^{n_i}(s_{ij},b_{ij},g_{ij},h_{ij})$, and
\item $R_{\mfrak{p}}/I\models D\Xi$ if and only if $(\mfrak{p},I)\models \bigwedge_{i=1}^l\bigvee_{j=1}^{n_i}(s_{ij},b_{ij},h_{ij},g_{ij})$.
\end{itemize}
Therefore,
for all prime ideals $\mfrak{p}\lhd R$ and ideals $I\lhd R_{\mfrak{p}}$,
\begin{itemize}
\item $R_{\mfrak{p}}/I\models \Xi$ if and only if \[(\mfrak{p},I)\models \bigvee_{\substack{\sigma:\{1,\ldots,l\}\rightarrow\N\\\sigma(i)\leq n_i}}\ \bigwedge_{i=1}^l(s_{i\sigma(i)},b_{i\sigma(i)},g_{i\sigma(i)},h_{i\sigma(i)}),\text{ and}\]
\item $R_{\mfrak{p}}/I\models D\Xi$ if and only if \[(\mfrak{p},I)\models \bigvee_{\substack{\sigma:\{1,\ldots,l\}\rightarrow\N \\ \sigma(i)\leq n_i}}\ \bigwedge_{i=1}^l(s_{i\sigma(i)},b_{i\sigma(i)},h_{i\sigma(i)},g_{i\sigma(i)}).\]
\end{itemize}
We can use \ref{comb1}, to replace the conjunction $\bigwedge_{i=1}^l(s_{i\sigma(i)},b_{i\sigma(i)},g_{i\sigma(i)},h_{i\sigma(i)})$, for each $\sigma$, by a disjunction to produce and $(s'_{\sigma k},b'_{\sigma k},g'_{\sigma k},\delta'_{\sigma k})$ for $1\leq k\leq m_\sigma$ such that
\begin{itemize}
\item $R_{\mfrak{p}}/I\models \Xi$ if and only if $(\mfrak{p},I)\models (s'_{\sigma k},b'_{\sigma k},g'_{\sigma k},h'_{\sigma k})$ for some $1\leq k\leq m_{\sigma}$, and
\item $R_{\mfrak{p}}/I\models D\Xi$ if and only if $(\mfrak{p},I)\models (s'_{\sigma k},b'_{\sigma k},h'_{\sigma k},g'_{\sigma k})$ for some $1\leq k\leq m_{\sigma}$.
\end{itemize}
Applying \ref{shiftlambda} to each $(s'_{\sigma k},b'_{\sigma k},h'_{\sigma k},g'_{\sigma k})$, we compute $(r'_j,a'_j,\gamma'_j,\delta'_j)$ for $1\leq j\leq m$ such that
\begin{itemize}
\item $R_{\mfrak{p}}/\lambda I\models \Xi$ if and only if $(\mfrak{p},I)\models  (r'_j,a'_j,\gamma'_j,\delta'_j)$ for some $1\leq j\leq m$, and
\item $R_{\mfrak{p}}/\lambda I\models D\Xi$ if and only if $(\mfrak{p},I)\models (r'_j,a'_j,\delta'_j,\gamma'_j)$ for some $1\leq j\leq m$.
\end{itemize}
Finally, applying \ref{cleanup} to each $(r'_j,a'_j,\gamma'_j,\delta'_j)$, we can compute $(r_i,r_ia_i,\gamma_i,\delta_i)$ for $1\leq i\leq n$ such that
\begin{itemize}
\item $R_{\mfrak{p}}/\lambda I\models \Xi$ if and only if $(\mfrak{p},I)\models  (r_i,r_ia_i,\gamma_i,\delta_i)$ for some $1\leq i\leq n$, and
\item $R_{\mfrak{p}}/\lambda I\models D\Xi$ if and only if $(\mfrak{p},I)\models (r_i,r_ia_i,\delta_i,\gamma_i)$ for some $1\leq i\leq n$. \qedhere
\end{itemize}
\end{proof}

\subsection{Simplification of $\nf{\phi}{\psi}(R_{\mfrak{p}}/\lambda I)$ and $\nf{\phi}{\psi}(I/\lambda R_{\mfrak{p}})$}\

\medskip\noindent
The results of this subsection will be used in sections \ref{Sfinitemodules} and \ref{Sremoving}.
In this subsection, we no longer need to worry about stability under duality.

\begin{remark}\label{ainbIetc}
Let $a,b\in R$ and $\alpha,u,v\in R$ be such that $a\alpha=bu$ and $b(\alpha-1)=av$. For prime ideals $\mfrak{p}\lhd R$ and ideals $I\lhd R_{\mfrak{p}}$,
\begin{itemize}
\item $a\in bI$ if and only if $(\mfrak{p},I)\models (1,u,\alpha,1)$ or $(\mfrak{p},I)\models (1,1,v(\alpha-1),1)$, and,
\item $aR_{\mfrak{p}}\supseteq bI$ if and only if $(\mfrak{p},I)\models (u,0,\alpha,1)$ or $(\mfrak{p},I)\models (1,0,\alpha-1,1)$.
\end{itemize}
\end{remark}

\begin{lemma}\label{R/lambdaIhelp}
Let $R$ be a recursive Pr\"ufer domain and $\lambda\in R\backslash\{0\}$.
\begin{enumerate}[(a)]
\item Given $d\in R$, we can compute finite sets $S_1,S_2,S_3\subseteq R^4$, $\rho:\bigcup_{i=1}^3S_i\rightarrow \{1,2,3\}$ and $s:\bigcup_{i=1}^3S_i\rightarrow R$ such that for all $q\in\bigcup_{i=1}^3S_i$, $q\in S_{\rho(q)}$, and, for all prime ideals $\mfrak{p}\lhd R$ and ideals $I\lhd R_{\mfrak{p}}$, there exists $q\in \bigcup_{i=1}^3S_i$ such that $(\mfrak{p},I)\models q$, and
\[\vertl\nf{d|x}{x=0}(R_{\mfrak{p}}/\lambda I)\vertr:=\left\{
                                                        \begin{array}{ll}
                                                          \vertl R_{\mfrak{p}}/s(q)I\vertr, & \hbox{if $(\mfrak{p},I)\models q$ and $\rho(q)=1$;} \\
                                                          \vertl s(q)R_{\mfrak{p}}/I\vertr, & \hbox{if $(\mfrak{p},I)\models q$ and $\rho(q)=2$;} \\
                                                          1, & \hbox{if $(\mfrak{p},I)\models q$ and $\rho(q)=3$.}
                                                        \end{array}
                                                      \right.
\]
Furthermore, if $(\mfrak{p},I)\models q$ for some $q\in \bigcup_{i=1}^3S_i$ and $\rho(q)=2$ then $s(q)R_{\mfrak{p}}\supseteq I$.
\item Given $b,c\in R$, we can compute finite sets $S_1,\ldots,S_5\subseteq R^4$, $\rho:\bigcup_{i=1}^5S_i\rightarrow \{1,\ldots,5\}$ and $s:\bigcup_{i=1}^5S_i\rightarrow R$ such that for all $q\in\bigcup_{i=1}^5S_i$, $q\in S_{\rho(q)}$, and, for all prime ideals $\mfrak{p}\lhd R$ and ideals $I\lhd R_{\mfrak{p}}$, there exists $q\in \bigcup_{i=1}^5S_i$ such that $(\mfrak{p},I)\models q$, and
\[\vertl\nf{xb=0}{c|x}(R_{\mfrak{p}}/\lambda I)\vertr:=\left\{
                                                        \begin{array}{ll}
                                                          \vertl R_{\mfrak{p}}/\lambda I\vertr, & \hbox{if $(\mfrak{p},I)\models q$ and $\rho(q)=1$;} \\
                                                          \vertl R_{\mfrak{p}}/cR_{\mfrak{p}}\vertr, & \hbox{if $(\mfrak{p},I)\models q$ and $\rho(q)=2$;} \\
                                                          \vertl I/bI\vertr, & \hbox{if $(\mfrak{p},I)\models q$ and $\rho(q)=3$;} \\
                                                          1, & \hbox{if $(\mfrak{p},I)\models q$ and $\rho(q)=4$;} \\
                                                          \vertl I/s(q)R_{\mfrak{p}}\vertr, & \hbox{if $(\mfrak{p},I)\models q$ and $\rho(q)=5$.}
                                                        \end{array}
                                                      \right.
\]
Furthermore, if $(\mfrak{p},I)\models q$ for some $q\in \bigcup_{i=1}^5S_i$ and $\rho(q)=5$ then $s(q)\in I$. Moreover, if $b=0$ then we may assume that $S_3=S_4=S_5=\emptyset$.
\end{enumerate}
\end{lemma}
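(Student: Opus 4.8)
The plan is to compute each of the two invariants as a quotient of ideals of the valuation ring $R_{\mfrak{p}}$, and then to replace every comparison of ideals that arises by a finite disjunction of conditions of the shape $(\mfrak{p},I)\models q$. The basic tool is \ref{Tuganbaev} applied to $R$ (together with \ref{alphafeather}): for $a,b\in R$ one picks $\alpha,u,v\in R$ with $a\alpha=bu$ and $b(\alpha-1)=av$, and then on $R_{\mfrak{p}}$ either $\alpha\notin\mfrak{p}$, whence $aR_{\mfrak{p}}=buR_{\mfrak{p}}$, or $\alpha-1\notin\mfrak{p}$, whence $bR_{\mfrak{p}}=avR_{\mfrak{p}}$. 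The reason the invariant always takes one of the few prescribed shapes is that multiplication by any nonzero $r\in R$ is an $R_{\mfrak{p}}$-module isomorphism onto its image, so it turns a quotient $\lambda J/\lambda J'$ into $J/J'$ and a quotient $\lambda I/rR_{\mfrak{p}}$ into $I/sR_{\mfrak{p}}$, where $s$ is a Tuganbaev element for $(\lambda,r)$. Every condition produced along the way is one of $xR_{\mfrak{p}}\supseteq yR_{\mfrak{p}}$, $x\in I$, $x\notin\mfrak{p}$ or $x\notin I^\#$, each of which is by definition an instance of $(\mfrak{p},I)\models\cdot$.

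For (a), $\nf{d|x}{x=0}(R_{\mfrak{p}}/\lambda I)=(R_{\mfrak{p}}/\lambda I)d\cong dR_{\mfrak{p}}/(dR_{\mfrak{p}}\cap\lambda I)$. Applying \ref{Tuganbaev} to $(d,\lambda)$ gives $\alpha,u,v$ with $d\alpha=\lambda u$ and $\lambda(\alpha-1)=dv$. If $\alpha-1\notin\mfrak{p}$ then $\lambda R_{\mfrak{p}}=dvR_{\mfrak{p}}\subseteq dR_{\mfrak{p}}$, so $\lambda I=d(vI)$ and multiplication by $d$ (an isomorphism, the branch being vacuous when $d=0$ since then $\alpha=1$) gives $(R_{\mfrak{p}}/\lambda I)d\cong R_{\mfrak{p}}/vI$: this is the $\rho=1$ case with $q=(1,0,\alpha-1,1)$ and $s(q)=v$. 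If $\alpha\notin\mfrak{p}$ then $dR_{\mfrak{p}}=\lambda uR_{\mfrak{p}}$, so multiplication by $\lambda^{-1}$ gives $(R_{\mfrak{p}}/\lambda I)d\cong uR_{\mfrak{p}}/(uR_{\mfrak{p}}\cap I)$, and since $R_{\mfrak{p}}$ is a valuation ring either $uR_{\mfrak{p}}\supseteq I$, giving $|uR_{\mfrak{p}}/I|$ (the $\rho=2$ case, $q=(u,0,\alpha,1)$, $s(q)=u$, with the extra clause $s(q)R_{\mfrak{p}}\supseteq I$ being literally part of $(\mfrak{p},I)\models q$), or $u\in I$, giving value $1$ (the $\rho=3$ case, $q=(1,u,\alpha,1)$). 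These three $q$'s cover every $(\mfrak{p},I)$, and on each the displayed value is the one proved in the corresponding branch.

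For (b), first dispose of $b=0$: here $\nf{x\cdot 0=0}{c|x}(R_{\mfrak{p}}/\lambda I)\cong R_{\mfrak{p}}/(cR_{\mfrak{p}}+\lambda I)$, and a \ref{Tuganbaev}-comparison of $cR_{\mfrak{p}}$ with $\lambda I$ shows the value is $|R_{\mfrak{p}}/cR_{\mfrak{p}}|$ when $cR_{\mfrak{p}}\supseteq\lambda I$ (the case $c\notin\mfrak{p}$ absorbed here) and $|R_{\mfrak{p}}/\lambda I|$ when $cR_{\mfrak{p}}\subseteq\lambda I$, so $S_3=S_4=S_5=\emptyset$ suffices. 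For $b\ne 0$ use $\nf{xb=0}{c|x}(R_{\mfrak{p}}/\lambda I)\cong(\lambda I:b)\big/\big((\lambda I:b)\cap(cR_{\mfrak{p}}+\lambda I)\big)$. If $b\notin I^\#$ then $(\lambda I:b)=\lambda I$ (by \ref{propIhash} and the computation in \ref{Rp/IAssDiv}), so the value is $1$ (the $\rho=4$ case, $q=(1,0,1,b)$). Otherwise one computes $(\lambda I:b)$ via \ref{Tuganbaev} applied to $(b,\lambda)$, reducing to $b\in\lambda I$, i.e.\ $(\lambda I:b)=R_{\mfrak{p}}$, or $b\notin\lambda I$, i.e.\ $\lambda I\subseteq bR_{\mfrak{p}}$ and $\lambda I=b(\lambda I:b)$; then one splits on whether $cR_{\mfrak{p}}\supseteq\lambda I$ and on whether $cR_{\mfrak{p}}+\lambda I\supseteq(\lambda I:b)$ (value $1$, $\rho=4$) or not. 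In the surviving branches, multiplying by $\lambda^{-1}$ and by $b^{-1}$ identifies the quotient with $R_{\mfrak{p}}/\lambda I$ ($\rho=1$), with $R_{\mfrak{p}}/cR_{\mfrak{p}}$ ($\rho=2$), with $I/bI$ ($\rho=3$), or --- in the branch $b\notin\lambda I$ and $\lambda I\subseteq cR_{\mfrak{p}}\subsetneq(\lambda I:b)$, which forces $bcR_{\mfrak{p}}\subseteq\lambda I$, hence $bcR_{\mfrak{p}}=\lambda s(q)R_{\mfrak{p}}$ with $s(q)\in I$ by \ref{Tuganbaev} applied to $(\lambda,bc)$ --- with $I/s(q)R_{\mfrak{p}}$ ($\rho=5$). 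Collecting the finitely many (effectively computable) $q$'s and defining $\rho,s$ accordingly gives the statement; \ref{propIhash}, \ref{(I:a)useful} and \ref{R/Iclosexbcx} are used to handle $I^\#$ and the ``value $1$'' region.

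The only genuine work is bookkeeping: the Tuganbaev splits for $(d,\lambda)$, resp.\ for $(b,\lambda)$, $(c,\lambda)$ and $(\lambda,bc)$, must be organized so that the resulting $q$'s simultaneously (i) cover every pair $(\mfrak{p},I)$ and (ii) carry, on each $q$, the branch that genuinely computes the invariant by the stated formula. There is no conceptual obstacle beyond this, because every quotient that appears is of the form $\lambda J/\lambda J'$, $\lambda J/bJ'$ or $J/rJ'$ with $r\ne 0$, which the isomorphisms ``multiply by $\lambda^{-1}$'' and ``multiply by $b^{-1}$'' convert into exactly $R_{\mfrak{p}}/\lambda I$, $R_{\mfrak{p}}/cR_{\mfrak{p}}$, $I/bI$, $I/s(q)R_{\mfrak{p}}$ or the trivial group, and \ref{Tuganbaev} is precisely what makes each divisibility decision in $R_{\mfrak{p}}$ into an instance of $(\mfrak{p},I)\models q$.
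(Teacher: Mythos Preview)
Your proof is correct and, for part~(a), essentially identical to the paper's: same Tuganbaev split on $(d,\lambda)$, same three tuples $q_1=(1,0,\alpha-1,1)$, $q_2=(u,0,\alpha,1)$, $q_3=(1,u,\alpha,1)$, same $s$-values, and the same reason that $d=0$ collapses to the $\rho=3$ branch.

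For part~(b) the paper takes a slightly cleaner route. Rather than your initial split on $b\notin I^\#$ (which exploits the $\delta$-slot of $q$ --- something the paper never uses in this lemma), it first records that when $bR_{\mfrak{p}}\supseteq\lambda I$ multiplication by $b$ gives
\[
\Bigl|\tfrac{(\lambda I:b)+cR_{\mfrak{p}}}{cR_{\mfrak{p}}+\lambda I}\Bigr|=\Bigl|\tfrac{\lambda I+bcR_{\mfrak{p}}}{bcR_{\mfrak{p}}+b\lambda I}\Bigr|,
\]
and then does a single symmetric split on whether each of $b$, $c$, $bc$ lies in $\lambda I$ or generates an ideal containing $\lambda I$. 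The five shapes $|R_{\mfrak{p}}/\lambda I|$, $|R_{\mfrak{p}}/cR_{\mfrak{p}}|$, $|I/bI|$, $1$, $|\lambda I/bcR_{\mfrak{p}}|\cong|I/s(q)R_{\mfrak{p}}|$ drop out immediately without nesting, and the encoding of each condition as a disjunction of $(\mfrak{p},I)\models q$ is delegated wholesale to \ref{ainbIetc} and \ref{comb1}. Your nested organization reaches the same five shapes and is also correct; the paper's split is just flatter and avoids the redundant $b\notin I^\#$ case. One small imprecision: what you call ``Tuganbaev applied to $(b,\lambda)$'' to decide $b\in\lambda I$ versus $bR_{\mfrak{p}}\supseteq\lambda I$ is really the content of \ref{ainbIetc}, which packages exactly that two-step argument.
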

\begin{proof}
(a) If $d=0$ then $\nf{d|x}{x=0}(N)=0$ for all $R$-modules $N$. So set $S_1=S_2=\emptyset$ and $S_3:=\{(1,0,1,1)\}$.

Suppose $d\neq 0$. Let $\alpha,u,v\in R$ be such that $d\alpha=\lambda u$ and $\lambda(\alpha-1)=dv$.
\begin{itemize}
\item If $(\mfrak{p},I)\models q_1:=(1,0,\alpha-1,1)$ then \[\vertl \nf{d|x}{x=0}(R_{\mfrak{p}}/\lambda I)\vertr=\vertl dR_{\mfrak{p}}/dvI\vertr=\vertl R_{\mfrak{p}}/vI\vertr.\]
\item If $(\mfrak{p},I)\models q_2:=(u,0,\alpha,1)$ then by definition $uR_{\mfrak{p}}\supseteq I$, and,\[\vertl \nf{d|x}{x=0}(R_{\mfrak{p}}/\lambda I)\vertr=\vertl \lambda uR_{\mfrak{p}}/\lambda I\vertr=\vertl uR_{\mfrak{p}}/I\vertr.\]
\item If $(\mfrak{p},I)\models q_3:=(1,u,\alpha,1)$ then $d\in\lambda I$ and hence $\vertl \nf{d|x}{x=0}(R_{\mfrak{p}}/\lambda I)\vertr=1$.
\end{itemize}
For all prime ideals $\mfrak{p}\lhd R$, either $\alpha\notin\mfrak{p}$ or $\alpha-1\notin\mfrak{p}$, and, for all ideal $I\lhd R_{\mfrak{p}}$, either $uR_{\mfrak{p}}\supseteq I$ or $u\in I$. Therefore for all prime ideals $\mfrak{p}\lhd R$ and ideals $I\lhd R_{\mfrak{p}}$, either $(\mfrak{p},I)\models q_1$, $(\mfrak{p},I)\models q_2$ or $(\mfrak{p},I)\models q_3$. So, set $S_i:=\{q_i\}$ for $1\leq i\leq 3$, and, set $s(q_1):=v$, $s(q_2):=u$ and $s(q_3):=1$.

\noindent
(b)
For all prime ideals $\mfrak{p}\lhd R$ and ideals $I\lhd R_{\mfrak{p}}$,
\[\vertl\nf{xb=0}{c|x}(R_{\mfrak{p}}/\lambda I)\vertr=\vertl\frac{(\lambda I:b)+cR_{\mfrak{p}}}{cR_{\mfrak{p}}+\lambda I}\vertr.\]
First suppose $b=0$. If $c\in \lambda I$ then $\vertl\nf{xb=0}{c|x}(R_{\mfrak{p}}/\lambda I)\vertr=\vertl R_{\mfrak{p}}/\lambda I\vertr$, and, if $cR_{\mfrak{p}}\supseteq \lambda I$ then $\vertl\nf{xb=0}{c|x}(R_{\mfrak{p}}/\lambda I)\vertr=\vertl R_{\mfrak{p}}/cR_{\mfrak{p}}\vertr$. We can use \ref{ainbIetc} to compute finite sets $S_1,S_2\subseteq R^4$ such that for all prime ideals $\mfrak{p}\lhd R$ and ideals $I\lhd R_{\mfrak{p}}$, there exists $q\in S_1$ such that $(\mfrak{p},I)\models q$ if and only if $c\in\lambda I$, and, there exists $q\in S_2$ such that $(\mfrak{p},I)\models q$ if and only if $cR_{\mfrak{p}}\supseteq\lambda I$. Set $S_3=S_4=S_5=\emptyset$.

Now suppose $b\neq 0$. If $bR_{\mfrak{p}}\supseteq \lambda I$ then
\[\vertl\frac{(\lambda I:b)+cR_{\mfrak{p}}}{cR_{\mfrak{p}}+\lambda I}\vertr=\vertl\frac{\lambda I+bcR_{\mfrak{p}}}{bcR_{\mfrak{p}}+b\lambda I}\vertr.\]

So
\[\vertl\nf{xb=0}{c|x}(R_{\mfrak{p}}/\lambda I)\vertr:=\left\{
                                                        \begin{array}{ll}
                                                          \vertl R_{\mfrak{p}}/\lambda I\vertr, & \hbox{if $b,c\in\lambda I$;} \\
                                                          \vertl R_{\mfrak{p}}/cR_{\mfrak{p}}\vertr, & \hbox{if $b\in\lambda I$ and $cR_{\mfrak{p}}\supseteq \lambda I$;} \\
                                                          \vertl I/bI\vertr, & \hbox{if $bR_{\mfrak{p}}\supseteq \lambda I$ and $c\in\lambda I$;} \\
                                                          1, & \hbox{if $bcR_{\mfrak{p}}\supseteq \lambda I$;} \\
                                                          \vertl \lambda I/bcR_{\mfrak{p}}\vertr, & \hbox{if $bR_{\mfrak{p}}\supseteq \lambda I$, $cR_{\mfrak{p}}\supseteq \lambda I$ and $bc\in\lambda I$.}
                                                        \end{array}
                                                      \right.\]

Therefore it is enough to compute:
\begin{itemize}
\item $S_1$ such that $b,c\in \lambda I $ if and only if there exists $q\in S_1$ such that $(\mfrak{p},I)\models q$,
\item  $S_2$ such that $b\in \lambda I $ and $cR_{\mfrak{p}}\supseteq \lambda I$ if and only if exists $q\in S_2$ such that $(\mfrak{p},I)\models q$,
\item $S_3$ such that $bR_{\mfrak{p}}\supseteq \lambda I$ and $c\in \lambda I $ and if and only if exists $q\in S_3$ such that $(\mfrak{p},I)\models q$,
\item $S_4$ such that  $bcR_{\mfrak{p}}\supseteq \lambda I$ if and only if there exists $q\in S_4$ such that $(\mfrak{p},I)\models q$, and,
\item $S_5$ and for each $q\in S_5$, $s_q\in R$ such that $bR_{\mfrak{p}}\supseteq \lambda I $, $cR_{\mfrak{p}}\supseteq \lambda I$ and $bc\in \lambda I$ if and only if there exists $q\in S_5$ such that $(\mfrak{p},I)\models q$ and such that, in this situation $\vertl\nf{xb=0}{c|x}(R_{\mfrak{p}}/\lambda I)\vertr=\vertl I/s_qR_{\mfrak{p}}\vertr$.
\end{itemize}
It is easy to compute $S_1,\ldots, S_4$ using \ref{ainbIetc} and \ref{comb1}.

Let $\epsilon,r,s$ be such that $bc\epsilon =\lambda s$ and $\lambda(\epsilon-1)=bcr$. By \ref{ainbIetc}, $bc\in \lambda I$ if and only if $(\mfrak{p},I)\models (1,s,\epsilon,1)$ or $(\mfrak{p},I)\models(1,1,r(\epsilon-1),1)$. If $bR_{\mfrak{p}}\supseteq \lambda I$, $cR_{\mfrak{p}}\supseteq \lambda I$ and $(\mfrak{p},I)\models (1,s,\epsilon,1)$ then \[\vertl\nf{xb=0}{c|x}(R_{\mfrak{p}}/\lambda I)\vertr=\vertl \lambda I/bcR_{\mfrak{p}}\vertr=\vertl I/sR_{\mfrak{p}}\vertr.\]
Use \ref{ainbIetc} and \ref{comb1} to compute $S_5'$ such that $(\mfrak{p},I)\models q$ for some $q\in S_5'$ if and only if $bR_{\mfrak{p}}\supseteq \lambda I$, $cR_{\mfrak{p}}\supseteq \lambda I$ and $(\mfrak{p},I)\models (1,s,\epsilon,1)$.

If $(\mfrak{p},I)\models(1,1,r(\epsilon-1),1)$ then $I=R_{\mfrak{p}}$ and
\[\vertl\nf{xb=0}{c|x}(R_{\mfrak{p}}/\lambda I)\vertr=\vertl \lambda I/bcR_{\mfrak{p}}\vertr=\vertl I/1R_{\mfrak{p}}\vertr.\]
Let $S_5'':=\{(1,1,r(\epsilon-1),1)\}$ and let $S_5:=S_5'\cup S_5''$. Set $s(q):=s$ if $s\in S_5'$ and $s(q):=1$ otherwise. Note that in both cases, by definition, $s(q)\in I$.
\end{proof}

\begin{proposition}\label{sortR/lambdaI}
Let $R$ be a recursive Pr\"ufer domain, $\lambda\in R\backslash\{0\}$ and $Z$ a finite subset of pp-pairs of the form $\nf{xb=0}{c|x}$ and $\nf{d|x}{x=0}$. Let
\[T_Z:=\{\mu:Z\rightarrow\{1,\ldots,5\} \st \text{ for all }  \nf{d|x}{x=0}\in Z \ \ \mu(\nf{d|x}{x=0})\leq 3\}.\]
We can compute $S_Z$ a finite subset of $R^4$, $\rho_Z: S_Z\rightarrow T_Z$ and $s_Z:S_Z\times Z\rightarrow R$ such that for all prime ideals $\mfrak{p}\lhd R$ and ideals $I\lhd R_{\mfrak{p}}$,
\begin{enumerate}[(a)]
\item there exists $q\in S_Z$ such that $(\mfrak{p},I)\models q$,
\item \begin{enumerate}[(1)]
\item if $\rho_Z(q)(\nf{d|x}{x=0})=1$ then $\vertl \nf{d|x}{x=0}(R_{\mfrak{p}}/\lambda I)\vertr=\vertl R_{\mfrak{p}}/s_Z(q,\nf{d|x}{x=0})I\vertr$
\item if $\rho_Z(q)(\nf{d|x}{x=0})=2$ then $\vertl \nf{d|x}{x=0}(R_{\mfrak{p}}/\lambda I)\vertr=\vertl s_Z(q,\nf{d|x}{x=0})R_{\mfrak{p}}/I\vertr$
\item if $\rho_Z(q)(\nf{d|x}{x=0})=3$ then $\vertl \nf{d|x}{x=0}(R_{\mfrak{p}}/\lambda I)\vertr=1$
\end{enumerate}
\item \begin{enumerate}[(1)]
\item if $\rho_Z(q)(\nf{xb=0}{c|x})=1$ then $\vertl \nf{xb=0}{c|x}(R_{\mfrak{p}}/\lambda I)\vertr= \vertl R_{\mfrak{p}}/\lambda I\vertr$
\item if $\rho_Z(q)(\nf{xb=0}{c|x})=2$ then $\vertl \nf{xb=0}{c|x}(R_{\mfrak{p}}/\lambda I)\vertr= \vertl R_{\mfrak{p}}/cR_{\mfrak{p}}\vertr$
\item if $\rho_Z(q)(\nf{xb=0}{c|x})=3$ then $\vertl \nf{xb=0}{c|x}(R_{\mfrak{p}}/\lambda I)\vertr= \vertl I/bI\vertr$
\item if $\rho_Z(q)(\nf{xb=0}{c|x})=4$ then $\vertl \nf{xb=0}{c|x}(R_{\mfrak{p}}/\lambda I)\vertr= 1$
\item if $\rho_Z(q)(\nf{xb=0}{c|x})=5$ then $\vertl \nf{xb=0}{c|x}(R_{\mfrak{p}}/\lambda I)\vertr= \vertl I/s_Z(q,\nf{xb=0}{c|x})R_{\mfrak{p}}\vertr$
\end{enumerate}
\end{enumerate}
Furthermore, for all prime ideals $\mfrak{p}\lhd R$ and ideals $I\lhd R_{\mfrak{p}}$, if $(\mfrak{p},I)\models q$ and $\rho_Z(q)(\nf{d|x}{x=0})=2$ then $s_Z(q,\nf{d|x}{x=0})R_{\mfrak{p}}\supseteq I$, and, if $(\mfrak{p},I)\models q$ and $\rho_Z(q)(\nf{xb=0}{c|x})=5$ then $s_Z(q,\nf{xb=0}{c|x})\in I$. Moreover, if $b=0$ then we may assume $\rho_Z(q)(\nf{xb=0}{c|x})\in\{1,2\}$ for all $q\in S_Z$.
\end{proposition}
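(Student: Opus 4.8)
The plan is to reduce to the single pp-pair case, which is Lemma~\ref{R/lambdaIhelp}, and then merge the resulting case distinctions into one common refinement by repeated use of Lemma~\ref{comb1}.

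First I would enumerate $Z=\{p_1,\dots,p_k\}$ and apply Lemma~\ref{R/lambdaIhelp} to each $p_j$ with the given $\lambda$: part~(a) when $p_j$ has the form $\nf{d|x}{x=0}$ and part~(b) when it has the form $\nf{xb=0}{c|x}$. For each $j$ this yields finite sets $S^{(j)}_1,\dots\subseteq R^4$ (three of them in the first case, five in the second), a ``type'' map $\rho^{(j)}$ and a ``parameter'' map $s^{(j)}$ defined on $\bigcup_i S^{(j)}_i$, with the property that every pair $(\mfrak p,I)$ satisfies at least one $q\in\bigcup_i S^{(j)}_i$, and whenever $(\mfrak p,I)\models q$ the invariant $\vertl p_j(R_{\mfrak p}/\lambda I)\vertr$ is given by the formula prescribed by $\rho^{(j)}(q)$ with parameter $s^{(j)}(q)$. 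Note $\rho^{(j)}$ lands in $\{1,2,3\}$ for pairs of the form $\nf{d|x}{x=0}$, and $S^{(j)}_3=S^{(j)}_4=S^{(j)}_5=\emptyset$ for a pair $\nf{xb=0}{c|x}$ with $b=0$.

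Next, for every choice function $\tau$ assigning to each $j$ a tuple $\tau(p_j)\in\bigcup_i S^{(j)}_i$, I would run Lemma~\ref{comb1} iteratively: combine $\tau(p_1)$ with $\tau(p_2)$, then combine each output tuple with $\tau(p_3)$, and so on, obtaining after $k-1$ steps a finite set $Q_\tau\subseteq R^4$ such that, for all $(\mfrak p,I)$, there is $q\in Q_\tau$ with $(\mfrak p,I)\models q$ if and only if $(\mfrak p,I)\models\tau(p_j)$ for all $j$; moreover any single $q\in Q_\tau$ satisfied by $(\mfrak p,I)$ forces $(\mfrak p,I)\models\tau(p_j)$ for every $j$. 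I then set $S_Z:=\bigcup_\tau Q_\tau$, and for $q\in Q_\tau$ define $\rho_Z(q)\in T_Z$ by $\rho_Z(q)(p_j):=\rho^{(j)}(\tau(p_j))$ and $s_Z(q,p_j):=s^{(j)}(\tau(p_j))$; that $\rho_Z(q)\in T_Z$ (and that $\rho_Z(q)(p_j)\in\{1,2\}$ when $b=0$) follows from the last sentence of the previous paragraph. All of this is effective since there are finitely many $\tau$ and both Lemmas are algorithmic.

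Finally I would check the conclusions. For~(a): given $(\mfrak p,I)$, choose for each $j$ some $q_j$ with $(\mfrak p,I)\models q_j$ and put $\tau(p_j):=q_j$; these conditions are simultaneously true at $(\mfrak p,I)$, so the iterated Lemma~\ref{comb1} produces some $q\in Q_\tau\subseteq S_Z$ with $(\mfrak p,I)\models q$. For~(b) and~(c): if $(\mfrak p,I)\models q$ with $q\in Q_\tau$ then $(\mfrak p,I)\models\tau(p_j)$ for each $j$, so Lemma~\ref{R/lambdaIhelp} computes $\vertl p_j(R_{\mfrak p}/\lambda I)\vertr$ precisely by the formula in the statement for the type $\rho_Z(q)(p_j)$ and the parameter $s_Z(q,p_j)$. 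The two ``furthermore'' clauses carry over verbatim: e.g.\ if $(\mfrak p,I)\models q$ and $\rho_Z(q)(\nf{d|x}{x=0})=2$, then $(\mfrak p,I)\models\tau(\nf{d|x}{x=0})\in S^{(j)}_2$, so the corresponding clause of Lemma~\ref{R/lambdaIhelp}(a) gives $s_Z(q,\nf{d|x}{x=0})R_{\mfrak p}\supseteq I$, and the $\rho=5$ case is handled the same way via Lemma~\ref{R/lambdaIhelp}(b). The only delicate bookkeeping point --- the closest thing to an obstacle --- is that one $4$-tuple $q$ may be produced by several $\tau$, so $\rho_Z,s_Z$ must be well-defined on $S_Z$; this is harmless, since at any $(\mfrak p,I)\models q$ the invariants $\vertl p_j(R_{\mfrak p}/\lambda I)\vertr$ are fixed integers and hence all admissible presentations agree where used, so one may simply fix one witnessing $\tau$ per surviving tuple.
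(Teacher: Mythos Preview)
Your proposal is correct and follows essentially the same approach as the paper: both use Lemma~\ref{R/lambdaIhelp} for the single pp-pair case and Lemma~\ref{comb1} to merge the resulting conditions. The only organisational difference is that the paper builds $S_Z$ inductively, adjoining one pp-pair at a time, whereas you first compute all the single-pair data and then take a common refinement indexed by choice functions $\tau$; these are two ways of arranging the same computation, and your handling of the well-definedness issue (fixing one $\tau$ per output tuple) is exactly parallel to the paper's use of a $\min$ when defining $\rho_{Z'}$.
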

\begin{proof}
We prove the proposition iteratively. Let $\lambda\in R\backslash\{0\}$ and $Z$ be a finite set of pp-pairs of the form $\nf{xb=0}{c|x}$ or $\nf{d|x}{x=0}$. If $|Z|=1$ then \ref{R/lambdaIhelp} gives the required result. Suppose that $S_Z,\rho_Z$ and $s_Z$ are as in the statement. We construct $S_{Z'},\rho_{Z'}$ and $s_{Z'}$ for $Z'=Z\cup \{\nf{\phi}{\psi}\}$.

Suppose that $\nf{\phi}{\psi}$ is either of the form $\nf{d|x}{x=0}$ or $\nf{xb=0}{c|x}$ and $\nf{\phi}{\psi}\notin Z$. Let $S_1,\ldots,S_5$, $s:\bigcup_{i=1}^5S_i\rightarrow R$ and $\rho:\bigcup_{i=1}^5S_i\rightarrow \{1,\ldots,5\}$ be as in \ref{R/lambdaIhelp} (1) or (2), as appropriate (if $\nf{\phi}{\psi}$ is of the form $\nf{d|x}{x=0}$ then set $S_4=S_5=\emptyset$). By \ref{comb1}, for each $q\in S_Z$ and $p\in S_i$, we can compute a finite set $S_{q,p,i}\subseteq R^4$ such that for all prime ideals $\mfrak{p}\lhd R$ and ideals $I\lhd R_{\mfrak{p}}$, $(\mfrak{p},I)\models q$ and $(\mfrak{p},I)\models p$ if and only if there exists $q'\in S_{q,p,i}$ such that $(\mfrak{p},I)\models q'$. Let $S_{q,i}:=\bigcup_{p\in S_i}S_{q,p,i}$, let $S_q:=\bigcup_{i=1}^5S_{q,i}$ and let $S_{Z'}:=\bigcup_{q\in S_Z}S_q$.

For all prime ideals $\mfrak{p}\lhd R$ and ideals $I\lhd R_{\mfrak{p}}$, by assumption, there exists $q\in S_Z$ such that $(\mfrak{p},I)\models q$ and, by \ref{R/lambdaIhelp}, there exists $1\leq i\leq 5$, such that $(\mfrak{p},I)\models q'$ for some $q'\in S_i$. Therefore, $(\mfrak{p},I)\models q''$ for some $q''\in S_{q,i}\subseteq S_{Z'}$. So (a) holds for $S_{Z'}$.

Define $\rho_{Z'}:S_{Z'}\rightarrow T_{Z'}$ by setting
\[\rho_{Z'}(q')(\nf{\sigma}{\tau}):=\left\{
                                      \begin{array}{ll}
                                        \min\{\rho_Z(q)(\nf{\sigma}{\tau})\st q'\in S_q\}, & \hbox{if $\nf{\sigma}{\tau}\in Z$;} \\
                                        \min\{1\leq i\leq 5 \st q'\in S_{q,i} \text{ for some }q\in S_Z\}, & \hbox{ if $\nf{\sigma}{\tau}=\nf{\phi}{\psi}$.}
                                      \end{array}
                                    \right.
\]
For $\sigma/\tau\in Z$, set $s_{Z'}(q',\nf{\sigma}{\tau})$ to be $s_{Z}(q,\sigma/\tau)$ for some $q\in S_Z$ where $q'\in S_q$ and $\rho_{Z'}(q')(\nf{\sigma}{\tau})=\rho_{Z}(q)(\nf{\sigma}{\tau})$. For $q'\in S_{Z'}$, set $s_{Z'}(q',\nf{\phi}{\psi})$ to be $s(p)$ for some $p\in S_j$ where $j=\rho_{Z'}(q')(\nf{\phi}{\psi})$ and $q'\in S_{q,p,j}$.

Now, for $\nf{\sigma}{\tau}\in Z$, properties (b) or (c), as appropriate, are inherited from those properties holding for $\rho_Z$ and $s_Z$ and for $\nf{\phi}{\psi}$, properties (b) or (c), as appropriate, are inherited from $\rho$ and $s$.
\end{proof}

\begin{lemma}\label{I/lambdaRhelp}
Let $R$ be a recursive Pr\"ufer domain and $\lambda\in R\backslash\{0\}$.
\begin{enumerate}[(a)]
\item Given $b,c\in R$, we can compute finite sets $S_1,\ldots,S_6\subseteq R^4$, $\rho:\bigcup_{i=1}^6S_i\rightarrow \{1,\ldots,6\}$ and $s:\bigcup_{i=1}^5S_i\rightarrow R$ such that for all $q\in\bigcup_{i=1}^6S_i$, $q\in S_{\rho(q)}$, for all prime ideals $\mfrak{p}\lhd R$ and ideals $I\lhd R_{\mfrak{p}}$ with $\lambda\in I$, there exists $q\in \bigcup_{i=1}^6S_i$ such that $(\mfrak{p},I)\models q$, and,
\[\vertl\nf{xb=0}{c|x}(I/\lambda R_{\mfrak{p}} )\vertr:=\left\{
                                                        \begin{array}{ll}
                                                          \vertl I/\lambda R_{\mfrak{p}}\vertr, & \hbox{if $(\mfrak{p},I)\models q$ and $\rho(q)=1$;} \\
                                                          \vertl I/cI\vertr, & \hbox{if $(\mfrak{p},I)\models q$ and $\rho(q)=2$;} \\
                                                          \vertl R_{\mfrak{p}}/bR_{\mfrak{p}}\vertr, & \hbox{if $(\mfrak{p},I)\models q$ and $\rho(q)=3$;} \\
                                                          1, & \hbox{if $(\mfrak{p},I)\models q$ and $\rho(q)=4$;} \\
                                                          \vertl s(q)R_{\mfrak{p}}/I\vertr, & \hbox{if $(\mfrak{p},I)\models q$ and $\rho(q)=5$;} \\
                                                          \vertl R_{\mfrak{p}}/s(q)I\vertr, & \hbox{if $(\mfrak{p},I)\models q$ and $\rho(q)=6$.}
                                                        \end{array}
                                                      \right.
\]
Furthermore, if $\rho(q)=5$ and $(\mfrak{p},I)\models q$ then $s(q)R_{\mfrak{p}}\supseteq I$.

\item Given $d\in R$, we can compute finite sets $S_1,S_2\subseteq R^4$, $\rho:\bigcup_{i=1}^2S_i\rightarrow \{1,2\}$ and $s:\bigcup_{i=1}^2S_i\rightarrow R$ such that for all $q\in\bigcup_{i=1}^2S_i$, $q\in S_{\rho(q)}$, for all prime ideals $\mfrak{p}\lhd R$ and ideals $I\lhd R_{\mfrak{p}}$ with $\lambda\in I$, there exists $q\in \bigcup_{i=1}^2S_i$ such that $(\mfrak{p},I)\models q$, and,
\[\vertl\nf{d|x}{x=0}(I/\lambda R_{\mfrak{p}})\vertr:=\left\{
                                                        \begin{array}{ll}
                                                          \vertl I/s(q)R_{\mfrak{p}}\vertr, & \hbox{if $(\mfrak{p},I)\models q$ and $\rho(q)=1$;} \\
                                                                                                                   0, & \hbox{if $(\mfrak{p},I)\models q$ and $\rho(q)=2$.}
                                                        \end{array}
                                                      \right.
\]
Furthermore, if $\rho(q)=1$ and $(\mfrak{p},I)\models q$ then $s(q)\in I$.
\end{enumerate}
\end{lemma}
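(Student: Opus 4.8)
The plan is to follow the proof of Lemma \ref{R/lambdaIhelp} almost verbatim, but with the module $I/\lambda R_{\mfrak{p}}$ in place of $R_{\mfrak{p}}/\lambda I$; only the internal valuation-ring computations change. First I would dispose of the degenerate cases. If $d=0$ then $\nf{d|x}{x=0}(N)=0$ for every $R$-module $N$, so (b) holds with $S_1=\emptyset$ and $S_2=\{(1,0,1,1)\}$. If $b=0$ then $\nf{xb=0}{c|x}$ is $\nf{x=x}{c|x}$, and on $M=I/\lambda R_{\mfrak{p}}$ its value is $\vertl I/(cI+\lambda R_{\mfrak{p}})\vertr$, which equals $\vertl I/cI\vertr$ when $\lambda\in cI$ and $\vertl I/\lambda R_{\mfrak{p}}\vertr$ otherwise (comparability of ideals in the valuation ring $R_{\mfrak{p}}$); Remark \ref{ainbIetc} turns the two conditions $\lambda\in cI$, $\lambda\notin cI$ into finite disjunctions of ``$(\mfrak{p},I)\models(r,a,\gamma,\delta)$'' assertions, and \ref{comb1} packages them, all with $\rho$-value in $\{1,2\}$.

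For part (b) with $d\ne 0$ I would apply Lemma \ref{Tuganbaev} to $d$ and $\lambda$ to get $\alpha,u,v$ with $d\alpha=\lambda u$ and $\lambda(\alpha-1)=dv$, and split each $\mfrak{p}$ on whether $\alpha\notin\mfrak{p}$ or $\alpha-1\notin\mfrak{p}$ (Remark \ref{alphafeather}). If $\alpha\notin\mfrak{p}$ then $dR_{\mfrak{p}}=\lambda uR_{\mfrak{p}}$, so $Id=\lambda uI\subseteq\lambda R_{\mfrak{p}}$ and the value is $1$; record $(1,0,\alpha,1)$, $\rho=2$. If $\alpha-1\notin\mfrak{p}$ then $\lambda R_{\mfrak{p}}=dvR_{\mfrak{p}}$; split on $v\in I$ versus $vR_{\mfrak{p}}\supseteq I$. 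When $v\in I$ we have $\lambda R_{\mfrak{p}}\subseteq dI$, so $Id+\lambda R_{\mfrak{p}}=Id$ and multiplication by $d$ (injective on $R_{\mfrak{p}}$) gives value $\vertl I/vR_{\mfrak{p}}\vertr$ with $vR_{\mfrak{p}}\subseteq I$: record $(1,v,\alpha-1,1)$, $\rho=1$, $s=v$. When $vR_{\mfrak{p}}\supseteq I$ we get $Id+\lambda R_{\mfrak{p}}=d(I+vR_{\mfrak{p}})=dvR_{\mfrak{p}}=\lambda R_{\mfrak{p}}$, so the value is $1$: record $(v,0,\alpha-1,1)$, $\rho=2$.

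For part (a) with $b\ne 0$ I would apply Lemma \ref{Tuganbaev} to $b$ and $\lambda$, getting $\beta,x,y$ with $b\beta=\lambda x$ and $\lambda(\beta-1)=by$, and split on $\beta\notin\mfrak{p}$ versus $\beta-1\notin\mfrak{p}$. If $\beta\notin\mfrak{p}$ then $bR_{\mfrak{p}}=\lambda xR_{\mfrak{p}}\subseteq\lambda R_{\mfrak{p}}\subseteq I$, so $b$ annihilates $M=I/\lambda R_{\mfrak{p}}$, the pair collapses to $\nf{x=x}{c|x}$, and we are back in the $b=0$ situation. If $\beta-1\notin\mfrak{p}$ then $\lambda R_{\mfrak{p}}=byR_{\mfrak{p}}$ with $y\ne 0$, and a direct computation in $R_{\mfrak{p}}$ gives
\[\nf{xb=0}{c|x}(I/byR_{\mfrak{p}})=(I\cap yR_{\mfrak{p}})\big/\bigl((I\cap yR_{\mfrak{p}})\cap(cI+byR_{\mfrak{p}})\bigr).\]
For $yR_{\mfrak{p}}\supseteq I$ this is again the $\nf{x=x}{c|x}$ value, and for $y\in I$ it is $yR_{\mfrak{p}}/\bigl(yR_{\mfrak{p}}\cap(cI+byR_{\mfrak{p}})\bigr)$, which I would evaluate by comparing, in order, $cI$ with $yR_{\mfrak{p}}$, then $cI$ with $byR_{\mfrak{p}}$, and finally --- using Lemma \ref{Tuganbaev} once more on $c,y$ --- $c$ with $y$ in $R_{\mfrak{p}}$. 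The outcomes are: value $1$ if $cI\supseteq yR_{\mfrak{p}}$ ($\rho=4$); $\vertl R_{\mfrak{p}}/bR_{\mfrak{p}}\vertr$ if $cI\subseteq byR_{\mfrak{p}}$ ($\rho=3$); $\vertl s(q)R_{\mfrak{p}}/I\vertr$ with $s(q)R_{\mfrak{p}}\supseteq I$ if $byR_{\mfrak{p}}\subsetneq cI\subseteq yR_{\mfrak{p}}$ and $yR_{\mfrak{p}}\subseteq cR_{\mfrak{p}}$ ($\rho=5$); and $\vertl R_{\mfrak{p}}/s(q)I\vertr$ if $byR_{\mfrak{p}}\subsetneq cI\subseteq yR_{\mfrak{p}}$ and $cR_{\mfrak{p}}\subseteq yR_{\mfrak{p}}$ ($\rho=6$), with $s(q)$ in each case the obvious cofactor.

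To assemble: every containment or divisibility condition appearing above is, after the usual $\alpha/(\alpha-1)$ feathering, one of the two shapes handled by Remark \ref{ainbIetc}, and Lemma \ref{comb1} combines the finitely many conditions describing a given case into a single predicate $(\mfrak{p},I)\models(r,a,\gamma,\delta)$ --- the $\delta$-coordinate staying equal to $1$ and the side requirements $s(q)R_{\mfrak{p}}\supseteq I$, $s(q)\in I$ being recorded in the $r$- and $a$-slots. Exhaustiveness holds because at each split the total ordering of the ideals of $R_{\mfrak{p}}$ makes at least one branch applicable, so every $(\mfrak{p},I)$ with $\lambda\in I$ satisfies some $q$ in the assembled sets, with the claimed value. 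The hard part is the $\beta-1\notin\mfrak{p}$ branch of part (a): one must strip the correct factor off $c$ or off $y$ before the quotient can be put in one of the six normalized forms, and the real care is in keeping the side conditions aligned with the correct value of $\rho$; everything else transcribes the proof of Lemma \ref{R/lambdaIhelp}.
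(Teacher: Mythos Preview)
Your part (b) is essentially identical to the paper's proof: apply Tuganbaev to $d,\lambda$ and split on $\alpha\notin\mfrak{p}$ versus $\alpha-1\notin\mfrak{p}$, then on $v\in I$ versus $vR_{\mfrak{p}}\supseteq I$. (Note the statement's ``$0$'' for $\rho(q)=2$ is evidently a typo for ``$1$''.)

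Your part (a) is correct but organised differently. The paper applies Tuganbaev \emph{once}, to $\lambda$ and the product $bc$, obtaining $\lambda\alpha=bcr$, $bc(\alpha-1)=\lambda s$, and then splits directly on the four coarse conditions $bI\subseteq\lambda R_{\mfrak{p}}$ versus $\lambda\in bI$, and $cI\subseteq\lambda R_{\mfrak{p}}$ versus $\lambda\in cI$ (all expressible via Remark~\ref{ainbIetc} as instances of $aR_{\mfrak{p}}\supseteq bI$ or $a\in bI$); only in the case $\lambda\in bI$ and $\lambda\in cI$ does the $\alpha$-split enter, giving $\rho\in\{4,5,6\}$ with $s(q)=r$ or $s$. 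Your route instead peels off $b$ first (Tuganbaev on $b,\lambda$), reduces two branches to the $b=0$ computation you already did, and in the remaining branch applies a \emph{second} Tuganbaev to $c,y$. This buys reuse of the $b=0$ case but costs an extra layer of nesting and makes the six outcomes less symmetric in $b,c$. One small point: your conditions ``$\lambda\notin cI$'' and ``$byR_{\mfrak{p}}\subsetneq cI$'' are not themselves of the shape handled by Remark~\ref{ainbIetc}; you should replace them by the overlapping expressible forms $\lambda R_{\mfrak{p}}\supseteq cI$ and $by\in cI$, checking (as is routine) that the values agree on the overlaps.
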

\begin{proof}

\noindent
(a)Let $\alpha,r,s\in R$ be such that $\lambda \alpha=bcr$ and $bc(\alpha-1)=\lambda s$.

\noindent
\textbf{Claim:} If $\lambda\in I$ then
\[
\vertl\nf{xb=0}{c|x}(I/\lambda R_{\mfrak{p}})\vertr:=\left\{
                                                                                                                                        \begin{array}{ll}
                                                                                                                                          \vertl I/\lambda R_{\mfrak{p}}\vertr, & \hbox{if (1) $bI\subseteq \lambda R_{\mfrak{p}}$ and $cI\subseteq \lambda R_{\mfrak{p}}$;} \\
                                                                                                                                          \vertl I/cI\vertr, & \hbox{if (2) $bI\subseteq \lambda R_{\mfrak{p}}$ and $\lambda\in  cI $;} \\
                                                                                                                                         \vertl R_{\mfrak{p}}/bR_{\mfrak{p}}\vertr, & \hbox{if (3) $b\in \lambda I$ and $cI\subseteq \lambda R_{\mfrak{p}}$;} \\
                                                                                                                                          1, & \hbox{if (4) $\lambda\in bI$, $\lambda\in cI$, $r\in I$ and $\alpha\notin \mfrak{p}$;} \\
\vertl rR_{\mfrak{p}}/I\vertr, & \hbox{if (5) $\lambda\in bI$, $\lambda\in cI$, $rR_{\mfrak{p}}\supseteq I$ and $\alpha\notin \mfrak{p}$;} \\
                                                                                                                                          \vertl R_{\mfrak{p}}/s I\vertr, & \hbox{if (6) $\lambda\in bI$, $\lambda\in cI$ and $\alpha-1\notin \mfrak{p}$.}
                                                                                                                                        \end{array}
                                                                                                                                      \right.
\]
Note that \[\vertl\nf{xb=0}{c|x}(I/\lambda R_{\mfrak{p}})\vertr=\vertl \frac{(\lambda R_{\mfrak{p}}:b)\cap I+cI}{cI+\lambda R_{\mfrak{p}}}\vertr.\]
For all $a\in R$, $a\in\ann_RI/\lambda R_{\mfrak{p}}$ if and only if $aI\subseteq \lambda R_{\mfrak{p}}$. Therefore the equalities for conditions $(1)$ and $(2)$ hold. If $\lambda\in bI\subseteq bR_{\mfrak{p}}$ then $b\neq 0$ since $\lambda\neq 0$ and $(\lambda R_{\mfrak{p}}:b)\cap I/\lambda R_{\mfrak{p}}\cong R_{\mfrak{p}}/bR_{\mfrak{p}}$. So the equality for condition $(3)$ holds.

When proving the equalities for $(4)$, $(5)$ and $(6)$, we may assume $b\neq0$ and $c\neq 0$ since $\lambda\neq 0$, $\lambda\in cI$ and $\lambda\in bI$. Moreover, \[\vertl\nf{xb=0}{c|x}(I/\lambda R_{\mfrak{p}})\vertr=\vertl(\lambda R_{\mfrak{p}}:b)+cI/cI\vertr=\vertl\lambda R_{\mfrak{p}}+bcI/bcI\vertr.\]
If $\alpha\notin \mfrak{p}$ and $r\in I$ then $\lambda\in bcI$.  So the equality for condition $(4)$ holds.

Suppose condition (5) holds. Then $\alpha\notin\mfrak{p}$ implies $\lambda R_{\mfrak{p}}=bc r R_{\mfrak{p}}$. Since $rR_{\mfrak{p}}\supseteq I$, $\vertl\nf{xb=0}{c|x}(R_{\mfrak{p}}/\lambda I)\vertr=\vertl rR_{\mfrak{p}}/I\vertr$. So the equality for condition (5) holds.

Suppose condition (6) holds. Since $\alpha-1 \notin \mfrak{p}$, $bcI=\lambda sI$. So $\lambda R_{\mfrak{p}}+bcI/bcI\cong R_{\mfrak{p}}/sI$. So the equality for condition (6) holds. So we have proved the claim.

Given a finite set of conditions of the form $\beta\notin\mfrak{p}$, $a\in \lambda I$ or $aR_{\mfrak{p}}\supseteq \lambda I$, using \ref{comb1} and \ref{ainbIetc}, we can compute a finite set $S\subseteq R^4$ such that $(\mfrak{p},I)$ satisfies these conditions if and only if $(\mfrak{p},I)\models q$ for some $q\in S$. So for each conditions ($i$) for $1\leq i\leq 6$ in the claim, we can compute $S_i\subseteq R^4$ such that $(\mfrak{p},I)$ satisfies $(i)$ and $\lambda\in I$ if and only if there exists $q\in S_i$ with $(\mfrak{p},I)\models q$. Moreover, it is easy to see that, for all prime ideals $\mfrak{p}\lhd R$ and ideals $I\lhd R_{\mfrak{p}}$, there exists $1\leq i\leq 6$ such that $(\mfrak{p},I)$ satisfies $(i)$.  Let $\rho:\bigcup_{i=1}^6S_i\rightarrow \{1,\ldots,6\}$ be such that $q\in S_{\rho(q)}$. Finally, set $s(q)=1$ if $\rho(q)\leq 4$, $s(q)=r$ if $\rho(q)=5$ and $s(q)=s$ if $\rho(q)=6$.

\smallskip
\noindent
(b) The case $d=0$ is done as in \ref{R/lambdaIhelp}. Suppose $d\neq 0$. Let $\alpha,r,s\in R$ be such that $d\alpha=\lambda r$ and $\lambda(\alpha-1)=ds$. If either $\alpha\notin\mfrak{p}$, or, $\alpha-1\notin\mfrak{p}$ and $sR_{\mfrak{p}}\supseteq I$ then $dI\subseteq \lambda R_{\mfrak{p}}$. If $\alpha-1\notin \mfrak{p}$ and $s\in I$ then $ds R_{\mfrak{p}}=\lambda R_{\mfrak{p}}\subseteq dI$. Since $d\neq 0$, $dI/dsR_{\mfrak{p}}\cong I/sR_{\mfrak{p}}$. Therefore
\[\vertl\nf{d|x}{x=0}(I/\lambda R_{\mfrak{p}})\vertr=\vertl\frac{dI+\lambda R_{\mfrak{p}}}{\lambda R_{\mfrak{p}}}\vertr=\left\{
                                                       \begin{array}{ll}
                                                         1, & \hbox{if  $(\mfrak{p},I)\models (1,0,\alpha,1)$;} \\
                                                         1, & \hbox{if  $(\mfrak{p},I)\models (s,0,\alpha-1,1)$;} \\
                                                         \vertl I/sR_{\mfrak{p}}\vertr, & \hbox{if  $(\mfrak{p},I)\models (1,s,\alpha-1,1)$.}
                                                       \end{array}
                                                     \right.\]
It is now clear how to define, $S_1,S_2$, $\rho$ and $s$.
\end{proof}

\begin{proposition}\label{sortI/lambda}
Let $R$ be a recursive Pr\"ufer domain, $\lambda\in R\backslash\{0\}$ and $Z$ a finite subset of pp-pairs of the form $\nf{xb=0}{c|x}$ and $\nf{d|x}{x=0}$. Let \[T_Z:=\{\mu:Z\rightarrow\{1,\ldots,6\}\st \mu(\nf{d|x}{x=0})\leq 2 \}.\]
We can compute $S_Z$ a finite subset of $R^4$, $\rho_Z: S_Z\rightarrow T_Z$ and $s_Z:S_Z\times Z\rightarrow R$ such that, for all prime ideals $\mfrak{p}\lhd R$ and ideals $I\lhd R_{\mfrak{p}}$ with $\lambda\in I$, if $(\mfrak{p},I)\models q$ for some $q\in S_Z$ then $\lambda\in I$, and, if $\lambda\in I$ then
\begin{enumerate}[(a)]
\item there exists $q\in S_Z$ such that $(\mfrak{p},I)\models q$,
\item \begin{enumerate}[(1)]
\item if $\rho_Z(q)(\nf{d|x}{x=0})=1$ then $\vertl \nf{d|x}{x=0}(I/\lambda R_{\mfrak{p}})\vertr=\vertl I/s_Z(q,\nf{d|x}{x=0})R_{\mfrak{m}}\vertr$
\item if $\rho_Z(q)(\nf{d|x}{x=0})=2$ then $\vertl \nf{d|x}{x=0}(I/\lambda R_{\mfrak{p}})\vertr=1$
\end{enumerate}
\item \begin{enumerate}[(1)]
\item if $\rho_Z(q)(\nf{xb=0}{c|x})=1$ then $\vertl\nf{xb=0}{c|x}(I/\lambda R_{\mfrak{p}})\vertr=\vertl I/\lambda R_{\mfrak{p}}\vertr$
\item if $\rho_Z(q)(\nf{xb=0}{c|x})=2$ then $\vertl\nf{xb=0}{c|x}(I/\lambda R_{\mfrak{p}})\vertr=\vertl I/cI\vertr$
\item if $\rho_Z(q)(\nf{xb=0}{c|x})=3$ then $\vertl\nf{xb=0}{c|x}(I/\lambda R_{\mfrak{p}})\vertr=\vertl R_{\mfrak{p}}/bR_{\mfrak{p}}\vertr$
\item if $\rho_Z(q)(\nf{xb=0}{c|x})=4$ then $\vertl\nf{xb=0}{c|x}(I/\lambda R_{\mfrak{p}})\vertr=1$
\item if $\rho_Z(q)(\nf{xb=0}{c|x})=5$ then $\vertl\nf{xb=0}{c|x}(I/\lambda R_{\mfrak{p}})\vertr=\vertl s(q,\nf{xb=0}{c|x})R_{\mfrak{p}}/I\vertr$
\item if $\rho_Z(q)(\nf{xb=0}{c|x})=6$ then $\vertl\nf{xb=0}{c|x}(I/\lambda R_{\mfrak{p}})\vertr=\vertl R_{\mfrak{p}}/s(q,\nf{xb=0}{c|x})I\vertr$
\end{enumerate}
\end{enumerate}
Furthermore, for all prime ideals $\mfrak{p}\lhd R$ and ideals $I\lhd R_{\mfrak{p}}$, if $\rho_Z(q)(\nf{d|x}{x=0})=1$ and $(\mfrak{p},I)\models q$ then $s_Z(q,\nf{d|x}{x=0})\in I$, and, if   $\rho_Z(q)(\nf{xb=0}{c|x})=5$ and $(\mfrak{p},I)\models q$ then $s_Z(q,\nf{d|x}{x=0})R_{\mfrak{p}}\supseteq I$.
\end{proposition}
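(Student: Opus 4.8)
The plan is to prove the proposition by induction on $|Z|$, following the same scheme as the proof of \ref{sortR/lambdaI}, but with \ref{I/lambdaRhelp} playing the role of \ref{R/lambdaIhelp}. For the base case $Z=\{\nf{\phi}{\psi}\}$ I would apply \ref{I/lambdaRhelp}(a) when $\nf{\phi}{\psi}$ has the form $\nf{xb=0}{c|x}$ and \ref{I/lambdaRhelp}(b) when it has the form $\nf{d|x}{x=0}$, obtaining finite sets $S_1,\dots,S_k$ (with $k=6$ or $k=2$), a map $\rho$ into $\{1,\dots,k\}$, and a map $s$ into $R$. For the inductive step, given $S_Z,\rho_Z,s_Z$ for $Z$ and a new pp-pair $\nf{\phi}{\psi}\notin Z$, I would apply \ref{I/lambdaRhelp} to $\nf{\phi}{\psi}$ and then use \ref{comb1} to intersect, tuple by tuple, the conditions of $S_Z$ with those output by \ref{I/lambdaRhelp}: for $q\in S_Z$, $1\le i\le k$ and $p\in S_i$ one computes a finite $S_{q,p,i}\subseteq R^4$ with $(\mfrak{p},I)\models q$ and $(\mfrak{p},I)\models p$ if and only if $(\mfrak{p},I)\models q'$ for some $q'\in S_{q,p,i}$, and sets $S_{Z'}:=\bigcup_{q,i,p}S_{q,p,i}$. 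The map $\rho_{Z'}$ is defined by taking, for each pp-pair and each $q'\in S_{Z'}$, the minimum of the relevant $\rho$-values among tuples $q$ (resp.\ indices $i$) that $q'$ refines, exactly as in \ref{sortR/lambdaI}, and $s_{Z'}$ is read off consistently with that choice.

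The one feature absent from \ref{sortR/lambdaI} is the standing hypothesis $\lambda\in I$ appearing in \ref{I/lambdaRhelp}: there the formulas for $\vertl\nf{\phi}{\psi}(I/\lambda R_{\mfrak{p}})\vertr$ are only asserted when $\lambda\in I$, and the conclusion of \ref{sortI/lambda} demands in addition that membership in $S_Z$ \emph{force} $\lambda\in I$. I would handle this by observing that, for every prime ideal $\mfrak{p}\lhd R$ and ideal $I\lhd R_{\mfrak{p}}$, one has $(\mfrak{p},I)\models(1,\lambda,1,1)$ if and only if $\lambda\in I$, since $1R_{\mfrak{p}}\supseteq I$, $1\notin\mfrak{p}$ and $1\notin I^\#$ hold unconditionally. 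So in the base case I would additionally intersect, via \ref{comb1}, every tuple produced by \ref{I/lambdaRhelp} with $(1,\lambda,1,1)$; this simultaneously guarantees that no tuple in $S_Z$ is satisfied unless $\lambda\in I$, and — using the covering statement of \ref{I/lambdaRhelp}, which holds whenever $\lambda\in I$ — that some tuple of $S_Z$ is satisfied whenever $\lambda\in I$. Because \ref{comb1} only ever yields tuples satisfied exactly when both input tuples are, this $\lambda\in I$ constraint is automatically preserved by the merging in the inductive step, so no further intervention is needed.

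With these ingredients, properties (a)–(c) and the two ``furthermore'' assertions follow by the same inheritance argument as in the proof of \ref{sortR/lambdaI}: for pp-pairs already in $Z$ they are inherited from the corresponding properties of $\rho_Z$ and $s_Z$, and for the newly adjoined pp-pair from the $\rho$ and $s$ supplied by \ref{I/lambdaRhelp}; in particular $s_Z(q,\nf{d|x}{x=0})\in I$ whenever $\rho_Z(q)(\nf{d|x}{x=0})=1$ and $(\mfrak{p},I)\models q$, and $s_Z(q,\nf{xb=0}{c|x})R_{\mfrak{p}}\supseteq I$ whenever $\rho_Z(q)(\nf{xb=0}{c|x})=5$ and $(\mfrak{p},I)\models q$, directly from the ``furthermore'' clauses of \ref{I/lambdaRhelp}(b) and (a). I do not expect a genuine obstacle: all of the module-theoretic content has been isolated in \ref{I/lambdaRhelp}, and what remains is the purely combinatorial bookkeeping of iteratively combining conditions and minimising the $\rho$-labels. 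The only point requiring care, exactly as in \ref{sortR/lambdaI}, is to choose $s_{Z'}(q',\nf{\sigma}{\tau})$ consistently with the minimised label $\rho_{Z'}(q')(\nf{\sigma}{\tau})$; this is possible because any two tuples of $S_Z$ that $q'$ refines yield the same value of $\vertl\nf{\sigma}{\tau}(I/\lambda R_{\mfrak{p}})\vertr$ on every $(\mfrak{p},I)\models q'$, so all the available labels correctly describe that invariant.
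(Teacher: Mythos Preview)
Your proposal is correct and matches the paper's own proof essentially verbatim: the paper says to argue as in \ref{sortR/lambdaI} with \ref{I/lambdaRhelp} in place of \ref{R/lambdaIhelp}, and to enforce the extra condition $\lambda\in I$ by intersecting with $(1,\lambda,1,1)$ via \ref{comb1}. Your write-up is in fact more detailed than the paper's, but the strategy, the key tuple $(1,\lambda,1,1)$, and the inheritance argument are identical.
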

\begin{proof}
This can be proved similarly to \ref{sortR/lambdaI} by replacing \ref{R/lambdaIhelp} by \ref{I/lambdaRhelp}. The extra condition that $(\mfrak{p},I)\models q$ for some $q\in S_Z$ implies $\lambda\in I$ can be incorporated using \ref{comb1}, since $(\mfrak{p},I)\models (1,\lambda,1,1)$ if and only if $\lambda\in I$.
\end{proof}

\section{Finite modules}\label{Sfinitemodules}
In this section we investigate the consequences of $\EPP(R)$ being recursive, and of
$\EPP(R)$ and the radical relation being recursive. In particular, we show that for a recursive Pr\"ufer domain the theory of $R$-modules of size $n$ is decidable
uniformly in $n$ if and only if $\EPP(R)$ is recursive, \ref{EPPfinite}.

Observe that finite modules over a Pr\"ufer domain $R$ are finite direct sums of modules of the form $R_{\mfrak{p}}/I$ where $\mfrak{p}\lhd R$ is a prime ideal and $I\lhd R_{\mfrak{p}}$ is an ideal. There are many ways of seeing this. If $M$ is finite then $M$ is pure-injective. So, by \ref{sumpielemeq} and \ref{standarduni}, there exist prime ideals $\mfrak{p}_i\lhd R$ and uniserial $R_{\mfrak{p}_i}$-modules $U_i$ such that $M$ is elementary equivalent, and hence isomorphic, to $\oplus_{i=1}^nU_i$. The desired result now follows from \ref{unifinitefg}.

\medskip

\noindent
Let $W$ be the set of tuples $(f,g,\overline{a},\gamma)$ where
\begin{enumerate}[(i)]
\item $f:X\rightarrow \N$ where $X:=X_0\cup \{\star\}$, $X_0$ is a finite subset of $R$ and $\star\notin R$,
\item $g:Y\rightarrow \N$ where $Y$ is a finite subset of $R$,
\item $\overline{a}:=(a_1,\ldots,a_m)$ is a finite tuple of elements of $R$ and $\gamma\in R$.
\end{enumerate}

\noindent
Let $V$ be the set of $(f,g,\overline{a},\gamma)\in W$ such that, for some $h\in\N$ and for $1\leq i\leq h$, there exist a prime ideal $\mfrak{p}_i$ and an ideal $I_i\lhd R_{\mfrak{p}_i}$ such that $a_j\in I_i$ for $1\leq j\leq m$ and $1\leq i\leq h$, $\gamma\notin\mfrak{p}_i$,
\[|\oplus_{i=1}^hR_{\mfrak{p}_i}/I_i|=f(\star),\]
\[|\oplus_{i=1}^hR_{\mfrak{p}_i}/eR_{\mfrak{p}_i}|=f(e) \ \ \text{ \ for \ } e\in X_0\] and
\[|\oplus_{i=1}^hR_{\mfrak{p}_i}/eR_{\mfrak{p}_i}|\geq g(e) \ \ \text{ \ for \ } e\in Y.\]
We write $(\mfrak{p}_i,I_i)_{1\leq i\leq h}\models (f,g,\overline{a},\gamma)$. By convention, $\emptyset\models (f,g,\overline{a},\gamma)$ if $f(e)=1$ for all $e\in X$ and $g(e)=1$ for all $e\in Y$ and in this situation, $(f,g,\overline{a},\gamma)\in V$.

%
%

As in \ref{lattcomb}, $\mathbb{W}$ will denote bounded distributive lattice generated by $W$ and $\mathbb{V}$ will denote the (prime) filter in $\mathbb{W}$ generated by $V$.

Let $W_0$ be the subset of elements of the form $(f,\emptyset,\overline{a},\gamma)$ with $|X_0|\leq 1$ and let $W_1$ be the subset of elements of the form $(f,\emptyset,\overline{a},\gamma)$ where, in both cases, $\emptyset$ denotes the function from the empty set to $\N$. Let $\mathbb{W}_0$, respectively $\mathbb{W}_1$, denote the lattice generated by $W_0\cup\{\top,\bot\}$ in $\mathbb{W}$, respectively the lattice generated by $W_1\cup\{\top,\bot\}$ in $\mathbb{W}$.

\begin{lemma}\label{reddomf}
Let $R$ be a recursive Pr\"ufer domain. There is an algorithm which given $w\in W_1$ returns $\underline{u}\in \mathbb{W}_0$, such that $w\in V$ if and only if  $\underline{u}\in \mathbb{V}$.
\end{lemma}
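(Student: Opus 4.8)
The plan is to reduce an element $w\in W_1$, which records a condition on a finite direct sum $\bigoplus_i R_{\mfrak{p}_i}/I_i$ involving the invariant $\vert\bigoplus R_{\mfrak{p}_i}/I_i\vert$ together with several invariants $\vert\bigoplus R_{\mfrak{p}_i}/eR_{\mfrak{p}_i}\vert$ for $e$ ranging over a finite set $X_0\subseteq R$, to a lattice combination of elements of $W_0$, where $\vert X_0\vert\le 1$. So if $w=(f,\emptyset,\overline a,\gamma)$ with $X_0=\{e_1,\ldots,e_k\}$, I want to eliminate all but one of the $e_j$'s. The natural inductive step is: given $w$ with $\vert X_0\vert\ge 2$, produce $\underline u\in\mathbb W$ which is a lattice combination of elements $(f',\emptyset,\overline{a}',\gamma')$ each having strictly smaller $\vert X_0'\vert$, and then iterate; since $\vert X_0\vert$ lives in the artinian order $\N_0$ this terminates, landing in $\mathbb W_0$.

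For the inductive step I would pick two distinct $a,b\in X_0$ and split the module along the idempotent-like decomposition provided by \ref{decomposeorder}: choose $\alpha,r,s\in R$ (recursively, by the remark after \ref{Tuganbaev}) with $a\alpha=br$ and $b(\alpha-1)=as$, and split any $R$-module $M$ as $M_1\oplus M_2$ where on $M_1$ one has $\vert\nf{x=x}{\alpha|x}(M_1)\vert=1$ and on $M_2$ one has $\vert\nf{x=x}{(\alpha-1)|x}(M_2)\vert=1$. Translated to the setting of finite direct sums of cyclic localizations $R_{\mfrak{p}_i}/I_i$, this says: reorder the witnessing sequence so that $\alpha\notin\mfrak{p}_i$ for $i\le t$ and $\alpha-1\notin\mfrak{p}_i$ for $i>t$; for $i\le t$, $aR_{\mfrak{p}_i}=brR_{\mfrak{p}_i}$, hence $\vert R_{\mfrak{p}_i}/aR_{\mfrak{p}_i}\vert=\vert R_{\mfrak{p}_i}/brR_{\mfrak{p}_i}\vert$ relates the invariant for $a$ to those for $b$ and $r$; for $i>t$ similarly $bR_{\mfrak{p}_i}=asR_{\mfrak{p}_i}$. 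Using this, together with $\vert R_{\mfrak{p}_i}/brR_{\mfrak{p}_i}\vert=\vert R_{\mfrak{p}_i}/bR_{\mfrak{p}_i}\vert\cdot\vert bR_{\mfrak{p}_i}/brR_{\mfrak{p}_i}\vert=\vert R_{\mfrak{p}_i}/bR_{\mfrak{p}_i}\vert\cdot\vert R_{\mfrak{p}_i}/rR_{\mfrak{p}_i}\vert$ (valid since $b\neq 0$ on these components, or else the component is trivial), I can express the invariant associated to $a$ on the whole sum as a product over a partition of the index set of invariants associated to $b$ and to $r$. The combinatorics of distributing the prescribed value $f(a)$ across such a partition is exactly what the formalism $\Omega_{f,g,n}$ / \ref{feathering} is built to handle: I would use \ref{feathering} (with $n=2$, corresponding to the two pieces $M_1$, $M_2$, and possibly further feathering within each piece across the multiplicative split) to write $w\in V$ as $\bigsqcup\bigsqcap$ of conditions in which $a$ no longer appears in the domain of the new $f'$ (it is absorbed, via the absorption $a\in I_i$ being equivalent to a combination of $b\in I_i$, $r\in I_i$ and non-membership conditions of the relevant auxiliary parameters in $\mfrak{p}_i$, recorded by moving those parameters into the "$\gamma$" slot and into $\overline a$). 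Each resulting tuple then has $\vert X_0'\vert<\vert X_0\vert$.

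A clean way to package the absorption bookkeeping is to route everything through the $(\mfrak{p},I)\models(r,a,\gamma,\delta)$ machinery and \ref{sortR/lambdaI}: for each $e\in X_0$ the invariant $\vert R_{\mfrak{p}}/eR_{\mfrak{p}}\vert$, after the split, is rewritten (by \ref{R/lambdaIhelp}(a) / \ref{sortR/lambdaI} applied with $\lambda$ suitably, or directly by an elementary computation since here we only deal with $\vert R_{\mfrak{p}}/eR_{\mfrak{p}}\vert$ and $\vert R_{\mfrak{p}}/I\vert$) as a product of invariants of the allowed shapes, on index sets cut out by finitely many conditions of the form $(\mfrak{p}_i,I_i)\models(s,sa,\gamma',\delta')$; these conditions are closed under conjunction by \ref{comb1}, so on each piece of the partition we again get an element of $W$ of the required reduced form. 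I would then note that $w\in V$ iff the displayed $\bigsqcup_{(\overline f,\overline g)\in\Omega}\bigsqcap_j (\cdots)\in\mathbb V$, by the standard "direct sum of witnesses" argument (as in \ref{decomp} and the paragraph before \ref{Zgsent}), and that every constituent has strictly smaller $\vert X_0\vert$. Iterating and invoking artinianity of $\N_0$ gives the algorithm producing $\underline u\in\mathbb W_0$ with $w\in V\iff\underline u\in\mathbb V$.

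The main obstacle I anticipate is not any single deep fact but getting the bookkeeping of the multiplicative splitting exactly right: one must verify that on each component either $b$ (resp.\ $r$) acts invertibly — so the component contributes $1$ to the corresponding factor and can be dropped — or the factorization $\vert R_{\mfrak{p}_i}/brR_{\mfrak{p}_i}\vert=\vert R_{\mfrak{p}_i}/bR_{\mfrak{p}_i}\vert\cdot\vert R_{\mfrak{p}_i}/rR_{\mfrak{p}_i}\vert$ genuinely holds (it does, by \ref{I/bI}-style reasoning, because $R_{\mfrak{p}_i}$ is a valuation domain and the modules in question are finite), and that the conditions "$a\in I_i$" transfer correctly: $a\in I_i$ together with $\alpha\notin\mfrak{p}_i$ forces $br\in I_i$ hence, since $R_{\mfrak{p}_i}$ is valuation, $b\in I_i$ or $r\in I_i$ — and one wants instead the cleaner implication $b\in I_i$ whenever it is $b$ that is the "smaller" generator, which is arranged by the choice of split and by further feathering on divisibility. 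All of these are the same kind of valuation-ring manipulations already carried out in \ref{shiftlambda}, \ref{cleanup} and \ref{R/lambdaIhelp}, so the proof should proceed "similarly to" those; I would explicitly say so rather than re-derive each identity, and concentrate the writing on the feathering step and the termination argument.
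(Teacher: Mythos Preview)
Your overall strategy---pick two distinct elements of $X_0$, use the Tuganbaev split $e_1\alpha=e_2r$, $e_2(\alpha-1)=e_1s$, partition the witnessing sequence by $\alpha\notin\mfrak{p}_i$ versus $\alpha-1\notin\mfrak{p}_i$, and feather the values of $f$ across the pieces---is exactly what the paper does. The gap is in your termination argument.

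You claim that after the split ``each resulting tuple then has $\vert X_0'\vert<\vert X_0\vert$''. This is false. On the $\alpha\notin\mfrak{p}$ piece you correctly observe $\vert R_{\mfrak{p}}/e_1R_{\mfrak{p}}\vert=\vert R_{\mfrak{p}}/e_2R_{\mfrak{p}}\vert\cdot\vert R_{\mfrak{p}}/rR_{\mfrak{p}}\vert$, so the constraint on $e_1$ is traded for a constraint on $r$; but $e_2$ remains in $X_0$, so the new domain is $(X_0\cup\{r\})\setminus\{e_1\}$, which has the \emph{same} cardinality. The quantity $\vert R_{\mfrak{p}}/rR_{\mfrak{p}}\vert$ is a genuine size constraint and cannot be ``absorbed into the $\gamma$ slot and into $\overline a$'' as you suggest---those slots record non-membership in $\mfrak{p}$ and membership in $I$, not prescribed cardinalities. (Your discussion here also conflates the role of elements of $X_0$ with the role of entries of $\overline a$.) Induction on $\vert X_0\vert$ alone therefore does not terminate.

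The paper fixes this by using the lexicographic complexity $\clx(f,\emptyset,\overline a,\gamma)=(\vert X_0\vert,\prod_{e\in X_0}f(e))$. When the split yields $f_1(e_2)=1$ one can drop $e_2$ from the domain (and record $e_2\notin\mfrak{p}$ by multiplying it into $\gamma$), so $\vert X_0\vert$ drops. When $f_1(e_2)\neq 1$, the domain size stays the same but one checks $f_1(r)\cdot\prod_{x\in X_0\setminus\{e_1\}}f_1(x)<\prod_{x\in X_0}f(x)$ using $f_1(e_1)=f_1(e_2)f_1(r)$ and $f_1(e_2)>1$. This is the missing ingredient; once you add the second coordinate and verify the product inequality, your argument becomes the paper's.
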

\begin{proof}
Define $\text{clx}(f,\emptyset,\overline{a},\gamma):=(|X_0|,\prod_{e\in X_0}f(e))$ and order $\N_0\times\N$ lexicographically.  For any $w\in W_1$ with $|X_0|>1$, we show how to compute $\underline{u}\in \mathbb{W}_1$, such that $\clx\underline{u}<\clx w$ and $w\in V$ if and only if $\underline{u}\in\mathbb{V}$. Since the lexicographic order on $\N_0\times\N$ is artinian, this is sufficient to prove the lemma.

Take $e_1,e_2\in X_0$ non-equal. Let $\alpha,r,s\in R$ be such that $e_1\alpha=e_2r$ and $e_2(\alpha-1)=e_1s$. Let $\Omega$ be the set of pairs of functions $(f_1,f_2)$ such that $f_1:X\cup \{r\}\rightarrow \N$, $f_2:X\cup \{s\}\rightarrow \N$ and
\begin{enumerate}
\item $f_1(e)f_2(e)=f(e)$ for all $e\in X$,
\item $f_1(e_1)=f_1(e_2)f(r)$, and
\item $f_2(e_2)=f_2(e_1)f(s)$.
\end{enumerate}
Let $\underline{u}$ be
\[\bigsqcup_{\substack{(f_1,f_2)\in \Omega\\ f_1(e_2)\neq 1, f_2(e_1)\neq 1  }}(f_1|_{(X\cup\{r\})\backslash\{e_1\}},\emptyset,\overline{a},\gamma\alpha)\sqcap (f_2|_{(X\cup\{s\})\backslash\{e_2\}},\emptyset,\overline{a},\gamma(\alpha-1))\]
\[\sqcup\bigsqcup_{\substack{(f_1,f_2)\in \Omega\\ f_1(e_2)= 1, f_2(e_1)\neq 1  }}(f_1|_{X\backslash\{e_2\}},\emptyset,\overline{a},e_2\gamma\alpha)\sqcap (f_2|_{(X\cup\{s\})\backslash\{e_2\}},\emptyset,\overline{a},\gamma(\alpha-1))\]
\[\sqcup\bigsqcup_{\substack{(f_1,f_2)\in \Omega\\ f_1(e_2)\neq 1, f_2(e_1)= 1  }}(f_1|_{(X\cup\{r\})\backslash\{e_1\}},\emptyset,\overline{a},\gamma\alpha)\sqcap (f_2|_{X\backslash\{e_1\}},\emptyset,\overline{a},e_1\gamma(\alpha-1))\]
\[\sqcup\bigsqcup_{\substack{(f_1,f_2)\in \Omega\\ f_1(e_2)= 1, f_2(e_1)= 1  }}(f_1|_{X\backslash\{e_2\}},\emptyset,\overline{a},e_2\gamma\alpha)\sqcap (f_2|_{X\backslash\{e_1\}},\emptyset,\overline{a},e_1\gamma(\alpha-1)).\]

\noindent
\textbf{Claim:} $w\in V$ if and only if $\underline{u}\in \mathbb{V}$

\noindent
For all prime ideals $\mfrak{p}\lhd R$, $\alpha\notin\mfrak{p}$ implies $e_1R_{\mfrak{p}}=e_1\alpha R_{\mfrak{p}}=e_2rR_{\mfrak{p}}$ and $\alpha-1\notin \mfrak{p}$ implies $e_2R_{\mfrak{p}}=e_2(\alpha-1)R_{\mfrak{p}}=e_1sR_{\mfrak{p}}$. So, $\alpha\notin\mfrak{p}$ implies
\[\vertl R_{\mfrak{p}}/e_1R_{\mfrak{p}}\vertr=\vertl R_{\mfrak{p}}/e_2R_{\mfrak{p}}\vertr\cdot \vertl R_{\mfrak{p}}/rR_{\mfrak{p}}\vertr\] and
$\alpha-1\notin\mfrak{p}$ implies
\[\vertl R_{\mfrak{p}}/e_2R_{\mfrak{p}}\vertr=\vertl R_{\mfrak{p}}/e_1R_{\mfrak{p}}\vertr\cdot \vertl R_{\mfrak{p}}/sR_{\mfrak{p}}\vertr.\]
Suppose that $\mfrak{p}_i\lhd R$ is a prime ideal and $I_i\lhd R_{\mfrak{p}_i}$ is an ideal for $1\leq i\leq h$ such that $(\mfrak{p}_i,I_i)_{1\leq i\leq h}\models (f,\emptyset,\overline{a},\gamma)$. By reordering, we may assume that $\alpha\notin \mfrak{p}_i$ for $1\leq i\leq h'$ and $\alpha-1\notin \mfrak{p}_i$ for $h'+1\leq i\leq h$. Let $f_1(\star):= |\oplus_{i=1}^{h'}R_{\mfrak{p}_i}/I_i|$ and $f_2(\star):=|\oplus_{i=h'+1}^{h}R_{\mfrak{p}_i}/I_i|$. For each $e\in X_0\cup \{r\}$, let $f_1(e)=|\oplus_{i=1}^{h'}R_{\mfrak{p}_i}/eR_{\mfrak{p}_i}|$ and for each $e\in X_0\cup\{s\}$, let $f_2(e)=|\oplus_{i=h'+1}^{h}R_{\mfrak{p}_i}/eR_{\mfrak{p}_i}|$. It follows from the first paragraph of the proof of this claim that $(f_1,f_2)\in \Omega$.

If $f_1(e_2)\neq 1$ then \[(\mfrak{p}_i,I_i)_{1\leq i\leq h'}\models (f_1|_{(X\cup\{r\})\backslash\{e_1\}},\emptyset,\overline{a},\gamma\alpha)\] and if $f_2(e_1)\neq 1$ then \[(\mfrak{p}_i,I_i)_{h'+1\leq i\leq h}\models (f_2|_{(X\cup\{s\})\backslash\{e_2\}},\emptyset,\overline{a},\gamma(\alpha-1)).\]
By definition, if $f_1(e_2)=1$, then $e_2\notin \mfrak{p}_i$ for $1\leq i\leq h'$ and if $f_2(e_1)=1$ then $e_1\notin \mfrak{p}_i$ for $h'+1\leq i\leq h$. So if $f_1(e_2)=1$ then \[(\mfrak{p}_i,I_i)_{1\leq i\leq h'}\models (f_1|_{X\backslash\{e_2\}},\emptyset,\overline{a},e_2\gamma\alpha)\] and if $f_2(e_1)=1$ then \[(\mfrak{p}_i,I_i)_{h'+1\leq i\leq h}\models(f_2|_{X\backslash\{e_1\}},\emptyset,\overline{a},e_1\gamma(\alpha-1)).\] So we have shown that if $w\in V$ then one of the components of the join defining $\underline{u}$ is in $\mathbb{V}$ and hence $\underline{u}\in\mathbb{V}$.

Conversely, take $(f_1,f_2)\in\Omega$. Suppose $f_1(e_2)\neq 1$ and \[(\mfrak{p}_i,I_i)_{1\leq i\leq h'}\models (f_1|_{(X\cup\{r\})\backslash\{e_1\}},\emptyset,\overline{a},\gamma\alpha).\]
Since $\alpha\notin\mfrak{p}_i$ for $1\leq i\leq h'$,
\[\vertl\oplus_{i=1}^{h'}R_{\mfrak{p}_i}/e_1R_{\mfrak{p}_i}\vertr=\vertl\oplus_{i=1}^{h'}R_{\mfrak{p}_i}/e_2R_{\mfrak{p}_i}\vertr\cdot\vertl\oplus_{i=1}^{h'}R_{\mfrak{p}_i}/rR_{\mfrak{p}_i}\vertr=f_1(e_2)\cdot f_1(r)=f_1(e_1)\] by definition of $\Omega$. So
$(\mfrak{p}_i,I_i)_{1\leq i\leq h'}\models (f_1,\emptyset,\overline{a},\gamma)$.

Suppose that $f_2(e_1)= 1$ and \[(\mfrak{q}_i,J_i)_{1\leq i\leq h''}\models (f_2|_{X\backslash\{e_1\}},\emptyset,\overline{a},e_1\gamma(\alpha-1)).\] Then
\[(\mfrak{q}_i,J_i)_{1\leq i\leq h''}\models (f_2,\emptyset,\overline{a},\gamma)\] because $e_1\notin\mfrak{q}_i$ for $1\leq i\leq h''$ implies
\[\vertl\oplus_{i=1}^{h''} R_{\mfrak{q}_i}/e_1R_{\mfrak{q}_i}\vertr=1=f_1(e_1).\]
So, setting $\mfrak{p}_i:=\mfrak{q}_{i-h'}$ and $I_i:=J_{i-h'}$ for $h'+1\leq i\leq h'+h''=h$,
\[(\mfrak{p}_i,I_i)_{1\leq i\leq h}\models (f,\emptyset,\overline{a},\gamma)\] because $f_1(e)f_2(e)=f(e)$ for all $e\in X$.
We leave the case $f_1(e_2)\neq 1$ and $f_2(e_1)\neq 1$, the case $f_1(e_2)= 1$ and $f_2(e_1)\neq 1$ and the case $f_1(e_2)=f_2(e_2)=1$ to the reader.

\smallskip
\noindent
\textbf{Claim:} $\clx \underline{u}<\clx w$

\noindent
We show that each of the components, $u'$, of the lattice combination defining $\underline{u}$ have $\clx u'<\clx w$. We only consider the components involving $f_1$; the result for those involving $f_2$ follows similarly.

If $f_1(e_2)= 1$ then $\clx(f_1|_{X\backslash\{e_2\}},\emptyset,\overline{a},e_2\gamma\alpha)<\clx w$ since $|X_0|>|X_0\backslash\{e_2\}|$. If $f_1(e_2)> 1$ then \[\frac{f_1(r)}{f(e_1)}=\frac{f_1(e_2)f_1(r)}{f_1(e_2)f(e_1)}=\frac{f_1(e_1)}{f_1(e_2)f(e_1)}<1\] since $f_1(e_1)/f(e_1)\leq 1$.
So
\[f_1(r)\cdot\prod_{x\in X_0\backslash\{e_1\}} f_1(x)\leq f_1(r)\cdot\prod_{x\in X_0\backslash\{e_1\}}f(x)=\frac{f_1(r)}{f(e_1)}\cdot\prod_{x\in X_0}f(x)<\prod_{x\in X_0}f(x).\]
Therefore $\text{clx}(f_1|_{(X\cup\{r\})\backslash\{e_1\}},\emptyset,\overline{a},\gamma\alpha)<\text{clx}w$.
\end{proof}

\begin{lemma}\label{USEEPP}
Let $R$ be a recursive Pr\"ufer domain. If $\EPP(R)$ is recursive then there is an algorithm which given $w\in W_1$ answers whether $w\in V$ or not.
\end{lemma}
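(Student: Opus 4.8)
The plan is to reduce the membership problem for $W_1$ to queries to $\EPP(R)$, using Lemma \ref{reddomf} to first contract the domain of $f$ down to a single element. So, given $w=(f,\emptyset,\overline{a},\gamma)\in W_1$, I would first apply \ref{reddomf} to obtain $\underline{u}\in\mathbb{W}_0$ with $w\in V$ iff $\underline{u}\in\mathbb{V}$. Since $\mathbb{V}$ is a prime filter (as noted in \ref{lattcomb}), deciding $\underline{u}\in\mathbb{V}$ reduces, after putting $\underline{u}$ in disjunctive--conjunctive normal form, to deciding for finitely many finite conjunctions $\bigsqcap_{k=1}^{N} w_k$ with each $w_k\in W_0\cup\{\top,\bot\}$ whether $w_1\sqcap\cdots\sqcap w_N\in\mathbb{V}$, i.e.\ whether there simultaneously exist witnessing sequences $(\mfrak{p}^{(k)}_i,I^{(k)}_i)_i\models w_k$ for every $k$. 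The components equal to $\top$ are trivially witnessed and those equal to $\bot$ make the conjunction fail; so it suffices to handle a finite conjunction of genuine elements of $W_0$.

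Each $w_k\in W_0$ has the form $(f_k,\emptyset,\overline{a}^{(k)},\gamma_k)$ where $f_k$ has domain $\{\star\}$ or $\{\star,e_k\}$ for a single $e_k\in R$. The key point is that membership $w_k\in V$ is exactly an instance of $\EPP_{l}(R)$: writing $f_k(\star)$ as $p_k^{n_k}$ (if it is not a prime power, $w_k\notin V$ and we output $\bot$) and $f_k(e_k)$ as $p_k^{m_k}$ similarly — noting all the exponents must share the same prime $p_k$ for a nonempty witness, else the answer is no — we have $w_k\in V$ iff $(p_k,n_k;\overline{a}^{(k)};\gamma_k;e_k,m_k)\in\EPP_{l}(R)$, which is decidable uniformly in $l$ by \ref{epp1enough}. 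The remaining subtlety is that deciding the \emph{conjunction} $\bigsqcap_k w_k\in\mathbb{V}$ is not simply the conjunction of the individual memberships: the witnessing sequences for different $k$ are independent, so $\bigsqcap_{k=1}^N w_k\in\mathbb{V}$ if and only if $w_k\in V$ for every $k$ separately. (This is the content of the last paragraph of \ref{lattcomb}: $w_1\sqcap\cdots\sqcap w_N\in\mathbb{V}$ says there exist $R$-modules $M_k\models w_k$, and the $M_k$ need not be related; since being in $V$ is witnessed by a finite direct sum of modules $R_{\mfrak{p}_i}/I_i$, this is exactly each $w_k\in V$.) Hence the algorithm is: normalize $\underline{u}$, and for each disjunct check that each conjunct lies in $V$ via the corresponding $\EPP$-query.

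The main obstacle is bookkeeping rather than a genuine mathematical difficulty: one must verify carefully that the reduction of $w_k\in W_0$ to an $\EPP_{l}(R)$-instance is faithful, in particular that allowing the witnessing prime ideals rather than maximal ideals in \ref{defnEPP} matches the definition of $V$, and that the ``$\star$'' coordinate of $f_k$ corresponds to $|{\oplus}R_{\mfrak{p}_i}/I_i|$ while the $e_k$-coordinate corresponds to $|{\oplus}R_{\mfrak{p}_i}/e_kR_{\mfrak{p}_i}|$, exactly as in \ref{defnEPP} with $e=e_k$ and $m=m_k$. One also has to dispatch the degenerate cases (empty witness, $f_k(\star)=1$ and $f_k(e_k)=1$ forcing the empty sequence; $f_k$ taking a value that is not a power of a single fixed prime). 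Once those are in place, recursivity of $\EPP_1(R)$ gives recursivity of $\EPP_l(R)$ uniformly in $l$ by \ref{epp1enough}, and the algorithm described above answers whether $w\in V$.
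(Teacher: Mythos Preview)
Your overall structure is right—reduce from $W_1$ to $\mathbb{W}_0$ via \ref{reddomf}, then decide each conjunct in $W_0$ using $\EPP$—and this is exactly what the paper does. But your treatment of the base case $w_k\in W_0$ has a genuine error.

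You assert that if $f_k(\star)$ is not a prime power then $w_k\notin V$, and similarly that $f_k(\star)$ and $f_k(e_k)$ must be powers of the \emph{same} prime. This is false. The witnessing sequence $(\mfrak{p}_i,I_i)_{1\leq i\leq h}$ may involve maximal ideals $\mfrak{p}_i$ with residue fields of \emph{different} characteristics, so $|\oplus_i R_{\mfrak{p}_i}/I_i|=f_k(\star)$ can be any natural number, not just a prime power. For a concrete counterexample take $R=\Z$, $e=6$, $f(\star)=f(6)=6$, $\overline{a}$ empty, $\gamma=1$: the sequence $(\mfrak{p}_1,I_1)=((2),2\Z_{(2)})$, $(\mfrak{p}_2,I_2)=((3),3\Z_{(3)})$ witnesses $w\in V$, yet $6$ is not a prime power.

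The fix, which is what the paper does, is to factor $f_k(\star)=\prod_{p\in P}p^{n_p}$ and $f_k(e_k)=\prod_{p\in P}p^{m_p}$ over the common set $P$ of prime divisors, and then observe that since each $|R_{\mfrak{p}_i}/I_i|$ and $|R_{\mfrak{p}_i}/e_kR_{\mfrak{p}_i}|$ is a power of the single prime $|R/\mfrak{p}_i|$, the witnessing sequence splits by prime. Hence $w_k\in V$ if and only if $(p,n_p;\overline{a}^{(k)};\gamma_k;e_k,m_p)\in\EPP_{l}(R)$ for \emph{every} $p\in P$. With this correction your argument goes through.
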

\begin{proof}
Let $w=(f,\emptyset,\overline{a},\gamma)$ and suppose that $X_0=\{e\}$ i.e. $w\in W_0$. Let $P$ be the set of prime divisors of $f(\star)\cdot f(e)$. If $P$ is empty then $f(\star)=f(e)=1$ and $w\in V$. Otherwise, for each $p\in P$, let $n_p\in \N_0$ and $m_p\in\N_0$ be such that $f(\star)=\prod_{p\in P}p^{n_p}$ and $f(e)=\prod_{p\in P}p^{m_p}$. For any prime ideal $\mfrak{p}\lhd R$, $e\in R$ and ideal $I\lhd R_{\mfrak{p}}$,  if $|R_{\mfrak{p}}/eR_{\mfrak{p}}|$ (respectively $|R_{\mfrak{p}}/I|$) is finite then it is a power of $|R/\mfrak{p}|$. Thus a prime power. Hence $w\in V$ if and only if $(p,n_p;\overline{a};\gamma;e,m_p)\in \EPP_{*}(R)$ for all $p\in P$.

If $w\in W_1$ then by \ref{reddomf}, we can compute $\underline{w}\in\mathbb{W}_0$ such that $w\in V$ if and only if $\underline{w}\in \mathbb{V}$. Therefore, by the previous paragraph there is an algorithm which, given $w\in W_1$ answers whether $w\in V$ or not.
\end{proof}

\begin{cor}\label{fincond}
Let $R$ be a recursive Pr\"ufer domain with $\EPP(R)$ recursive. There is an algorithm which given $r,a,\gamma,\delta\in R$, $n\in\N$, $e_j\in R$ for $1\leq j\leq n$, $N\in\N$ and $N_j\in \N$ for $1\leq j\leq n$, answers whether there exist $h\in\N_0$, prime ideals $\mfrak{p}_i\lhd R$ and ideals $I_i\lhd R_{\mfrak{p}_i}$ for $1\leq i\leq h$ such that
\begin{enumerate}
\item  $(\mfrak{p}_i,I_i)\models (r,a,\gamma,\delta)$ for $1\leq i\leq h$,
\item  $\vertl\oplus_{i=1}^h R_{\mfrak{p}_i}/I_i\vertr=N$, and
\item $\vertl \oplus_{i=1}^h R_{\mfrak{p}_i}/e_jR_{\mfrak{p}_i}\vertr=N_j$ for $1\leq j\leq n$.
\end{enumerate}
\end{cor}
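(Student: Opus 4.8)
The plan is to peel off, one at a time, the two features of the relation $(\mfrak{p},I)\models(r,a,\gamma,\delta)$ that are not directly expressible in the ``membership/avoidance/size'' formalism of $W_1$ and $V$, and then appeal to \ref{USEEPP}. First dispose of $r=0$: then $rR_{\mfrak{p}_i}\supseteq I_i$ forces $I_i=0$, so $a\in I_i$ forces $a=0$ and $\delta\notin I_i^\#=(0)^\#=(0)$ forces $\delta\neq 0$; the degenerate sub-cases are settled by the empty sequence, and when $a=0,\delta\neq 0$ the surviving requirements read $\gamma\notin\mfrak{p}_i$, $\vertl\oplus_{i=1}^h R_{\mfrak{p}_i}\vertr=N$ and $\vertl\oplus_{i=1}^h R_{\mfrak{p}_i}/e_jR_{\mfrak{p}_i}\vertr=N_j$, which, writing $\vertl R_{\mfrak{p}}\vertr=\vertl R_{\mfrak{p}}/0\cdot R_{\mfrak{p}}\vertr$, is membership in $V$ of an explicit element of $W_1$, hence decidable by \ref{USEEPP}. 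So assume $r\neq 0$.

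\emph{Removing the divisibility constraint.} Since $r\neq 0$, for any ideal $I\lhd R_{\mfrak{p}}$ with $rR_{\mfrak{p}}\supseteq I$ one has $I=r(I:r)$, and $(rJ:r)=J$ for every ideal $J\lhd R_{\mfrak{p}}$, so $J\mapsto rJ$ is a bijection between ideals of $R_{\mfrak{p}}$ and ideals $I$ with $rR_{\mfrak{p}}\supseteq I$. Writing $I_i=rJ_i$, the relation $(\mfrak{p}_i,I_i)\models(r,a,\gamma,\delta)$ becomes the conjunction of $a\in rJ_i$, $\gamma\notin\mfrak{p}_i$ and $\delta\notin(rJ_i)^\#=J_i^\#$ (by \ref{propIhash}(iii)); and, $r$ being a non-zero-divisor, $\vertl R_{\mfrak{p}_i}/I_i\vertr=\vertl R_{\mfrak{p}_i}/rR_{\mfrak{p}_i}\vertr\cdot\vertl R_{\mfrak{p}_i}/J_i\vertr$. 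In a prospective solution $\vertl R_{\mfrak{p}_i}/I_i\vertr$ is finite (it divides $N$), hence so is $\vertl R_{\mfrak{p}_i}/J_i\vertr$.

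\emph{Removing the $(\cdot)^\#$-constraint.} By \ref{propIhash}(iv), as soon as $R_{\mfrak{p}_i}/J_i$ is finite we have $J_i^\#=\mfrak{p}_iR_{\mfrak{p}_i}$, so $\delta\notin J_i^\#$ is equivalent to $\delta\notin\mfrak{p}_i$. Now express $a\in rJ_i$ by \ref{ainbIetc} as a finite disjunction of conditions $(\mfrak{p}_i,J_i)\models(1,\,\cdot\,,\,\cdot\,,1)$, express $\gamma\notin\mfrak{p}_i$ and $\delta\notin\mfrak{p}_i$ as $(\mfrak{p}_i,J_i)\models(1,0,\gamma,1)$ and $(\mfrak{p}_i,J_i)\models(1,0,\delta,1)$, and combine all of these by \ref{comb1}; since every tuple fed in has first entry $1$, and neither \ref{ainbIetc} nor \ref{comb1} changes the first entry away from $1$, we obtain computable tuples $(1,u_k,\gamma_k,\delta_k)$, $1\le k\le K$, with $(\mfrak{p}_i,rJ_i)\models(r,a,\gamma,\delta)$ equivalent to $\bigvee_{k=1}^K(\mfrak{p}_i,J_i)\models(1,u_k,\gamma_k,\delta_k)$. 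Because the first entry is $1$, the divisibility clause of $(\mfrak{p}_i,J_i)\models(1,u_k,\gamma_k,\delta_k)$ is vacuous, and because $R_{\mfrak{p}_i}/J_i$ is finite its $(\cdot)^\#$-clause is equivalent to $\delta_k\notin\mfrak{p}_i$. So, for pairs $(\mfrak{p}_i,J_i)$ with $R_{\mfrak{p}_i}/J_i$ finite (which holds in both directions of the equivalence below, the backward direction because the $W_1$-membership test forces finite quotients), the whole constraint is equivalent to $\bigvee_k\big(u_k\in J_i\ \text{and}\ \gamma_k\delta_k\notin\mfrak{p}_i\big)$.

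\emph{Assembling the algorithm.} A sequence as in the statement therefore exists if and only if there are a factorisation $N=N_0N_1$ and a sequence $(\mfrak{p}_i,J_i)_{1\le i\le h}$ with $\vertl\oplus R_{\mfrak{p}_i}/rR_{\mfrak{p}_i}\vertr=N_0$, $\vertl\oplus R_{\mfrak{p}_i}/J_i\vertr=N_1$, $\vertl\oplus R_{\mfrak{p}_i}/e_jR_{\mfrak{p}_i}\vertr=N_j$, and each $(\mfrak{p}_i,J_i)$ satisfying, for some $k$, $u_k\in J_i$ and $\gamma_k\delta_k\notin\mfrak{p}_i$. Splitting the sequence into $K$ blocks according to which disjunct is used and distributing $N_0,N_1,N_1,\dots,N_n$ multiplicatively over the blocks (finitely many choices), this holds iff for one such choice and for every $1\le k\le K$ there is a sub-sequence realising the $k$th disjunct with the prescribed sizes; and the latter is exactly membership in $V$ of the element $(f,\emptyset,(u_k),\gamma_k\delta_k)\in W_1$ with $X_0=\{r,e_1,\dots,e_n\}$ and $f$ recording the prescribed values. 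Equivalently, one computes $\underline{w}\in\mathbb{W}_1$, a lattice combination of these $W_1$-elements, with a solution existing iff $\underline{w}\in\mathbb{V}$, and uses that $\mathbb{V}$ is a prime filter. By \ref{USEEPP} each such question is decidable, and a finite search over factorisations and block distributions completes the algorithm. The one genuinely new ingredient over \ref{USEEPP} is the condition $\delta\notin I^\#$; the point is that in the finite-module regime forced by $N\in\N$ this collapses, via \ref{propIhash}(iv), to the avoidance condition $\delta\notin\mfrak{p}$, after which only routine bookkeeping with \ref{ainbIetc} and \ref{comb1} remains.
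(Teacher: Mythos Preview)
Your proof is correct and arrives at the same destination as the paper, but by a somewhat longer road. Both arguments hinge on the same two observations: (i) when $r\neq 0$, writing $I=rJ$ converts the divisibility clause into nothing and turns $|R_{\mfrak p}/I|$ into the product $|R_{\mfrak p}/rR_{\mfrak p}|\cdot|R_{\mfrak p}/J|$; and (ii) the finiteness forced by $N\in\N$ makes $J^\#=\mfrak p$ via \ref{propIhash}(iv), so the awkward clause $\delta\notin J^\#$ collapses to the avoidance condition $\delta\notin\mfrak p$, which fits directly into the $W_1$-framework of \ref{USEEPP}. You spell out step (ii) explicitly, which is a virtue: the paper's proof invokes \ref{USEEPP} on the condition $(\mfrak p_i,J_i)\models(1,a',\gamma,\delta)$ without saying how the $\#$-clause is absorbed.

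The genuine difference is in how the clause $a\in rJ$ is handled. The paper first applies \ref{cleanup} to rewrite the input tuple in the form $(r,ra',\gamma,\delta)$, after which $a\in rJ$ becomes simply $a'\in J$; this yields a \emph{single} $W_1$-element per factorisation $N=N'N''$ and avoids any block decomposition. You instead keep $a$ general, express $a\in rJ$ through \ref{ainbIetc}, and then combine with $\gamma\notin\mfrak p$ and $\delta\notin\mfrak p$ via \ref{comb1}, producing a disjunction over $K$ tuples and a further multiplicative splitting over blocks. This works, but \ref{cleanup} does exactly the bookkeeping you are reproducing by hand and gives a shorter argument. (Your observation that the first coordinate stays $1$ through these manipulations is correct and necessary for the $W_1$-encoding to go through.)

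A minor point on the $r=0$ case: your reduction to a $W_1$-element is technically valid, but it is simpler to observe, as the paper does, that $I_i=0$ forces $|R_{\mfrak p_i}/I_i|=|R_{\mfrak p_i}|$, which is infinite for an (infinite) domain, so the only admissible sequence is the empty one and the answer is just whether $N=N_1=\cdots=N_n=1$.
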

\begin{proof}
Applying \ref{cleanup}, we may reduce to the case where $a=ra'$. We may also assume $r\neq 0$ since $(\mfrak{p},I)\models (0,b',\gamma,\delta)$ implies $I=0$ and hence $\vertl R_{\mfrak{p}}/I\vertr=\vertl R_\mfrak{p}\vertr$ which is infinite.

For any prime ideal $\mfrak{p}\lhd R$ and ideal $I\lhd R_{\mfrak{p}}$, $(\mfrak{p},I)\models (r,ra',\gamma,\delta)$ if and only if there exists $J\lhd R_{\mfrak{p}}$ such that $I=rJ$ and $(\mfrak{p},J)\models (1,a',\gamma,\delta)$. Note that, because $R$ is a domain,  $\vertl R_{\mfrak{p}}/rJ\vertr=\vertl R_{\mfrak{p}}/J\vertr\cdot \vertl R_{\mfrak{p}}/rR_{\mfrak{p}}\vertr$.

Therefore, there exist $h\in\N_0$, prime ideals $\mfrak{p}_i\lhd R$ and ideals $I_i\lhd R_{\mfrak{p}_i}$ for $1\leq i\leq h$ satisfying $(1),(2)$ and $(3)$ if and only if there exist $N',N''\in \N$ with $N'\cdot N''=N$, $h\in\N_0$, prime ideals $\mfrak{p}_i\lhd R$ and ideals $J_i\lhd R_{\mfrak{p}_i}$ for $1\leq i\leq h$ such that
\begin{enumerate}[(a)]
\item  $(\mfrak{p}_i,J_i)\models (1,a',\gamma,\delta)$ for $1\leq i\leq h$,
\item  $\vertl\oplus_{i=1}^h R_{\mfrak{p}_i}/J_i\vertr=N'$, and
\item $\vertl \oplus_{i=1}^h R_{\mfrak{p}_i}/rR_{\mfrak{p}_i}\vertr=N''$, $\vertl \oplus_{i=1}^h R_{\mfrak{p}_i}/e_jR_{\mfrak{p}_i}\vertr=N_j$ for $1\leq j\leq n$.
\end{enumerate}

By \ref{USEEPP}, there is an algorithm which answers whether there exist $h\in\N_0$, prime ideals $\mfrak{p}_i\lhd R$ and ideals $J_i\lhd R_{\mfrak{p}_i}$ for $1\leq i\leq h$ satisfying (a),(b) and (c).
\end{proof}

\begin{proposition}\label{PIconditions}
Let $R$ be a recursive Pr\"ufer domain such that $\EPP(R)$ and the radical relation are recursive. There is an algorithm which, given $(f,g,\overline{a},\gamma)\in W$, answers whether there exist $h\in\N_0$, prime ideals $\mfrak{p}_i$ and ideals $I_i\lhd R_{\mfrak{p}_i}$ for $1\leq i\leq h$ such that $(\mfrak{p}_i,I_i)_{1\leq i\leq h}\models (f,g,\overline{a},\gamma)$.
\end{proposition}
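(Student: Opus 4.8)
The plan is to reduce membership of $(f,g,\overline a,\gamma)$ in $V$ to the case $Y=\emptyset$, which is already decided by \ref{USEEPP} (applied after the reduction of \ref{reddomf}), organising the reduction inside the distributive lattice $\mathbb{W}$ of \S\ref{lattcomb}. After disposing of the degenerate cases ($\gamma=0$, or $0\in X_0\cup Y$), which are easily handled directly, we may assume every element in sight is nonzero. The crucial observation throughout is that the primes $\mfrak{p}_i$ of a witness need not be distinct: so if $\mfrak{p}$ is any prime with $e'\in\mfrak{p}$, then appending many copies of the pair $(\mfrak{p},R_{\mfrak{p}})$ makes $\vertl\oplus_i R_{\mfrak{p}_i}/e'R_{\mfrak{p}_i}\vertr$ as large as we like, while respecting the conditions $a_j\in R_{\mfrak{p}}$ and $\gamma\notin\mfrak{p}$.

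First I would use the radical relation to compute the set $Y_{\mathrm{pad}}\subseteq Y$ of those $e'$ for which there is a prime $\mfrak{p}$ with $e'\in\mfrak{p}$, $\gamma\notin\mfrak{p}$ and $e\notin\mfrak{p}$ for all $e\in X_0$; such a $\mfrak{p}$ exists precisely when $\gamma\cdot\prod_{e\in X_0}e\notin\rad(e'R)$, so this is decidable. A pair $(\mfrak{p},R_{\mfrak{p}})$ with $\mfrak{p}$ as above is ``$f$-inert'': appending copies of it to a witness changes neither $\vertl\oplus_i R_{\mfrak{p}_i}/I_i\vertr$ nor $\vertl\oplus_i R_{\mfrak{p}_i}/eR_{\mfrak{p}_i}\vertr$ for $e\in X_0$, and can only increase the invariants attached to $Y$. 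One then checks that $(f,g,\overline{a},\gamma)\in V$ if and only if $(f,g|_{Y\setminus Y_{\mathrm{pad}}},\overline{a},\gamma)\in V$: the forward direction merely drops conditions, and for the converse one appends, for each $e'\in Y_{\mathrm{pad}}$, enough copies of an $f$-inert prime through $e'$ to force $\vertl\oplus_i R_{\mfrak{p}_i}/e'R_{\mfrak{p}_i}\vertr\geq g(e')$.

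It then remains to treat $Y':=Y\setminus Y_{\mathrm{pad}}$, and we may assume $g(e')\geq 2$ for all $e'\in Y'$. Fix $e'\in Y'$; then $\gamma\prod_{e\in X_0}e\in\rad(e'R)$, so by enumerating pairs $(n,r)\in\N\times R$ and testing the recursive equality $(\gamma\prod_{e\in X_0}e)^n=e'r$ one finds such an $n$ effectively. In any witness, an index $i$ with $e'\in\mfrak{p}_i$ must, by definition of $Y_{\mathrm{pad}}$ and since $\gamma\notin\mfrak{p}_i$, satisfy $e\in\mfrak{p}_i$ for some $e\in X_0$; then $\vertl R_{\mfrak{p}_i}/eR_{\mfrak{p}_i}\vertr$ divides $f(e)$ and is in particular finite, and since $\gamma$ is a unit of $R_{\mfrak{p}_i}$ we get $e'R_{\mfrak{p}_i}\supseteq(\prod_{e\in X_0}e)^nR_{\mfrak{p}_i}$, whence $\vertl R_{\mfrak{p}_i}/e'R_{\mfrak{p}_i}\vertr\leq\prod_{e\in X_0}f(e)^{n}$. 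As the number of indices $i$ with $\vertl R_{\mfrak{p}_i}/I_i\vertr>1$ or $\vertl R_{\mfrak{p}_i}/eR_{\mfrak{p}_i}\vertr>1$ for some $e\in X_0$ is at most $\log_2\!\bigl(f(\star)\prod_{e\in X_0}f(e)\bigr)$, and every $i$ with $e'\in\mfrak{p}_i$ is among these, we obtain a computable bound $N^{*}_{e'}$ with $\vertl\oplus_i R_{\mfrak{p}_i}/e'R_{\mfrak{p}_i}\vertr\leq N^{*}_{e'}$ in every witness. In particular this invariant is always finite, so ``$\geq g(e')$'' is equivalent to the finite disjunction $\bigvee_{g(e')\leq N\leq N^{*}_{e'}}$``$=N$''. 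Distributing over $Y'$ and adjoining each $e'\in Y'$ to $X_0$ with each of its admissible values, we conclude that $(f,g|_{Y'},\overline{a},\gamma)\in V$ if and only if $\bigsqcup(\tilde f,\emptyset,\overline{a},\gamma)\in\mathbb{V}$ for the finitely many resulting $\tilde f$; since each $(\tilde f,\emptyset,\overline{a},\gamma)$ lies in $W_1$, the right-hand side is decidable by \ref{USEEPP}.

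I expect the main technical work to be in the treatment of the ``non-paddable'' invariants $e'\in Y'$: one has to establish the effective bound $N^{*}_{e'}$, which rests on producing the exponent $n$ with $(\gamma\prod_{e\in X_0}e)^n\in e'R$ — possible by unbounded search precisely because the radical relation tells us such an $n$ exists — together with the multiplicativity of sizes under direct sums and localisation. The $f$-inertness verification and the degenerate cases are routine, and the whole argument is cleanest to assemble inside $\mathbb{W}$, exactly as in the other reduction lemmas of the paper.
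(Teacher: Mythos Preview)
Your proposal is correct, and it takes a genuinely different route from the paper's proof. The paper argues by induction on $|Y|$: for $|Y|\geq 2$ it picks $e_1,e_2\in Y$, finds $\alpha,r,s$ with $e_1\alpha=e_2r$ and $e_2(\alpha-1)=e_1s$ via \ref{Tuganbaev}, and uses the feathering $\Omega_{f,g,2}$ to split any witness according to whether $\alpha\in\mfrak{p}_i$ or $\alpha-1\in\mfrak{p}_i$; in each half one of $e_1,e_2$ becomes redundant, reducing $|Y|$. Only at $|Y|=1$ does the paper invoke the radical relation, doing exactly your paddable/boundable dichotomy for the single remaining element. You instead apply the radical dichotomy to every $e'\in Y$ simultaneously, remove the paddable ones by appending $f$-inert pairs, and bound the rest uniformly, landing directly at $Y=\emptyset$. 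Your argument is more elementary in that it avoids the arithmetical $\alpha$-splitting entirely; the paper's argument is more uniform with its other reductions and with \ref{reddomf}.

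One simplification: your detour through the count of ``active indices'' is unnecessary. From $(\gamma\prod_{e\in X_0}e)^n=e'r$ and $\gamma\notin\mfrak{p}_i$ you get $e'R_{\mfrak{p}_i}\supseteq(\prod_e e)^nR_{\mfrak{p}_i}$ for \emph{every} $i$, hence
\[
\Bigl\vert\bigoplus_i R_{\mfrak{p}_i}/e'R_{\mfrak{p}_i}\Bigr\vert
\;\leq\;\prod_i\prod_{e\in X_0}\vertl R_{\mfrak{p}_i}/eR_{\mfrak{p}_i}\vertr^{\,n}
\;=\;\prod_{e\in X_0} f(e)^{\,n},
\]
which is precisely the bound the paper records in its $|Y|=1$ case. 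This gives your $N^*_{e'}$ without counting indices.
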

\begin{proof}
Suppose $(f,g,\overline{a},\gamma)\in W$.

\smallskip

\noindent
\textbf{Case $Y=\emptyset$:} This is \ref{USEEPP}.

\smallskip

\noindent
\textbf{Case $|Y|=1$:} Suppose that $X_0:=\{e_1,\ldots,e_n\}$ and $Y:=\{e\}$.

If $\gamma \prod_{j=1}^n e_j\notin \rad eR$ then there exists a prime ideal $\mfrak{p}$ such that $e_j\notin\mfrak{p}$ for $1\leq j\leq n$, $\gamma\notin \mfrak{p}$ and $e\in\mfrak{p}$. Then $|R_{\mfrak{p}}/e_jR_{\mfrak{p}}|=1$ for $1\leq j\leq n$ and $|R_{\mfrak{p}}/eR_{\mfrak{p}}|>1$. So $(\mfrak{p}_i,I_i)_{i=1}^h\models (f,\emptyset,\overline{a},\gamma)$ if and only if $(\mfrak{p}_i,I_i)_{i=1}^{h+g(e)}\models(f,g,\overline{a},\gamma)$ where $\mfrak{p}_i:=\mfrak{p}$ and $I_i:=R_{\mfrak{p}_i}$ for $h<i\leq g(e)$. So $(f,g,\overline{a},\gamma)\in V$ if and only if $(f,\emptyset,\overline{a},\gamma)\in V$. So we are now in the situation of case $Y=\emptyset$.

If $\gamma\prod_{j=1}^n e_j\in \rad eR$ then there exist $l\in\N$ and $r\in R$ such that $(\gamma\prod_{j=1}^n e_j)^l=er$. Thus, for all prime ideals $\mfrak{p}$ with $\gamma\notin \mfrak{p}$, $|R_{\mfrak{p}}/eR_{\mfrak{p}}|\leq \prod_{j=1}^n|R_{\mfrak{p}}/e_jR_{\mfrak{p}}|^l$. Therefore $(\mfrak{p},I)\models (f,g,\overline{a},\gamma)$ if and only if there exists $f':X\cup\{e\}\rightarrow \N$ where $f'(x)=f(x)$ for all $x\in X$ and $g(e)\leq f'(e)\leq \prod_{j=1}^nf(e_j)^l$ and $(\mfrak{p},I)\models (f',\emptyset,\overline{a},\gamma)$. Since the set of $f'$ is finite and computable, we are now in the situation of case $Y=\emptyset$.

\smallskip

\noindent
\textbf{Case $|Y|\geq 2$:} We show how to reduce to the situation where $|Y|\leq 1$. By \ref{reducegconstant}, we may assume that $g$ is a constant function. Take $e_1,e_2\in Y$ non-equal. Let $\alpha,r,s\in R$ be such that $e_1\alpha=e_2r$ and $e_2(\alpha-1)=e_1s$. Since for all prime ideals $\mfrak{p}\lhd R$, either $\alpha\notin\mfrak{p}$ or $\alpha-1\notin\mfrak{p}$, by \ref{feathering}, $(f,g,\overline{a},\gamma)\in V$ if and only if
\[\bigsqcup_{(f_1,f_2,g_1,g_2)\in \Omega_{f,g,2}}(f_1,g_1,\overline{a},\alpha\gamma)\sqcap (f_2,g_2,\overline{a},(\alpha-1)\gamma)\in\mathbb{V}.\]
Note that if $g$ is constant then $g_1$ and $g_2$ are constant for all $(f_1,f_2,g_1,g_2)\in \Omega_{f,g}$.

For each $(f_1,f_2,g_1,g_2)\in \Omega_{f,g}$, either $|Y_1|<|Y|$ or $e_1,e_2\in Y_1$. In the first case we are done. In the second, $(f_1,g_1,\overline{a},\alpha\gamma)\in V$ if and only if $(f_1,g_1|_{Y_1\backslash\{e_1\}},\overline{a},\alpha\gamma)\in V$. This is because, for all prime ideals $\mfrak{p}$ with $\alpha\notin\mfrak{p}$, $|R_{\mfrak{p}}/e_1R_{\mfrak{p}}|\geq |R_{\mfrak{p}}/e_2R_{\mfrak{p}}|$ and hence if $|R_{\mfrak{p}}/e_2R_{\mfrak{p}}|\geq g_1(e_2)$ then $|R_{\mfrak{p}}/e_1R_{\mfrak{p}}|\geq g_1(e_2)=g_1(e_1)$. So we may replace $(f_1,g_1,\overline{a},\alpha\gamma)$ by $(f_1,g_1|_{Y_1\backslash\{e_1\}},\overline{a},\alpha\gamma)$. A similar argument shows that either $|Y_2|<|Y|$ or we can replace $(f_2,g_2,\overline{a},(\alpha-1)\gamma)$ by $(f_2,g_2|_{Y_1\backslash\{e_2\}},\overline{a},(\alpha-1)\gamma)$.
\end{proof}

\begin{cor}\label{fincond2}
Let $R$ be a recursive Pr\"ufer domain with the radical relation and $\EPP(R)$ recursive. There is an algorithm which given $r,a,\gamma,\delta\in R$, $n,n'\in\N$, $e_j\in R$ for $1\leq j\leq n$, $e_j'\in R$ for $1\leq j\leq n'$, $N\in\N$, $N_j\in \N$ for $1\leq j\leq n$ and $N_j'\in \N$ for $1\leq j\leq n'$, answers whether there exist $h\in\N_0$, prime ideals $\mfrak{p}_i\lhd R$ and ideals $I_i\lhd R_{\mfrak{p}_i}$ for $1\leq i\leq h$ such that
\begin{enumerate}
\item  $(\mfrak{p}_i,I_i)\models (r,a,\gamma,\delta)$ for $1\leq i\leq h$,
\item  $\vertl\oplus_{i=1}^h R_{\mfrak{p}_i}/I_i\vertr=N$,
\item $\vertl \oplus_{i=1}^h R_{\mfrak{p}_i}/e_jR_{\mfrak{p}_i}\vertr=N_j$ for $1\leq j\leq n$, and
\item $\vertl \oplus_{i=1}^h R_{\mfrak{p}_i}/e_jR_{\mfrak{p}_i}\vertr\geq N_j'$ for $1\leq j\leq n'$.
\end{enumerate}
\end{cor}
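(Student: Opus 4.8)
The plan is to run the proof of \ref{fincond} with \ref{PIconditions} in place of \ref{USEEPP}, carrying the lower-bound conditions $(4)$ along untouched. The key observation making this painless is that, apart from the single requirement $|\bigoplus_{i=1}^h R_{\mfrak{p}_i}/I_i| = N$, every condition in $(2)$--$(4)$ is of the shape ``$|\bigoplus_{i=1}^h R_{\mfrak{p}_i}/e R_{\mfrak{p}_i}|$ is prescribed exactly, resp. bounded below'' and hence depends only on the primes $\mfrak{p}_i$, not on the ideals $I_i$; so these conditions are precisely what the ``$g$-part'' (and the $X_0$-part on auxiliary elements) of a tuple in $W$ records, and they survive the reduction $I_i = rJ_i$ verbatim.

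Concretely, I would first apply \ref{cleanup} to $(r,a,\gamma,\delta)$; exactly as in \ref{fincond} this reduces the question, at the cost of a finite disjunction of inputs (the disjunction being combined with a distribution of the prescribed invariants among the pieces, cf. \ref{feathering}), to the case $a = ra'$, and I would discard $r = 0$, since then $I = 0$ forces $|R_{\mfrak{p}}/I| = |R_{\mfrak{p}}|$ infinite, incompatible with $N\in\N$. With $r\neq 0$ we have $rR_{\mfrak{p}}\supseteq I$ iff $I = rJ$ for some $J\lhd R_{\mfrak{p}}$; using $(rJ)^\# = J^\#$ (\ref{propIhash}(iii)) and $|J/rJ| = |R_{\mfrak{p}}/rR_{\mfrak{p}}|$ when $R_{\mfrak{p}}/J$ is finite (\ref{I/bI}), the existence of $(\mfrak{p}_i,I_i)_{1\leq i\leq h}$ realising $(1)$--$(4)$ becomes equivalent to: for some factorisation $N = N'N''$ there are $(\mfrak{p}_i,J_i)_{1\leq i\leq h}$ with $(\mfrak{p}_i,J_i)\models(1,a',\gamma,\delta)$, $|\bigoplus_{i=1}^h R_{\mfrak{p}_i}/J_i| = N'$, $|\bigoplus_{i=1}^h R_{\mfrak{p}_i}/rR_{\mfrak{p}_i}| = N''$, $|\bigoplus_{i=1}^h R_{\mfrak{p}_i}/e_jR_{\mfrak{p}_i}| = N_j$ for $1\leq j\leq n$ and $|\bigoplus_{i=1}^h R_{\mfrak{p}_i}/e_j'R_{\mfrak{p}_i}| \geq N_j'$ for $1\leq j\leq n'$. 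There are finitely many factorisations of $N$, so it suffices to decide this for each fixed pair $(N',N'')$.

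For a fixed $(N',N'')$ the relation $(\mfrak{p}_i,J_i)\models(1,a',\gamma,\delta)$ translates into $W$-data: $1\cdot R_{\mfrak{p}_i}\supseteq J_i$ is automatic, and since $|R_{\mfrak{p}_i}/J_i|$ divides $N'$ and is therefore finite, \ref{propIhash}(iv) (with the convention $R_{\mfrak{p}_i}^\# = \mfrak{p}_iR_{\mfrak{p}_i}$ covering the case $J_i = R_{\mfrak{p}_i}$) gives $J_i^\# = \mfrak{p}_iR_{\mfrak{p}_i}$, so $\delta\notin J_i^\#$ just means $\delta\notin\mfrak{p}_i$, which together with $\gamma\notin\mfrak{p}_i$ is exactly $\gamma\delta\notin\mfrak{p}_i$. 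Hence, for fixed $(N',N'')$, the sought configuration exists iff $(f,g,(a'),\gamma\delta)\in V$, where $X_0 := \{r,e_1,\ldots,e_n\}$ with $f(\star) := N'$, $f(r) := N''$, $f(e_j) := N_j$, and $Y := \{e_1',\ldots,e_{n'}'\}$ with $g(e_j') := N_j'$ (collapsing any coincidences among these elements to a common prescribed value, and declaring infeasibility if prescribed values clash). Since $\EPP(R)$ and the radical relation are recursive, this membership is decidable by \ref{PIconditions}; looping over the finitely many factorisations of $N$ and the finitely many disjuncts produced by \ref{cleanup} yields the algorithm.

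I expect the only delicate point to be the same one as in \ref{fincond}: faithfully translating $(\mfrak{p},I)\models(r,a,\gamma,\delta)$ — in particular the condition $\delta\notin I^\#$, and the borderline cases $I = R_{\mfrak{p}}$, $a' = 0$, and $r$ a unit — into the combinatorics of $W$; but all of this is controlled by \ref{propIhash} and by the finiteness of every quotient in play, which $N\in\N$ guarantees. The inequalities $(4)$ introduce no new structural difficulty, being absorbed directly by the $g$-component already handled in \ref{PIconditions}.
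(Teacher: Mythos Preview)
Your proposal is correct and follows exactly the approach the paper intends: its entire proof of \ref{fincond2} reads ``The proof is as in \ref{fincond} but we use \ref{PIconditions} in place of \ref{USEEPP}.'' You have simply spelled out the details of that substitution, including the implicit step (present already in \ref{fincond}) of converting $\delta\notin J_i^\#$ into $\delta\notin\mfrak{p}_i$ via \ref{propIhash}(iv) so that the condition can be absorbed into the $\gamma$-slot of a $W$-tuple.
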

\begin{proof}
The proof is as in \ref{fincond} but we use $\ref{PIconditions}$ in place of $\ref{USEEPP}$.
\end{proof}

\begin{theorem}\label{EPPfinite}
Let $R$ be a recursive Pr\"ufer domain.  The theory of $R$-modules of size $n$ is decidable uniformly
in $n$ if and only if $\EPP(R)$ is recursive.
\end{theorem}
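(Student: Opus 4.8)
The statement is an "if and only if," and the reverse direction (that $\EPP(R)$ recursive implies the theory of modules of size $n$ is decidable uniformly in $n$) is the substantive one; the forward direction is already recorded as Corollary~\ref{decfinimpEPP}. So the proof splits into two parts, and I would simply invoke \ref{decfinimpEPP} for the forward direction and concentrate on the converse.

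\textbf{Forward direction.} If the theory of $R$-modules of size $n$ is decidable uniformly in $n$, then $\EPP_1(R)$ is recursive: this is exactly \ref{decfinimpEPP}, whose proof inspects the sentences $\Theta_{(p,n;a;\gamma;e,m)}$ built in \ref{decimpEPPrec} and notes that deciding satisfiability of these among finite modules is enough.

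\textbf{Reverse direction.} Assume $\EPP(R)=\EPP_1(R)$ is recursive (hence by \ref{epp1enough} $\EPP_l(R)$ is recursive uniformly in $l$). I want an algorithm which, given $n\in\N$ and an $\mcal L_R$-sentence $\chi$, decides whether $\chi$ holds in all $R$-modules of size $\le n$ (equivalently, by taking the finite list of $n$, whether it holds in all modules of size exactly $n$; and uniformity in $n$ is automatic since $n$ is an input). By the Baur--Monk theorem and the remarks following \ref{DecconBM}, using a proof algorithm I may assume $\chi$ is a Boolean combination of sentences $\vertl\nf{\phi}{\psi}\vertr\ge N$; so it suffices to decide, for a conjunction $\chi$ of the form (\ref{sentform}) — i.e. $\bigwedge_i\vertl\nf{\phi_i}{\psi_i}\vertr = N_i\wedge\bigwedge_i\vertl\nf{\phi_i}{\psi_i}\vertr\ge H_i$ — whether there is an $R$-module $M$ with $|M|\le n$ and $M\models\chi$. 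Now a module of size $\le n$ over a Pr\"ufer domain is a finite direct sum $\bigoplus_{j=1}^h R_{\mfrak p_j}/I_j$ with $\mfrak p_j$ prime (indeed maximal) and $I_j\lhd R_{\mfrak p_j}$ a proper ideal, and $h\le\log_2 n$; this is the observation recorded at the start of \S\ref{Sfinitemodules} (via \ref{sumpielemeq}, \ref{standarduni}, \ref{unifinitefg}). So the search space is: choose $h\le \log_2 n$ and, for each $j$, a pair $(\mfrak p_j,I_j)$. The Baur--Monk invariants $\vertl\nf{\phi_i}{\psi_i}\vertr$ are multiplicative over direct sums, and on a summand $R_{\mfrak p_j}/I_j$ each invariant $\vertl\nf{\phi_i}{\psi_i}(R_{\mfrak p_j}/I_j)\vertr$ — after applying \ref{4.1Denseimproved} to reduce to pairs of the form $\nf{d|x}{x=0}$ and $\nf{xb=0}{c|x}$ — can be computed by \ref{sortR/lambdaI} (with $\lambda=1$) in terms of cardinalities $\vertl R_{\mfrak p_j}/s R_{\mfrak p_j}\vertr$, $\vertl R_{\mfrak p_j}/I_j\vertr$, $\vertl I_j/s R_{\mfrak p_j}\vertr$ and so on, together with finitely many "sidedness" conditions of the shape $(\mfrak p_j, I_j)\models q$ for $q\in R^4$. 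Running over the finitely many possible values of the local invariants $\vertl R_{\mfrak p_j}/I_j\vertr$, $\vertl R_{\mfrak p_j}/eR_{\mfrak p_j}\vertr$ (all bounded by $n$) and the finitely many choices in \ref{sortR/lambdaI}, the question "does there exist $(\mfrak p_1,I_1),\dots,(\mfrak p_h,I_h)$ with $(\mfrak p_j,I_j)\models q_j$ and prescribed products of local cardinalities" is precisely of the form handled by \ref{fincond2} (or \ref{fincond}/\ref{PIconditions}), whose decidability uses that $\EPP(R)$ and the radical relation are recursive — and the radical relation is recursive because it is the special case $(a,b,a,b)\in\DPR(R)$ of $\DPR_1(R)$... wait: here I only assume $\EPP(R)$ recursive, not $\DPR(R)$. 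So I should instead argue the radical relation directly: $a\in\rad bR$ over a Pr\"ufer domain can be tested via $\EPP$? No. The cleaner route: the versions I actually need are \ref{fincond} and \ref{PIconditions}, and for size-$\le n$ modules I can dispense with the $\ge$-conjuncts by a case split (replacing $\vertl\cdot\vertr\ge H$ by the finitely many $\vertl\cdot\vertr = H'$ with $H\le H'\le n$), reducing everything to finitely many satisfiability queries of the kind \ref{fincond} answers using only that $\EPP(R)$ is recursive.

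\textbf{Main obstacle.} The main obstacle is the radical-relation issue: \ref{PIconditions} and \ref{fincond2} are stated assuming \emph{both} $\EPP(R)$ and the radical relation are recursive, but the theorem only gives $\EPP(R)$. The resolution is to notice that for modules of bounded size all the $\vertl\nf{\phi}{\psi}\vertr\ge H$ conjuncts can be replaced, at the cost of a finite disjunction, by equalities $\vertl\nf{\phi}{\psi}\vertr = H'$ with $H\le H'\le n$, so one never needs the "$\ge$" machinery (\ref{PIconditions}, \ref{fincond2}) and can work entirely with \ref{fincond} and \ref{USEEPP}, which need only $\EPP(R)$ recursive; equivalently, one re-runs the proof of \ref{PIconditions} in the bounded setting, where the case $|Y|=1$ (the only place the radical relation is invoked) is trivial because $g(e)\le n$ forces $|R_{\mfrak p}/eR_{\mfrak p}|$ into a finite computable range regardless. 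I would make this reduction explicit first, then feed the resulting finite batch of queries into \ref{fincond}, and finally assemble the answers through the Boolean structure of the original $\chi$ — all of which is uniform in $n$ since $n$ enters only as a numerical bound on the search.
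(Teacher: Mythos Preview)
Your proposal is correct and follows essentially the same route as the paper: invoke \ref{decfinimpEPP} for the forward direction; for the converse, reduce via Baur--Monk, use the structure of finite modules as $\bigoplus R_{\mfrak p_j}/I_j$, apply \ref{sortR/lambdaI} with $\lambda=1$ to express all invariants in terms of $\vertl R_{\mfrak p}/I\vertr$ and $\vertl R_{\mfrak p}/eR_{\mfrak p}\vertr$, and feed the resulting numerical conditions into \ref{fincond}. Your key observation---that the size bound lets you replace every $\geq$ conjunct by a finite disjunction of equalities, so that only \ref{fincond} (not \ref{fincond2} or \ref{PIconditions}) is needed and the radical relation never enters---is exactly how the paper avoids the radical-relation hypothesis: it works from the outset with sentences of the form $\vertl\nf{x=x}{x=0}\vertr=N\wedge\bigwedge_i\vertl\nf{\phi_i}{\psi_i}\vertr=A_i$, all conjuncts equalities.

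One point to tighten: you cannot simply ``apply \ref{4.1Denseimproved}'' to reduce to pairs $\nf{d|x}{x=0}$ and $\nf{xb=0}{c|x}$, because that theorem is a statement about decidability of $T_R$, not an equivalence of sentences preserving the fixed-size constraint. The paper flags this explicitly and notes that the \emph{proof} of \cite[4.1]{Decdense} must be re-run, checking at each step that when the input sentence carries a conjunct $\vertl\nf{x=x}{x=0}\vertr=N$, so do all the output sentences in the produced tuples. This is routine but needs to be said. Two further technical ingredients you should name when writing it up: for finite $R_{\mfrak p}/I$ one has $I^\#=\mfrak p$ (\ref{propIhash}\ref{finmodulespr}), which turns the condition $\delta\notin I^\#$ into $\delta\notin\mfrak p$; and \ref{I/bI} converts $\vertl I/bI\vertr$ into $\vertl R_{\mfrak p}/bR_{\mfrak p}\vertr$, so that case $(3)$ of \ref{sortR/lambdaI}(b) also reduces to the $\EPP$-type data.
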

\begin{proof}
The forward direction is \ref{decfinimpEPP}.

Standard arguments using the Baur-Monk theorem and the fact that $T_R$ is recursively axiomatisable imply that the theory of $R$-modules of size $n$ is decidable uniformly in $n$ if and only if there is an algorithm which, given $N\in\N$, pairs of pp-$1$-formulae $\phi_i/\psi_i$ and $A_i\in\N$ for $1\leq i\leq m$, answers whether there exists $M\in\Mod\text{-}R$ such that satisfying
\[\tag{$\star$}\label{temp2} \vertl \nf{x=x}{x=0}\vertr=N\wedge\bigwedge_{i=1}^m\vertl\nf{\phi_i}{\psi_i}\vertr=A_i.\]
Unfortunately, we can't directly apply the statement of \cite[4.1]{Decdense} to reduce to the case that the pp-pairs $\nf{\phi_i}{\psi_i}$ are of the form $\nf{d|x}{x=0}$ or $\nf{xb=0}{c|x}$. However, the proof of \cite[4.1]{Decdense} can be easily modified to allow us to do this. Roughly speaking, starting with a sentence $\chi$ of the form
\[\tag{$\dagger$}\label{temp1}\bigwedge_{i=1}^l \vertl\nf{\phi_i}{\psi_i}\vertr= A_i\wedge \bigwedge^m_{i=l+1}\vertl\nf{\phi_i}{\psi_i}\vertr\geq A_i\] where $A_i\in\N$ and $\nf{\phi_i}{\psi_i}$ is an arbitrary pp-pair for $1\leq i\leq m$, each step of the proof of \cite[4.1]{Decdense} produces a finite set $S$ of finite tuples of sentences $(\chi_1,\ldots,\chi_k)$ as in (\ref{temp1}), but with the form of the pp-pairs involved progressively improved, such that there is an $R$-module satisfying $\chi$ if and only there exist $(\chi_1,\ldots,\chi_k)\in S$ and $R$-modules $M_i\in\Mod\text{-}R$ with $M_i\models \chi_i$ for $1\leq i\leq k$. In order to adapt the proof to our situation, the reader just needs to note that, for each of the steps of the proof of \cite[4.1]{Decdense}, if the initial sentence $\chi$ is as in (\ref{temp2}), then the sentences $\chi_i$ in the finite tuples $(\chi_1,\ldots,\chi_k)\in S$ are of the same form as in (\ref{temp2}).

Let $N\in\N$, $Z$ be a finite set of pp-pairs of the form $\nf{d|x}{x=0}$ and $\nf{xb=0}{c|x}$, and $f:Z\rightarrow \N$. Let $\chi$ be the sentence \[\vertl \nf{x=x}{x=0}\vertr=N\wedge\bigwedge_{\nf{\phi}{\psi}\in Z}\vertl\nf{\phi}{\psi}\vertr=f(\nf{\phi}{\psi}).\]
Recall that every finite module over a Pr\"ufer domain is isomorphic to a direct sum of modules of the form $R_{\mfrak{p}}/I$ for some prime ideal $\mfrak{p}\lhd R$ and ideal $I\lhd R_{\mfrak{p}}$.

Let $S_Z$, $\rho_Z$ and $s_Z$ be as in \ref{sortR/lambdaI} with $\lambda:=1$. Enumerate $S_Z:=\{q_1,\ldots,q_m\}$ and let $q_i:=(r_i,r_ia_i,\gamma_i,\delta_i)$.  By definition, for all prime ideals $\mfrak{p}\lhd R$ and ideals $I\lhd R_{\mfrak{p}}$ there exists $1\leq i\leq m$ such that $(\mfrak{p},I)\models q_i$. Therefore, there exists $M\in \Mod\text{-}R$ satisfying $\chi$ if and only if there exist $N_i\in\N$ and $f_i:Z\rightarrow \N$ for $1\leq i\leq m$ such that $N=\prod_{i=1}^mN_i$, for all $\nf{\phi}{\psi}\in Z$, $f(\nf{\phi}{\psi})=\prod_{i=1}^mf_i(\nf{\phi}{\psi})$ and there exist $h_i\in\N_0$, prime ideals $\mfrak{p}_{ij}\lhd R$ and ideals $I_{ij}\lhd R_{\mfrak{p}_{ij}}$ for $1\leq j\leq h_i$ such that
\begin{itemize}
\item [(a)${}_i$] $(\mfrak{p}_{ij},I_{ij})\models q_i$ for $1\leq j\leq h_i$ and
\item [(b)${}_i$] \[\bigoplus_{j=1}^{h_j}R_{\mfrak{p}_{ij}}/I_{ij}\models \vertl \nicefrac{x=x}{x=0}\vertr=N_i\wedge\bigwedge_{\nf{\phi}{\psi}\in Z}\vertl \nicefrac{\phi}{\psi}\vertr=f_i(\nf{\phi}{\psi}).\]
\end{itemize}

Fix $1\leq i\leq m$, $N_i\in\N$ and $f_i$ as above. If $r_i=0$ and $(\mfrak{p},I)\models (r_i,r_ia_i,\gamma_i,\delta_i)$ then $R_{\mfrak{p}}/I=R_\mfrak{p}$, which is not finite. Hence, (a)${}_i$ and (b)${}_i$ holds if and only if $h_i=0$, $N_i=1$ and $f_i(\nf{\phi}{\psi})=1$ for all $\nf{\phi}{\psi}\in Z$. So, we may assume that $r_i\neq 0$. Note, by \ref{propIhash}\ref{finmodulespr}, if $R_{\mfrak{p}}/I$ is finite then $I^\#=\mfrak{p}$. So, $(\mfrak{p},I)\models (r_i,r_ia_i,\gamma_i,\delta_i)$ if and only if $I=r_iJ$ for some $J\lhd R_{\mfrak{p}}$, $a\in J$, $\gamma\notin \mfrak{p}$ and $\delta\notin\mfrak{p}$.

Therefore $h_i\in\N_0$, $\mfrak{p}_{ij}\lhd R$ and $I_{ij}\lhd R_{\mfrak{p}_{ij}}$ for $1\leq j\leq h_i$ satisfy (a)${}_i$ and (b)${}_i$  if and only if for each $1\leq j\leq h_i$, there exists $J_{ij}\lhd R_{\mfrak{p}_{ij}}$ such that $I_{ij}=r_iJ_{ij}$, $a\in J_{ij}$, $\gamma\delta\notin \mfrak{p}$,
$\vertl\bigoplus_{j=1}^{h_i} R_{\mfrak{p}_{ij}}/I_{ij}\vertr=N_i$, and,
for all $\nf{d|x}{x=0}\in Z$,
\begin{enumerate}
\item if $\rho_Z(q_i)(\nf{d|x}{x=0})=1$ then $\vertl\oplus_{j=1}^{h_i}R_{\mfrak{p}_{ij}}/s_Z(q_i,\nf{d|x}{x=0})R_{\mfrak{p}_{ij}}\vertr=f_i(\nf{d|x}{x=0})\cdot N_i^{-1}$
\item if $\rho_Z(q_i)(\nf{d|x}{x=0})=2$ then $\vertl\oplus_{j=1}^{h_i}R_{\mfrak{p}_{ij}}/s_Z(q_i,\nf{d|x}{x=0})R_{\mfrak{p}_{ij}}\vertr=f_i(\nf{d|x}{x=0})\cdot N_i$
\item if $\rho_Z(q_i)(\nf{d|x}{x=0})=3$ then $f_i(\nf{d|x}{x=0})=1$
\end{enumerate}
and for all $\nf{xb=0}{c|x}\in Z$,
\begin{enumerate}
\item if $\rho_Z(q_i)(\nf{xb=0}{c|x})=1$ then $f_i(\nf{xb=0}{c|x})=N_i$
\item if $\rho_Z(q_i)(\nf{xb=0}{c|x})=2$ then $\vertl \oplus_{j=1}^{h_i}R_{\mfrak{p_{ij}}}/cR_{\mfrak{p_{ij}}}\vertr=f_i(\nf{xb=0}{c|x})$
\item if $\rho_Z(q_i)(\nf{xb=0}{c|x})=3$ then $\vertl \oplus_{j=1}^{h_i}I_{ij}/bI_{ij}\vertr=f_i(\nf{xb=0}{c|x})$
\item if $\rho_Z(q_i)(\nf{xb=0}{c|x})=4$ then $f_i(\nf{xb=0}{c|x})=1$
\item if $\rho_Z(q_i)(\nf{xb=0}{c|x})=5$ then $\vertl\oplus_{j=1}^{h_i} R_{\mfrak{p}_{ij}}/s_Z(q_i)(\nf{xb=0}{c|x})R_{\mfrak{p}_{ij}}\vertr=f_i(\nf{xb=0}{c|x})\cdot N_i$.
\end{enumerate}
Since $R_{\mfrak{p}_{ij}}/I_{ij}$ is finite, by \ref{I/bI}, if $b\neq 0$ then $\vertl I_{ij}/bI_{ij}\vertr =\vertl R_{\mfrak{p}_{ij}}/bR_{\mfrak{p}_{ij}}\vertr$. By definition, $\rho_Z(\nf{xb=0}{c|x})=3$ implies $b\neq 0$. So ($3$) in the second list of conditions may be replaced by
\begin{itemize}
\item [($3$')] if $\rho_Z(q_i)(\nf{xb=0}{c|x})=3$ then $\vertl \oplus_{j=1}^{h_i}R_{\mfrak{p}_{ij}}/bR_{\mfrak{p}_{ij}}\vertr=f_i(\nf{xb=0}{c|x})$.
\end{itemize}
Finally, the condition that $\vertl\bigoplus_{j=1}^{h_i} R_{\mfrak{p}_{ij}}/I_{ij}\vertr=N_i$ can be replaced by the condition that there exist $N_i',N_i''\in\N$ such that $N_i=N_i'N_i''$, $\vertl\bigoplus_{j=1}^{h_i} R_{\mfrak{p}_{ij}}/r_iR_{\mfrak{p}_{ij}}\vertr=N_i'$ and $\vertl\bigoplus_{j=1}^{h_i} R_{\mfrak{p}_{ij}}/J_{ij}\vertr=N_i''$.
The proof can now be finished using \ref{fincond}.
\end{proof}

\section{Removing $|\nf{d|x}{x=0}|=D$ and $|\nf{xb=0}{c|x}|=G$.}\label{Sremoving}

\noindent
This section uses results from sections \ref{sectuni} and \ref{Sfinitemodules}. For $d\in R\backslash\{0\}$, respectively $b,c\in R\backslash\{0\}$, we show that there is an algorithm answering whether there exists a sum of modules $\oplus_{i=1}^hR_{\mfrak{p}_i}/dI_i$, respectively $\oplus_{i=1}^hI_i/bcR_{\mfrak{p}_i}$, satisfying a sentence as in \ref{4.1Denseimproved} under the assumption that one of the conjuncts is $\vertl d|x/x=0\vertr=D$, respectively $\vertl xb=0/c|x\vertr=G$.
These results will be used in section \ref{notred} to eliminate expressions of the form $|\nf{d|x}{x=0}|=D$ and $|\nf{xb=0}{c|x}|=G$, where $D,G\geq 2$.

\begin{proposition}\label{removed|x/x=0=}
Let $R$ be a recursive Pr\"ufer domain such that $\EPP(R)$ and the radical relation are recursive. There exists an algorithm which, given a sentence $\chi$ of the form
\[\vertl \nicefrac{d|x}{x=0}\vertr= \ D\wedge\bigwedge_{\nf{\phi}{\psi}\in X} \vertl\nf{\phi}{\psi}\vertr=f(\nf{\phi}{\psi})\wedge\bigwedge_{\nf{\phi}{\psi}\in Y} \vertl\nf{\phi}{\psi}\vertr\geq g(\nf{\phi}{\psi}),\] where $d\in R\backslash\{0\}$, $D\in\N$, $f:X\rightarrow \N$, $g:Y\rightarrow \N$ and $X,Y$ are disjoint finite sets of pp-pairs of the form $\nf{xb=0}{c|x}$ and $\nf{a|x}{x=0}$, answers whether there exist $h\in \N_0$, prime ideals $\mfrak{p}_i\lhd R$ and ideals $I_i\lhd R_{\mfrak{p}_i}$ for $1\leq i\leq h$ such that $\oplus_{i=1}^hR_{\mfrak{p}_i}/dI_i\models \chi$.
\end{proposition}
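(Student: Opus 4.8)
The plan is to reduce the question about direct sums $\bigoplus_{i=1}^h R_{\mfrak{p}_i}/dI_i$ to a purely arithmetical question about sequences $(\mfrak{p}_i,I_i)$ of the kind handled by Corollary \ref{fincond2}. First I would apply Proposition \ref{sortR/lambdaI} with $\lambda:=d$ and $Z:=X\cup Y$, obtaining a finite set $S_Z\subseteq R^4$, a map $\rho_Z\colon S_Z\to T_Z$ and an auxiliary function $s_Z$. Since every pair $(\mfrak{p},I)$ satisfies $(\mfrak{p},I)\models q$ for some $q\in S_Z$, enumerate $S_Z=\{q_1,\ldots,q_m\}$ and split the desired sequence $(\mfrak{p}_i,I_i)_{1\le i\le h}$ according to which $q_k$ it realises. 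This turns the existence of a single sum $\bigoplus R_{\mfrak{p}_i}/dI_i\models\chi$ into the existence of: a factorisation $D=\prod_{k=1}^m D_k$ and $f(\nf{\phi}{\psi})=\prod_{k=1}^m f_k(\nf{\phi}{\psi})$ for $\nf{\phi}{\psi}\in X$, together with a "feathering" of the $\geq$-conditions over $Y$ as in Lemma \ref{feathering} / Remark \ref{reducegconstant} (so that for each $k$ the block contributes a specified exact value to each $x\in X$, an exact value to the $d|x/x=0$ invariant, and satisfies the chosen lower bounds on $Y$), and, for each $1\le k\le m$, a sequence $(\mfrak{p}_{kj},I_{kj})_{1\le j\le h_k}$ with $(\mfrak{p}_{kj},I_{kj})\models q_k$ realising those values.

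The point of invoking Proposition \ref{sortR/lambdaI} is that, once we are inside the block for a fixed $q_k=(r_k,r_ka_k,\gamma_k,\delta_k)$, every invariant $\vertl\nf{\phi}{\psi}(R_{\mfrak{p}_{kj}}/dI_{kj})\vertr$ appearing in $\chi$ is, by clause (b)/(c) of that proposition, equal to one of: $\vertl R_{\mfrak{p}_{kj}}/s\,I_{kj}\vertr$, $\vertl s R_{\mfrak{p}_{kj}}/I_{kj}\vertr$, $\vertl R_{\mfrak{p}_{kj}}/d I_{kj}\vertr$, $\vertl R_{\mfrak{p}_{kj}}/cR_{\mfrak{p}_{kj}}\vertr$, $\vertl I_{kj}/bI_{kj}\vertr$, $\vertl I_{kj}/s R_{\mfrak{p}_{kj}}\vertr$, or $1$, for explicitly computable ring elements. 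Using Remark \ref{I/bI}, the terms $\vertl I_{kj}/bI_{kj}\vertr$ (which occur only when $b\neq0$, by the "moreover" clause of \ref{sortR/lambdaI}) may be replaced by $\vertl R_{\mfrak{p}_{kj}}/bR_{\mfrak{p}_{kj}}\vertr$. The quotients of the form $\vertl sR_{\mfrak{p}_{kj}}/I_{kj}\vertr$ and $\vertl I_{kj}/sR_{\mfrak{p}_{kj}}\vertr$ can be absorbed by the usual trick: since $R$ is a domain, $\vertl R_{\mfrak{p}}/I\vertr=\vertl R_{\mfrak{p}}/sR_{\mfrak{p}}\vertr\cdot\vertl sR_{\mfrak{p}}/I\vertr$ when $sR_{\mfrak{p}}\supseteq I$ (and symmetrically), so each such quantity is governed by independent factorisations of the relevant target integer into a cyclic-quotient part and an $\vertl R_{\mfrak{p}}/sR_{\mfrak{p}}\vertr$-part, exactly as in the proof of \ref{fincond}. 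Likewise $\vertl R_{\mfrak{p}_{kj}}/dI_{kj}\vertr=\vertl R_{\mfrak{p}_{kj}}/I_{kj}\vertr\cdot\vertl R_{\mfrak{p}_{kj}}/dR_{\mfrak{p}_{kj}}\vertr$, and since the chosen $q_k$ pins down $rR_{\mfrak{p}}\supseteq I$, a further factorisation reduces $\vertl R_{\mfrak{p}_{kj}}/I_{kj}\vertr$ to $\vertl R_{\mfrak{p}_{kj}}/r_kR_{\mfrak{p}_{kj}}\vertr\cdot\vertl R_{\mfrak{p}_{kj}}/J_{kj}\vertr$ with $I_{kj}=r_kJ_{kj}$ and $(\mfrak{p}_{kj},J_{kj})\models(1,a_k,\gamma_k,\delta_k)$ (noting that for these blocks $r_k\neq0$, since $r_k=0$ would force $R_{\mfrak{p}}/dI$ infinite, contradicting finiteness forced by the $d|x/x=0$ invariant being the prescribed finite $D_k$).

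After all these reductions, for each $k$ the requirement on the block $(\mfrak{p}_{kj},J_{kj})_{1\le j\le h_k}$ is precisely: each pair models a fixed tuple $(1,a_k,\gamma_k,\delta_k)$; the quantity $\vertl\bigoplus_j R_{\mfrak{p}_{kj}}/J_{kj}\vertr$ equals a prescribed integer; and finitely many quantities of the form $\vertl\bigoplus_j R_{\mfrak{p}_{kj}}/e_l R_{\mfrak{p}_{kj}}\vertr$ equal, or are $\geq$, prescribed integers (the $e_l$ ranging over the finitely many ring elements $d,r_k,c$'s and $s_Z$-values produced above). This is exactly the input to Corollary \ref{fincond2}, which, under the hypothesis that $\EPP(R)$ and the radical relation are recursive, is decidable. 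Since $S_Z$ is finite and computable, and the outer factorisations of $D$ and of the finitely many target integers, together with the feathering data from $\Omega_{f,g,m}$, range over finite computable sets, the whole disjunction is decidable. The main obstacle is bookkeeping: one must check carefully that each invariant appearing in $\chi$, after the case split, really does land in the list provided by \ref{sortR/lambdaI}, and that the several layers of multiplicative factorisation (the outer product over $k$, the $r_k$-versus-$J$ split, and the $s$-versus-cyclic splits) can be carried out independently and composed — but this is routine given Lemma \ref{feathering}, Remark \ref{reducegconstant}, Remark \ref{I/bI}, and the multiplicativity of $\vertl R_{\mfrak{p}}/{-}\vertr$ over short exact sequences, all already available.
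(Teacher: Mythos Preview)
Your proposal is correct and follows essentially the same route as the paper's proof: apply Proposition~\ref{sortR/lambdaI} with $\lambda=d$ and $Z=X\cup Y$, split the sequence $(\mfrak{p}_i,I_i)$ over the finitely many $q_k\in S_Z$ using $\Omega_{f,g,m}$, observe that $\vertl\nf{d|x}{x=0}(R_{\mfrak{p}}/dI)\vertr=\vertl R_{\mfrak{p}}/I\vertr$ forces each $R_{\mfrak{p}_{kj}}/I_{kj}$ to be finite (so $r_k\neq0$ may be assumed and \ref{I/bI} applies), rewrite every remaining invariant as a product of cyclic quotients $\vertl R_{\mfrak{p}}/eR_{\mfrak{p}}\vertr$ and $\vertl R_{\mfrak{p}}/J\vertr$ via the multiplicative identities, and finish with Corollary~\ref{fincond2}. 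The only point you leave implicit that the paper spells out is that finiteness of $R_{\mfrak{p}_{kj}}/I_{kj}$ gives $I_{kj}^\#=\mfrak{p}_{kj}$ by \ref{propIhash}(iv), so the $\delta_k$-condition in $(\mfrak{p}_{kj},I_{kj})\models q_k$ becomes $\delta_k\notin\mfrak{p}_{kj}$; this is exactly what is needed for the reduction to \ref{fincond2} to go through.
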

\begin{proof}
Let $Z:=X\cup Y$ and let $S_Z$, $\rho_Z$ and $s_Z$ be as in \ref{sortR/lambdaI} with $\lambda:=d$. Enumerate $S_Z:=\{q_1,\ldots,q_m\}$ and let $q_i:=(r_i,r_ia_i,\gamma_i,\delta_i)$. By definition, for all prime ideals $\mfrak{p}\lhd R$ and ideals $I\lhd R_{\mfrak{p}}$, there exists $1\leq i\leq m$ such that $(\mfrak{p},I)\models q_i$.

Therefore, there exist $h\in \N_0$, prime ideals $\mfrak{p}_i\lhd R$ and ideals $I_i\lhd R_{\mfrak{p}_i}$ for $1\leq i\leq h$ such that $\oplus_{i=1}^h R_{\mfrak{p}_i}/dI_i\models \chi$ if and only if there exist $D_i\in \N$ for $1\leq i\leq m$ such that $\prod_{i=1}^mD_i=D$, $(\overline{f},\overline{g})\in\Omega_{f,g,m}$ and, for $1\leq i\leq m$, $h_i\in\N_0$, prime ideals $\mfrak{p}_{ij}\lhd R$ and ideals $I_{ij}\lhd R_{\mfrak{p}_{ij}}$ such that, for $1\leq i\leq m$,
\begin{itemize}
\item [(a)${}_i$] $(\mfrak{p}_{ij},I_{ij})\models q_i$ for $1\leq j\leq h_i$
\item [(b)${}_i$] $\oplus_{j=1}^{h_j}R_{\mfrak{p}_{ij}}/dI_{ij}$ satisfies
\[
\vertl \nicefrac{d|x}{x=0}\vertr= \ D_i\wedge\bigwedge_{\nf{\phi}{\psi}\in X_i} \vertl\nf{\phi}{\psi}\vertr=f_i(\nf{\phi}{\psi})\wedge\bigwedge_{\nf{\phi}{\psi}\in Y_i} \vertl\nf{\phi}{\psi}\vertr\geq g_i(\nf{\phi}{\psi}).
\]
\end{itemize}
Fix $1\leq i\leq m$, $D_i\in\N$ and $(\overline{f},\overline{g})\in \Omega_{f,g,m}$ as above.
Note that \[\vertl\nf{d|x}{x=0}(\oplus_{j=1}^{h_i}R_{\mfrak{p}_{ij}}/dI_{ij})\vertr=D_i \ \text{ if and only if } \ \vertl\oplus_{j=1}^{h_i}R_{\mfrak{p}_{ij}}/I_{ij}\vertr=D_i.\] So, exactly as in \ref{EPPfinite}, we may assume $r_i\neq 0$ for $1\leq i\leq m$. Moreover, by \ref{propIhash}\ref{finmodulespr}, $I_{ij}^\#=\mfrak{p}_{ij}$.

Now $h_i\in\N_0$, $\mfrak{p}_{ij}\lhd R$ and $I_{ij}\lhd R_{\mfrak{p}_{ij}}$ for $1\leq j\leq h_i$ satisfy (a)${}_i$ if and only if for $1\leq j\leq h_i$, there exists $J_{ij}\lhd R_{\mfrak{p}_i}$ such that $I_{ij}=r_iJ_{ij}$, $a_i\in J_{ij}$, $\gamma_i\notin \mfrak{p}_{ij}$ and $\delta_i\notin J_{ij}^\#=I_{ij}^\#=\mfrak{p}_{ij}$.

Now $h_i\in\N_0$, $\mfrak{p}_{ij}\lhd R$ and $J_{ij}\lhd R_{\mfrak{p}_{ij}}$ with $I_{ij}:=r_iJ_{ij}$ for $1\leq j\leq h_i$ satisfy (b)${}_i$ if and only if there exists $D'_i,D''_i\in\N$ with $D'_iD''_i=D_i$ such that
\[\vertl \oplus_{j=1}^{h_i}R_{\mfrak{p}_{ij}}/J_{ij}\vertr=D'_i \ \text{ and } \ \vertl \oplus_{j=1}^{h_i}R_{\mfrak{p}_{ij}}/r_iR_{\mfrak{p}_{ij}}\vertr=D''_i,\]
for all $\nf{xb=0}{c|x}\in X_i$ (respectively $\nf{xb=0}{c|x}\in Y_i$),
\begin{enumerate}
\item if $\rho_Z(q_i)(\nf{xb=0}{c|x})=1$ then $f_i(\nf{xb=0}{c|x})=D_i$ (respectively $D_i\geq g_i(\nf{xb=0}{c|x})$)
\item if $\rho_Z(q_i)(\nf{xb=0}{c|x})=2$ then $\vertl \oplus_{j=1}^{h_i}R_{\mfrak{p}_{ij}}/cR_{\mfrak{p}_{ij}}\vertr=f_i(\nf{xb=0}{c|x})$ (respectively $\vertl \oplus_{j=1}^{h_i}R_{\mfrak{p}_{ij}}/cR_{\mfrak{p}_{ij}}\vertr\geq g_i(\nf{xb=0}{c|x})$ )
\item if $\rho_Z(q_i)(\nf{xb=0}{c|x})=3$ then $\vertl \oplus_{j=1}^{h_i}R_{\mfrak{p}_{ij}}/bR_{\mfrak{p}_{ij}}\vertr=f_i(\nf{xb=0}{c|x})$ (respectively $\vertl \oplus_{j=1}^{h_i}R_{\mfrak{p}_{ij}}/bR_{\mfrak{p}_{ij}}\vertr\geq g_i(\nf{xb=0}{c|x})$)
\item if $\rho_Z(q_i)(\nf{xb=0}{c|x})=4$ then $f_i(\nf{xb=0}{c|x})=1$ (respectively $g_i(\nf{xb=0}{c|x})=1$)
\item if $\rho_Z(q_i)(\nf{xb=0}{c|x})=5$ then $\vertl\oplus_{j=1}^{h_i} R_{\mfrak{p}_{ij}}/s_Z(q_i,\nf{xb=0}{c|x})R_{\mfrak{p}_{ij}}\vertr=f_i(\nf{xb=0}{c|x})\cdot D_i$ (respectively $\vertl\oplus_{j=1}^{h_i} R_{\mfrak{p}_{ij}}/s_Z(q_i,\nf{xb=0}{c|x})R_{\mfrak{p}_{ij}}\vertr\geq g_i(\nf{xb=0}{c|x})\cdot D_i$).
\end{enumerate}
and
for all $\nf{a|x}{x=0}\in X_i$ (respectively $\nf{a|x}{x=0}\in Y_i$),
\begin{enumerate}
\item if $\rho_Z(q_i)(\nf{a|x}{x=0})=1$ then $\vertl\oplus_{j=1}^{h_i}R_{\mfrak{p}_{ij}}/s_Z(q_i,\nf{a|x}{x=0})R_{\mfrak{p}_{ij}}\vertr=f_i(\nf{a|x}{x=0})\cdot D_i^{-1}$ (respectively $\vertl\oplus_{j=1}^{h_i}R_{\mfrak{p}_{ij}}/s_Z(q_i,\nf{a|x}{x=0})R_{\mfrak{p}_{ij}}\vertr\geq g_i(\nf{a|x}{x=0})\cdot D_i^{-1}$),
\item if $\rho_Z(q_i)(\nf{a|x}{x=0})=2$ then $\vertl\oplus_{j=1}^{h_i}R_{\mfrak{p}_{ij}}/s_Z(q_i,\nf{a|x}{x=0})R_{\mfrak{p}_{ij}}\vertr=f_i(\nf{a|x}{x=0})\cdot D_i$ (respectively $\vertl\oplus_{j=1}^{h_i}R_{\mfrak{p}_{ij}}/s_Z(q_i,\nf{a|x}{x=0})R_{\mfrak{p}_{ij}}\vertr\geq g_i(\nf{a|x}{x=0})\cdot D_i$), and,
\item if $\rho_Z(q_i)(\nf{d|x}{x=0})=3$ then $f_i(\nf{a|x}{x=0})=1$ (respectively $g_i(\nf{a|x}{x=0})=1$).
\end{enumerate}
So we are now done by \ref{fincond2}.
\end{proof}

\begin{proposition}\label{I/bcR}
Let $R$ be a recursive Pr\"ufer domain such that $\EPP(R)$ and the radical relation are recursive. There exists an algorithm which, given a sentence $\chi$ of the form
\[\vertl \nicefrac{xb=0}{c|x}\vertr= \ G\wedge\bigwedge_{\nf{\phi}{\psi}\in X} \vertl\nf{\phi}{\psi}\vertr=f(\nf{\phi}{\psi})\wedge\bigwedge_{\nf{\phi}{\psi}\in Y} \vertl\nf{\phi}{\psi}\vertr\geq g(\nf{\phi}{\psi}),\] where $b,c\in R\backslash\{0\}$, $G\in\N$, $f:X\rightarrow \N$, $g:Y\rightarrow \N$ and $X,Y$ are disjoint finite sets of pp-pairs of the form $\nf{xb'=0}{c'|x}$ and $\nf{d|x}{x=0}$, answers whether there exist $h\in \N_0$, prime ideals $\mfrak{p}_i\lhd R$ and ideals $I\lhd R_{\mfrak{p}_i}$ with $b,c\in I_i$ for $1\leq i\leq h$ such that $\oplus_{i=1}^hI_i/bc R_{\mfrak{p}_i}\models \chi$.
%
\end{proposition}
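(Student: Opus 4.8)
The plan is to mimic the proof of Proposition~\ref{removed|x/x=0=} almost line for line, interchanging the roles of the modules $R_{\mfrak{p}}/dI$ and $I/bcR_{\mfrak{p}}$. Put $\lambda:=bc\in R\backslash\{0\}$ and $Z:=X\cup Y$. Since the modules to be analysed are $I_i/bcR_{\mfrak{p}_i}$ with $b,c\in I_i$, we automatically have $\lambda=bc\in I_i$, and, by the second clause of Lemma~\ref{needI/bcV}, $\vertl\nf{xb=0}{c|x}(I_i/bcR_{\mfrak{p}_i})\vertr=\vertl R_{\mfrak{p}_i}/I_i\vertr$ (note $I_i\neq 0$ because $0\neq b\in I_i$). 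Thus the distinguished conjunct $\vertl\nf{xb=0}{c|x}\vertr=G$ plays exactly the role that $\vertl\nf{d|x}{x=0}\vertr=D$ played in Proposition~\ref{removed|x/x=0=}, via the identity $\vertl\nf{d|x}{x=0}(R_{\mfrak{p}}/dI)\vertr=\vertl R_{\mfrak{p}}/I\vertr$ used there; on a direct sum it becomes $\vertl\oplus_{i=1}^{h}R_{\mfrak{p}_i}/I_i\vertr=G$.

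First I would apply Proposition~\ref{sortI/lambda} with this $\lambda$ and $Z$ to obtain a finite $S_Z\subseteq R^4$, a map $\rho_Z:S_Z\to T_Z$ and a map $s_Z:S_Z\times Z\to R$ sorting the values $\vertl\nf{\phi}{\psi}(I/bcR_{\mfrak{p}})\vertr$, $\nf{\phi}{\psi}\in Z$, according to which branch $(\mfrak{p},I)\models q$ holds, valid for all $\mfrak{p},I$ with $\lambda\in I$. Next, applying Lemma~\ref{comb1} to each $q\in S_Z$ twice---once with the tuple $(1,b,1,1)$ and once with $(1,c,1,1)$, so that $(\mfrak{p},I)\models(1,b,1,1)$ iff $b\in I$ and likewise for $c$---I replace $S_Z$ by a finite refinement whose branches characterise ``the original branch holds and $b,c\in I$'', and on which the value-sorting of Proposition~\ref{sortI/lambda} remains valid since $b,c\in I$ is stronger than $\lambda\in I$. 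Applying Lemma~\ref{cleanup} I may assume each $q_i\in S_Z$ has the form $(r_i,r_ia_i,\gamma_i,\delta_i)$. Then, exactly as in Proposition~\ref{removed|x/x=0=}, a feathering argument (using Lemma~\ref{feathering} and the fact that every proper ideal omits one of $\alpha,\alpha-1$) reduces the question to: whether $G$ admits a factorisation $G=\prod_i G_i$ and there is $(\overline{f},\overline{g})\in\Omega_{f,g,m}$ (with $m:=|S_Z|$) such that, for each $i$, there exist $h_i\in\N_0$, prime ideals $\mfrak{p}_{ij}$ and ideals $I_{ij}\lhd R_{\mfrak{p}_{ij}}$ with $b,c\in I_{ij}$ and $(\mfrak{p}_{ij},I_{ij})\models q_i$ realising the $i$-th chunk of the feathered sentence.

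For a fixed branch $q_i=(r_i,r_ia_i,\gamma_i,\delta_i)$ and fixed data as above: from $0\neq b\in I_{ij}$ we get $I_{ij}\neq 0$, hence $r_i\neq 0$ (else $r_iR_{\mfrak{p}_{ij}}\supseteq I_{ij}$ forces $I_{ij}=0$); and since $\vertl R_{\mfrak{p}_{ij}}/I_{ij}\vertr=\vertl\nf{xb=0}{c|x}(I_{ij}/bcR_{\mfrak{p}_{ij}})\vertr$ is finite, Lemma~\ref{propIhash}(iv) gives $I_{ij}^{\#}=\mfrak{p}_{ij}$. Writing $I_{ij}=r_iJ_{ij}$, the constraint $(\mfrak{p}_{ij},I_{ij})\models q_i$ becomes $(\mfrak{p}_{ij},J_{ij})\models(1,a_i,\gamma_i,\delta_i)$ (using $I_{ij}^{\#}=J_{ij}^{\#}$ from Lemma~\ref{propIhash}(iii)), and---$R$ being a domain---each branch value output by Proposition~\ref{sortI/lambda} ($\vertl I/bcR_{\mfrak{p}}\vertr$, $\vertl I/c'I\vertr$, $\vertl R_{\mfrak{p}}/b'R_{\mfrak{p}}\vertr$, $1$, $\vertl s_Z(q_i,\cdot)R_{\mfrak{p}}/I\vertr$, $\vertl R_{\mfrak{p}}/s_Z(q_i,\cdot)I\vertr$, $\vertl I/s_Z(q_i,\cdot)R_{\mfrak{p}}\vertr$) rewrites as a product of finite cardinalities of the form $\vertl\oplus_j R_{\mfrak{p}_{ij}}/eR_{\mfrak{p}_{ij}}\vertr$ with $e\in R$ (e.g.\ $e\in\{b,c,bc,b',c',r_i,s_Z(q_i,\cdot)\}$) and of $\vertl\oplus_j R_{\mfrak{p}_{ij}}/J_{ij}\vertr$: here one uses $\vertl R_{\mfrak{p}}/r_iJ\vertr=\vertl R_{\mfrak{p}}/r_iR_{\mfrak{p}}\vertr\cdot\vertl R_{\mfrak{p}}/J\vertr$, $\vertl R_{\mfrak{p}}/bcR_{\mfrak{p}}\vertr=\vertl R_{\mfrak{p}}/bR_{\mfrak{p}}\vertr\cdot\vertl R_{\mfrak{p}}/cR_{\mfrak{p}}\vertr$, Remark~\ref{I/bI} to replace $\vertl I_{ij}/c'I_{ij}\vertr$ by $\vertl R_{\mfrak{p}_{ij}}/c'R_{\mfrak{p}_{ij}}\vertr$, and one checks that the degenerate sub-branches with $b'=0$, $c'=0$, or $s_Z(q_i,\cdot)=0$ are vacuous once $b,c\in I_{ij}$ and $bc\neq 0$. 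After a further finite search over the induced factorisations of each $G_i$ and the renaming $I_{ij}=r_iJ_{ij}$, every chunk is a conjunction of exactly the shape handled by Corollary~\ref{fincond2}, which we call to finish. The main, purely bookkeeping, obstacle, precisely as in Proposition~\ref{removed|x/x=0=} and Corollary~\ref{fincond2}, is that the $J_{ij}$ live over the localisations $R_{\mfrak{p}_{ij}}$ and need not be extended from $R$; this is handled, as there, by absorbing the decomposition $I_{ij}=r_iJ_{ij}$ into the statement of Corollary~\ref{fincond2} and only ever forming quotients by elements of $R$, never by elements of $R_{\mfrak{p}_{ij}}$.
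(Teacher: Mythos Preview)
Your proposal is correct and takes essentially the same route as the paper, whose proof is literally the one-sentence sketch ``mimic Proposition~\ref{removed|x/x=0=} using Proposition~\ref{sortI/lambda} in place of Proposition~\ref{sortR/lambdaI} and the identity $\vertl\nf{xb=0}{c|x}(I/bcR_{\mfrak{p}})\vertr=\vertl R_{\mfrak{p}}/I\vertr$ in place of $\vertl\nf{d|x}{x=0}(R_{\mfrak{p}}/dI)\vertr=\vertl R_{\mfrak{p}}/I\vertr$.'' Your explicit refinement of $S_Z$ by the conditions $b,c\in I$ (via Lemma~\ref{comb1} with $(1,b,1,1)$ and $(1,c,1,1)$) is a correct and necessary detail that the paper's sketch leaves implicit: Proposition~\ref{sortI/lambda} only guarantees $\lambda=bc\in I$, and in a valuation domain $bc\in I$ does not force $b,c\in I$ individually. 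One small inaccuracy: your parenthetical ``every proper ideal omits one of $\alpha,\alpha-1$'' is not what justifies the feathering step here---that fact belongs to Lemma~\ref{decomposeorder}, not to this argument. The feathering (in the sense of Lemma~\ref{feathering} and $\Omega_{f,g,m}$) works because every $(\mfrak{p},I)$ with $b,c\in I$ satisfies some $q_i$ in your refined $S_Z$, so the direct sum $\oplus I_i/bcR_{\mfrak{p}_i}$ partitions into sub-sums indexed by the branches.
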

\begin{proof}
The proof, which we leave to the reader, is very similar to \ref{removed|x/x=0=}, except we use \ref{sortI/lambda} in place of \ref{sortR/lambdaI} and the fact that $\vertl\nicefrac{xb=0}{c|x}(I/bcR_{\mfrak{p}})\vertr=\vertl R_{\mfrak{p}}/I\vertr$ in place of $\vertl\nf{d|x}{x=0}(R_\mfrak{p}/dI)\vertr=\vertl R_{\mfrak{p}}/I\vertr$.
\end{proof}

\section{Further syntactic reductions}\label{Furthersyn}

\noindent
In this section we continue work of section \ref{1stsyn} to improve the form of the conjunction in \ref{4.1Denseimproved}. Some of these reductions use results in sections \ref{Sfinitemodules} and \ref{Sremoving}, which only apply to Pr\"ufer domains (i.e. they do not  apply to arbitrary arithmetical rings).

Let $W$ be the set of $\mcal{L}_R$-sentences of the form
\[\tag{$\dagger$}\label{flaW2}\vertl \nicefrac{d|x}{x=0}\vertr\square_1 D\wedge \vertl \nicefrac{xb=0}{c|x}\vertr \square_2 E\wedge\bigwedge_{\nf{\phi}{\psi}\in X}\vertl\nf{\phi}{\psi}\vertr=f(\nf{\phi}{\psi})\wedge \bigwedge_{\nf{\phi}{\psi}\in Y}\vertl\nf{\phi}{\psi}\vertr\geq g(\nf{\phi}{\psi})\wedge\Xi  \] where $\square_1,\square_2\in\{\geq,=,\emptyset\}$, $d,c,b\in R\backslash\{0\}$, $D,E\in\N$, $f:X\rightarrow \N$, $g:Y\rightarrow \N$, $X,Y$ are finite subsets of pp-pairs of the form $\nf{xb'=0}{x=0}$ and $\nf{x=x}{c'|x}$, and $\Xi$ an auxiliary sentence.

Let $V$ be the set of $w\in W$ such that there exists an $R$-module which satisfies $w$.

\begin{definition}
Let $w\in W$ be as in \textnormal{(\ref{flaW2})}. Define
\begin{align*}
z_1 &:=\vertl\{\nf{xb'=0}{x=0}\in X \st f(\nf{xb'=0}{x=0})>1\}\vertr,\\
z_2 &:=\vertl\{\nf{xb'=0}{x=0}\in Y \st g(\nf{xb'=0}{x=0})>1\}\vertr,\\
z_3 &:=\vertl\{\nf{x=x}{c'|x}\in X \st f(\nf{x=x}{c'|x})>1\}\vertr, \text{ and }\\
z_4 &:=\vertl\{\nf{x=x}{c'|x}\in Y \st g(\nf{x=x}{c'|x})>1\}\vertr.
\end{align*}
The \textbf{short signature} is defined to be the tuple $(\square_1,\square_2)$ and the \textbf{extended signature}, $\exsig w$, is defined to be the tuple $((\square_1,\square_2),(z_1,z_2),(z_3,z_4))$.
\end{definition}

\noindent
We equip the set $\{\geq,=,\emptyset\}$ with a total order $\succ$ by putting $\geq \ \succ \ = \ \succ \ \emptyset$. We partially order the set of short signatures $\{\geq,=,\emptyset\}^2$ by setting $(\square_1,\square_2)\geq (\square'_1,\square'_2)$ whenever $\square_1\succeq \square'_1$ and $\square_2\succeq \square'_2$. Finally, we partially order on the set of extended signatures by setting
\[((\square_1,\square_2),(z_1,z_2),(z_3,z_4))\geq ((\square'_1,\square'_2),(z'_1,z'_2),(z'_3,z'_4))\] if and only if
$(\square_1,\square_2)> (\square'_1,\square'_2)$ or $(\square_1,\square_2)= (\square'_1,\square'_2)$ and
\begin{itemize}
\item $z_1+z_2>z'_1+z'_2$ or $z_1+z_2= z'_1+z'_2$ and $z_2\geq z'_2$, and
\item $z_3+z_4>z'_3+z'_4$ or $z_3+z_4= z'_3+z'_4$ and $z_4\geq z'_4$.
\end{itemize}
We now present various algorithms which, given $w\in W$ of a particular form, output $\underline{w}\in\mathbb{W}$ such that $w\in V$ if and only if $\underline{w}\in \mathbb{V}$, and, $\exsig \underline{w}<\exsig w$. By convention, we give both $\top\in\mathbb{W}$ and $\bot\in\mathbb{W}$ extended signature $((\emptyset,\emptyset),(0,0),(0,0))$.

\begin{remark}
The order on the set of extended signatures is artinian.
\end{remark}

\noindent
As in \S \ref{1stsyn}, given $w\in W$, we may always assume that $w$ is of the form
\[\chi_{f,g}\wedge\Xi\] where $X,Y$ are disjoint finite sets of pp-pairs of the form $\nf{d|x}{x=0}$ or $\nf{xb=0}{c|x}$, $f:X\rightarrow \N_2$, $g:Y\rightarrow \N_2$ and $\Xi$ is an auxiliary sentence.

%
%
%

\begin{remark}\label{dualexsig}
If $w\in W$ has extended signature $((\square_1,\square_2),(z_1,z_2),(z_3,z_4))$ then $Dw$ has extended signature $((\square_1,\square_2),(z_3,z_4),(z_1,z_2))$ where $D$ denotes the duality defined in \ref{Dualsent}.
\end{remark}

\begin{remark}\label{featheringextsig}
Let $X,Y$ be disjoint finite subsets of pp-pairs, $f:X\rightarrow \N_2$, $g:Y\rightarrow \N_2$ and $\Xi$ an auxiliary sentence be such that
\[\chi_{f,g}\wedge\Xi=\bigwedge_{\nicefrac{\phi}{\psi}\in X}\vertl\nicefrac{\phi}{\psi}\vertr=f(\nicefrac{\phi}{\psi})\wedge\bigwedge_{\nicefrac{\phi}{\psi}\in Y}\vertl \nicefrac{\phi}{\psi}\vertr\geq g(\nicefrac{\phi}{\psi})\wedge\Xi\in W.\] Then, for each $(f_1,\ldots,f_n,g_1,\ldots,g_n)\in\Omega_{f,g,n}$ and $1\leq i\leq n$,
\[\chi_{f_i,g_i}\wedge\Xi =\bigwedge_{\nicefrac{\phi}{\psi}\in X_i}\vertl\nicefrac{\phi}{\psi}\vertr=f_i(\nicefrac{\phi}{\psi})\wedge\bigwedge_{\nicefrac{\phi}{\psi}\in Y_i}\vertl \nicefrac{\phi}{\psi}\vertr\geq g_i(\nicefrac{\phi}{\psi})\wedge\Xi\in W,\] and, $\exsig\chi_{f_i,g_i}\wedge\Xi\leq \exsig\chi_{f,g}\wedge\Xi$. Moreover, for each $(\overline{f},\overline{g})\in \Omega_{f,g,n}$ and $1\leq i\leq n$, either $X=X_i$ and $Y=Y_i$, or, $\exsig\chi_{f_i,g_i}\wedge\Xi<\chi_{f,g}\wedge\Xi$.
\end{remark}
\begin{proof}
Fix $(\overline{f},\overline{g})\in\Omega_{f,g,n}$ and $1\leq i\leq n$. Let $((\square_1,\square_2),(z_1,z_2),(z_3,z_4))$ be the extended signature of $\chi_{f,g}\wedge \Xi$ and $((\square'_1,\square'_2),(z'_1,z'_2),(z'_3,z'_4))$ the extended signature of $\chi_{f_i,g_i}\wedge\Xi$. Note that since $X$ and $Y$ are disjoint, so are $X_i=X\cup (Y\backslash Y_i)$ and $Y_i$.

That the short signature of $\chi_{f_i,g_i}$ is less than the short signature of $\chi_{f,g}$ follows from the fact that $Y_i\subseteq Y$ and $X_i=X\cup(Y\backslash Y_i)$.

%

We show that $z_1+z_2\geq z_1'+z_2'$ and $z_2\geq z_2'$. By definition, $z_1+z_2$ is the number of $\nf{x=x}{c|x}\in X\cup Y$ and $z_2$ is the number of $\nf{x=x}{c|x}\in Y$. Since $g_i(\nf{x=x}{c|x})=g(\nf{x=x}{c|x})>1$ for all $\nf{x=x}{c|x}\in Y_i$, we see that $z_2'$ is the number of $\nf{x=x}{c|x}\in Y_i$. So $z_2'\leq z_2$ because $Y_i\subseteq Y$. Since $X$ and $Y$ are disjoint, so are $X_i=X\cup (Y\backslash Y_i)$ and $Y_i$. Therefore $z_1'+z_2'$ is equal to the number of $\nf{x=x}{c|x}\in X_i\cup Y_i=X\cup Y$ with $\nf{x=x}{c|x}\in Y_i$, or, $\nf{x=x}{c|x}\in X_i$ and $f_i(\nf{x=x}{c|x})>1$. Since $X\cup Y=X_i\cup Y_i$, $z_1+z_2\geq z_1'+z_2'$. A similar argument shows that $z_3+z_4\geq z_3'+z_4'$ and $z_4\geq z_4'$. So the extended signature of $\chi_{f,g}$ is less than the extended signature of $\chi_{f_i,g_i}$.

%

We now prove the moreover. Since $X,Y$ are disjoint, $X\neq X_i$ if and only if $Y\neq Y_i$. Suppose $X\neq X_i$. Then there exists $\phi/\psi\in Y\backslash Y_i$. By assumption, $g(\phi/\psi)>1$.

If $\phi/\psi$ is $d|x/x=0$ then the short signature of $\chi_{f,g}$ is $(\geq,\square)$ and the short signature of $\chi_{f_i,g_i}$ is $(=,\square')$ or $(\emptyset,\square')$, and, by what we have already proved, $\square'\leq \square$. So the short signature of $\chi_{f_i,g_i}$ is strictly less than the short signature of $\chi_{f,g}$. The case of $xb=0/c|x$ for $b,c\neq 0$ is similar.


If $\phi/\psi$ is $x=x/c|x$ then
\[\vertl\{x=x/c'|x\in Y_i\st g_i(x=x/c'|x)>1\}\vertr<\vertl\{x=x/c'|x\in Y\}\vertr.\] So $\exsig\chi_{f_i,g_i}<\exsig\chi_{f,g}$. The case of $\phi/\psi$ equal to $xb=0/x=0$ is similar.
\end{proof}

\subsection{Reducing the short signature}

\begin{proposition}\label{ssrem=}
Let $R$ be a recursive Pr\"ufer domain with $\EPP(R)$ recursive. There is an algorithm which given $w\in W$ with short signature $(=,\square)$ or $(\square,=)$, for some $\square\in\{\emptyset,=,\geq\}$, outputs $\underline{w}\in\mathbb{W}$ such that $w\in V$ if and only if $\underline{w}\in \mathbb{V}$, and, $\underline{w}$ is a lattice combination of elements $w'\in W$ such that the short signature of $w'$ is strictly less than the short signature of $w$.
\end{proposition}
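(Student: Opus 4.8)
The plan is to reduce to the case where $w$ has short signature $(=,\square)$ with $\square \in \{\emptyset,=,\geq\}$, the case $(\square,=)$ being recovered by applying the duality $D$ of \ref{Dualsent}: by \ref{dualexsig}, $Dw$ has short signature $(\square,=)$ turned into $(=,\square)$ only after swapping the two components, but more precisely the short signature is symmetric under $D$ only when we also swap which of $\nf{d|x}{x=0}$, $\nf{xb=0}{c|x}$ we are removing; since the pp-pairs $\nf{d|x}{x=0}$ and $\nf{xb=0}{c|x}$ with $b,c,d\neq 0$ are exchanged by $D$ (up to the standard manipulations), it suffices to treat $\square_1$ being $=$, i.e. the conjunct $\vertl\nf{d|x}{x=0}\vertr = D$ with $d\neq 0$ and $D\in\N$. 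If $D=1$ there is nothing to do, so assume $D\geq 2$.

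First I would use the semantic input from section \ref{sectuni}: by \ref{needV/dI}, a uniserial module $U$ over a valuation domain with $\nf{d|x}{x=0}(U)$ finite and non-zero satisfies $U\cong V/dI$ for some ideal $I$. Combining this with \ref{REFuni}, for $\chi$ a sentence as in (\ref{sentform}) and $d\in R\backslash\{0\}$, $D\in\N_2$, there is a module satisfying $\vertl\nf{d|x}{x=0}\vertr=D\wedge\chi$ if and only if there is a module of the form $\bigoplus_{i=1}^h R_{\mfrak{p}_i}/dI_i \oplus M'$ with $\vertl\nf{d|x}{x=0}(M')\vertr=1$ satisfying the sentence; this is exactly the decomposition discussed in subsection \ref{SSproofguide}. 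I would make this precise by first splitting, using \ref{decomposeorder}(1) applied to a suitable $\alpha$ (or rather by invoking the already-stated consequences in section \ref{1stsyn}), so that $w$ becomes equivalent to a lattice combination $\bigsqcup \bigsqcap$ of sentences $w_1' \sqcap w_2'$ where $w_1'$ carries the conjunct $\vertl\nf{d|x}{x=0}\vertr=D_1$ and forces all its indecomposable summands to be of the form $R_{\mfrak{p}}/dI$ (finite), while $w_2'$ carries $\vertl\nf{d|x}{x=0}\vertr=1$ and otherwise has strictly smaller short signature because the conjunct involving $\nf{d|x}{x=0}$ now reads $=1$, i.e. has $\square_1$ demoted from $=$ to $\emptyset$.

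For the first factor $w_1'$, I would apply \ref{removed|x/x=0=}: since $\EPP(R)$ and the radical relation are recursive (note $\EPP(R)$ recursive implies the radical relation recursive via \ref{decimpEPPrec}'s companion remarks, or more directly since $\DPR$/radical recursivity follows — actually we only need $\EPP(R)$ recursive here, and the radical relation is a consequence as noted after \ref{DPRradical}; if not, one carries the radical-relation hypothesis, but the statement as given only assumes $\EPP(R)$ recursive, so I would check that \ref{removed|x/x=0=} is invoked in the regime where its hypotheses hold — in fact the cleanest route is: \ref{removed|x/x=0=} gives an algorithm, and its output being a yes/no answer means that the "truth" of $w_1'$ — whether there exists a sum $\bigoplus R_{\mfrak{p}_i}/dI_i$ satisfying it — is decidable, so $w_1'$ can be replaced by $\top$ or $\bot$ according to that answer). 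Thus each $w_1'$ collapses to $\top$ or $\bot$, which by convention have the minimal extended signature $((\emptyset,\emptyset),(0,0),(0,0))$, and so in the lattice combination $\bigsqcup(w_1'\sqcap w_2')$ every surviving term is $w_2'$ (or $\bot$), each of which has short signature strictly below that of $w$. Taking $\underline{w}$ to be this lattice combination finishes the construction, and \ref{featheringextsig} controls the extended signatures of the $w_2'$ relative to $w$.

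The main obstacle I expect is bookkeeping rather than conceptual: making the decomposition "$w \leftrightarrow \bigsqcup w_1'\sqcap w_2'$" fully rigorous requires carefully feeding the auxiliary sentence $\Xi$ and the non-$\nf{d|x}{x=0}$ conjuncts $\vertl\nf{\phi}{\psi}\vertr=f(\nf{\phi}{\psi})$, $\vertl\nf{\phi}{\psi}\vertr\geq g(\nf{\phi}{\psi})$ through the splitting, using \ref{decomp} / \ref{featheringextsig} to see that the pieces $\chi_{f_i,g_i}\wedge\theta_j\wedge\Xi$ still lie in $W$ and have extended signature $\leq \exsig w$, and then verifying that the conjunct recording $\nf{d|x}{x=0}$ really does drop from $=$ to $\emptyset$ (or the whole term becomes $\top/\bot$) in $w_2'$. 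The subtle point is that when $D_1 = D$ all the "finite part" is concentrated in $w_1'$ and $w_2'$ genuinely has $\vertl\nf{d|x}{x=0}\vertr = 1$; when $1 \leq D_1 < D$ one needs $\Omega_{f,g,n}$-style splitting to distribute the remaining factor $D/D_1 \geq 2$, and here \ref{removed|x/x=0=} must be applied with the general $D_i$, not just $D_i = D$ — this is exactly why its statement allows an arbitrary $D\in\N$ in the conjunct $\vertl\nf{d|x}{x=0}\vertr = D$. Once that is in place, the argument is a direct assembly of \ref{needV/dI}, \ref{REFuni}, \ref{removed|x/x=0=}, \ref{decomp} and \ref{featheringextsig}.
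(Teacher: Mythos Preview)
Your treatment of the $(=,\square)$ case is essentially the paper's argument: split via $\Omega_{f,g,2}$ into a factor handled by \ref{removed|x/x=0=} (replaced by $\top$ or $\bot$) and a residual factor with $\vertl\nf{d|x}{x=0}\vertr=1$, hence short signature $(\emptyset,\square')$ with $\square'\preceq\square$.

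The genuine gap is your reduction of the $(\square,=)$ case to the $(=,\square)$ case by duality. This does not work: as recorded in \ref{dualexsig}, $D$ \emph{preserves} the short signature $(\square_1,\square_2)$ and only swaps the $(z_1,z_2)$ and $(z_3,z_4)$ blocks. Concretely, $D$ sends $\nf{d|x}{x=0}$ to $\nf{x=x}{xd=0}$, which is equivalent (via $m\mapsto md$) to $\nf{d|x}{x=0}$ again, and sends $\nf{xb=0}{c|x}$ to $\nf{xc=0}{b|x}$, i.e.\ another pair of the same shape. So the pairs $\nf{d|x}{x=0}$ and $\nf{xb=0}{c|x}$ are \emph{not} exchanged by $D$, and applying $D$ to a sentence with short signature $(\square,=)$ yields another sentence with short signature $(\square,=)$.

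The paper handles $(\square,=)$ by a separate argument: by \ref{needI/bcV}, a uniserial $U$ with $\nf{xb=0}{c|x}(U)$ finite non-zero and $b,c\notin\ann U$ is $\equiv I/bcV$; the cases $b\in\ann U$ or $c\in\ann U$ must be peeled off first, giving a four-way split (via $\Omega_{f,g,4}$) into pieces with $b\in\ann M_1$, $c\in\ann M_2$, $\vertl\nf{xb=0}{c|x}(M')\vertr=1$, and a sum $\oplus I_i/bcR_{\mfrak{p}_i}$. The last is decided by \ref{I/bcR}, and the first three contribute terms of short signature $(\square',\emptyset)$. You will need to supply this decomposition; duality does not shortcut it.
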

\begin{proof}
Suppose $w$ has short signature $(=,\square)$. Then $w$ is of the form

\[\vertl d|x/x=0\vertr = D\wedge\chi_{f,g}\wedge\Xi\]
where $d\in R\backslash\{0\}$, $D\in\N_2$, $f:X\rightarrow \N_2$, $g:Y\rightarrow \N_2$, $X$ and $Y$ are disjoint finite subsets of $\{x=x/c'|x,xb'=0/x=0\st c',b'\in R\}\cup\{xb=0/c|x\}$ for some $b,c\in R\backslash\{0\}$, and $\Xi$ is an auxiliary sentence.

If $M\models w$ then, by \ref{REFuni} and \ref{needV/dI}, there exist $h\in\N$, prime ideals $\mfrak{p}_i\lhd R$ and ideals $I_i\lhd R_{\mfrak{p}_i}$ for $1\leq i\leq h$ and $M'\in\Mod\text{-}R$ such that $M\equiv M'\oplus\oplus_{i=1}^hR_{\mfrak{p}_i}/dI_i$ and $\vertl d|x/x=0(M')\vertr=1$.

Thus there exists $M\in\Mod\text{-}R$ such that $M\models w$ if and only if there exist $(f_1,f_2,g_1,g_2)\in\Omega_{f,g,2}$, $h\in\N$, prime ideals $\mfrak{p}_i\lhd R$ and $I_i\lhd R_{\mfrak{p}_i}$ for $1\leq i\leq h$, such that $\oplus_{i=1}^h R_{\mfrak{p}_i}/dI_i$ satisfies
\[w_{(f_1,f_2,g_1,g_2)}:=\vertl d|x/x=0\vertr = D\wedge\chi_{f_1,g_1}\wedge \Xi\]
and $M'$ which satisfies
\[w'_{(f_1,f_2,g_1,g_2)}:=\vertl d|x/x=0\vertr = 1\wedge \chi_{f_2,g_2}\wedge \Xi.\]
Let $\Omega\subseteq \Omega_{f,g,2}$ be the set of $(f_1,f_2,g_1,g_2)\in\Omega_{f,g,2}$, such that there exist $h\in\N_0$, prime ideals $\mfrak{p}_i\lhd R$ and $I_i\lhd R_{\mfrak{p}_i}$ for $1\leq i\leq h$, such that $\oplus_{i=1}^h R_{\mfrak{p}_i}/dI_i$ satisfies $w_{(f_1,f_2,g_1,g_2)}$. So $w\in V$ if and only if
\[\underline{w}:=\bigsqcup_{(f_1,f_2,g_1,g_2)\in \Omega}w'_{(f_1,f_2,g_1,g_2)}\in\mathbb{V}.\]
By \ref{removed|x/x=0=}, given $w$, we can compute $\Omega$, and, so, we can compute $\underline{w}$. The short signature of each $w'_{(f_1,f_2,g_1,g_2)}$ is $(\emptyset,\square')$ for some $\square'\in\{\emptyset,=,\leq\}$ and, by \ref{featheringextsig}, $\square\succeq\square'$. So $(=,\square)>(\emptyset,\square')$ as required.

Suppose $w$ has short signature $(\square,=)$. Then $w$ is

\[\vertl xb=0/c|x\vertr = E\wedge\chi_{f,g}\wedge\Xi\]
where $b,c\in R\backslash\{0\}$, $E\in\N_2$, $f:X\rightarrow \N_2$, $g:Y\rightarrow \N_2$, $X$ and $Y$ are disjoint finite subsets of $\{x=x/c'|x,xb'=0/x=0\st c',b'\in R\}\cup\{d|x/x=0\}$ for some $d\in R\backslash\{0\}$, and $\Xi$ is an auxiliary sentence.

If $M\models w$ then, by \ref{REFuni} and \ref{needI/bcV}, there exist $M_1,M_2,M'\in\Mod\text{-}R$ such that $b\in\ann_RM_1$, $c\in\ann_RM_2$ and $\vertl xb=0/c|x(M')\vertr=1$ and there exist $h\in\N_0$, prime ideals $\mfrak{p}_i$ and ideals $I_i\lhd R_{\mfrak{p}_i}$ for $1\leq i\leq h$ such that $b, c\in I_i$ and \[\oplus_{i=1}^hI_i/bcR_{\mfrak{p}_i}\oplus M_1\oplus M_2\oplus M'\models w.\] Therefore, $w\in V$ if and only if there exist $E_1,E_2,E_4\in \N$  with $E_1E_2E_4=E$ and $(\overline{f},\overline{g})\in\Omega_{f,g,4}$ such that
\[w_{1,E_1,f_1,g_1}:=\vertl x=x/c|x\vertr=E_1\wedge \chi_{f_1,g_1}\wedge \Xi\wedge \vertl b|x/x=0\vertr=1\in V,\]
\[w_{2,E_2,f_2,g_2}:=\vertl xb=0/x=0\vertr=E_2\wedge \chi_{f_2,g_2}\wedge \Xi\wedge \vertl c|x/x=0\vertr=1\in V,\]
\[w_{3,f_3,g_3}:=\chi_{f_3,g_3}\wedge\Xi\wedge \vertl xb=0/c|x\vertr=1\in V\] and
there exist $h\in\N_0$, prime ideals $\mfrak{p}_i$ and ideals $I_i\lhd R_{\mfrak{p}_i}$ for $1\leq i\leq h$ such that $b,c\in I_i$ and \[\oplus_{i=1}^hI_i/bcR_{\mfrak{p}_i}\models \vertl xb=0/c|x\vertr=E_4\wedge\chi_{f_4,g_4}\wedge\Xi.\]
Let $\mcal{H}$ be the set of $(E_1,E_2,(\overline{f},\overline{g}))\in \N^2\times\Omega_{f,g,4}$ such that there exists $E_4$ with $E_1E_2E_4=E$ and there exist $h\in\N_0$, prime ideals $\mfrak{p}_i$ and ideals $I_i\lhd R_{\mfrak{p}_i}$ for $1\leq i\leq h$ such that $b,c\in I_i$ and \[\oplus_{i=1}^hI_i/bcR_{\mfrak{p}_i}\models \vertl xb=0/c|x\vertr=E_4\wedge\chi_{f_4,g_4}\wedge\Xi.\]
Then $w\in V$ if and only if
\[\underline{w}:=\bigsqcup_{(E_1,E_2,(\overline{f},\overline{g}))\in\mcal{H}} w_{1,E_1,f_1,g_1}\sqcap w_{2,E_2,f_3,g_3}\sqcap w_{3,f_3,g_3}\in V.\]
By \ref{I/bcR}, given $w$, we can compute $\mcal{H}$ and so we can compute $\underline{w}$.

Now if $w$ has short signature $(\square,=)$ then $w_{1,E_1,f_1,g_1},w_{2,E_2,f_3,g_3}$ and $w_{3,f_3,g_3}$ have short signature $(\square',\emptyset)$ and by \ref{featheringextsig}, $\square'\preceq \square$. So $(\square',\emptyset)<(\square,=)$, as required.
\end{proof}

\begin{lemma}\label{d|x/x=0x=x/c|x}
Let $R$ be an arithmetical ring. Let $c,d\in R$ and $D\in\N$. For all $C\in\N_2$,
\begin{multline*}
T_R\models \vertl \nicefrac{xc^{D-1}d=0}{c|x}\vertr=1\wedge \vertl \nicefrac{d|x}{x=0}\vertr\geq D\wedge\vertl \nicefrac{x=x}{c|x}\vertr=C \\ \leftrightarrow\vertl \nicefrac{xc^{D-1}d=0}{c|x}\vertr=1\wedge\vertl \nicefrac{x=x}{c|x}\vertr=C
\end{multline*}
and
\begin{multline*}
T_R\models \vertl \nicefrac{c^Dd|x}{x=0}\vertr=1\wedge \vertl \nicefrac{d|x}{x=0}\vertr\geq D\wedge \vertl \nicefrac{x=x}{c|x}\vertr=C \\ \leftrightarrow \bigvee_{D\leq E\leq C^D}\vertl \nicefrac{d|x}{x=0}\vertr= E\wedge \vertl \nicefrac{x=x}{c|x}\vertr=C.
\end{multline*}

\end{lemma}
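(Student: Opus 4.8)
The plan is to verify each of the two equivalences by a direct computation of the two sides in an arbitrary $M\in\Mod\text{-}R$; no passage to pp-uniserial or indecomposable pure-injective modules is needed, and in fact the argument goes through for any commutative ring. Throughout I use the translations $\vertl \nf{d|x}{x=0}(M)\vertr=\vertl Md\vertr$, $\vertl \nf{x=x}{c|x}(M)\vertr=\vertl M/Mc\vertr$, $\vertl \nf{c^kd|x}{x=0}(M)\vertr=1\iff c^kd\in\ann_R M$, and $\vertl \nf{xc^{D-1}d=0}{c|x}(M)\vertr=1\iff \ann_M(c^{D-1}d)\subseteq Mc$. The structural fact I would record first is: for each $i\ge 0$, multiplication by $dc^i$ is an $R$-linear surjection $M\to Mdc^i$ carrying $Mc$ into $Mdc^{i+1}$, hence it induces an $R$-linear surjection $M/Mc\to Mdc^i/Mdc^{i+1}$; in particular $\vertl Mdc^i/Mdc^{i+1}\vertr\le\vertl M/Mc\vertr$ for all $i$. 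I also use that $M/Mc$ is annihilated by $c$.

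For the first equivalence the left-to-right implication is just deletion of a conjunct, so the content is the converse: assuming $\ann_M(c^{D-1}d)\subseteq Mc$ and $\vertl M/Mc\vertr=C\ge 2$, show $\vertl Md\vertr\ge D$. Multiplication by $c^{D-1}d$ induces an isomorphism $M/\ann_M(c^{D-1}d)\xrightarrow{\ \sim\ }Mdc^{D-1}$, and since $\ann_M(c^{D-1}d)\subseteq Mc$ there is a quotient map $M/\ann_M(c^{D-1}d)\to M/Mc$; composing the inverse of the first with the second yields an $R$-linear surjection $\theta\colon Mdc^{D-1}\to M/Mc$, so in particular $\vertl Mdc^{D-1}\vertr\ge C\ge 2$. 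I would then show the chain $Md\supseteq Mdc\supseteq\cdots\supseteq Mdc^{D-1}$ is strictly decreasing: were $Mdc^i=Mdc^{i+1}$ for some $i\le D-2$, then $Mdc^{D-1}$ would be $c$-stable, whence $\theta(Mdc^{D-1})=\theta(Mdc^{D-1}c)=\theta(Mdc^{D-1})c=(M/Mc)c=0$, contradicting surjectivity of $\theta$ onto $M/Mc\ne 0$. Thus $Md\supsetneq Mdc\supsetneq\cdots\supsetneq Mdc^{D-1}\supsetneq 0$ is a chain of $D+1$ distinct submodules, so $\vertl Md\vertr\ge D$.

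For the second equivalence the right-to-left implication is immediate: each disjunct on the right retains $\vertl \nf{x=x}{c|x}\vertr=C$ (and, as part of the hypothesis carried through, the auxiliary conjunct $\vertl \nf{c^Dd|x}{x=0}\vertr=1$), while $E\ge D$ gives $\vertl \nf{d|x}{x=0}\vertr\ge D$. For the forward direction, suppose $M$ satisfies the left side; then $c^Dd\in\ann_R M$, so $Mdc^D=Mc^Dd=0$, and the structural fact gives $\vertl Md\vertr=\prod_{i=0}^{D-1}\vertl Mdc^i/Mdc^{i+1}\vertr\le C^D$ (in particular $Md$ is finite); together with the hypothesis $\vertl Md\vertr\ge D$ this places $E:=\vertl Md\vertr$ in the range $D\le E\le C^D$, and $M$ satisfies the $E$-th disjunct.

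The step I expect to be the real obstacle is the strictness claim in the first equivalence. The obvious first move only gives $\vertl Mdc^{D-1}\vertr\ge C$, which on its own says nothing about whether $\vertl Md\vertr\ge D$; the point is to reuse the very surjection $\theta$, together with the fact that $M/Mc$ is $c$-torsion, to rule out the chain $Md\supseteq Mdc\supseteq\cdots\supseteq Mdc^{D-1}$ stabilising before it reaches its final term. Once that is in hand, the remaining steps are routine manipulations of submodule chains.
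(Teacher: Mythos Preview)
Your argument for the first equivalence is correct but more roundabout than the paper's. The paper observes that $\ann_M(c^id)\subseteq\ann_M(c^{D-1}d)\subseteq Mc$ for every $0\le i\le D-1$, and then applies the identity $\vertl Mdc^i/Mdc^{i+1}\vertr=\vertl M/(\ann_M(c^id)+Mc)\vertr$ from Remark~\ref{equivpair1} to conclude that each successive quotient in the chain equals $C$ exactly; this gives $\vertl Md\vertr=C^D\cdot\vertl Mc^Dd\vertr\ge C^D\ge D$ in one line, with no separate strictness argument. Your indirect route via the surjection $\theta$ and chain stabilisation works, but the paper's computation is shorter and simultaneously produces the identity $\vertl Md\vertr=C^D\cdot\vertl Mc^Dd\vertr$, which is precisely what gives the upper bound in the second equivalence.

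For the reverse direction of the second equivalence, your parenthetical ``as part of the hypothesis carried through, the auxiliary conjunct $\vertl\nf{c^Dd|x}{x=0}\vertr=1$'' is not justified by the displayed statement: the right-hand disjuncts omit that conjunct, and without it the implication fails (over $R=\Z$ take $M=\Z/3\Z\oplus\Z/4\Z$, $c=3$, $d=2$, $D=2$: then $C=3$, $E=6$, $D\le E\le C^D=9$, yet $Mc^Dd=18M\ne 0$). The paper's proof is equally silent on this point; in its only application (Proposition~\ref{ssremgeqdx}) the conjunct $\vertl\nf{c^Dd|x}{x=0}\vertr=1$ is in fact retained on both sides, so the intended statement evidently keeps it throughout.
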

\begin{proof}
Suppose $M\models\vertl \nf{xc^{D-1}d=0}{c|x}\vertr=1$. Then $xc^id=0\leq_Mc|x$ for $0\leq i\leq D-1$. So, by \ref{equivpair1}, for $0\leq i\leq D-1$, \[\vertl\nicefrac{c^id|x}{c^{i+1}d|x}(M)\vertr=\vertl\nicefrac{x=x}{c|x}(M)\vertr.\] Hence
\[\vertl \nicefrac{d|x}{x=0}(M)\vertr=\vertl \nicefrac{x=x}{c|x}(M)\vertr^D\vertl \nicefrac{c^Dd|x}{x=0}(M)\vertr.\]
%
Therefore, if $C\geq 2$ then
\begin{multline*}
T_R\models \vertl \nicefrac{xc^{D-1}d=0}{c|x}\vertr=1\wedge \vertl \nicefrac{d|x}{x=0}\vertr\geq D\wedge\vertl \nicefrac{x=x}{c|x}\vertr=C\leftrightarrow \\ \vertl \nicefrac{xc^{D-1}d=0}{c|x}\vertr=1\wedge\vertl \nicefrac{x=x}{c|x}\vertr=C.
\end{multline*}
Now suppose that $c^Dd\in \ann_RM$. Then $c^Dd|x$ is equivalent to $x=0$ in $M$. Therefore, by \ref{equivpair1},
\[\vertl \nicefrac{d|x}{c^Dd|x}(M)\vertr=\vertl \nicefrac{x=x}{c^D|x+xd=0}(M)\vertr\leq \vertl \nicefrac{x=x}{c^D|x}(M)\vertr\leq \vertl \nicefrac{x=x}{c|x}\vertr^D.\] Hence
\begin{multline*}
T_R\models \vertl c^Dd|x/x=0\vertr=1\wedge \vertl d|x/x=0\vertr\geq D\wedge \vertl x=x/c|x\vertr=C \leftrightarrow \\ \bigvee_{D\leq E\leq C^D}\vertl d|x/x=0\vertr= E\wedge \vertl x=x/c|x\vertr=C. \qedhere
\end{multline*}
\end{proof}

\begin{proposition}\label{ssremgeqdx}
Let $R$ be a recursive arithmetical ring. There is an algorithm, which given $w\in W$ with extended signature $((\geq,\square),(z_1,z_2),(z_3,z_4))$ with $z_1\geq 1$ or $z_3\geq 1$, returns $\underline{w}\in\mathbb{W}$ such that $w\in V$ if and only if $\underline{w}\in \mathbb{V}$, and, $\exsig\underline{w}<\exsig w$.
\end{proposition}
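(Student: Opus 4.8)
The plan is to handle the two cases $z_1\geq 1$ and $z_3\geq 1$ symmetrically via the duality $D$ of \ref{Dualsent}, using \ref{dualexsig} to transfer the result for $z_1\geq 1$ to the result for $z_3\geq 1$. So it suffices to treat the case where $w$ contains a conjunct $\vertl\nf{d|x}{x=0}\vertr\geq D$ with $D\geq 2$ (this is the $\geq$ in the short signature) together with at least one conjunct $\vertl\nf{xb'=0}{x=0}\vertr=f(\nf{xb'=0}{x=0})$ with $f$-value $>1$ (witnessing $z_1\geq 1$). Write $w$ as $\vertl\nf{d|x}{x=0}\vertr\geq D\wedge\vertl\nf{xb'=0}{x=0}\vertr=B\wedge\chi_{f,g}\wedge\Xi$, where $B\geq 2$ and $\chi_{f,g}$ collects the remaining pp-pairs.

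First I would decompose, as in \S\ref{1stsyn} and \S\ref{Furthersyn}, relative to the pair $\nf{x=x}{b'|x}$ and $\nf{xb'=0}{x=0}$, or rather use \ref{equivpair1} and the ideas of \ref{xb=0/c|xsplit}/\ref{redclx2} to reduce to components on which $b'\in\ann_R M$ or $\vertl\nf{xb'=0}{x=0}(M)\vertr=1$. More precisely, using \ref{decomposeorder} one writes every module as $M_1\oplus M_2$ where $\vertl xb'=0/x=0(M_1)\vertr=1$ and $b'\in\ann_R M_2$; the conjunct $\vertl\nf{xb'=0}{x=0}\vertr=B$ with $B\geq 2$ then forces (by the feathering formalism \ref{feathering}, applied through $\Omega_{f,g,2}$) that the $\nf{xb'=0}{x=0}$-factor on the $M_1$-component is $1$, so that all of the ``$B$'' is concentrated on an $M_2$-component where $b'$ is annihilated. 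On that component $\nf{xb'=0}{x=0}(M)$ is just $\nf{x=x}{x=0}(M)=M$, i.e. $M$ is finite of size $B$. This is exactly the situation where the results of \S\ref{Sfinitemodules} apply: a finite module is a direct sum of modules $R_{\mfrak{p}}/I$, and one computes the finite Baur-Monk invariants of such sums.

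Then the key move is to eliminate the conjunct $\vertl\nf{d|x}{x=0}\vertr\geq D$. On the component where $b'$ annihilates the module, the module is finite; there a $\geq D$ condition on a finite invariant can be replaced by a finite disjunction over $E$ with $D\leq E\leq B$ of conditions $\vertl\nf{d|x}{x=0}\vertr=E$ (since the invariant divides $\vertl M\vertr=B$ anyway, or is at most $B$). More carefully, I would follow the pattern of \ref{d|x/x=0x=x/c|x}: split off an auxiliary condition of the form $\vertl\nf{c^Dd|x}{x=0}\vertr=1$ (or its $\nf{x\cdot=0}{\cdot|x}$ analogue) via \ref{decomposeorder}, which on one side makes $\vertl\nf{d|x}{x=0}\vertr$ bounded by a power of a finite invariant already present, hence convertible to a finite disjunction of $=E$ conditions; on the other side the auxiliary conjunct is absorbed into $\Xi$. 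After this, the short signature drops from $(\geq,\square)$ to $(=,\square)$ or $(\emptyset,\square)$ on every component, and by \ref{featheringextsig} and the definition of the order on extended signatures, $\exsig$ strictly decreases (a strict drop in the short signature dominates). Assembling the finitely many components with $\sqcap$ and the finitely many choices with $\sqcup$ gives $\underline{w}\in\mathbb{W}$ with $w\in V\iff\underline{w}\in\mathbb{V}$ and $\exsig\underline{w}<\exsig w$; recursivity of the construction is clear since all the ingredients (\ref{decomposeorder}, \ref{feathering}, \ref{d|x/x=0x=x/c|x}, \ref{equivpair1}) are effective over a recursive arithmetical ring. Finally, applying $D$ and \ref{dualexsig} disposes of the case $z_3\geq 1$.

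The main obstacle I anticipate is bookkeeping: making sure that after splitting off auxiliary conjuncts and feathering through $\Omega_{f,g,n}$, the new sentences genuinely lie in $W$ (the pp-pairs stay of the permitted forms $\nf{xb'=0}{x=0}$, $\nf{x=x}{c'|x}$, $\nf{d|x}{x=0}$, $\nf{xb=0}{c|x}$, plus an auxiliary part) and that the extended signature does not accidentally increase in the $(z_i)$-coordinates while the short signature drops — but since the order declares any strict drop in the short signature to be a strict decrease regardless of the $(z_i)$, this is automatic once one checks the short signature strictly decreases on every component, which is the content of the elimination step above. A secondary subtlety is that when $D\geq 2$ but there is no finite invariant already bounding $\vertl\nf{d|x}{x=0}\vertr$ on the ``$xb'=0/x=0=1$'' component, one must argue that the $\geq D$ condition there can simply be weakened or moved; this is where the hypothesis $z_1\geq 1$ (forcing a genuine finite invariant of size $\geq 2$ somewhere) is used.
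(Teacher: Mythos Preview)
Your proposal contains a genuine gap. The decomposition you rely on---``every module is elementarily equivalent to $M_1\oplus M_2$ with $\vertl\nf{xb'=0}{x=0}(M_1)\vertr=1$ and $b'\in\ann_R M_2$''---is false. Take $R=\Z$, $b'=2$, $M=\Z/4\Z$: this is an indecomposable pure-injective with $\vertl\nf{x\cdot 2=0}{x=0}(M)\vertr=2$ but $2\notin\ann M$. Neither part of \ref{decomposeorder} yields such a split; the dichotomy ``no $b'$-torsion versus $b'$ annihilates'' simply does not exhaust pp-uniserial modules. Consequently your plan to concentrate all of $B$ on a finite summand and then bound $\vertl\nf{d|x}{x=0}\vertr$ there does not get off the ground, and your ``secondary subtlety'' about the $M_1$-component is in fact the whole problem.

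The paper's argument works on the other side of the duality: it pairs the $\geq D$ conjunct on $\nf{d|x}{x=0}$ with a conjunct $\vertl\nf{x=x}{c|x}\vertr=C$, $C\geq 2$, and applies Lemma~\ref{d|x/x=0x=x/c|x}. That lemma uses the specific element $c^Dd$ and the instance of \ref{decomposeorder}(2) with the two auxiliaries $\vertl\nf{xc^{D-1}d=0}{c|x}\vertr=1$ and $\vertl\nf{c^Dd|x}{x=0}\vertr=1$. On the first side the identity $\vertl\nf{d|x}{x=0}\vertr=\vertl\nf{x=x}{c|x}\vertr^D\cdot\vertl\nf{c^Dd|x}{x=0}\vertr$ makes $\vertl\nf{d|x}{x=0}\vertr\geq D$ \emph{redundant} (so the short signature drops to $(\emptyset,\square')$); on the second side $c^Dd$ annihilates, which bounds $\vertl\nf{d|x}{x=0}\vertr\leq C^D$, and a finite disjunction over $D\leq E\leq C^D$ converts $\geq D$ into $=E$ (short signature drops to $(=,\square')$). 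Feathering via $\Omega_{f,g,2}$ and \ref{featheringextsig} controls the remaining coordinates. The case with a $\nf{xb'=0}{x=0}$ pair in $X$ is then obtained by applying $D$ and \ref{dualexsig}, not by a direct annihilation split. If you want to argue your case directly, you need the dual of \ref{d|x/x=0x=x/c|x} (relating $\nf{d|x}{x=0}$ and $\nf{xb'=0}{x=0}$ through an element like $b'^Dd$), not a torsion/annihilator dichotomy.
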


\begin{proof}
Suppose $w:=\chi_{f,g}\wedge\Xi\in W$, where $f:X\rightarrow \N_2$, $g:Y\rightarrow \N_2$, $X,Y$ are disjoint finite sets of appropriate pp-pairs and $\Xi$ is an auxiliary sentence, has extended signature $((\geq,\square), (z_1,z_2),(z_3,z_4))$ with $z_1\geq 1$. Then $\nf{d|x}{x=0}\in Y$ for some $d\in R\backslash\{0\}$ and $\nf{x=x}{c|x}\in X$ for some $c\in R$. Let $D=g(\nf{x=x}{c|x})$.

Then, by \ref{decomposeorder}, $w\in V$ if and only if
\[
\bigsqcup_{(\overline{f},\overline{g})\in\Omega_{f,g,2}}\left(\chi_{f_1,g_1}\wedge\vertl\nf{xc^{D-1}d=0}{c|x}\vertr=1\wedge \Xi\right)\sqcap\left(\chi_{f_2,g_2}\wedge\vertl\nf{c^Dd|x}{x=0}\vertr=1\wedge \Xi\right)\in\mathbb{V}.
\]
For each $(\overline{f},\overline{g})\in\Omega_{f,g,2}$, we define
$\underline{w}_{\overline{f},\overline{g}}, \underline{w}'_{\overline{f},\overline{g}}\in\mathbb{W}$ such that
\[
\exsig \underline{w}_{\overline{f},\overline{g}}, \exsig\underline{w}'_{\overline{f},\overline{g}}<\exsig w,
\]
\[
\underline{w}_{\overline{f},\overline{g}}\in \mathbb{V} \ \text{ if and only if } \ \chi_{f_1,g_1}\wedge\vertl\nf{xc^{D-1}d=0}{c|x}\vertr=1\wedge \Xi\in V, \text{ and,}
\]
\[\underline{w}'_{\overline{f},\overline{g}}\in \mathbb{V} \ \text{ if and only if } \ \chi_{f_2,g_2}\wedge\vertl\nf{c^Dd|x}{x=0}\vertr=1\wedge \Xi\in V.
\]
Once we have done this, the proof is complete since then
\[
\underline{w}:=\bigsqcup_{(\overline{f},\overline{g})\in\Omega_{f,g,2}}\underline{w}_{\overline{f},\overline{g}}\sqcap\underline{w}'_{\overline{f},\overline{g}}
\] has the properties required by the statement.
%

If $\exsig \chi_{f_1,g_1}<\exsig \chi_{f,g}$ then set $\underline{w}_{\overline{f},\overline{g}}:=\chi_{f_1,g_1}\wedge\vertl\nf{xc^{D-1}d=0}{c|x}\vertr=1\wedge \Xi$. Otherwise, by \ref{featheringextsig}, $X=X_1$ and $Y=Y_1$. Further, $f_1(\nf{x=x}{c|x})>1$ and, by definition of $\Omega_{f,g,2}$, $g_1(\nf{d|x}{x=0})=g(\nf{d|x}{x=0})=D$. Let $Y_1':=Y_1\backslash\{\nf{d|x}{x=0}\}$ and $g_1':=g_1|_{Y_1'}$. Then, by \ref{d|x/x=0x=x/c|x},
\[\chi_{f_1,g_1}\wedge\vertl\nf{xc^{D-1}d=0}{c|x}\vertr=1\wedge \Xi\in V \] if and only if
\[\underline{w}_{\overline{f},\overline{g}}:= \chi_{f_1,g_1'}\wedge\vertl\nf{xc^{D-1}d=0}{c|x}\vertr=1\wedge\Xi\in \mathbb{V}.\]
Moreover, $\underline{w}_{\overline{f},\overline{g}}$ has short signature $(\emptyset,\square')$, where, by \ref{featheringextsig}, $\square'\preceq\square$. Therefore $\exsig \underline{w}_{\overline{f},\overline{g}}<\exsig w$ because $(\emptyset,\square')<(\emptyset,\square)$.

If $\exsig \chi_{f_2,g_2}<\exsig \chi_{f,g}$ then let $\underline{w}'_{\overline{f},\overline{g}}:=\chi_{f_2,g_2}\wedge\vertl\nf{c^Dd|x}{x=0}\vertr=1\wedge \Xi$. Otherwise, by \ref{featheringextsig}, $X=X_1$ and $Y=Y_1$. Further, $f_2(\nf{x=x}{c|x})>1$ and, by definition of $\Omega_{f,g,2}$, $g_2(\nf{d|x}{x=0})=g(\nf{d|x}{x=0})=D$.

Let $X_2':=X_2\backslash \{\nf{x=x}{c|x}\}$ and $Y_2':=Y_2\backslash\{\nf{d|x}{x=0}\}$. Let $f_2':=f_2|_{X_2'}$, $g_2':=g_2|_{Y_2'}$ and $C:=f_2(\nf{x=x}{c|x})$. Then $\chi_{f_2,g_2}\wedge\vertl\nf{c^Dd|x}{x=0}\vertr=1\wedge \Xi$ is
\[\vertl\nf{d|x}{x=0}\vertr\geq D\wedge\vertl\nf{x=x}{c|x}\vertr=C\wedge\chi_{f_2',g_2'}\wedge\Xi\wedge \vertl\nf{c^Dd|x}{x=0}\vertr=1.\]
Let
\[
\underline{w}'_{\overline{f},\overline{g}}:=\bigsqcup_{D\leq E\leq C^D}\vertl\nf{d|x}{x=0}\vertr=E\wedge\vertl\nf{x=x}{c|x}\vertr=C\wedge\chi_{f_2',g_2'}\wedge\vertl\nf{c^Dd|x}{x=0}\vertr=1\wedge\Xi.
\]
By \ref{d|x/x=0x=x/c|x}, $\underline{w}'_{\overline{f},\overline{g}}\in \mathbb{V}$ if and only if $\chi_{f_2,g_2}\wedge\vertl\nf{c^Dd|x}{x=0}\vertr=1\wedge \Xi\in V$. The short signature of each component of the join defining $\underline{w}'_{\overline{f},\overline{g}}$ is $(=,\square')$ where, by \ref{featheringextsig}, $\square'\preceq\square$. Therefore $\exsig \underline{w}'_{\overline{f},\overline{g}}<\exsig w$ because $(=,\square')<(\geq,\square)$.

Suppose $w\in W$ has extended signature $((\geq,\square),(z_1,z_2),(z_3,z_4))$ with $z_3\geq 1$. Then $Dw$ has extended signature $((\geq,\square),(z_3,z_4),(z_1,z_2))$. By the previous case, we can compute $\underline{w}$, a lattice combination of elements of $W$ with extended signatures strictly less than $Dw$ such that $Dw\in V$ if and only if $\underline{w}\in\mathbb{V}$. Now $w\in V$ if and only if $D\underline{w}\in\mathbb{V}$ and $\exsig D\underline{w}<\exsig w$.
\end{proof}

\begin{lemma}\label{redxb=0c|xwithx=xc|x}
Let $R$ be a Pr\"ufer domain. Let $a,b,c\in R$ and $E,C\in\N_2$. Suppose that $r,s,\alpha\in R$ are such that $c\alpha=ar$ and $a(\alpha-1)=cs$. Define
\begin{enumerate}
\item $\Sigma_1$ to be the formula $\vertl \nicefrac{x=x}{\alpha-1|x}\vertr=1$
\item $\Sigma_2$ to be the formula $\vertl \nicefrac{x=x}{\alpha|x}\vertr=1\wedge\vertl \nicefrac{ar|x}{x=0}\vertr=1$
\item $\Sigma_3$ to be the formula $\vertl \nicefrac{x=x}{\alpha|x}\vertr=1\wedge\vertl \nicefrac{xr=0}{a|x}\vertr=1\wedge\vertl \nicefrac{rb|x}{x=0}\vertr=1$
\item $\Sigma_4$ to be the formula $\vertl \nicefrac{x=x}{\alpha|x}\vertr=1\wedge\vertl \nicefrac{xr=0}{a|x}\vertr=1\wedge\vertl \nicefrac{xb=0}{r|x}\vertr=1$.
\end{enumerate}
Then for all $M\in\Mod\text{-}R$ there exist $M_1,\ldots,M_4\in\Mod\text{-}R$ such that $M_i\models \Sigma_i$ for $1\leq i\leq 4$ and $M\equiv M_1\oplus\ldots\oplus M_4$.

\begin{enumerate}[(i)]
\item If $i\in\{1,4\}$ and $C<E$ then
\[T_R\models\lnot(\Sigma_i\wedge \vertl \nicefrac{x=x}{a|x}\vertr=C\wedge \vertl \nicefrac{xb=0}{c|x}\vertr\geq E),
\]
and if $i\in\{1,4\}$ and $E\leq C$ then
\begin{multline*}
T_R\models\Sigma_i\wedge \vertl \nicefrac{x=x}{a|x}\vertr=C\wedge \vertl \nicefrac{xb=0}{c|x}\vertr\geq E\leftrightarrow \\ \bigvee_{E\leq E'\leq C}\Sigma_i\wedge \vertl \nicefrac{x=x}{a|x}\vertr=C\wedge \vertl \nicefrac{xb=0}{c|x}\vertr= E'.
\end{multline*}


\item In $T_R$ the following equivalence holds.
\begin{multline*} \Sigma_2\wedge \vertl \nicefrac{x=x}{a|x}\vertr=C\wedge \vertl \nicefrac{xb=0}{c|x}\vertr\geq E\leftrightarrow \Sigma_2\wedge \vertl \nicefrac{x=x}{a|x}\vertr=C\wedge \vertl \nicefrac{xb=0}{x=0}\vertr\geq E
\end{multline*}


\item  In $T_R$ the following equivalence holds.
\begin{multline*}
\Sigma_3\wedge \vertl \nicefrac{x=x}{a|x}\vertr=C\wedge \vertl \nicefrac{xb=0}{c|x}\vertr\geq E\leftrightarrow
\Sigma_3\wedge \vertl \nicefrac{x=x}{a|x}\vertr=C\wedge \vertl \nicefrac{xb=0}{r|x}\vertr\geq \lceil E/C\rceil
 \end{multline*}

\end{enumerate}
\end{lemma}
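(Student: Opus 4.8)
The plan is to prove this in two movements. First, the splitting claim: every module $M$ is elementary equivalent to $M_1\oplus M_2\oplus M_3\oplus M_4$ with $M_i\models\Sigma_i$. This is a direct application of \ref{decomposeorder}. I would first apply \ref{decomposeorder}(1) with $\alpha$: writing $M\equiv M'\oplus M''$ where $M'$ kills $\alpha-1$ (so $\alpha$ acts bijectively, $\vert\frac{x=x}{\alpha|x}(M')\vert=1$) and $M''$ kills $\alpha$ (so $\alpha-1$ acts bijectively, $\vert\frac{x=x}{\alpha-1|x}(M'')\vert=1$). The summand with $\alpha-1\notin\Div$ gives $\Sigma_1$, so $M_1:=M''$. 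On the summand with $\alpha\notin\Div$ I then apply \ref{decomposeorder}(2) with the pair $(a,r)$: one summand satisfies $\vert\frac{ar|x}{x=0}\vert=1$ (giving $\Sigma_2$), the other satisfies $\vert\frac{xr=0}{a|x}\vert=1$; on that last one apply \ref{decomposeorder}(2) once more, this time with $(r,b)$ [i.e. the pp-pair $\frac{rb|x}{x=0}$ vs $\frac{xb=0}{r|x}$], producing the splitting into $\Sigma_3$ and $\Sigma_4$. Since all these decompositions are of pure-injective summands and preserve the auxiliary conjuncts (here $\vert\frac{x=x}{\alpha|x}\vert=1$, $\vert\frac{xr=0}{a|x}\vert=1$), the resulting four-fold decomposition is exactly as claimed.

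Next, the three computations of pp-invariants on each $\Sigma_i$. Throughout I use that $c\alpha=ar$ and $a(\alpha-1)=cs$, so on a module where $\alpha$ (resp.\ $\alpha-1$) acts bijectively, $c|x$ is equivalent to $ar|x$ (resp.\ $a|x$ is equivalent to $cs|x$), and likewise $xc=0$ is interchangeable with $xar=0$ etc.\ via the relations. For (i), on $M\models\Sigma_1$ we have $\alpha-1\notin\Div M$, hence $a(\alpha-1)=cs$ gives $a|x\sim cs|x\leq_M c|x$; combined with $c|x$ this forces, via \ref{equivpair1}-type manipulation, $\vert\frac{xb=0}{c|x}(M)\vert$ to sit between $1$ and $\vert\frac{x=x}{a|x}(M)\vert=C$ — more precisely $\frac{xb=0}{c|x}(M)$ is a quotient of $\frac{xb=0}{a|x}(M)$ which embeds in $\frac{x=x}{a|x}(M)$... wait, I would instead argue directly: on $\Sigma_1$, $c|x\geq_M a|x$ but actually one shows $\vert\frac{xb=0}{c|x}(M)\vert\leq\vert\frac{x=x}{a|x}(M)\vert = C$ using $cs=a(\alpha-1)$ and $\alpha-1$ invertible, so $\frac{x=x}{c|x}(M)$ is a quotient of $\frac{x=x}{a|x}(M)$. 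For $\Sigma_4$ one has $xr=0\leq_M a|x$ and $xb=0\leq_M r|x$, and $\alpha$ invertible identifies $c|x$ with $ar|x$; here $\frac{xb=0}{c|x}(M)=\frac{xb=0}{ar|x}(M)$ and one uses $xb=0\leq r|x\leq$ (something) to bound its size by $C$ again. In both cases, a finite $C$-bound on $\vert\frac{xb=0}{c|x}\vert$ means the conjunct $\vert\frac{xb=0}{c|x}\vert\geq E$ is either unsatisfiable (if $C<E$) or equivalent to $\bigvee_{E\le E'\le C}\vert\frac{xb=0}{c|x}\vert=E'$, which is exactly the stated equivalence.

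For (ii), on $M\models\Sigma_2$ we have $\alpha\notin\Div M$, so $c|x$ is equivalent to $ar|x$ in $M$; and $\vert\frac{ar|x}{x=0}(M)\vert=1$ means $ar\in\ann_R M$, hence $c|x$ is equivalent to $x=0$ in $M$. Therefore $\frac{xb=0}{c|x}(M)=\frac{xb=0}{x=0}(M)$, giving the claimed equivalence verbatim. For (iii), on $M\models\Sigma_3$ we again have $\alpha\notin\Div M$ so $c|x\sim ar|x$; moreover $xr=0\leq_M a|x$ lets us factor, by \ref{equivpair1}, $\vert\frac{a|x}{ar|x}(M)\vert=\vert\frac{x=x}{xa=0+r|x}(M)\vert=\vert\frac{x=x}{r|x}(M)\vert$ (using $xa=0\leq_M$... actually $xr=0\leq_M a|x$ gives $a|x\wedge xr=0$ equivalent to $xr=0$, so $\frac{a|x}{ar|x}\cong\frac{xr=0}{xar=0}$, hmm) — the clean route is: since $\vert\frac{rb|x}{x=0}(M)\vert=1$ is \emph{not} in $\Sigma_3$; rather $\Sigma_3$ has $\vert\frac{rb|x}{x=0}\vert=1$, wait it does. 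So $rb\in\ann_R M$, and then $\frac{xb=0}{c|x}(M)=\frac{xb=0}{ar|x}(M)$ decomposes as $\frac{xb=0}{xab=0}(M)\cdot\ldots$ — the upshot is a multiplicativity identity $\vert\frac{xb=0}{c|x}(M)\vert=\vert\frac{xb=0}{r|x}(M)\vert\cdot\vert\frac{x=x}{a|x}(M)\vert=\vert\frac{xb=0}{r|x}(M)\vert\cdot C$, exactly as in the proofs of \ref{xb=0/c|xsplit}(vi) and \ref{redclx2}. Given that product identity, $\vert\frac{xb=0}{c|x}\vert\geq E$ holds iff $\vert\frac{xb=0}{r|x}\vert\geq\lceil E/C\rceil$, which is the stated equivalence.

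The main obstacle is the bookkeeping in (i) and (iii): getting the exact pp-pair manipulations right so that the invariant on $\Sigma_i$ is expressed \emph{solely} in terms of $\vert\frac{x=x}{a|x}\vert=C$ and a single auxiliary pair, with the correct multiplicative/ceiling relationship. These are the same kind of calculations as in \ref{equivpair1}, \ref{xb=0/c|xsplit}, and \ref{redclx2}, so the technique is established; the care needed is in choosing which of the interchangeable divisibility relations ($c|x\sim ar|x$ on $\alpha$-invertible modules, $a|x\sim cs|x$ on $(\alpha-1)$-invertible modules) to deploy at each step and verifying the kernels and images of the connecting maps. I expect no genuinely new idea is required beyond what \ref{decomposeorder}, \ref{equivpair1} and the $\Sigma_i$ in earlier lemmas already supply.
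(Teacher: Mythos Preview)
Your proposal is correct and follows essentially the same approach as the paper: the decomposition via iterated applications of \ref{decomposeorder}, the bound $\vertl\nf{xb=0}{c|x}(M)\vertr\leq\vertl\nf{x=x}{a|x}(M)\vertr$ for $\Sigma_1$ and $\Sigma_4$ (using $a|x\sim cs|x\leq c|x$ on $\Sigma_1$ and $xb=0\leq_M r|x$ together with $\vertl\nf{r|x}{ar|x}\vertr=\vertl\nf{x=x}{a|x}\vertr$ from \ref{equivpair1} on $\Sigma_4$), the equivalence $c|x\sim ar|x\sim x=0$ for $\Sigma_2$, and the multiplicative identity $\vertl\nf{xb=0}{c|x}\vertr=\vertl\nf{xb=0}{r|x}\vertr\cdot\vertl\nf{x=x}{a|x}\vertr$ for $\Sigma_3$ are exactly what the paper does. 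Your writeup would benefit from pruning the self-corrections, but every ingredient you identify is the one the paper uses.
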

\begin{proof}
The first claim follows from \ref{decomposeorder}.

\noindent
(i) Suppose $M\models \Sigma_1$. Then
\[\vertl x=x/a|x(M)\vertr=\vertl x=x/cs|x (M)\vertr\geq \vertl xb=0/c|x(M)\vertr.\]
Suppose $M\models \Sigma_4$. Then
\[\vertl r|x/ar|x(M)\vertr=\vertl x=x/xr=0+a|x(M)\vertr=\vertl x=x/a|x(M)\vertr\]
because $\vertl xr=0/a|x(M)\vertr=1$ and hence $xr=0\leq_Ma|x$.
So
\[\vertl xb=0/c|x(M)\vertr=\vertl xb=0/ar|x(M)\vertr\leq \vertl r|x/ar|x(M)\vertr=\vertl x=x/a|x(M)\vertr.\] The first equality holds because $\vertl x=x/\alpha|x(M)\vertr=1$ and the inequality holds because $\vertl xb=0/r|x\vertr=1$ and hence $xb=0\leq_M r|x$.

Therefore, if $M\models \Sigma_i$ for $i\in\{1,4\}$ then
\[\vertl xb=0/c|x(M)\vertr\leq \vertl x=x/a|x(M)\vertr.\] The first claim follows from this.

\noindent
(ii) Suppose $M\models \Sigma_2$. Then $c|x$ is equivalent to $ar|x$ in $M$ and $ar|x$ is equivalent to $x=0$ in $M$. So $\vertl xb=0/c|x(M)\vertr=\vertl xb=0/x=0(M)\vertr$.

\noindent
(iii) Suppose $M\models \Sigma_3$. Then
\[\vertl xb=0/c|x(M)\vertr=\vertl xb=0/ar|x(M)\vertr=\vertl xb=0/r|x(M)\vertr\cdot \vertl r|x/ar|x(M)\vertr.\]
Since $\vertl xr=0/a|x(M)\vertr=1$, by \ref{equivpair1}, $\vertl r|x/ar|x(M)\vertr=\vertl x=x/a|x(M)\vertr$. The claim now follows.
\end{proof}

\begin{proposition}\label{ssremgeqxbcx}
Let $R$ be a recursive Pr\"ufer domain. There is algorithm which given $w\in W$ with extended signature $((\square,\geq),(z_1,z_2),(z_3,z_4))$ with $z_1\geq 1$ or $z_3\geq 1$ outputs $\underline{w}\in\mathbb{W}$ such that $w\in V$ if and only if $\underline{w}\in \mathbb{V}$ and $\exsig\underline{w}<\exsig w$.
\end{proposition}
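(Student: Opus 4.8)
The plan is to mirror the proof of \ref{ssremgeqdx}, replacing its use of \ref{d|x/x=0x=x/c|x} by \ref{redxb=0c|xwithx=xc|x} and adding one internal recursion. I first treat the case $z_3\geq 1$ and deduce the case $z_1\geq 1$ from it by duality. So suppose $z_3\geq 1$ and, as in \S\ref{1stsyn}, write $w=\chi_{f,g}\wedge\Xi$ with $f\colon X\to\N_2$, $g\colon Y\to\N_2$, $X,Y$ disjoint finite sets of the allowed pp-pairs and $\Xi$ auxiliary; by hypothesis $\nf{x=x}{a|x}\in X$ for some $a\in R$, and the distinguished conjunct is $\vertl\nf{xb=0}{c|x}\vertr\geq E$ with $b,c\in R\setminus\{0\}$ and $E\geq 2$. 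Using \ref{Tuganbaev} I compute $r,s,\alpha\in R$ with $c\alpha=ar$, $a(\alpha-1)=cs$, and form the auxiliary sentences $\Sigma_1,\dots,\Sigma_4$ of \ref{redxb=0c|xwithx=xc|x}. By the first clause of that lemma (which rests on \ref{decomposeorder}) every module is elementarily equivalent to $\bigoplus_{i=1}^4M_i$ with $M_i\models\Sigma_i$, so by \ref{decomp}
\[w\in V\quad\text{iff}\quad\bigsqcup_{(\overline f,\overline g)\in\Omega_{f,g,4}}\ \bigsqcap_{i=1}^4\bigl(\chi_{f_i,g_i}\wedge\Sigma_i\wedge\Xi\bigr)\in\mathbb{V}.\]

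Now fix $(\overline f,\overline g)$ and $i$. By \ref{featheringextsig}, either $\exsig(\chi_{f_i,g_i}\wedge\Sigma_i\wedge\Xi)<\exsig w$, in which case I keep this conjunct unchanged, or $X_i=X$ and $Y_i=Y$; in the latter case $\nf{x=x}{a|x}\in X_i$ with some value $C_i\geq 2$ and $\nf{xb=0}{c|x}\in Y_i$ still carries the constraint $\geq E$, and I apply the appropriate clause of \ref{redxb=0c|xwithx=xc|x} to the subexpression $\Sigma_i\wedge\vertl\nf{x=x}{a|x}\vertr=C_i\wedge\vertl\nf{xb=0}{c|x}\vertr\geq E$. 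For $i\in\{1,4\}$, part (i) turns it into $\bot$ (when $C_i<E$) or into a finite join of terms with $\vertl\nf{xb=0}{c|x}\vertr=E'$ for $E\le E'\le C_i$, so the short signature becomes $(\square_1,=)$; for $i=2$, part (ii) replaces $\nf{xb=0}{c|x}$ by $\nf{xb=0}{x=0}$, which removes the distinguished conjunct — so $\square_2$ becomes $\emptyset$ — and moves $\nf{xb=0}{x=0}$ into $Y$. In both cases $\exsig$ strictly decreases, so these branches are finished.

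The delicate branch is $i=3$: \ref{redxb=0c|xwithx=xc|x}(iii) replaces $\vertl\nf{xb=0}{c|x}\vertr\geq E$ by $\vertl\nf{xb=0}{r|x}\vertr\geq\lceil E/C_i\rceil$. If $r=0$ or $\lceil E/C_i\rceil=1$, the distinguished conjunct again disappears (its numerator becomes $\nf{xb=0}{x=0}$, or the conjunct is trivial) and $\exsig$ drops. Otherwise the resulting sentence $w'$ has the same extended signature as $w$: its distinguished pair is $\nf{xb=0}{r|x}$ with $b,r\neq 0$ and $\square_2=\geq$, only the distinguished conjunct was altered, and $\nf{x=x}{a|x}$ still lies in its $X$-part with value $C_i\geq 2$; in particular its third signature coordinate is still $\geq 1$. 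But the new bound $\lceil E/C_i\rceil$ is strictly smaller than $E$ because $C_i\geq 2$, so I recurse on $w'$. Since the positive integer $E$ strictly decreases at each recursive call, the recursion terminates — at the latest when the bound reaches $1$ and the conjunct vanishes — returning $\underline{w'}$ with $w'\in V$ iff $\underline{w'}\in\mathbb{V}$ and $\exsig\underline{w'}<\exsig w'=\exsig w$. Substituting these replacements back for each $(\overline f,\overline g)$ and $i$, and distributing the finite joins through the meets in the distributive lattice $\mathbb{W}$, yields the required $\underline w$. The main obstacle is exactly this $i=3$ branch: unlike in \ref{ssremgeqdx}, where \ref{d|x/x=0x=x/c|x} always either deletes the $\geq$-conjunct or turns it into a bounded $=$-conjunct, here one summand merely simplifies the coefficient $c$ to $r$ and shrinks the bound $E$, so termination must be tracked on the auxiliary parameter $E$, and one must check that the $\nf{x=x}{a|x}$ conjunct — and everything else in the sentence — is left intact so that the recursive calls stay within the hypotheses.

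Finally, the case $z_1\geq 1$. Writing $Dw$ for the sentence obtained from $w$ by the duality of \ref{Dualsent}, rewritten as an element of $W$ using $\nf{x=x}{xd=0}(M)\cong\nf{d|x}{x=0}(M)$ (\ref{needV/dI}), $D(\nf{xb=0}{c|x})=\nf{xc=0}{b|x}$, and the corresponding replacements on $X$, $Y$, $\Xi$, \ref{dualexsig} gives that $Dw$ has extended signature $((\square,\geq),(z_3,z_4),(z_1,z_2))$, whose third coordinate is $z_1\geq 1$. Since $R$ is commutative, \ref{Dualsent} gives $w\in V$ iff $Dw\in V$, and $D$ is, up to $T_R$-equivalence, an involution; applying the previous case to $Dw$ produces $\underline v\in\mathbb{W}$ with $Dw\in V$ iff $\underline v\in\mathbb{V}$ and $\exsig\underline v<\exsig Dw$, and since the order on extended signatures is symmetric in the pairs $(z_1,z_2)$ and $(z_3,z_4)$, \ref{dualexsig} then gives $w\in V$ iff $D\underline v\in\mathbb{V}$ with $\exsig D\underline v<\exsig w$.
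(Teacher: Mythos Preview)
Your proof is correct and follows the same approach as the paper's: feather using the four $\Sigma_i$ of \ref{redxb=0c|xwithx=xc|x}, apply that lemma's clauses to the branches whose extended signature has not yet dropped, recurse on the strictly decreasing bound $E$ in the $i=3$ case (the paper packages this recursion via an auxiliary $\deg$ function), and handle the remaining case by duality. One small point to tighten: the claim that $C_i\geq 2$ in the ``latter case'' does not follow from \ref{featheringextsig} alone---$X_i=X$ and $Y_i=Y$ do not force $f_i(\nf{x=x}{a|x})>1$---but, as the paper notes, if $f_i(\nf{x=x}{a|x})=1$ then $\exsig$ already strictly drops, so you may indeed assume $C_i\geq 2$.
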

\begin{proof}
For $w\in W$ as in (\ref{flaW2}), define $\deg w= E$. Given $w\in W$ with extended signature $((\square,\geq),(z_1,z_2),(z_3,z_4))$ with $z_1\geq 1$, we will show how to compute $\underline{w}\in\mathbb{W}$, a lattice combination of $w'\in W$, such that for each $w'$, either $\exsig w'<\exsig w$ or $\exsig w'=\exsig w$ and $\deg w'<\deg w$, and, such that $w\in V$ if and only if $\underline{w}\in\mathbb{V}$. Since $\deg$ takes values in $\N$, by iterating this process we will eventually compute $\underline{w}\in \mathbb{W}$ such that $\exsig \underline{w}<\exsig w$.

Suppose $w:=\chi_{f,g}\wedge\Xi\in W$, where $f:X\rightarrow \N_2$, $g:Y\rightarrow \N_2$, $X,Y$ are disjoint finite sets of appropriate pp-pairs and $\Xi$ is an auxiliary sentence, has extended signature $((\square,\geq), (z_1,z_2),(z_3,z_4))$ with $z_1\geq 1$. Since $w$ has extended signature $((\square,\geq),(z_1,z_2),(z_3,z_4))$ with $z_1\geq 1$, there exist $\nf{x=x}{a|x}\in X$ and $\nf{xb=0}{c|x}\in Y$.

Let $\Sigma_1,\Sigma_2,\Sigma_3,\Sigma_4$ be as in \ref{redxb=0c|xwithx=xc|x}. Then $w\in V$ if and only if
\[\bigsqcup_{(\overline{f},\overline{g})\in\Omega_{f,g,4}}\bigsqcap_{i=1}^4\chi_{f_i,g_i}\wedge \Xi\wedge\Sigma_i\in\mathbb{V}.\]
We show that for each $(\overline{f},\overline{g})\in\Omega_{f,g,4}$, either the extended signature of $\chi_{f_i,g_i}\wedge \Xi$ is strictly less than the extended signature of $\chi_{f,g}\wedge\Xi$ or we will show that we can compute $\underline{w}_{i,(\overline{f},\overline{g})}\in \mathbb{W}$ such that $\underline{w}_{i,(\overline{f},\overline{g})}$ is a lattice combination of $w'\in W$ with either $\exsig w'<\exsig w$, or, $\exsig w'\leq\exsig w$ and $\deg w'<w$.

Fix $(\overline{f},\overline{g})\in\Omega_{f,g,4}$ and $1\leq i\leq 4$. By \ref{featheringextsig}, $\exsig\chi_{f_i,g_i}\wedge \Xi<\exsig\chi_{f,g}\wedge\Xi$  unless $X_i=X$ and $Y_i=Y$. Moreover, if $f_i(\nf{x=x}{a|x})=1$ then $\exsig\chi_{f_i,g_i}\wedge \Xi<\exsig\chi_{f,g}\wedge\Xi$. So we may assume $X_i=X$, $Y_i=Y$ and $f_i(\nf{x=x}{a|x})>1$.

Let $X':= X\backslash\{\nf{x=x}{a|x}\}$, $Y':=Y\backslash\{\nf{xb=0}{c|X}\}$, $f_i':=f_i|_{X'}$, $g_i':=g_i|_{Y'}$, $C:=f_i(\nf{x=x}{a|x})$ and $E:=g_i(\nf{xb=0}{c|x})=g(\nf{xb=0}{c|x})$. Then $\chi_{f_i,g_i}\wedge \Xi$ is
\[\vertl\nf{x=x}{a|x}\vertr=C\wedge \vertl\nf{xb=0}{c|x}\vertr\geq E\wedge \chi_{f_i',g_i'}\wedge\Xi.\]

\noindent
\textbf{Case  $i=1$ or $i=4$:}
If $f_1(\nf{x=x}{a|x})<g_1(\nf{xb=0}{c|x})$ then, by \ref{redxb=0c|xwithx=xc|x},
\[T_R\models \lnot(\chi_{f_i,g_i}\wedge \Xi\wedge \Sigma_i).\] So set $\underline{w}_{i,(\overline{f},\overline{g})}:=\bot$.

Suppose $f_i(\nf{x=x}{a|x})\geq g_i(\nf{xb=0}{c|x})$. Then, by \ref{redxb=0c|xwithx=xc|x},
\[\vertl\nf{x=x}{a|x}\vertr=C\wedge \vertl\nf{xb=0}{c|x}\vertr\geq E\wedge \chi_{f_i',g_i'}\wedge\Xi\in V\] if and only
\[\underline{w}_{i,(\overline{f},\overline{g})}:=\bigsqcup_{E\leq E'\leq C}\vertl\nf{x=x}{a|x}\vertr=C\wedge \vertl\nf{xb=0}{c|x}\vertr= E'\wedge \chi_{f_i',g_i'}\wedge\Xi\wedge\Sigma_i\in\mathbb{V}.\]
So we are done since the short signatures of the components of the join defining $\underline{w}_{i,(\overline{f},\overline{g})}$ are $(\square',=)$ where, by \ref{featheringextsig}, $\square'\preceq\square$.

\noindent
\textbf{Case $i=2$:}
By \ref{redxb=0c|xwithx=xc|x},
\[\vertl\nf{x=x}{a|x}\vertr=C\wedge \vertl\nf{xb=0}{c|x}\vertr\geq E\wedge \chi_{f_2',g_2'}\wedge\Xi\wedge\Sigma_2\in V\] if and only
\[\underline{w}_{i,(\overline{f},\overline{g})}:=\vertl\nf{x=x}{a|x}\vertr=C\wedge \vertl\nf{xb=0}{x=0}\vertr\geq E\wedge \chi_{f_2',g_2'}\wedge\Xi\wedge\Sigma_2\in V.\]
So we are done since the short signature of $\underline{w}_{i,(\overline{f},\overline{g})}$ is $(\square',\emptyset)$ where, by \ref{featheringextsig}, $\square'\preceq\square$.

\noindent
\textbf{Case $i=3$:}
By \ref{redxb=0c|xwithx=xc|x},
\[\vertl\nf{x=x}{a|x}\vertr=C\wedge \vertl\nf{xb=0}{c|x}\vertr\geq E\wedge \chi_{f_2',g_2'}\wedge\Xi\wedge\Sigma_2\in V\] if and only
\[\underline{w}_{i,(\overline{f},\overline{g})}:=\vertl\nf{x=x}{a|x}\vertr=C\wedge \vertl\nf{xb=0}{r|x}\vertr\geq \lceil E/C\rceil\wedge \chi_{f_2',g_2'}\wedge\Xi\wedge\Sigma_3\in V.\] Since $\lceil E/C\rceil<E$, we have $\deg \underline{w}_{i,(\overline{f},\overline{g})}<E=\deg\chi_{f,g}\wedge \Xi$.

\smallskip

So we have proved the lemma for the case $z_1\geq 1$. Suppose $w\in W$ has extended signature $((\square,\geq),(z_1,z_2),(z_3,z_4))$ with $z_3\geq1$. Then $Dw$ has extended signature $((\square,\geq),(z_3,z_4),(z_1,z_2))$. So by the version of the lemma just proved, we can compute $\underline{w}\in W$ with $\exsig \underline{w}<\exsig Dw$ such that $Dw\in V$ if and only if $\underline{w}\in\mathbb{V}$. Now, $w\in V$ if and only if $D\underline{w}\in\mathbb{V}$ and $\exsig D\underline{w}<\exsig w$, as required.
\end{proof}

\subsection{Reducing the extended signature}

\begin{lemma}\label{a|xx=0b|xx=0help}
Let $a,b\in R$ and $A,B\in\N_2$. Suppose that $r,s,\alpha\in R$ are such that $a\alpha=br$ and $b(\alpha-1)=as$. Define
\begin{enumerate}
\item $\Sigma_1$ to be the sentence $|\nf{x=x}{\alpha|x}|=1\wedge |\nf{rb|x}{x=0}|=1\wedge |\nf{b^A|x}{x=0}|=1$,
\item $\Sigma_2$ to be the sentence $|\nf{x=x}{\alpha|x}|=1\wedge |\nf{rb|x}{x=0}|=1\wedge |\nf{xb^{A-1}=0}{b|x}|=1$,
\item $\Sigma_3$ to be the sentence $|\nf{x=x}{\alpha|x}|=1\wedge |\nf{xb=0}{r|x}|=1$,
\item $\Sigma_4$ to be the sentence $|\nf{x=x}{(\alpha-1)|x}|=1\wedge |\nf{sa|x}{x=0}|=1\wedge |\nf{a^B|x}{x=0}|=1$,
\item $\Sigma_5$ to be the sentence $|\nf{x=x}{(\alpha-1)|x}|=1\wedge |\nf{sa|x}{x=0}|=1\wedge |\nf{xa^{B-1}=0}{a|x}|=1$, and
\item $\Sigma_6$ to be the sentence $|\nf{x=x}{(\alpha-1)|x}|=1\wedge |\nf{xa=0}{s|x}|=1$.
\end{enumerate}
Then, for all $M\in\Mod\text{-}R$, there exist $M_i\in\Mod\text{-}R$ for $1\leq i\leq 6$ such that $M\equiv \oplus_{i=1}^6M_i$ and for $1\leq i\leq 6$, $M_i\models \Sigma_i$.

Moreover, there is an algorithm which, given $a,b,r,s,\alpha,A,B$ as above, $1\leq i\leq 6$ and $\square,\square'\in\{=,\geq\}$, either returns
$\bot$, in which case,
\[T_R\models \lnot(\vertl\nf{x=x}{a|x}\vertr\square A\wedge \vertl\nf{x=x}{b|x}\vertr\square' B\wedge \Sigma_i)\]
or returns $n\in \N$ and  $\sigma_1,\ldots,\sigma_n$ such that
\[T_R\models \vertl \nf{x=x}{a|x}\vertr\square A\wedge \vertl\nf{x=x}{b|x}\vertr\square' B\wedge \Sigma_i\leftrightarrow \bigvee_{j=1}^{n}(\sigma_{j}\wedge \Sigma_i)\]
and each $\sigma_{j}$ is either of the form
\[\vertl \nf{x=x}{a'|x}\vertr\square_{j}A'\wedge \vertl\nf{x=x}{b'|x}\vertr\square'_{j}B',\] where $\square_{j},\square'_{j}\in \{\emptyset,=,\geq\}$, $\square_j\preceq \square$, $\square'_j\preceq \square'$, $a',b'\in R$, $A',B'\in\N$ and $A'B'<AB$ or of the form
\[ \vertl\nf{x=x}{a'|x}\vertr\square_{j}A'\wedge \vertl \nf{x=x}{b'|x}\vertr\square'_{j}B'\wedge \vertl\nf{x=x}{x=0}\vertr=N,\] where $\square_{j},\square'_{j}\in \{=,\geq\}$, $a',b'\in R$ and $A'',B'',N\in \N.$
\end{lemma}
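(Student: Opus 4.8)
The first claim — that every $M\in\Mod\text{-}R$ decomposes as $\oplus_{i=1}^6 M_i$ with $M_i\models\Sigma_i$ — follows from iterated application of \ref{decomposeorder}, exactly as in the proof of \ref{xb=0/c|xsplit}: each branch splits off a summand according to whether $\alpha$ (resp. $\alpha-1$) lies in $\Div M$ or $\Ass M$, and then, on the relevant branches, whether $r$ (resp. $s$) acts so that $rb|x$ (resp. $sa|x$) collapses to $x=0$ or so that $xb=0\leq r|x$ (resp. $xa=0\leq s|x$); the extra split in $\Sigma_1,\Sigma_2,\Sigma_4,\Sigma_5$ between $b^A|x/x=0$ and $xb^{A-1}=0/b|x$ (resp. with $a$) is one more application of \ref{decomposeorder}(2). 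So the real content is the ``moreover'', which I would prove by treating the six cases separately.

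The key observation to set up each case is that $a\alpha=br$ forces $a|x$ and $br|x$ to be equivalent on any $M$ with $\alpha\notin\Div M$; dually $b(\alpha-1)=as$ makes $b|x$ equivalent to $as|x$ when $\alpha-1\notin\Div M$. Thus on the $\Sigma_1,\Sigma_2,\Sigma_3$ branches ($\alpha\notin\Div M$) we have $|\nf{x=x}{a|x}(M)| = |\nf{x=x}{br|x}(M)|$, and $\nf{x=x}{br|x}$ factors through $\nf{x=x}{r|x}$ and $\nf{r|x}{br|x}$. On $\Sigma_1$, since $rb\in\ann_R M$ and $b^A\in\ann_R M$, one gets $|\nf{x=x}{b|x}(M)|^{?}$ controlled: here $b^A|x\equiv x=0$, so $\nf{x=x}{b|x}(M)$ has order dividing... — precisely, $|\nf{x=x}{b|x}(M)|\leq ?$, and I expect $|\nf{x=x}{a|x}(M)| = |\nf{x=x}{r|x}(M)|\cdot|\nf{r|x}{rb|x}(M)| = |\nf{x=x}{r|x}(M)|$ since $rb\in\ann_R M$; combined with $b^A\in\ann_R M$ giving $|\nf{x=x}{b|x}(M)|^A \geq |\nf{x=x}{b^A|x}(M)| = |M|$ we can, when $B\geq 2$, rewrite the pair $\vertl\nf{x=x}{a|x}\vertr\square A\wedge\vertl\nf{x=x}{b|x}\vertr\square' B$ as a finite disjunction of conditions on $\nf{x=x}{r|x}$, $\nf{x=x}{b|x}$ and $\vertl\nf{x=x}{x=0}\vertr=N$ — this is the ``second form'' of $\sigma_j$. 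On $\Sigma_2$, using \ref{equivpair1} repeatedly with $xb^i=0\leq b|x$ (which holds because $xb^{A-1}=0\leq b|x$), one gets $|\nf{x=x}{b^A|x}(M)| = |\nf{x=x}{b|x}(M)|^A$-type identities, letting us trade $\nf{x=x}{a|x}$ for $\nf{x=x}{r|x}$ after peeling off copies of $\nf{x=x}{b|x}$; here the product bound $A'B'<AB$ comes because $r$ ``absorbs'' a factor of $b$. On $\Sigma_3$, $xb=0\leq r|x$ gives $|\nf{r|x}{rb|x}(M)| = |\nf{x=x}{b|x}(M)|$ again by \ref{equivpair1}, and $|\nf{x=x}{a|x}(M)| = |\nf{x=x}{br|x}(M)| = |\nf{x=x}{r|x}(M)|\cdot|\nf{x=x}{b|x}(M)|$, so the condition $|\nf{x=x}{a|x}|\square A$ becomes, via the divisor/ceiling bookkeeping of \ref{redmult} (or a direct copy of that argument), a finite disjunction of conditions on $\nf{x=x}{r|x}$ and $\nf{x=x}{b|x}$ with product strictly smaller — note $r\neq $ trivial is ensured because on this branch things are not degenerate. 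Cases $\Sigma_4,\Sigma_5,\Sigma_6$ are the mirror images with $a,b,r,A$ replaced by $b,a,s,B$ and $\alpha$ by $\alpha-1$.

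So the core mechanism in each case is: (1) use the branch hypothesis to make $\nf{x=x}{a|x}$ (or $\nf{x=x}{b|x}$) equivalent to a product of $\nf{x=x}{r|x}$ (or $\nf{x=x}{s|x}$) with powers of $\nf{x=x}{b|x}$ (or $\nf{x=x}{a|x}$), via $a\alpha=br$ and \ref{equivpair1}; (2) apply the elementary number-theoretic case analysis of \ref{redmult} to rewrite a conjunction $|\cdot|\square A\wedge|\cdot|\square' B$ over the factorization as a finite disjunction with strictly smaller product, tracking that $\square_j\preceq\square$, $\square'_j\preceq\square'$; (3) when a power collapses to $\ann_R M$ (the $b^A\in\ann_R M$ or $a^B\in\ann_R M$ branches), instead produce the second form with a $\vertl\nf{x=x}{x=0}\vertr=N$ conjunct, since then the module is finite and $N$ is the relevant free parameter. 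The main obstacle I anticipate is the bookkeeping in cases $\Sigma_1,\Sigma_4$ (and partly $\Sigma_2,\Sigma_5$): keeping the inequalities $A'B'<AB$ honest while the equivalence $|\nf{x=x}{a|x}(M)|=|\nf{x=x}{r|x}(M)|$ genuinely \emph{does} absorb a factor only when the collapsing power $b$ (resp. $a$) is a non-unit on $M$, and correctly detecting when the situation is degenerate (all factors forced to be $1$, forcing $\bot$ or forcing a finite module, i.e. the second form). Once the equivalences of step (1) are pinned down precisely, steps (2) and (3) are essentially verbatim copies of \ref{redmult} and of the finiteness argument in \ref{EPPfinite}/\ref{decimpEPPrec}, so I would not spell them out in full.
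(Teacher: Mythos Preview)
Your overall architecture --- iterated \ref{decomposeorder} for the decomposition, then a case-by-case rewriting of the two invariants on each branch --- matches the paper, and your treatment of $\Sigma_3$ (hence $\Sigma_6$) is exactly right: on that branch $a|x$ is equivalent to $br|x$, and since $xb=0\leq_M r|x$ one factors $\vertl\nf{x=x}{br|x}(M)\vertr=\vertl\nf{x=x}{b|x}(M)\vertr\cdot\vertl\nf{x=x}{r|x}(M)\vertr$ and then feeds this into \ref{redmult}. (A small slip: you justify $\vertl\nf{r|x}{rb|x}(M)\vertr=\vertl\nf{x=x}{b|x}(M)\vertr$ via $xb=0\leq r|x$, but \ref{equivpair1} gives $\vertl\nf{r|x}{rb|x}(M)\vertr=\vertl\nf{x=x}{xr=0+b|x}(M)\vertr$, which needs $xr=0\leq b|x$; the paper instead factors through $b|x$ first, where $xb=0\leq r|x$ is the hypothesis that actually applies. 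The end formula is the same.)

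The genuine gap is in your handling of $\Sigma_1,\Sigma_2$ (and by symmetry $\Sigma_4,\Sigma_5$). You write $\vertl\nf{x=x}{a|x}(M)\vertr=\vertl\nf{x=x}{r|x}(M)\vertr\cdot\vertl\nf{r|x}{rb|x}(M)\vertr=\vertl\nf{x=x}{r|x}(M)\vertr$ ``since $rb\in\ann_R M$'', but $rb\in\ann_R M$ means $Mrb=0$, so $\vertl\nf{r|x}{rb|x}(M)\vertr=\vertl Mr\vertr$, not $1$. The point you are missing is much simpler: on $\Sigma_1$ and $\Sigma_2$, the conjuncts $\vertl\nf{x=x}{\alpha|x}\vertr=1$ and $\vertl\nf{rb|x}{x=0}\vertr=1$ together force $a|x$ to be equivalent to $x=0$ (since $a|x\leftrightarrow br|x$ and $Mbr=0$). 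Hence $\vertl\nf{x=x}{a|x}(M)\vertr=\vertl M\vertr$, and $r$ never enters the picture on these branches. From here the paper's argument is short: if $\square$ is $=$, replace $\vertl\nf{x=x}{a|x}\vertr=A$ by $\vertl\nf{x=x}{x=0}\vertr=A$ (second form). If $\square$ is $\geq$ and $B\geq A$, the condition $\vertl M\vertr\geq A$ is implied by $\vertl\nf{x=x}{b|x}(M)\vertr\square' B$, so drop it (first form, with $\square_j=\emptyset$ and product $B<AB$). If $\square$ is $\geq$ and $\square'$ is $=$ with $B<A$: on $\Sigma_1$ use $b^A\in\ann_RM$ to bound $\vertl M\vertr\leq B^A$ (second form, finitely many $N$); on $\Sigma_2$ use $xb^{A-1}=0\leq b|x$ to get $\vertl M\vertr\geq\vertl\nf{x=x}{b|x}(M)\vertr^A\geq 2^A>A$, so again drop the $a$ condition (first form). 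The remaining case ($\square$ is $\geq$, $\square'$ is $\geq$, $B<A$) reduces to these by splitting on the value of $\vertl\nf{x=x}{b|x}\vertr$. Your proposed mechanism --- ``trade $\nf{x=x}{a|x}$ for $\nf{x=x}{r|x}$ after peeling off copies of $\nf{x=x}{b|x}$'' --- is not what happens and does not yield the claimed product bound.
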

\begin{proof}
Note that
\[T_R\vdash|\nf{x=x}{\alpha|x}|=1\wedge |\nf{rb|x}{x=0}|=1\rightarrow (a|x \leftrightarrow x=0).\]

\noindent
\textbf{Case 1:} $\Sigma \in\{\Sigma_1,\Sigma_2\}$ and $\square$ is $=$.

\noindent
In this case, $\vertl\nf{x=x}{a|x}\vertr\square A\wedge \vertl\nf{x=x}{b|x}\vertr\square' B\wedge \Sigma$ is equivalent to $\vertl\nf{x=x}{x=0}\vertr= A\wedge \vertl\nf{x=x}{b|x}\vertr\square' B\wedge \Sigma$ as required.

\noindent
\textbf{Case 2:} $\Sigma \in\{\Sigma_1,\Sigma_2\}$ and $\square$ is $\geq$ and $B\geq A$.

\noindent
In this case $\vertl \nf{x=x}{a|x}\vertr\square A\wedge \vertl\nf{x=x}{b|x}\vertr\square' B\wedge \Sigma$ is equivalent to $\vertl \nf{x=x}{b|x}\vertr\square' B\wedge \Sigma$.

\noindent
\textbf{Case 3:} $\Sigma=\Sigma_1$, $\square$ is $\geq$ and $\square'$ is $=$.

\noindent
If $b^A\in \ann_RM$ and $\vertl \nf{x=x}{b|x}(M)\vertr=B$ then $B\leq \vertl M\vertr\leq B^A$. Thus $\vertl\nf{x=x}{a|x}\vertr\geq A\wedge\vertl\nf{x=x}{b|x}\vertr= B\wedge \Sigma_1$ is equivalent to
\[\bigvee_{B^A\geq A'\geq A}\vertl \nf{x=x}{x=0} \vertr=A'\wedge \vertl \nf{x=x}{b|x} \vertr\geq B\wedge \Sigma_1\]

\noindent
\textbf{Case 4:} $\Sigma=\Sigma_2$, $\square$ is $\geq$ and $\square'$ is $=$.

\noindent
Suppose $M$ satisfies $\vertl\nf{xb^{A-1}=0}{b|x}\vertr=1$. Then $\vertl \nf{b^i|x}{b^{i+1}|x}(M)\vertr=\vertl \nf{x=x}{b|x}(M)\vertr$ for $i\geq A-1$. So $\vertl \nf{x=x}{b^A|x}(M)\vertr =\vertl \nf{x=x}{b|x}(M)\vertr^A$. Thus if $\vertl \nf{x=x}{b|x}(M)\vertr\geq 2$ then $\vertl M \vertr\geq A$. Therefore
$\vertl\nf{x=x}{a|x}\vertr\geq A\wedge \vertl\nf{x=x}{b|x}\vertr= B\wedge \Sigma_2$ is equivalent to $\vertl\nf{x=x}{b|x}\vertr= B\wedge \Sigma_2$.

\noindent
\textbf{Case 5:} $\Sigma \in\{\Sigma_1,\Sigma_2\}$ and $\square$ is $\geq$ and $B<A$.

\noindent
If $\square'$ is $\geq$ then
\[\vertl \nf{x=x}{a|x}\vertr\geq A\wedge \vertl \nf{x=x}{b|x}\vertr\geq B\wedge\Sigma\] is equivalent to
\[(\vertl \nf{x=x}{b|x}\vertr\geq A\vee \bigvee_{B\leq B'<A}\vertl \nf{x=x}{x=0}\vertr\geq A\wedge \vertl \nf{x=x}{b|x}\vertr=B')\wedge\Sigma\]

So we may reduce to the case where $\square'$ is $=$ and hence to either case $3$ or $4$ at the expense of replacing $B$ by $B'$ with $B\leq B'<A$. This is not a problem since in case $3$ we show that $\sigma_j$ has the form $\vertl\nf{x=x}{a'|x}\vertr\square_{j}A'\wedge \vertl\nf{x=x}{b'|x}\vertr\square'_{j}B'\wedge \vertl\nf{x=x}{x=0}\vertr=N$ i.e. there is no restriction on $A'$ or $B'$ and in case $4$ we show that $\vertl\nf{x=x}{a|x}\vertr\geq A\wedge \vertl\nf{x=x}{b|x}\vertr= B'\wedge \Sigma_2$ is equivalent to $\vertl\nf{x=x}{b|x}\vertr= B'\wedge \Sigma_2$ and $B'\leq A$.

\noindent
\textbf{Case 6:} $\Sigma = \Sigma_3$.

\noindent
Suppose that $M$ satisfies $\Sigma_3$. Then $a|x$ is equivalent to $br|x$ in $M$ and
\[\vertl \nf{x=x}{br|x}(M)\vertr=\vertl \nf{x=x}{b|x}(M)\vertr\cdot\vertl \nf{b|x}{br|x}(M)\vertr=\vertl \nf{x=x}{b|x}(M)\vertr\cdot\vertl \nf{x=x}{r|x}(M)\vertr.\] So, in this case, the result now follows from \ref{redmult}.

\smallskip

\noindent
The remaining cases follow from the cases we have already covered by exchanging the roles of $\alpha$ and $\alpha-1$, $a$ and $b$, $r$ and $s$, and $A$ and $B$.
\end{proof}

\begin{proposition}\label{redexsig}
Let $R$ be a recursive Pr\"ufer domain. There is an algorithm which, given $w\in W$ with extended signature $((\square_1,\square_2),(z_1,z_2),(z_3,z_4))$ with $z_1+z_2>1$ or $z_3+z_4>1$, outputs $\underline{w}\in\mathbb{W}$ such that $w\in V$ if and only if $\underline{w}\in \mathbb{V}$ and $\exsig\underline{w}<\exsig w$.
\end{proposition}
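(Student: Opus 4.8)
\textbf{Proof plan for \ref{redexsig}.}

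The plan is to reduce the extended signature by the same strategy already used for the short signature in \ref{ssrem=}, \ref{ssremgeqdx}, \ref{ssremgeqxbcx}: first dispose of the case where the short signature can be reduced (so that we may assume $\square_1,\square_2\in\{\emptyset,\geq\}$), then handle the case $z_1+z_2>1$, and obtain the case $z_3+z_4>1$ by applying Herzog duality via \ref{dualexsig}. For the short-signature reduction: if $\square_1$ or $\square_2$ is $=$, apply \ref{ssrem=}; if $\square_1$ is $\geq$ and $z_1\geq 1$ apply \ref{ssremgeqdx}; if $\square_2$ is $\geq$ and $z_1\geq 1$ apply \ref{ssremgeqxbcx}. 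Since $z_1+z_2>1$ with $z_1=0$ forces $z_2>1$, and after these reductions $z_1\geq 1$ whenever $\square_1$ or $\square_2$ equals $\geq$ would have been handled, the only remaining situation to treat directly is when both $\square_1,\square_2\in\{\emptyset\}$ on the relevant side, i.e. neither $d|x/x=0$ nor $xb=0/c|x$ with $b,c\neq 0$ occurs, and $w$ is (up to an auxiliary sentence) $\chi_{f,g}$ with $f,g$ taking values $\geq 2$ on pp-pairs of the form $\nf{x=x}{c'|x}$ and $\nf{xb'=0}{x=0}$, with $z_1+z_2>1$.

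In that remaining case, there are at least two distinct pp-pairs of the form $\nf{x=x}{a|x}$ and $\nf{x=x}{b|x}$ occurring among $X\cup Y$ (using that $z_1+z_2$ counts exactly these, where $z_1$ is the count in $X$ and $z_2$ the count in $Y$; if $z_2>1$ we pick two from $Y$, otherwise one from $X$ and one from $Y$, or two from $X$). Let $r,s,\alpha$ be such that $a\alpha=br$ and $b(\alpha-1)=as$, and let $A,B$ be the exponents $f$ or $g$ assigns to $\nf{x=x}{a|x}$ and $\nf{x=x}{b|x}$. Apply \ref{a|xx=0b|xx=0help}: every module is elementary equivalent to $\oplus_{i=1}^6 M_i$ with $M_i\models\Sigma_i$, so by \ref{featheringextsig} (feathering via $\Omega_{f,g,6}$) we get $w\in V$ iff $\bigsqcup_{(\overline f,\overline g)\in\Omega_{f,g,6}}\bigsqcap_{i=1}^6\chi_{f_i,g_i}\wedge\Xi\wedge\Sigma_i\in\mathbb{V}$. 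For each $(\overline f,\overline g)$ and each $i$, if $X_i\neq X$ or $f_i(\nf{x=x}{a|x})=1$ then by \ref{featheringextsig} the extended signature already dropped; otherwise \ref{a|xx=0b|xx=0help} rewrites $\vertl\nf{x=x}{a|x}\vertr\square A\wedge\vertl\nf{x=x}{b|x}\vertr\square'B\wedge\Sigma_i$ as a disjunction of sentences each of which either has the two pp-pairs replaced by two with product of bounds strictly smaller (hence $z_1+z_2$ drops, or one of them becomes $=x=0$ which moves it out of the $\nf{x=x}{c'|x}$ count — either way $z_1+z_2$ strictly decreases, or $z_1+z_2$ stays and $z_2$ drops because a $Y$-pair is absorbed into a bound), or introduces a conjunct $\vertl\nf{x=x}{x=0}\vertr=N$, i.e. a size constraint, which does not add to any of $z_1,z_2,z_3,z_4$. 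One checks that iterating (as in \ref{ssremgeqxbcx}, using that the product $A\cdot B$ strictly decreases when it is not already resolved) terminates since $A\cdot B\in\N$ is strictly decreasing, and at the end $\exsig\underline w<\exsig w$. Finally, for $z_3+z_4>1$, pass to $Dw$, which by \ref{dualexsig} has $(z_3,z_4)$ and $(z_1,z_2)$ swapped, apply the previous case to obtain $\underline w$ with $\exsig\underline w<\exsig Dw$, and return $D\underline w$, noting $w\in V$ iff $D\underline w\in\mathbb{V}$ (by \ref{Dualsent}) and $\exsig D\underline w<\exsig w$ (again by \ref{dualexsig}).

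\textbf{Main obstacle.} The delicate point is the bookkeeping showing that in every branch produced by \ref{a|xx=0b|xx=0help} the extended signature genuinely decreases in the partial order defined before \ref{dualexsig} — in particular that replacing $\nf{x=x}{a|x}$ by $\nf{x=x}{x=0}$ (a size constraint) is consistent with the definition of $z_1$ (which only counts $\nf{x=x}{c'|x}$ with bound $>1$) so that it does not increase $z_1+z_2$, while the "$A'B'<AB$" branches keep the same extended signature only until $A\cdot B$ is driven down, and that the auxiliary sentences $\Sigma_i$ contribute nothing to the $z_j$'s. This is exactly parallel to the termination argument in \ref{ssremgeqxbcx} with $\deg$, so the structure is available; the work is in verifying each of the six cases of \ref{a|xx=0b|xx=0help} against the $\exsig$ order, together with the reduction to the case $\square_1,\square_2\neq\geq$ at the start.
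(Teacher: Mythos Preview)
Your overall plan—decompose via the six auxiliary sentences $\Sigma_i$ of \ref{a|xx=0b|xx=0help}, feather through $\Omega_{f,g,6}$, use a product $A\cdot B$ as a degree to drive termination, and handle $z_3+z_4>1$ by duality—matches the paper's approach. Two points, one minor and one a genuine gap.

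The preliminary reduction to $\square_1,\square_2\in\{\emptyset\}$ is both unnecessary and incomplete: \ref{ssremgeqdx} and \ref{ssremgeqxbcx} require $z_1\geq 1$ or $z_3\geq 1$, so for instance $(\square_1,\square_2)=(\geq,\emptyset)$ with $(z_1,z_2)=(0,2)$, $(z_3,z_4)=(0,0)$ is not covered by any of them. The paper does not make this reduction at all; the main argument below works for arbitrary short signature, and the preamble should simply be dropped.

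The substantive gap is your treatment of the branches in which \ref{a|xx=0b|xx=0help} returns a $\sigma_j$ of the second form, i.e.\ containing a conjunct $\vertl\nf{x=x}{x=0}\vertr=N$. You assert this ``does not add to any of $z_1,z_2,z_3,z_4$'', but $\nf{x=x}{x=0}=\nf{x=x}{0|x}$ \emph{is} of the form $\nf{x=x}{c'|x}$ (with $c'=0$), so for $N>1$ it contributes to $z_1$. In that second form the conjunct $\vertl\nf{x=x}{x=0}\vertr=N$ appears \emph{alongside} $\vertl\nf{x=x}{a'|x}\vertr\square_j A'$ and $\vertl\nf{x=x}{b'|x}\vertr\square_j' B'$, so $z_1+z_2$ can go up, and the clause gives no bound relating $A'B'$ to $AB$, so your degree argument does not apply either. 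The paper handles these branches differently: since $\vertl\nf{x=x}{x=0}\vertr=N$ fixes the module size, the whole sentence $\sigma_j\wedge\chi_{f,g}\wedge\Sigma_i\wedge\Xi$ concerns modules of a fixed finite size, and one invokes \ref{EPPfinite}—hence the hypothesis that $\EPP(R)$ is recursive, which the paper's proof uses and which you never mention—to decide it outright and replace that branch by $\top$ or $\bot$. Without this step the reduction does not terminate.
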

\begin{proof}
For $w\in W$, as in (\ref{flaW2}), define \[\deg_1 w:=\prod_{\nf{x=x}{c|x}\in X}f(\nf{x=x}{c|x})\cdot\prod_{\nf{x=x}{c|x}\in Y}g(\nf{x=x}{c|x}).\]
Given $w\in W$ with extended signature $((\square_1,\square_2),(z_1,z_2),(z_3,z_4))$ with $z_1+z_2>1$, we will show how to compute $\underline{w}\in\mathbb{W}$, a lattice combination of $w'\in W$, such that $w\in V$ if and only if $\underline{w}\in\mathbb{V}$ and such that for each $w'$, either $\exsig w'<\exsig w$, or, $\exsig w'\leq\exsig w$ and $\deg_1 w'<\deg_1 w$. Since $\deg_1$ takes values in $\N$, by iterating this process, we will eventually compute $\underline{w}\in \mathbb{W}$ which is a lattice combination of $w'\in W$ such that $\exsig w'<\exsig w$.

We start with a special case. Let $a,b,\alpha,r,s\in R$ be such that $a\alpha=br$ and $b(\alpha-1)=as$. Let $\Sigma_i$, for $1\leq i\leq 6$, be as in \ref{a|xx=0b|xx=0help}. Suppose that, for some $1\leq i\leq 6$, $w$ is
\[\vertl \nicefrac{x=x}{a|x}\vertr\square A\wedge \vertl \nicefrac{x=x}{b|x}\vertr\square'B\wedge \chi_{f,g}\wedge \Sigma_i\wedge\Xi\in W\]
where $A,B\in\N_2$, $\square,\square'\in\{=,\geq\}$, $X$ and $Y$ are finite sets of appropriate pp-pairs, $f:X\rightarrow \N$, $g:Y\rightarrow \N$ and $\Xi$ is an auxiliary sentence.

We will compute $u=\bigsqcup_{j=1}^n u_j$ such that for each $1\leq j\leq n$, either $\exsig u_j<\exsig w$, or, $\exsig u_j\leq \exsig w$ and $\deg_1 u_j<\deg_1 w$.

If the algorithm from \ref{a|xx=0b|xx=0help} returns $\bot$, then
\[T_R\models \lnot (\vertl \nicefrac{x=x}{a|x}\vertr\square A\wedge \vertl \nicefrac{x=x}{b|x}\vertr\square'B\wedge \chi_{f,g}\wedge \Sigma_i\wedge\Xi).\] In this case, set $u:=\bot$. Otherwise, let $\sigma_1,\ldots\sigma_n$ be, as in \ref{a|xx=0b|xx=0help}, such that
\[T_R\models \vertl\nicefrac{x=x}{a|x}\vertr\square A\wedge \vertl\nicefrac{x=x}{b|x}\vertr\square' B\wedge \Sigma_i\leftrightarrow \bigvee_{j=1}^{n}(\sigma_{j}\wedge \Sigma_i).\]
Therefore
\[T_R\models \vertl \nicefrac{x=x}{a|x}\vertr\square A\wedge \vertl \nicefrac{x=x}{b|x}\vertr\square'B\wedge \chi_{f,g}\wedge \Sigma_i\wedge\Xi\leftrightarrow \bigvee_{j=1}^{n}(\sigma_{j}\wedge \chi_{f,g}\wedge \Sigma_i\wedge\Xi)\]
So $w\in V$ if and only if
\[ \bigsqcup_{j=1}^{n}(\sigma_{j}\wedge \chi_{f,g}\wedge \Sigma_i\wedge\Xi)\in\mathbb{V}.\]
If $\sigma_j$ is of the form
\[ |x=x/a'|x|\square_{j}A'\wedge |x=x/b'|x|\square'_{j}B'\wedge |x=x/x=0|=N,\] where $\square_{j},\square'_{j}\in \{=,\geq\}$, $a',b'\in R$ and $A'',B'',N\in \N$ then $\sigma_{j}\wedge \chi_{f,g}\wedge \Sigma_i\wedge\Xi$ is a sentence about an $R$-module of fixed finite size. So, since $\EPP(R)$ is recursive, by \ref{EPPfinite}, we can effectively decide whether $\sigma_{j}\wedge \chi_{f,g}\wedge \Sigma_i\wedge\Xi$ holds in some $R$-module. Set $u_j:=\top$ if $\sigma_{j}\wedge \chi_{f,g}\wedge \Sigma_i\wedge\Xi$ is true in some $R$-module and $u_j:=\bot$ otherwise. Finally, if $\sigma_j$ is of the form
\[|x=x/a'|x|\square_{j}A'\wedge |x=x/b'|x|\square'_{j}B',\] where $\square_{j},\square'_{j}\in \{\emptyset,=,\geq\}$, $a',b'\in R$ and $A',B'\in\N$ with $\square_j\preceq\square$, $\square'_j\preceq\square'$ and $A'B'<AB$ then set $u_j:=\sigma_{j}\wedge \chi_{f,g}\wedge \Sigma_i\wedge\Xi$. The condition that $\square_j\preceq\square$ and $\square'_j\preceq\square'$ ensures that $\exsig u_j\leq \exsig w$. The condition that $A'B'<AB$ implies that $\deg_1u_j<\deg_1w$. So $w\in V$ if and only if $u:=\bigsqcup_{j=1}^nu_j\in \mathbb{V}$ and $\deg_1 u_j<\deg_1 w$ for $1\leq j\leq n$ as required.

\smallskip

We now consider the general case. Suppose
\[w:=\chi_{f,g}\wedge\Xi,\] where $X$ and $Y$ are disjoint finite sets of appropriate pp-pairs, $f:X\rightarrow\N_2$, $g:Y\rightarrow \N_2$ and $\Xi$ is an auxiliary sentence, has extended signature $((\square_1,\square_2),(z_1,z_2),(z_3,z_4))$ with $z_1+z_2>1$. There exist $a,b\in R$ with $a\neq b$ such that $\nf{x=x}{a|x},\nf{x=x}{b|x}\in X\cup Y$. Then $w\in V$ if and only if \[\bigsqcup_{(\overline{f},\overline{g})\in\Omega_{f,g,6}}\bigsqcap_{i=1}^6\Sigma_i\wedge\chi_{f_i,g_i}\wedge\Xi\in\mathbb{V}.\]

\smallskip
\noindent
\textbf{Claim:} For all $(\overline{f},\overline{g})\in\Omega_{f,g,6}$ and $1\leq i\leq 6$, either \[\exsig\Sigma_i\wedge\chi_{f_i,g_i}\wedge\Xi<\exsig\chi_{f,g}\wedge \Xi, \text{ or, }   \deg_1\Sigma_i\wedge\chi_{f_i,g_i}\wedge\Xi<\deg_1 \chi_{f,g},\] or, $X_i=X$, $Y_i=Y$, $f_i(\nicefrac{x=x}{c|x})=f(\nicefrac{x=x}{c|x})$ for all $\nicefrac{x=x}{c|x}\in X$ and $g_i(\nicefrac{x=x}{c|x})=g(\nicefrac{x=x}{c|x})$ for all $\nicefrac{x=x}{c|x}\in Y$.

\smallskip

By \ref{featheringextsig}, if $X_i\neq X$ or $Y_i\neq Y$ then the extended signature of $\chi_{f_i,g_i}$ is strictly less that the extended signature of $\chi_{f,g}$. So suppose that $X=X_i$, $Y=Y_i$. By definition, $g(\nicefrac{x=x}{c|x})=g_i(\nicefrac{x=x}{c|x})$ for all $\nicefrac{x=x}{c|x}\in Y=Y_i$ and $f_i(\nicefrac{x=x}{c|x})\leq f(\nicefrac{x=x}{c|x})$ for all $\nf{x=x}{c|x}\in X=X_i$. So, if $f_i(\nicefrac{x=x}{c|x})<f(\nicefrac{x=x}{c|x})$, for some $\nf{x=x}{c|x}\in X$, then $\deg_1\Sigma_i\wedge\chi_{f_i,g_i}\wedge\Xi<\deg_1 \chi_{f,g}$. So we have proved the claim.

\smallskip

Therefore for each $(\overline{f},\overline{g})\in\Omega_{f,g,6}$ and $1\leq i\leq 6$, either $\exsig\Sigma_i\wedge\chi_{f_i,g_i}\wedge\Xi<\exsig\chi_{f,g}\wedge\Xi$, $\deg_1 \chi_{f_i,g_i}\wedge\Sigma_i\wedge\Xi<\deg_1 \chi_{f,g}\wedge\Xi$ or $\chi_{f_i,g_i}\wedge\Sigma_i\wedge\Xi$ is of the form of the special case. Since, by \ref{featheringextsig}, we always have $\exsig\Sigma_i\wedge\chi_{f_i,g_i}\wedge\Xi\leq \exsig\chi_{f,g}\wedge\Xi$, we are done.

The version of the lemma with $z_3+z_4>1$ follows from the one we have just proved by applying duality as in \ref{ssremgeqxbcx}.
\end{proof}

\noindent
Say $w\in W$ is \textbf{reducible} if $w$ has
\begin{itemize}
\item short signature $(=,\square)$ or $(\square,=)$, or
\item extended signature $((\geq,\square),(z_1,z_2),(z_3,z_4))$ or $((\square,\geq),(z_1,z_2),(z_3,z_4))$ with $z_1\geq 1$ or $z_3\geq 1$, or
\item extended signature $((\square,\square'),(z_1,z_2),(z_3,z_4))$ with $z_1+z_2>1$ or $z_3+z_4>1$.
\end{itemize}

\begin{remark}
If $w\in W$ is reducible then $w$ is of the form required by one of the lemmas \ref{ssrem=}, \ref{ssremgeqdx}, \ref{ssremgeqxbcx} or \ref{redexsig}.
\end{remark}

\noindent
Thus $w\in W$ is not reducible if and only if $w$ has extended signature
\begin{itemize}
\item $(\emptyset,\emptyset,(z_1,z_2),(z_3,z_4))$ with $(z_1,z_2),(z_3,z_4)\in\{(1,0),(0,1),(0,0)\}$ or
\item $((\square,\square'),(z_1,z_2),(z_3,z_4))$ with $(\square,\square')\in\{(\geq,\emptyset),(\emptyset,\geq), (\geq,\geq),(\emptyset,\emptyset)\}$ and

    \noindent
    $(z_1,z_2),(z_3,z_4)\in\{(0,1),(0,0)\}$.
\end{itemize}
The next remark follows directly from \ref{ZieglertoTinfty}.
\begin{remark}\label{nonredtoZg}
Let $R$ be a Pr\"ufer domain. There is an algorithm which, given $w\in W$ with extended signature $((\square_1,\square_2),(z_1,z_2),(z_3,z_4))$ with $(\square_1,\square_2)\in\{(\geq,\emptyset),(\emptyset,\geq),(\geq,\geq),(\emptyset,\emptyset),\}$ and $(z_1,z_2),(z_3,z_4)\in\{(0,1),(0,0)\}$, answers whether $w\in V$  or not.
\end{remark}

\section{Algorithms for sentences which are not reducible}\label{notred}

%
%
%

\begin{lemma}\label{Xusefulform}
Let $R$ be a recursive Pr\"ufer domain with $X(R)$ recursive. There is an algorithm which, given $\lambda \in R$, $C\in\N$ and $(r,ra,\gamma,\delta)\in R^4$, answers whether there exist $h\in\N_0$, prime ideals $\mfrak{p}_i\lhd R$ and ideals $I_i\lhd R_{\mfrak{p}_i}$ for $1\leq i\leq h$ such that $(\mfrak{p}_i,I_i)\models (r,ra,\gamma,\delta)$ and $\vertl\oplus_{i=1}^hR_{\mfrak{p}_i}/\lambda R_{\mfrak{p}_i}\vertr=C$.
\end{lemma}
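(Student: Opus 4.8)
The plan is to reduce the statement to the decidability result \ref{fincond} (which uses $\EPP(R)$ recursive), by unwinding the two constraints defining the desired direct sum. There are two quantities to control: the membership condition $(\mfrak{p}_i,I_i)\models (r,ra,\gamma,\delta)$, and the cardinality $\vertl\oplus_{i=1}^h R_{\mfrak{p}_i}/\lambda R_{\mfrak{p}_i}\vertr = C$. The subtlety — and the reason $X(R)$ enters rather than just $\EPP(R)$ — is that we are \emph{not} imposing that $R_{\mfrak{p}_i}/I_i$ be finite; in fact the $I_i$ here may well be such that $R_{\mfrak{p}_i}/I_i$ is infinite, while $R_{\mfrak{p}_i}/\lambda R_{\mfrak{p}_i}$ is still a prime power. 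This is exactly the configuration that the definition of $X(R)$ (see \ref{X2ndformulation}) is built to detect.

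First I would dispose of the trivial cases: if $\lambda \notin \mfrak{p}_i$ for some index then that summand contributes a factor $1$ to the product, so by the usual feathering argument (cf. \ref{feathering}) one may assume each contributing summand has $\lambda\in\mfrak{p}_i$ and $1\le h\le$ (number of prime factors of $C$, counted with multiplicity). If $C=1$ we just ask whether there is a single $(\mfrak{p},I)\models(r,ra,\gamma,\delta)$ with $\lambda\notin\mfrak{p}$, or no summands at all, which is decidable using \ref{fincond}. Also, as in \ref{fincond}, applying \ref{cleanup} we may assume $a = ra'$ is already in the normalized shape, and we may assume $r\neq 0$, since $r=0$ forces $I_i=0$ and then $R_{\mfrak{p}_i}/\lambda R_{\mfrak{p}_i}$ is infinite (as $\lambda\neq 0$ would still give a nonzero quotient of an infinite module — one must handle $\lambda=0$ separately, where every summand of the form $R_{\mfrak{p}}/0R_{\mfrak{p}}$ is infinite, so the condition $\vertl\cdot\vertr=C$ fails unless $h=0$, $C=1$).

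The core step: factor $C = \prod_{p\in P} p^{n_p}$ over its prime divisors $P$. For each prime ideal $\mfrak{p}$, $\vertl R_{\mfrak{p}}/\lambda R_{\mfrak{p}}\vertr$, when finite, is a power of $\vertl R/\mfrak{p}\vertr$, hence a prime power. So a sequence $(\mfrak{p}_i,I_i)_{1\le i\le h}$ with the required properties exists if and only if for each $p\in P$ there is such a sequence contributing exactly $p^{n_p}$ at $p$ — and by reordering / splitting, we can ask these questions independently for each $p$. So it suffices to decide, given $p\in\P$, $n\in\N_0$, $\lambda$, $C$-free, and $(r,ra,\gamma,\delta)$: does there exist $h\in\N_0$ and prime ideals $\mfrak{p}_i$ with ideals $I_i$ such that $(\mfrak{p}_i,I_i)\models(r,ra,\gamma,\delta)$ and $\vertl\oplus_i R_{\mfrak{p}_i}/\lambda R_{\mfrak{p}_i}\vertr = p^n$. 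Now I recognize this: writing out $(\mfrak{p}_i,I_i)\models (r,ra,\gamma,\delta)$ as $r R_{\mfrak{p}_i}\supseteq I_i$ (automatic once $I_i = rJ_i$), $ra\in I_i$, $\gamma\notin\mfrak{p}_i$, $\delta\notin I_i^{\#}$, this is precisely the defining condition of $(p,n;\lambda,\gamma,ra,\delta)\in X(R)$ after we absorb the ``$rR_{\mfrak{p}_i}\supseteq I_i$'' clause. More carefully: $(\mfrak{p}_i,I_i)\models(r,ra,\gamma,\delta)$ iff $I_i = rJ_i$ for some ideal $J_i\lhd R_{\mfrak{p}_i}$ with $a\in J_i$ — but the data $a\in I_i$ and $\delta\notin I_i^{\#}$ and $\gamma\notin\mfrak{p}_i$ is, when $r\neq 0$, exactly the shape appearing in $X(R)$ with the roles $e:=\lambda$, $a:=ra$ (or $a$, after passing to $J_i$), $\gamma:=\gamma$, $\delta:=\delta$. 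Thus one checks directly from the definition of $X(R)$ and \ref{X2ndformulation} that the answer is ``yes'' iff $(p,n;\lambda,\gamma,ra,\delta)\in X(R)$ for each $p\in P$ (with the appropriate exponent $n_p$), together with the boundary convention handling $n_p=0$.

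The main obstacle I anticipate is the bookkeeping of the membership condition $(\mfrak{p}_i,I_i)\models(r,ra,\gamma,\delta)$ versus the slightly different membership condition implicit in $X(R)$ — specifically making sure that the clause ``$rR_{\mfrak{p}_i}\supseteq I_i$'' (equivalently, $I_i$ factors as $rJ_i$) is correctly tracked and does not change the cardinality count $\vertl R_{\mfrak{p}_i}/\lambda R_{\mfrak{p}_i}\vertr$, which it does not, since that quantity only depends on $\mfrak{p}_i$ and $\lambda$. A second, minor subtlety is verifying that one really can split the problem over the prime divisors of $C$ independently: since distinct residue fields $R/\mfrak{p}$ may have distinct characteristics, a single summand contributes to at most one prime $p\in P$, so the decomposition is automatic — but one should state this cleanly, perhaps invoking the same reasoning as in the proof of \ref{USEEPP}. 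Modulo these routine checks, the proof is a direct translation into the recursivity of $X(R)$, exactly parallel in structure to how \ref{fincond} uses \ref{USEEPP}.
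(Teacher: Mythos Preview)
Your overall strategy---factor $C$ into prime powers and, for each prime $p$, match the resulting condition against the definition of $X(R)$---is exactly the paper's, and your treatment of the case $r\neq 0$ is essentially correct (the right tuple is $(p,n;\lambda,\gamma,a,\delta)$, as you note parenthetically after writing $I_i=rJ_i$).

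The genuine gap is your dismissal of the case $r=0$. You claim that $r=0$ forces $I_i=0$ and hence $R_{\mfrak{p}_i}/\lambda R_{\mfrak{p}_i}$ is infinite, but this conflates two different quotients: $I_i=0$ makes $R_{\mfrak{p}_i}/I_i$ infinite, not $R_{\mfrak{p}_i}/\lambda R_{\mfrak{p}_i}$, which depends only on $\lambda$ and $\mfrak{p}_i$ and may well be finite (take $R=\Z$, $\mfrak{p}_i=p\Z$, $\lambda=p$). So the $r=0$ case is not vacuous. The paper handles it directly: when $r=0$, the condition $(\mfrak{p},I)\models(0,0,\gamma,\delta)$ reduces to $I=0$, $\gamma\notin\mfrak{p}$ and $\delta\neq 0$, so the question becomes whether there exist $\mfrak{p}_i$ with $\gamma\notin\mfrak{p}_i$ and $|\bigoplus_i R_{\mfrak{p}_i}/\lambda R_{\mfrak{p}_i}|=p^n$, i.e.\ whether $(p,n;\lambda,\gamma,0,1)\in X(R)$, together with the side condition $\delta\neq 0$. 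A smaller point: the lemma assumes only that $X(R)$ is recursive, not $\EPP(R)$, so your appeals to \ref{fincond} are misplaced---though harmless here, since they occur only in the $C=1$ case, where $h=0$ already gives the answer ``yes''.
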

\begin{proof}
If $C=1$ then there is nothing to prove, so suppose that $C\neq 1$. Let $p_1,\ldots,p_l\in \mathbb{P}$ be distinct primes and $n_1,\ldots,n_l\in\N$ be such that $C=\prod_{j=1}^lp_j^{n_j}$. For each prime ideal $\mfrak{p}\lhd R$, if $\vertl R_{\mfrak{p}}/\lambda R_{\mfrak{p}}\vertr$ is finite then it is a prime power. Therefore, there exist prime ideals $\mfrak{p}_i\lhd R$ and ideals $I_i\lhd R_{\mfrak{p}_i}$ for $1\leq i\leq h$ such that $(\mfrak{p}_i,I_i)\models (r,ra,\gamma,\delta)$ and $\vertl\oplus_iR_{\mfrak{p}_i}/\lambda R_{\mfrak{p}_i}\vertr=C$ if and only if for each $1\leq j\leq l$, there exist $h_j\in\N_0$, prime ideals $\mfrak{p}_{ij}\lhd R$ and ideals $I_{ij}\lhd R_{\mfrak{p}_{ij}}$ for $1\leq i\leq h_j$ such that $(\mfrak{p}_{ij},I_{ij})\models (r,ra,\gamma,\delta)$ and $\vertl\oplus_{i=1}^{h_j}R_{\mfrak{p}_{ij}}/\lambda R_{\mfrak{p}_{ij}}\vertr=p^{n_j}$. Thus we may reduce to the case that $C=p^n$ for some $p\in\mathbb{P}$ and $n\in\N$.

We consider the cases $r=0$ and $r\neq 0$ separately.

\noindent
\textbf{Case $r=0$:}
For $\mfrak{p}\lhd R$ prime and $I\lhd R_{\mfrak{p}}$, $(\mfrak{p},I)\models (0,0,\gamma,\delta)$ if and only if $I=0$, $\gamma\notin \mfrak{p}$ and $\delta\neq 0$. So, there exist $h\in\N_0$, prime ideals $\mfrak{p}_i\lhd R$ and ideals $I_i\lhd R_{\mfrak{p}_i}$ for $1\leq i\leq h$ such that $(\mfrak{p}_i,I_i)\models (0,0,\gamma,\delta)$ and $\vertl\oplus_{i=1}^hR_{\mfrak{p}_i}/\lambda R_{\mfrak{p}_i}\vertr=p^n$ if and only if $(p,n;\lambda,\gamma,0,1)\in X(R)$ and $\delta\neq 0$.

\noindent
\textbf{Case $r\neq 0$:}
If $r\neq 0$ then $(\mfrak{p},I)\models (r,ra,\gamma,\delta)$ if and only if $I=rJ$ for some $J\lhd R_{\mfrak{p}}$ and $(\mfrak{p},J)\models (1,a,\gamma,\delta)$. So, there exist $h\in\N_0$, prime ideals $\mfrak{p}_i\lhd R$ and ideals $I_i\lhd R_{\mfrak{p}_i}$ for $1\leq i\leq h$ such that $(\mfrak{p}_i,I_i)\models (r,ra,\gamma,\delta)$ and $\vertl\oplus_{i=1}^hR_{\mfrak{p}_i}/\lambda R_{\mfrak{p}_i}\vertr=p^n$ if and only if there exist $h\in\N_0$, prime ideals $\mfrak{p}_i\lhd R$ and ideals $J_i\lhd R_{\mfrak{p}_i}$ for $1\leq i\leq h$ such that $(\mfrak{p}_i,J_i)\models (1,a,\gamma,\delta)$ and $\vertl\oplus_{i=1}^hR_{\mfrak{p}_i}/\lambda R_{\mfrak{p}_i}\vertr=p^n$. The last statement is equivalent to $(p,n;\lambda,\gamma,a,\delta)\in X(R)$.
\end{proof}

\begin{proposition}\label{x=x/c|x=}
Let $R$ be a recursive Pr\"ufer domain with $\EPP(R)$ and $X(R)$ recursive. There is an algorithm which, given $c\in R$, $C\in \N$ and $\Xi$ an auxiliary sentence, answers whether there exists $M\in\Mod\text{-}R$ such that \[M\models \vertl \nicefrac{x=x}{c|x}\vertr=C\wedge \Xi.\]
\end{proposition}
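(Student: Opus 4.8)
The plan is to reduce, via the description of uniserial modules in Section~\ref{sectuni}, to two decidable subproblems --- one about direct sums of modules $R_{\mfrak p}/cI$, handled by $X(R)$, and one about finite modules, handled by $\EPP(R)$ --- and then to glue them by a multiplicativity argument on Baur--Monk invariants.

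First I would dispose of the degenerate case $c=0$: then $c|x$ is $T_R$-equivalent to $x=0$, so $\vertl\nf{x=x}{c|x}\vertr=C\wedge\Xi$ is just $\vertl\nf{x=x}{x=0}\vertr=C\wedge\Xi$, an assertion about $R$-modules of size $C$, and its satisfiability is decidable since, by \ref{EPPfinite}, $\EPP(R)$ recursive implies the theory of $R$-modules of size $n$ is decidable uniformly in $n$. So assume $c\neq 0$, write $\chi:=\vertl\nf{x=x}{c|x}\vertr=C\wedge\Xi$, and put $\Xi':=\Xi\wedge\vertl\nf{c|x}{x=0}\vertr=1$, the extra conjunct expressing that $c$ annihilates the module. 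Using \ref{auxtoragammadelta} with $\lambda:=c$, compute tuples $(r_j,r_ja_j,\gamma_j,\delta_j)$ for $1\le j\le n$ with the property that $R_{\mfrak p}/cI\models\Xi$ if and only if $(\mfrak p,I)\models(r_j,r_ja_j,\gamma_j,\delta_j)$ for some $j$.

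The heart of the argument is the claim that there exists $M\models\chi$ if and only if there is a factorization $C=C_0\cdot C_1\cdots C_n$ in $\N$ such that (i) some finite $R$-module of size $C_0$ satisfies $\Xi'$, and (ii) for each $1\le j\le n$ there are prime ideals $\mfrak p_i$ and ideals $I_i\lhd R_{\mfrak p_i}$ with $(\mfrak p_i,I_i)\models(r_j,r_ja_j,\gamma_j,\delta_j)$ and $\vertl\bigoplus_i R_{\mfrak p_i}/cR_{\mfrak p_i}\vertr=C_j$. To prove it, use \ref{REFuni} to replace $M$ by a finite direct sum $\bigoplus U_i$ of uniserial modules over localizations; since $\vertl\nf{x=x}{c|x}\vertr$ and each conjunct of $\Xi$ are multiplicative over direct sums, discard the $U_i$ with $\vertl\nf{x=x}{c|x}(U_i)\vertr=1$, and apply \ref{x=x/c|xfinnonzero} to the rest: each is either of the form $R_{\mfrak p_i}/cI_i$ with $\vertl\nf{x=x}{c|x}(U_i)\vertr=\vertl R_{\mfrak p_i}/cR_{\mfrak p_i}\vertr$, or finite with $c\in\ann_R U_i$ and $\vertl\nf{x=x}{c|x}(U_i)\vertr=\vertl U_i\vertr$. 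Collecting the latter into a finite module $M_b$ (which then satisfies $\Xi'$) and grouping the former according to which tuple $(\mfrak p_i,I_i)$ satisfies --- possible since each such $R_{\mfrak p_i}/cI_i$ satisfies $\Xi$ --- yields the factorization and conditions (i), (ii); the converse is immediate from the equalities $\vertl\nf{x=x}{c|x}(R_{\mfrak p}/cI)\vertr=\vertl R_{\mfrak p}/cR_{\mfrak p}\vertr$ and $\vertl\nf{x=x}{c|x}(M_b)\vertr=\vertl M_b\vertr$ (the latter using $c\in\ann_R M_b$) together with the defining property of the tuples.

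Granting the claim, the decision procedure enumerates the finitely many factorizations $C=C_0C_1\cdots C_n$; condition (i) is decided by \ref{EPPfinite} (it asks for a model of $\vertl\nf{x=x}{x=0}\vertr=C_0\wedge\Xi'$), and each instance of condition (ii) is decided by \ref{Xusefulform}, using that $X(R)$ is recursive; the answer is ``yes'' exactly when some factorization passes every test. I expect the only genuine obstacle to be conceptual: recognizing that the summands $R_{\mfrak p}/cI$ --- which need not be finite --- are precisely the objects $X(R)$ was introduced to control, and arranging, via \ref{auxtoragammadelta}, for the constraint imposed by $\Xi$ on them to take the shape $(\mfrak p,I)\models(r,ra,\gamma,\delta)$ demanded by \ref{Xusefulform}. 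Everything else is routine bookkeeping with multiplicativity of the invariants over direct sums.
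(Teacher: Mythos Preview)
Your proposal is correct and follows essentially the same route as the paper: reduce via \ref{REFuni} and \ref{x=x/c|xfinnonzero} to a finite module $F$ with $c\in\ann_R F$ (handled by \ref{EPPfinite}) plus a direct sum of modules $R_{\mfrak p}/cI$ (handled, after applying \ref{auxtoragammadelta} with $\lambda=c$, by \ref{Xusefulform}), and glue by factoring $C$. The paper presents the factorization in two stages ($C=A\cdot B$, then $B=\prod_j B_j$) rather than your single $C=C_0\cdots C_n$, and does not separately dispose of the case $c=0$---since \ref{auxtoragammadelta} requires $\lambda\neq 0$, your explicit treatment of that case is arguably tidier---but these are cosmetic differences.
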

\begin{proof}
Let $\chi$ be the sentence $\vertl \nicefrac{x=x}{c|x}\vertr=C\wedge \Xi$. By \ref{REFuni}, there exists $M\models \chi$ if and only if there exist prime ideals $\mfrak{p}_i\lhd R$ and uniserial $R_{\mfrak{p}_i}$-modules $U_i$ for $1\leq i\leq l$ such that $\oplus_{i=1}^lU_i\models \chi$. Moreover, we may assume that $U_i/U_ic\neq 0$ for all $1\leq i\leq l$. 
By \ref{x=x/c|xfinnonzero}, for each $1\leq i\leq l$, either $c\in \ann_R U_i$ or, for some ideal $I_i\lhd R_{\mfrak{p}_i}$, $U_i\cong R_{\mfrak{p}_i}/cI_i$. Thus, there exists $M\models \chi$ if and only if there exist $A,B\in \N$ with $AB=C$, $F\in\Mod\text{-}R$ such that
\[F\models \vertl \nf{x=x}{x=0}\vertr=A\wedge \vertl \nf{c|x}{x=0}\vertr=1\wedge \Xi\] and $h\in\N_0$, prime ideals $\mfrak{p}_i\lhd R$ and ideals $I_i\lhd R_{\mfrak{p}_i}$ for $1\leq i\leq h$ such that $\vertl \oplus_{i=1}^hR_{\mfrak{p}_i}/cR_{\mfrak{p}_i}\vertr=B$ and $R_{\mfrak{p}_i}/cI_i\models \Xi$ for $1\leq i\leq h$. So, since $\EPP(R)$ is recursive, by \ref{EPPfinite},
it is enough to show that there is an algorithm which answers whether there exist $h\in\N_0$, prime ideals $\mfrak{p}_i\lhd R$ and ideals $I_i\lhd R_{\mfrak{p}_i}$ for $1\leq i\leq h$ such that $\vertl \oplus_{i=1}^h R_{\mfrak{p}_i}/cR_{\mfrak{p}_i}\vertr=B$ and $R_{\mfrak{p}_i}/cI_i\models \Xi$ for $1\leq i\leq h$.

By \ref{auxtoragammadelta}, we can compute $(r_j,r_ja_j,\gamma_j,\delta_j)$ for $1\leq j\leq n$ such that $(\mfrak{p},I)\models (r_j,r_ja_j,\gamma_j,\delta_j)$ for some $1\leq j\leq n$ if and only if $R_{\mfrak{p}}/cI\models \Xi$. Thus there exist $h\in\N_0$, prime ideals $\mfrak{p}_i\lhd R$ and ideals
$I_i\lhd R_{\mfrak{p}_i}$ for $1\leq i\leq h$ such that $\vertl \oplus_{i=1}^h R_{\mfrak{p}_i}/cR_{\mfrak{p}_i}\vertr=B$ and $R_{\mfrak{p}_i}/cI_i\models \Xi$ for $1\leq i\leq h$ if and only if there exist $B_j\in\N$ for $1\leq j\leq n$ such that $B=\prod_{j=1}^nB_j$ and for $1\leq j\leq n$, there exist $h_j\in\N_0$, prime ideals $\mfrak{p}_{ij}\lhd R$ and ideals $I_{ij}\lhd R_{\mfrak{p}_{ij}}$ for $1\leq i\leq h_j$ such that $\vertl \oplus_{i=1}^{h_j} R_{\mfrak{p}_{ij}}/cR_{\mfrak{p}_{ij}}\vertr=B_j$ and $(\mfrak{p}_{ij},I_{ij})\models (r_j,r_ja_j,\gamma_j,\delta_j)$ for $1\leq i\leq h_j$. The result now follows from \ref{Xusefulform}.
\end{proof}

\begin{cor}\label{xb=0x=0=}
Let $R$ be a recursive Pr\"ufer domain with $\EPP(R)$ and $X(R)$ recursive. There is an algorithm which, given $b\in R$, $B\in \N$ and $\Xi$ an auxiliary sentence, answers whether there exists $M\in\Mod\text{-}R$ such that \[M\models \vertl \nicefrac{xb=0}{x=0}\vertr=B\wedge \Xi.\]
\end{cor}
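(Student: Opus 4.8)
The plan is to reduce the statement about the pp-pair $\nicefrac{xb=0}{x=0}$ to the already-established statement about $\nicefrac{x=x}{c|x}$ of Proposition \ref{x=x/c|x=} by invoking Herzog's duality. First I would recall, from the definition of the duality $D$ on pp-formulae (see \ref{Dualsent} and the remarks preceding it), that $D(xb=0)$ is $b|x$ and $D(x=x)$ is $x=0$, and hence, up to the lattice anti-isomorphism, the pp-pair $\nicefrac{xb=0}{x=0}$ is dual to the pp-pair $\nicefrac{x=x}{b|x}$. Moreover, since $\Xi$ is an auxiliary sentence, i.e. a conjunction of sentences of the form $\vertl\nicefrac{d|x}{x=0}\vertr=1$ and $\vertl\nicefrac{xa=0}{c|x}\vertr=1$, applying the duality conjunct-wise transforms it into another auxiliary sentence: $D(\nicefrac{d|x}{x=0})=\nicefrac{xd=0}{x=0}$... here I should be slightly more careful, because $D(d|x)$ is $xd=0$ and $D(x=0)$ is $x=x$, so $D(\nicefrac{d|x}{x=0})$ is $\nicefrac{x=x}{xd=0}$, which is not literally of the form appearing in an auxiliary sentence; but $\vertl\nicefrac{x=x}{xd=0}\vertr=1$ is $T_R$-equivalent to $\vertl\nicefrac{xd=0}{x=0}\vertr=1$ (both say $xd=0\leftrightarrow x=x$, i.e. $d=0$ acts as... rather, both say $d\in\ann M$). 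Similarly $D(\nicefrac{xa=0}{c|x})$ is $\nicefrac{xc=0}{a|x}$ which is again a conjunct of the right form. So from $\Xi$ we can effectively compute an auxiliary sentence $\Xi'$ with $D\Xi$ equivalent to $\Xi'$ modulo $T_R$-equivalence of the relevant size-$1$ conjuncts.

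The key step is then: by Proposition \ref{Dualsent}, there exists a (right) $R$-module satisfying $\vertl\nicefrac{xb=0}{x=0}\vertr=B\wedge\Xi$ if and only if there exists a left $R$-module satisfying $D(\vertl\nicefrac{xb=0}{x=0}\vertr=B\wedge\Xi)$, which after the above rewriting is $\vertl\nicefrac{x=x}{b|x}\vertr=B\wedge\Xi'$. Since $R$ is a commutative ring, left and right $R$-modules coincide, so this last condition is exactly an instance of the input to Proposition \ref{x=x/c|x=}, with $c:=b$, $C:=B$, and auxiliary sentence $\Xi'$. Note that the hypotheses needed for Proposition \ref{x=x/c|x=} — that $R$ is a recursive Pr\"ufer domain with $\EPP(R)$ and $X(R)$ recursive — are precisely the hypotheses we are given here, and that $\EPP(R)$ and $X(R)$ are defined in terms of the ring $R$ alone (there is no left/right asymmetry). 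Composing the algorithm that computes $\Xi'$ from $\Xi$ with the algorithm of Proposition \ref{x=x/c|x=} then gives the desired algorithm.

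I expect the only mildly delicate point to be the bookkeeping around the duality: one must check that $D$ sends the sentence $\vertl\nicefrac{\phi}{\psi}\vertr=N$ to $\vertl\nicefrac{D\psi}{D\phi}\vertr=N$ (this is exactly \ref{Dualsent}), and that the resulting dual of an auxiliary sentence, after the harmless $T_R$-equivalences noted above, is again an auxiliary sentence of the form required by \ref{x=x/c|x=}; all of this is routine given the explicit values $D(d|x)=(xd=0)$ and $D(xa=0)=(a|x)$. An alternative, entirely self-contained route — which I would mention but not pursue — is to mimic the proof of \ref{x=x/c|x=} directly, using \ref{REFuni} to reduce to a finite sum of uniserial modules $U_i$ with $U_i b\subsetneq U_i$... actually the dual statement: one works with $\nicefrac{xb=0}{x=0}(U_i)$ finite nonzero, applies \ref{unifinitefg} to get $U_i\cong V/K_i$, splits off the part where $b\in\ann U_i$, and on the remaining part writes $K_i$ appropriately; but this duplicates work already done, so the duality argument is cleaner. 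Hence the proof is short:

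\begin{proof}
Since $R$ is commutative, left and right $R$-modules coincide. By \ref{Dualsent}, there exists $M\in\Mod\text{-}R$ with $M\models\vertl\nicefrac{xb=0}{x=0}\vertr=B\wedge\Xi$ if and only if there exists an $R$-module satisfying $D\bigl(\vertl\nicefrac{xb=0}{x=0}\vertr=B\wedge\Xi\bigr)$. Now $D(xb=0)$ is $b|x$ and $D(x=0)$ is $x=x$, so $D\bigl(\vertl\nicefrac{xb=0}{x=0}\vertr=B\bigr)$ is $\vertl\nicefrac{x=x}{b|x}\vertr=B$. Writing $\Xi=\bigwedge_{i=1}^{l'}\vertl\nicefrac{d_i|x}{x=0}\vertr=1\wedge\bigwedge_{i=l'+1}^{l}\vertl\nicefrac{xa_i=0}{c_i|x}\vertr=1$, and using $D(d_i|x)=(xd_i=0)$, $D(x=0)=(x=x)$, $D(xa_i=0)=(a_i|x)$, $D(c_i|x)=(xc_i=0)$, we see that $D\Xi$ is
\[
\bigwedge_{i=1}^{l'}\vertl\nicefrac{x=x}{xd_i=0}\vertr=1\wedge\bigwedge_{i=l'+1}^{l}\vertl\nicefrac{xc_i=0}{a_i|x}\vertr=1.
\]
For each $i$, $\vertl\nicefrac{x=x}{xd_i=0}\vertr=1$ is $T_R$-equivalent to $\vertl\nicefrac{xd_i=0}{x=0}\vertr=1$, since both express that $d_i\in\ann_R M$. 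Hence $D\Xi$ is $T_R$-equivalent to the auxiliary sentence
\[
\Xi':=\bigwedge_{i=1}^{l'}\vertl\nicefrac{xd_i=0}{x=0}\vertr=1\wedge\bigwedge_{i=l'+1}^{l}\vertl\nicefrac{xc_i=0}{a_i|x}\vertr=1,
\]
which can be computed from $\Xi$. Therefore there exists $M\in\Mod\text{-}R$ with $M\models\vertl\nicefrac{xb=0}{x=0}\vertr=B\wedge\Xi$ if and only if there exists $M'\in\Mod\text{-}R$ with $M'\models\vertl\nicefrac{x=x}{b|x}\vertr=B\wedge\Xi'$. The existence of an algorithm deciding the latter is \ref{x=x/c|x=}, applied with $c:=b$, $C:=B$ and auxiliary sentence $\Xi'$.
\end{proof}
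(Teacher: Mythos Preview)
Your approach is exactly the paper's: apply duality (\ref{Dualsent}) to reduce to \ref{x=x/c|x=}. However, there is a genuine error in your bookkeeping when dualising the auxiliary sentence. You claim that $\vertl\nicefrac{x=x}{xd_i=0}\vertr=1$ is $T_R$-equivalent to $\vertl\nicefrac{xd_i=0}{x=0}\vertr=1$, asserting that ``both express that $d_i\in\ann_R M$''. This is false: the sentence $\vertl\nicefrac{xd_i=0}{x=0}\vertr=1$ says that $\ker(\cdot d_i)=0$, i.e.\ that multiplication by $d_i$ is \emph{injective} on $M$, which is the opposite of $d_i\in\ann_R M$. The correct equivalence is that $\vertl\nicefrac{x=x}{xd_i=0}\vertr=1$ is $T_R$-equivalent to $\vertl\nicefrac{d_i|x}{x=0}\vertr=1$: both have value $\vertl Md_i\vertr$, so both equal $1$ exactly when $Md_i=0$. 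With this correction, the dual auxiliary sentence is
\[
\Xi':=\bigwedge_{i=1}^{l'}\vertl\nicefrac{d_i|x}{x=0}\vertr=1\wedge\bigwedge_{i=l'+1}^{l}\vertl\nicefrac{xc_i=0}{a_i|x}\vertr=1,
\]
which is indeed an auxiliary sentence (the first batch is unchanged, the second has $b$ and $c$ swapped), and the rest of your argument goes through verbatim.
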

\begin{proof}
Apply \ref{Dualsent} to \ref{x=x/c|x=}.
\end{proof}

\begin{proposition}\label{x=x/c|x=xb=0/x=0=}
Let $R$ be a recursive Pr\"ufer domain with $\EPP(R)$ and $X(R)$ recursive. There is an algorithm which, given $c,b\in R$, $C,B\in \N$ and $\Xi$ an auxiliary sentence, answers whether there exists $M\in\Mod\text{-}R$ such that \[M\models \vertl \nicefrac{x=x}{c|x}\vertr=C\wedge \vertl \nicefrac{xb=0}{x=0}\vertr= B\wedge \Xi.\]
\end{proposition}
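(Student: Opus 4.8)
The plan is to reduce this two-invariant problem to the one-invariant case already handled in \ref{x=x/c|x=} and \ref{xb=0x=0=}, together with the finite-module machinery of section \ref{Sfinitemodules}. The key observation is that the pp-pairs $\nf{x=x}{c|x}$ and $\nf{xb=0}{x=0}$ can be ``separated'' onto different summands: by \ref{decomposeorder}(2), for any $M\in\Mod\text{-}R$ there exist $M_1,M_2$ with $M_1\oplus M_2\equiv M$, $\vertl\nf{cb|x}{x=0}(M_1)\vertr=1$ and $\vertl\nf{xc=0}{b|x}(M_2)\vertr=1$. On a module $M_1$ satisfying $cb\in\ann_RM_1$, the pair $\nf{xb=0}{x=0}$ is unaffected while $\nf{x=x}{c|x}$ interacts with the annihilator, and dually on $M_2$; so after this split each summand carries essentially one ``hard'' invariant. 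More carefully, I would use the uniserial description from section \ref{sectuni}: by \ref{REFuni} it suffices to find a finite direct sum of uniserial $R_{\mfrak{p}}$-modules satisfying the sentence, and by \ref{x=x/c|xfinnonzero} and its dual, each summand $U_i$ is either finite, or of the form $R_{\mfrak{p}_i}/cI_i$ (contributing to the $\nf{x=x}{c|x}$-invariant via $R_{\mfrak{p}_i}/cR_{\mfrak{p}_i}$ and having $\nf{xb=0}{x=0}(U_i)$ computable), or dually of the form $I_i/bR_{\mfrak{p}_i}$ viewed through duality.

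Concretely, I would argue: there exists $M\models\vertl\nf{x=x}{c|x}\vertr=C\wedge\vertl\nf{xb=0}{x=0}\vertr=B\wedge\Xi$ if and only if there exist $C_1,C_2,C_3\in\N$ with $C_1C_2C_3=C$, $B_1,B_2,B_3\in\N$ with $B_1B_2B_3=B$, and three modules: a finite module $F$ satisfying $\vertl\nf{x=x}{x=0}\vertr=N\wedge\vertl\nf{c|x}{x=0}\vertr=1\wedge\vertl\nf{xb=0}{x=0}\vertr=N'\wedge\dots$ for appropriate fixed $N,N'$, a module of the form $\oplus R_{\mfrak{p}_i}/cI_i$ with $b\in\ann$ on it, and a module of the form $\oplus I_j/bR_{\mfrak{q}_j}$ with $c\in\ann$ on it, all compatible with $\Xi$. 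The finite piece is decidable by \ref{EPPfinite}; the $\oplus R_{\mfrak{p}_i}/cI_i$ piece is decidable by the argument in the proof of \ref{x=x/c|x=} (using \ref{auxtoragammadelta} to translate $R_{\mfrak{p}_i}/cI_i\models\Xi$ into conditions $(\mfrak{p}_i,I_i)\models(r_j,r_ja_j,\gamma_j,\delta_j)$, then \ref{Xusefulform} to count $R_{\mfrak{p}_i}/cR_{\mfrak{p}_i}$), and the dual piece by applying \ref{Dualsent} to translate $\vertl\nf{xb=0}{x=0}\vertr$ into $\vertl\nf{x=x}{b|x}\vertr$ on left modules, landing again in the situation of \ref{x=x/c|x=}.

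The honest subtlety — and the step I expect to be the main obstacle — is bookkeeping the auxiliary sentence $\Xi$ and verifying that, once one decomposes $M$ via \ref{decomposeorder}(2), the resulting summands really do fall into the three clean types, in particular that a uniserial summand $U_i$ with $\nf{xb=0}{x=0}(U_i)$ finite nonzero but $c\notin\ann_RU_i$ is handled; for such $U_i$, duality (via the remark before subsection 7.1, ``We avoid dealing directly with uniserial $V$-modules $U$ such that $\nf{xb=0}{x=0}(U)$ is finite but non-zero by using duality'') gives $U_i\cong$ the restriction of a module $I_i/bR_{\mfrak{p}_i}$. One must also check that $\Xi$ and its dual $D\Xi$ are both auxiliary sentences of the right shape so that \ref{auxtoragammadelta} applies on both sides; this is exactly why \ref{auxtoragammadelta} was stated to produce tuples stable under swapping $\gamma_i\leftrightarrow\delta_i$. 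So the proof is: split via \ref{decomposeorder}(2) and \ref{REFuni} into a finite part plus a ``$c$-part'' plus a ``$b$-part'', feather the target values $C$ and $B$ across the parts via \ref{feathering}, decide the finite part with \ref{EPPfinite}, the $c$-part as in \ref{x=x/c|x=}, and the $b$-part by dualizing with \ref{Dualsent} and reusing \ref{x=x/c|x=}.
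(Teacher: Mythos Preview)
There is a genuine gap. Your three-way split into a finite part, a ``$c$-part $\oplus R_{\mfrak{p}_i}/cI_i$ with $b\in\ann$'', and a ``$b$-part $\oplus I_j/bR_{\mfrak{q}_j}$ with $c\in\ann$'' does not cover the relevant uniserials. Taken literally, if $b\in\ann(R_{\mfrak{p}}/cI)$ then $b\in cI$ and $\vertl\nf{xb=0}{x=0}(R_{\mfrak{p}}/cI)\vertr=\vertl R_{\mfrak{p}}/cI\vertr$, so finiteness of the $b$-invariant forces this summand to be finite; likewise any $I_j/bR_{\mfrak{q}_j}$ with $I_j\lhd R_{\mfrak{q}_j}$ has size at most $\vertl R_{\mfrak{q}_j}/bR_{\mfrak{q}_j}\vertr$, again finite. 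So your ``$c$-part'' and ``$b$-part'' collapse into the finite part. Meanwhile there \emph{are} infinite uniserials carrying both invariants nontrivially: over a valuation domain $V$ with value group $\Z\times\Z$ (lex) and finite residue field, take $c,b$ with $\vertl V/cV\vertr,\vertl V/bV\vertr$ finite and $\mu$ with $\vertl V/\mu V\vertr$ infinite; then $U=V/cb\mu V$ has $\vertl\nf{x=x}{c|x}(U)\vertr=\vertl V/cV\vertr$ and $\vertl\nf{xb=0}{x=0}(U)\vertr=\vertl V/bV\vertr$, both finite and greater than $1$, yet $U$ is infinite and neither $b$ nor $c$ lies in $\ann_V U$. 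Your appeal to duality to place such $U$ in a ``$b$-part'' is a misreading: Herzog duality (\ref{Dualsent}) relates the existence of right modules satisfying $\chi$ to left modules satisfying $D\chi$; it does not turn a specific right uniserial into a module of the form $I/bR_{\mfrak{p}}$.

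The paper's proof handles exactly this obstruction by a preliminary Tuganbaev split: choose $\alpha,u,v$ with $c\alpha=bu$ and $b(\alpha-1)=cv$, and decompose $M$ according to whether $\alpha$ or $\alpha-1$ is invertible. On the piece where $\alpha$ is a unit, $c|x$ is equivalent to $bu|x$, so the two invariants become $\vertl\nf{x=x}{bu|x}\vertr$ and $\vertl\nf{xb=0}{x=0}\vertr$ with $b\mid bu$; now on each $R_{\mfrak{p}}/buI$ one computes $\vertl\nf{xb=0}{x=0}\vertr=\vertl I/bI\vertr$, and this is either $1$ (when $b\notin I^{\#}$, a condition detectable via $X(R)$) or forces $I$ to be principal, whence the question reduces to sizes of $R_{\mfrak{p}}/bR_{\mfrak{p}}$ and is handled by $\EPP(R)$. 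The other piece is dealt with by duality, which here is used correctly at the level of sentences. This divisibility reduction $c\rightsquigarrow bu$ is the missing idea in your plan.
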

\begin{proof}
For $\alpha\in R$, we write $\alpha\notin \Att$ for the sentence $\vertl \nf{x\alpha=0}{x=x}\vertr=1\wedge \vertl\nf{x=x}{\alpha|x}\vertr=1$. Recall, \ref{decomposeorder}, that for all $M\in\Mod\text{-}R$, there are $M_1,M_2\in\Mod\text{-}R$ such that $M\equiv M_1\oplus M_2$, $M_1$ satisfies $\alpha\notin \Att$ and $M_2$ satisfies $\alpha-1\notin \Att$.

Let $\alpha,u,v\in R$ be such that $c\alpha=bu$ and $b(\alpha-1)=cv$. There exists an $R$-module which satisfies $\vertl \nicefrac{x=x}{c|x}\vertr=C\wedge \vertl \nicefrac{xb=0}{x=0}\vertr= B\wedge \Xi$ if and only if there exist $C_1,C_2,B_1,B_2\in \N$ with $C=C_1C_2$ and $B=B_1B_2$ and there exists an $R$-module satisfying
\[\vertl \nicefrac{x=x}{bu|x}\vertr=C_1\wedge \vertl \nicefrac{xb=0}{x=0}\vertr= B_1\wedge \alpha\notin \textrm{Att} \wedge \Xi\]

\noindent
and an $R$-module satisfying
\[\vertl \nicefrac{x=x}{c|x}\vertr=C_2\wedge \vertl \nicefrac{xcv=0}{x=0}\vertr= B_2\wedge (\alpha-1)\notin\textrm{Att}\wedge \Xi.\]
By \ref{Dualsent}, there exists an $R$-module satisfying
\[\vertl \nicefrac{x=x}{c|x}\vertr=C_2\wedge \vertl \nicefrac{xcv=0}{x=0}\vertr= B_2\wedge \alpha\notin\textrm{Att}\wedge \Xi\] if and only if there exists an $R$-module satisfying
\[\vertl \nicefrac{xc=0}{x=0}\vertr=C_2\wedge \vertl \nicefrac{x=x}{cv|x}\vertr= B_2\wedge \alpha\notin\textrm{Att}\wedge D\Xi.\] Thus, in order to prove the proposition, it is enough to show that there is an algorithm which, given $b,u\in R$, $C,B\in \N$ and $\Xi$ an auxiliary sentence, answers whether there exists an $R$-module satisfying the sentence $\chi$ defined as
\[\vertl \nicefrac{x=x}{bu|x}\vertr=C\wedge \vertl \nicefrac{xb=0}{x=0}\vertr= B\wedge \Xi.\]
We may assume that $bu\neq 0$, for otherwise $\chi$ is a sentence about an $R$-module of fixed finite size and since $\EPP(R)$ is recursive, we can decide whether there exist $R$-modules satisfying such sentences.

By  \ref{REFuni} and \ref{x=x/c|xfinnonzero}, there exists an $R$-module satisfying $\chi$ if and only if there exists $F\in\Mod\text{-}R$ with $bu\in\ann_RF$, $h\in\N_0$, prime ideals $\mfrak{p}_i\lhd R$ and ideals $I_i\lhd R_{\mfrak{p}_i}$ for $1\leq i\leq h$ and $M\in\Mod\text{-}R$ with $Mbu=M$ such that $F\oplus \bigoplus_{i=1}^h R_{\mfrak{p}_i}/buI_i \oplus M$ satisfies $\chi$. Now, this happens if and only if there exist $C_1,C_2\in \N$ and $B_1,B_2,B_3\in \N$ with $C=C_1C_2$ and $B=B_1B_2B_3$ such that
\[
F\models \vertl \nicefrac{x=x}{x=0}\vertr=C_1\wedge \vertl \nicefrac{xb=0}{x=0}\vertr= B_1\wedge \vertl \nicefrac{bu|x}{x=0}\vertr=1\wedge  \Xi,
\]
\[
\oplus_{i=1}^h R_{\mfrak{p}_i}/buI_i\models \vertl \nicefrac{x=x}{bu|x}\vertr=C_2\wedge \vertl \nicefrac{xb=0}{x=0}\vertr= B_2\wedge \Xi\text{, and}
\]
\[
M\models  \vertl \nicefrac{xb=0}{x=0}\vertr= B_3\wedge\vertl \nicefrac{x=x}{bu|x}\vertr=1\wedge \Xi.
\]
In view of \ref{EPPfinite} and \ref{xb=0x=0=}, it is therefore enough to show that there is an algorithm which answers whether there exists $h\in\N_0$, prime ideals $\mfrak{p}_i\lhd R$ and ideals $I_i\lhd R_{\mfrak{p}_i}$ for $1\leq i\leq h$ such that
\[
\oplus_{i=1}^h R_{\mfrak{p}_i}/buI_i\models \vertl \nicefrac{x=x}{bu|x}\vertr=C_2\wedge \vertl \nicefrac{xb=0}{x=0}\vertr= B_2\wedge \Xi.
\]
By \ref{auxtoragammadelta}, we can compute $n\in\N$ and $(r_j,r_ja_j,\gamma_j,\delta_j)$ for $1\leq j\leq n$ such that $R_{\mfrak{p}}/buI\models \Xi$ if and only if $(\mfrak{p},I)\models (r_j,r_ja_j,\gamma_j,\delta_j)$ for some $1\leq j\leq n$. It is therefore enough to show that there is an algorithm which, given $(r,ra,\gamma,\delta)$, $b,u\in R$ and $C,B\in \N$, answers whether there exist $h\in\N_0$, prime ideals $\mfrak{p}_i$ for $1\leq i\leq h$ and ideals $I_i\lhd R_{\mfrak{p}_i}$ for $1\leq i\leq h$ such that $(\mfrak{p}_i,I_i)\models (r,ra,\gamma,\delta)$ and \[\oplus_{i=1}^h R_{\mfrak{p}_i}/buI_i\models \vertl \nicefrac{x=x}{bu|x}\vertr=C\wedge \vertl \nicefrac{xb=0}{x=0}\vertr= B.\]

\smallskip
\noindent
\textbf{Case $r=0$:}
In this case $(\mfrak{p},I)\models (r,ra,\gamma,\delta)$ implies $I=0$. Moreover $(\mfrak{p},0)\models (r,ra,\gamma,\delta)$ if and only if $\gamma\notin \mfrak{p}$ and $\delta\neq 0$. Thus, there exist prime ideals $\mfrak{p}_i$ for $1\leq i\leq h$ such that $(\mfrak{p}_i,0)\models (r,ra,\gamma,\delta)$ and $\oplus_{i=1}^h R_{\mfrak{p}_i}\models \vertl \nicefrac{x=x}{bu|x}\vertr=C\wedge \vertl \nicefrac{xb=0}{x=0}\vertr= B$ if and only if $\delta\neq 0$, $B=1$ and there exist prime ideals $\mfrak{p}_i$ for $1\leq i\leq h$ such that $\gamma\notin \mfrak{p}_i$ and $|\oplus_{i=1}^h R_{\mfrak{p}_i}/buR_{\mfrak{p}_i}|=C$. Such an algorithm exists since $\EPP(R)$ is recursive.

\smallskip
\noindent
\textbf{Case $r\neq 0$:}
For all prime ideals $\mfrak{p}$ and ideals $I\lhd R_{\mfrak{p}}$, since $bu\neq 0$,
\[
\vertl xb=0/x=0(R_{\mfrak{p}}/buI)\vertr= \vertl (buI:b)/buI\vertr=\vertl I/bI\vertr.
\]
Now, if $\vertl I/bI\vertr$ is finite but not equal to $1$ then $I=\lambda R_{\mfrak{p}}$ for some $\lambda\neq 0$ and $\vertl I/bI\vertr=\vertl R_{\mfrak{p}}/bR_{\mfrak{p}}\vertr$. Since $b\neq 0$, $\vertl I/bI\vertr =1$ if and only if $b\notin I^\#$.

Therefore, there exist prime ideals $\mfrak{p}_i$ and ideals $I_i\lhd R_{\mfrak{p}_i}$ for $1\leq i\leq h$ such that
$\oplus_{i=1}^h R_{\mfrak{p}_i}/buI_i\models \vertl \nicefrac{x=x}{bu|x}\vertr=C\wedge \vertl \nicefrac{xb=0}{x=0}\vertr= B\wedge \Xi$ if and only if there exist $C',C''\in\N$ with $C'C''=C$ such that the following conditions hold.
\begin{itemize}
\item [(i)] There exist prime ideals $\mfrak{p}_i\lhd R$ and $\lambda_i\in R\backslash\{0\}$ for $1\leq i\leq h$ such that $(\mfrak{p}_i,\lambda_iR_{\mfrak{p}_i})\models (r,ra,\gamma,\delta)$, $\vertl \oplus_{i=1}^{h}R_{\mfrak{p}_i}/buR_{\mfrak{p}_i}\vertr =C'$
    and $\vertl\oplus_{i=1}^{h} R_{\mfrak{p}_i}/bR_{\mfrak{p}_i}\vertr =B$.


\item [(ii)] There exist prime ideals $\mfrak{p}_i\lhd R$ and ideals $I_i\lhd R_{\mfrak{p}_i}$ for $1\leq i\leq h$ such that $(\mfrak{p}_i,I_i)\models (r,ra,\gamma,\delta b)$ and $\vertl \oplus_{i=1}^hR_{\mfrak{p}_i}/buR_{\mfrak{p}_i}\vertr =C''$.
\end{itemize}
Note that, if $(\mfrak{p},\lambda R_{\mfrak{p}})\models (r,ra,\gamma,\delta)$ then $(\mfrak{p}, rR_{\mfrak{p}})\models (r,ra,\gamma,\delta)$. Since $(rR_{\mfrak{p}})^\#=\mfrak{p}R_{\mfrak{p}}$, $(\mfrak{p}, rR_{\mfrak{p}})\models (r,ra,\gamma,\delta)$ if and only if $\gamma\delta\notin\mfrak{p}$. So $(i)$ holds if and only if there exist prime ideals $\mfrak{p}_i$ for $1\leq i\leq h'$ such that $\gamma\delta\notin\mfrak{p}_i$ and $\vertl\oplus_{i=1}^{h'} R_{\mfrak{p}_i}/buR_{\mfrak{p}_i}\vertr =C'$ and $\vertl\oplus_{i=1}^{h'} R_{\mfrak{p}_i}/bR_{\mfrak{p}_i}\vertr =B$. So, since $\EPP(R)$ is recursive, by \ref{USEEPP}, there is an algorithm which answers whether (i) holds or not.

By \ref{Xusefulform}, since $X(R)$ is recursive, there is an algorithm which answers whether (ii) holds or not.
\end{proof}

\noindent
The rest of this section is spent proving the following proposition.

\begin{proposition}\label{x=x/c|x=xb=0/x=0geq}
Let $R$ be a recursive Pr\"ufer domain with $\EPP(R)$ and $\DPR(R)$ recursive. There is an algorithm which, given $c,b\in R$, $C,B\in \N$ and $\Xi$ an auxiliary sentence, answers whether there exists $M\in\Mod\text{-}R$ such that \[M\models \vertl \nicefrac{x=x}{c|x}\vertr=C\wedge \vertl \nicefrac{xb=0}{x=0}\vertr\geq B\wedge \Xi.\]
\end{proposition}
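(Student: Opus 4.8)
The plan is to imitate the strategy used for \ref{x=x/c|x=xb=0/x=0=}, decomposing a hypothetical model into a part on which $c$ acts surjectively and a bounded number of cyclic uniserial pieces, but to exploit everywhere that the second invariant is bounded below rather than pinned down, so that $X(R)$ is never needed. First one disposes of the degenerate cases: if $C=1$ the sentence is $\vertl\nf{x=x}{c|x}\vertr=1\wedge\vertl\nf{xb=0}{x=0}\vertr\geq B\wedge\Xi$, which lies in $W$ with a non-reducible extended signature of the form handled by \ref{nonredtoZg}; if $c=0$ (so $\vertl\nf{x=x}{0|x}\vertr=\vertl\nf{x=x}{x=0}\vertr$) the sentence only speaks of a module of fixed finite size $C$ and is handled by \ref{EPPfinite} after rewriting $\vertl\cdot\vertr\geq B$ as a disjunction of $\vertl\cdot\vertr=\gamma$, $B\leq\gamma\leq C$; the case $b=0$ is treated the same way since then $\vertl\nf{x0=0}{x=0}\vertr=\vertl\nf{x=x}{x=0}\vertr$. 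So assume $C\geq 2$ and $b,c\neq 0$.

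Next I would set up the decomposition. By \ref{REFuni} (and \ref{sumpielemeq}, \ref{standarduni}), there is $M\in\Mod\text{-}R$ with $M\models\chi$ iff there are prime ideals $\mfrak{p}_i\lhd R$ and uniserial $R_{\mfrak{p}_i}$-modules $U_i$ with $\bigoplus_iU_i\models\chi$. Grouping the $U_i$ with $\vertl\nf{x=x}{c|x}(U_i)\vertr=1$ into a module $M_0$, and noting that $\vertl\nf{x=x}{c|x}(M)\vertr=C$ forces all but $\leq\log_2 C$ of the remaining summands to be trivial, one gets $M\equiv M_0\oplus U_1\oplus\dots\oplus U_k$ with $k\leq\log_2 C$, $\vertl\nf{x=x}{c|x}(M_0)\vertr=1$, $\vertl\nf{x=x}{c|x}(U_j)\vertr=C_j\geq 2$, $\prod_j C_j=C$, and $M_0,U_j\models\Xi$. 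By \ref{x=x/c|xfinnonzero} applied to each (cyclic, since finite nonzero $\nf{x=x}{c|x}$-quotient) uniserial $U_j$, either $U_j$ is finite of size $C_j$ with $c\in\ann_R U_j$, or $U_j\cong R_{\mfrak{p}_j}/cI_j$ with $\vertl R_{\mfrak{p}_j}/cR_{\mfrak{p}_j}\vertr=C_j$ and $c\in\mfrak{p}_j$. Collecting the first kind into a finite module $F$ and the second into $G=\bigoplus_l R_{\mfrak{p}_l}/cI_l$, the requirement $\vertl\nf{xb=0}{x=0}\vertr\geq B$ becomes $\vertl\nf{xb=0}{x=0}(M_0)\vertr\cdot\vertl\nf{xb=0}{x=0}(F)\vertr\cdot\vertl\nf{xb=0}{x=0}(G)\vertr\geq B$. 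Hence $\chi$ is satisfiable iff, for one of the finitely many computable tuples $(k,(C_j),C_F,C_G,B_0,B_F,B_G)$ with $C_F C_G=C$, $\prod C_j=C$, $B_0B_FB_G\geq B$ and all entries $\leq B$ (resp.\ $\leq C$), the following three clauses hold: (i) there is $M_0\models\vertl\nf{x=x}{c|x}\vertr=1\wedge\vertl\nf{xb=0}{x=0}\vertr\geq B_0\wedge\Xi$; (ii) there is a finite module $F$ of size $C_F$ with $c\in\ann_R F$, $\vertl\nf{xb=0}{x=0}(F)\vertr\geq B_F$ and $F\models\Xi$; (iii) there is $G=\bigoplus_{l=1}^m R_{\mfrak{p}_l}/cI_l$ with $c\in\mfrak{p}_l$, $\prod_l\vertl R_{\mfrak{p}_l}/cR_{\mfrak{p}_l}\vertr=C_G$, $\prod_l\vertl\nf{xb=0}{x=0}(R_{\mfrak{p}_l}/cI_l)\vertr\geq B_G$ and $R_{\mfrak{p}_l}/cI_l\models\Xi$.

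Clause (i) is a sentence in $W$ of a non-reducible shape covered by \ref{nonredtoZg} (using $\DPR(R)$ recursive). Clause (ii) is a statement about a module of the fixed finite size $C_F$, decidable by \ref{EPPfinite} (using $\EPP(R)$ recursive), after rewriting $\vertl\cdot\vertr\geq B_F$ as a disjunction of exact values. For clause (iii) I would again observe that $\prod_l\vertl R_{\mfrak{p}_l}/cR_{\mfrak{p}_l}\vertr=C_G$ with each factor $\geq 2$ forces $m\leq\log_2 C$, so there are only boundedly many summands; parametrising $\vertl R_{\mfrak{p}_l}/cR_{\mfrak{p}_l}\vertr=C_l$ and a lower bound $\beta_l$ for $\vertl\nf{xb=0}{x=0}(R_{\mfrak{p}_l}/cI_l)\vertr$ with $\prod C_l=C_G$, $\prod\beta_l\geq B_G$, clause (iii) reduces to: for each $l$, there is a single pair $(\mfrak{p}_l,I_l)$ with $c\in\mfrak{p}_l$, $\vertl R_{\mfrak{p}_l}/cR_{\mfrak{p}_l}\vertr=C_l$, $\vertl\nf{xb=0}{x=0}(R_{\mfrak{p}_l}/cI_l)\vertr\geq\beta_l$ and $R_{\mfrak{p}_l}/cI_l\models\Xi$. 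Using \ref{auxtoragammadelta} with $\lambda:=c$ to replace ``$R_{\mfrak{p}}/cI\models\Xi$'' by a finite disjunction of conditions $(\mfrak{p},I)\models(r,ra,\gamma,\delta)$, and \ref{sortR/lambdaI} with $\lambda:=c$ to rewrite $\vertl\nf{xb=0}{x=0}(R_{\mfrak{p}}/cI)\vertr$ as one of $1$, $\vertl R_{\mfrak{p}}/cI\vertr$, $\vertl I/bI\vertr$, $\vertl I/sR_{\mfrak{p}}\vertr$ according to which $q$ holds, this single-indecomposable existence question translates into asking whether an explicitly computable Boolean combination of Ziegler basic open sets contains an indecomposable pure-injective (the exact finite value $\vertl\nf{x=x}{c|x}(N)\vertr=C_l$ being expressed, via Ziegler's openness theorem \cite[10.x]{PrestBluebook} and the recursive presentation of $R$, as membership in the effectively computable open set $\{\vertl\nf{x=x}{c|x}\vertr\geq C_l\}$ minus the one for $\geq C_l+1$; the conditions $\gamma\notin\mfrak{p}$, $a\in I$, $\delta\notin I^\#$, and the inequality on the second invariant being conditions of the same type by \ref{Rp/IAssDiv}). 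Such a question is an instance of the problem of deciding inclusions of finite unions of Ziegler basic open sets, hence decidable since $\DPR(R)$ is recursive, by \ref{Zginc}. The main obstacle will be precisely this last step: making rigorous the translation of the constraints on a single indecomposable into basic-open conditions, and seeing that the decoupling in clause (iii) into a ``$\geq 2$, boundedly many, Ziegler'' part and a ``$=1$, absorbed into $M_0$'' part genuinely works because the target invariant enters only as a lower bound — an exact-value constraint there, as in \ref{x=x/c|x=xb=0/x=0=}, would force one to realise prescribed finite multiplicities of $\vertl I/bI\vertr$ on modules $R_{\mfrak{p}}/cI$ with $R_{\mfrak{p}}/cI$ infinite, which is exactly the information carried by $X(R)$.
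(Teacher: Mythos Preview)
Your overall decomposition into a finite part $F$, a ``$c$-surjective'' part $M_0$, and a bounded sum $G=\bigoplus R_{\mfrak p_l}/cI_l$ is sound, and clauses (i) and (ii) are indeed handled by \ref{nonredtoZg} and \ref{EPPfinite}. The gap is squarely in clause~(iii), at exactly the place you flag as ``the main obstacle''. You assert that for a single indecomposable $N$ the locus $\{N:\vertl\nf{x=x}{c|x}(N)\vertr\geq C_l\}$ is an \emph{effectively computable} Ziegler open set, so that the condition $\vertl\nf{x=x}{c|x}(N)\vertr=C_l$ is a difference of such opens and the whole single-summand existence question becomes a Ziegler inclusion. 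This is not justified: for $C_l\geq 3$ the set $\{N:\vertl\nf{\phi}{\psi}(N)\vertr\geq C_l\}$ is in general \emph{not} a basic open, and there is no algorithm that, from $c$ and $C_l$ alone, produces a finite union of basic opens equal to it. (The vague citation to ``Ziegler's openness theorem'' does not supply one.) Concretely, after translating $\Xi$ via \ref{auxtoragammadelta} you are asking whether there exist a prime $\mfrak p$ and an ideal $I\lhd R_{\mfrak p}$ with $\vertl R_{\mfrak p}/cR_{\mfrak p}\vertr$ a \emph{prescribed} value, $\gamma\notin\mfrak p$, $a\in I$ and $\delta\notin I^{\#}$. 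This is precisely the kind of datum that $X(R)$ records (cf.\ \ref{X2ndformulation} and \ref{Xusefulform}); it is not recoverable from $\DPR(R)$ and $\EPP(R)$.

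In fact the paper's own proof \emph{does} use $X(R)$ here: it invokes \ref{=geqhelp}, which explicitly assumes $X(R)$ recursive, so the omission of $X(R)$ from the hypotheses of \ref{x=x/c|x=xb=0/x=0geq} is a typo in the statement rather than evidence that it can be avoided. The paper's route is also structurally different from yours. Instead of bounding the number of summands and testing them one at a time, it introduces the auxiliary modules $M(\mfrak p,\mfrak q,r)$ (built via duality) which satisfy $\Xi$ by \ref{MpqXi}, have $\vertl\nf{x=x}{c|x}\vertr=1$, and can carry arbitrarily large $\vertl\nf{xb=0}{x=0}\vertr$; these are used to absorb the $\geq B$ constraint when a $\DPR_2$-test succeeds, and when it fails the resulting algebraic relation $((\epsilon-1)\delta)^n=as_1+bs_2$ is used to bound the remaining data in terms of $C$ so that \ref{reint} and \ref{removed|x/x=0=} apply. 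Your intuition that the lower bound on $\vertl\nf{xb=0}{x=0}\vertr$ should make things easier than in \ref{x=x/c|x=xb=0/x=0=} is correct in spirit---it is what allows the $M(\mfrak p,\mfrak q,r)$ trick---but the exact constraint $\vertl\nf{x=x}{c|x}\vertr=C$ still forces an appeal to $X(R)$.
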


We could choose the module in the following definition uniquely.  For instance, one can show that when $\mfrak{q}\supsetneq \mfrak{p}$ the uniserial module $[\mfrak{q}R_{\mfrak{q}}:\lambda\mfrak{p}R_{\mfrak{q}}]/\mfrak{q}R_{\mfrak{q}}$ has the required theory where $[\mfrak{q}R_{\mfrak{q}}:\lambda\mfrak{p}R_{\mfrak{q}}]$ is the set of elements $a\in Q$, the fraction field of $R$, such that $a\lambda\mfrak{q}R_{\mfrak{q}}\subseteq \mfrak{q}R_{\mfrak{q}}$. However, we are only ever interested in modules up to elementary equivalence.

\begin{definition}
For $\lambda\in R\backslash\{0\}$ and prime ideals $\mfrak{p},\mfrak{q}\lhd R$, let $M(\mfrak{p},\mfrak{q},\lambda)$ be $R_{\mfrak{p}}/\lambda \mfrak{q}R_{\mfrak{p}}$ if $\mfrak{p}\supseteq \mfrak{q}$ and a module with theory dual, in the sense of \cite[6.6]{HerDual}, to the theory of $R_{\mfrak{q}}/\lambda \mfrak{p}R_{\mfrak{q}}$ if $\mfrak{q}\supsetneq \mfrak{p}$.
%
%
\end{definition}


\begin{lemma}\label{MpqXi}
Suppose that $\lambda,a,\gamma,\delta\in R$ with $\lambda\neq 0$ are such that if $(\mfrak{p},I)\models (\lambda,\lambda a,\gamma,\delta)$ then $R_{\mfrak{p}}/I\models \Xi$ and if $(\mfrak{p},I)\models (\lambda,\lambda a,\delta,\gamma)$ then $R_{\mfrak{p}}/I\models D\Xi$. Then $\gamma\notin \mfrak{p}$, $\delta\notin \mfrak{q}$ and $a\in\mfrak{p}\cap\mfrak{q}$ implies $M(\mfrak{p},\mfrak{q},\lambda)\models \Xi$.
\end{lemma}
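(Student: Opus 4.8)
The statement concerns two cases depending on whether $\mfrak{p} \supseteq \mfrak{q}$ or $\mfrak{q} \supsetneq \mfrak{p}$, and by the definition of $M(\mfrak{p},\mfrak{q},\lambda)$ together with Herzog duality (Proposition \ref{Dualsent}) the second case reduces to the first applied to $D\Xi$. So the plan is: first handle the case $\mfrak{p}\supseteq \mfrak{q}$, showing $R_{\mfrak{p}}/\lambda\mfrak{q}R_{\mfrak{p}} \models \Xi$; then observe that if $\mfrak{q}\supsetneq\mfrak{p}$, the module $M(\mfrak{p},\mfrak{q},\lambda)$ has theory dual to $R_{\mfrak{q}}/\lambda\mfrak{p}R_{\mfrak{q}}$, and the hypothesis on $D\Xi$ together with the first case (with the roles of $\mfrak{p},\mfrak{q}$ and of $\gamma,\delta$ swapped, using that the hypothesis is symmetric in exactly the way $(\lambda,\lambda a,\gamma,\delta)\leftrightarrow(\lambda,\lambda a,\delta,\gamma)$ records) gives $R_{\mfrak{q}}/\lambda\mfrak{p}R_{\mfrak{q}}\models D\Xi$, hence $M(\mfrak{p},\mfrak{q},\lambda)\models \Xi$ by the definition of dual theories and the fact that satisfaction of a sentence of the form appearing in an auxiliary $\Xi$ is preserved by $D$ via \ref{Dualsent}.

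For the main case $\mfrak{p}\supseteq\mfrak{q}$, I would set $I := \mfrak{q}R_{\mfrak{p}}$, so that the module in question is $R_{\mfrak{p}}/\lambda I$. The goal is to invoke the hypothesis, which requires checking $(\mfrak{p},I)\models(\lambda,\lambda a,\gamma,\delta)$; unwinding the definition of $(\mfrak{p},I)\models(r,a',\gamma,\delta)$ (here with $r=\lambda$, $a'=\lambda a$), this means $\lambda R_{\mfrak{p}}\supseteq I$, $\lambda a\in I$, $\gamma\notin\mfrak{p}$, and $\delta\notin I^\#$. But the hypothesis of \ref{MpqXi} is about $M(\mfrak{p},\mfrak{q},\lambda) = R_{\mfrak{p}}/\lambda\mfrak{q}R_{\mfrak{p}}$, whose defining data corresponds to the quadruple $(\lambda,\lambda a,\gamma,\delta)$ applied to the ideal $\mfrak{q}R_{\mfrak{p}}$ only after multiplying by $\lambda$: that is, I want to check that $(\mfrak{p},\mfrak{q}R_{\mfrak{p}})\models(\lambda,\lambda a,\gamma,\delta)$. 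Here $\lambda R_{\mfrak{p}}\supseteq \mfrak{q}R_{\mfrak{p}}$ need not hold in general, so I must be more careful: the correct reading is that $R_{\mfrak{p}}/\lambda\mfrak{q}R_{\mfrak{p}} = R_{\mfrak{p}}/\lambda J$ with $J = \mfrak{q}R_{\mfrak{p}}$, and the hypothesis is stated exactly for ideals written in the form $\lambda J$ — so I need $J = \mfrak{q}R_{\mfrak{p}}$ to satisfy $(\mfrak{p},J)\models(\lambda,\lambda a,\gamma,\delta)$ is not quite it either. Re-examining: the hypothesis says "if $(\mfrak{p},I)\models(\lambda,\lambda a,\gamma,\delta)$ then $R_{\mfrak{p}}/I\models\Xi$"; I want to apply this with $I = \lambda\mfrak{q}R_{\mfrak{p}}$. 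So I must verify: $\lambda R_{\mfrak{p}} \supseteq \lambda\mfrak{q}R_{\mfrak{p}}$ (clear, as $\mfrak{q}R_{\mfrak{p}}\subseteq R_{\mfrak{p}}$); $\lambda a\in \lambda\mfrak{q}R_{\mfrak{p}}$, i.e. $a\in\mfrak{q}R_{\mfrak{p}}$, which follows from $a\in\mfrak{q}$ (given); $\gamma\notin\mfrak{p}$ (given); and $\delta\notin(\lambda\mfrak{q}R_{\mfrak{p}})^\#$. By \ref{propIhash}(iii), $(\lambda\mfrak{q}R_{\mfrak{p}})^\# = (\mfrak{q}R_{\mfrak{p}})^\#$, and by \ref{propIhash}(ii), $(\mfrak{q}R_{\mfrak{p}})^\# = \mfrak{q}R_{\mfrak{p}}$ since $\mfrak{q}R_{\mfrak{p}}$ is prime in $R_{\mfrak{p}}$ (as $\mfrak{q}\subseteq\mfrak{p}$). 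Since $\delta\notin\mfrak{q}$ by hypothesis, $\delta\notin\mfrak{q}R_{\mfrak{p}}$, giving $\delta\notin(\lambda\mfrak{q}R_{\mfrak{p}})^\#$. Thus $(\mfrak{p},\lambda\mfrak{q}R_{\mfrak{p}})\models(\lambda,\lambda a,\gamma,\delta)$ and the hypothesis yields $R_{\mfrak{p}}/\lambda\mfrak{q}R_{\mfrak{p}} = M(\mfrak{p},\mfrak{q},\lambda)\models\Xi$, as desired.

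For the case $\mfrak{q}\supsetneq\mfrak{p}$: now $M(\mfrak{p},\mfrak{q},\lambda)$ is chosen with theory dual to $R_{\mfrak{q}}/\lambda\mfrak{p}R_{\mfrak{q}}$. Running the argument of the previous paragraph with the roles of $\mfrak{p}$ and $\mfrak{q}$ interchanged and the quadruple $(\lambda,\lambda a,\delta,\gamma)$ in place of $(\lambda,\lambda a,\gamma,\delta)$: I check $(\mfrak{q},\lambda\mfrak{p}R_{\mfrak{q}})\models(\lambda,\lambda a,\delta,\gamma)$, which requires $\lambda R_{\mfrak{q}}\supseteq\lambda\mfrak{p}R_{\mfrak{q}}$, $\lambda a\in\lambda\mfrak{p}R_{\mfrak{q}}$ (from $a\in\mfrak{p}$), $\delta\notin\mfrak{q}$ (given), and $\gamma\notin(\lambda\mfrak{p}R_{\mfrak{q}})^\# = \mfrak{p}R_{\mfrak{q}}$, which holds since $\gamma\notin\mfrak{p}$. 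The second clause of the hypothesis ("if $(\mfrak{p},I)\models(\lambda,\lambda a,\delta,\gamma)$ then $R_{\mfrak{p}}/I\models D\Xi$") — stated for $\mfrak{p}$, but the hypothesis as written quantifies over the fixed pair, so I should take care that it is genuinely the statement for arbitrary primes; rereading the hypothesis of \ref{MpqXi} it is phrased with a generic "$(\mfrak{p},I)$", i.e. for all prime ideals, so it applies with $\mfrak{q}$ here. Thus $R_{\mfrak{q}}/\lambda\mfrak{p}R_{\mfrak{q}}\models D\Xi$, and since $M(\mfrak{p},\mfrak{q},\lambda)$ has dual theory and $D$ (via \ref{Dualsent}) carries $D\Xi$ back to $\Xi$ on the level of such auxiliary sentences — more precisely, a right module and its dual left module satisfy $\Xi$ and $D\Xi$ respectively, and $DD\Xi$ is $T$-equivalent to $\Xi$ — we conclude $M(\mfrak{p},\mfrak{q},\lambda)\models\Xi$.

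\textbf{Main obstacle.} The only genuinely delicate point is bookkeeping around the $(\cdot)^\#$ operator and the precise meaning of "$(\mfrak{p},I)\models(r,a,\gamma,\delta)$" when $I$ is written as $\lambda J$ with $J$ a prime-extension ideal: one must apply \ref{propIhash}(ii) and (iii) in the right order to see $(\lambda\mfrak{q}R_{\mfrak{p}})^\# = \mfrak{q}R_{\mfrak{p}}$, and similarly on the dual side. The duality step is then a routine appeal to \ref{Dualsent} and the definition of $M(\mfrak{p},\mfrak{q},\lambda)$; no essential difficulty lies there.
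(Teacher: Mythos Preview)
Your proposal is correct and follows essentially the same approach as the paper: split into the two cases $\mfrak{p}\supseteq\mfrak{q}$ and $\mfrak{q}\supsetneq\mfrak{p}$, in the first verify $(\mfrak{p},\lambda\mfrak{q}R_{\mfrak{p}})\models(\lambda,\lambda a,\gamma,\delta)$ and apply the hypothesis directly, and in the second verify $(\mfrak{q},\lambda\mfrak{p}R_{\mfrak{q}})\models(\lambda,\lambda a,\delta,\gamma)$ to get $R_{\mfrak{q}}/\lambda\mfrak{p}R_{\mfrak{q}}\models D\Xi$ and conclude by the definition of $M(\mfrak{p},\mfrak{q},\lambda)$ via duality. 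Your argument is in fact more detailed than the paper's (which simply asserts the needed $(\mfrak{p},I)\models\cdots$ without unpacking), and your explicit use of \ref{propIhash}(ii),(iii) to identify $(\lambda\mfrak{q}R_{\mfrak{p}})^\#=\mfrak{q}R_{\mfrak{p}}$ is exactly the right justification.
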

\begin{proof}
Let $(\lambda,\lambda a,\gamma,\delta)$ be such that if $(\mfrak{p},I)\models (\lambda,\lambda a,\gamma,\delta)$ then $R_{\mfrak{p}}/I\models \Xi$ and if $(\mfrak{p},I)\models (\lambda,\lambda a,\delta,\gamma)$ then $R_{\mfrak{p}}/I\models D\Xi$.

Suppose that $\gamma\notin \mfrak{p}$, $\delta\notin \mfrak{q}$ and $a\in\mfrak{p}\cap\mfrak{q}$. If $\mfrak{p}\supseteq \mfrak{q}$ then $(\mfrak{p},\lambda\mfrak{q}R_{\mfrak{p}})\models (\lambda,\lambda a,\gamma,\delta)$. So $M(\mfrak{p},\mfrak{q},\lambda)\models \Xi$. Now suppose that $\mfrak{q}\supsetneq \mfrak{p}$. Then $(\mfrak{q},\lambda\mfrak{p}R_{\mfrak{q}})\models (\lambda,\lambda a, \delta,\gamma)$. So $R_{\mfrak{q}}/\lambda\mfrak{p}R_{\mfrak{q}}\models D\Xi$. Hence, by definition, $M(\mfrak{p},\mfrak{q},\lambda)\models \Xi$.
\end{proof}

\begin{lemma}\label{reint}
Let $R$ be a recursive Pr\"{u}fer domain with $\EPP(R)$ recursive. There is an algorithm which, given $r,c\in R\backslash\{0\}$, $a,b,\gamma,\delta\in R$ and $A,B,C\in \N$, answers whether there exist $h\in\N_0$, prime ideals $\mfrak{p}_i\lhd R$ and ideals $I_i\lhd R_{\mfrak{p}_i}$ for $1\leq i\leq h$ such that $(\mfrak{p}_i,I_i)\models (1,a,\gamma,\delta)$, $\vertl\oplus_{i=1}^hR_{\mfrak{p}_i}/I_i\vertr=A$ and
\[\oplus_{i=1}^hR_{\mfrak{p}_i}/cI_i\models\vertl \nicefrac{x=x}{c|x}\vertr=C\wedge\vertl \nicefrac{xb=0}{x=0}\vertr\geq B.\]
\end{lemma}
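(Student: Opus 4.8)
The plan is to reduce the question to a bounded search interleaved with calls to \ref{fincond}. Observe first that $\vertl\oplus_{i=1}^hR_{\mfrak{p}_i}/I_i\vertr=A$ forces every $R_{\mfrak{p}_i}/I_i$ to be finite; since $c\neq 0$, \ref{I/bI} then gives $\vertl R_{\mfrak{p}_i}/cI_i\vertr=\vertl R_{\mfrak{p}_i}/I_i\vertr\cdot\vertl R_{\mfrak{p}_i}/cR_{\mfrak{p}_i}\vertr$, and the conjunct $\vertl\nicefrac{x=x}{c|x}\vertr=C$ forces each $\vertl R_{\mfrak{p}_i}/cR_{\mfrak{p}_i}\vertr$ finite, so every $R_{\mfrak{p}_i}/cI_i$ is finite as well. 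Hence we may work throughout with finite modules; in particular \ref{propIhash}\ref{finmodulespr} gives $I_i^\#=\mfrak{p}_iR_{\mfrak{p}_i}$, so that $(\mfrak{p}_i,I_i)\models(1,a,\gamma,\delta)$ becomes equivalent to $(\mfrak{p}_i,I_i)\models(1,a,\gamma\delta,1)$.

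Next I would compute the relevant Baur--Monk invariants of $R_{\mfrak{p}}/cI$. For $R_{\mfrak{p}}/I$ finite one has $\vertl\nicefrac{x=x}{c|x}(R_{\mfrak{p}}/cI)\vertr=\vertl R_{\mfrak{p}}/cR_{\mfrak{p}}\vertr$ (directly, or as in \ref{R/lambdaIhelp}), while, arguing as in \ref{R/lambdaIhelp}(b) applied to the pp-pair $\nicefrac{xb=0}{x=0}=\nicefrac{xb=0}{0|x}$ and using \ref{I/bI},
\[
\vertl\nicefrac{xb=0}{x=0}(R_{\mfrak{p}}/cI)\vertr=\begin{cases}\vertl R_{\mfrak{p}}/I\vertr\cdot\vertl R_{\mfrak{p}}/cR_{\mfrak{p}}\vertr&\text{if }b=0\text{ or }b\in cI,\\ \vertl R_{\mfrak{p}}/bR_{\mfrak{p}}\vertr&\text{if }b\neq 0\text{ and }bR_{\mfrak{p}}\supseteq cI.\end{cases}
\]
The two clauses overlap only when $cI=bR_{\mfrak{p}}$, where they agree, so it does no harm that the split is not disjoint. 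Using \ref{ainbIetc} together with \ref{comb1}, and then \ref{cleanup}, I would compute a finite list of conditions $q_1,\dots,q_m\in R^4$, each of the form $(r,ra',\gamma',\delta')$ and each tagged as "type $1$" or "type $2$" according to which clause applies, such that every $(\mfrak{p},I)$ with $(\mfrak{p},I)\models(1,a,\gamma\delta,1)$ satisfies some $q_k$.

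Finally comes the standard "split the summands" argument, exactly as in the proofs of \ref{EPPfinite} and \ref{removed|x/x=0=}: a family $(\mfrak{p}_i,I_i)_{1\le i\le h}$ of the required kind exists if and only if there are factorisations $A=\prod_kA_k$, $C=\prod_kC_k$, numbers $B_k\in\N$ for the type $2$ indices, and, for each $k$, a family $(\mfrak{p}_{kj},I_{kj})_j$ with $(\mfrak{p}_{kj},I_{kj})\models q_k$, $\vertl\oplus_jR_{\mfrak{p}_{kj}}/I_{kj}\vertr=A_k$, $\vertl\oplus_jR_{\mfrak{p}_{kj}}/cR_{\mfrak{p}_{kj}}\vertr=C_k$, and, for type $2$ indices, $\vertl\oplus_jR_{\mfrak{p}_{kj}}/bR_{\mfrak{p}_{kj}}\vertr=B_k$, all subject to $\prod_{k\ \mathrm{type}\ 1}A_kC_k\cdot\prod_{k\ \mathrm{type}\ 2}B_k\ge B$. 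The observation that makes this search finite — and that lets us avoid \ref{fincond2}, which would need the radical relation — is that for a type $2$ summand $R_{\mfrak{p}}/bR_{\mfrak{p}}$ is a quotient of $R_{\mfrak{p}}/cI$, so $\vertl R_{\mfrak{p}}/bR_{\mfrak{p}}\vertr\le\vertl R_{\mfrak{p}}/I\vertr\cdot\vertl R_{\mfrak{p}}/cR_{\mfrak{p}}\vertr$; hence the left-hand side of the displayed inequality never exceeds $AC$, and it suffices to range over the finitely many tuples with $A_k\le A$, $C_k\le C$, $B_k\le A_kC_k$. For each such tuple each of the $m$ sub-questions is precisely of the form decided by \ref{fincond} (with $\{e_1,e_2\}=\{c,b\}$), which requires only that $\EPP(R)$ be recursive. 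The main thing to be careful about is the bookkeeping: the degenerate values $b=0$, $C=1$, $I=R_{\mfrak{p}}$ and $r=0$ (the last absorbed into \ref{fincond}), and checking that the "split the summands" equivalence really is an iff — forward by grouping each summand under a $q_k$ it satisfies, backward by concatenating the witnessing families.
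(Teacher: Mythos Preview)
Your argument is correct and takes a genuinely different route from the paper. The paper's proof is a one–liner: it observes that all the side conditions $(\mfrak{p}_i,I_i)\models(1,a,\gamma,\delta)$ and $\vertl\oplus R_{\mfrak{p}_i}/I_i\vertr=A$ can be encoded as conjuncts of a single invariants sentence $\chi$ about the modules $R_{\mfrak{p}_i}/rcI_i$ (using $\vertl\nf{rca|x}{x=0}\vertr=1$, $\vertl\nf{x=x}{\gamma|x}\vertr=1$, $\vertl\nf{x\delta=0}{x=0}\vertr=1$ for the first, and $\vertl\nf{rc|x}{x=0}\vertr=A$ for the second), and then hands the whole thing to \ref{removed|x/x=0=}. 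Your proof instead unpacks the two Baur--Monk invariants of $R_{\mfrak{p}}/cI$ by hand, case-splits on $b\in cI$ versus $bR_{\mfrak{p}}\supseteq cI$, bounds the type~$2$ contribution by $A_kC_k$ to make the $\geq B$ search finite, and reduces each subproblem to \ref{fincond}.

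The trade-off is this: the paper's route is short but \ref{removed|x/x=0=} requires both $\EPP(R)$ \emph{and} the radical relation to be recursive, whereas the lemma as stated assumes only $\EPP(R)$. (This is harmless in context, since \ref{reint} is only invoked inside \ref{=geqhelp} where $\DPR(R)$, hence the radical relation, is available.) Your bound $B_k\le A_kC_k$ is exactly what lets you replace \ref{fincond2} by \ref{fincond} and so avoid the radical relation entirely, matching the stated hypothesis on the nose. Your approach is thus more self-contained, at the cost of doing by hand the invariant computation and the ``split the summands'' bookkeeping that the paper outsources to \ref{removed|x/x=0=}.
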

\begin{proof}
Let $\mfrak{p}\lhd R$ be a prime ideal and $I\lhd R_{\mfrak{p}}$ be an ideal. Then $a\in I_i$ if and only if $rca\in rcI_i$. So $a\in I_i$ if and only if $\vertl \nicefrac{rca|x}{x=0}(R_{\mfrak{p}}/rcI)\vertr=1$. If $R_{\mfrak{p}}/rcI\neq 0$ then $\gamma\notin\mfrak{p}$ if and only if $\vertl \nicefrac{x=x}{\gamma|x}\vertr=1$, and, $\delta\notin I^\#$ if and only if $\vertl \nicefrac{x\delta=0}{x=0}\vertr=1$. Note that
\[\vertl \nicefrac{rc|x}{x=0}(\oplus_{i=1}^hR_{\mfrak{p}_i}/rcI_i)\vertr=\vertl\oplus_{i=1}^hR_{\mfrak{p}_i}/I_i\vertr.\]
Therefore, there exist $h\in \N_0$, prime ideals $\mfrak{p}_i\lhd R$ and ideals $I_i\lhd R_{\mfrak{p}_i}$ for $1\leq i\leq h$ as in the statement if and only if there exist $h\in \N_0$, prime ideals $\mfrak{p}_i\lhd R$ and ideals $I_i\lhd R_{\mfrak{p}_i}$ for $1\leq i\leq h$ such that $\oplus_{i=1}^hR_{\mfrak{p}_i}/rcI_i$ satisfies
\begin{multline*}
  \chi:=\vertl \nicefrac{rc|x}{x=0}\vertr=A\wedge\vertl \nicefrac{x=x}{c|x}\vertr=C\wedge\vertl \nicefrac{xb=0}{x=0}\vertr\geq B\\ \wedge\vertl \nicefrac{rca|x}{x=0}\vertr=1 \wedge\vertl \nicefrac{x=x}{\gamma|x}\vertr=1\wedge\vertl \nicefrac{x\delta=0}{x=0}\vertr=1.
\end{multline*}
%
%
By \ref{removed|x/x=0=}, there is an algorithm answering whether there exists $\oplus_{i=1}^hR_{\mfrak{p}_i}/rcI_i$ satisfying $\chi$.
\end{proof}

\begin{proposition}\label{=geqhelp}
Let $R$ be a recursive Pr\"ufer domain with $\EPP(R)$, $\DPR(R)$ and $X(R)$ recursive. There is an algorithm which, given $C,B\in\N$, $c,b\in R$ with $c\neq 0$ and $r,a,\gamma,\delta\in R$ with $r\neq 0$, answers whether there exists $M\in\Mod\text{-}R$ satisfying
\[
\vertl \nicefrac{x=x}{c|x}\vertr =C\wedge \vertl \nicefrac{xb=0}{x=0}\vertr\geq B,
\]
such that $M$ is a direct sum of
\begin{itemize}
\item  modules of the form $R_\mfrak{p}/rcI$ where $\mfrak{p}$ is prime, $I\lhd R_{\mfrak{p}}$ is an ideal and $(\mfrak{p},I)\models (1,a,\gamma,\delta)$, and,
\item  modules of the form $M(\mfrak{p},\mfrak{q},r)$ where $\mfrak{p},\mfrak{q}\lhd R$ are prime ideals such that $\mfrak{p}+\mfrak{q}\neq R$, $c\gamma\notin \mfrak{p}$, $\delta\notin \mfrak{q}$, $a\in \mfrak{p}$ and $a\in \mfrak{q}$.
\end{itemize}
\end{proposition}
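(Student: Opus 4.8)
The plan is to decompose the sought module $M$ as $M_1\oplus M_2$, where $M_1$ collects the summands of the first kind, $R_{\mfrak p}/rcI$ with $(\mfrak p,I)\models(1,a,\gamma,\delta)$, and $M_2$ collects the summands of the second kind, $M(\mfrak p,\mfrak q,r)$ with $\mfrak p+\mfrak q\neq R$, $c\gamma\notin\mfrak p$, $\delta\notin\mfrak q$ and $a\in\mfrak p\cap\mfrak q$. First I would compute, on each kind of building block, the two Baur--Monk invariants appearing in the sentence. For a summand of the first kind, $\vertl\nf{x=x}{c|x}(R_{\mfrak p}/rcI)\vertr=\vertl R_{\mfrak p}/cR_{\mfrak p}\vertr$, since $rcI\subseteq cR_{\mfrak p}$; in particular this value does not depend on $I$. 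For a summand of the second kind I claim $\vertl\nf{x=x}{c|x}(M(\mfrak p,\mfrak q,r))\vertr=1$: when $\mfrak p\supseteq\mfrak q$ this module is $R_{\mfrak p}/r\mfrak qR_{\mfrak p}$ and the claim follows from $c\notin\mfrak p$ (a consequence of $c\gamma\notin\mfrak p$) together with \ref{Rp/IAssDiv}; when $\mfrak q\supsetneq\mfrak p$ its theory is dual to that of $R_{\mfrak q}/r\mfrak pR_{\mfrak q}$, so by \ref{Dualsent} the invariant equals $\vertl\nf{xc=0}{x=0}(R_{\mfrak q}/r\mfrak pR_{\mfrak q})\vertr$, which is $1$ by \ref{Rp/IAssDiv} and \ref{propIhash}, because $c\notin\mfrak pR_{\mfrak q}=(r\mfrak pR_{\mfrak q})^{\#}$. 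Hence the conjunct $\vertl\nf{x=x}{c|x}\vertr=C$ is a condition on $M_1$ alone, namely $\prod_i\vertl R_{\mfrak p_i}/cR_{\mfrak p_i}\vertr=C$.

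For the conjunct $\vertl\nf{xb=0}{x=0}\vertr\geq B$ the role of $M_2$ is to supply, if needed, an arbitrarily large factor. Since $\mfrak p+\mfrak q\neq R$ forces $\mfrak p$ and $\mfrak q$ to be comparable, each admissible second-kind module is either $R_{\mfrak p}/r\mfrak qR_{\mfrak p}$ or, up to duality, $R_{\mfrak q}/r\mfrak pR_{\mfrak q}$; a short computation with the operation $J\mapsto J^{\#}$ (using \ref{Rp/IAssDiv} and \ref{propIhash}, and \ref{Dualsent} in the case $\mfrak q\supsetneq\mfrak p$) shows, in all three sub-cases $\mfrak q\subsetneq\mfrak p$, $\mfrak q=\mfrak p$, $\mfrak q\supsetneq\mfrak p$, that $\vertl\nf{xb=0}{x=0}(M(\mfrak p,\mfrak q,r))\vertr\geq 2$ if and only if $b\in\mfrak q$, and that the invariant is $1$ otherwise. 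Because one may include arbitrarily many copies of a fixed admissible second-kind module without affecting any other constraint (each such module contributes $1$ to $\vertl\nf{x=x}{c|x}\vertr$), the part $M_2$ can raise $\vertl\nf{xb=0}{x=0}\vertr$ above any prescribed bound exactly when there exist prime ideals $\mfrak p,\mfrak q$ with $\mfrak p+\mfrak q\neq R$, $c\gamma\notin\mfrak p$, $a\in\mfrak p$, $\delta\notin\mfrak q$ and $a,b\in\mfrak q$. By \ref{DPRradical} (together with \ref{DPRimpDPRn}, which makes $\DPR_{*}(R)$ recursive once $\DPR(R)$ is) the negation of this condition is exactly $(c\gamma,aR,\delta,aR+bR)\in\DPR_{*}(R)$, so whether $M_2$ can help is decidable.

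It then remains to assemble an algorithm. A witness $M$ exists if and only if at least one of the following holds: (i) some direct sum of first-kind modules already satisfies the whole sentence; (ii) $B\leq 1$ and some direct sum of first-kind modules satisfies $\vertl\nf{x=x}{c|x}\vertr=C$; or (iii) $B\geq 2$, $(c\gamma,aR,\delta,aR+bR)\notin\DPR_{*}(R)$, and some direct sum of first-kind modules satisfies $\vertl\nf{x=x}{c|x}\vertr=C$, in which case one pads with enough copies of an admissible second-kind module with $b\in\mfrak q$. The existence questions about first-kind modules are handled by \ref{reint}: after rewriting $R_{\mfrak p}/rcI$ with $(\mfrak p,I)\models(1,a,\gamma,\delta)$ in the shape occurring there (using $(rcI)^{\#}=I^{\#}$ and the $I$-independence of $\vertl\nf{x=x}{c|x}\vertr$ noted above), and splitting off the boundedly many summands on which $\vertl R_{\mfrak p}/cR_{\mfrak p}\vertr\neq 1$, a finite case analysis over the relevant factorisations of $C$ reduces each such question to an instance of \ref{reint}, and to an instance of \ref{removed|x/x=0=} or \ref{Xusefulform} in the sub-cases where the relevant summand is to be made infinite. \textbf{The main difficulty} is precisely this assembly: organising the finitely many ways that $\vertl\nf{xb=0}{x=0}\vertr$ can be forced to be $\geq B$ --- via several finite first-kind modules, via one infinite first-kind module, or via copies of a second-kind module --- and verifying that the $J\mapsto J^{\#}$ computations of the second paragraph really yield the clean dichotomy in every sub-case; by contrast the individual decision steps are immediate from the results already available.
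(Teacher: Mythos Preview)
Your high-level decomposition into $M_1\oplus M_2$ and the dichotomy ``can $M_2$ boost $\vertl xb=0/x=0\vertr$ or not'' is exactly the paper's Case~1/Case~2 split via $(c\gamma,\delta,a,a,a,b)\in\DPR_2(R)$, and your computations of the two invariants on second-kind modules are correct. Your handling of case~(iii) also matches the paper's Case~1 (reduce to first-kind modules giving $\vertl x=x/c|x\vertr=C$, then invoke \ref{Xusefulform}).

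The gap is in deciding your case~(i), i.e.\ whether some direct sum of first-kind modules alone satisfies $\vertl x=x/c|x\vertr=C\wedge\vertl xb=0/x=0\vertr\geq B$. You appeal to \ref{reint}, but \ref{reint} requires an a~priori bound $A$ on $\vertl\oplus R_{\mfrak p_i}/I_i\vertr$, and ``splitting off the boundedly many summands on which $\vertl R_{\mfrak p}/cR_{\mfrak p}\vertr\neq 1$'' does not yield such a bound: each of those summands can have $R_{\mfrak p}/I$ arbitrarily large (or infinite), and the remaining summands with $c\notin\mfrak p$ can still contribute nontrivially to $\vertl xb=0/x=0\vertr$ whenever $b\in I^{\#}$, with no bound on their number. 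Neither \ref{removed|x/x=0=} (which concerns conjuncts $\vertl d|x/x=0\vertr=D$) nor \ref{Xusefulform} (which ignores $b$ entirely) applies as stated to close this.

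The paper fills exactly this gap by exploiting the $\DPR_2$ \emph{witness}. From $(c\gamma,\delta,a,a,a,b)\in\DPR_2(R)$ one extracts, via \ref{DPRradical}, elements $\epsilon,t,s_1,s_2$ and $n$ with $(\epsilon\gamma c)^n=at$ and $((\epsilon-1)\delta)^n=as_1+bs_2$, and then splits first-kind modules according to whether $\epsilon\notin\mfrak p$ or $\epsilon-1\notin\mfrak p$. In the first sub-case, $a\in I$ and $\epsilon\gamma\notin\mfrak p$ force $I\supseteq atR_{\mfrak p}=c^nR_{\mfrak p}$, giving the bound $\vertl R_{\mfrak p}/I\vertr\le\vertl R_{\mfrak p}/cR_{\mfrak p}\vertr^n\le C_1^n$ that makes \ref{reint} applicable. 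In the second sub-case, $(\epsilon-1)\delta\notin I^{\#}$ forces (when $I\neq R_{\mfrak p}$) $bs_2\notin I^{\#}$ and hence $b\notin I^{\#}$, so these summands contribute $1$ to $\vertl xb=0/x=0\vertr$ and only \ref{Xusefulform} is needed; the residual case $I=R_{\mfrak p}$ is handled by \ref{fincond}. This $\epsilon$-trick is the missing idea; without it your case~(i) does not reduce to decidable sub-questions.
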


\begin{proof}

Recall that, \ref{DPRimpDPRn}, if $\text{DPR}(R)$ is recursive then so is $\text{DPR}_2(R)$.

\smallskip

\noindent
\textbf{Case 1:} $(c\gamma,\delta,a,a,a,b)\notin \text{DPR}_2(R)$

\noindent
There exist prime ideals, $\mfrak{p},\mfrak{q}$ with $\mfrak{p}+\mfrak{q}\neq R$ such that $c\gamma\notin \mfrak{p}$, $\delta\notin \mfrak{q}$, $a\in\mfrak{p}$ and $a,b\in \mfrak{q}$. So, $|\nicefrac{x=x}{c|x}(M(\mfrak{p},\mfrak{q},r))|=1$, since $c\notin\mfrak{p}$, and, $|\nicefrac{xb=0}{x=0}(M(\mfrak{p},\mfrak{q},r))|>1$, since $b\in\mfrak{q}$.

Therefore, there exists $M\in\Mod\text{-}R$ as in the statement if and only if there exists $h\in\N_0$, prime ideals $\mfrak{p}_i\lhd R$ and ideals $I_i\lhd R_{\mfrak{p}_i}$ such that $(\mfrak{p},I_i)\models (1,a,\gamma,\delta)$ for $1\leq i\leq h$ and such that
\[\vertl \oplus_{i=1}^hR_{\mfrak{p}_i}/cR_{\mfrak{p}_i}\vertr=\vertl \nicefrac{x=x}{c|x}(\oplus_{i=1}^hR_{\mfrak{p}_i}/rcI_i)\vertr=C.\] Since $X(R)$ is recursive, we are done by \ref{Xusefulform}.

\smallskip

\noindent
\textbf{Case 2:} $(c\gamma,\delta,a,a,a,b)\in \text{DPR}_2(R)$

\noindent
Then $|\nicefrac{xb=0}{x=0}(M(\mfrak{p},\mfrak{q},r))|=1$ and $|\nicefrac{x=x}{c|x}(M(\mfrak{p},\mfrak{q},r))|=1$ for all primes $\mfrak{p},\mfrak{q}$ such that $\mfrak{p}+\mfrak{q}\neq R$, $c\gamma\notin \mfrak{p}$, $\delta\notin \mfrak{q}$, $a\in \mfrak{p}$ and $a\in \mfrak{q}$.

By \ref{DPRradical}, there exist $n\in\N$, $\epsilon,t,s_1,s_2\in R$ such that \[(\epsilon\gamma c)^n=at \ \ \text{and} \ \ ((\epsilon-1)\delta)^n=as_1+bs_2.\] For all prime ideals $\mfrak{p}\lhd R$, either $\epsilon\notin \mfrak{p}$ or $\epsilon-1\notin \mfrak{p}$. Thus $(\mfrak{p},I)\models (1,a,\gamma,\delta)$ if and only if $(\mfrak{p},I)\models (1,a,\epsilon\gamma,\delta)$ or $(\mfrak{p},I)\models (1,a,(\epsilon-1)\gamma,\delta)$.

Therefore, it is enough to be able to effectively answer whether there exist $C_1,C_2,B_1,B_2\in\N$ with $C_1\cdot C_2=C$, $B_1\cdot B_2\geq B$ and $B_1,B_2\leq B$, such that
\begin{enumerate}
\item there is a sum of modules of the form $R_\mfrak{p}/rcI$ where $(\mfrak{p},I)\models (1,a,\epsilon\gamma,\delta)$ which satisfy $\vertl \nicefrac{x=x}{c|x}\vertr =C_1\wedge \vertl \nicefrac{xb=0}{x=0}\vertr\geq B_1$, and
\item there is a sum of modules of the form $R_\mfrak{p}/rcI$ where $(\mfrak{p},I)\models (1,a,(\epsilon-1)\gamma,\delta)$ which satisfy $\vertl \nicefrac{x=x}{c|x}\vertr =C_2\wedge \vertl \nicefrac{xb=0}{x=0}\vertr\geq B_2$.
\end{enumerate}
Suppose that $(\mfrak{p},I)\models (1,a,\epsilon\gamma,\delta)$ and $\vertl \nicefrac{x=x}{c|x}(R_{\mfrak{p}}/rcI)\vertr\leq C_1$. Then
\[\vertl R_{\mfrak{p}}/I\vertr\leq \vertl R_{\mfrak{p}}/atR_{\mfrak{p}}\vertr=\vertl R_{\mfrak{p}}/(\epsilon \gamma c)^nR_{\mfrak{p}}\vertr=\vertl R_{\mfrak{p}}/c^nR_{\mfrak{p}}\vertr\leq \vertl R_{\mfrak{p}}/cR_{\mfrak{p}}\vertr^n\] because $a\in I$ and $\gamma\epsilon\notin \mfrak{p}$.
So, $\vertl R_{\mfrak{p}}/cR_{\mfrak{p}}\vertr=\vertl \nicefrac{x=x}{c|x}(R_{\mfrak{p}}/rcI)\vertr\leq C_1$ implies $\vertl R_{\mfrak{p}}/I\vertr\leq C_1^n$. Therefore, there exists a sum of modules as in $(1)$ if and only if there is $A\leq C_1^n$ such that there exist $h\in\N_0$, prime ideals $\mfrak{p}_i\lhd R$ and ideals $I_i\lhd R_{\mfrak{p}_i}$ for $1\leq i\leq h$ with $(\mfrak{p}_i,I_i)\models (1,a,\epsilon\gamma,\delta)$, $\vertl\oplus_{i=1}^hR_{\mfrak{p}_i}/I_i\vertr=A$ and
\[\oplus_{i=1}^hR_{\mfrak{p}_i}/rcI_i\models \vertl \nicefrac{x=x}{c|x}\vertr =C_1\wedge \vertl \nicefrac{xb=0}{x=0}\vertr\geq B_1.\] By \ref{reint} and \ref{removed|x/x=0=}, there is an algorithm which answers this question.

Suppose that $(\mfrak{p},I)\models (1,a,(\epsilon-1)\gamma,\delta)$. Since $a\in I$, either $a\notin \mfrak{p}$ and $I=R_{\mfrak{p}}$, or, $a\in I^\#$.
If $a\in I^\#$ then, since $(\epsilon-1)\delta\notin I^\#$, $bs_2=((\epsilon-1)\delta)^n-as_1\notin I^\#$. So $b\notin I^\#$ and hence $\vertl \nf{xb=0}{x=0}(R_{\mfrak{p}}/rcI)\vertr=1$.

%

Thus, there exists a sum of modules as required in $(2)$ if and only if there exist $C_2',C_2''\in \N$ with $C_2=C_2'C_2''$ such that
\begin{enumerate}[(i)]
\item there exist $h\in \N_0$, prime ideals $\mfrak{p}_i\lhd R$ and ideals $I_i\lhd R_{\mfrak{p}_i}$ for $1\leq i\leq h$ such that $(\mfrak{p}_i,I_i)\models(1,a,(\epsilon-1)\delta b,\gamma)$ and
\[\oplus_{i=1}^hR_{\mfrak{p}_i}/rcI_i\models \vertl \nicefrac{x=x}{c|x}\vertr=C_2'\text{, and}\]
\item there exist $h\in \N_0$ and prime ideals $\mfrak{p}_i\lhd R$ for $1\leq i\leq h$ such that $a\notin\mfrak{p}_i$, $\gamma\notin\mfrak{p}_i$, $\delta\notin\mfrak{p}_i$ and
\[\oplus_{i=1}^hR_{\mfrak{p}_i}/rcR_{\mfrak{p}_i}\models \vertl \nicefrac{x=x}{c|x} \vertr=C_2''\wedge \vertl\nicefrac{xb=0}{x=0}\vertr\geq B_2.\]
\end{enumerate}
Since for $\mfrak{p}\lhd R$ prime and $I\lhd R_{\mfrak{p}}$, $\vertl\nicefrac{x=x}{c|x}(R_{p}/rcI)\vertr=\vertl R_{\mfrak{p}}/cR_{\mfrak{p}}\vertr$, by \ref{Xusefulform}, there is an algorithm which answers whether (i) holds.

To conclude the proof we need to show that we can effectively answer whether (ii) holds or not. Let $\alpha,u,v\in R$ be such that $b\alpha=rcu$ and $rc(\alpha-1)=bv$. If $\alpha\notin\mfrak{p}$ then
\[
\vertl\nicefrac{x=x}{c|x}(R_{\mfrak{p}}/crR_{\mfrak{p}})\vertr=\vertl R_{\mfrak{p}}/cR_{\mfrak{p}}\vertr \ \ \text{ and }\ \ \vertl\nicefrac{xb=0}{x=0}(R_{\mfrak{p}}/crR_{\mfrak{p}})\vertr=\vertl R_{\mfrak{p}}/crR_{\mfrak{p}}\vertr.
\]
If $\alpha-1\notin\mfrak{p}$ then
\[\vertl\nicefrac{x=x}{c|x}(R_{\mfrak{p}}/crR_{\mfrak{p}})\vertr=\vertl R_{\mfrak{p}}/cR_{\mfrak{p}}\vertr \ \ \text{ and } \ \ \vertl\nicefrac{xb=0}{x=0}(R_{\mfrak{p}}/crR_{\mfrak{p}})\vertr=\vertl R_{\mfrak{p}}/bR_{\mfrak{p}}\vertr.\]
Since for all prime ideals $\mfrak{p}\lhd R$, either $\alpha\notin \mfrak{p}$ or $\alpha-1\notin\mfrak{p}$, by \ref{fincond}, there is an algorithm which answers whether (ii) holds or not.
\end{proof}

\begin{lemma}\label{Rp}
Let $R$ be a recursive Pr\"ufer domain with $\EPP(R)$ and $\DPR(R)$ recursive. There is an algorithm which, given $b,c,\gamma\in R$ and $B,C\in\N$, answers whether there exist $h\in\N_0$ and prime ideals $\mfrak{p}_i\lhd R$ with $\gamma\notin\mfrak{p}_i$ for $1\leq i\leq h$ such that
\[\bigoplus_{i=1}^hR_{\mfrak{p}_i}\models \vertl\nf{x=x}{c|x}\vertr=C\wedge \vertl \nf{xb=0}{x=0}\vertr\geq B.\]
\end{lemma}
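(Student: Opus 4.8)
\textbf{Proof proposal for \ref{Rp}.}
The plan is to reduce the question about direct sums of the \emph{flat} localisations $R_{\mfrak p_i}$ (with $\gamma\notin\mfrak p_i$) to the finite-module machinery already established, chiefly \ref{fincond} and \ref{fincond2}, together with a use of $\DPR(R)$ to detect when $\vertl \nf{xb=0}{x=0}\vertr$ can be made $\geq 2$. First I would observe that for a prime ideal $\mfrak p\lhd R$ we have $\vertl\nf{x=x}{c|x}(R_{\mfrak p})\vertr=\vertl R_{\mfrak p}/cR_{\mfrak p}\vertr$ and $\vertl\nf{xb=0}{x=0}(R_{\mfrak p})\vertr=\vertl R_{\mfrak p}/bR_{\mfrak p}\vertr$ since $R_{\mfrak p}$ is a domain (so $xb=0$ is equivalent to $x=0$ in $R_{\mfrak p}$ exactly when $b\neq 0$, and is trivially so when $b=0$). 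Thus a direct sum $\bigoplus_{i=1}^hR_{\mfrak p_i}$ satisfies the target sentence if and only if $\vertl\bigoplus_{i=1}^hR_{\mfrak p_i}/cR_{\mfrak p_i}\vertr=C$ and $\vertl\bigoplus_{i=1}^hR_{\mfrak p_i}/bR_{\mfrak p_i}\vertr\geq B$, with $\gamma\notin\mfrak p_i$ for all $i$.

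Next I would handle the case split. If $B=1$ the second conjunct is vacuous, and we are asking whether there exist $h\in\N_0$ and primes $\mfrak p_i$ avoiding $\gamma$ with $\vertl\bigoplus_iR_{\mfrak p_i}/cR_{\mfrak p_i}\vertr=C$; this is answered by \ref{USEEPP} (equivalently \ref{fincond} with the relevant tuple, taking $(r,a,\gamma,\delta):=(0,0,\gamma,1)$ so that $(\mfrak p,0)\models(r,a,\gamma,\delta)$ iff $\gamma\notin\mfrak p$, and using $I_i=0$, $R_{\mfrak p_i}/I_i=R_{\mfrak p_i}$). For $B\geq 2$, the obstruction is that we need at least one $\mfrak p_i$ with $b\in\mfrak p_i$ (so that $R_{\mfrak p_i}/bR_{\mfrak p_i}\neq 0$), while simultaneously $\gamma\notin\mfrak p_i$, in order for $\vertl\nf{xb=0}{x=0}\vertr\geq 2$ to be achievable at all; moreover once it is achievable at a single prime, one can iterate to push the invariant arbitrarily high (so $B\geq 2$ collapses to ``$\geq 2$''), at the cost of possibly increasing $C$. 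Whether such a prime exists is exactly the question of whether $b\in\rad\,\gamma R$ fails, i.e. whether there is a prime containing $b$ and avoiding $\gamma$; by the remark after \ref{DPRradical} this is decidable since $\DPR(R)$, and hence the radical relation, is recursive. If $b\in\rad\,\gamma R$ then no such prime exists, so for $B\geq 2$ the answer is negative. If $b\notin\rad\,\gamma R$, fix such a prime $\mfrak q$ with $b\in\mfrak q$, $\gamma\notin\mfrak q$; then appending enough copies of $R_{\mfrak q}$ contributes an arbitrarily large power of $\vertl R/\mfrak q\vertr$ to $\vertl\nf{xb=0}{x=0}\vertr$ while contributing only $1$ to $\vertl\nf{x=x}{c|x}\vertr$ if also $c\notin\mfrak q$, or a known finite amount otherwise; either way, for $B\geq 2$ the condition ``$\vertl\nf{xb=0}{x=0}\vertr\geq B$'' can be traded for ``$\geq 2$'', and then one reduces to deciding whether $\vertl\bigoplus_iR_{\mfrak p_i}/cR_{\mfrak p_i}\vertr=C'$ and $\vertl\bigoplus_iR_{\mfrak p_i}/bR_{\mfrak p_i}\vertr\geq 2$ for suitable $C'\mid C$ together with the separate contribution of the copies of $R_{\mfrak q}$. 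Both of these are instances of \ref{fincond2} (taking the tuple $(0,0,\gamma,1)$, the equality targets $N_j$ for $c$ and the inequality target $N'_j\geq 2$ for $b$).

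I expect the only mildly delicate point to be bookkeeping the interaction between the value $C$ of $\vertl\nf{x=x}{c|x}\vertr$ and the extra primes used to force $\vertl\nf{xb=0}{x=0}\vertr\geq B$: one must allow $C$ to be factored as $C=C_1C_2$ with $C_1$ realised by the ``$b$-heavy'' primes and $C_2$ by the rest, and observe that at each chosen prime $\mfrak q$ with $b\in\mfrak q$ the quantity $\vertl R_{\mfrak q}/cR_{\mfrak q}\vertr$ is either $1$ (if $c\notin\mfrak q$) or some fixed prime power $\geq\vertl R/\mfrak q\vertr$, so only finitely many factorisations of $C$ are relevant and each resulting subproblem is of the form handled by \ref{fincond} / \ref{fincond2}. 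Everything here is effective because $\EPP(R)$ and $\DPR(R)$ are recursive, which is precisely what those two lemmas and the radical-relation remark require.
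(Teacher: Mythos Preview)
Your opening computation contains a genuine error that invalidates the rest of the argument. You claim
\[
\bigl|\nf{xb=0}{x=0}(R_{\mfrak p})\bigr| \;=\; \bigl|R_{\mfrak p}/bR_{\mfrak p}\bigr|,
\]
but $\nf{xb=0}{x=0}(R_{\mfrak p})$ is the $b$-torsion of the \emph{domain} $R_{\mfrak p}$ modulo $0$, i.e.\ it has size $1$ whenever $b\neq 0$ (regardless of whether $b\in\mfrak p$), and size $|R_{\mfrak p}|$ (infinite) when $b=0$. You have confused $\nf{xb=0}{x=0}$ with $\nf{x=x}{b|x}$; only the latter gives $|R_{\mfrak p}/bR_{\mfrak p}|$. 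Your parenthetical remark (``$xb=0$ is equivalent to $x=0$ in $R_{\mfrak p}$ exactly when $b\neq 0$'') is correct and actually contradicts the formula you wrote. Everything downstream---looking for primes with $b\in\mfrak p$, the $b\in\rad\gamma R$ test, the factorisation $C=C_1C_2$ for ``$b$-heavy'' primes---is built on the wrong invariant and does not address the actual question.

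The paper's proof splits instead on whether $b=0$. If $b\neq 0$, then $|\nf{xb=0}{x=0}(R_{\mfrak p})|=1$ for every prime, so the sentence can hold only when $B=1$, and one is left with deciding $|\bigoplus_i R_{\mfrak p_i}/cR_{\mfrak p_i}|=C$ subject to $\gamma\notin\mfrak p_i$, which \ref{fincond} handles. If $b=0$ and $C>1$, any witnessing sum is nonzero hence infinite, so $|\nf{xb=0}{x=0}|\geq B$ is automatic and the same reduction applies. If $b=0$ and $C=1$, either $B=1$ (take $h=0$) or one needs a prime with $c\gamma\notin\mfrak p$, whose existence is decided via $\DPR(R)$ (specifically, $(\gamma c,1,0,0)\notin\DPR(R)$). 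A secondary issue: your proposed use of \ref{fincond}/\ref{fincond2} with the tuple $(r,a,\gamma,\delta)=(0,0,\gamma,1)$ forces $I_i=0$, so $|\bigoplus_i R_{\mfrak p_i}/I_i|$ is infinite whenever $h\geq 1$ and the equality condition $(2)$ in those corollaries cannot be met; the paper instead uses a tuple like $(1,0,\gamma,1)$ which places no constraint on $I_i$.
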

\begin{proof}
We split the proof into $3$ cases. Let $\chi$ be the sentence
\[
\vertl\nf{x=x}{c|x}\vertr=C\wedge \vertl \nf{xb=0}{x=0}\vertr\geq B.
\]

\noindent
\textbf{Case $b\neq 0$:}
Then $\vertl\nf{xb=0}{x=0}(R_{\mfrak{p}})\vertr=1$ for all prime ideals $\mfrak{p}\lhd R$. So
$\oplus_{i=1}^hR_{\mfrak{p}_i}\models \chi$ if and only if $B=1$ and $\vertl \oplus_{i=1}^hR_{\mfrak{p}_i}/cR_{\mfrak{p}_i}\vertr=C$. Since $\EPP(R)$ is recursive, we are done by \ref{fincond}.

\noindent
\textbf{Case $b=0$ and $C>1$:}
Then $\vertl \nf{xb=0}{x=0}(\oplus_{i=1}^hR_{\mfrak{p}_i})\vertr=\vertl\oplus_{i=1}^hR_{\mfrak{p}_i}\vertr$. So $\vertl\oplus_{i=1}^hR_{\mfrak{p}_i}/cR_{\mfrak{p}_i}\vertr=C>1$ implies $\vertl \nf{xb=0}{x=0}(\oplus_{i=1}^hR_{\mfrak{p}_i})\vertr$ is infinite. So
$\oplus_{i=1}^hR_{\mfrak{p}_i}\models \chi$ if and only if $\vertl\oplus_{i=1}^hR_{\mfrak{p}_i}/cR_{\mfrak{p}_i}\vertr=C$. So, since $\EPP(R)$ is recursive, we are done by \ref{fincond}.

\noindent
\textbf{Case $b=0$ and $C=1$:}
If $B=1$ then the zero module satisfies $\chi$ i.e. $h=0$. Otherwise, if $\oplus_{i=1}^hR_{\mfrak{p}_i}\models\chi$ and $\gamma\notin\mfrak{p}_i$ for $1\leq i\leq h$ then $h\geq 1$ and $c\gamma\notin \mfrak{p}_i$ for all $1\leq i\leq h$. So there exists a prime ideal $\mfrak{p}\lhd R$ such that $\gamma c\notin\mfrak{p}$. Conversely, if $\mfrak{p}\lhd R$ is a prime ideal such that $c\gamma\notin\mfrak{p}$ then $R_{\mfrak{p}_i}\models\chi$. There exists a prime ideal $\mfrak{p}\lhd R$ such that $\gamma c\notin \mfrak{p}$ if and only if $(\gamma c,1,0,0)\notin\DPR(R)$.
\end{proof}

\begin{proof}[Proof of \ref{x=x/c|x=xb=0/x=0geq}]
We may assume that $c\neq 0$ since if $c=0$ then $\vertl \nicefrac{x=x}{c|x}\vertr=C\wedge \vertl \nicefrac{xb=0}{x=0}\vertr\geq B\wedge \Xi$ is a statement about an $R$-module of a fixed finite size and in this case we know such an algorithm exists, by \ref{EPPfinite}, since $\EPP(R)$ is recursive.

By \ref{auxtoragammadelta}, we can compute  $n\in\N$ and $(r_j,r_ja_j,\gamma_j,\delta_j)\in R^4$ for $1\leq j\leq n$ such that $R_{\mfrak{p}}/cI\models \Xi$ if and only if $(\mfrak{p},I)\models (r_j,r_ja_j,\gamma_j,\delta_j)$ for some $1\leq j\leq n$ and such that $R_{\mfrak{p}}/cI\models D\Xi$ if and only if $(\mfrak{p},I)\models (r_j,r_ja_j,\delta_j,\gamma_j)$ for some $1\leq j\leq n$.

\smallskip

\noindent
\textbf{Claim:} There exists $M\in\Mod\text{-}R$ such that
\[M\models \vertl \nicefrac{x=x}{c|x}\vertr=C\wedge \vertl \nicefrac{xb=0}{x=0}\vertr\geq B\wedge \Xi\] if and only if there exist $C_j\in\N$ for $0\leq j\leq n$ and $B_j\in\N$, $B_j\leq B$ for $0\leq j\leq n+1$
with $\prod_{j=0}^nC_j=C$ and $\prod_{j=0}^{n+1}B_j\geq B$,
satisfying the following conditions.
\begin{enumerate}


\item There exists $F\in\Mod\text{-}R$ such that
\[
F\models \vertl\nf{x=x}{x=0}\vertr=C_0\wedge \vertl\nf{xb=0}{x=0}\vertr\geq B_0\wedge \vertl \nf{c|x}{x=0}\vertr=1\wedge \Xi.
\]

\item There exists $M'\in\Mod\text{-}R$ such that
\[
M'\models \vertl \nicefrac{x=x}{c|x}\vertr =1\wedge \vertl \nicefrac{xb=0}{x=0}\vertr\geq B_{n+1}\wedge\Xi.
\]

\item For $1\leq j\leq n$,
\begin{itemize}
\item [(a)${}_j$] if $r_j=0$ then there exist $h_j\in\N_0$ and prime ideals $\mfrak{p}_{ij}\lhd R$ for $1\leq i\leq h_j$ such that $\gamma_j\notin \mfrak{p}_{ij}$, $\delta_j\neq 0$ and
\[M_j:=\bigoplus_{i=1}^{h_j}R_{\mfrak{p}_{ij}}\models \vertl \nicefrac{x=x}{c|x}\vertr =C_j\wedge \vertl \nicefrac{xb=0}{x=0}\vertr\geq B_j,\text{ and}\]
\item [(b)${}_j$]if $r_j\neq 0$ then there exist $h_j,k_j\in\N_0$, prime ideals $\mfrak{p}_j,\mfrak{q}_j,\mfrak{p}_{ij}\lhd R$ and ideals $I_{ij}\lhd R_{\mfrak{p}_{ij}}$ for $1\leq i\leq h_j$ such that $(\mfrak{p}_{ij},I_{ij})\models (1,a_j,\gamma_j,\delta_j)$ for $1\leq i\leq h_j$, $\gamma_j\notin\mfrak{p}_j$, $\delta_j\notin\mfrak{q}_j$, $a\in\mfrak{p}_j$, $a\in \mfrak{q}_j$, and
\[M_j:= M(\mfrak{p}_j,\mfrak{q}_j,r_j)^{k_j}\oplus\bigoplus_{i=1}^{h_j}R_{\mfrak{p}_{ij}}/r_jcI_{ij}\models \vertl \nicefrac{x=x}{c|x}\vertr =C_j\wedge \vertl \nicefrac{xb=0}{x=0}\vertr\geq B_j.\]
\end{itemize}
\end{enumerate}

\noindent
\textit{Proof of claim.}
($\Rightarrow$) By \ref{x=x/c|xfinnonzero}, if $U$ is a uniserial module with $\nf{x=x}{c|x}(U)$ finite but non-zero then either $c\in\ann_R U$ or $U\cong R_{\mfrak{p}}/cI$ for some prime ideal $\mfrak{p}\lhd R$ and ideal $I\lhd R_{\mfrak{p}}$. Therefore, by \ref{REFuni}, there exists
\[M\models \vertl \nicefrac{x=x}{c|x}\vertr=C\wedge \vertl \nicefrac{xb=0}{x=0}\vertr\geq B\wedge \Xi\] if and only if there exists $F\in\Mod\text{-}R$ with $c\in\ann_RF$, prime ideals $\mfrak{p}_i\lhd R$ and ideals $J_i\lhd R_{\mfrak{p}_i}$ for $1\leq i\leq h$ and $M'\in\Mod\text{-}R$ with $\vertl\nf{x=x}{c|x}(M')\vertr=1$ such that
\[F\oplus M'\oplus \oplus_{i=1}^hR_{\mfrak{p}_i}/cJ_i\models \vertl \nicefrac{x=x}{c|x}\vertr=C\wedge \vertl \nicefrac{xb=0}{x=0}\vertr\geq B\wedge \Xi.\]
Since $R_{\mfrak{p}}/cJ\models \Xi$ if and only if $(\mfrak{p},J)\models (r_j,r_ja_j,\gamma_j,\delta_j)$ for some $1\leq j\leq n$, we may rewrite $\oplus_{i=1}^h R_{\mfrak{p}_i}/cJ_i$ as $\oplus_{j=1}^{n}\oplus_{i=1}^{h_j}R_{\mfrak{p}_{ij}}/cJ_{ij}$ where $(\mfrak{p}_{ij},J_{ij})\models (r_j,r_ja_j,\gamma_j,\delta_j)$ for $1\leq j\leq n$ and $1\leq i\leq h_j$.

If $r_j=0$ then $(\mfrak{p}_{ij},J_{ij})\models (r_j,r_ja_j,\gamma_j,\delta_j)$ if and only if $J_{ij}=0$, $\delta_j\neq 0$ and $\gamma_j\notin\mfrak{p}_{ij}$. If $r_j\neq 0$ then $(\mfrak{p}_{ij},J_{ij})\models (r_j,r_ja_j,\gamma_j,\delta_j)$ if and only if there exists $I_{ij}\lhd R_{\mfrak{p}_{ij}}$ such that $J_{ij}=rI_{ij}$ and $(\mfrak{p}_{ij},I_{ij})\models (1,a_j,\gamma_j,\delta_j)$.

Let $C_0:=\vertl \nf{x=x}{c|x}(F)\vertr=\vertl F\vertr$ and
$B_0:=\min\{\vertl\nf{xb=0}{x=0}(F)\vertr,B\}$. Let $B_{n+1}:=\min\{\vertl \nf{xb=0}{x=0}(M')\vertr,B\}$. If $r_j=0$ then let
$C_j:=\vertl \nf{x=x}{c|x}(\oplus_{i=1}^{h_j}R_{\mfrak{p}_{ij}})\vertr$ and $B_j:=\min\{\vertl \nf{xb=0}{x=0}(\oplus_{i=1}^{h_j}R_{\mfrak{p}_{ij}})\vertr,B\}$.
If $r_j\neq 0$ then let $C_j:=\vertl \nf{x=x}{c|x}(\oplus_{i=1}^{h_j}R_{\mfrak{p}_{ij}}/r_jcI_{ij})\vertr$ and $B_j:=\min\{\vertl \nf{xb=0}{x=0}(\oplus_{i=1}^{h_j}R_{\mfrak{p}_{ij}}/r_jcI_{ij})\vertr,B\}$. Now, setting $k_j=0$ for $1\leq j\leq n$, we are done.

\smallskip
\noindent
($\Leftarrow$) Fix $1\leq j\leq n$. Suppose $r_j=0$. Then $(\mfrak{p}_{ij},0)\models(r_j,r_ja_j,\gamma_j,\delta_j)$ for each $1\leq i\leq h_j$ and hence $R_{\mfrak{p}_{ij}}\models \Xi$. Suppose $r_j\neq 0$. Then $(\mfrak{p}_{ij},r_jI_{ij})\models (r_j,r_ja_j,\gamma_j,\delta_j)$ for each $1\leq i\leq h_j$ and hence $R_{\mfrak{p}_{ij}}/r_jcI_{ij}\models \Xi$.

By \ref{MpqXi}, $\gamma_j\notin\mfrak{p}_j$, $\delta_j\notin \mfrak{q}_j$, $a\in\mfrak{p}_j$ and $a\in\mfrak{q}_j$ implies that $M(\mfrak{p}_j,\mfrak{q}_j,r_j)\models \Xi$.
Thus
\[F\oplus M'\oplus\bigoplus_{j=1}^{n}M_j\models \Xi.\]
Therefore
\[F\oplus M'\oplus\bigoplus_{j=1}^{n}M_j\models \vertl \nf{x=x}{c|x}\vertr=\prod_{j=0}^n C_i\wedge \vertl \nf{xb=0}{x=0}\vertr\geq \prod_{j=0}^{n+1}B_i\wedge \Xi.\] Since $C=\prod_{j=0}^n C_i$ and $\prod_{j=0}^{n+1}B_i\geq B$, we are done.

Since the set of $C_j\in \N$ for $0\leq j\leq n$ and $B_j\in\N$ for $0\leq j\leq n+1 $ satisfying $(1)$ is finite, it is enough to show that for fixed $C_j\in \N$ for $0\leq j\leq n$ and $B_j\in\N$ for $0\leq j\leq n+1$, there are algorithms answering whether $(2),(3)$ and $(4)$ hold. By \ref{EPPfinite}, since $\EPP(R)$ is recursive, there is an algorithm which answers whether $(2)$ holds. By \ref{ZieglertoTinfty}, since $\DPR(R)$ is recursive there is an algorithm which answers whether $(3)$ holds. Since $\DPR(R),\EPP(R)$ and $X(R)$ are recursive, by \ref{=geqhelp}, if $r_j\neq 0$ then there is an algorithm which answers whether (b)${}_j$ holds. Since $\DPR(R)$ and $\EPP(R)$ are recursive, by \ref{Rp}, if $r_j=0$ then there is an algorithm which answers whether (a)${}_j$ holds.
\end{proof}

\section{The main theorem}

\begin{theorem}\label{mainthm}
Let $R$ be a Pr\"ufer domain. The theory of $R$-modules is decidable if and only if $\DPR(R)$, $\EPP(R)$ and $X(R)$ are recursive.
\end{theorem}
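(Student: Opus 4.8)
The forward direction (decidability of $T_R$ implies recursivity of the three sets) is already fully dispatched by earlier results: $\DPR(R)$ recursive is \cite[6.4]{Decpruf} together with \ref{DPRimpDPRn}, $\EPP(R)$ recursive is \ref{decimpEPPrec}, and $X(R)$ recursive is \ref{decimpXrec}. So the entire content of the theorem is the reverse direction, and the plan is to assemble it from the machinery built up in \S\S\ref{Sform}--\ref{notred}.

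By \ref{DecconBM} it suffices to exhibit an algorithm deciding, for a sentence $\chi$ of the form $(\dagger)$, whether some $R$-module satisfies $\chi$. First I would invoke \ref{4.1Denseimproved} to reduce to sentences whose pp-pairs are all of the shape $\nf{d|x}{x=0}$ or $\nf{xb=0}{c|x}$, and then \ref{1stsynthm} (proved via \ref{redclx1} and \ref{redclx2}) to reduce further to sentences $w$ lying in the set $W$ of \S\ref{Furthersyn}: at most one conjunct involving $\nf{d|x}{x=0}$ with value $\geq 2$, at most one involving $\nf{xb=0}{c|x}$ with $b,c\neq0$ and value $\geq2$, and the rest built from $\nf{x=x}{c'|x}$, $\nf{xb'=0}{x=0}$ and an auxiliary sentence $\Xi$. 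The point of all the formalism of \S\ref{Sform} (the lattices $\mathbb{W}\supseteq\mathbb{V}$, the functions $\Omega_{f,g,n}$) is that these reductions, and the ones to follow, are carried out at the level of $\mathbb{W}$, so that $w\in V$ is decided by computing some $\underline{w}\in\mathbb{W}$ with $w\in V\iff\underline{w}\in\mathbb{V}$.

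The core of the argument is the termination of the reduction process on $W$ governed by the extended signature $\exsig$, whose order is artinian. Given $w\in W$, I would: if $w$ is reducible, apply the appropriate one of \ref{ssrem=}, \ref{ssremgeqdx}, \ref{ssremgeqxbcx}, \ref{redexsig} to replace $w$ by $\underline{w}\in\mathbb{W}$ with $\exsig\underline{w}<\exsig w$ and $w\in V\iff\underline{w}\in\mathbb{V}$ (here \ref{ssrem=} needs \ref{removed|x/x=0=} and \ref{I/bcR}, which in turn need $\EPP(R)$ and the radical relation recursive — the latter follows from $\DPR(R)$ recursive via the Remark after \ref{DPRZg}). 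Since $\exsig$ is artinian, iterating terminates at a lattice combination of non-reducible $w'\in W$. For the non-reducible sentences, their extended signatures are exactly the short list at the end of \S\ref{Furthersyn}: those with short signature in $\{(\geq,\emptyset),(\emptyset,\geq),(\geq,\geq),(\emptyset,\emptyset)\}$ and no $\nf{x=x}{c'|x}$ conjunct equal to a value $\geq 2$ are handled by \ref{nonredtoZg} (which needs only $\DPR(R)$ recursive); the remaining ones, which carry a conjunct $\vertl\nf{x=x}{c|x}\vertr=C$, are precisely the sentences treated individually in \S\ref{notred}, namely by \ref{x=x/c|x=}, \ref{xb=0x=0=}, \ref{x=x/c|x=xb=0/x=0=} and \ref{x=x/c|x=xb=0/x=0geq} — and these use all three of $\DPR(R)$, $\EPP(R)$, $X(R)$ recursive. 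Combining, each non-reducible $w'$ can be decided, hence so can $\underline{w}\in\mathbb{V}$, hence $w\in V$.

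The main obstacle — assuming the cited lemmas — is purely bookkeeping: checking that the finitely many non-reducible extended signatures are exactly covered by \ref{nonredtoZg} and the four propositions of \S\ref{notred}, with no case left over, and that the hypotheses actually invoked at each step ($\EPP(R)$, the radical relation, $\DPR(R)$, $X(R)$) are among those the theorem grants. Concretely I would enumerate the non-reducible signatures: $z_1=z_3=0$ forces every $\nf{x=x}{c'|x}$-conjunct and every $\nf{xb'=0}{x=0}$-conjunct to be either absent or of the form $\vertl\cdot\vertr\geq g$ or $\vertl\cdot\vertr= C$ with the $=C$ case appearing at most once on each side (that is $(z_2,z_4)\in\{(1,0),(0,1),(0,0)\}$), and then match $(z_2,z_4)=(0,0)$ with short signature $(\geq,\cdot)/(\cdot,\geq)/(\emptyset,\emptyset)$ to \ref{nonredtoZg}, $(z_2,z_4)=(1,0)$ with a lone $\vertl\nf{x=x}{c|x}\vertr=C$ to \ref{x=x/c|x=} or \ref{x=x/c|x=xb=0/x=0=} or \ref{x=x/c|x=xb=0/x=0geq} according to whether a $\nf{xb'=0}{x=0}$-conjunct is present and whether it is $=$ or $\geq$, and $(z_2,z_4)=(0,1)$ to the duals of these via \ref{xb=0x=0=} and \ref{Dualsent}. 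Once this matching is verified the proof closes.
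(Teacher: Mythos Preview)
Your plan is essentially the paper's own proof: forward direction from the cited lemmas, reverse direction via \ref{1stsynthm}, then artinian descent on $\exsig$ using \ref{ssrem=}, \ref{ssremgeqdx}, \ref{ssremgeqxbcx}, \ref{redexsig}, and finally \ref{nonredtoZg} together with the propositions of \S\ref{notred} (plus duality) for the non-reducible base cases. Nothing is missing at the level of strategy.

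Your final bookkeeping paragraph, however, has the $z_i$'s scrambled. In the paper's convention $z_1,z_2$ count the $\nf{xb'=0}{x=0}$ conjuncts (with ``$=$'' and ``$\geq$'' respectively) and $z_3,z_4$ count the $\nf{x=x}{c'|x}$ conjuncts; the extended signature records $(z_1,z_2)$ and $(z_3,z_4)$, not ``$(z_2,z_4)$''. Consequently your claim that every non-reducible $w$ has $z_1=z_3=0$ is false: when the short signature is $(\emptyset,\emptyset)$, non-reducibility allows $(z_1,z_2)=(1,0)$ and $(z_3,z_4)=(1,0)$. Likewise, \ref{nonredtoZg} requires $z_1=z_3=0$ (no ``$=$'' conjunct of \emph{either} type), not merely the absence of an $\nf{x=x}{c'|x}$ equality. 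Your actual case-matching is nonetheless the right one: after \ref{nonredtoZg} and duality (\ref{dualexsig}), what remains is exactly the three short-signature-$(\emptyset,\emptyset)$ cases ``one $\vertl\nf{x=x}{c|x}\vertr=C$'' together with zero, one ``$=$'', or one ``$\geq$'' conjunct of type $\nf{xb'=0}{x=0}$, handled respectively by \ref{x=x/c|x=}, \ref{x=x/c|x=xb=0/x=0=}, \ref{x=x/c|x=xb=0/x=0geq}; in the paper's indexing these are the signatures $((\emptyset,\emptyset),(1,0),(0,0))$, $((\emptyset,\emptyset),(1,0),(1,0))$, $((\emptyset,\emptyset),(1,0),(0,1))$ (up to applying $D$). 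Fix the indices and the proof closes exactly as you say.
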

\begin{proof}
The forward direction follows from \cite[6.4]{Decpruf} (or \ref{DPRZg}), \ref{decimpEPPrec} and \ref{decimpXrec}.

By \ref{1stsynthm}, in order to show that $T_R$ is decidable, it is enough to show that there is an algorithm which, given a sentence $\chi$ of the form
\[\vertl \nicefrac{d|x}{x=0}\vertr\square_1 D\wedge \vertl \nicefrac{xb=0}{c|x}\vertr \square_2 E\wedge\chi_{f,g}\wedge\Xi,\]
where $\square_1,\square_2\in\{\geq,=,\emptyset\}$, $d,c,b\in R\backslash\{0\}$, $D,E\in\N$, $f:X\rightarrow \N$, $g:Y\rightarrow \N$, $X,Y$ are finite sets of pp-$1$-pairs of the form $\nf{xb'=0}{x=0}$ or $\nf{x=x}{c'|x}$ and $\Xi$ is an auxiliary sentence, answers whether there exists an $R$-module which satisfies $\chi$ or not.


Let $W$ and $V$ be as in \S \ref{Furthersyn}. By \ref{ssrem=}, \ref{ssremgeqdx}, \ref{ssremgeqxbcx} and \ref{redexsig}, there is an algorithm which given $w\in W$ reducible, returns $\underline{w}\in \mathbb{W}$ such that $w\in V$ if and only if $\underline{w}\in\mathbb{V}$, and, $\exsig\underline{w}<\exsig w$. Since the set of extended signatures is artinian, it is enough to show that there is an algorithm which, given $w\in W$ not reducible, answers whether $w\in V$ or not. By \ref{nonredtoZg} and the statement just before that, it is enough to show that there is an algorithm which, given $w\in W$ with extended signature in \[S:=\{((\emptyset,\emptyset),(z_1,z_2),(z_3,z_4))\st z_1+z_2\leq 1 \text{ and } z_3+z_4\leq 1\},\] answers whether $w\in V$ or not. Now, $w\in V$ if and only if $Dw\in V$. So, by \ref{dualexsig}, we can reduce $S$ further to \[S:=\{((\emptyset,\emptyset),(1,0),(1,0)),((\emptyset,\emptyset),(1,0),(0,0)),((\emptyset,\emptyset),(1,0),(0,1))\}.\]  By \ref{nonredtoZg}, \ref{x=x/c|x=}, \ref{x=x/c|x=xb=0/x=0=} and \ref{x=x/c|x=xb=0/x=0geq}, such an algorithm exists.
%
%
\end{proof}

\section{Integer-valued polynomials}\label{SIntpoly}

\noindent
We use our main theorem, \ref{mainthm}, to show that the theory of modules over the ring of integer valued polynomials with rational valued coefficients, $\text{Int}(\Z)$, is decidable.

First we fix some notation: For all $p\in\mathbb{P}$, $\Z_{(p)}$ denotes $\Z$ localised at the ideal generated by $p$, $\Q_p$ denote the field of $p$-adic numbers, $v_p:\Q_p\rightarrow \Z\cup\{\infty\}$ denotes the $p$-adic valuation on $\Q_p$ and $\widehat{\Z}_p$ denote the $p$-adic integers.

The ring $\text{Int}(\Z)$ is the subring of $\Q[x]$ consisting of all polynomials $a\in \Q[x]$ such that $a(\Z)\subseteq \Z$. Recall, \cite[I.1.1]{IntPoly}, that the polynomials
\[\left(
    \begin{array}{c}
      x \\
      n \\
    \end{array}
  \right):=\frac{x(x-1)\ldots(x-(n-1))}{n!}
\] are a basis for $\text{Int}(\Z)$ as a $\Z$-module. This readily gives us a recursive presentation of $\text{Int}(\Z)$.  The ring $\text{Int}(\Z)$ is a Pr\"{u}fer domain \cite[VI.1.7]{IntPoly}.

Note, \cite[I.2.1]{IntPoly}, that, for all $p\in\mathbb{P}$, if $f\in \Q[x]$ and $f(\Z)\subseteq \Z$ then $f(\Z_{(p)})\subseteq \Z_{(p)}$. Further, for any $p\in\mathbb{P}$, by continuity of polynomials over $\Q_p$, $f(\Z_{(p)})\subseteq \Z_{(p)}$ implies $f(\widehat{\Z_{p}})\subseteq \widehat{\Z_{p}}$.

The prime spectrum of $\text{Int}(\Z)$ is described in \cite[V.2.7]{IntPoly}. We recall the information we need.

\begin{itemize}
\item For any $p\in \mathbb{P}$, the prime ideals of $\text{Int}(\Z)$ containing $p$ are in bijective correspondence with the elements of $\widehat{\Z_{p}}$ by mapping $\alpha\in \widehat{\Z_{p}}$ to
\[\mfrak{m}_{p,\alpha}:=\{f\in\text{Int}(\Z) \st f(\alpha)\in p\widehat{\Z_{p}}\}.\] The prime ideals $\mfrak{m}_{p,\alpha}$ are exactly the maximal ideals of $\text{Int}(\Z)$ and the quotient $\text{Int}(\Z)/\mfrak{m}_{p,\alpha}$ has size $p$.
\item The non-zero prime ideals $\mfrak{p}$ of $\text{Int}(\Z)$ such that $\Z\cap\mfrak{p}=\{0\}$ are in bijective correspondence with the monic irreducible polynomials $q\in\Q[x]$ via the mapping
\[q\mapsto\mfrak{p}_{q}:=q\Q[x]\cap\text{Int}(\Z).\]
Note that $\mfrak{p}_q\subseteq \mfrak{m}_{p,\alpha}$ if and only if $q(\alpha)=0$ in $\Q_p$.
\end{itemize}
It will sometimes be useful to have some alternate notation for the non-maximal prime ideals. For $\alpha\in \widehat{\Z_{p}}$, let
\[\mfrak{p}_{\alpha}:=\left\{
                        \begin{array}{ll}
                          \mfrak{p}_q, & \hbox{if $\alpha$ is algebraic and $q\in\Z[x]$ is its monic minimal polynomial;} \\
                          \{0\}, & \hbox{if $\alpha$ is transcendental.}
                        \end{array}
                      \right.
\]
This notation has the disadvantage that $\mfrak{p}_{\alpha}=\mfrak{p}_\beta$ does not imply $\alpha=\beta$. However, it allows us to work with $\alpha\in\widehat{\Z_p}$ algebraic and transcendental uniformly in the following ways: Firstly, for $a\in\text{Int}(\Z)$ and $\alpha\in\widehat{\Z_p}$, $a\in\mfrak{p}_\alpha$ if and only if $a(\alpha)=0$. Secondly, for $\mfrak{q}\lhd R$ a prime ideal and $\alpha\in\widehat{\Z_p}$, $\mfrak{q}\subseteq \mfrak{m}_{p,\alpha}$ if and only if $\mfrak{q}=\mfrak{m}_{p,\alpha}$, $\mfrak{q}=\mfrak{p}_\alpha$ or $\mfrak{q}=\{0\}$

By \ref{mainthm}, we need to show that $\DPR(\text{Int}(\Z))$, $\EPP(\text{Int}(\Z))$ and $X(\text{Int}(\Z))$ are recursive. In order to do this, we will use the fact, \cite[Thm 17]{Ax}, that the common theory $T_{\text{adic}}$ of the valued fields $\Q_p$, as $p$ varies, is decidable. We shall work in a two-sorted language $\mcal{L}_{val}$ of valued fields with a sort for the field $K$, a sort $\Gamma$ for the value group extended by $\infty$ and a function symbol $v:K\rightarrow \Gamma$ which will be interpreted as $v_p$ in each $\Q_p$. For convenience, we add a constant symbol $1$ to the value group sort $\Gamma$, which for each valued field $\Q_p$ will be interpreted as the least strictly positive element of the value group.

Let $\mcal{L}_{val}^0$ be the set of sentences in $\mcal{L}_{val}$. The sets
\[T_{adic}:=\{\phi\in\mcal{L}_{val}^0 \st \text{ for all } p\in\mathbb{P}, \ \Q_p\models \phi\}\] and
\[\{\phi\in\mcal{L}_{val}^0 \st \text{ there exists } p\in\mathbb{P} \text{ such that } \Q_p\models \phi\}\] are recursive. Hence, since $\Q_p\models\phi$ if and only if $\Q_q\models v(p)=0\vee \phi$ for all $q\in \mathbb{P}$, the set
\[\{(p,\phi)\in \mathbb{P}\times \mcal{L}_{val}^0 \st \Q_p\models \phi\}\] is recursive.

\begin{proposition}\label{DPRInt}
The set $\text{DPR}(\text{Int}(\Z))$ is recursive.
\end{proposition}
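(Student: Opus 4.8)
The plan is to reduce the membership problem for $\DPR(\Int(\Z))$ to a decidable question about the common theory $T_{\adic}$ of the $p$-adic fields $\Q_p$. By \ref{DPRradical}, given $(a,b,c,d)\in\Int(\Z)^4$ we have $(a,b,c,d)\in\DPR(\Int(\Z))$ if and only if $1\in(\rad bR:a)+(\rad dR:c)$, equivalently, for all prime ideals $\mfrak{p},\mfrak{q}\lhd\Int(\Z)$ with $\mfrak p+\mfrak q\neq R$ we have $a\in\mfrak p$, $b\notin\mfrak p$, $c\in\mfrak q$ or $d\notin\mfrak q$. So I would negate: $(a,b,c,d)\notin\DPR(\Int(\Z))$ if and only if there exist primes $\mfrak p\subseteq\mfrak m$, $\mfrak q\subseteq\mfrak m$ for some maximal ideal $\mfrak m$ (here I use that in the Pr\"ufer domain $\Int(\Z)$ two primes with $\mfrak p+\mfrak q\neq R$ lie in a common maximal ideal, and that the primes below a maximal ideal $\mfrak m_{p,\alpha}$ form a chain $\{0\}\subseteq\mfrak p_\alpha\subseteq\mfrak m_{p,\alpha}$), with $a\in\mfrak p$, $b\notin\mfrak p$, $c\in\mfrak q$, $d\notin\mfrak q$.

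Next I would turn this existential statement about primes into an existential statement about a prime $p$ and elements $\alpha,\beta\in\widehat{\Z_p}$, using the description of $\Spec\Int(\Z)$ recalled in the excerpt. For each of the finitely many choices of ``type'' for $\mfrak p$ and $\mfrak q$ among $\{\{0\},\mfrak p_\alpha,\mfrak m_{p,\alpha}\}$ and $\{\{0\},\mfrak p_\beta,\mfrak m_{p,\beta}\}$, the conditions $a\in\mfrak p$, $b\notin\mfrak p$, $c\in\mfrak q$, $d\notin\mfrak q$ translate: for $f\in\Int(\Z)$ and $\alpha\in\widehat{\Z_p}$, $f\in\mfrak p_\alpha$ iff $f(\alpha)=0$ in $\Q_p$, while $f\in\mfrak m_{p,\alpha}$ iff $v_p(f(\alpha))\geq 1$; membership in $\{0\}$ is just $f=0$ in $\Int(\Z)$, a decidable condition on the given data not involving $p$. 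A polynomial $f\in\Int(\Z)\subseteq\Q[x]$ has rational coefficients, so $f(\alpha)$ and the conditions $f(\alpha)=0$, $v_p(f(\alpha))\geq 1$ are expressible, uniformly in $p$, by an $\mcal L_{val}$-formula with a free variable ranging over $K$ (after clearing denominators, which are integers and hence have controlled valuation at all but finitely many $p$; one handles the finitely many bad $p$ separately by direct computation). Thus for each type-choice I obtain an $\mcal L_{val}$-sentence $\phi$ (a two-variable existential over $K$, plus the constraint $\alpha,\beta$ are $p$-adic integers $v_p(\alpha),v_p(\beta)\geq 0$, and in the $\mfrak p=\mfrak p_\alpha$ or $\mfrak m_{p,\alpha}$ cases the requirement that $\alpha$ be a root of an appropriate polynomial or that such $\alpha$ exists) such that the type-choice is realizable for prime $p$ exactly when $\Q_p\models\phi$.

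Then $(a,b,c,d)\notin\DPR(\Int(\Z))$ if and only if, for some $p\in\mathbb P$ and some type-choice, $\Q_p\models\phi$, i.e. if and only if the disjunction $\bigvee\phi$ over the finitely many type-choices is satisfied in some $\Q_p$. Since, as recalled in the excerpt, the set $\{\psi\in\mcal L_{val}^0\st\exists p\in\mathbb P,\ \Q_p\models\psi\}$ is recursive (and indeed $\{(p,\psi)\st\Q_p\models\psi\}$ is recursive, which lets us also deal with the finitely many exceptional small primes individually), this gives a decision procedure: compute the finite disjunction $\bigvee\phi$ from $(a,b,c,d)$, check whether it holds in some $\Q_p$, and also check the finitely many bad primes by hand; then negate. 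This shows $\DPR(\Int(\Z))$ is recursive.

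The main obstacle is the careful, uniform-in-$p$ translation of the prime-membership conditions into $\mcal L_{val}$, in particular: (i) bounding the interaction between the fixed integer denominators appearing in $a,b,c,d$ (written in the binomial basis) and the valuation $v_p$, so that only finitely many primes need special treatment; (ii) correctly handling the three-way case split for each of $\mfrak p$ and $\mfrak q$, especially the zero-ideal case, which must be read off from the data directly rather than from $\Q_p$; and (iii) checking that ``$\mfrak p\subseteq\mfrak m$ and $\mfrak q\subseteq\mfrak m$ for a common maximal $\mfrak m$'' is the right reformulation of $\mfrak p+\mfrak q\neq R$ here and that it is faithfully captured by ``$\alpha,\beta$ give rise to the same maximal ideal'', which for $\mfrak m_{p,\alpha}=\mfrak m_{p,\beta}$ is the condition $v_p(\alpha-\beta)\geq 1$ — expressible in $\mcal L_{val}$. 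Once these bookkeeping points are settled, the reduction to $T_{\adic}$ is routine.
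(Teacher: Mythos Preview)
Your overall strategy---translate failure of the $\DPR$ condition into an existential statement about a prime $p$ and $p$-adic integers, then invoke decidability of $T_{\adic}$---matches the paper's. But the implementation has a concrete error and an unnecessary complication.

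The error: you claim $\mfrak m_{p,\alpha}=\mfrak m_{p,\beta}$ iff $v_p(\alpha-\beta)\ge 1$. This is false. The maximal ideals above $p$ are in \emph{bijection} with $\widehat{\Z_p}$, so $\mfrak m_{p,\alpha}=\mfrak m_{p,\beta}$ iff $\alpha=\beta$. Concretely, take $\alpha=0$, $\beta=p$, and $f=\binom{x}{p}\in\Int(\Z)$; then $f(0)=0$ while $f(p)=1$, so $f\in\mfrak m_{p,0}\setminus\mfrak m_{p,p}$ even though $v_p(0-p)=1$. Equality of maximal ideals is therefore not first-order in $\mcal L_{val}$ in the way you propose.

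The complication that led you there is the use of two parameters $\alpha,\beta$. It is unnecessary: since $\Int(\Z)$ is Pr\"ufer, the primes contained in a fixed $\mfrak m_{p,\alpha}$ form the chain $\{0\}\subseteq\mfrak p_\alpha\subseteq\mfrak m_{p,\alpha}$, so once you fix the common maximal ideal by a single $(p,\alpha)$ both $\mfrak p$ and $\mfrak q$ already lie among these three. The paper's nine cases (1)--(9) are precisely this enumeration with one parameter, and the comparison of distinct maximal ideals never arises. A further difference: for the purely non-maximal cases ($\mfrak p,\mfrak q\in\{\{0\},\mfrak p_q\}$) the paper does not go through $T_{\adic}$ at all but rewrites them as radical-membership statements in $\Q[x]$ (e.g.\ $ac\in\rad_{\Q[x]}(b\Q[x]+d\Q[x])$), decidable because $\Q$ has a splitting algorithm. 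Your idea of encoding $f\in\mfrak p_\alpha$ by $f(\alpha)=0$ and quantifying over $\alpha$ inside $\mcal L_{val}$ can be made to work for these cases too, since every $\mfrak p_q$ sits below some $\mfrak m_{p,\alpha}$; but the paper's separation is cleaner and sidesteps your bookkeeping about denominators and exceptional primes.
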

\begin{proof}
Let $a,b,c,d\in \text{Int}(\Z)$. Then $(a,b,c,d)\in \text{DPR}(\text{Int}(\Z))$ if and only if
\begin{enumerate}
\item for all $p\in\mathbb{P}$ and $\alpha\in \widehat{\Z_p}$, $a\in \mfrak{m}_{p,\alpha}$,  $b\notin \mfrak{m}_{p,\alpha}$, $c\in \mfrak{m}_{p,\alpha}$  or $d\notin\mfrak{m}_{p,\alpha}$;
\item for all $p\in\mathbb{P}$ and $\alpha\in \widehat{\Z_p}$, $a\in\mfrak{m}_{p,\alpha}$, $b\notin \mfrak{m}_{p,\alpha}$, $c\in \mfrak{p}_\alpha$ or $d\notin \mfrak{p}_\alpha$;
\item for all $p\in\mathbb{P}$ and $\alpha\in \widehat{\Z_p}$, $a\in\mfrak{m}_{p,\alpha}$, $b\notin\mfrak{m}_{p,\alpha}$, $c=0$ or $d\neq 0$;
\item for all $p\in\mathbb{P}$ and $\alpha\in \widehat{\Z_p}$, $a\in\mfrak{p}_\alpha$, $b\notin \mfrak{p}_\alpha$, $c\in\mfrak{m}_{p,\alpha} $ or $d\notin \mfrak{m}_{p,\alpha} $;
\item for all $p\in\mathbb{P}$ and $\alpha\in \widehat{\Z_p}$, $a=0$, $b\neq 0$, $c\in\mfrak{m}_{p,\alpha}$ or $d\notin\mfrak{m}_{p,\alpha}$;
\item for all $q\in\Q[x]$ irreducible and monic, $a\in\mfrak{p}_q$, $b\notin\mfrak{p}_q$, $c\in\mfrak{p}_q$ or $d\notin\mfrak{p}_q$;
\item for all $q\in\Q[x]$ irreducible and monic, $a\in\mfrak{p}_q$, $b\notin\mfrak{p}_q$, $c=0$ or $d\neq 0$;
\item for all $q\in\Q[x]$ irreducible and monic, $a=0$, $b\neq 0$, $c\in\mfrak{p}_q$ or $d\notin\mfrak{p}_q$; and
\item $a=0$ or $b\neq 0$ or $c=0$ or $d\neq 0$
\end{enumerate}
Define $\chi_1,\chi_2,\chi_3,\chi_4,\chi_5\in \mcal{L}^0_{val}$ to be
\[\chi_1:=\forall x \ (v(x)<0 \vee v(a(x))\geq 1\vee v(b(x))=0\vee v(c(x))\geq 1\vee v(d(x))=0),\]
\[\chi_2:=\forall x \ (v(x)<0 \vee v(a(x))\geq 1\vee v(b(x))=0\vee c(x)=\vee d(x)\neq 0),\]
\[\chi_3:=\forall x \ (v(x)<0 \vee v(a(x))\geq 1\vee v(b(x))=0),\]
\[\chi_4:=\forall x \ (v(x)<0 \vee a(x)=0\vee b(x)\neq 0\vee v(c(x))\geq 1\vee v(d(x))=0),\] and
\[\chi_5:=\forall x \ (v(x)<0 \vee v(c(x))\geq 1\vee v(d(x))=0).\]

\smallskip

\noindent
\textbf{Claim:} $(a,b,c,d)\in \DPR(\text{Int}(\Z))$ if and only if
\begin{enumerate}[(i)]
\item $\chi_1,\chi_2,\chi_4\in T_{adic}$,
\item either $c=0$, $d\neq 0$ or $\chi_3\in T_{adic}$,
\item either $a=0$, $b\neq 0$ or $\chi_5\in T_{adic}$,
\item $ac\in \rad_{\Q[x]}(b\Q[x]+d\Q[x])$,
\item $a\in\rad_{\Q[x]}(b\Q[x])$ or $c=0$ or $d\neq 0$,
\item either $c=0$, $d\neq 0$ or $c\in\rad_{\Q[x]}(d\Q[x])$, and
\item either $a=0$, $b\neq 0$, $c=0$ or $d\neq 0$.
\end{enumerate}
Recall that $a\in \mfrak{m}_{p,\alpha}$ if and only if $v_p(a(\alpha))\geq 1$ and $a\in \mfrak{p}_\alpha$ if and only if $a(\alpha)=0$. So, for $j\in\{1,2,4\}$, $(j)$ holds if and only if $\chi_j\in T_{adic}$, $(3)$ holds if and only if $\chi_3\in T_{adic}$, $c=0$ and $d\neq 0$ and, and $(5)$ holds if and only if $\chi_5\in T_{adic}$, $a=0$ and $b\neq 0$.

The statement that, for all $\mfrak{p}$ such that $\mfrak{p}=0$ or $\mfrak{p}=\mfrak{p}_q$ with  $q\in \Q[x]$ monic irreducible,
\[a\in \mfrak{p}\vee c\in \mfrak{p}\vee b\notin \mfrak{p}\vee d\notin\mfrak{p}\] is equivalent to $ac\in \text{rad}_{\Q[x]}(b\Q[x]+d\Q[x])$.
So $(6)$ and $(9)$ hold if and only if (iv) holds. Similarly, $(7)$ and $(9)$ holds if and only if (v) holds and, $(8)$ and $(9)$ holds if and only if (vi) holds. Finally $(9)$ holds if and only if (vii) holds. So the claim holds.

Since $T_{adic}$ is decidable, we can effectively decide whether $(i),(ii)$ and $(iii)$ hold. If $a,b_1,b_2\in \Q[x]$ then $a\in \rad_{\Q[x]}(b_1\Q[x]+b_2\Q[x])$ if and only if for all $q\in\Q[x]$ irreducible, $q$ divides $b_1$ and $q$ divides $b_2$ implies $q$ divides $a$. Since $\Q$ has a splitting algorithm, there is an algorithm which, given $a,b_1,b_2\in\Q[x]$, decides whether $a\in \rad_{\Q[x]}(b_1\Q[x]+b_2\Q[x])$. Therefore, we can effectively decide whether $(iv)$-$(vi)$ holds. It is obvious that we can effectively decide whether $(vii)$ holds.
\end{proof}

\noindent
In order to analyse $\EPP(\text{Int}(\Z))$, we need to understand the valuation overrings of $\text{Int}(\Z)$. For $p\in\mathbb{P}$, let $v_p:\Q_p\rightarrow \Z\cup\{\infty\}$ denote the standard valuation on $\Q_p$.

\begin{itemize}
\item For each $p\in\mathbb{P}$ and $\alpha\in \widehat{\Z_p}$ transcendental, define $v_{p,\alpha}:\Q(x)\rightarrow \Z\cup\{\infty\}$ by setting $v_{p,\alpha}(f/g)=v_p(f(\alpha)/g(\alpha))$.
\item For each $p\in\mathbb{P}$ and $\alpha\in \widehat{\Z_p}$ algebraic with monic minimal polynomial $q\in\Z[x]$, define $v_{p,\alpha}:\Q(x)\rightarrow \Z\times\Z\cup\{\infty\}$ by setting $v_{p,\alpha}(h)=(k,v_p(f(\alpha)/g(\alpha)))$ where $h=q^k\cdot f/g$, $f(\alpha)\neq 0$ and $g(\alpha)\neq 0$.
\end{itemize}
By \cite[VI.1.9]{IntPoly}, $\text{Int}(\Z)_{\mfrak{m}_{p,\alpha}}$ is the valuation ring of $v_{p,\alpha}$.

Let $e\in \text{Int}(\Z)$ and $N\in\N_0$.
\begin{itemize}
\item For $p\in\mathbb{P}$ and $\alpha\in\widehat{\Z_p}$ transcendental, $v_{p,\alpha}(e)=v_{p}(e(\alpha))=N$ if and only if
\[|\text{Int}(\Z)_{\mfrak{m}_{p,\alpha}}/e\text{Int}(\Z)_{\mfrak{m}_{p,\alpha}}|=p^N.\]
\item For $p\in\mathbb{P}$ and $\alpha\in\widehat{\Z_p}$ algebraic, $v_{p,\alpha}(e(\alpha))=N$ if and only if
$v_{p,\alpha}(e)= (0,N)$ if and only if
\[|\text{Int}(\Z)_{\mfrak{m}_{p,\alpha}}/e\text{Int}(\Z)_{\mfrak{m}_{p,\alpha}}|=p^N.\]
\end{itemize}

\begin{proposition}\label{EPPInt}
The set $\EPP(\text{Int}(\Z))$  is recursive.
\end{proposition}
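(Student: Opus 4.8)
The plan is to reduce membership in $\EPP(\text{Int}(\Z))$ to the decidability of the common theory $T_{adic}$ of the $p$-adic fields, exactly as was done for $\DPR(\text{Int}(\Z))$ in \ref{DPRInt}. Recall that $(p,n;a;\gamma;e,m)\in\EPP_1(\text{Int}(\Z))$ means there exist finitely many prime ideals $\mfrak{p}_i\lhd\text{Int}(\Z)$ and ideals $I_i\lhd\text{Int}(\Z)_{\mfrak{p}_i}$ with $\gamma\notin\mfrak{p}_i$, $a\in I_i$, $|\oplus_i\text{Int}(\Z)_{\mfrak{p}_i}/I_i|=p^n$ and $|\oplus_i\text{Int}(\Z)_{\mfrak{p}_i}/e\text{Int}(\Z)_{\mfrak{p}_i}|=p^m$. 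By the remark following \ref{defnEPP} we may take each $\mfrak{p}_i$ to be maximal, i.e.\ of the form $\mfrak{m}_{q,\alpha}$ for some prime $q$ and $\alpha\in\widehat{\Z_q}$; and since $|\text{Int}(\Z)/\mfrak{m}_{q,\alpha}|=q$, any finite localisation quotient at $\mfrak{m}_{q,\alpha}$ has size a power of $q$. So if $n$ or $m$ is nonzero then necessarily $q=p$ for every contributing ideal, and the condition becomes purely a statement about the valuation rings $\text{Int}(\Z)_{\mfrak{m}_{p,\alpha}}$, $\alpha\in\widehat{\Z_p}$, which (by \cite[VI.1.9]{IntPoly}, recalled above) are the valuation rings of the valuations $v_{p,\alpha}$.

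The key step is to express, for fixed $\alpha\in\widehat{\Z_p}$, the existence of an ideal $I\lhd\text{Int}(\Z)_{\mfrak{m}_{p,\alpha}}$ with $a\in I$, $|\text{Int}(\Z)_{\mfrak{m}_{p,\alpha}}/I|=p^k$, in first-order terms over $\Q_p$. First I would handle the case $a\neq 0$: writing $w:=v_{p,\alpha}(a)$ (an element of $\Z$ in the transcendental case, of $\Z\times\Z$ in the algebraic case, but in either case the relevant quantity controlling finite quotients is the $\Q_p$-valuation of $a(\alpha)$), an ideal $I$ with $a\in I$ and $\text{Int}(\Z)_{\mfrak{m}_{p,\alpha}}/I$ finite of order $p^k$ exists iff $0\le k\le v_p(a(\alpha))$ and $a(\alpha)\neq 0$ (the value group of $v_{p,\alpha}$, restricted to the part relevant for finite quotients, is discrete of rank one with least positive element mapping to $p$-size $p$, so every intermediate order is attained). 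The case $a=0$ forces $I=0$, so no finite quotient. Similarly $|\text{Int}(\Z)_{\mfrak{m}_{p,\alpha}}/e\text{Int}(\Z)_{\mfrak{m}_{p,\alpha}}|=p^m$ is equivalent to $e(\alpha)\neq 0$ and $v_p(e(\alpha))=m$ (or to $m=0$ and $e(\alpha)\neq 0$ when $e\notin\mfrak{m}_{p,\alpha}$). Having these pointwise descriptions, a witnessing sequence $(\mfrak{m}_{p,\alpha_i},I_i)_{1\le i\le h}$ with the two total-size conditions $\sum k_i=n$, $\sum m_i=m$ exists iff there is a partition $n=\sum_{j}n_j$, $m=\sum_j m_j$ into at most $n+m$ (say) pieces and, for each piece, an $\alpha\in\widehat{\Z_p}$, hence an element $x$ of $\Q_p$ with $v_p(x)\ge 0$ realising $\gamma(\alpha)\neq 0$, $a(\alpha)\neq 0$ with $v_p(a(\alpha))\ge n_j$, and $e(\alpha)\neq 0$ with $v_p(e(\alpha))=m_j$. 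Since $n,m$ are given, there are only finitely many such partitions, and for each the displayed condition ``there exists $x\in\Q_p$ with $v(x)\ge 0$ and $\ldots$'' is a single $\mcal{L}_{val}$-sentence $\chi$ (using the polynomials $a,\gamma,e$ as terms, as in \ref{DPRInt}); and whether $\Q_p\models\chi$ is decidable because $\{(p,\phi)\st\Q_p\models\phi\}$ is recursive.

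Assembling: $(p,n;a;\gamma;e,m)\in\EPP(\text{Int}(\Z))$ iff $n$ and $m$ can be written as sums $n=\sum_{j=1}^h n_j$, $m=\sum_{j=1}^h m_j$ with $h\le n+m$ and, for each $j$, the valued-field sentence asserting existence of a suitable $x\in\Q_p$ (with the appropriate alternative sentences in the degenerate subcases $n_j=0$ or $m_j=0$, where only $e(\alpha)\neq 0$ resp.\ $a(\alpha)\neq 0$, $\gamma(\alpha)\neq 0$ are required) holds in $\Q_p$; together with the trivial cases $n=m=0$ (always true). Each clause is effectively checkable, and there are finitely many partitions, so the whole condition is decidable. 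The main obstacle I anticipate is the algebraic-versus-transcendental dichotomy in the definition of $v_{p,\alpha}$: in the algebraic case the value group is $\Z\times\Z$ and one must be careful that the $q^k$-component does not produce extra finite quotients — but it does not, because $q\in\mfrak{m}_{p,\alpha}$ means $v_p(q(\alpha))>0$ is impossible (indeed $q(\alpha)=0$), so $q$ is not a unit and $\text{Int}(\Z)_{\mfrak{m}_{p,\alpha}}/q^k\text{Int}(\Z)_{\mfrak{m}_{p,\alpha}}$ is infinite; hence every \emph{finite} quotient is controlled purely by the second coordinate, i.e.\ by $v_p$ of a polynomial evaluated at $\alpha$, and the first-order translation via $T_{adic}$ goes through uniformly. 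The remaining book-keeping (bounding $h$, enumerating partitions, writing out the sentences) is routine.
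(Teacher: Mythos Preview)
Your approach is exactly the paper's: partition $n$ and $m$ into at most $n+m$ pieces and, for each piece, test an $\mcal{L}_{val}$-sentence in $\Q_p$ asserting the existence of a suitable $\alpha\in\widehat{\Z_p}$; then invoke decidability of the theory of $\Q_p$. The paper's claim is literally that $(p,M;a;\gamma;e,N)\in\EPP_1(\text{Int}(\Z))$ iff there are $h\le N+M$ and $N_i,M_i$ summing to $N,M$ with $\Q_p\models\exists x\,(v(x)\ge 0\wedge v(e(x))=N_i\wedge v(a(x))\ge M_i\wedge v(\gamma(x))=0)$ for each $i$.

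Two details in your write-up are wrong and would give an incorrect algorithm. First, the condition $\gamma\notin\mfrak{m}_{p,\alpha}$ is $v_p(\gamma(\alpha))=0$, not merely $\gamma(\alpha)\neq 0$: for instance $\gamma=p$ satisfies $\gamma(\alpha)\neq 0$ but lies in every $\mfrak{m}_{p,\alpha}$. Second, your treatment of $a$ is backwards. If $a=0$ (or more generally $a(\alpha)=0$), then $a$ lies in \emph{every} ideal $I$, so $I$ can be chosen freely to realise any finite quotient size; it does not force $I=0$. The clean fix is simply to use the inequality $v_p(a(\alpha))\ge M_i$ with the usual convention $v_p(0)=\infty$, dropping your extraneous clause $a(\alpha)\neq 0$. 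Your discussion of the algebraic/transcendental dichotomy is on the right track: finite quotients of $\text{Int}(\Z)_{\mfrak{m}_{p,\alpha}}$ are exactly the $\mfrak{m}_{p,\alpha}^k$, and membership of $a$ in $\mfrak{m}_{p,\alpha}^k$ is governed by $v_p(a(\alpha))\ge k$ in both cases, so the translation into $\mcal{L}_{val}$ is uniform.
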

\begin{proof}
\textbf{Claim:} $(p,M;a;\gamma;e,N)\in \EPP_1(Int(\Z))$ if and only if
there exist $h\in\{1,\ldots,N+M\}$,  $N_i,M_i\in \N_0$ for $1\leq i\leq h$ with $\sum_{i=1}^hN_i=N$ and $\sum_{i=1}^hM_i=M$ such that for each $1\leq i\leq h$,
\[\Q_p\models\exists x \ v(x)\geq 0\wedge v(e(x))=N_i\wedge v(a(x))\geq M_i\wedge v(\gamma(x))=0.\]
Suppose $(p,M;a;\gamma;e,N)\in \EPP_1(Int(\Z))$. There exists $\alpha_1,\ldots, \alpha_h$  and $l_1,\ldots, l_h\in \N_0$ such that
$\gamma\notin \mfrak{m}_{p,\alpha_i}$, $a\in \mfrak{m}_{p,\alpha_i}^{l_i}$, \[p^{\sum_{i=1}^hl_i}=\prod_{i=1}^h|\text{Int}(\Z)_{\mfrak{m}_{p,\alpha_i}}/\mfrak{m}_{p,\alpha_i}^{l_i}|=p^M\] and
\[p^{\sum_{i=1}^h v_p(e(\alpha_i))}=\prod_{i=1}^h|\text{Int}(\Z)_{\mfrak{m}_{p,\alpha_i}}/e\text{Int}(\Z)_{\mfrak{m}_{p,\alpha_i}}|=p^N.\]
We may assume that $h\leq N+M$ since the size of the set of $1\leq i\leq h$ such that $v_p(e(\alpha_i))>0$ or $l_i>0$ is at most $N+M$. Set $N_i:=v_{p}(e(\alpha_i))$ and $M_i:=l_i$. Then $N=\sum_{i=1}^hN_i$ and $M=\sum_{i=1}^hM_i$. Since $a\in \mfrak{m}_{p,\alpha_i}^{M_i}$, $v_p(a(\alpha_i))\geq M_i$ and since $\gamma\notin\mfrak{m}_{p,\alpha_i}$, $v_p(\gamma(\alpha_i))=0$, as required.

Conversely, suppose that there exist $h\in\{1,\ldots,N+M\}$,  $N_i,M_i\in \N_0$ for $1\leq i\leq h$ with $\sum_{i=1}^hN_i=N$ and $\sum_{i=1}^hM_i=M$ such that for each $1\leq i\leq h$,
\[\exists x \ v(e(x))=N_i\wedge v(a(x))\geq M_i\wedge v(\gamma(x))=0\] holds in $\Q_p$.

Let $\alpha_i$ witness the truth of the sentence for each $1\leq i\leq h$. Set $\mfrak{p}_i:=\mfrak{m}_{p,\alpha_i}$ for $1\leq i\leq h$ and $I_i:=\mfrak{m}_{p,\alpha_i}^{M_i}$. Then $v_p(a(\alpha_i))\geq M_i$ implies $a\in \mfrak{m}_{p,\alpha_i}^{M_i}$. Since $v_p(\gamma(\alpha_i))=0$, $\gamma\notin \mfrak{m}_{p,\alpha_i}$ and $v_p(e(\alpha_i))=N_i$ implies \[|\text{Int}(\Z)_{\mfrak{m}_{p,\alpha_i}}/e\text{Int}(\Z)_{\mfrak{m}_{p,\alpha_i}}|=p^{N_i}.\] So $(p,M;a;\gamma;e,N)\in \EPP(Int(\Z))$ as required.

The proposition now follows from the claim since the set of $(p,\phi)\in \mathbb{P}\times\mcal{L}^0_{val}$ such that $\Q_p\models \phi$ is recursive.
\end{proof}

\begin{proposition}\label{XInt}
The set $X(\text{Int}(\Z))$ is recursive.
\end{proposition}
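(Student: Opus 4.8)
The plan is to mimic the proofs of \ref{DPRInt} and \ref{EPPInt}, expressing membership in $X(\text{Int}(\Z))$ as a finite combinatorial condition over the $p$-adic fields $\Q_p$ that can be checked using the decidability of $T_{adic}$ together with a splitting algorithm over $\Q$. By \ref{X2ndformulation}, $(p,n;e,\gamma,a,\delta)\in X(\text{Int}(\Z))$ if and only if there exist $1\leq h\leq n$ and maximal ideals $\mfrak{m}_{p,\alpha_1},\dots,\mfrak{m}_{p,\alpha_h}$ (necessarily all lying over the same rational prime $p$, since $|R_{\mfrak{m}}/eR_{\mfrak{m}}|$ being a power of $|R/\mfrak{m}|=p$ forces the residue characteristic) with $\vert\oplus_{i=1}^h R_{\mfrak{m}_{p,\alpha_i}}/eR_{\mfrak{m}_{p,\alpha_i}}\vert=p^n$, and for each $i$ we have $\gamma\notin\mfrak{m}_{p,\alpha_i}$ and either $\delta\notin\mfrak{m}_{p,\alpha_i}$ or there is a prime $\mfrak{q}_i\subseteq\mfrak{m}_{p,\alpha_i}$ with $a\in\mfrak{q}_i$ and $\delta\notin\mfrak{q}_i$. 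Using the description of $\spec(\text{Int}(\Z))$ recalled before \ref{DPRInt} — in particular that the primes $\mfrak{q}$ with $\mfrak{q}\subseteq\mfrak{m}_{p,\alpha}$ are exactly $\{0\}$, $\mfrak{p}_\alpha$, and $\mfrak{m}_{p,\alpha}$ itself, and that $a\in\mfrak{p}_\alpha\iff a(\alpha)=0$ — the ``$\delta$-condition'' at $\alpha_i$ becomes: $\delta(\alpha_i)\neq 0 \bmod p\widehat{\Z_p}$, or ($a(\alpha_i)=0$ and $\delta(\alpha_i)\neq 0$ in $\Q_p$), or ($\delta\neq 0$ as an element of $\text{Int}(\Z)$, using $\mfrak{q}_i=\{0\}$). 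The last disjunct is handled separately as it does not depend on $\alpha_i$.

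Concretely, I would set, for each target local contribution $N_i\in\N_0$, the $\mcal{L}_{val}$-sentence asserting the existence of $x$ with $v(x)\geq 0$, $v(e(x))=N_i$, $v(\gamma(x))=0$, and the disjunction of $v(\delta(x))=0$ with $(\delta(x)=0\text{ is excluded}$ — rather: $a(x)=0 \wedge \delta(x)\neq 0)$; i.e.
\[
\psi_{N_i}:=\exists x\ \big(v(x)\geq 0\wedge v(e(x))=N_i\wedge v(\gamma(x))=0\wedge(v(\delta(x))=0\vee (a(x)=0\wedge \delta(x)\neq 0))\big).
\]
Then, exactly as in the claim inside the proof of \ref{EPPInt}, $(p,n;e,\gamma,a,\delta)\in X(\text{Int}(\Z))$ holds iff either (a) $\delta\neq 0$ in $\text{Int}(\Z)$ and there exist $h\in\{1,\dots,n\}$ and $N_1,\dots,N_h\in\N_0$ with $\sum N_i=n$ such that for each $i$ the sentence $\psi'_{N_i}:=\exists x\,(v(x)\geq 0\wedge v(e(x))=N_i\wedge v(\gamma(x))=0)$ holds in $\Q_p$ — here we may use $\mfrak{q}_i=\{0\}$ freely — or (b) there exist such $h, N_1,\dots,N_h$ with $\psi_{N_i}$ holding in $\Q_p$ for each $i$ (this covers the cases $\mfrak{q}_i=\mfrak{m}_{p,\alpha_i}$ and $\mfrak{q}_i=\mfrak{p}_{\alpha_i}$). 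The bound $h\leq n$ comes from discarding the $\alpha_i$ with $v_p(e(\alpha_i))=0$, just as in \ref{EPPInt}. For the direction that picks witnesses $\alpha_i\in\widehat{\Z_p}$ from the truth of the sentences, I would note that $\Q_p$ is the fraction field of $\widehat{\Z_p}$ and that $v(x)\geq 0$ pins $x$ down to $\widehat{\Z_p}$, and then set $\mfrak{p}_i:=\mfrak{m}_{p,\alpha_i}$; for the $\mfrak{q}_i$-witness one takes $\mfrak{q}_i=\mfrak{p}_{\alpha_i}$ when $a(\alpha_i)=0,\ \delta(\alpha_i)\neq 0$, and $\mfrak{q}_i=\mfrak{m}_{p,\alpha_i}$ when $v_p(\delta(\alpha_i))=0$, then define the ideal $I_i\lhd R_{\mfrak{m}_{p,\alpha_i}}$ as in the last paragraph of \ref{X2ndformulation}.

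The recursivity then follows: the set $\{(p,\phi)\in\mathbb{P}\times\mcal{L}_{val}^0 \st \Q_p\models\phi\}$ is recursive (established in the paragraph before \ref{DPRInt}), so for given input we compute the finitely many compositions $n=N_1+\dots+N_h$ with $h\leq n$, form the corresponding sentences $\psi_{N_i}$ and $\psi'_{N_i}$, and test them in $\Q_p$; separately we check whether $\delta\neq 0$ in $\text{Int}(\Z)$, which is decidable since $\text{Int}(\Z)$ is a recursive ring. I expect the main obstacle to be purely bookkeeping: correctly enumerating the three possibilities for $\mfrak{q}_i$ and folding the $\mfrak{q}_i=\{0\}$ case out as a global side condition (since it does not localize at a single $\alpha_i$), and verifying that the $h\leq n$ truncation is still valid in the presence of the extra $\delta$-disjunct — but this goes through exactly as in \ref{EPPInt} because dropping an index with $v_p(e(\alpha_i))=0$ only removes a trivial factor from the product computing $p^n$ and cannot violate any of the conditions, which are imposed pointwise. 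No new ideas beyond those in \ref{DPRInt} and \ref{EPPInt} are needed; only Ax's theorem, the splitting algorithm over $\Q$ (not actually needed here, unlike in \ref{DPRInt}), and the explicit description of $\spec(\text{Int}(\Z))$ and its localizations.
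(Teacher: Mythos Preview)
Your approach is essentially the paper's: translate \ref{X2ndformulation} into conditions on $\alpha_i\in\widehat{\Z_p}$ using the description of $\Spec(\text{Int}(\Z))$, then express these as $\mcal{L}_{val}$-sentences and appeal to the recursivity of $\{(p,\phi):\Q_p\models\phi\}$. The paper writes a single claim with a three-way disjunction per $\alpha_i$ (namely $v_p(\delta(\alpha_i))=0$, or $a(\alpha_i)=0\wedge\delta(\alpha_i)\neq 0$, or $a=0\wedge\delta\neq 0$), whereas you factor the third disjunct out as a global case~(a); this is harmless in principle.

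However, there is a slip in your third disjunct and hence in your case~(a). Using $\mfrak{q}_i=\{0\}$ requires $a\in\{0\}$, i.e.\ $a=0$ as an element of $\text{Int}(\Z)$, not merely $\delta\neq 0$. As stated, your equivalence fails in the $\Leftarrow$ direction: take $a=1$, $\delta=p$, and any $\alpha\in\widehat{\Z_p}$ with $v_p(e(\alpha))=n$ and $v_p(\gamma(\alpha))=0$; then $\delta\neq 0$ and $\psi'_n$ holds, so your (a) declares the tuple in $X(\text{Int}(\Z))$, yet none of the three genuine disjuncts is satisfied at $\alpha$ (since $v_p(\delta(\alpha))=1$, $a(\alpha)=1\neq 0$, and $a\neq 0$). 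The fix is immediate: replace the hypothesis of case~(a) by ``$a=0$ and $\delta\neq 0$''. With that correction your argument and the paper's coincide.
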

\begin{proof}\textbf{Claim:} $(p,n;e,\gamma,a,\delta)\in X(R)$ if and only if there exist $h\in\{0,\ldots,n\}$ and $\alpha_i\in \widehat{\Z_{p}}$ for $1\leq i\leq h$ such that $\sum_{i=1}^hv_p(e(\alpha_i))=n$, $v_p(\gamma(\alpha_i))=0$ and either
\begin{enumerate}[(i)]
\item $v_{p}(\delta(\alpha_i))=0$,
\item $a(\alpha_i)=0$ and $\delta(\alpha_i)\neq 0$, or
\item $a=0$ and $\delta\neq 0$.
\end{enumerate}
Recall, \ref{X2ndformulation}, $(p,n;\lambda,\gamma,a,\delta)\in X(R)$ if and only if there exist $h\in\N$ and maximal ideals $\mfrak{m}_{p,\alpha_i}$ for $1\leq i\leq h$ such that
\[p^{\sum_{i=1}^hv_p(e(\alpha_i))}=|\oplus_{i=1}^h\text{Int}(\Z)_{\mfrak{m}_{p,\alpha_i}}/e\text{Int}(\Z)_{\mfrak{m}_{p,\alpha_i}}|=p^{n},\] $\gamma\notin\mfrak{m}_{p,\alpha_i}$ and for each $1\leq i\leq h$, either $\delta\notin\mfrak{m}_{p,\alpha_i}$ or there exists $\mfrak{q}_i$ a prime ideal such that $\mfrak{q}_i\subsetneq\mfrak{m}_{p,\alpha_i}$, $a\in \mfrak{q}_i$ and $\delta\notin \mfrak{q}_i$.

If $\mfrak{q}$ is a prime ideal strictly contained in $\mfrak{m}_{p,\alpha}$ then $\mfrak{q}=\mfrak{p}_\alpha$ or $\mfrak{q}=\{0\}$. Thus there exists $\mfrak{q}\subsetneq \mfrak{m}_{p,\alpha}$ such that $a\in \mfrak{q}$ and $\delta\notin \mfrak{q}$ if and only if $a(\alpha)=0$ and $\delta(\alpha)\neq 0$, or, $a=0$ and $\delta\neq 0$. For $\alpha\in \widehat{\Z_p}$, $\gamma\notin\mfrak{m}_{p,\alpha}$ if and only if $v_p(\gamma(\alpha))= 0$.

Therefore $(p,n;e,\gamma,a,\delta)\in X(R)$ if and only if there exist $h\in\N$ and maximal ideals $\mfrak{m}_{p,\alpha_i}$ for $1\leq i\leq h$ such that $n=\sum_{i=1}^hv_p(e(\alpha_i))$, $\gamma\notin\mfrak{m}_{p,\alpha_i}$ and for each $1\leq i\leq h$, either $v_p(\delta(\alpha_i))=0$, or, $a(\alpha)=0$ and $\delta(\alpha)\neq 0$, or, $a=0$ and $\delta\neq 0$. So we have proved the claim.

The right hand side of the claim can be written as a sentence $\phi\in\mcal{L}^0_{val}$ so that $\Q_p\models \phi$ if and only if $(p,n;\lambda,\gamma,a,\delta)\in X(R)$. By convention $(p,0;\lambda,\gamma,a,\delta)\in X(R)$ for all $p,\lambda,\gamma,a,\delta$. So, since the set of $(p,\phi)\in\mathbb{P}\times \mcal{L}^0_{val}$ such that $\Q_p\models \phi$ is recursive, we are done.
\end{proof}

\begin{theorem}
The theory of modules of the ring of integer valued polynomials is decidable.
\end{theorem}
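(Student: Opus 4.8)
The plan is to apply the Main Theorem \ref{mainthm} directly. First I would recall that $\text{Int}(\Z)$ is a recursive Pr\"ufer domain: the binomial polynomials $\binom{x}{n}$ form a $\Z$-basis of $\text{Int}(\Z)$, which yields a recursive presentation of the ring, and $\text{Int}(\Z)$ is a Pr\"ufer domain by \cite[VI.1.7]{IntPoly}. Both of these facts were already recorded at the start of section \ref{SIntpoly}.

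Next, by the Main Theorem \ref{mainthm}, the theory of $\text{Int}(\Z)$-modules is decidable if and only if the three sets $\DPR(\text{Int}(\Z))$, $\EPP(\text{Int}(\Z))$ and $X(\text{Int}(\Z))$ are recursive. So it suffices to invoke \ref{DPRInt}, \ref{EPPInt} and \ref{XInt}, which establish exactly these three recursivity statements. Assembling these gives the result immediately.

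There is no remaining obstacle at this point: all the real work sits in the three propositions \ref{DPRInt}, \ref{EPPInt}, \ref{XInt} (whose proofs reduce each membership question to a first-order sentence in the language $\mcal{L}_{val}$ of valued fields, after analysing the prime spectrum and the valuation overrings of $\text{Int}(\Z)$) together with the uniform decidability, over all $p\in\P$, of the theory of the $p$-adic valued fields $\Q_p$ \cite[Thm 17]{Ax}, which is what makes the sets $\{(p,\phi)\in\P\times\mcal{L}_{val}^0 \st \Q_p\models\phi\}$ recursive. The final theorem itself is then a one-line corollary of \ref{mainthm} combined with \ref{DPRInt}, \ref{EPPInt} and \ref{XInt}.

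\begin{proof}
The ring $\text{Int}(\Z)$ is a recursive Pr\"ufer domain: a recursive presentation is obtained from the $\Z$-basis $\left\{\binom{x}{n}\st n\in\N_0\right\}$, and $\text{Int}(\Z)$ is Pr\"ufer by \cite[VI.1.7]{IntPoly}. By \ref{mainthm}, the theory of $\text{Int}(\Z)$-modules is decidable if and only if $\DPR(\text{Int}(\Z))$, $\EPP(\text{Int}(\Z))$ and $X(\text{Int}(\Z))$ are recursive. These are \ref{DPRInt}, \ref{EPPInt} and \ref{XInt}, respectively.
\end{proof}
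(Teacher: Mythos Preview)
Your proposal is correct and matches the paper's own proof exactly: the paper's argument is the single line ``This follows from \ref{mainthm}, \ref{DPRInt}, \ref{EPPInt} and \ref{XInt}.'' Your version simply adds the (already-established) observation that $\text{Int}(\Z)$ is a recursive Pr\"ufer domain before invoking the same four results.
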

\begin{proof}
This follows from \ref{mainthm}, \ref{DPRInt}, \ref{EPPInt} and $\ref{XInt}$.
\end{proof}


\end{document}

\section{excess}

Duality

The following proposition is direct consequence of \cite[6.6]{HerDual}.

\begin{proposition}\label{Dualsent}
Let $R$ be a ring. Let $n,m\in\N$ be such that $n\leq m$ and for $1\leq i\leq m$, let $N_i\in\N$ and let $\nicefrac{\phi_i}{\psi_i}$ be a pp-pair. There exists a right $R$-module satisfying
\[\bigwedge_{i=1}^n\vertl\nicefrac{\phi_i}{\psi_i}\vertr=N_i\wedge\bigwedge_{i=n+1}^m\vertl \nicefrac{\phi_i}{\psi_i}\vertr\geq N_i\]
if and only if there exists a left $R$-module satisfying
\[\bigwedge_{i=1}^n\vertl \nicefrac{D\psi_i}{D\phi_i}\vertr=N_i\wedge\bigwedge_{i=n+1}^m\vertl \nicefrac{D\psi_i}{D\phi_i}\vertr\geq N_i.\]
\end{proposition}

\subsection{Decidability and recursive Pr\"ufer domains}

\

\noindent
A \textbf{recursive ring} is either a finite ring or a ring $R$ together with a bijection $\pi:\N\rightarrow R$ such that addition and multiplication in $R$ induce recursive functions on $\N$ via $\pi$.

Note that if $R$ is a ring and $\pi: \N\rightarrow R$ is a bijection then $T_R$ is recursively axiomatisable with respect to $\pi$ if and only if $R$ together with $\pi$ is a recursive ring.

When proving decidability results about theories of modules, it is common to work with an ``effectively given'' ring rather than just a recursive one. Usually, a ring of a particular type if called effectively given if $R$ is a recursive ring and the bijection $\pi$ satisfies some extra conditions which are equivalent, for that particular type of ring, to Prest's condition $(D)$ holding (see \cite[pg 334]{PrestBluebook}). Recall that a recursive ring satisfies condition $(D)$ if there is an algorithm which, given $\phi,\psi\in\pp_R^1$ answers whether $\psi\leq \phi$.
For example, a recursive valuation domain $V$ is said to be \textbf{effectively given} if the preimage under $\pi$ of the set of units of $V$ is recursive. A recursive Pr\"ufer domain $R$ is said to be \textbf{effectively given} if the preimage under $\pi$ of the set of $(a,b)\in R^2$ such that $a\in bR$ is recursive.

Whether one works with recursive rings or effectively given ones is largely a matter of personal taste. For simplicity, we choose to work with recursive rings.

It was remarked in \cite[paragraph before 2.4]{Decdense}, that the property that $a\in bR$ is recursive is never used in \cite{Decpruf} and \cite{Decdense}. Moreover, see \cite[2.4]{Decdense}, if $R$ is a recursive Pr\"ufer domain and the set $\DPR(R)\subseteq R^4$ (see REF for a definition) is recursive then $a\in bR$ is recursive. In particular, even though they are stated for effectively given Pr\"ufer domains, all results in \cite{Decpruf} and \cite{Decdense} in fact hold for recursive Pr\"ufer domains.

\subsection{pp-fla Pr\"ufer}

\begin{proposition}
Let $R$ be a arithmetical ring. Every pp-$1$-formula over $R$ is a sum of formulae of the form $xa_1=0\wedge xa_2=0\wedge b|x$.
\end{proposition}
\begin{proof}
We know, REF, that every pp-$1$-formula over $R$ is equivalent to a sum of formulae of the form $\exists y \ yc=x\wedge yd=0$. Take $c,d\in R$ and let $\alpha,r,s\in R$ be such that $c\alpha=dr$ and $d(\alpha-1)=cs$. We will show that $\exists y \ yc=x\wedge yd=0$ is equivalent to $x\alpha=0\wedge xs=0\wedge c|x$.

Suppose $M\in\Mod\text{-}R$ and $m\in M$ is such that $mc\alpha=mcs=0$. Then $mc=(m-m\alpha)c$ and $(m-m\alpha)d=-mcs=0$.

Suppose $M\in\Mod\text{-}R$ and $m\in M$ is such that $md=0$. Then $mc\alpha=mdr=0$ and $mcs=md(\alpha-1)=0$. So $mc$ satisfies $x\alpha=0\wedge xs=0\wedge c|x$.
\end{proof}

\subsection{Uniserial and indecomposable pure-injective modules}

For $V$ a valuation domain and a proper ideal $I\lhd R$, define \[I^\#:=\bigcup_{a\notin I}(I:a).\] By convention, we define $V^\#$ to be the unique maximal ideal of $V$. Note that this definition agrees with the definition given in \cite[ChII \S 4]{MONND}, that is, for $I\neq 0$, $r\in I^\#$ if and only if $rI\subsetneq I$. For all ideal $I\lhd V$, $I^\#$ is a prime ideal and for $\mfrak{p}\lhd V$ a prime ideal, $\mfrak{p}^\#=\mfrak{p}$. Moreover, if $a\in V\backslash\{0\}$ and $I\lhd V$ then $(aI)^\#=I^\#$.

%
%

\bibliographystyle{alpha}\bibliography{finresgen}\end{document}